\definecolor{gray}{gray}{0.7}
\newcommand{\slc}[1]{\langle #1\rangle} 
\newcommand{\cs}{\text{\rm c}}
\numberwithin{equation}{section}
\theoremstyle{plain}
\def\blfootnote{\gdef\@thefnmark{}\@footnotetext}
\newtheorem{theorem}[equation]{Theorem}
\newtheorem{thm}[equation]{Theorem}
\newtheorem{proposition}[equation]{Proposition}
\newtheorem{lemma}[equation]{Lemma}
\newtheorem{corollary}[equation]{Corollary}
\newtheorem{conjecture}[equation]{Conjecture}
\newtheorem{lem}[equation]{Lemma}
\newtheorem{prop}[equation]{Proposition}
\newtheorem{cor}[equation]{Corollary}
\newcommand{\sub}{\subset}
\newcommand{\bZ}{{\mathbf Z}}
\theoremstyle{remark}
\newtheorem{remark}[equation]{Remark}
\theoremstyle{definition}
\newtheorem{definition}[equation]{Definition}
\newtheorem*{question*}{Question}
\newcommand{\bb}[1]{\llbracket #1\rrbracket} 
\newcommand{\pp}[1]{\left\langle #1\right\rangle} 
\newcommand{\Lip}{\operatorname{Lip}}
\newcommand{\cD}{{\mathscr D}}
\newcommand{\vp}{\varphi}
\newcommand{\C}{{\mathcal C}}
\renewcommand{\d}{\mathcal{D}}
\newcommand{\ha}{\tilde{H}^{\text{AK}}}
\newcommand{\hl}{\tilde{H}^{\text{L}}}
\newcommand{\E}{\mathbb E}
\newcommand{\f}{\mathbb{F}}
\newcommand{\h}{{\mathcal H}}
\newcommand{\I}{{\textbf I}}
\renewcommand{\L}{{\mathcal L}}
\newcommand{\M}{{\mathbf M}}
\newcommand{\N}{\mathbb N}
\newcommand{\R}{\mathbb R}
\newcommand{\Z}{\mathbb Z}
\newcommand{\dd}{\mathfrak D}
\newcommand{\cP}{{\mathscr P}}
\newcommand{\bI}{{\mathbf I}}
\newcommand{\0}{\textbf{0}}
\newcommand{\al}{\alpha}
\newcommand{\be}{\beta}
\newcommand{\ben}{\begin{enumerate}}
\newcommand{\bit}{\begin{itemize}}
\newcommand{\cat}{\operatorname{CAT}}
\newcommand{\D}{\partial}
\newcommand{\de}{\delta}
\newcommand{\diam}{\operatorname{diam}}
\newcommand{\een}{\end{enumerate}}
\newcommand{\eit}{\end{itemize}}
\newcommand{\eps}{\epsilon}
\newcommand{\Fill}{\operatorname{Fill}}
\newcommand{\lip}{\operatorname{Lip}}
\newcommand{\ga}{\gamma}
\newcommand{\id}{\operatorname{id}}
\newcommand{\im}{\operatorname{Im}}
\newcommand{\isom}{\operatorname{Isom}}
\newcommand{\la}{\lambda}
\newcommand{\length}{\operatorname{length}}
\newcommand{\om}{\omega}
\newcommand{\on}{\:\mbox{\rule{0.1ex}{1.2ex}\rule{1.1ex}{0.1ex}}\:}
\newcommand{\ra}{\rightarrow}
\newcommand{\si}{\sigma}
\newcommand{\spt}{\operatorname{spt}}
\renewcommand{\th}{\theta}
\newcommand{\ul}{\underline}
\newcommand{\loc}{\text{\rm loc}}
\newcommand{\es}{\emptyset}
\newcommand{\Del}{\Delta}
\newcommand{\til}{\tilde}
\newcommand{\B}[2]{B_{#1}(#2)}   
\newcommand{\Sph}[2]{S_{#1}(#2)} 
\begin{document}

\title{Morse quasiflats I}
\author{Jingyin Huang}
\author{Bruce Kleiner}
\author{Stephan Stadler}
\thanks{The first author thanks the Max-Planck Institute for mathematics at Bonn, where part of this work was done. 
The second author was supported by NSF grants DMS-1711556
and DMS-2005553 and a Simons Collaboration grant. The third author was supported by  DFG grant SPP 2026.
}

\date{\today}
\maketitle

\begin{abstract}
This is the first in a series of papers concerned with Morse quasiflats, which are a generalization of Morse quasigeodesics to arbitrary dimension.  In this paper we introduce a number of alternative definitions, and under appropriate assumptions on the ambient space we show that they are equivalent and quasi-isometry invariant; we also give a variety of examples.  The second paper proves that Morse quasiflats are asymptotically conical and have canonically defined Tits boundaries; it also gives some first applications.

\end{abstract}

\tableofcontents

\section{Introduction}

\subsection{Background and overview}

Gromov-hyperbolicity has been a central concept in geometric group theory since it was first introduced in \cite{Gromov_hyp}.  Over the years, it has inspired  a large literature with variations on the original idea.   Roughly speaking two approaches have been used: the first is to identify hyperbolic features in settings which may be very far from hyperbolic, while the second is to relax Gromov-hyperbolicity to geometric conditions which still retain some weakly hyperbolic flavor.  Examples of the first include: 
\begin{itemize}
	\item Relative hyperbolicity \cite{Gromov_hyp,farb1998relatively,bowditch2012relatively,dructu2005tree,osin2006relatively}, 
	\item Notions of directional hyperbolicity, including rank 1/Morse/ contracting/sublinear geodesics and subsets, tree graded structure, hyperbolically embedded subgroups etc \cite{Ballmann_axial,bestvina2002bounded,dructu2010divergence,charney2014contracting,MR3737283,sisto2018contracting,sublinear_boundary,durham2015convex,cordes2017stability,kapovich1998quasi,dructu2005tree,dahmani2017hyperbolically};
	\item Generalizations of classical small cancellation theory \cite{ol1992periodic,delzant1996sous,champetier1994petite,gromov2003random,osin2007peripheral,delzant2008courbure,osin2010small,ol2009lacunary,coulon2011asphericity,wise2018structure,dahmani2017hyperbolically,coulon2019small};
	\item Statistical or probabilistic versions of hyperbolicity \cite{duchin2012statistical,chatterjee2021average};
	\item Cohomological aspects of negative
	curvature \cite{monod2006orbit,hamenstadt2008bounded,thom2009low};
	\item Analyzing certain non-hyperbolic solvable groups using elements of hyperbolic geometry \cite{farb1998rigidity,farb1999quasi,eskin2012coarse,eskin2013coarse};
	\item Acylindrical hyperbolicity \cite{bestvina2002bounded,bowditch2008tight,osin2016acylindrically,dahmani2017hyperbolically}, 
	\item The projection complex \cite{bestvina2015constructing},
	\item Hierarchically hyperbolicity \cite{masur2000geometry,kim2014geometry,behrstock2017hierarchically,behrstock2015hierarchically},
\end{itemize}
Examples of the second approach include:
\begin{itemize}
	\item Spaces with coning inequalities \cite{gromov1983filling,wenger2005isoperimetric}, 
	\item Combable, automatic and semi-hyperbolic groups \cite{MR1161694,gromov_asym,alonso1995semihyperbolic};
	\item Spaces with certain geodesic bicombing, injective metric spaces, $\cat(0)$ spaces and generalizations  \cite{busemann2012geometry,isbell1964six,dress1984trees,lang2013injective,descombes2015convex,kar2011asymptotically}, 
	\item Combinatorial non-positive curvature \cite{januszkiewicz2006simplicial,bandelt2008metric,osajda2013combinatorial,brevsar2013bucolic,chalopin2014weakly,hoda2017quadric,huang2019metric,joharinad2019topology},
	\item Ptolemy spaces \cite{foertsch2007nonpositive,foertsch2011hyperbolicity,foertsch2011group},
	\item Median graphs, median spaces and coarse median spaces \cite{bandelt1983median,verheul1993multimedians,gerasimov1997semi,gerasimov1998fixed,chepoi2000graphs,bandelt2008metric,chatterji2010kazhdan,bowditch2016some,bowditch2013coarse}, 
\end{itemize}

This is the first of a series of two papers where we introduce and study the notion of \emph{Morse quasiflats}, which are a higher dimensional generalization of Morse quasi-geodesics \cite{Ballmann_axial,kapovich1998quasi,dructu2010divergence}. (We caution the reader that our notion of a Morse quasiflat should not be confused with a quasiflat which is a ``Morse subset'' in the sense of \cite{genevois2017hyperbolicities}, see the footnote\footnote{To illustrate the difference, consider a copy of $\mathbb Z^2$ in $F_2\times \mathbb Z$. This is a Morse quasiflat in our sense, but not a Morse subset in the sense of \cite{genevois2017hyperbolicities}.  ``Morse subsets'' in \cite{genevois2017hyperbolicities} were referred to as ``strongly quasiconvex subsets'' in \cite{tran2019strongly}.  While we were aware of the potential confusion caused by our choice of terminology, we were not able to find a better alternative.} below.)   Like Morse quasigeodesics, the definition of Morse quasiflats axiomatizes the stability properties satisfied by quasigeodesics in Gromov hyperbolic spaces.    The following is an overview of our objectives. (1), (2) and (3) are done in this paper, which will be discussed in more detail in the remainder of the introduction. (4) and (5) are in the second part \cite{HKS}.  The second paper uses a few results from this paper, but it may be read independently if the reader is willing to take these results for granted.   
\ben
\item We introduce a number of potential definitions of Morse quasiflats. We find conditions which guarantee that the definitions are equivalent and quasi-isometry invariant.   
\item We establish stability properties for higher dimensional Morse quasiflats, including a generalization of the Morse lemma.
\item We provide criteria for quasiflats to be Morse, and give examples.  In $\cat(0)$ spaces, we give an easy-to-verify  flat half-space criterion which generalizes \cite{Ballmann_axial}.
\item In spaces with a convex geodesic bicombing, we prove that Morse quasiflats are asymptotically conical,  or, to put it in more analytical terms, they have unique tangent cones at infinity.  Consequently they have a well-defined Tits boundary.
\item We mention a few applications to quasi-isometric rigidity.
\een
We emphasize that although a few applications will be discussed in \cite{HKS}, our main objective here is to develop the basic theory of Morse quasiflats.  There are clearly many natural examples, and we expect the techniques here to be useful in proving further QI rigidity results.

We mention that geometric measure theory plays an important role in this paper, both as the language used to state results, and as a technical tool in the proofs.
In the process of developing our ideas, we were led to several questions about geometric measure theory in metric spaces which are not addressed by the existing literature.  We anticipate that further work in this topic will help to stimulate a fruitful and deeper interaction between geometric group theory and geometric measure theory.  See Subsection~\ref{subsec_GMT} for more discussion.

\subsection{Definitions of Morse quasiflats}
\label{subsec_def}

In this section we discuss a number of candidates for the definition of a Morse quasiflat. Several  of these are analogs of well-known definitions for Morse quasigeodesics, while others are not. In addition to the definitions mentioned here, there are other definitions appearing only in the body of the paper.

For the remainder of the introduction $X$ will denote a metric space, and $Q$ an $n$-quasiflat in $X$ -- the image of an $(L,A)$-quasi-isometric embedding $\Phi:\R^n\ra X$; for simplicity we will assume here that $\Phi$ is $L$-Lipschitz.  
More general cases where $Q$ may not be represented by a Lipschitz map will be discussed in the main body of the paper.   
We will denote by $F$ a  bilipschitz $n$-flat, i.e. the image of a bilipschitz embedding $\R^n\ra Z$, where $Z$ is a metric space.

The reader may find it helpful while reading this section to keep some simple but illustrative examples in mind, such as a copy of the vertex group $\mathbb Z^3$ in $\mathbb Z^3*_{\mathbb Z^2}\mathbb Z^4$, or a 2-dimensional flat in $\mathbb H^2\times\mathbb H^2$. More interesting examples are in Subsection~\ref{subsec_examples_morse_quasiflats_intro}.

We start by recalling several equivalent characterizations of Morse quasi-geodesics, which are axiomization of known properties of quasi-geodesics in hyperbolic spaces. We describe the main idea of each definition and give reference to the detailed formulation. See \cite{dructu2010divergence,behrstock2014divergence,charney2014contracting,MR3690269} for equivalence of these definitions.

Let $Q$ be a quasi-geodesic. 
\begin{enumerate}
	\item (Divergence) The cost of joining two points at distance $2r$ on $Q$, while avoiding the $r$-ball centered at their  (intrinsic) midpoint, is superlinear in $r$ 
	\cite{dructu2010divergence,behrstock2014divergence,charney2014contracting,MR3690269}.
	\item (Asymptotic cone) For every asymptotic cone $F:=Q_\om\subset X_\om=:Z$ the following holds (\cite{dructu2010divergence}). Every point $p\in F$ is a cut point of $Z$ which disconnects $F$ in $Z$.
	\item (Contracting property of projection) Let $\pi:X\to Q$ be a nearest point projection. Then the diameter of the $\pi$-image of any ball of radius $r$ disjoint from $Q$ is sublinear in $r$ \cite{MR3690269}.
	\item (Stability) For any $L,A>0$, the collection of all $(L,A)$-quasi-geodesics joining two points on $Q$ stays in a uniformly bounded neighborhood of $Q$ \cite{dructu2010divergence}. 
\end{enumerate}
We mention the following homological reformulation of the cut point condition in (2), in anticipation of the higher dimensional case, where homological language seems unavoidable.
\begin{enumerate}[label=(\alph*)]
	\item For any $p\in F$, the induced map in reduced homology 
	\begin{equation}
	\label{eq_homology}
	\tilde H_0(F\setminus\{p\})\ra \tilde H_0(Z\setminus \{p\})
	\end{equation}
	is injective. Equivalently, if $\al:I\ra F$ is a topological embedding and $\ga:I\ra Z$ is a path with the same endpoints, then $\ga(I)$ contains $\al(I)$. 
\end{enumerate}

Now we discuss higher dimensional versions of the above definitions, among which the formulation of the contraction properties is the least evident. The divergence condition  characterizes how hard it is to fill in a 0-cycle while avoiding a ball, hence naturally leads to the definition below in higher dimensions. Here we state the corresponding higher dimensional version for Lipschitz chains. However, more flexible notions are discussed in the text, cf. 
Definition~\ref{def_divergence1}, Definition~\ref{def_divergence2}.

\begin{definition}
	\label{def_divergence_intro}
	The $n$-quasiflat $Q=\Phi(\R^n)$ has ($\delta$-){\em super-Euclidean divergence}, 
	if for any $D>1$, there exists a function $\delta=\delta_{D}:[0,\infty)\to[0,\infty)$ with $\lim\limits_{r\to\infty}\delta(r)=+\infty$ such that the following property holds.
	
	Pick $r>0$ and $x\in \R^n$. Suppose that $\hat\si$ is an $(n-1)$-dimensional Lipschitz cycle carried by $\R^n\setminus B_x(r)$ which represents a nontrivial class in the reduced homology group $\tilde H_{n-1}(\R^n\setminus B_x(r))$, and assume that  the mass $\M(\hat\si)$ of $\hat\si$ satisfies $\M(\hat\si)\leq D\cdot r^{n-1}$. Then for any Lipschitz chain $\tau$ carried by  $X\setminus B_{\Phi(x)}(\frac{r}{D})$  such that $\partial\tau=\Phi_*(\hat\si)$, we have 
	$\M(\tau)\geq\delta(r)\cdot r^n$.
\end{definition}

The mass $\M(\sigma)$ of a Lipschitz $k$-chain $\sigma$ takes into account cancellation; in our present setting, one may think of $\M(\sigma)$ as the $k$-dimensional volume of the image, counted with multiplicity (cf. Section~\ref{sec_prelim}). 

\begin{remark}
	The notion of ``divergence'' has been studied in a more general context, where roughly speaking it measures how hard it is to fill in cycles outside larger and larger balls, see \cite{gersten1994quadratic,brady1998filling,wenger2006filling,abrams2013pushing,mcgdivergence}. One can either define divergence homotopically (filling spheres by disks) or homologically (filling cycles by chains), using cellular chains, or Lipschitz chains, or integral currents. 
\end{remark}

\bigskip 

We now consider the higher dimensional generalization of the asymptotic cone condition (2) above (compare (\ref{eq_homology}) above).  For this, we require that every asymptotic cone $Q_\om\subset X_\om$ of an $n$-quasiflat $Q\subset X$ satisfies the following higher dimensional analog of \eqref{eq_homology}:  

\begin{definition}\label{def_rigid_intro}(Compare  \cite[Definition 3.8]{kapovich1998quasi})
	A bilipschitz $n$-flat $F\subset Z$ is \emph{rigid}, if for every topological embedding $\al:D^n\ra F$ of the $n$-disk and every singular $n$-chain 
	$\tau$ with  $\D\tau=k\cdot\al_*(\D[D^n])$ for some $k\neq 0$, we have  $\im\al\subset\im \tau$.  Here $\im\tau$ refers to the union of images of corresponding singular simplices.  Equivalently, the inclusion map induces an injective mapping 
	\begin{equation}
	\label{eqn_reduced_homology_injective}
	\tilde H_{n-1}(F\setminus \{p\})\ra \tilde H_{n-1}(Z\setminus \{p\})
	\end{equation} 
	in reduced homology for any $p\in F$.
\end{definition}

\begin{remark}\label{def_full_support_intro}
	Note that when $H_n(Z)=\{0\}$ then injectivity of (\ref{eqn_reduced_homology_injective}) is equivalent to the injectivity of the induced homomorphism in relative homology 
	$$
	H_n(F,F\setminus\{p\},\mathbb Z)\to H_n(Z,Z\setminus\{p\},\mathbb Z)\,.
	$$ 
\end{remark}

\bigskip
We point out that Definitions~\ref{def_divergence_intro} and \ref{def_rigid_intro} are related to previous work (see \cite{mcgdivergence,{kapovich1998quasi}}), however those papers had different objectives, and did not define notions equivalent to Morse quasiflats.

We now turn to the contracting property.  To motivate our generalization to the higher dimensional case, rather than using the contracting property as formulated in (3) above, we use the following variation which is equivalent to (3) by \cite[Thoerem 1.4 (4)]{MR3690269}:

$(3)':$ For every $\rho\in (0,1)$ there is a function $f_\rho:[0,\infty)\ra[0,\infty)$ such that $\frac{f_\rho(R)}{R}\ra 0$ as $R\ra\infty$ with the following property.   For every $R\in (0,\infty)$, and any two points $x,y\in X$ which are joined by a path $\tau$ of length $\leq C\cdot R$ which lies outside the $\rho R$-neighborhood of $Q$, the distance between the projections satisfies $d(\pi(x),\pi(y))\leq f_\rho(R)$. Here $\pi$ denotes a nearest point projection $X\ra Q$.

We would like to reinterpret the above property homologically by treating $\{x,y\}$ as a reduced 0-cycle $\si:=x-y\in\tilde Z_0(X)$, and $\tau$ as a 1-chain with $\D \tau=\si$. We think of $\si'=\pi(x)-\pi(y)$ as the projection of the cycle $\si$. Then $(3)'$ implies $\si'$ can be filled by a chain with much smaller mass.

To formulate the higher dimensional version, let $\si$ be a cycle with $\dim(\si)=\dim(Q)-1$ such that:
\bit
\item $\M(\si)\le C \cdot R^{n-1}$, $\diam(\im(\si))\le R$ and $d(\im(\si),Q)\le R$;
\item $\si$ is filled by a Lipschitz chain $\tau$ with mass $\le C\cdot R^n$ such that $\tau$ lies outside the $\rho R$-neighborhood of $Q$ and $\im(\tau)\subset N_{R}(\im(\si))$.
\eit

Now we define the projection $\si'$ of $\si$ onto $Q$. Instead of considering the nearest point projection, which might not be well-behaved in the higher-dimensional case, we consider the following relative Plateau problem. Let $\mathcal{C}$ be the collection of all $n$-chains $\al$ such that $\D\al=\si+\si_Q$ where $\si_Q$ is a cycle living in the bilipschitz flat $Q$. Let $\al_0$ be an element with minimal mass in $\mathcal{C}$ and define $\si'=\D\al_0-\si$.

We say $Q$ has the \emph{cycle contracting property} if for each $\rho\in(0,1)$ there exists a sublinear $f_\rho$ as above such that $\si'$ can be filled by a chain with mass $\le f_\rho(R)\cdot R^{n-1}$.

In the 1-dimensional case, if $d(x,Q)$ is relatively small compared to $d(x,y)$, then $\si'=\pi(x')-\pi(y')$, which agrees with the nearest point projection. Otherwise we have $\si'=0$ and the above condition holds trivially. Thus even in the 1-dimensional case, our condition looks weaker than the usual contracting property. Nonetheless, it is strong enough to characterize ``Morse'' quasiflats.

Here we have omitted  some technical details such as the existence of the minimizer $\si'$. See Definition~\ref{def_ccp} for a precise formulation.

\bigskip

We turn to stability. A naive generalization of (4) would be  to require any quasidisk $D$ to be contained in a small neighborhood of $Q$ given that $\D D$ does so. However, this is not a useful definition for the following reason. To exploit condition (4) in the case of Morse quasigeodesics, one takes a pair of points $x,y\in Q$,  joins them by a geodesic segment $\ga$, and applies (4).  To imitate this in the higher dimensional case, one would want to fill in spheres by quasidisks; unfortunately, there is no good procedure for producing such fillings in general metric spaces.  Instead we consider the following variation, which is equivalent to (4) by \cite[Proposition 3.24]{dructu2010divergence}.

$(4)':$ For $x,y\in Q$ with distance $\leq R$, let $\om_{xy}$ be the sub-segment of $Q$ from $x$ to $y$ and let $\omega$ be a path from $x$ to $y$ of length $\le C\cdot R$. Then $\om_{xy}$ is contained in the $f(R)$-neighborhood of $\omega$ where $f$ is sublinear.

In the higher dimensional case, we replace $\{x,y\}$ by a codimension-1 cycle $\si$ of the bilipschitz flat $Q$, replace $\om_{xy}$ by the canonical filling $\tau$ of $\si$ inside $Q$, replace $\omega$ by any chain $\tau'$ with appropriate mass bound such that $\D\tau'=\si$, and require that $\im\tau'$ ``covers'' $\im\tau$. We refer the reader to the notion of $(\mu,b)$-rigid in Definition~\ref{def_rigid} for a precise formulation.

\bigskip

Now we discuss several definitions of Morse quasiflats not previously considered for quasi-geodesics. To motivate these, we go back to condition (a) on the asymptotic cone in the 1-dimensional case.

Suppose (a) holds, and consider a Lipschitz embedding $\al:I\ra F$ and a Lipschitz path $\ga:I\ra Z$ with the same endpoints.  Then the inverse image $\ga^{-1}(Z\setminus F)$ will be a disjoint union of a collection $\{I_j\}$ of at most countably many open intervals.  Restricting $\ga$ to the closure of $I_j$ we obtain a path $\ga_j:\bar I_j\ra Z$ which must actually be a loop in view of condition (a).  We may form a new path $\hat \ga:\hat I \ra Z$ by collapsing each of the closed intervals $\{\bar I_j\}$ to a point; note that $\hat \ga(\hat I)\subset F$.  Heuristically, we would like to assert that we have a decomposition
$$
\ga=\hat \ga+\sum_j\ga_j
$$
where $\sum_j\ga_j$ is an infinite sum of cycles, and the equality holds up to (possibly infinite) decomposition and reparametrization.  Also, since $\hat \ga$ and $\al$ are Lipschitz paths in $F\simeq \R$ with the same endpoints, then we should have (heuristically speaking) that $\hat\ga=\al$ modulo cancellation; here the reader may think of this as being analogous to the cancellation which occurs when adding simplicial chains.  Summing up, we arrive at a second condition which one may impose on asymptotic cones $F\subset Z$:
\begin{enumerate}[label=(\alph*)]
	\setcounter{enumi}{1}
	\item (Informal) For every Lipschitz embedding $\al:I\ra F$ and Lipschitz path $\ga:I\ra Z$ with the same endpoints,  we have 
	$$
	\length(\ga)=\length(\al)+\length(\ga-\al)\,,
	$$
	where length is measured after cancellation.
	\een
	
	To make Condition (b) rigorous and generalize it to higher dimensions  we use integral currents and mass rather than Lipschitz chains and length, so that infinite sums become valid and cancellation is taken into account.    For the purpose of this introduction, the space of \emph{integral $n$-currents} $\I_n(X)$ can be thought of as the closure of Lipschitz $n$-chains with respect to 
	an appropriate norm, which is derived from mass. See Subsection~\ref{subsect:currents} for precise definitions and basic properties. For now it is only important that integral currents form a chain complex whose homology is isomorphic to singular homology, at least for reasonable spaces, and that a chain $\tau\in\I_n(X)$ comes with a natural associated locally finite mass measure $\|\tau\|$.

	Our second condition to impose on asymptotic cones is as follows:
	\begin{definition}[Piece property]
		\label{def_piece_decomposition_intro}
		A bilipschitz $n$-flat $F$ has the {\em piece property} if for any integral $n$-current $\nu$ supported in $F$,   every filling $\tau$ of $\D\nu$ contains $\nu$ as a piece, i.e. the decomposition $\tau=\nu+(\tau-\nu)$ is additive with respect to mass:
		$$
		\M(\tau)=\M(\nu)+\M(\tau-\nu)\,;
		$$
		furthermore, the mass measure $\|\tau-\nu\|$ is concentrated on $Z\setminus F$.
		
	\end{definition}
	Here a {\em filling} of an integral cycle $\al$ is an integral current $\be$ with $\D\be=\al$.
	
	Returning again to the $1$-dimensional case, suppose $F\subset Z$ is a bilipschitz  $1$-flat satisfying Condition (a).  Let $\ga:I\ra Z$ be a Lipschitz path in the complement of $F$ whose endpoints lie in the $\rho$-neighborhood $N_\rho(F)$.  Then we may concatenate $\ga$ with two paths $\al_\pm$ of length $<\rho$ to obtain a new path $\ga'$ meeting $F$ only at its endpoints.  Condition (a) implies that the endpoints of $\ga'$ are the same, and hence the concatenation of $\al_\pm$ gives a path of length $< 2\rho$ joining the endpoints of $\ga$.   The higher dimensional generalization of this is as follows:

	\begin{definition}\label{def_lipschitz_neck}
		A bilipschitz $n$-flat $F$ in a metric space $Z$ has the \emph{neck property}, if there exists a constant $C>0$ such that the following holds for all
		$\rho>0$.  If $\si$ is an integral $(n-1)$-cycle supported in the $\rho$-neighborhood $N_\rho(F)$ of $F$ and $\si$ has a filling supported in $Z\setminus F$, then it has a filling $\tau$ with
		\[\M(\tau)\leq C\cdot\rho\cdot\M(\si).\]
	\end{definition}

	In addition to rigidity, the piece property, and the neck property, there are other conditions --  the full support property and the weak neck property -- which we will define in the body of the paper.  
	
	As mentioned above, each of the above definitions for bilipschitz flats gives rise to a notion for quasiflats: a quasiflat $Q$ in a metric space $X$ 
	is \emph{ asymptotically  rigid/has the asymptotic piece property/has the asymptotic neck property} if the corresponding property holds
	in every asymptotic cone $Q_\om\subset X_\om$.  
	
	One may produce still more conditions by converting Definitions \ref{def_piece_decomposition_intro}, \ref{def_lipschitz_neck} to quantitive conditions in the space $X$, without reference to asymptotic cones.   
	Instead of stating each counterpart of the notions above, we refer to the precise definitions in the later chapters: 
	Definition~\ref{def_neckp} (coarse neck property) and  Definition~\ref{def_tcp} (coarse piece property). 
	
	\subsection{An informal discussion on subsets of Morse degree $k$.}
	In the previous subsection we introduced a collection of properties for quasiflats. Some of these properties can be naturally formulated for more general subsets, leading to the notion of ``Morse degree'' for subsets. For example, based on the coarse neck property, we arrive at the following tentative definition.  
	
	\begin{definition}
		Let $k\ge 1$ be an integer. A subset $Y\subset X$ has \emph{Morse degree $k$} if there is a constant $C_0>0$ such that the following holds. 
		
		For given positive constants $C$ and $\rho$, 
		there exists $\ul R=\ul R(C,\rho,d(p,Y))$ such that for any $R\ge \ul R$ and any $\tau\in \I_k(B_p(CR)\setminus N_{\rho R}(Y))$ satisfying
		\bit
		\item $\M(\tau)\le C\cdot R^k$;
		\item $\sigma:=\partial\tau\in \I_{k-1}(X)$, $\spt(\si)\subset N_{2\rho R}(Y)$ and $\M(\si)\le C\cdot R^{k-1}$;
		\eit
		we have 
		\[\Fill(\si)\le C_0\cdot \rho R\cdot \M(\si).\] 
	\end{definition}
	
	Variations of this definition can be obtained by adjusting other properties from Section~\ref{subsec_def}. Such a definition incorporates several previously investigated notions/examples:
	\begin{itemize}[leftmargin=*]
		\item A Morse quasigeodesic has Morse degree $1$. 
		\item A ``Morse subset'' as defined in \cite{genevois2017hyperbolicities,tran2019strongly} has Morse degree $1$.
		\item The discussion in Section~\ref{subsec_def} is intended to characterize $n$-quasiflats which have Morse degree $n$.
		\item If $m\le n$, then a copy of $\mathbb Z^n$ in $\mathbb Z^n*_{\mathbb Z^{m-1}}\mathbb Z^n$ has Morse degree $m$.
		\item $\mathbb H^2\times \mathbb R$ has Morse degree $2$ in $\mathbb H^2\times \mathbb H^2$.
	\end{itemize}
	However, in this paper, we will focus on quasiflats.

	\subsection{Relations between different definitions of Morse quasiflats} 
	We now show that the definitions given in Section~\ref{subsec_def} become equivalent under appropriate assumptions on the ambient space $X$.  We refer the reader to Section~\ref{sec_prelim} for definitions.
	
	In the study of Morse quasi-geodesics, it is often assumed that the underlying metric space is geodesic or quasi-geodesic.  When working with $n$-dimensional quasiflats, the analogous assumption is that the metric space satisfies coning inequalities for integral currents up to dimension $(n-1)$, in fact we will sometimes need stronger assumptions.

	\begin{definition}
		\label{def_coning_ineq_intro}
		A complete metric space $X$ satisfies {\em coning inequalities up to dimension $n$ (or satisfies condition~(CI$_n$))}, if there exists a constant $c_0>0$, such that for all $0\leq k\leq n$ and every cycle 
		$S\in\I_{k}(X)$ with bounded support there exists a filling
		$T\in\I_{k+1}(X)$ with 
		\[\M(T)\leq c_0\cdot\diam(\spt (S))\cdot\M(S).\]
		If in addition, there exists a constant $c_1>0$ such that $T$ can always be chosen to fulfill
		\[\diam(\spt (T))\leq c_1\cdot \diam(\spt (S)),\]
		then $X$ is said to satisfy {\em strong coning inequalities} up to dimension $n$, abbreviated condition~(SCI$_n$).
	\end{definition}

	\begin{thm}[Theorem~\ref{thm:equivalence}]
		\label{thm:equivalence intro}
		Let $Q\subset X$ be an $(L,A)$-quasiflat in a proper metric space $X$. Consider the following conditions:
		\begin{enumerate}
			\item $Q$ is $(\mu,b)$-rigid (cf. Definition~\ref{def_rigid}).
			\item $Q$ has super-Euclidean divergence (cf. Definition~\ref{def_divergence2}).
			\item $Q$ has the cycle contracting property (cf. Definition~\ref{def_ccp}).
			\item $Q$ has the coarse neck property (cf. Definition~\ref{def_tcp}).
			\item $Q$ has the coarse piece property (cf. Definition~\ref{def_neckp}).
		\end{enumerate}
		Then the following hold:
		\begin{enumerate}[label=(\alph*)]
			\item $(1)$ and $(2)$ are equivalent if $X$ satisfies {\rm (CI$_{n-1}$)};
			\item $(1),(2),(3)$ and $(4)$ are equivalent if $X$ satisfies {\rm (SCI$_{n-1}$)};
			\item $(1),(2),(3),(4)$ and $(5)$ are equivalent if $X$ satisfies {\rm (SCI$_{n}$)}.
		\end{enumerate}
		Moreover, all of the above equivalences give uniform control on the parameters (e.g. if $X$ satisfies {\rm (SCI$_{n-1}$)} and $Q$ has super-Euclidean divergence, 
		then $Q$ has the coarse neck property (CNP) with parameters of CNP depending only on parameters of super-Euclidean divergence and $X$).
	\end{thm}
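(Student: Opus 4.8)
The plan is to prove the theorem by establishing a single cycle of implications among the five conditions, arranged in three tiers so that each arrow uses only the coning inequality available at its level: the sub-cycle $(1)\Leftrightarrow(2)$ uses only (CI$_{n-1}$); the conditions $(3)$ and $(4)$ are then inserted into the cycle --- say via $(1)\Rightarrow(4)\Rightarrow(3)\Rightarrow(2)$ --- using (SCI$_{n-1}$); and $(5)$ is threaded in last, say via $(1)\Rightarrow(5)\Rightarrow(2)$, using (SCI$_n$). Two structural remarks make the bookkeeping manageable. First, every filling built along the way is produced by invoking a coning inequality, so its mass --- and, in the strong case, the diameter of its support --- is controlled by the structure constants $c_0$ (and $c_1$) of $X$ together with $L$ and $A$; hence each implication is automatically uniform, and the global uniform control claimed in the last sentence follows by composing finitely many uniform implications. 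Second, a preliminary reduction handles the fact that $Q$ need not be a Lipschitz image: one works with the flexible cellular formulations (e.g.\ Definition~\ref{def_divergence2}) and uses properness together with the coning inequalities to transfer currents between $\R^n$, a scaled integer lattice in it, and $X$ with controlled mass, so that the ``canonical filling in $Q$'' used throughout exists with bounded multiplicity; properness is also what supplies, by Federer--Fleming type compactness, the Plateau minimizer $\si'$ appearing in Definition~\ref{def_ccp}.

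The core equivalence $(1)\Leftrightarrow(2)$ expresses, in two languages, the single principle that a codimension-one cycle which is homologically nontrivial relative to a round ball of the model flat cannot be filled cheaply in $X$ while staying far from the corresponding ball. For $(2)\Rightarrow(1)$ I would argue by contradiction: suppose $\si$ is an $(n-1)$-cycle supported in $Q$ with canonical filling $\tau_{\mathrm{can}}\subset Q$, and $\tau'$ is a filling in $X$ of controlled mass whose image fails to come within the admissible (sublinear) distance of some interior point $\Phi(x)$ of $\tau_{\mathrm{can}}$. Forming the $n$-cycle $\tau_{\mathrm{can}}-\tau'$ and slicing it by the distance function to $\Phi(x)$ at a good radius $s$ --- supplied by the coarea inequality --- produces, since $\tau'$ avoids a ball about $\Phi(x)$, an $(n-1)$-current $\hat\si$ supported on a small sphere about $\Phi(x)$ which, pulled back through $\Phi$, is a nontrivial multiple of a round sphere in $\R^n\setminus B_x(r)$ with $\M(\hat\si)\lesssim r^{n-1}$, together with a filling of $\Phi_*(\hat\si)$ lying outside $B_{\Phi(x)}(r/D)$ of mass at most $\M(\tau_{\mathrm{can}})+\M(\tau')$; super-Euclidean divergence then forces this mass to be at least $\de_D(r)\cdot r^n$, and since the scale $r$ is commensurate with the gap by which $\tau'$ misses $\Phi(x)$, comparison with the controlled mass bound on $\tau'$ yields a contradiction once that gap is large. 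The direction $(1)\Rightarrow(2)$ runs the same machinery in reverse: given $\hat\si$ nontrivial in $\R^n\setminus B_x(r)$ and a filling $\tau$ of $\Phi_*(\hat\si)$ avoiding $B_{\Phi(x)}(r/D)$, one caps $\hat\si$ off inside $\R^n$ using (CI$_{n-1}$) to obtain a cycle of $Q$ whose canonical filling has image passing through $\Phi(x)$ and spread over $\Phi(B_x(r))$; since $\tau$ avoids $B_{\Phi(x)}(r/D)$ it fails to coarsely cover that image, so the quantitative failure of $(\mu,b)$-rigidity forces $\M(\tau)$ to grow faster than every fixed multiple of $r^n$, and the admissible function $\de$ is read off from the family of rigidity gauges.

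For the middle tier one must \emph{manufacture} fillings of $(n-1)$-cycles with controlled support, which is exactly the extra content of (SCI$_{n-1}$): to pass from rigidity to the coarse neck property, given an $(n-1)$-cycle $\si$ in $N_{\rho R}(Q)$ that admits a filling off $Q$, one projects $\si$ towards $Q$, uses the strong coning inequality to connect the projection to $\si$ by a homotopy chain of mass $\lesssim\rho R\cdot\M(\si)$ supported in a neighborhood of $Q$ of size $O(\rho R)$, and feeds the Morse hypothesis in to rule out the ``linking'' configuration that would otherwise make the resulting filling expensive; the cycle-contracting property is then a variant obtained by running the analogous comparison for the Plateau minimizer $\si'$, and the reverse implications --- recovering $(1)/(2)$ --- follow by a contraposition argument that again rests on slicing and the strong coning inequality. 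The argument should be arranged so as never to form the $n$-dimensional discrepancy of two fillings, and I expect that this is precisely why $(5)$ costs an extra dimension: certifying the additive decomposition $\M(\tau)=\M(\nu)+\M(\tau-\nu)$ together with the concentration of $\|\tau-\nu\|$ off $Q$ amounts to controlling the $n$-cycle obtained as the difference of two $n$-currents sharing a boundary, and filling that cycle requires coning at level $n$, i.e.\ (SCI$_n$).

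\textbf{Main obstacle.}
The technical heart is the localization step carried out in an arbitrary proper metric space: slicing an integral current by a distance function to a point or to $Q$ so that the slice is an $(n-1)$-cycle of controlled mass sitting in the correct reduced-homology class relative to the relevant ball, chosen at a ``good'' radius from the coarea inequality, and then recognizing its $\Phi$-preimage as an honest round spherical cycle of the model flat. Coupled to this is the bookkeeping that converts the ``coarsely covers'' clause of $(\mu,b)$-rigidity into genuine mass lower bounds and back, matching the various sublinear gauges and scales, together with the propagation of constants through the retraction-and-cone constructions in the middle tier, where $L$, $A$, $c_0$ and $c_1$ all compound; verifying that the finitely many composed implications still deliver the single uniform dependence asserted in the theorem is where most of the real work will lie.
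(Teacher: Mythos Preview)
Your outline for $(1)\Leftrightarrow(2)$ under (CI$_{n-1}$) is broadly correct and matches the paper's Lemma~\ref{lem_rig_iff_supdiv}, and your handling of $(5)$ under (SCI$_n$) is in the right spirit. The genuine gap is in the middle tier, specifically in your proposed direct implication $(1)\Rightarrow(4)$.

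Here is the problem. Given $\si$ supported in $N_{2\rho R}(Q)$ and bounding $\tau$ off $N_{\rho R}(Q)$, you project $\si$ to some $\si'$ near $Q$ via a homotopy $H$ with $\M(H)\lesssim\rho R\cdot\M(\si)$, then invoke rigidity to ``rule out the linking configuration.'' But all that buys you is $\Fill(\si)\le\M(H)+\Fill(\si')$, so you still need $\Fill(\si')$ small. What $(\mu,b)$-rigidity actually says is: if $\nu$ is the canonical filling of $\si'$ in $Q$, then $\spt(\nu)$ (away from its boundary) lies in a sublinear neighborhood of $\spt(H+\tau)$. Since $\spt(\tau)$ stays at distance $\ge\rho R$ from $Q$ and $\spt(H)$ stays near $\spt(\si')$, this yields only that $\spt(\nu)$ lies in a small neighborhood of $\spt(\si')$ --- i.e.\ $\si'$ has small \emph{filling radius}. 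Small filling radius does \emph{not} imply small filling volume; the paper flags exactly this obstruction in Section~\ref{sec_proofs}. Your phrase ``rule out the linking configuration'' hides the step that does not go through.

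The paper's route is $(1)\Rightarrow(3)\Rightarrow(4)$, with $(1)\Rightarrow(3)$ (Proposition~\ref{prop_contraction}) carrying essentially all the weight. Instead of projecting $\si$ by a homotopy, one takes $\si'=\si-\D\al$ where $\al$ solves a \emph{relative Plateau problem}: it minimizes mass among chains with boundary $\si$ modulo currents supported near $Q$. Minimality of $\al$ yields a relative monotonicity formula for $\M(\al\on\{d_Q\le r\})$ (Lemma~\ref{lem_alpha_control}), which controls the Minkowski content at scale $h$ of the slice $\si'_h=\slc{\al,d_Q,h}$ (Lemma~\ref{lem_si'}(3)). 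Rigidity then gives the filling-radius bound on $\si'_h$ (Lemma~\ref{lem_si'}(2)), and a compactness argument in $\R^n$ (Lemma~\ref{lem_small_filling}) converts ``small filling radius \emph{plus} controlled Minkowski content'' into an honest filling-volume bound. A separate diameter estimate for slices (Lemma~\ref{lem_diamter control}) is needed to make that compactness fire. None of this machinery is visible in your sketch, and without the monotonicity/Minkowski-content step there is no way to upgrade the filling-radius information that rigidity provides into the mass bound that CNP requires.
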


	\begin{thm}[Theorem~\ref{thm:conditions in asymptotic cones body}]
		\label{thm:conditions in asymptotic cones}
		Suppose $X$ is a proper metric space satisfying {\rm (SCI$_{n}$)}. Let $Q\subset X$ be an $n$-dimensional quasiflat. Suppose in addition that any asymptotic cone of $X$ has a Lipschitz combing (see Section~\ref{sec_prelim}). 
		Then the following conditions are equivalent and each of them is equivalent to the conditions in Theorem~\ref{thm:equivalence intro}.
		\begin{enumerate}
			\item  $Q\subset X$ has the asymptotic piece property (Definition~\ref{def_piece_decomposition}).
			\item  $Q\subset X$ has the asymptotic neck property (Definition~\ref{def_neck_decomposition}).
			\item  $Q\subset X$ has the asymptotic weak neck property (Definition~\ref{def_weak_neck_decomposition}).
			\item $Q\subset X$ has the asymptotic full support property (Definition~\ref{def_full_support}) with respect to reduced singular homology.
			\item $Q\subset X$ has the asymptotic full support property with respect to  reduced homology induced by  Ambrosio-Kirchheim currents.
		\end{enumerate}
		All asymptotic cones here are taken with base points inside $Q$. 
	\end{thm}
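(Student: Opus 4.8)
The plan is to prove Theorem~\ref{thm:conditions in asymptotic cones} by establishing a chain of implications among (1)--(5), and then linking one of these asymptotic conditions back to the quantitative conditions of Theorem~\ref{thm:equivalence intro}. The natural skeleton is:
$(1)\Rightarrow(2)\Rightarrow(3)$ directly from the definitions (the piece property is the strongest, forcing an additive mass decomposition from which the neck and weak neck estimates follow by taking the ``$Z\setminus F$'' part of a filling and using its mass bound); $(2)\Leftrightarrow(4)\Leftrightarrow(5)$ by comparing the full support property with the piece property---here the point is that ``$\|\tau-\nu\|$ concentrated on $Z\setminus F$'' is exactly the statement that the portion of any filling lying over $F$ is determined, which is the homological full-support assertion, and one checks this is insensitive to whether one uses singular chains or Ambrosio--Kirchheim currents because over the reasonable spaces in play the two homology theories agree and fillings can be approximated; and then the loop is closed by showing $(3)\Rightarrow(1)$ or by showing one of (1)--(5) is equivalent to, say, rigidity in every asymptotic cone, which by Theorem~\ref{thm:equivalence intro} is equivalent to all the quantitative conditions.

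First I would set up the dictionary between quantitative conditions in $X$ and their asymptotic-cone counterparts. The key mechanism is the standard passage: a sequence of $(L,A)$-quasiflats, cycles, and fillings in $X$ at scales $R_i\to\infty$, after rescaling by $R_i^{-1}$ and passing to the ultralimit, converges (using properness of $X$, the mass bounds, and the compactness theorem for integral currents in the ultralimit space---here the Lipschitz combing hypothesis on $X_\omega$ is what guarantees the ultralimit carries a well-behaved integral current structure with the expected filling behavior) to a bilipschitz flat $Q_\omega\subset X_\omega$ together with a cycle and a filling realizing the limiting masses. Conversely, a configuration in $X_\omega$ can be pulled back to near-optimal configurations in $X$ at large finite scales. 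This ``quantitative $\Leftrightarrow$ asymptotic'' correspondence is what converts, e.g., the coarse neck property (CNP, Definition~\ref{def_tcp}) into the asymptotic neck property: the sublinear error functions in the coarse version become zero in the limit, while a failure of the asymptotic version at some scale produces, by rescaling, a violation of the coarse version. I would phrase this as a lemma and use it uniformly.

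Given that correspondence, the remaining content is purely within a fixed bilipschitz flat $F\subset Z$ where $Z$ has a Lipschitz combing: prove that for such $F$, the piece property, neck property, weak neck property, and the two flavors of full support property are equivalent. The implications from stronger to weaker are formal. For the reverse implications the main tool is the Lipschitz combing on $Z$, which lets one construct fillings with controlled mass (it yields coning-type inequalities in $Z_\omega$) and, crucially, lets one build the ``projection'' of a current onto $F$ by coning toward $F$ along the combing; additivity of mass in the piece property then follows from a calibration-type or lower-semicontinuity argument comparing $\M(\tau)$ with $\M(\nu)+\M(\tau-\nu)$, using that $\nu$ already has minimal mass among fillings supported in $F$ (since $F$ is bilipschitz to $\R^n$, optimal fillings in $F$ are understood). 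The equivalence of the singular-homology and Ambrosio--Kirchheim-homology versions of full support is handled by the fact, recalled in the excerpt, that integral currents compute singular homology on reasonable spaces, together with a deformation/approximation of singular cycles by current cycles that does not increase support beyond a small neighborhood.

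The hard part will be the two implications that genuinely use the combing to manufacture fillings and projections: $(3)\Rightarrow(2)$ (or more precisely, recovering the piece property / full support property from the weak neck property) and the mass-additivity in the piece property itself. The delicate point is that a Lipschitz combing gives good control on fillings of cycles but not automatically on the interaction between a filling $\tau$ and the flat $F$---one must show that the part of $\tau$ ``over'' $F$ cannot be made cheaper than $\nu$ and cannot leak mass onto $F$, which requires a slicing argument: slice $\tau$ by the distance function to $F$, use the coarea/slicing inequality for integral currents to control the slices, and cone the near-$F$ slice into $F$ using the combing, then invoke the weak neck estimate on the resulting cycle supported in a thin neighborhood of $F$. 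Making the error terms in this slicing-and-coning scheme vanish in the ultralimit---so that one truly gets equality $\M(\tau)=\M(\nu)+\M(\tau-\nu)$ and not just an approximate version---is where the real work lies, and it is also where properness of $X$ and the precise form of (SCI$_n$) versus (SCI$_{n-1}$) enter (the full support / piece property sits at dimension $n$, which is why this theorem needs (SCI$_n$) rather than (SCI$_{n-1}$), matching part (c) of Theorem~\ref{thm:equivalence intro}).
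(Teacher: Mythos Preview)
Your overall architecture is right: one proves the equivalences (1)--(5) for a bilipschitz flat inside a fixed asymptotic cone, and separately bridges one of these to a quantitative condition in $X$ via a limiting argument. But two of your proposed mechanisms diverge from what the paper does, and in one case your sketch would not obviously go through.

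First, the cycle (1)$\Rightarrow$(2)$\Rightarrow$(3)$\Rightarrow$(5)$\Rightarrow$(1) inside the cone. Your ``calibration-type or lower-semicontinuity'' argument for mass additivity in the piece property is not how the paper closes the loop, and it is not clear such an argument exists in a general metric space with only a Lipschitz combing. The paper instead proves (3)$\Rightarrow$(5) (weak neck $\Rightarrow$ full support for $\tilde H^{AK}$, Lemma~\ref{lem_neck_impl_full_spt}) by the slicing argument you describe, but then proves (5)$\Rightarrow$(1) (full support $\Rightarrow$ piece property, Lemma~\ref{lem_full_spt_impl_pp}) by a pointwise density argument: decompose $\tau=\tau\on Q+\tau\on(X\setminus Q)$, write $\tau\on Q-\nu$ as $f\cdot\bb{Q}$, and at a Lebesgue point $p$ with $f(p)=k\neq 0$ use a low-mass slice and an isoperimetric filling away from $p$ to show $k\cdot\bb{Q}$ is trivial in $\ha_n(X,X\setminus\{p\})$, contradicting full support. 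No calibration or minimality of $\nu$ among all fillings is invoked.

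Second, your bridge to Theorem~\ref{thm:equivalence intro} via CNP $\leftrightarrow$ asymptotic neck property is not the route taken under the present hypotheses (that route appears in the appendix but needs a Lipschitz \emph{bi}combing). The paper's bridge is Proposition~\ref{prop_bridge}: super-Euclidean divergence $\Leftrightarrow$ asymptotic full support with respect to $H^L$ (equivalently $\tilde H^{AK}_{*,c}$). The direction full support $\Rightarrow$ super-Euclidean divergence (Lemma~\ref{lem_fullsup_implies_suplindiv}) is where the delicate compactness issue you allude to lives: a Wenger-type limit of the failing sequence need not have compact support, so the paper first regularizes each $\tau_k$ via the Ekeland/free-boundary approximation of Proposition~\ref{prop_approx} to force uniform lower density, hence uniformly totally bounded supports. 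You gesture at ``the compactness theorem for integral currents'' but do not isolate this regularization, which is the essential new input. The equivalence (4)$\Leftrightarrow$(5) then comes from Proposition~\ref{prop_equal_homology}, using that the asymptotic cone satisfies the needed coning and isoperimetric inequalities (here via the Lipschitz combing), so singular, Lipschitz, and AK homologies agree in the relevant degrees.
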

	
	Some of the equivalences are proved under weaker assumptions. Actually, we only need the asymptotic cone to satisfy certain coning inequalities, see Theorem~\ref{thm:conditions in asymptotic cones body} for the detailed set of assumptions. Also Theorem~\ref{thm:conditions in asymptotic cones body} is formulated more generally for a family of quasiflats rather than a single quasiflat.

	Despite the various equivalent characterizations given in the above theorems, we formally define a quasiflat in a metric space to be \emph{Morse} if it has super-Euclidean divergence in the sense of Definition~\ref{def_divergence2}.
	
	An immediate consequence of Theorem~\ref{thm:conditions in asymptotic cones} is:
	\begin{corollary} 
		A quasi-isometry $X\ra X'$ maps Morse $n$-quasiflats to Morse $n$-quasiflats if $X$ and $X'$ satisfy (SCI$_{n}$) and their asymptotic cones have Lipschitz combings. 
	\end{corollary}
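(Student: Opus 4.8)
The plan is to route the argument through the asymptotic characterizations of Theorem~\ref{thm:conditions in asymptotic cones}, exploiting that several of them — most conveniently the asymptotic full support property with respect to reduced singular homology (Definition~\ref{def_full_support}, condition (4) of Theorem~\ref{thm:conditions in asymptotic cones}) — refer to the pair $(Q_\omega, X_\omega)$ only up to homeomorphism, and are therefore transported by the bilipschitz homeomorphisms of asymptotic cones that a quasi-isometry induces. Let $f\colon X\ra X'$ be an $(L,A)$-quasi-isometry and let $Q=\Phi(\R^n)\subset X$ be a Morse $n$-quasiflat. First I would observe that $f\circ\Phi\colon\R^n\ra X'$ is again a quasi-isometric embedding, so $Q':=f(Q)$ is an $n$-quasiflat in $X'$; moreover both $X$ and $X'$ satisfy (SCI$_n$) and have asymptotic cones with Lipschitz combings, so Theorem~\ref{thm:conditions in asymptotic cones} is available for $Q\subset X$ and for $Q'\subset X'$. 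Since ``Morse'' is \emph{defined} as super-Euclidean divergence, which that theorem lists among the equivalent conditions, it suffices to show: if $Q\subset X$ has the asymptotic full support property, then so does $Q'\subset X'$.

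For this I would set up the ultralimit. Fix a non-principal ultrafilter $\omega$, scales $\la_i\to\infty$, and basepoints $p_i'\in Q'$; using coarse surjectivity of $f$, pick $q_i\in Q$ with $\sup_i d(f(q_i),p_i')<\infty$. Then $f$ induces an $L$-bilipschitz homeomorphism $f_\omega\colon X_\omega:=(X,q_i,\la_i)_\omega\ra X'_\omega:=(X',p_i',\la_i)_\omega$, surjective because $f$ is coarsely surjective. It restricts to a bilipschitz homeomorphism of the bilipschitz flat $Q_\omega$ onto $Q'_\omega=f_\omega(Q_\omega)$: indeed, if $\Phi_\omega\colon\R^n\ra X_\omega$ denotes the bilipschitz chart coming from $\Phi$ (with domain basepoints $v_i\in\R^n$ chosen so that $\Phi(v_i)$ stays boundedly close to $q_i$), then $f_\omega\circ\Phi_\omega$ is the corresponding chart for $Q'_\omega$, being the rescaled ultralimit of the quasi-isometric embedding $f\circ\Phi$. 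Conversely, every asymptotic cone of $X'$ with basepoints in $Q'$ arises in exactly this way, by lifting the basepoints through $f$ as above.

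Now the full support property is the injectivity of an induced map in reduced singular homology associated to complements $Q_\omega\setminus\{p\}\subset X_\omega\setminus\{p\}$ (for $p\in Q_\omega$); since $f_\omega$ is a homeomorphism carrying $Q_\omega$ to $Q'_\omega$ and $p$ to $f_\omega(p)\in Q'_\omega$, functoriality of homology transfers this injectivity verbatim from $(Q_\omega,X_\omega)$ to $(Q'_\omega,X'_\omega)$. Hence $Q'_\omega\subset X'_\omega$ has the full support property for every such $\omega$, i.e. $Q'\subset X'$ has the asymptotic full support property, and Theorem~\ref{thm:conditions in asymptotic cones} applied in $X'$ shows $Q'$ is Morse. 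One could equally well run the argument through the asymptotic neck property: in place of functoriality of homology, one uses that the neck property (Definition~\ref{def_lipschitz_neck}) is invariant under an $\ell$-bilipschitz homeomorphism $g\colon Z\ra Z'$, with the constant passing from $C$ to roughly $C\ell^{2n}$ — proved by pulling an $(n-1)$-cycle and its off-$F$ filling back through $g^{-1}$, applying the neck property in $Z$, and pushing the resulting filling forward with $g_*$, tracking the factors $\M(g_*T)\leq\ell^{\dim T}\M(T)$, $\D g_*=g_*\D$, and $N_{\rho/\ell}(g(F))\subset g(N_\rho(F))\subset N_{\ell\rho}(g(F))$.

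I expect the only genuine work to be the routine but slightly fiddly ultralimit bookkeeping: verifying that $f_\omega$ is a well-defined bilipschitz homeomorphism carrying $Q_\omega$ \emph{exactly} onto $(f(Q))_\omega$ as bilipschitz flats (in particular that $f_\omega\circ\Phi_\omega$ is a bilipschitz chart for $Q'_\omega$), and that every $Q'$-centered asymptotic cone of $X'$ is obtained from some $Q$-centered asymptotic cone of $X$ — both standard consequences of $f$ being a quasi-isometry and of $Q$, $Q'$ being quasiflats via $\Phi$, $f\circ\Phi$. The basepoint convention in Theorem~\ref{thm:conditions in asymptotic cones} (``base points inside $Q$'') is handled precisely by the coarse surjectivity step, so it causes no difficulty.
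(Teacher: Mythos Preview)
Your argument is correct and is exactly the approach the paper intends: the corollary is stated as an ``immediate consequence'' of Theorem~\ref{thm:conditions in asymptotic cones}, and you have simply spelled out that immediacy by observing that a quasi-isometry induces bilipschitz homeomorphisms of asymptotic cones carrying $Q_\omega$ to $Q'_\omega$, which transports the (topologically invariant) asymptotic full support property. The paper gives no further proof beyond this remark, so your write-up is a faithful elaboration of the authors' one-line deduction.
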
 
	See Proposition~\ref{prop:QI invariance} for quasi-isometry invariance of  the super-Euclidean divergence property of quasiflats under weaker assumptions.
	
	\begin{remark}
		\label{rmk:combable}
		Theorem~\ref{thm:equivalence intro} and Theorem~\ref{thm:conditions in asymptotic cones} apply to all groups which admit a quasi-geodesic combing \cite{MR1161694}, including all automatic groups. 
		More precisely, for any group $G$ with a quasi-geodesic combing and any integer $n>0$, we can always find a complex $X$ where $G$ acts properly and cocompactly such that $X$ satisfies (SCI$_{n}$), moreover, 
		the quasi-geodesic combing in $G$ passes to a Lipschitz combing on the asymptotic cone.
	\end{remark}

	Now we give more criteria for Morse quasiflats in $\cat(0)$ spaces.

	\begin{prop}[Proposition~\ref{lem_coefficient}]
		\label{lem_coefficient intro}
		Let $Q$ be an $n$-quasiflat in a $\cat(0)$ space $X$. Let $\f$ be a non-trivial abelian group. Consider the following conditions.
		\ben
		\item There exists an asymptotic cone $X_\om$ of $X$ and $p_\om\in Q_\om$ such that the map $H_n(Q_\om,Q_\om\setminus\{p_\om\},\f)\to H_n(X_\om,X_\om\setminus\{p_\om\},\f)$ is not injective.  
		\item There exists an asymptotic cone $X_\om$ of $X$ such that the limit $Q_\om$ of $Q$ is an $n$-flat which bounds a flat half-space in $X_\om$. 
		\item There exists an ultralimit $X_\om=\lim_{\om}(X,p_i)$ such that the limit $Q_\om$ of $Q$ is an $n$-flat which bounds a flat half-space in $X_\om$. 
		\een
		Then (1) and (2) are equivalent. If we assume in addition that $Q$ is a flat, then all three conditions are equivalent.
	\end{prop}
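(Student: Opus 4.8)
The plan is to prove $(2)\Rightarrow(1)$ and $(1)\Rightarrow(2)$ in general, and, when $Q$ is a flat, $(3)\Rightarrow(2)$ and $(2)\Rightarrow(3)$. Two facts are used throughout: an asymptotic cone, or an un-rescaled ultralimit, of the $\cat(0)$ space $X$ is again $\cat(0)$ — hence contractible and uniquely geodesic — and, being $\cat(0)$, it satisfies coning inequalities in every dimension. If $Z$ is such a space and $F\subset Z$ a topological $n$-flat, then $H_n(Z)=H_n(F)=0$ and $\tilde H_0(Z)=0$, so the long exact sequence of the pair $(Z,Z\setminus\{p\})$ identifies the map in $(1)$, via the connecting isomorphisms, with the inclusion-induced map $\tilde H_{n-1}(F\setminus\{p\})\to\tilde H_{n-1}(Z\setminus\{p\})$. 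Thus $(1)$ says: in some asymptotic cone $X_\om$ there is $p_\om\in Q_\om$ such that the inclusion $Q_\om\setminus\{p_\om\}\hookrightarrow X_\om\setminus\{p_\om\}$ kills a nonzero class of $\tilde H_{n-1}(Q_\om\setminus\{p_\om\};\f)$ — represented, up to an $\f$-multiple, by a small round $(n-1)$-sphere $S=\D\Delta$ with $\Delta\subset Q_\om$ a small disk and $p_\om\in\Int\Delta$. Recall finally that ``$F$ bounds a flat half-space in $Z$'' means $Z$ has a convex subset isometric to $\R^{n+1}_+$ with bounding hyperplane $F$.

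\emph{$(2)\Rightarrow(1)$.} Let $X_\om$ be an asymptotic cone in which $Q_\om$ is an $n$-flat bounding a flat half-space $H$, and fix $p_\om\in Q_\om=\D H$. Identify $H$ with $\R^{n+1}_+$ with $p_\om$ the origin; let $e$ be the inward unit normal to $Q_\om$ and $\Delta\subset Q_\om$ a small round $n$-disk about $p_\om$, so $[\Delta]$ generates $H_n(Q_\om,Q_\om\setminus\{p_\om\};\f)\cong\f\ne 0$. The prism $\Delta\times[0,\eps]\subset H$ obtained by translating $\Delta$ along $\eps e$ is a singular $(n+1)$-chain with boundary $(\Delta+\eps e)-\Delta+(\D\Delta)\times[0,\eps]$; for small $\eps$ both $\Delta+\eps e$ and $(\D\Delta)\times[0,\eps]$ avoid $p_\om$, so $[\Delta]$ maps to $0$ in $H_n(X_\om,X_\om\setminus\{p_\om\};\f)$ and the map in $(1)$ has nontrivial kernel.

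\emph{$(1)\Rightarrow(2)$.} This is the main point, and the strategy is a homologically phrased, higher-dimensional version of Ballmann's flat half-plane argument. Fix an asymptotic cone $X_\om\supset Q_\om$ and $p_\om\in Q_\om$ witnessing $(1)$, so that a small round $(n-1)$-sphere $S=\D\Delta$ with $\Delta\subset Q_\om$, $p_\om\in\Int\Delta$, bounds a chain $Z\subset X_\om\setminus\{p_\om\}$ (after an $\f$-multiple if needed). I would first gain metric control on the ``detour'' $Z$ — e.g.\ by replacing it with a mass-minimizing filling of $S$, available since $X_\om$ satisfies coning inequalities in all dimensions — so that $\Delta-Z$ is a tightly filled null-homologous cycle threading $p_\om$. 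The decisive step is to turn this into \emph{metric flatness}: by $\cat(0)$ convexity (convexity of distance functions, geodesic contraction, and above all the flat strip/sandwich lemma) one shows that the presence of such a cycle — equivalently, a failure of a contracting property for $Q_\om$ in $X_\om$ — forces a neighborhood of $p_\om$ in $Q_\om$ to be isometric to a flat $n$-disk and to bound a flat slab $\Delta'\times[0,\delta]$ in $X_\om$. Blowing up at $p_\om$ — i.e.\ passing to the tangent cone $C_{p_\om}X_\om$, which by a diagonal argument is itself an asymptotic cone of $X$ — promotes this slab to a genuine flat half-space bounded by the genuine $n$-flat $C_{p_\om}Q_\om$, the limit of $Q$ there; this is $(2)$. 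The hard part, which is the real content, is exactly the implication ``homological non-separation at $p_\om$'' $\Longrightarrow$ ``the slab is metrically flat''; this is where the $\cat(0)$ hypothesis is indispensable and where the difficulties over the case $n=1$ concentrate, since one must control an $n$-parameter family of fillings rather than a single detour path.

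\emph{$(2)\Leftrightarrow(3)$ when $Q$ is a flat.} The implication $(3)\Rightarrow(2)$ is soft: given an ultralimit $\lim_\om(X,p_i)$ in which the limit of $Q$ bounds a flat half-space $H$, each ball $B_R\cap H$ is, for $\om$-a.e.\ $i$, arbitrarily Hausdorff-close to $B_R(p_i)\cap H_i$ for suitable $H_i\subset X$; a diagonal choice of base points $p_{i(k)}$ and slowly growing scales $\lambda_k\to\infty$ then yields an asymptotic cone of $X$ containing a genuine flat half-space bounded by the limit of $Q$ (still an $n$-flat since $Q$ is), which is $(2)$. For $(2)\Rightarrow(3)$ one uses $\cat(0)$ rigidity of flats: since $Q$ is a genuine flat, the flat strip theorem splits its parallel set isometrically as $P(Q)\cong\R^n\times Y$ with $Y$ a $\cat(0)$ space, and the existence of a flat half-space bounded by $Q_\om$ in an asymptotic cone forces $Y$ to be unbounded; choosing geodesic segments in $Y$ of length $\to\infty$ issuing from the point representing $Q$, and basing an un-rescaled ultralimit of $X$ at interior points of these segments near $Q$, one gets, in the limit, a geodesic ray issuing from the limit of $Q$ within the transverse factor — so that $\R^n$ times this ray is an honest flat half-space bounded by the limit of $Q$, which is $(3)$. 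The technical heart of this last direction is the passage from an approximately-flat half-space visible only at the rescaling scale $\lambda_i$ to a genuine one at unit scale, which is again where $\cat(0)$ rigidity is essential.
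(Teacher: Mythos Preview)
Your outline correctly identifies $(2)\Rightarrow(1)$ as straightforward and $(3)\Rightarrow(2)$ as a diagonal argument, matching the paper. However, the two substantial implications contain genuine gaps.

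\textbf{$(1)\Rightarrow(2)$: the key step is not proved.} You correctly isolate the crux as ``homological non-separation at $p_\om$ $\Rightarrow$ a flat slab through $p_\om$'', but you do not supply a mechanism. The flat strip lemma and sandwich lemma apply to pairs of parallel geodesics or convex sets at bounded Hausdorff distance; a mass-minimizing integral $n$-current filling a small sphere and avoiding $p_\om$ is neither, and there is no evident way to extract from it a convex set parallel to a piece of $Q_\om$. The paper takes a completely different route (Lemma~\ref{lem_blow_up}): first blow up at a point of metric differentiability so that the limit of $Q$ becomes an honest flat (Lemma~\ref{lem:CAT differential}), then iterate tangent cones $Y_{i+1}=T_{p_i}Y_i$ at points $p_i\neq o_i$. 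Each such tangent cone of a metric cone at a non-apex point splits off an $\R$-factor, so after $n$ steps one reaches $Y_n=Q_n\times Q_n^\perp$ with $Q_n$ flat. Non-injectivity of local homology is transported through each step via the $\log$ map and now forces $Q_n^\perp\neq\{\text{pt}\}$, yielding $Q_n\times[0,a]$. A compact-subset approximation argument then realizes this in a single asymptotic cone of $X$. The point is that the paper never tries to find the slab directly from the chain; it instead reduces to the product situation where the conclusion is trivial.

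\textbf{$(2)\Rightarrow(3)$: the parallel-set argument has a gap.} You assert that a half-space bounded by $Q_\om$ in an asymptotic cone forces the cross-section $Y$ of $P(Q)=\R^n\times Y$ in $X$ to be unbounded. But the half-space lies in $P(Q_\om)$, not a priori in $P(Q)_\om$; the inclusion $P(Q)_\om\subset P(Q_\om)$ can be strict, so boundedness of $Y$ does not obviously rule out half-spaces in the cone. The paper instead invokes the argument of \cite{francaviglia2010large}: approximate the ``top faces'' $C_\om\times\{a_i\}$ of larger and larger slabs by Lipschitz maps $f_{ij}:D^n\to X$, and analyze $\im f_{ij}$ together with their nearest-point projections to the genuine flat $Q$; $\cat(0)$ convexity of the projection then produces the required half-flat in an un-rescaled ultralimit. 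This bypasses the parallel-set comparison entirely.
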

	
	\subsection{Stability of Morse quasiflats and quasidisks}

	In the following we use $d_H$ to denote Hausdorff distance.
	
	\begin{prop}[Morse lemma for Morse disks]
		\label{prop_Morse_lemma_for_quasidisks_intro}
		Suppose $X$ is a complete metric space satisfying condition ~{\rm (CI$_{n}$)}. Given $(\mu,b)$ as in Definition~\ref{def_rigid} and positive constants $L,A,A',n$, there exists $C$ depending only on $\mu,b,L,A,A',n$ and $X$ such that the following holds.
		
		Let $D$ and $D'$ be two $n$-dimensional $(L,A)$-quasi-disks in $X$ such that $d_H(\D D,\D D')<A'$ and $D$ is $(\mu,b)$-rigid. Then $d_H(D,D')<C$.
	\end{prop}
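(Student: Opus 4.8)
The plan is to bound $D$ in terms of $D'$ using the $(\mu,b)$-rigidity of $D$ together with a competitor built from $D'$, to bound $D'$ in terms of $D$ by a projection/fellow-traveling argument, and — if the rigidity bound comes out scale-dependent — to upgrade it to a radius-independent constant by a bootstrap.

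\textbf{Set-up.} First represent $D,D'$ as integral currents by pushing forward the fundamental classes of the parametrizing $n$-balls, so that $\M(D),\M(D')\le c_1R^n$ and $\M(\partial D),\M(\partial D')\le c_1R^{n-1}$, where $R$ is the common radius (comparable up to a factor depending on $L,A,A'$ by an elementary argument, which also produces a point lying within $A'$ of both $\partial D$ and $\partial D'$, as well as the bound $\diam D,\diam D'\le 2LR+A$). Using $d_H(\partial D,\partial D')<A'$, a fine triangulation of $S^{n-1}\cong\partial D$, and coning in dimension $n-1$ (a consequence of {\rm (CI$_{n}$)}) applied simplex-by-simplex, build an $n$-current $E$ with $\partial E=\partial D-\partial D'$, $\M(E)\le c_2R^{n-1}$ and $\spt E\subset N_{c_3}(\partial D)$, where $c_2,c_3$ depend only on $L,A,A',n$.

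\textbf{The inclusion $D\subset N(D')$ and its reverse.} Apply $(\mu,b)$-rigidity of $D$ (Definition~\ref{def_rigid}) to the cycle $\sigma=\partial D$, whose canonical filling inside $D$ is $D$ itself, with the competitor $\tau'=E+D'$ (so $\partial\tau'=\partial D$, $\spt\tau'\subset N_{c_3}(\partial D)\cup D'\subset N_{c_3+A'}(D')$, and $\M(\tau')\le c_1R^n+c_2R^{n-1}$); after subdividing the configuration so that the relevant mass ratio falls below $b$, rigidity forces $\im D$ into a controlled neighborhood of $\spt\tau'$, hence $D\subset N_{C_1}(D')$. For the reverse, given $p'\in D'$ at distance $t'$ from $D$ one observes that $p'$ is deep in $D'$ (depth $\ge t'-A'$, since $d(p',\partial D)\ge t'$ while $\partial D'\subset N_{A'}(\partial D)$); feeding the $\Phi'$-image of a radial segment from $(\Phi')^{-1}(p')$ to the parametrizing sphere into the coarse nearest-point retraction onto $D$ — which is coarsely well behaved because $D$ is rigid, by the divergence estimate — produces a coarse path in $N_{C_1}(D)$ joining $p'$ to a point near $D$, forcing $t'$ to be bounded. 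Together this gives $d_H(D,D')\le g(R)$ for some $g$ that is at worst sublinear in $R$.

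\textbf{Removing the $R$-dependence.} If $g$ is not already constant, bootstrap. Starting from the crude bound $d_H(D,D')\le 4LR+2A+A'$, iterate the two steps above with $R$ replaced by the current value $s$ of $d_H(D,D')$: since $D$ is now $s$-close to $D'$ everywhere, the competitor in the first step can be \emph{localized} — for $p\in D$ far from $D'$ take a sphere $\sigma$ inside $D$ around $p$ at a scale comparable to $s$, push it outward through a thin collar of $D$, bridge across to a cycle in $D'$ (cheap precisely because $d_H(D,D')\le s$), and cap off inside $D'$ by a push-forward of a coned $(n-1)$-cycle in the parametrizing ball of $D'$; this competitor has mass within budget and support avoiding a definite-radius ball about $p$, so rigidity of $D$ forces $d(p,D')$ small. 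One obtains a sublinear improvement $s\mapsto g_0(s)$; since $g_0$ is sublinear, the iterates $s_{k+1}=g_0(s_k)$ satisfy $s_{k+1}\le s_k/2$ once $s_k$ exceeds a threshold $\sigma_0$, so after finitely many steps $d_H(D,D')<\sigma_0$, and $C:=\sigma_0$ works, depending only on $\mu,b,L,A,A',n,X$.

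\textbf{Main obstacle.} The delicate point throughout is the competitor construction: a competitor for a cycle $\sigma$ inside $D$ at scale $\ell$ must simultaneously (a) have mass within the budget Definition~\ref{def_rigid} allows at scale $\ell$, (b) have support avoiding a ball of radius $\propto\ell$ about the point one is trying to control, and (c) have boundary exactly $\sigma$ — and these tug against each other, since the only support-controlled way to fill $\sigma$ while avoiding that point is either to push $\sigma$ all the way out to $\partial D$ (costing $\sim R^n$, far over budget) or to bridge it to $D'$ (affordable only when $\sigma$ is already close to $D'$, which is exactly what the standing closeness hypothesis supplies in the bootstrap). Since only {\rm (CI$_{n}$)} and not {\rm (SCI$_{n}$)} is assumed, unconstrained cone fillings have uncontrolled support, so the bridge and the cap must instead be realized by push-forwards of thin regions in the parametrizing balls; arranging that the scale, the collar width, the bridge length, the cap mass and the depth constraints all close up is the substantive part of the argument. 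The auxiliary ingredients — comparability of the radii, the construction of $E$, and the coarse retraction onto $D$ — are routine.
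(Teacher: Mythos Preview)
Your bootstrap is essentially the paper's scale-halving iteration, so the core idea is right. But there is one genuine gap and one point of confusion.

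\textbf{The gap: the reverse inclusion.} Your argument for $D'\subset N_C(D)$ invokes ``the coarse nearest-point retraction onto $D$ --- which is coarsely well behaved because $D$ is rigid, by the divergence estimate''. This is not justified: $(\mu,b)$-rigidity of $D$ constrains competitors that fill cycles lying on $D$; it says nothing about how points of $X$ (in particular of $D'$) project onto $D$, and the divergence estimate does not yield a well-behaved retraction either. Since you phrase the iteration as an iteration on $d_H(D,D')$, you need both directions at every step; but after one application only the forward bound $D\subset N_{g_0(s)}(D')$ actually improves (this step only uses $D\subset N_s(D')$ to build the bridge), and your mechanism for propagating the reverse bound is broken. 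The paper sidesteps this: it iterates only the forward direction (fixing $\hat p\in D$, halving the scale $r_k$, maintaining that the $r_k$-ball in $D$ about $\hat p$ is $\epsilon_0 r_k$-close to $D'$), and obtains $D\subset N_{\underline r}(D')$ with $\underline r$ independent of $R$. The reverse inclusion then follows from a short topological degree argument (Lemma~\ref{lem_quasi_disk_close}): the composite $\dd\to D\to D'\to\dd'$ restricts to a degree-$1$ map on boundaries (because $d_H(\partial D,\partial D')<A'$), hence is surjective, so every point of $D'$ lies near the image of $D$.

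\textbf{The confusion in the bootstrap step.} You write that the localized competitor has ``support avoiding a definite-radius ball about $p$'', but this is not what rigidity uses, and it need not hold --- the cap inside $D'$ may well lie close to $\Phi(p)$. What matters is that the \emph{bridge} portion of the competitor lives near $\spt\sigma$, hence at distance $\gtrsim L^{-1}\ell - Cs$ from $\Phi(p)$; then when rigidity gives $d(\Phi(p),\spt\tau')\le\mu_M(\ell)$ and $\ell$ is chosen large enough that $\mu_M(\ell)<L^{-1}\ell - Cs$, the nearest point of $\spt\tau'$ to $\Phi(p)$ must lie in $D'$, yielding $d(\Phi(p),D')\le\mu_M(\ell)$. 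This is exactly the role of the paper's invariant $\spt(\beta_i)\subset N_{\epsilon_0 r_i}(\spt(\partial\tau_i))$: the bridge hugs the boundary sphere, so rigidity forces the center to be close to the projection $\tau'_i\subset D'$.
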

	
	In \cite[Section 5]{higherrank}, stronger versions of Morse lemmas for top rank quasi-minimizers are proved.
	
	\begin{prop}\label{prop_divergence_intro}
		Let $X$ be a complete metric space satisfying condition ~{\rm (CI$_{n}$)} and
		let $Q, Q'\subset X$ be $(L,A)$-quasiflats with $\dim Q=n$. Suppose that $Q$ is $(\mu,b)$-rigid. Then there exist $A'$ and $\eps$ depending only on $X,L,A,n,b$ and $\mu$ such that either $d_H(Q,Q')\le A'$, or 
		\begin{equation}
		\label{eq_linear_div_intro}
		\limsup_{r\to\infty}\frac{d_H(B_p(r)\cap Q, B_p(r)\cap Q')}{r}\ge \eps
		\end{equation}
		for some (hence any) $p\in Q$.
	\end{prop}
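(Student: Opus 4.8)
The plan is to prove the contrapositive: assuming
\[
\ell:=\limsup_{r\to\infty}\frac{d_H(B_p(r)\cap Q,\,B_p(r)\cap Q')}{r}
\]
is smaller than a threshold $\eps=\eps(X,L,A,n,b,\mu)$ to be fixed below, I will show $d_H(Q,Q')\le A'$ for a suitable $A'=A'(X,L,A,n,b,\mu)$. This settles both alternatives at once, since it forces $\ell\ge\eps$ whenever $d_H(Q,Q')>A'$, in particular whenever $d_H(Q,Q')=\infty$. Fix quasi-isometric parametrizations $\Phi,\Phi':\R^n\to X$ of $Q,Q'$ together with quasi-inverses. From $\ell<\eps$ there is $r_0$ with $d_H(B_q(r)\cap Q,B_q(r)\cap Q')<\eps r$ for every $r\ge r_0$ and every $q\in Q$, the limsup being basepoint-independent.

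The main tool is the Morse lemma for quasidisks, Proposition~\ref{prop_Morse_lemma_for_quasidisks_intro}; the point is to use, instead of the coarse Hausdorff bound, the following refinement provable by the same (cylinder-plus-rigidity) argument: if $D$ is a $(\mu,b)$-rigid quasidisk and $\partial D'$ lies within $A''$ of $\partial D$, then every point of $\im D$ at distance more than $A''+c_1$ from $\partial D$ lies in $\im D'$, with $c_1$ depending only on $\mu,b,L,A,n$. Now fix $z\in Q$, put $y=\Phi^{-1}(z)$, and for large $\rho$ let $D_\rho=\Phi(\bar B_y(\rho))\subset Q$, a quasidisk which one checks is $(\mu,b')$-rigid with $b'=b'(\mu,b,L,A,n)$. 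Its boundary $\Phi(\Sph{y}{\rho})$ lies in $B_z(L\rho+A)\cap Q$, hence --- once $L\rho+A\ge r_0$ --- within $\eps(L\rho+A)$ of $Q'$; reparametrizing $D_\rho$ through a quasi-inverse of $\Phi'$ (i.e. precomposing $\Phi'$ with a bilipschitz model of $(\Phi')^{-1}\circ\Phi$) produces a quasidisk $D'_\rho\subset Q'$, with quasi-isometry constants controlled by $L,A$, such that $d_H(\partial D_\rho,\partial D'_\rho)\le\eps(L\rho+A)+c_0$ for a constant $c_0=c_0(L,A)$. Taking $\eps<1/(2L^2)$, this bound is eventually (for all large $\rho$) strictly less than $\rho/L-A-c_1$, the distance from the centre $z$ to $\partial D_\rho$ minus $c_1$. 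The refined form of Proposition~\ref{prop_Morse_lemma_for_quasidisks_intro} then gives $z\in\im D'_\rho\subset Q'$; since $z\in Q$ was arbitrary, $Q\subset N_{c_1}(Q')$. No uniform bound on $\rho$ is required: for each $z$ one simply takes $\rho$ large enough, and the output slack $c_1$ is already uniform --- this is precisely what removes the scale-dependence that plagues the naive Hausdorff estimate.

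For the reverse inclusion one observes that, $\eps$ being small, $Q'$ inherits super-Euclidean divergence from $Q$ with parameters controlled by those of $Q$ and by $L,A,n$ (a $\le\eps r$ Hausdorff perturbation does not affect the filling estimates at scale $r$), hence by Theorem~\ref{thm:equivalence intro} $Q'$ is $(\mu',b')$-rigid with controlled constants; running the previous paragraph with the roles of $Q,Q'$ exchanged gives $Q'\subset N_{c_1'}(Q)$, so $d_H(Q,Q')\le A':=\max(c_1,c_1')$. The step I expect to be the real obstacle is the second paragraph: one must (a) verify that the proof of Proposition~\ref{prop_Morse_lemma_for_quasidisks_intro} genuinely places deep interior points of a rigid quasidisk inside the competitor, not merely in a bounded neighbourhood --- without this the estimate is circular in the scale $\rho$ (because one cannot bound $r_0$) --- and (b) pin down the quantitative rigidity of the sub-quasidisk $D_\rho$ and of the reparametrized $D'_\rho$. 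A fallback for (a) is a compactness argument: if the statement failed, rescaling $X$ at the scale where two counterexample quasiflats first become $\eps$-close and passing to an ultralimit would produce a $(\mu,b)$-rigid $n$-flat properly contained in a distinct $n$-flat of the same asymptotic shape, contradicting rigidity together with the coning inequalities; but making that uniform meets essentially the same bookkeeping.
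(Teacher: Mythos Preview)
Your first inclusion $Q\subset N_{c_1}(Q')$ is on the right track, but the ``refined'' Morse lemma you invoke is misstated: the proof of Proposition~\ref{prop_Morse_lemma_for_quasidisks} does \emph{not} place deep interior points of $D$ inside $\im D'$, only within a constant $c_1$ of $D'$.  What is true --- and is what you actually need --- is that this constant $c_1$ depends only on $\mu,b,L,A,n,X$ and \emph{not} on the boundary distance $A''$, provided $A''$ is small compared to the disk radius (concretely, the multiscale iteration in that proof starts at scale $r_0\gtrsim A''$ but terminates at a scale $\underline r$ depending only on $\mu$; the initial condition $r_0\ge 1000\lambda_3 A''$ translates exactly into your threshold on $\eps$).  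So with this correction your $Q\subset N_{c_1}(Q')$ step survives.  The paper, rather than black-boxing the quasidisk Morse lemma, simply reruns the same ``going down on scale'' iteration directly for the pair $(Q,Q')$; this is the same engine you are using, just unpacked.

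The reverse inclusion is where you go wrong.  Arguing that $Q'$ inherits super-Euclidean divergence from $Q$ because they are $\le\eps r$-close at scale $r$ is not justified: the definition involves fillings of $\iota'(\varphi)$ (built from the parametrization of $Q'$, not of $Q$), and controlling these requires exactly the kind of uniform closeness you are still trying to prove.  The paper bypasses this entirely.  Once $Q\subset N_{c_1}(Q')$ is known, rigidity of $Q$ forces $\dim Q'=n$, and then the degree argument of Lemma~\ref{lem_quasi_disk_close} (the composite $\R^n\to Q\to N_{c_1}(Q')\to\R^n$ is a proper map close to a quasi-isometry, hence surjective) immediately gives $Q'\subset N_{C}(Q)$ with $C=C(L,A,c_1,n)$.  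You should replace your last paragraph with this one-line topological step.
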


	\subsection{Examples of Morse quasiflats}
	\label{subsec_examples_morse_quasiflats_intro}
	Recall that we have defined a quasiflat in a metric space $X$ to be \emph{Morse} if it has super-Euclidean divergence in the sense of Definition~\ref{def_divergence1}. There are many examples of Morse quasiflats which are neither 1-dimensional nor of top rank. In the following list we assume $X$, $X_1$ and $X_2$ satisfy the hypotheses of Theorem~\ref{thm:conditions in asymptotic cones} (this applies to combable groups, spaces with Lipschitz combing etc.), where being Morse can be characterized by other conditions in Theorem~\ref{thm:conditions in asymptotic cones}.
	\begin{itemize}
		\item Suppose $X$ has asymptotic rank $\le n$ (\cite{higherrank}). Then every $n$-dimensional quasiflat in $X$ is Morse.
		\item Suppose $Q_i\subset X_i$ is a Morse quasiflat for $i=1,2$. Then $Q_1\times Q_2$ is a Morse quasiflat in $X_1\times X_2$ (see Corollary~\ref{cor:products}).
		\item One can obtain Morse quasiflats from more interesting operations, like amalgamation or taking branched covers under certain conditions. We refer to Section~\ref{subsec:example} for an example using branched covers.
	\end{itemize}
	
	Morse quasiflats also arise naturally in some group theoretic contexts. The following is a special case of Proposition~\ref{lem_coefficient}.
	
	\begin{cor}\label{cor_Morse_crit_intro}
		Suppose $X$ is a proper $\cat(0)$ space and $F\subset X$ is a flat such that the stabilizer of $F$ in $\isom(X)$ acts cocompactly on $F$. 
		Then $F$ is Morse if and only if $F$ does not bound an isometrically embedded half-flat.
	\end{cor}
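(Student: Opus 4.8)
The plan is to derive this from Proposition~\ref{lem_coefficient} (of which the corollary is a special case) together with the cocompactness hypothesis. First I would record the general facts: a proper $\cat(0)$ space $X$ satisfies {\rm (SCI$_n$)} for every $n$ (fill a cycle with bounded support by coning it off at a point, which produces the required linear mass bound and keeps the support within bounded diameter), and every asymptotic cone of $X$ is again $\cat(0)$, hence complete and equipped with a convex geodesic bicombing, which is in particular a Lipschitz combing. So Theorems~\ref{thm:equivalence intro} and \ref{thm:conditions in asymptotic cones} apply to $X$, and the flat $F$ --- the image of an isometric, hence bilipschitz, embedding $\R^n\to X$ with $n=\dim F$ --- is an $n$-quasiflat for which the notion of being Morse is defined. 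Combining these theorems with Remark~\ref{def_full_support_intro} (asymptotic cones of $X$ are contractible, so their top homology vanishes and the full support property there reduces to rigidity) shows that $F$ is Morse if and only if $F_\om$ is rigid in the sense of Definition~\ref{def_rigid_intro} in every asymptotic cone $X_\om$; negating and feeding this into Proposition~\ref{lem_coefficient} --- whose conditions (1), (2), (3) become, for the flat $F$, statements about $F$ alone, and are insensitive to the choice of nontrivial coefficient group --- I obtain that $F$ fails to be Morse precisely when there is an ultralimit $X_\om=\lim_\om(X,p_i)$ in which the limit $F_\om$ of $F$ is an $n$-flat bounding a flat half-space. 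After rebasing within bounded distance I may assume $p_i\in F$, so it remains to show that for our flat $F$ this last condition is equivalent to $F$ bounding an isometrically embedded half-flat in $X$.

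One implication is immediate: if $F$ bounds an isometrically embedded half-flat $H\subset X$, take the constant sequence $p_i\equiv p\in F$; then $\lim_\om(X,p_i)$ is canonically isometric to $(X,p)$ by properness, the limit of $F$ is $F$ itself, and $H$ is a flat half-space with $\partial H=F$. Hence $F$ is not Morse.

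The converse is the step I expect to be the main obstacle --- though it is elementary, it is where, and why, cocompactness is needed, and it is the reason Proposition~\ref{lem_coefficient} uses an unrescaled ultralimit rather than an asymptotic cone in the case of a flat. Suppose $X_\om=\lim_\om(X,p_i)$, $p_i\in F$, is an ultralimit in which $F_\om$ bounds a flat half-space $H_\om$. Set $G=\stab(F)\le\isom(X)$; since $G$ acts cocompactly on $F$ there is a compact $K\subset F$ with $G\cdot K=F$, and I may choose $g_i\in G$ with $g_ip_i\in K$. The $g_i$ induce an isometry $\lim_\om(X,p_i)\to\lim_\om(X,g_ip_i)$, and because $X$ is proper and $(g_ip_i)$ stays in the compact set $K$, the assignment $[(x_i)]\mapsto\lim_\om x_i$ is a surjective isometry $\lim_\om(X,g_ip_i)\to(X,q)$, where $q=\lim_\om g_ip_i\in K$. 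Composing, I get a surjective isometry $\Psi\colon X_\om\to X$. Since each $g_i$ stabilizes $F$ setwise, $\Psi$ carries $F_\om=\lim_\om(F,p_i)$ onto $\lim_\om(F,g_ip_i)$, which equals $F$ (again by properness of $F$ and $g_ip_i\in K\subset F$). Therefore $\Psi(H_\om)\subset X$ is an isometrically embedded flat half-space with $\partial\Psi(H_\om)=\Psi(F_\om)=F$; that is, $F$ bounds an isometrically embedded half-flat. The only points needing care here are the standard facts that ultralimits are unchanged under bounded rebasing and that the ultralimit of a proper pointed space along basepoints confined to a compact set is the space itself; everything else is bookkeeping.
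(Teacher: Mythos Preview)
Your proof is correct and follows the route the paper intends: the paper states only that the corollary ``is a special case of Proposition~\ref{lem_coefficient}'' and gives no further argument, so you have correctly filled in the two ingredients that are left implicit there --- first, that by Theorems~\ref{thm:equivalence intro} and~\ref{thm:conditions in asymptotic cones} (applicable since proper $\cat(0)$ spaces and their asymptotic cones have Lipschitz combings) failure of Morseness is equivalent to condition~(1) of Proposition~\ref{lem_coefficient}; and second, that under the cocompactness hypothesis, condition~(3) of that proposition is equivalent to $F$ bounding a half-flat in $X$ itself, via the standard rebasing argument you give (pull back the basepoints into a compact fundamental domain by stabilizing isometries, then identify the ultralimit of a proper space along a bounded basepoint sequence with the space).
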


	Let $G$ be a group. 
	Following \cite{wise2017cubical} we say that a finitely generated free abelian subgroup $H$ of $G$ is \emph{highest} if its commensurability class is maximal, i.e. $H$ does not have a finite index subgroup that is contained in a free abelian subgroup of higher rank.
	
	\begin{cor}[Theorem~\ref{thm_virtually_special}]
		\label{cor:special}
		Let $G$ be a group such that $G$ has a finite index subgroup which is the fundamental group of a compact special cube complex. Then any highest free abelian subgroup of $G$ is a Morse quasiflat.
	\end{cor}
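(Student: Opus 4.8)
\emph{The plan.}
The strategy is to reduce to a $\cat(0)$ cube complex model, use the flat torus theorem to attach an honest flat to $H$, and then invoke the flat half-space criterion of Corollary~\ref{cor_Morse_crit_intro}; the real work is showing that the presence of a flat half-space forces $H$ to fail to be highest.

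\emph{Reductions.}
First I would note that the super-Euclidean divergence property of quasiflats is quasi-isometry invariant (Proposition~\ref{prop:QI invariance}) and is unaffected both by passing to a quasiflat at finite Hausdorff distance and by replacing $H$ by a finite-index subgroup; moreover $H\cap G_0$ is highest in $G_0$ whenever $H$ is highest in $G$ and $[G:G_0]<\infty$ (a free abelian subgroup of $G_0$ properly virtually containing $H\cap G_0$ would properly virtually contain a finite-index subgroup of $H$ inside $G$). Hence we may assume $G=\pi_1(X)$ with $X$ a compact special cube complex, and set $Y:=\tilde X$, a proper finite-dimensional $\cat(0)$ cube complex on which $G$ acts properly cocompactly; via the orbit map, being a Morse quasiflat for $H$ is a statement about a quasiflat in $Y$, and since $Y$ is proper $\cat(0)$, Corollary~\ref{cor_Morse_crit_intro} together with the equivalences of Theorems~\ref{thm:equivalence intro} and \ref{thm:conditions in asymptotic cones} apply to it. Since $G$ acts cocompactly, all of its elements act on $Y$ by semisimple isometries, so by the flat torus theorem $H\cong\Z^n$ stabilizes an $n$-flat $F\cong\R^n$ on which it acts cocompactly by translations; then $\stab_{\isom(Y)}(F)\supseteq H$ acts cocompactly on $F$ and an $H$-orbit lies within finite Hausdorff distance of $F$, so the quasiflat $H$ is Morse iff $F$ is Morse. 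By Corollary~\ref{cor_Morse_crit_intro} it therefore remains to prove: \emph{if $F$ bounds an isometrically embedded flat half-space, then $H$ is not highest in $G$.}

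\emph{The crux.}
So suppose $F$ bounds a flat half-space $F^{+}\cong\R^{n}\times[0,\infty)$. Each slice $\R^{n}\times\{t\}$ is at finite Hausdorff distance from $F$, hence lies in the parallel set $P:=P(F)$, which splits isometrically as $P\cong\R^{n}\times C$ with $F=\R^{n}\times\{c_{0}\}$ and $C\subseteq Y$ closed convex; under this splitting $F^{+}$ becomes $\R^{n}\times\gamma$ for a geodesic ray $\gamma$ in $C$, so $C$ is unbounded. The idea is now to promote this to a group-theoretic statement: $P$ is a convex subcomplex preserved by $G_{1}:=\stab_{G}(P)\supseteq H$, and $G_{1}$ acts cocompactly on $P$, so since $C$ is an unbounded proper $\cat(0)$ space some element of $G_{1}$ acts on $C$ as a hyperbolic isometry with axis $L\cong\R$; then $E:=\R^{n}\times L\subseteq P\subseteq Y$ is an isometrically embedded, periodic $(n+1)$-flat containing $F$, and its stabilizer supplies a free abelian group $K\le G$ of rank $n+1$ together with a finite-index subgroup $H'\le H$ with $H'\le K$ — contradicting highestness. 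An alternative way to manufacture the $(n+1)$-flat, bypassing parallel sets, is to translate a far slab $\R^{n}\times[t-1,t+1]$ of $F^{+}$ by some $g_{t}\in G$ into a fixed compact fundamental domain and pass to an $\omega$-limit of the resulting isometric embeddings, obtaining an isometrically embedded $\R^{n+1}$ in $Y$ directly; one then invokes that an $(n+1)$-flat in a cocompactly cubulated $\cat(0)$ cube complex forces a $\Z^{n+1}$ in the group.

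\emph{The hard part.}
The genuinely difficult points are in the last paragraph, and they are precisely where the cube-complex (or special) structure is essential — in an arbitrary proper $\cat(0)$ space the analogous chain of implications can fail, which is exactly why Corollary~\ref{cor_Morse_crit_intro} is not merely formal. First, one must know that the parallel set of a periodic flat in a cocompactly cubulated $\cat(0)$ cube complex is cocompact under its stabilizer; this is what turns ``$C$ unbounded'' into the existence of a hyperbolic element, and I would establish it either via the combinatorics of hyperplanes and periodic flats or by passing through the Haglund--Wise local isometry $X\to S_{\Gamma}$ to a Salvetti complex, where parallel sets of standard flats are standard and manifestly cocompact. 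Second, and more delicate, one must arrange that the rank-$(n+1)$ abelian group $K$ \emph{virtually contains $H$} rather than merely existing somewhere in $G$ — equivalently, one must produce a periodic $(n+1)$-flat that contains $F$ and is virtually stabilized by $H$ itself; since $H$ need not preserve the half-flat $F^{+}$ a priori, this requires ``straightening'' $F^{+}$ (replacing it, up to bounded error, by a half-flat that does lie in an $H$-invariant, periodic $(n+1)$-flat), which is where the combinatorial rigidity of $Y$ is used. Controlling this straightening, rather than the routine reductions or the flat torus step, is the main obstacle.
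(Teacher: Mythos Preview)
Your overall strategy matches the paper's: reduce to a flat $F$ stabilized cocompactly by the highest abelian subgroup, invoke the half-flat criterion, and show that a bounding half-flat produces a commuting element extending $H$ to a higher-rank abelian group. Where you and the paper diverge is exactly at what you correctly flag as ``the hard part.''

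You propose to work with the $\cat(0)$ parallel set $P(F)\cong\R^n\times C$, establish cocompactness of $\stab_G(P)$ on $P$, and extract a hyperbolic element on $C$ centralizing $H$. You are right that both steps are nontrivial and that this is where the special structure must enter; but as written you only gesture at how to do this (hyperplane combinatorics, or passing through the Haglund--Wise embedding into a Salvetti complex), and the second step --- getting the element to actually commute with $H$, not merely exist --- is not addressed. Your alternative ``$\omega$-limit of translated slabs'' route produces an $(n+1)$-flat somewhere in $Y$ but loses the connection to $H$ entirely, so it cannot give highestness a contradiction.

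The paper resolves this more directly and combinatorially. It replaces the parallel set by the \emph{combinatorial convex hull} $C_F$ of the half-flat, which splits as $C_E\times C_E^\perp$ via hyperplanes; a result of Woodhouse--Wise guarantees $C_E$ is at finite Hausdorff distance from $E$, so $C_E^\perp$ is unbounded. The key device is a $G$-invariant labeling of edges in $\tilde D$ (available because the complex is weakly special: no self-osculation or self-intersection of hyperplanes), which forces any deck transformation $g$ taking $(w_1,w_2)$ to $(w_1,w_2')$ to act as parallelism between the slices $C_E\times\{w_2\}$ and $C_E\times\{w_2'\}$. This immediately shows $g$ commutes with every element of $A$ and that $\langle g\rangle\cap A=\{e\}$, so $\langle A,g\rangle$ is free abelian of rank one higher --- contradicting highestness. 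The labeling argument thus supplies both the commuting element and its compatibility with $H$ in one stroke, exactly the ``straightening'' you identify as the obstacle.
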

	
	One cannot drop the word ``special'' from the above corollary because of certain irreducible lattices acting on products of trees \cite{RattaggiRobertson}. For groups acting on higher rank Euclidean buildings or symmetric spaces, it might happen quite often that highest abelian subgroups are not Morse quasiflats, and it would be interesting to identify which number-theoretical invariants lead to this as in \cite{RattaggiRobertson}.
	
	Morse quasiflats also arise naturally in mapping class groups:
	\begin{thm}
		\label{cor_highest_abelian_mcg}
		Suppose $G$ is a mapping class group of a surface. Then any highest abelian subgroup in $G$ is a Morse quasiflat.
	\end{thm}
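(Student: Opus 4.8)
The plan is to realize the mapping class group $G = \mathrm{Mod}(S)$ as acting geometrically on a space $X$ satisfying the hypotheses of Theorem~\ref{thm:conditions in asymptotic cones} (so that the Morse property can be detected by any of the equivalent conditions there), then show that the quasiflat $Q$ corresponding to a highest abelian subgroup $H \le G$ has super-Euclidean divergence --- or more conveniently, is $(\mu,b)$-rigid, or has the asymptotic neck property. The natural candidate for $X$ is a suitable model of the Teichm\"uller metric or the Weil--Petersson metric, but the cleanest route is via the \emph{hierarchically hyperbolic} structure on $G$: mapping class groups are HHS, and the pieces of the hierarchy are hyperbolic (curve complexes of subsurfaces). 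First I would fix the standard dictionary: a maximal free abelian subgroup $H$ corresponds (up to finite index and commensuration) to a complete clean marking whose ``active'' subsurfaces form a collection $\{Y_1,\dots,Y_k\}$ of pairwise disjoint non-annular subsurfaces together with some annuli, and $H$ has rank equal to the number of factors; by the highest assumption this collection is maximal, i.e. one cannot enlarge the pants-like decomposition into which the $Y_i$ fit while keeping the factors independent --- concretely, each $Y_i$ supports a pseudo-Anosov (else one could split $Y_i$ further and raise the rank).

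The key geometric input is that, in the HHS structure, the subgroup $H$ acts cocompactly on a ``standard flat'' $F_H \subset X$ which is quasi-isometric to the product $\prod_i (\text{axis in } \mathcal{C}(Y_i)) \times \prod_j (\text{axis in } \mathcal{C}(\text{annulus}_j))$, and the point is to show this flat is Morse. I would use the $\cat(0)$-type / asymptotic criterion: pass to an asymptotic cone $X_\omega$, which by the Behrstock--Drutu--Sapir theory is a tree-graded-like / median space whose pieces come from the ultralimits of the subsurface projection maps. The flat $F_{H,\omega}$ sits inside $X_\omega$, and I want to verify the asymptotic neck property (Definition~\ref{def_neck_decomposition}) or, equivalently by Theorem~\ref{thm:conditions in asymptotic cones}, asymptotic rigidity: every $(n-1)$-cycle in $F_{H,\omega}$ that is null-homologous in $X_\omega \setminus \{p\}$ must already be null-homologous in $F_{H,\omega} \setminus\{p\}$. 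The mechanism is that a non-trivial such cycle would detect, via the coordinate projections, a transverse direction in which $X_\omega$ splits off the flat --- and such a direction would correspond to a \emph{larger} product region in $X_\omega$, i.e. to a free abelian subgroup of $G$ strictly containing (a finite-index subgroup of) $H$ up to commensuration, contradicting highestness. This last implication is the crucial and most delicate point: translating ``the flat fails to be rigid in the asymptotic cone'' back into ``$H$ is not highest.''

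The main obstacle is exactly that translation. Failure of rigidity in $X_\omega$ gives a ``flat half-space'' or a product direction only in the limit; one must show this limit phenomenon comes from an honest algebraic/geometric product region in $G$ --- i.e. promote an ultralimit splitting to a genuine quasi-isometrically embedded higher-dimensional standard flat. For this I would invoke the rigidity/classification of standard flats and product regions in mapping class groups (Behrstock--Minsky's rank theorem and the structure of maximal product regions: every top-dimensional flat lies uniformly close to a standard flat, and more generally maximal product regions correspond to collections of disjoint subsurfaces). Concretely: an asymptotic cone splitting of $F_{H,\omega}$ yields, after choosing good rescaling, a sequence of ``larger flats'' $F_m \subset X$ whose dimension is strictly greater and which stay within bounded (after rescaling, sublinear) Hausdorff distance of translates of $F_H$; by the classification these must be conjugates of a standard flat of higher rank commensurable with one containing $H$, contradicting highestness. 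A secondary technical task is to verify that $X$ genuinely satisfies (SCI$_n$) and that $X_\omega$ has a Lipschitz combing --- here I would use that mapping class groups are (bi)automatic (Mosher), hence combable, and apply Remark~\ref{rmk:combable}: one builds a complex on which $G$ acts properly and cocompactly with (SCI$_n$) and whose asymptotic cone inherits a Lipschitz combing from the automatic structure. Given these inputs, the remaining steps --- identifying $Q$ with a standard flat, checking the mass/diameter bounds for the neck property, and running the contradiction --- are routine combinations of the hierarchically hyperbolic machinery with Theorem~\ref{thm:conditions in asymptotic cones}.
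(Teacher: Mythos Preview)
Your outline is in the right spirit but takes a substantially longer and harder route than the paper's proof, and the step you yourself flag as ``crucial and most delicate'' is a genuine gap as written. The paper's proof is essentially a citation: by \cite{birman1983abelian} any highest abelian subgroup satisfies the hypotheses of \cite[Proposition~3.5]{mcgdivergence} (Behrstock--Dru\c{t}u), and that proposition directly establishes super-Euclidean divergence for the corresponding quasiflat, in a cellular/homotopical sense (filling spheres by disks). The only remaining work is to match that notion of divergence with Definition~\ref{def_divergence2}, and this is done via the coarse median structure: mapping class groups are coarse median, so their asymptotic cones are bilipschitz to $\cat(0)$ spaces, and then Corollary~\ref{cor:bilip}, Theorem~\ref{thm:conditions in asymptotic cones} and Remark~\ref{rmk:combable} give the equivalence. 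In particular, the hard geometric input --- that the flat associated to a highest abelian cannot be cheaply ``gone around'' --- is imported wholesale from \cite{mcgdivergence}, and the paper's contribution here is the translation layer.

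Your plan instead tries to verify an asymptotic-cone condition from scratch via the HHS structure, and then pull a failure of rigidity in $X_\omega$ back to an honest higher-rank abelian subgroup in $G$. The tools you invoke for this last step --- the Behrstock--Minsky rank theorem and the classification of standard flats --- concern \emph{top-dimensional} quasiflats in the space, not half-flats or product directions appearing only in asymptotic cones; they do not directly promote an ultralimit splitting to a commensurably larger abelian subgroup. Making that promotion precise is essentially what the divergence computation in \cite{mcgdivergence} accomplishes (working in the space rather than the cone), so carrying your program through would amount to reproving or closely paralleling their argument. Your secondary checks (combability via Mosher's automaticity, hence (SCI$_n$) and Lipschitz combings on cones via Remark~\ref{rmk:combable}) are correct and are exactly what the paper uses as well.
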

	Behrstock and Drutu \cite{mcgdivergence} studied highest abelian subgroups in order to give a lower bound for divergence functions, and their argument is at the heart of the proof of Theorem~\ref{cor_highest_abelian_mcg}.
	
	\begin{proof}
		By \cite{birman1983abelian}, any highest abelian subgroup in $G$ satisfies the assumption of \cite[Proposition 3.5]{mcgdivergence}, hence has super-Euclidean divergence, where the notion of divergence in \cite[Proposition 3.5]{mcgdivergence} uses a cellular setting and considers filling spheres by disks.  To see that this is equivalent to the notion of super-Euclidean divergence from Definition~\ref{def_divergence_intro}, recall that mapping class groups are coarse median \cite{behrstock2011centroids}, hence their aymptotic cones are bilipschitz to $\cat(0)$ spaces \cite{bowditch2013coarse}. Now the equivalence follows from Corollary~\ref{cor:bilip}, Theorem~\ref{thm:conditions in asymptotic cones} and Remark~\ref{rmk:combable}.
	\end{proof}
	
We expect that the analog of Corollary \ref{cor:special} holds for more examples.
	\begin{conjecture}
		Suppose $G$ is either a Coxeter group, an Artin group, or the fundamental group of the complement of a complexified real simplicial arrangement of hyperplanes as in Deligne \cite{deligne1972immeubles}.  Then any highest abelian subgroup in $G$ is a Morse quasiflat.
	\end{conjecture}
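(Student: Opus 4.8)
The plan is to treat the three families separately, in each case exhibiting a geometric model for $G$ in which a highest abelian subgroup $H\cong\mathbb Z^n$ determines an $n$-flat $F$, and then verifying one of the equivalent Morse conditions of Theorem~\ref{thm:equivalence intro} and Theorem~\ref{thm:conditions in asymptotic cones} for $F$. When the model is $\cat(0)$ with a geometric $G$-action, the efficient route is Corollary~\ref{cor_Morse_crit_intro}: it suffices to show that $F$ has cocompact stabilizer in $\isom(X)$ and bounds no isometrically embedded half-flat. When no $\cat(0)$ model is at hand, one instead passes to asymptotic cones and checks rigidity there, using Theorem~\ref{thm:conditions in asymptotic cones}, Corollary~\ref{cor:bilip}, and a Behrstock--Drutu type divergence estimate as in the proof of Theorem~\ref{cor_highest_abelian_mcg}.

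\textbf{Coxeter groups.} Here $G$ acts geometrically on the $\cat(0)$ Davis complex $\Sigma$; being cocompact, $\Sigma$ satisfies (SCI$_n$) and its asymptotic cones have Lipschitz combings, so all of the equivalences are available. Up to commensurability a finitely generated free abelian subgroup sits inside a Euclidean reflection subgroup, and a highest $H$ corresponds to one of maximal rank; let $F=\operatorname{Min}(H)\cong F_0\times Y$ be the associated flat. Two things must be proved: (i) $\stab(F_0)$ acts cocompactly on $F_0$ --- an unbounded transverse direction in $Y$ would, by cocompactness of the $G$-action, produce an isometry commuting with a finite-index subgroup of $H$ and hence an abelian subgroup strictly enlarging $H$ up to commensurability, contradicting highest-ness; and (ii) $F_0$ bounds no half-flat in $\Sigma$, which by the structure theory of Coxeter groups and their parabolic and reflection subgroups should force an enlargement of the Euclidean reflection subgroup of $H$, or a product splitting, again contradicting maximality. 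Then Corollary~\ref{cor_Morse_crit_intro} applies. For right-angled Coxeter groups this is unnecessary, as they are virtually special and Corollary~\ref{cor:special} applies directly.

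\textbf{Artin groups and Deligne's arrangements.} Right-angled Artin groups are covered by Corollary~\ref{cor:special}. For FC-type, two-dimensional, spherical-type, and affine-type Artin groups, and for Deligne's complexified simplicial arrangements, Deligne's $K(\pi,1)$ theorem and the work of Charney--Davis, Crisp, and others furnish Salvetti- and Deligne-type complexes; granting a $\cat(0)$ (or at least (SCI$_n$)-with-combable-asymptotic-cone) metric on such a complex, highest abelian subgroups are again translation lattices in the Euclidean parts of parabolic subgroups, or of maximal ``flat'' sub-arrangements, and the no-half-flat argument runs as in the Coxeter case. For a general Artin group one would instead invoke that it is hierarchically hyperbolic or coarse median (known in many cases), so its asymptotic cones are bilipschitz to $\cat(0)$ spaces, and then reduce via Corollary~\ref{cor:bilip} and Theorem~\ref{thm:conditions in asymptotic cones} to a rigidity statement in the asymptotic cone, verified by a divergence lower bound.

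\textbf{Main obstacle.} The Morse machinery developed in this paper is not the difficulty; the bottleneck is the input geometry. For general Artin groups and for Deligne's arrangements we currently lack the $\cat(0)$, biautomatic, or hierarchically hyperbolic structures the argument needs, which is entangled with the $K(\pi,1)$ and biautomaticity conjectures. Even in the Coxeter case, where the $\cat(0)$ model comes for free, the two steps ``the flat of a highest abelian subgroup has cocompact stabilizer'' and ``a maximal Euclidean reflection flat bounds no half-flat'' each demand a genuine argument with the structure of parabolic subgroups and their centralizers.
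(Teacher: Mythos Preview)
The statement you are addressing is a \emph{conjecture} in the paper, not a theorem; the paper offers no proof. Immediately after stating it, the authors write: ``We believe the case of Coxeter groups is accessible, and follows from known techniques. On the other hand, it appears that substantial new ingredients would be required to treat the other two cases.'' Your proposal is therefore not to be compared against a proof in the paper, because there is none.

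That said, your outline is a reasonable reconnaissance of the problem and is consistent with the authors' own assessment. For Coxeter groups your plan via the Davis complex and Corollary~\ref{cor_Morse_crit_intro} is the natural one, and you correctly identify the two substantive steps (cocompact stabilizer; no half-flat) as requiring real work with parabolics and centralizers rather than being formal. For Artin groups and Deligne's arrangements you are honest that the argument is conditional on geometric input ($\cat(0)$, biautomatic, coarse median/HHS) that is not available in general; this matches the paper's remark that ``substantial new ingredients'' are needed. One caution: your Artin sketch leans on structures (HHS, coarse median, $\cat(0)$ Deligne/Salvetti metrics) that are only known for restricted classes, so even the cases you list as ``granted'' are not uniformly settled, and the reduction to a divergence lower bound in the asymptotic cone is itself a nontrivial step that would need a genuine argument rather than an appeal to analogy with Theorem~\ref{cor_highest_abelian_mcg}.

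In short: there is no gap relative to the paper because the paper proves nothing here; your document is a plausible research plan, with the Coxeter case looking tractable along the lines you sketch and the other two cases remaining open, exactly as the authors say.
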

We believe the case of Coxeter groups is accessible, and follows from known techniques.  On the other hand, it appears that substantial new ingredients would be required to treat the other two cases.

	\subsection{The role of geometric measure theory in this paper}\label{subsec_GMT}
	We now make a few remarks about analytical aspects of this paper.
	
	In the initial stages of this project, we had considered bypassing integral currents in favor of a technically simpler alternative such as simplicial chains or Lipschitz chains.  However, our attempts to implement such alternative approaches failed, either because they could only be used to treat a fraction of our results, or because they would necessitate the replacement of standard technicalities from geometric measure theory by new technical arguments of similar complexity, thereby eliminating the expected benefit of avoiding geometric measure theory.  Here are some of the properties that a geometric chain complex should have in order to facilitate the objectives of this paper:
	\bit
	\item There should be a notion of ``cancellation'', when one adds chains which overlap with opposite orientation.
	\item Slicing of chains by Lipschitz functions, or at least distance functions, is basic to many comparison arguments.
	\item Minimizers should be compatible with $\cat(0)$ structure which may not be polyhedral.
	\item Natural limit spaces (Tits boundaries, asymptotic cones) are not PL in any sense, so one is forced to consider chain complexes in metric spaces.
	\item Compactness is needed in a number of places.  
	\eit
	We were not able to find a viable alternative to integral currents which was any simpler, and still had the above properties.
	
	In addition to  providing the basic framework for our results,  integral currents helped to simplify our arguments in  several places.  In particular, the estimates we obtained by solving  Plateau problems relative to a Lipschitz submanifold (or quasiflat) were  very helpful in Sections \ref{sec_approx} and \ref{sec_cycles_close_to_Morse_quasiflats}, and may be of independent interest.  Also, following earlier authors \cite{ambrosio2000currents,wenger2011compactness}, we use the Ekeland variational principle in Section \ref{sec_approx}.

	\subsection{Structure of the paper and suggestions for the reader}
	We are mostly working under the assumption that the ambient space satisfies coning inequality. However, quasiflats in such spaces might not be represented by Lipschitz maps, which brings complications at several places in the proofs. The reader might find it helpful to assume quasiflats are represented by Lipschitz maps, to avoid some technicalities. 
	
	The reader might start with Section~\ref{sec_definition}, and consult earlier sections whenever needed.
	
	In Section~\ref{sec_proofs} we discuss ideas of proofs of some of the main results. 
	In Section~\ref{sec_prelim} we discuss some background on metric spaces and metric currents. In Section~\ref{sec_approx} we discuss a procedure of ``regularizing'' an integral current for later use. 
	In Section~\ref{sec:quasiflats in metric spaces} we prove some properties on quasiflats in metric spaces with cone inequalities.
	
	In Section~\ref{sec_definition}, we introduce various definitions of Morse quasiflats. Definitions using asymptotic cones are in Section~\ref{subsec_cone_conditions}, 
	and definitions without asymptotic cones are in Section~\ref{subsec_asymptotic condition}. We also prove quasi-isometry invariance of Morse quasiflats in Section~\ref{subsec_QI_invariance}.

	Section~\ref{sec:cone and space}, Section~\ref{sec_cycles_close_to_Morse_quasiflats} and Section~\ref{sec_neck_decomposition} are devoted to the proof of equivalence of various definitions. In Section~\ref{sec:cone and space} we discuss relations between definitions using asymptotic cones, and definitions without using asymptotic cones. In Section~\ref{sec_cycles_close_to_Morse_quasiflats} we introduce the cycle contraction property and study its relation with some conditions in Section~\ref{subsec_asymptotic condition}. In  Section~\ref{sec_neck_decomposition} we introduce the coarse piece decomposition and conclude the equivalence of all properties under suitable conditions.
	
	In Section~\ref{sec_Morse_Lemma} we prove stability properties of Morse quasiflats, including a version of the Morse lemma for Morse quasi-disks. 
	
	In Section~\ref{sec_example} we prove the half flat criterion (Proposition~\ref{lem_coefficient intro})
	for Morse quasiflats in $\cat(0)$ spaces, and discuss several examples/non-examples of Morse quasiflats.
	
	When the ambient spaces has a Lipschitz bicombing (e.g. $\cat(0)$ spaces or injective metric spaces), then some part of the proof can be simplified and short cuts are discussed in the appendix of this paper. 
	
	\subsection{Acknowledgment}
	We thank J. Behrstock for bringing \cite{mcgdivergence} to our attention and pointing out Theorem~\ref{cor_highest_abelian_mcg}. We also thank J. Russell for some helpful discussions. We warmly thank the referee for a careful reading and many helpful comments to improve the exposition.
	
	\section{Discussion of proofs}
	\label{sec_proofs}
	We give rough sketches of some of the key arguments in the paper.

	\noindent
	\subsection*{Proof of Theorem~\ref{thm:equivalence intro}}
	
	We only discuss $(1)\Rightarrow (3)$, which is the most interesting part. For simplicity, we assume $Q$ is a bilipschitz flat.

	Take a base point $p\in Q$. Let $\si$ be an $(n-1)$-cycle such that $\si=\D \tau$ with $\M(\tau)\le C R^n$, $\spt(\tau)\subset B_p(R)\setminus N_{\rho R}(Q)$, and $M(\si)\le C R^{n-1}$ 
	(here $R\gg 1$, $\rho\ll 1$). Let $\si'$ be a cycle in $Q$ which is the projection of $\si$ obtained from a relative Plateau problem as explained before. Let $\tau'$ be the canonical filling of $\si'$ inside the bilipschitz flat $Q$. Given $\delta>0$. We need to show there exists $\ul{R}$ such that $\M(\tau')\le \delta\cdot R^n$ whenever $R\ge \ul{R}$.
	
	Let $\al$ be the solution of the relative Plateau problem with $\D \al=\si-\si'$. Let $\al_h$ be the slice of $\al$ at distance $h$ from $Q$, and let $\al_{\le h}$ be the piece of $h$ which is at distance $\le h$ from $Q$. For the moment let us make the following simplifying assumption which need not to be true in general:
	\begin{center}
		$(*)$ $\al_h$ is contained in the $h$-neighborhood of $\si'$.
	\end{center}

	Then $\si'=\D\tau'=\D(\al+\tau)$. The $(\mu,b)$-rigid condition implies that $\spt(\tau')\subset N_{\eps R}(\spt (\al+\tau))$, where $\eps\ll\rho$ as long as $R$ is sufficiently large. 
	As $\spt(\tau)$ is further away from $Q$, we know $\spt(\tau')\subset N_{\eps R}(\spt(\al)))$. This together with condition $(*)$ implies  $\spt(\tau')\subset N_{2\eps R}(\spt(\si'))$. Hence $\si'$ can be filled in a small neighborhood of its support, which intuitively suggests that $\M(\si')$ is small. However, we can not argue in this way as there are examples of cycles with small filling 
	radius but large filling volume.
	
	The idea is that if $\si'$ has some extra geometric control, more precisely controlled $k$-dimensional Minkowski content with $k<n$, then we can deduce small filling volume from small filling radius 
	(see Lemma~\ref{lem_small_filling} for a precise formulation). As $\al$ is the solution of a relative Plateau problem, it satisfies a relative version of the monotonicity formula (cf. Lemma~\ref{lem_alpha_control}), which can be used to estimate the exponent of the Minkowski content (cf. Lemma~\ref{lem_si'} (3)).
	
	In the general case there are several further complications, including:
	\bit
	\item Condition $(*)$ may fail. In fact, as $h\to 0$, the control on the diameter of $\al_h$ gets worse. 
	\item The solution to the relative Plateau problem may not be realized by an integral current $\al$. 
	\item The quasiflat may not be represented by a bilipschitz (or even Lipschitz) quasi-isometric embedding.
	\eit
	These issues are handled in Section~\ref{sec_cycles_close_to_Morse_quasiflats}.

	\subsection*{Proof of Theorem~\ref{thm:conditions in asymptotic cones}}
	We only discuss the step where having full support in the asymptotic cone with respect to \emph{compactly supported} integral currents implies super-Euclidean divergence, see Lemma~\ref{lem_fullsup_implies_suplindiv}. The proof is another instance where solving a free boundary minimizing problem turns out to be helpful.
	
	The usual argument is to suppose super-Euclidean divergence fails. Then one obtains a sequence of integral currents $\si_k$, and argues that their rescaled limit in the asymptotic cone contradicts the full support with respect 
	to the reduced homology induced by compact supported integral currents.
	The Wenger compactness theorem guarantees a limit, however, the limit may not have compact support.
	In order to resolve this issue, we introduce a regularization procedure which we believe is of independent interest.
	(Another way to deal with this point might be to use a variation  on the ``thick-thin decomposition'' from \cite{gromov1983filling,wenger2011compactness}.)

	Our idea is to use a free boundary minimizing problem to ``regularize'' an integral current $\si$. More precisely, given a constant $\kappa>0$, we consider a functional 
	$\Theta:\I_{n}(X)\to\R$ by $\Theta(\tau)=\kappa\M(\tau)+\M(\si-\D\tau)$. We can think of $\tau$ as a ``cylinder'' with one end of the cylinder being $\si$, and the other end being $\si':=\si-\D\tau$. 
	The functional involves the mass of the cylinder and the mass of the ``free end'' of the cylinder. Note that if $X=\mathbb R^n$ and $\si$ has ``fingers'', then cutting off the fingers will decrease the functional $\Theta$. This also holds true for more general $X$ with appropriate isoperimetric assumption.
	We take a minimizer $\tau'$ of $\Theta$ and let $\si'=\si-\D\tau'$. A simple computation yields that $\si'$ has a lower  density bound. Moreover, by making $\kappa$ large, we can assume the flat distance between $\si'$ and $\si$ is small, 
	and $\spt(\si')$ is contained in a small neighborhood of $\spt(\si)$, see Proposition~\ref{prop_approx}. Back to the proof, we can apply this regularization procedure to each $\tau_k$ to obtain a new sequence with 
	uniformly bounded supports, moreover, the supports of elements in the new sequence are not too far away from the original elements, which is enough to conclude the proof.
	
	\section{Preliminaries}
	\label{sec_prelim}
	
	\subsection{Metric notions} \label{subsect:metric}
	\mbox{}
	Let $X = (X,d)$ be a metric space. We write 
	\[
	\B{p}{r} := \{x \in X : d(p,x) \le r\}, \quad
	\Sph{p}{r} := \{x \in X : d(p,x) = r\}
	\] 
	for the closed ball and sphere with radius $r \ge 0$ and center $p \in X$. We write $A_p(r_1,r_2):=\{x\in X: r_1\le d(p,x)\le r_2\}$ for closed annulus with inner radius $r_1$ and outer radius $r_2$.

	A map $f \colon X \to Y$ into another metric space $Y = (Y,d)$ is
	{\em $L$-Lipschitz}, for a constant $L \ge 0$, 
	if $d(f(x),f(x')) \le L\,d(x,x')$ for all $x,x' \in X$.
	The smallest such $L$ is the {\em Lipschitz constant\/} $\Lip(f)$ of~$f$. 
	The map $f \colon X \to Y$ is an {\em $L$-bilipschitz embedding}
	if $L^{-1}d(x,x') \le d(f(x),f(x')) \le L\,d(x,x')$ for all $x,x' \in X$.
	For an $L$-Lipschitz function $f \colon E \to \R$ defined on a set 
	$E \sub X$,
	\[
	\bar f(x) := \sup\{f(a) - L\,d(a,x): a \in E\} \quad \text{($x \in X$)}
	\]
	defines an $L$-Lipschitz extension $\bar f \colon X \to \R$ of $f$.
	Every $L$-Lipschitz map $f \colon E \to \R^n$, $E \sub X$, admits
	a $\sqrt{n}L$-Lipschitz extension $\bar f \colon X \to \R^n$.
	
	A map $f \colon X \to Y$ between two metric spaces is called an
	{\em $(L,A)$-quasi-isometric embedding}, for constants
	$L \ge 1$ and $A \ge 0$, if
	\[
	L^{-1} d(x,x') - A \le d(f(x),f(x')) \le L\,d(x,x') + A
	\]
	for all $x,x' \in X$.
	A {\em quasi-isometry} $f \colon X \to Y$ has the additional property
	that $Y$ is within finite distance of the image of $f$. An \emph{$(L,A)$-quasi-disk} $D$ in a metric space $X$ is the image of an $(L,A)$ quasi-isometric embedding $q$ from a closed metric ball $B$ in $\R^n$ to $X$. $B$ is called the \emph{domain} of $D$. The \emph{boundary} of $D$, denoted $\D D$, is defined to be $q(\D B)$. An $n$-dimensional {\em quasiflat\/} in $X$ is the image of a quasi-isometric
	embedding of $\R^n$.
	
	A curve $\rho \colon I \to X$ defined on some interval $I \sub \R$ is a
	{\em geodesic} if there is a constant $s \ge 0$, the {\em speed}
	of $\rho$, such that $d(\rho(t),\rho(t')) = s |t - t'|$ for all $t,t' \in I$.
	A geodesic defined on $I = \R_+ := [0,\infty)$ is called a {\em ray}.

	\begin{definition}
		\label{def:lipschitz bicombing}
		A metric space $X$ has an \emph{$L$-Lipschitz combing} if for each pair of points $x,y\in X$, there is an $L$-bilipschitz path $\si_{xy}:[0,a_{xy}]\to X$ from $x$ to $y$ such that $$d(\si_{xy}(t),\si_{xy'}(t))\le L\cdot d(y,y')$$ for any $x,y,y'\in X$ and any $t\ge 0$. Here $\si_{xy}$ is assumed to be extended
		to domain $\R$ by constant map. $X$ has an \emph{$L$-Lipschitz bicombing} if we instead require $$d(\si_{xy}(t),\si_{x'y'}(t))\le L\cdot \max\{d(x,x'),d(y,y')\}$$ for any $x,y,x',y'\in X$ and any $t\ge 0$. 
	\end{definition}

	Having $L$-Lipschitz (bi)combing is closed under taking ultralimits. If $X$ is a metric space with a bounded $(L,C)$–quasi-geodesic combing in the sense of \cite[Section 1]{alonso1995semihyperbolic}, then any asymptotic cone of $X$ has an $L$-Lipschitz combing. Thus the asymptotic cone of any combable group, or semi-hyperbolic group, has an $L$-Lipschitz combing.

	\subsection{Local currents in proper metric spaces} 
	\label{subsect:currents}
	\mbox{}
	Readers not familiar with the theory of current may think of currents as Lipschitz chains for the first reading (see Remark~\ref{rmk_Lipschitz_chain}). However, many useful features of currents, like slicing and various compactness results, are less clear for Lipschitz chains.

	Currents of finite mass in complete metric spaces were introduced by Ambrosio and Kirchheim in~\cite{ambrosio2000currents}. There is a localized variant of this theory for locally compact metric spaces, as described in~\cite{Lan3}. Here we only provide some background on the theory from \cite{Lan3}, and leave it to the reader to identify the corresponding statements in \cite{ambrosio2000currents} (such comparison is already carried out in detail in \cite{Lan3}). To avoid certain technicalities, we will assume that the underlying metric space $X$ is proper throughout Section~\ref{subsect:currents}, hence complete and separable.

	For every integer $n \ge 0$, let $\cD^n(X)$ denote the set of all 
	$(n+1)$-tuples $(\pi_0,\ldots,\pi_n)$ of real valued functions on $X$ such 
	that $\pi_0$ is Lipschitz with compact support $\spt(\pi_0)$ and 
	$\pi_1,\dots,\pi_n$ are locally Lipschitz. 
	(In the case that $X = \R^N$ and the entries of $(\pi_0,\ldots,\pi_n)$ are 
	smooth, this tuple should be thought of as representing the compactly supported 
	differential $n$-form $\pi_0\,d\pi_1 \wedge \ldots \wedge d\pi_n$.)
	An {\em $n$-dimensional current\/} $S$ in $X$ is a function 
	$S \colon \cD^n(X) \to \R$ satisfying the following three conditions:
	\ben
	\item
	$S$ is $(n+1)$-linear;
	\item 
	$S(\pi_{0,k},\ldots,\pi_{n,k}) \to S(\pi_0,\ldots,\pi_n)$
	whenever $\pi_{i,k} \to \pi_i$ pointwise on $X$ with 
	$\sup_k\Lip(\pi_{i,k}|_K) < \infty$ for every compact set $K \sub X$ 
	($i = 0,\dots,n$) and with $\bigcup_k\spt(\pi_{0,k}) \sub K$ for some such set; 
	\item
	$S(\pi_0,\ldots,\pi_n) = 0$ whenever one of the functions
	$\pi_1,\ldots,\pi_n$ is constant on a neighborhood of $\spt(\pi_0)$.
	\een
	We write $\cD_n(X)$ for the vector space of all $n$-dimensional currents 
	in $X$. The defining conditions already imply that every $S \in \cD_n(X)$ is 
	alternating in the last $n$ arguments and satisfies a product derivation 
	rule in each of these. The definition is further motivated by the fact that
	every function $w \in L^1_\loc(\R^n)$ induces a current 
	$\bb{w} \in \cD_n(\R^n)$ defined by
	\[
	\bb{w}(\pi_0,\dots,\pi_n) 
	:= \int w \pi_0\det\bigl[\d_j\pi_i\bigr]_{i,j = 1}^n \,dx
	\]
	for all $(\pi_0,\dots,\pi_n) \in \cD^n(\R^n)$, where the partial derivatives 
	$\d_j\pi_i$ exist almost every\-where according to Rademacher's theorem. 
	Note that this just corresponds to integration of the differential form
	$\pi_0\,d\pi_1 \wedge \ldots \wedge d\pi_n$ over $\R^n$, weighted by $w$.
	For the characteristic function $\chi_W$ of a Borel set $W \sub \R^n$,
	we put $\bb{W} := \bb{\chi_W}$.
	(See Section~2 in~\cite{Lan3} for details.) 
	
	\subsubsection{Support, push-forward,  and boundary}
	\mbox{}
	For every $S \in \cD_n(X)$ there exists a smallest closed subset of $X$,
	the {\em support\/} $\spt(S)$ of $S$, such that the value 
	$S(\pi_0,\ldots,\pi_n)$ depends only on the restrictions of 
	$\pi_0,\dots,\pi_n$ to this set.
	
	Let $A\subset X$ be a closed subset and let $S_A\in \cD_n(A)$. Then $S_A$ naturally defines $S\in \cD_n(X)$ by 
	$S(\pi_0,\ldots,\pi_n)=S_A(\pi_0|_{A},\ldots,\pi_n|_A)$. We have $\spt(S_A)=\spt(S)$. 
	Conversely, let $S\in \cD_n(X)$ and let $A\subset X$ be locally compact. If $\spt(S)\subset A$, then $S$ uniquely determines an 
	element $S_A\in \cD_n(A)$ (\cite[Proposition 3.3]{Lan3}).

	For a proper Lipschitz map $f \colon X \to Y$ into another proper
	metric space $Y$, the {\em push-forward\/} $f_\#S \in \cD_n(Y)$ is defined
	simply by
	\[
	(f_\#S)(\pi_0,\ldots,\pi_n) := S(\pi_0 \circ f,\ldots,\pi_n \circ f)
	\]
	for all $(\pi_0,\ldots,\pi_n) \in \cD^n(Y)$. This definition can be
	extended to proper Lipschitz maps $f \colon \spt(S) \to Y$ by the previous paragraph. 
	In either case, $\spt(f_\#S) \sub f(\spt(S))$.
	For $n \ge 1$, the {\em boundary} $\D S \in \cD_{n-1}(X)$ of 
	$S \in \cD_n(X)$ is defined by
	\[
	(\D S)(\pi_0,\dots,\pi_{n-1}) := S(\tau,\pi_0,\dots,\pi_{n-1})
	\]
	for all $(\pi_0,\ldots,\pi_{n-1}) \in \cD^{n-1}(X)$ and for any compactly 
	supported Lipschitz function $\tau$ that is identically $1$ on some 
	neighborhood of $\spt(\pi_0)$. If $\til\tau$ is another such function, 
	then $\pi_0$ vanishes on a neighborhood of $\spt(\tau - \til\tau)$ and 
	$\D S$ is thus well-defined by~(1) and~(3).
	Similarly one can check that $\D \circ \D = 0$. The inclusion
	$\spt(\D S) \sub \spt(S)$ holds, and $f_\#(\D S) = \D(f_\# S)$ for
	$f \colon \spt(S) \to Y$ as above.
	(See Section~3 in \cite{Lan3}.) If $\D S=0$, then we will call $S$ an \emph{$n$-cycle}.
	
	\subsubsection{Mass} \label{subsect:mass}
	\mbox{}
	Let $S \in \cD_n(X)$. A tuple $(\pi_0,\ldots,\pi_n) \in \cD^n(X)$
	will be called {\em normalized\/} if the restrictions of
	$\pi_1,\ldots,\pi_n$ to the compact set $\spt(\pi_0)$ are $1$-Lipschitz.
	For an open set $U \sub X$, 
	the {\em mass} $\|S\|(U) \in [0,\infty]$ of $S$ in $U$ is then defined as
	the supremum of $\sum_j S(\pi_{0,j},\ldots,\pi_{n,j})$
	over all finite families of normalized tuples
	$(\pi_{0,j},\ldots,\pi_{n,j}) \in \cD^n(X)$ such that
	$\bigcup_j \spt(\pi_{0,j}) \sub U$ and $\sum_j |\pi_{0,j}| \le 1$.
	Note that $\|S\|(U) > 0$ if and only if $U \cap \spt(S) \ne \es$.
	This induces a regular Borel measure $\|S\|$ on $X$, whose
	{\em total mass} $\|S\|(X)$ is denoted by $\M(S)$. For Borel sets
	$W,A \sub \R^n$, $\|\bb{W}\|(A)$ equals the Lebesgue measure of $W \cap A$.
	If $T \in \cD_n(X)$ is another $n$-current in $X$, then clearly
	\[
	\|S + T\| \le \|S\| + \|T\|.
	\]
	We will now assume that the measure $\|S\|$ is locally finite (and hence
	finite on bounded sets, as $X$ is proper). Then it can be shown that
	\[
	|S(\pi_0,\ldots,\pi_n)| \le \int_X |\pi_0|\,d\|S\|
	\]
	for every normalized tuple $(\pi_0,\ldots,\pi_n) \in \cD^n(X)$.
	This inequality allows us to extend the functional $S$ such that the first entry $\pi_0$ is allowed to be a compact-supported bounded Borel function \cite[Theorem 4.4]{Lan3}. We define the {\em restriction} $S \on A \in \cD_n(X)$
	of $S$ to a Borel set $A \sub X$ by
	\[
	(S \on A)(\pi_0,\dots,\pi_n) := \lim_{k \to \infty} S(\tau_k,\pi_1,\ldots,\pi_n)
	\]
	for any sequence of compactly supported Lipschitz functions $\tau_k$ converging
	in $L^1(\|S\|)$ to $\chi_A \pi_0$. The measure $\|S \on A\|$ equals
	the restriction $\|S\| \on A$ of $\|S\|$ (\cite[Lemma 4.7]{Lan3}).
	
	A \emph{piece decomposition} of $S$ is a sum $S=\sum_i S_i$ such that for any Borel set $A$ holds $\|S\|(A)=\sum_i\|S_i\|(A)$. Each $S_i$ is a \emph{piece} of $S$. Suppose $X=A\sqcup B$ with $A$ and $B$ Borel. Then the previous paragraph implies that $S=S\on A+S\on B$ is a piece decomposition.
	
	Recall a more general restriction operation as follows (\cite[Definition 2.3]{Lan3}). For $S\in \cD_m(X)$ and $(u,v) \in \Lip_{\loc}(X)\times [\Lip_{\loc}(X)]^k$ where $m\ge k\ge 0$, define the current $S\on(u,v)\in\cD_{m-k}(X)$ by $(S\on(u,v))(f,g):= S(uf,v,g) = S(uf,v_1,\ldots,v_k,g_1,\ldots,g_{m-k})$ for $(f,g)\in\cD^{m-k}(X)$. If $k=0$, then the definition simplifies to $$(S\on u)(f,g):= S(uf,g) = S(uf,g_1,\ldots,g_{m}).$$
	
	If $f \colon \spt(S) \to Y$ is a proper $L$-Lipschitz map into a proper 
	metric space $Y$ and $B \sub Y$ is a Borel set, then
	$(f_\#S) \on B = f_\#(S \on f^{-1}(B))$ and
	\[
	\|f_\#S\|(B) \le L^n\,\|S\|(f^{-1}(B)).
	\]
	(See Section~4 in~\cite{Lan3}.)
	
	We say $\|S\|$ is \emph{concentrated} on a Borel subset $A\subset X$ if $\|S\|(X\setminus A)=0$. It is possible that $A$ is much smaller than $\spt(S)$. 
	\begin{remark}
		\label{rmk:concentration}
		We summarize some useful properties which follow directly from \cite[Lemma 4.7]{Lan3}:
		\begin{enumerate}
			\item if $S_i$ converges to $S$ weakly and each $S_i$ is concentrated on a Borel set $A$, then $S$ is concentrated on $A$;
			\item if $S$ is concentrated on $A$; then $S\on(1,v)\in \cD_{m-k}(X)$ for $v\in[\Lip(X)]^k$ is also concentrated on $A$.
		\end{enumerate}
	\end{remark}

	\subsubsection{Slicing} \label{subsect:slicing}
	\mbox{}
	Let $S \in \cD_n(X)$ be such that both $\|S\|$ and $\|\D S\|$ are locally finite
	(that is, $S$ is {\em locally normal}, see Section~5 in~\cite{Lan3}).
	Let $\rho \colon X \to \R$ be a Lipschitz function.
	The corresponding {\em slice} of $S$ is the $(n-1)$-dimensional current
	\begin{align*}
	\slc{S,\rho,s+} &:= \D(S \on  \{\rho \le s\}) - (\D S) \on  \{\rho \le s\}\\
	&= (\D S)\on\{\rho>s\} - \D(S \on  \{\rho > s\})
	\end{align*}
	with support in $\{\rho = s\} \cap \spt(S)$. 
	Similarly, we define
	\begin{align*}
	\slc{S,\rho,s-} &:= (\D S)\on\{\rho\ge s\} - \D(S \on  \{\rho \ge s\})\\
	&= \D(S \on  \{\rho < s\}) - (\D S) \on  \{\rho < s\}\ .
	\end{align*}
	When $\spt(S)$ is separable (this is always true if $X$ is proper), $(\|S\|+\|\D S\|)(\pi^{-1}(s))=0$ for a.e. (almost every) $s$. Thus $\slc{S,\rho,s+}=\slc{S,\rho,s-}$ holds for a.e. $s$, in this case, we define $\slc{S,\rho,s}$ to be either one of them.
	
	For almost all $s$, $S\on\{\rho\le s\}$ is the maximal piece of $S$ supported in $\{\rho\le s\}$ and $S$ has a piece decomposition $S=S\on\{\rho\le s\}+S\on\{\rho> s\}$ (we can also write the piece decomposition as $S=S\on\{\rho< s\}+S\on\{\rho> s\}$ as $S\on\{\rho\le s\}=S\on\{\rho<s\}$ for a.e. $s$).
	
	For $s\in\R$ and $\delta>0$, let now $\ga_{s,\de}:\R\to\R$ be the piecewise affine $\frac{1}{\de}$-Lipschitz function with $\ga_{s,\de}|_{(-\infty,s]}= 0$ and$\ga_{s,\de}|_{[s+\de,\infty)}= 1$. Then, for $y = (y_1,\ldots,y_k) \in \R^k$ and $\de>0$, define $\ga_{y,\de}:\R^k\to\R^k$ such that $\ga_{y,\de}(z)= (\ga_{y_1,\de}(z_1),\ldots,\ga_{y_k,\de}(z_k))$ for all $z = (z_1,\ldots,z_k)\in\R^k$.
	
	\begin{lem}
		\label{lem_slice}
		For each $s$, we have
		\begin{enumerate}
			\item $\slc{S,\rho,s+}$ is equal to the weak limit $\lim_{\de\to 0+}S\on(1,\ga_{s,\de}\circ\rho)$;
			\item if $S$ is concentrated on a Borel subset $A\subset X$, then $\slc{S,\rho,s+}$ is concentrated on $A\cap\rho^{-1}(s)$.
		\end{enumerate}
	\end{lem}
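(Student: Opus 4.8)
\emph{Proof idea.} The plan is to deduce both parts from the single identity
\[
S\on(1,u)\ =\ (\D S)\on u\ -\ \D(S\on u),
\]
valid for every $u\in\Lip_{\loc}(X)$ (with $S$ locally normal, as here). To prove this identity I would fix $(f,g)=(f,g_1,\dots,g_{n-1})\in\cD^{n-1}(X)$ and a compactly supported Lipschitz cutoff $\tau$ equal to $1$ on a neighborhood of $\spt(f)$, so that $\tau f=f$. Unwinding the definitions of the boundary and of $\on$ gives $(\D(S\on u))(f,g)=S(u\tau,f,g)$ and $((\D S)\on u)(f,g)=S(\tau,uf,g)$, and the product derivation rule — valid in each of the last $n$ slots of $S$ — applied in the slot carrying $uf$ yields $S(\tau,uf,g)=S(u\tau,f,g)+S(f,u,g)=(\D(S\on u))(f,g)+(S\on(1,u))(f,g)$, which rearranges to the identity.

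For (1), I would apply the identity with $u=\ga_{s,\de}\circ\rho$ and let $\de\to 0^+$. Since $\ga_{s,\de}$ vanishes on $(-\infty,s]$ and equals $1$ on $[s+\de,\infty)$, the functions $u_\de:=\ga_{s,\de}\circ\rho$ increase pointwise to $\chi_{\{\rho>s\}}$, with $\chi_{\{\rho>s\}}-u_\de$ supported in $\{s<\rho<s+\de\}$ and bounded by $1$. Testing against a fixed tuple $(f,g)\in\cD^{n-1}(X)$, whose first entry has compact support, and localizing to a bounded neighborhood of $\spt(f)$ — on which $\|S\|$ and $\|\D S\|$ are finite because $X$ is proper — dominated convergence gives $(\D S)\on u_\de\to(\D S)\on\{\rho>s\}$ and $S\on u_\de\to S\on\{\rho>s\}$ weakly; applying the weakly continuous boundary operator to the second and subtracting,
\[
S\on(1,\ga_{s,\de}\circ\rho)=(\D S)\on u_\de-\D(S\on u_\de)\ \longrightarrow\ (\D S)\on\{\rho>s\}-\D(S\on\{\rho>s\}),
\]
and the right-hand side is precisely $\slc{S,\rho,s+}$ by the second form of its definition.

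For (2), since $\rho$ is globally Lipschitz and $\ga_{s,\de}$ is $\tfrac1\de$-Lipschitz, $\ga_{s,\de}\circ\rho\in\Lip(X)$; so Remark~\ref{rmk:concentration}(2) shows that $S\on(1,\ga_{s,\de}\circ\rho)$ is concentrated on $A$ whenever $\|S\|$ is, and then part (1) together with Remark~\ref{rmk:concentration}(1) shows the weak limit $\slc{S,\rho,s+}$ is concentrated on $A$. On the other hand $\slc{S,\rho,s+}$ is supported in $\{\rho=s\}\cap\spt(S)$, hence concentrated on $\rho^{-1}(s)$; and a Borel measure concentrated on each of two sets is concentrated on their intersection. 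Hence $\slc{S,\rho,s+}$ is concentrated on $A\cap\rho^{-1}(s)$.

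The only point requiring real care is the limit $u_\de\to\chi_{\{\rho>s\}}$ in the second paragraph: the masses $\|S\|(\{\rho>s\})$ and $\|\D S\|(\{\rho>s\})$ may well be infinite, so one cannot dominate globally and must instead argue against a fixed test tuple and use properness to pass to a bounded set where both measures are finite; after that reduction the ingredients are just the derivation rule, weak continuity of $\D$, and dominated convergence.
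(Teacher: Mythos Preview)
Your proposal is correct and follows essentially the same approach as the paper. The paper simply cites \cite[Equation~(6.4)]{Lan3} for part~(1) --- which your argument unpacks directly via the derivation identity $S\on(1,u)=(\D S)\on u-\D(S\on u)$ --- and derives (2) from (1) together with Remark~\ref{rmk:concentration}, exactly as you do.
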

	
	\begin{proof}
		The first assertion is \cite[Equation (6.4)]{Lan3}. (2) follows from (1) and Remark~\ref{rmk:concentration}.
	\end{proof}
	
	For a Lipschitz function $\pi:X\to \R^k$, we define the slice of $S$ at $y\in \R^k$ with respect to $\pi$ as the weak limit $\slc{S,\pi,y}:=\lim_{\de\to 0+} S\on (1,\ga_{y,\de}\circ \pi)$ whenever it exists and defines an element of $\cD^{(m-k)}(X)$. By \cite[Theorem 6.4]{Lan3}, the weak limit $\lim_{\de\to 0+} S\on (1,\ga_{y,\de}\circ \pi)$ exists for a.e. $y\in \R^n$.

	For a Borel subset $B\subset\R^k$, the coarea inequality
	\[
	\int_B \M(\slc{S,\pi,s}) \,ds \le \Lip(\pi)\,\|S\|(\pi^{-1}(B))
	\]
	holds by \cite[Thoerem 6.4 (3)]{Lan3}.

	\subsubsection{Integral currents}
	\mbox{}
	A current $S \in \cD_n(X)$ is called {\em locally integer rectifiable\/} 
	if all the following properties hold:
	\begin{enumerate}
		\item $\|S\|$ is locally finite and is concentrated on the union of
		countably many Lipschitz images of compact subsets of $\R^n$;
		\item for every
		Borel set $A \sub X$ with compact closure and every Lipschitz map
		$\phi \colon X \to \R^n$, the current $\phi_\#(S \on A) \in \cD_n(\R^n)$
		is of the form $\bb{w}$ for some {\em integer valued\/}
		$w = w_{A,\phi} \in L^1(\R^n)$.
	\end{enumerate}
	Then $\|S\|$ turns out to be absolutely continuous with 
	respect to $n$-dimensional Hausdorff measure. 
	Furthermore, push-forwards and restrictions to Borel sets
	of locally integer rectifiable currents are again locally integer rectifiable.
	
	A current $S \in \cD_n(X)$ is called a {\em locally integral current\/} 
	if $S$ is locally integer rectifiable and, for $n \ge 1$, $\D S$ satisfies 
	the same condition.
	(Remarkably, this is the case already when $\|\D S\|$ is locally finite,
	provided $S$ is locally integer rectifiable; see Theorem~8.7 in~\cite{Lan3}.)
	This yields a chain complex of abelian groups $\bI_{n,\loc}(X)$.
	We write $\bI_{n,\cs}(X)$ (resp. $\bI_n(X)$) for the respective subgroups of
	{\em integral currents\/} with compact support (resp. with finite mass).
	
	By \cite[Thoerem 8.5]{Lan3}, if $\pi:X\to \R^k$ is Lipschitz, then for a.e. $y\in \R^k$, $\slc{S,\pi,y} \in \bI_{n-k,\loc}(X)$. In particular, if $k=1$, $S \on  \{\pi \le y\} \in \bI_{n,\loc}(X)$ for a.e. $y\in \R$.

	\begin{remark}
		\label{rmk_Lipschitz_chain}
		If $\Del \sub \R^n$ is an $n$-simplex and $f \colon \Del \to X$
		is a Lipschitz map, then $f_\#\bb{\Del} \in \bI_{n,\cs}(X)$.
		Thus every singular Lipschitz chain in $X$ with integer coefficients
		defines an element of $\bI_{n,\cs}(X)$.
	\end{remark}
	There is a canonical chain isomorphism from $\bI_{*,\cs}(\R^N)$
	to the chain complex of ``classical'' integral currents in $\R^N$
	originating from~\cite{FedF}.
	
	If $T\in \bI_{N,\loc}(\R^N)$, then $T=\bb{u}$ for some function $u$ of locally bounded variation, moreover $u$ is integer-valued almost everywhere \cite[Theorem 7.2]{Lan3}. The element $\bb{\R^N}\in \bI_{N,\loc}(\R^N)$ is the \emph{fundamental class} of $\R^N$. 
	
	For $n \ge 1$, we let $\bZ_{n,\loc}(X) \sub \bI_{n,\loc}(X)$ and 
	$\bZ_{n,\cs}(X) \sub \bI_{n,\cs}(X)$ denote the subgroups of currents
	with boundary zero. An element of $\bI_{0,\cs}(X)$ is an integral
	linear combination of currents of the form $\bb{x}$, where
	$\bb{x}(\pi_0) = \pi_0(x)$ for all $\pi_0 \in \cD^0(X)$. We let
	$\bZ_{0,\cs}(X) \sub \bI_{0,\cs}(X)$ denote the subgroup of linear combinations
	whose coefficients sum up to zero. The boundary of a current in $\bI_{1,\cs}(X)$
	belongs to $\bZ_{0,\cs}(X)$.
	Given $Z \in \bZ_{n,\cs}(X)$, for $n \ge 0$, we will call
	$V \in \bI_{n+1,\cs}(X)$ a {\em filling} of $Z$ if $\D V = Z$.
	
	For each $S\in \bZ_{N-1,c}(\R^N)$, there exists a unique $T\in\bI_{N,c}(\R^N)$ such that $\D T=S$. In this case, $T$ is called the \emph{canonical filling of $S$}. By  the  discussion above, $T=\bb{u}$ for a compactly supported integer valued function of bounded variation. Similarly, by an approximation argument, one can define a canonical filling $T\in \bI_N(\R^N)$ for each $S\in\bZ_{N-1}(\R^{N})$. If $T=\bb{u}$, then $\M(T)$ is the $L^1$-norm of $u$ (\cite[Equation 4.5]{Lan3}) and $\|\D T\|=|D u|$ (\cite[Theorem 7.2]{Lan3}).

	\subsection{Ambrosio-Kirchheim currents in general metric spaces}
	We will also need metric currents in a complete metric space $X$ which is not necessarily locally compact \cite{ambrosio2000currents}.  These currents are defined using slightly different axioms, however, all the features discussed in the previous section (e.g. support, boundary, mass, slicing etc) are available, see \cite{ambrosio2000currents} for more details. We refer to \cite[Section 4]{Lan3} for comparison between currents in the sense of Ambrosio and Kirchheim and local currents in the previous section.
	
	Let $X$ be a complete metric space. We use $\I_n(X)$ to denote the abelian group of $n$-dimensional integral currents in the sense of Ambrosio and Kirchheim. In the special case when $X$ is proper, each element in $\I_{n,\loc}(X)$ defined in Section~\ref{subsect:currents} with finite mass gives a unique element in $\I_n(X)$, and each element of $\I_n(X)$ in the sense of Ambrosio and Kirchheim gives a unique element in $\I_{n,\loc}$ with finite mass (\cite[Section 4]{Lan3}). So when $X$ is proper, we can treat $\I_n(X)$ as the abelian group of local integral currents with finite mass. We use $\I_{n,c}(X)$ to denote integral currents with compact support. When $X$ is proper, these currents can be identified with local integral currents with compact support.
	
	\subsection{Homotopies}
	\label{subsec_homotopy}
	Let $X$ be a complete metric space.
	Let $\bb{0,1} \in \bI_{1,\cs}([0,1])$ denote the current defined by
	\[
	\bb{0,1}(\pi_0,\pi_1) := \int_0^1 \pi_0(t) \pi'_1(t) \,dt.
	\]
	Note that $\D\bb{0,1} = \bb{1} - \bb{0}$.
	We endow $[0,1] \times X$ with the usual $l_2$~product metric.
	There exists a canonical product construction
	\[
	T \in \bI_n(X) \leadsto 
	\bb{0,1} \times T \in \bI_{n+1}([0,1] \times X)
	\]
	for all $n \ge 0$ (see \cite[Definition 2.8]{wenger2005isoperimetric}).
	Suppose now that $Y$ is another complete metric space,
	$h \colon [0,1] \times X \to Y$ is a Lipschitz homotopy from 
	$f = h(0,\cdot)$ to $g = h(1,\cdot)$, and $T \in \bI_n(X)$. 
	Then $h_\#(\bb{0,1} \times T)$ is an element of $\bI_{n+1}(Y)$ 
	with boundary
	\[
	\D\,h_\#(\bb{0,1} \times T) = g_\# T - f_\# T - h_\#(\bb{0,1} \times \D T)
	\]
	(for $n = 0$ the last term is zero.) See \cite[Theorem 2.9]{wenger2005isoperimetric}.
	If $h(t,\cdot)$ is $L$-Lipschitz for every $t$, and $h(\cdot,x)$ is a
	geodesic of length at most $D$ for every $x \in \spt(T)$, then
	\[
	\M(h_\#(\bb{0,1} \times T)) \le (n+1) L^n D\,\M(T).  
	\]
	(see \cite[Proposition 2.10]{wenger2005isoperimetric}). A similar formula holds if instead $h(\cdot,x)$ is a bi-Lipschitz path of length $\le D$.
	
	An important special case of this is
	when $S \in \bZ_n(X)$ and $h(\cdot,x) = \si_{px}$ is a geodesic from
	some fixed point $p \in X$ to $x$ for every $x \in \spt(S)$.   
	Then $h_\#(\bb{0,1} \times S) \in \bI_{n+1}(X)$ is the 
	{\em cone from $p$ over $S$} determined by this family of geodesics,
	whose boundary is $S$. 
	If the family of geodesics satisfies the convexity condition
	\[
	d(h(t,x),h(t,x')) = d(\si_{px}(t),\si_{px'}(t)) \le t\, d(x,x')
	\]
	for all $x,x' \in \spt(S)$ and $t \in [0,1]$, and if 
	$\spt(S) \sub \B{p}{r}$, then
	\[
	\M(h_\#(\bb{0,1} \times S)) \le r\cdot \M(S).  
	\]
	
	\subsection{Coning inequalities and isoperimetric inequalities}
	
	\begin{definition}
		\label{def_coning_ineq}
		A complete metric space $X$ satisfies {\em coning inequalities} up to dimension $n$ for currents, abbreviated condition~(CI$_n$), if there exists a constant $c_0>0$, such that for all $0\leq k\leq n$ and every cycle $S\in\I_{k}(X)$ with bounded support there exists a filling
		$T\in\I_{k+1}(X)$ with 
		\[\M(T)\leq c_0\cdot\diam(\spt (S))\cdot\M(S).\]
		If in addition, there exists a constant $c_1>0$ such that $T$ can always be chosen to fulfill
		\[\diam(\spt (T))\leq c_1\cdot \diam(\spt (S)),\]
		then $X$ is said to satisfy {\em strong coning inequalities} up to dimension $n$, abbreviated condition~(SCI$_n$).
	\end{definition}

	\begin{remark}
		\label{rmk_combing}
		By Section~\ref{subsec_homotopy}, condition (CI$_n$) and (SCI$_n$) are satisfied for complete metric space with an $L$-Lipschitz combing for any $n$.
	\end{remark}
	
	\begin{definition}
		\label{def_coning_ineq1}
		Let $\mathcal{C}$ be a class of chains, e.g. $\mathcal{C}$ could be the class of singular chains, or the class of Lipschitz chains, or the class of compact supported integral currents. We can reformulate Definition~\ref{def_coning_ineq} by requiring both $S$ and $T$ being elements in $\mathcal{C}$, leading to the notion of having \emph{(strong) coning inequalities up to dimension $n$ for $\mathcal{C}$}. 
	\end{definition}

	If $X$ is proper, then (CI$_n$) is equivalent to having coning inequalities up to dimension $n$ for compact supported integral currents. If $X$ is bilipschitz homeomorphic
	to a finite-dimensional simplicial complex with standard metrics on the simplices, then (CI$_n$) (by a variant of the Federer–Fleming deformation
	theorem \cite{FedF}) is equivalent to having coning inequalities up to dimension $n$ for simplicial chains or singular
	Lipschitz chains (with integer coefficients). See \cite[Section 2]{abrams2013pushing} for more explanation. Similar statements hold for (SCI$_n$).
	
	\begin{remark}
		Any $n$-connected simplicial complex with a properly discontinuous
		and cocompact simplicial action of a combable group satisfies (CI$_n$) and (SCI$_n$); see
		\cite[Section 10.2]{MR1161694}. Every combable group, in particular every automatic group, admits such an action.
	\end{remark} 
	
	\begin{theorem}[isoperimetric inequality] \label{thm:isop-ineq}
		Let $n \ge 2$, and let $X$ be a complete metric space satisfying
		condition~{\rm (CI$_{n-1}$)}. Then every cycle 
		$S \in \bZ_{n-1}(X)$ possesses a filling $T \in \bI_n(X)$  such that
		\begin{enumerate}
			\item 	$\M(T) \le b_1\cdot\M(S)^{n/(n-1)}$;
			\item $\spt(T)\subset N_c(\spt(\D T))$ where $c=b_2\M(S)^{1/(n-1)}$,
		\end{enumerate}
		for some constants $b_1,b_2 > 0$ depending only on the constants 
		of condition~{\rm (CI$_{n-1}$)}. Moreover, if $S$ has compact support, then we can also require $T$ to have compact support.
	\end{theorem}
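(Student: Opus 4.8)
\emph{Proof proposal.} This asserts that the linear cone-type bound of {\rm (CI$_{n-1}$)} self-improves to a Euclidean-type isoperimetric inequality together with a metric localization of the filling; for integral currents in complete metric spaces this is essentially Wenger's theorem \cite{wenger2005isoperimetric} (following Gromov \cite{gromov1983filling} in the Riemannian case), and the plan is to establish it in the stated form, obtaining the localization by a variational argument. Write $M:=\M(S)$ and $\ell:=M^{1/(n-1)}$, and assume $M>0$. For the mass bound~(1) I would run the Federer--Fleming-type deformation at scale $\ell$: using a bounded-overlap cover of $\spt S$ by balls of radius comparable to $\ell$ and a composition of radial projections built from the distance functions to the ball centers together with the slicing and coarea estimates of Section~\ref{subsect:slicing}, one deforms $S$ off the union of these balls, producing $S=P+\D Q$ with $Q\in\bI_n(X)$ a homotopy current of mass $\lesssim\ell M=M^{n/(n-1)}$ supported near $\spt S$, and $P\in\bZ_{n-1}(X)$ a cycle supported near $\spt S$ which is a locally finite sum of ``cells'', each an $(n-1)$-cycle of diameter $\lesssim\ell$ with total mass $\lesssim M$; filling each cell by {\rm (CI$_{n-1}$)} with mass $\lesssim\ell\cdot(\text{its mass})$ and summing gives a filling $R$ of $P$ with $\M(R)\lesssim\ell M$, so $T:=Q+R$ has $\D T=S$ and $\M(T)\lesssim M^{n/(n-1)}$. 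I would simply invoke \cite{wenger2005isoperimetric} for the metric version of this deformation. Note that {\rm (CI$_{n-1}$)} controls only the \emph{mass}, not the support, of the cell fillings, so this construction does not by itself yield~(2).

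For the localization~(2), apply~(1) to a \emph{mass-minimizing} filling $T_0$ of $S$ --- which exists by standard compactness for integral currents, or, to sidestep compactness in non-proper spaces, may be replaced throughout by an approximate minimizer produced by the Ekeland variational principle as in Section~\ref{sec_approx}, affecting the constants only by a bounded factor. Put $f:=\dist(\cdot,\spt S)$, a $1$-Lipschitz function, and $m(\rho):=\|T_0\|(\{f>\rho\})$, which is nonincreasing with $m(0)\le\M(T_0)\le b_1M^{n/(n-1)}$ by~(1). For a.e.\ $\rho>0$ the slice $Z_\rho:=\slc{T_0,f,\rho}$ lies in $\bZ_{n-1}(X)$ (since $\D T_0=S$ is supported on $\{f=0\}$) and $\D(T_0\on\{f>\rho\})=-Z_\rho$; replacing $T_0\on\{f>\rho\}$ by an arbitrary filling $W$ of $-Z_\rho$ modifies $T_0$ by a cycle of mass at most $\M(W)+m(\rho)$, so (approximate) minimality gives $m(\rho)\le\kappa\,\Fill(Z_\rho)$ for a dimensional constant $\kappa$ (with $\kappa=1$ for a genuine minimizer), and then~(1) applied to $Z_\rho$ gives $m(\rho)\le\kappa b_1\,\M(Z_\rho)^{n/(n-1)}$. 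Combined with the coarea inequality $\int_\rho^\infty\M(Z_s)\,ds\le m(\rho)$, the absolutely continuous function $\Psi(\rho):=\int_\rho^\infty\M(Z_s)\,ds$ satisfies $\Psi\le m$ and, wherever $\Psi>0$, $\Psi'(\rho)=-\M(Z_\rho)\le-(\Psi(\rho)/(\kappa b_1))^{(n-1)/n}$, i.e.\ $\frac{d}{d\rho}\Psi(\rho)^{1/n}\le-\frac{1}{n(\kappa b_1)^{(n-1)/n}}$; integrating and using $\Psi(0)\le b_1M^{n/(n-1)}$, the function $\Psi$ vanishes once $\rho\ge c$, where $c$ is a dimensional multiple of $b_1M^{1/(n-1)}$. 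For a.e.\ $\rho\ge c$ this forces $Z_\rho=0$, hence $\Fill(Z_\rho)=0$ and $m(\rho)=0$; since $\|T_0\|(\{f>c\})=\lim_{\rho\downarrow c}m(\rho)=0$ (continuity from below), we get $\spt T_0\subset\{f\le c\}\subset N_{2c}(\spt S)$, which is~(2) with $b_2$ a dimensional multiple of $b_1$. As also $\M(T_0)\le b_1M^{n/(n-1)}$, the single current $T_0$ gives both~(1) and~(2) after relabeling constants; and if $\spt S$ is compact, running Part~(1) with a finite cover produces a compactly supported filling, and Part~(2) then yields a compactly supported $T$.

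The technical heart of the argument is Part~(1): implementing the Federer--Fleming deformation in an arbitrary metric space --- where there is no ambient cubical grid to project onto --- purely in terms of slicing and the coarea inequality, with all the mass estimates, is Wenger's metric deformation theorem and is by far the most delicate ingredient; I would not attempt to reprove it. By comparison Part~(2) is elementary and robust: one application of Ekeland's principle (or of a genuine Plateau minimizer) followed by a one-variable differential inequality for $\rho\mapsto\|T_0\|(\{\dist(\cdot,\spt S)>\rho\})$.
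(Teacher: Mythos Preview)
Your proposal is correct and matches the paper's treatment: the paper does not give its own proof but cites \cite{wenger2005isoperimetric} for item~(1) and \cite[Proposition~4.3, Corollary~4.4]{wenger2011compactness} for item~(2), and your sketch accurately reproduces those arguments (Ekeland quasi-minimizer plus slicing and a differential inequality).

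The only loose end is your final sentence on compact support. Your global slicing by $f=d(\cdot,\spt S)$ yields $\spt T_0\subset\{f\le c\}$, but in a non-proper $X$ this set need not be compact, so ``Part~(2) then yields a compactly supported $T$'' does not follow as stated. The cited reference instead runs the same Ekeland-plus-slicing argument with $d(\cdot,x)$ for each $x\in\spt T_0$, obtaining the pointwise lower density bound $\|T_0\|(\B{x}{r})\ge D r^n$ for $0\le r\le d(x,\spt S)$; this is recorded in the paper as Lemma~\ref{lem:density0}, and from it total boundedness (hence compactness) of $\spt T_0$ follows exactly as in the proof of Proposition~\ref{prop_approx}(2). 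Your global differential inequality is a pleasant variant that gives (2) directly, but for the ``moreover'' clause you still need the pointwise version.
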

	
	The first item is proved in \cite{wenger2005isoperimetric}, see the comment after \cite[Theorem 1.2]{wenger2005isoperimetric} regarding compact supports. The second item follows from \cite[Proposition 4.3 and Corollary 4.4]{wenger2011compactness}.

	We also consider the following condition which is weaker than Definition~\ref{def_coning_ineq} (by Theorem~\ref{thm:isop-ineq}.)
	\begin{definition}
		\label{def_eii}
		A complete metric space $X$ satisfies {\em Euclidean isoperimetric inequality} up to dimension $n$, abbreviated condition~(EII$_n$), 
		if there exists a constant $c>0$, such that for all $0\leq k\leq n$ and every cycle $S\in\bZ_{k,c}(X)$ there exists a filling
		$T\in\I_{k+1,c}(X)$ with 
		\[\M(T)\leq c\cdot\M(S)^{k+1/k}.\]
	\end{definition}
	
	\subsection{Quasi-minimizer and minimizer}
	
	\begin{definition}[quasi-minimizer] \label{def:qmin}
		Suppose that $X$ is a complete metric space, $n \ge 1$, and 
		$\Lambda \ge 1$, $a \ge 0$ are constants. For a closed set $Y \sub X$, a cycle 
		\[
		S \in \bZ_{n}(X,Y) := \{Z \in \bI_{n}(X): \spt(\D Z) \sub Y\}
		\]
		relative to $Y$ will be called {\em $(\Lambda,a)$-quasi-minimizing mod\/ $Y$} if, 
		for all $x \in \spt(S)$ and almost all $r > a$ such that 
		$\B{x}{r} \cap Y = \es$, the inequality
		\[
		\M(S \on \B{x}{r}) \le \Lambda\,\M(T)
		\] 
		holds whenever $T \in \bI_{n}(X)$ and $\D T = \D(S \on \B{x}{r})$
		(recall that $S \on \B{x}{r} \in \bI_{n}(X)$ for almost all $r > 0$). A current $S \in \bI_{n}(X)$ 
		is {\em $(\Lambda,a)$-quasi-minimizing} or a {\em $(\Lambda,a)$-quasi-minimizer} 
		if $S$ is $(\Lambda,a)$-quasi-minimizing mod $\spt(\D S)$, and we say that $S$ is 
		{\em quasi-minimizing} or a {\em quasi-minimizer} if this holds for some
		$\Lambda \ge 1$ and $a \ge 0$.
		
		When $X$ is proper, then we can define $(\Lambda,a)$-quasi-minimizing similarly for local cycle $S\in \bZ_{n,\loc}(X,Y)$ (in this case $T\in \I_{n,\cs}(X)$). 	
	\end{definition}
	
	\begin{definition}
		\label{def:minimizing}
		Suppose $X$ is a complete metric space. We say an element $S\in \I_n(X)$ is \emph{minimizing}, or $S$ is a \emph{minimizer}, if $\M(S)\le \M(T)$ for any $T\in\I_n(X)$ with $\D T=\D S$. For a constant $M\ge 1$, we say $S$ is \emph{$M$-minimizing}, if for each piece $S'$ of $S$, we have $\M(S')\le M\cdot \M(T)$ for any $T\in\I_n(X)$ with $\D T=\D S'$. Note that $S$ is minimizing if and only if $S$ is 1-minimizing. A local current $S\in \I_{n,\loc}(X)$ with $X$ being proper is \emph{minimizing}, if each compact supported piece of $S$ is minimizing. We define $M$-minimizing for local currents in a similar way.
	\end{definition}
	
	Obviously every minimizing $S \in \bI_{n}(X)$ is $(1,0)$-quasi-minimizing. 		For $S\in \bZ_n(X)$, we define $\Fill(S):=\inf \{\M(T):T\in \I_{n+1}(X), \D T=S\}$. When $S$ is compactly supported and $X$ satisfies EII$_n$, we can define $\Fill(S)$ by requiring $T\in \I_{n+1,c}(X)$ -- this gives rise to the same number (see \cite[Proposition 4.3]{wenger2011compactness}). The next theorem guarantees the existence of a minimal filling under extra assumptions.
	\begin{theorem} \label{thm:plateau}
		\cite[Theorem 2.4]{higherrank}
		Let $n \ge 1$, and let $X$ be a proper metric space satisfying 
		condition~{\rm (CI$_{n-1}$)}. Then for every $R \in \bZ_{n-1,\cs}(X)$
		there exists a filling $S \in \bI_{n,\cs}(X)$ of $R$ with mass $\M(S)=\Fill(R)$.
		Furthermore, $\spt(S)$ is within distance at most $(\M(S)/\de)^{1/n}$
		from $\spt(R)$ for some constant $\de > 0$ depending only on $n$ and constants in condition~{\rm (CI$_{n-1}$)}.
	\end{theorem}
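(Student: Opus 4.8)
The plan is to run the direct method of the calculus of variations: realize $S$ as a weak limit of a mass-minimizing sequence of fillings of $R$, using the Euclidean isoperimetric inequality of Theorem~\ref{thm:isop-ineq} (available since $X$ satisfies (CI$_{n-1}$)) twice --- first to prevent escape of mass to infinity along the minimizing sequence, and then, through a monotonicity computation, to bound the support of the limit and thereby show it is compactly supported.

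We may assume $R\ne 0$. Applying (CI$_{n-1}$) to the $(n-1)$-cycle $R$ produces a compactly supported finite-mass filling, so $\Fill(R)<\infty$; fix $p\in\spt(R)$ and $\rho_0:=\diam(\spt(R))$, and pick $S_i\in\bI_{n,\cs}(X)$ with $\D S_i=R$ and $\M(S_i)\le\Fill(R)+\eta_i$, $\eta_i\downarrow 0$. For a.e. $r>\rho_0$ one has $R\on\B{p}{r}=R$, hence $\D\bigl(S_i\on(X\setminus\B{p}{r})\bigr)=-\slc{S_i,d_p,r}$; this $(n-1)$-cycle has a finite-mass filling $T_{i,r}$ with $\M(T_{i,r})\le b_1\M(\slc{S_i,d_p,r})^{n/(n-1)}$ by Theorem~\ref{thm:isop-ineq} when $n\ge 2$, resp. $\M(T_{i,r})\le 2c_0 r\,\M(\slc{S_i,d_p,r})$ by (CI$_0$) when $n=1$. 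Since $S_i\on\B{p}{r}+T_{i,r}$ is again a filling of $R$, near-minimality yields
\[
\|S_i\|(X\setminus\B{p}{r})\ \le\ \eta_i+\M(T_{i,r})\qquad\text{for a.e. }r>\rho_0.
\]
If mass escaped, i.e.\ $\limsup_i\|S_i\|(X\setminus\B{p}{r})\ge\delta>0$ for every $r$, then for each $r_0$ this inequality would force $\M(\slc{S_i,d_p,r})$ to be bounded below by a positive constant depending only on $\delta$ (and, when $n=1$, on $r_0$) for a.e.\ $r\in(\rho_0,r_0)$ and all large $i$, contradicting $\int_{\rho_0}^{\infty}\M(\slc{S_i,d_p,r})\,dr\le\M(S_i)\le\Fill(R)+1$ once $r_0$ is large. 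Hence $\lim_{r\to\infty}\limsup_i\|S_i\|(X\setminus\B{p}{r})=0$.

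With this tightness and the uniform bound on $\M(S_i)+\M(\D S_i)$, the Wenger compactness theorem \cite{wenger2011compactness} provides a subsequence converging weakly to an integral current $S\in\bI_n(X)$ of finite mass. Weak continuity of the boundary gives $\D S=R$, and lower semicontinuity of mass gives $\M(S)\le\liminf_i\M(S_i)=\Fill(R)$; as $S$ is itself a finite-mass filling of $R$ this forces $\M(S)=\Fill(R)$, so $S$ is mass-minimizing among all fillings of $R$. For the support bound, fix $x\in\spt(S)$ with $\rho:=d(x,\spt(R))>0$. For a.e.\ $r\in(0,\rho)$ one has $\B{x}{r}\cap\spt(R)=\es$, so $\slc{S,d_x,r}$ is a finite-mass $(n-1)$-cycle and $\D\bigl(S\on\B{x}{r}\bigr)=\slc{S,d_x,r}$; replacing $S\on\B{x}{r}$ by an isoperimetric filling of $\slc{S,d_x,r}$ and using minimality gives, for $n\ge 2$,
\[
g(r):=\|S\|(\B{x}{r})\ \le\ b_1\,\M(\slc{S,d_x,r})^{n/(n-1)},
\]
while for $n=1$ the cycle $\slc{S,d_x,r}$ is nonzero (else $S\on\B{x}{r}$, of positive mass since $x\in\spt(S)$, could be discarded) and hence has mass $\ge 2$. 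Combining this with the coarea inequality $\int_0^r\M(\slc{S,d_x,s})\,ds\le g(r)$ and $g(0^+)=0$, a standard differential-inequality argument gives $g(r)\ge\de\,r^n$ for all $r<\rho$, with $\de=\de(n,c_0)>0$; letting $r\uparrow\rho$ and using $g(\rho)\le\M(S)$ yields $\rho\le(\M(S)/\de)^{1/n}$. Since $\spt(R)$ is compact, $\spt(S)$ is bounded, hence compact, so $S\in\bI_{n,\cs}(X)$, completing the proof.

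The principal difficulty is the compactness step: as $X$ is only proper, not compact, one cannot naively truncate the minimizing sequence inside a fixed ball without destroying the mass bound, so the escape-of-mass argument built on the isoperimetric inequality is indispensable (this is the role of the ``thick--thin'' type reasoning alluded to in Section~\ref{sec_proofs}). A secondary, routine point is the separate treatment of $n\ge 2$ (Euclidean isoperimetric inequality in dimension $n-1$) and $n=1$ (where one uses the elementary fact that a nonzero integral $0$-cycle has mass at least $2$).
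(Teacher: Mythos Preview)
The paper does not supply its own proof of this theorem: it is stated with a citation to \cite[Theorem~2.4]{higherrank} and used as a black box. Your argument is the standard direct-method proof (minimizing sequence, compactness, lower semicontinuity, monotonicity for the support bound) and is essentially correct; it is the same route taken in \cite{higherrank}. The monotonicity computation at the end is exactly the $(\Lambda,a)=(1,0)$ case of Lemma~\ref{lem:density} in the present paper.

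One point worth tightening: the compactness step. The reference \cite{wenger2011compactness} treats sequences of metric spaces and is not quite the right citation for extracting a weak limit inside a single fixed space. Since $X$ is proper, the cleanest route is to observe that your no-escape-of-mass argument lets you replace $S_i$ by a modified minimizing sequence with uniformly compact supports (truncate at a suitable radius and cap off the slice with the isoperimetric filling $T_{i,r}$, whose support is controlled by Theorem~\ref{thm:isop-ineq}(2)), and then invoke the Ambrosio--Kirchheim closure/compactness theorem directly. Alternatively, in Lang's local framework one has weak-$*$ compactness for locally integral currents in a proper space with locally bounded mass and boundary mass, which applies immediately. Either way the conclusion stands; only the justification of that one sentence should be made precise.
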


When $X$ is not proper, minimizers might not exist. However, the following lemmas gives useful control on the geometry of ``almost minimizers'' in two different senses.

\begin{lemma}
	\label{lem:density0}
Let $n\ge 1$ and let $X$ be a complete metric space satisfying {\rm (EII$_{n-1}$)} (by Theorem~\ref{thm:isop-ineq}, this holds when $X$ satisfies {\rm (CI$_{n-1}$)}). For every $S\in \bZ_{n-1}(X)$ and every $\eps>0$, there exists $T\in \bI_n(X)$ such that 
\begin{enumerate}
	\item $\M(T)\le (1+\eps)\Fill(S)$;
	\item for every $x\in \spt(T)$ and every $0\le r\le d(x,\spt(S))$, we have $$
	\frac{1}{r^n} \|T\|(\B{x}{r}) \ge D$$
\end{enumerate}
where $D>0$ depending only on $n$ and the constants of {\rm (EII$_{n-1}$)}.
\end{lemma}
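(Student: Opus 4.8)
The plan is to produce $T$ by a variational argument, using Ekeland's variational principle as in \cite{ambrosio2000currents,wenger2011compactness}, and then to read off the density lower bound from a ``cut and refill'' comparison with the isoperimetric inequality (Theorem~\ref{thm:isop-ineq}).

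First I would dispose of the trivial case $S=0$ by taking $T=0$. Otherwise $\Fill(S)\in(0,\infty)$ -- finiteness is (EII$_{n-1}$), and positivity holds because a sequence of fillings $T_k$ with $\M(T_k)\to0$ would converge to $0$ in mass, forcing $S=\D T_k\to0$. Write $V:=\Fill(S)$, and let $\mathcal F:=\{T'\in\bI_n(X):\D T'=S\}$ be metrized by $d(T',T''):=\M(T'-T'')$. I would check that $(\mathcal F,d)$ is complete: a $d$-Cauchy sequence has uniformly bounded mass and boundary mass, hence a weak limit $T_\infty$ which is integral by the closure theorem, with $\D T_\infty=S$ and $\M(T_\infty-T_k)\to0$ by lower semicontinuity of mass. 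On $(\mathcal F,d)$ the mass functional is $1$-Lipschitz and bounded below, so Ekeland's principle applies. Choosing its two parameters appropriately -- starting from $T_0\in\mathcal F$ with $\M(T_0)\le(1+\eps)V$ and matching the linear error to $\tfrac12$ -- it produces $T\in\mathcal F$ with
\[
\M(T)\le\M(T_0)\le(1+\eps)\,\Fill(S)
\]
together with the \emph{local almost-minimality}
\[
\M(T)\le\M(T')+\tfrac12\,\M(T-T')\qquad\text{for all }T'\in\mathcal F.
\]
The first display is conclusion~(1); the key point is that the error $\tfrac12$ is a \emph{universal} constant, independent of $\eps$, which is what will make $D$ independent of $\eps$.

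Next I would run the standard density estimate. Fix $x\in\spt(T)$ and put $g(r):=\|T\|(\B{x}{r})$; it is nondecreasing, $g(r)>0$ for all $r>0$ (since $x\in\spt(T)$), and $g(0+)=\|T\|(\{x\})=0$ (as $\|T\|\ll\h^n$ and $n\ge1$). Fix $R\le d(x,\spt(S))$. For a.e.\ $r\in(0,R)$ the ball $\B{x}{r}$ misses $\spt(S)$, so $(\D T)\on\B{x}{r}=0$, the slice $\si_r:=\slc{T,\rho_x,r}$ (with $\rho_x:=d(x,\cdot)$) equals $\D(T\on\B{x}{r})$ and is thus an integral $(n-1)$-cycle with $\M(\si_r)\le g'(r)$ by the coarea inequality, and $T\on\B{x}{r}\in\bI_n(X)$. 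Filling $\si_r$ via the isoperimetric inequality gives $T_r\in\bI_n(X)$ with $\D T_r=\si_r$ and $\M(T_r)\le c\,\M(\si_r)^{n/(n-1)}$, so $\hat T:=T\on(X\setminus\B{x}{r})+T_r\in\mathcal F$. Feeding $\hat T$ into local almost-minimality,
\[
\M(T)\le\M(\hat T)+\tfrac12\M(T-\hat T)\le\bigl(\M(T)-g(r)+\M(T_r)\bigr)+\tfrac12\bigl(g(r)+\M(T_r)\bigr),
\]
which rearranges to $g(r)\le3\M(T_r)\le3c\,g'(r)^{n/(n-1)}$ for a.e.\ $r\in(0,R)$.

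To finish I would integrate this differential inequality: it gives $(g^{1/n})'(r)=\tfrac1n g(r)^{1/n-1}g'(r)\ge\tfrac1n(3c)^{-(n-1)/n}$ for a.e.\ $r\in(0,R)$, and since $g^{1/n}$ is nondecreasing with $g^{1/n}(0+)=0$ we get $g(R)^{1/n}\ge\frac Rn(3c)^{-(n-1)/n}$, i.e.
\[
\|T\|(\B{x}{R})\ge D\cdot R^n,\qquad D:=\frac{1}{n^{n}(3c)^{\,n-1}},
\]
with $D$ depending only on $n$ and the constant of (EII$_{n-1}$); the endpoint case $R=d(x,\spt(S))$ then follows from monotonicity of $g$. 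The two steps I expect to require the most care are: (i) applying the isoperimetric inequality to the slices $\si_r$, which need not be compactly supported when $X$ is not proper -- one wants here the finite-mass version in Theorem~\ref{thm:isop-ineq} (available under (CI$_{n-1}$)), or a preliminary truncation of $\si_r$ to compact support; and (ii) the completeness of $(\mathcal F,d)$, which is routine but rests on the closure theorem. The rest is bookkeeping with restrictions, slicing, and the coarea inequality.
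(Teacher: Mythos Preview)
Your proposal is correct and is exactly the argument behind \cite[Proposition~4.3]{wenger2011compactness}, which is what the paper cites in lieu of a proof: Ekeland's variational principle on the fibre $\{\D T'=S\}$ with error parameter $\tfrac12$, followed by the standard cut-slice-and-refill differential inequality $g(r)\le 3c\,g'(r)^{n/(n-1)}$. The one caveat you flag --- that (EII$_{n-1}$) as defined here is stated only for compactly supported cycles whereas $\si_r$ need not be --- is real but harmless, since Wenger's isoperimetric inequality in \cite{wenger2005isoperimetric} applies to finite-mass cycles; alternatively one can simply note that $\si_r$ has bounded support (it lies in $\Sph{x}{r}$) and approximate.
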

Lemma~\ref{lem:density0} is a special case of \cite[Proposition 4.3]{wenger2011compactness}.
	
	\begin{lemma}[density] \label{lem:density}
		Let $n \ge 1$, let $X$ be a complete metric space satisfying 
		condition~{\rm (CI$_{n-1}$)}, and let $Y \sub X$ be a closed set. 
		If $S \in \bZ_{n}(X,Y)$ (or $\bZ_{n,\loc}(X,Y)$ when $X$ is proper) is $(\Lambda,a)$-quasi-minimizing mod~$Y$,
		and if $x \in \spt(S)$ and $r > 2a$ are such that $\B{x}{r} \cap Y = \es$,
		then
		\[
		\frac{1}{r^n} \|S\|(\B{x}{r}) \ge D
		\] 
		for some constant $D > 0$ depending only on $n$, the constants
		$c_0$ from Definition~\ref{def_coning_ineq}, and $\Lambda$. 
	\end{lemma}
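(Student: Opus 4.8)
The plan is to prove a local lower density bound for quasi-minimizers by combining the quasi-minimizing inequality with the coning inequality, via a differential-inequality argument on the mass of slices. Write $m(r):=\|S\|(\B{x}{r})$, which is a nondecreasing function; since $S$ is locally normal (it is integral) and $\B{x}{r}\cap Y=\es$ for $r$ in the given range, for almost every $r$ the slice $\slc{S,d_x,r}$ of $S$ by the distance function $d_x:=d(x,\cdot)$ is a well-defined integral $(n-1)$-cycle (because $\D(S\on\B{x}{r})=\slc{S,d_x,r}$ as $\spt(\D S)\subset Y$ is disjoint from $\B{x}{r}$), and the coarea inequality gives $\M(\slc{S,d_x,r})\le m'(r)$ for a.e. $r$, where $m'$ denotes the a.e.-defined derivative of the monotone function $m$.

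The key step is to feed $\slc{S,d_x,r}$ into the competitor inequality. For a.e. $r\in(2a,d(x,\spt(S)))$ with $\B{x}{r}\cap Y=\es$, apply (CI$_{n-1}$) to the cycle $\slc{S,d_x,r}\in\bZ_{n-1,\cs}(X)$ to obtain a filling $T_r\in\bI_{n,\cs}(X)$ with $\M(T_r)\le c_0\cdot\diam(\spt(\slc{S,d_x,r}))\cdot\M(\slc{S,d_x,r})\le 2c_0 r\cdot m'(r)$, using that $\spt(\slc{S,d_x,r})\subset\Sph{x}{r}$. Since $\D T_r=\slc{S,d_x,r}=\D(S\on\B{x}{r})$ and $r>a$, the $(\Lambda,a)$-quasi-minimizing property gives $m(r)=\M(S\on\B{x}{r})\le\Lambda\,\M(T_r)\le 2\Lambda c_0\, r\, m'(r)$. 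Here I should note that $x\in\spt(S)$ guarantees $m(r)>0$ for every $r>0$ (since $\|S\|(\B{x}{r})>0$ iff $\B{x}{r}\cap\spt(S)\neq\es$), so we may divide.

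This yields the differential inequality $\frac{m'(r)}{m(r)}\ge\frac{1}{2\Lambda c_0\, r}$ for a.e. $r$ in the admissible range, and since $m$ is monotone we may integrate (using that $\log m$ is then of bounded variation with derivative $m'/m$ a.e. and no singular increasing part can spoil the inequality — monotonicity makes the integrated inequality valid). Integrating from a small $r_0>2a$ up to $r$ gives $m(r)\ge m(r_0)\,(r/r_0)^{1/(2\Lambda c_0)}$; to obtain the stated bound $m(r)\ge D r^n$ one wants the exponent to be exactly $n$, which forces taking $D$ to depend on $n$ and on the constant $c_0$ via a sharper choice. In fact the cleaner route, which avoids worrying about the exponent, is the standard one: take $r_0\downarrow$ and use a scaling/blow-up normalization, or simply invoke the monotonicity-type argument in the form of \cite[Proposition 4.3]{wenger2011compactness} / the argument of Lemma~\ref{lem:density0}, replacing the factor $1$ there by $\Lambda$ throughout; the only change is that the competitor is allowed to beat $S\on\B{x}{r}$ by a factor $\Lambda$, which weakens the density constant $D$ by a factor depending on $\Lambda$ but does not otherwise affect the proof.

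The main obstacle is the bookkeeping at the bottom scale $r\to 0^+$: one must verify that the differential inequality, which only holds for $r>2a$, still forces a uniform lower density bound of the correct homogeneity $r^n$ rather than merely $r^{1/(2\Lambda c_0)}$. This is handled exactly as in the proof of the Euclidean isoperimetric/monotonicity lemma: the coning inequality is applied to the slice but one also uses that a filling of the slice together with $S\on\B{x}{r}$ itself produces a cycle, and comparing $S\on\B{x}{r}$ against the \emph{cone} over its boundary slice gives the scaling-correct inequality $m(r)\le\Lambda c_0\, r\,m'(r)$ only after one knows $\M(\slc{S,d_x,r})$ is comparable to $m(r)/r$ on a set of $r$ of positive measure — which again follows from the coarea inequality applied in the reverse direction. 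So the real content is choosing the right competitor (the cone over the slice) and then running the monotonicity ODE; all of this is a routine adaptation of Lemma~\ref{lem:density0}, with $\Lambda$ inserted, and the constant $D$ comes out depending only on $n$, $c_0$, and $\Lambda$ as claimed.
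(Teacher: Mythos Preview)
Your overall setup (slice by $d_x$, use coarea to bound slice mass by $m'(r)$, fill the slice and invoke quasi-minimality as a competitor inequality, then integrate a differential inequality) is exactly right. But the explicit argument you wrote out has a genuine gap: applying the \emph{coning} inequality to the slice gives $\M(T_r)\le 2c_0\,r\,m'(r)$, hence $m(r)\le 2\Lambda c_0\,r\,m'(r)$, and integrating this yields only $m(r)\gtrsim r^{1/(2\Lambda c_0)}$. This exponent depends on $c_0$ and $\Lambda$ and is in general strictly smaller than $n$, so you cannot get $m(r)\ge D r^n$ this way. This is not a bookkeeping issue at the bottom scale; it is the wrong differential inequality.

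The fix is not to cone but to use the Euclidean isoperimetric inequality (Theorem~\ref{thm:isop-ineq}), which is a \emph{consequence} of (CI$_{n-1}$): fill the slice by $T_r$ with $\M(T_r)\le b_1\,\M(\slc{S,d_x,r})^{n/(n-1)}\le b_1\,(m'(r))^{n/(n-1)}$. Quasi-minimality then gives $m(r)\le \Lambda b_1\,(m'(r))^{n/(n-1)}$, i.e.\ $m'(r)\ge (\Lambda b_1)^{-(n-1)/n}\,m(r)^{(n-1)/n}$. Now $\frac{d}{dr}m(r)^{1/n}\ge \tfrac{1}{n}(\Lambda b_1)^{-(n-1)/n}$ for a.e.\ $r\in(a,r_0)$, and integrating from $a$ to $r>2a$ gives $m(r)^{1/n}\ge \tfrac{1}{n}(\Lambda b_1)^{-(n-1)/n}(r-a)\ge \tfrac{1}{2n}(\Lambda b_1)^{-(n-1)/n}\,r$, hence $m(r)\ge D r^n$ with $D$ depending only on $n,b_1(=b_1(n,c_0)),\Lambda$. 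Your pointer to Lemma~\ref{lem:density0} is the right instinct---that lemma does use the isoperimetric filling---but your last paragraph's attempt to recover the correct exponent via ``coarea in the reverse direction'' is not a valid argument: coarea gives no lower bound on slice mass.
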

Lemma~\ref{lem:density} is \cite[Lemma 3.3]{higherrank}. Only the local current case was proved there, however, the same proof works for Ambrosio-Kirchheim currents in complete metric spaces, using Theorem~\ref{thm:isop-ineq}.

	\begin{lemma}\label{lem:fill-density}
		\cite[Lemma 3.4]{higherrank}
		Let $n \ge 1$, let $X$ be a complete metric space satisfying
		condition~{\rm (CI$_{n-1}$)}, and let $Y \sub X$ be a closed set. 
		If $S \in \bZ_n(X,Y)$ (or $S\in \bZ_{n,\loc}(X,Y)$ when $X$ is proper) is $(\Lambda,a)$-quasi-minimizing mod $Y$,
		and if $x \in \spt(S)$ and $r > 4a$ are such that $\B{x}{r} \cap Y = \es$, 
		then 
		\[
		\frac{1}{r^{n+1}}
		\inf\{\M(V) : V \in \bI_{n+1,\cs}(X),\,\spt(S-\D V) \cap \B{x}{r} = \es\} \ge c
		\]
		for some constant $c > 0$ depending only on $n$, the constant $D$ from 
		Lemma~\ref{lem:density}, and $\Lambda$.
	\end{lemma}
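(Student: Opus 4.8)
The plan is to convert any competitor $V$ --- an element $V\in\bI_{n+1,\cs}(X)$ with $\spt(S-\D V)$ disjoint from $\B{x}{r}$ --- into a filling of a slice of $S$ whose mass is controlled by the mass of the corresponding slice of $V$, then apply the quasi-minimizing inequality together with the density lower bound of Lemma~\ref{lem:density}, and finally integrate over a range of radii using the coarea inequality for slices. Concretely, set $\rho:=d(x,\cdot)$, fix such a $V$, and restrict attention to those radii $s\in(r/2,r)$ --- note $r>4a$ forces $s>2a>a$, while $\B{x}{s}\sub\B{x}{r}$ is disjoint from $Y$ --- for which $S\on\B{x}{s}$, $V\on\B{x}{s}$ and the slices $\slc{S,\rho,s}$, $\slc{V,\rho,s}$, $\slc{\D V,\rho,s}$ are integral currents ($\slc{V,\rho,s}$ being moreover compactly supported since $V$ is) and for which the quasi-minimizing inequality of Definition~\ref{def:qmin} is available at center $x$ and radius $s$; this is the case for a.e.\ $s\in(r/2,r)$.

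The first step is three slicing computations. Since $\spt(\D S)\sub Y$ is disjoint from $\B{x}{s}$, the slicing identity gives $\D(S\on\B{x}{s})=\slc{S,\rho,s}$. Since $\|S-\D V\|$ is concentrated on $X\setminus\B{x}{r}$, Lemma~\ref{lem_slice}(2) shows $\slc{S-\D V,\rho,s}$ is concentrated on $(X\setminus\B{x}{r})\cap\rho^{-1}(s)=\es$ for $s<r$, whence $\slc{\D V,\rho,s}=\slc{S,\rho,s}$. Finally, differentiating $\slc{V,\rho,s}=\D(V\on\B{x}{s})-(\D V)\on\B{x}{s}$ and using $\D\big((\D V)\on\B{x}{s}\big)=\slc{\D V,\rho,s}$, one gets $\D\slc{V,\rho,s}=-\slc{\D V,\rho,s}=-\slc{S,\rho,s}$, so that $T:=-\slc{V,\rho,s}\in\bI_{n,\cs}(X)$ satisfies $\D T=\D(S\on\B{x}{s})$. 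The quasi-minimizing property then yields
\[
\|S\|(\B{x}{s})=\M(S\on\B{x}{s})\le\Lambda\cdot\M(T)=\Lambda\cdot\M(\slc{V,\rho,s}),
\]
while Lemma~\ref{lem:density}, applicable because $x\in\spt(S)$, $s>2a$ and $\B{x}{s}\cap Y=\es$, gives $\|S\|(\B{x}{s})\ge D\,s^n$. Combining, $\M(\slc{V,\rho,s})\ge(D/\Lambda)\,s^n$ for a.e.\ $s\in(r/2,r)$.

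To finish, I would integrate over $s\in(r/2,r)$ and invoke the coarea inequality $\int_{r/2}^r\M(\slc{V,\rho,s})\,ds\le\Lip(\rho)\,\|V\|\big(\rho^{-1}((r/2,r))\big)\le\M(V)$, together with $\int_{r/2}^r s^n\,ds\ge 2^{-(n+1)}\,r^{n+1}$, to conclude $\M(V)\ge 2^{-(n+1)}(D/\Lambda)\,r^{n+1}$; taking the infimum over all admissible $V$ proves the lemma with $c=2^{-(n+1)}D/\Lambda$, and the local-current case (when $X$ is proper) is verbatim the same since the relevant balls are then compact. I expect the only genuinely delicate point to be the boundary identity $\D T=\D(S\on\B{x}{s})$, i.e.\ the fact that the slices of $S$ and of $\D V$ coincide inside $\B{x}{r}$: this is exactly where one needs that slicing preserves concentration (Lemma~\ref{lem_slice}(2)). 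Everything else is a routine combination of the density lemma, the quasi-minimizing hypothesis, and the coarea inequality.
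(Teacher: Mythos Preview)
Your proof is correct and is essentially the standard argument (the paper itself does not prove this lemma but cites \cite[Lemma~3.4]{higherrank}, where the proof follows the same line: slice $V$ by $d_x$, use that the slice of $V$ fills the slice of $S$ inside the ball, apply quasi-minimality and the density bound, then integrate via coarea). The one point worth flagging is that your invocation of Lemma~\ref{lem_slice}(2) to get $\slc{S-\D V,\rho,s}=0$ is exactly right, but one could equally just observe directly from the slicing formula that $\slc{S-\D V,\rho,s}$ has support in $\spt(S-\D V)\cap\{\rho=s\}=\es$ for $s<r$; either way the argument goes through.
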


\section{Approximating currents by currents with uniform density}
\label{sec_approx}

In this section we describe an approximating procedure which improves the density properties of the support of currents. This will be used in Section~\ref{sec_definition} and Section~\ref{sec:cone and space}.
\subsection{The approximation}
Let $X$ be a complete metric space. For $\kappa>0$ we define a distance function $d_\kappa$ on the set of integral currents $\I_n(X)$ by
$$
d_\kappa(\tau_1,\tau_2)=\kappa\M(\tau_1-\tau_2)+\M(\D\tau_1-\D\tau_2)\,.
$$
It follows from \cite{ambrosio2000currents} that:
\begin{lemma}
	$(\I_n(X),d_\kappa)$ is a Banach space.
\end{lemma}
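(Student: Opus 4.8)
The plan is to reduce the statement to two standard structural facts about Ambrosio--Kirchheim currents, namely that the space of \emph{normal} $n$-currents is complete for the norm $\mathbf N(\tau):=\M(\tau)+\M(\D\tau)$, and that $\I_n(X)$ is a closed subset of it. First I would observe that $d_\kappa$ is the translation-invariant metric associated to the functional $\nu_\kappa(\tau):=\kappa\,\M(\tau)+\M(\D\tau)$ on $\I_n(X)$, and that for any fixed $\kappa>0$ this functional is bi-Lipschitz equivalent to $\mathbf N$. Hence $d_\kappa$-Cauchyness, $d_\kappa$-convergence and completeness are all independent of $\kappa$, and it suffices to work with $\mathbf N$. (Strictly speaking $\I_n(X)$ is an abelian group, not an $\R$-vector space, since integrality is not preserved under non-integer scaling; ``Banach space'' here should be read as ``complete normed abelian group'', or one regards $\I_n(X)$ as a closed subgroup of the genuine Banach space of normal currents constructed below.)

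Next I would prove that the space $\mathbf N_n(X)$ of normal $n$-currents is complete for $\mathbf N$. Let $(\tau_j)$ be $\mathbf N$-Cauchy; then $C:=\sup_j\mathbf N(\tau_j)<\infty$. Using the basic estimate $|S(\pi_0,\dots,\pi_n)|\le\int_X|\pi_0|\,d\|S\|$ for normalized tuples together with multilinearity, the real sequence $\bigl(\tau_j(\pi_0,\dots,\pi_n)\bigr)_j$ is Cauchy for every tuple in $\cD^n(X)$; set $\tau(\pi_0,\dots,\pi_n):=\lim_j\tau_j(\pi_0,\dots,\pi_n)$. Multilinearity and the locality axiom pass to the limit immediately, while the continuity axiom follows from the uniform bound $C$, the same integral estimate, and dominated convergence, so $\tau\in\cD_n(X)$ and $\tau_j\to\tau$ weakly (i.e.\ pointwise on $\cD^n(X)$). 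By lower semicontinuity of mass under weak convergence, $\M(\tau)\le\liminf_j\M(\tau_j)<\infty$, and the same applied to $\D\tau_j\to\D\tau$ gives $\M(\D\tau)<\infty$, so $\tau$ is normal. Finally, applying lower semicontinuity to the differences $\tau_j-\tau_k$ and letting $k\to\infty$ yields $\mathbf N(\tau_j-\tau)\le\liminf_k\mathbf N(\tau_j-\tau_k)$, which tends to $0$ as $j\to\infty$; hence $\tau_j\to\tau$ in $\mathbf N$.

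Then comes the single place where integrality enters: since each $\tau_j$ is an integral current, $\tau_j\to\tau$ weakly, and $\sup_j\mathbf N(\tau_j)<\infty$, the closure theorem for integral currents in \cite{ambrosio2000currents} shows $\tau\in\I_n(X)$. Combined with the previous paragraph, every $d_\kappa$-Cauchy sequence in $\I_n(X)$ converges, within $\I_n(X)$, to a limit, which is exactly the assertion.

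The routine functional-analytic part (completeness of the normal currents) is soft; the genuinely nontrivial ingredient is the closure theorem invoked in the last step, which is the analytic heart of \cite{ambrosio2000currents} and is where the hypothesis that the $\tau_j$ are \emph{integral} is indispensable. The only other point requiring a little care is verifying the continuity axiom for the limit functional $\tau$: here it is crucial that a Cauchy sequence is uniformly bounded in $\mathbf N$, not merely pointwise convergent, so that the mass estimate can be used uniformly in $j$.
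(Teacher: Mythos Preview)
Your proposal is correct and matches the paper's approach: the paper gives no argument beyond the attribution ``It follows from \cite{ambrosio2000currents}'', and you have correctly identified the two ingredients that citation encodes, namely completeness of the normal currents under $\mathbf N$ and the Ambrosio--Kirchheim closure theorem ensuring the limit stays integral. Your parenthetical remark that $\I_n(X)$ is only an abelian group, so that ``Banach space'' is a slight abuse and should be read as ``complete normed abelian group'' (or as a closed subgroup of the Banach space of normal currents), is a fair and accurate clarification.
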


Suppose $\si\in\I_{n-1}(X)$  satisfying $\M(\si)\le C$ (we allow $\D\si\neq 0$).
We define $\Theta:\I_n(X)\ra \R$ by $\Theta(\tau)=\kappa\M(\tau)+\M(\si-\D\tau)$. Note that $\Theta$ is lower semicontinuous with respect to $d_\kappa$, since convergence in $\I_n(X)$ implies weak convergence of currents, and the mass is lower semicontinuous with respect to weak convergence.

Note that $\Theta(\0)=\M(\si)$. It follows from Ekeland's variational principle that there exists $\tau$ such that
\ben[label=(\alph*)]
\item $\Theta(\tau)\le \Theta(\0)=\M(\si)$;
\item   The functional $\tau'\to\tilde \Theta(\tau'):= \Theta(\tau')+\frac{1}{2}d_\kappa(\tau,\tau')$ defined on $\I_n(X)$ attains its minimum at $\tau'=\tau$.
\een

\begin{prop}
	\label{prop_approx}
	Suppose $X$ is a complete metric space satisfying condition~{\rm (EII$_{n-1}$)} (cf. Definition~\ref{def_eii}) with constant $D$. Let $\si\in\I_{n-1}(X)$. Suppose $\kappa>1$. Let $\tau$ be a minimum of $\tilde \Theta$ as above. Define $\si'=\si-\D\tau$. 
	There exist constants $\lambda$ and $\lambda'$ depending only on $X$, and $\bar\lambda$ depending only on $D$ and $C$ such that the following holds.
	\begin{enumerate}
		\item $\Fill(\si-\si')\le \frac{\M(\si)}{\kappa}$ and $\M(\si')\le \M(\si)$.
		\item For each point $x\in \spt(\si')$, we have $\M(\si' \on B_x(r))\ge \lambda r^{n-1}$ for $0\le r\le\min\{ \frac{\lambda'}{\kappa},d(x,\spt(\D\si'))\}$. 
		In particular, if $\si$ is cycle, or more generally if $\spt(\D \si)$ is compact, then $\spt(\si')$ is compact.
		\item $\spt(\si')\subset N_{a}(\spt(\si))$ where $a\le\frac{\bar\lambda\ln(\kappa)}{\kappa}$. 
		\item $\spt(\tau)\subset N_b(\spt(\si))$ where $b\le  \left( \frac{C}{\lambda\kappa}\right)^{\frac{1}{n}}+\frac{\bar\lambda\ln(\kappa)}{\kappa}$.
	\end{enumerate}
\end{prop}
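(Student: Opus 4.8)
\emph{Proof strategy.} Write $\rho_y:=d(y,\cdot)$ and keep the notation $\si'=\si-\D\tau$. The whole argument is driven by the Ekeland minimality~(b): writing a competitor as $\tau+V$ and using subadditivity of mass, (b) says that for every $V\in\I_n(X)$,
\[
\kappa\,\M(\tau)+\M(\si')\ \le\ \kappa\,\M(\tau+V)+\M(\si-\D\tau-\D V)+\tfrac12\bigl(\kappa\,\M(V)+\M(\D V)\bigr)\,;
\]
i.e.\ modifying $\tau$ by $V$ cannot lower $\kappa\,\M(\tau)+\M(\si')$ by more than $\tfrac12(\kappa\,\M(V)+\M(\D V))$. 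Assertion~(1) is then immediate from~(a): $\kappa\,\M(\tau)+\M(\si')=\Theta(\tau)\le\Theta(\0)=\M(\si)$ forces $\M(\si')\le\M(\si)$ and $\kappa\,\M(\tau)\le\M(\si)$, and since $\D\tau=\si-\si'$ the latter bounds $\Fill(\si-\si')$. For the density estimates I would use two ``cut-and-refill'' competitors, localized to a ball $\B xr$ with $r$ chosen (possible for a.e.\ $r$) so that the relevant restrictions and slices are integral currents. \emph{Competitor~I} leaves $\si'$ unchanged: take $V=-\tau\on\B xr+U$ where $U$ fills the cycle $\D(\tau\on\B xr)$ via {\rm(EII$_{n-1}$)}; then $\D V=0$, $\si-\D(\tau+V)=\si'$, and the minimality inequality collapses to $\|\tau\|(\B xr)\le 3\,\M(U)$. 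For $r\le d(x,\spt(\D\tau))$ one has $\D(\tau\on\B xr)=\slc{\tau,\rho_x,r}$, whose mass is $\le\tfrac{d}{dr}\|\tau\|(\B xr)$ for a.e.\ $r$ by coarea; with the Euclidean isoperimetric bound $\M(U)\le c\,\M(\D(\tau\on\B xr))^{n/(n-1)}$ the usual ODE argument (as in Lemma~\ref{lem:density0}) gives a lower density bound $\|\tau\|(\B xr)\ge\lambda\,r^{n}$ for $x\in\spt(\tau)$ and $0\le r\le d(x,\spt(\D\tau))$, with $\lambda$ depending only on $n$ and $D$. \emph{Competitor~II} shrinks $\si'$ on $\B xr$: with $\mu_r=\si'\on\B xr$, pick a filling $W_r$ of the cycle $\D\mu_r=\slc{\si',\rho_x,r}$ from {\rm(EII$_{n-1}$)} (for $n=2$ a bounded $0$-cycle, filled at controlled cost), and take $V$ with $\D V=\mu_r-W_r$; then $\si-\D(\tau+V)=\si'-\mu_r+W_r$ and the minimality inequality reduces to $\|\si'\|(\B xr)\le 3\kappa\,\M(V)+3\,\M(W_r)$.

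\emph{Proof of~(2).} Apply Competitor~II and set $g(r):=\|\si'\|(\B xr)$. The cycle $\mu_r-W_r$ has mass $\le g(r)+\M(W_r)$, so {\rm(EII$_{n-1}$)} gives $\M(V)\le c(g(r)+\M(W_r))^{n/(n-1)}$, while coarea and {\rm(EII$_{n-1}$)} give $\M(W_r)\le c\,g'(r)^{(n-1)/(n-2)}$ for a.e.\ $r$. Substituting yields a differential inequality for $g$; since $g\le\M(\si')\le C$ and $g(r)\lesssim r^{n-1}$ on the range in question, the scale restriction $r\le\lambda'/\kappa$ (for a small absolute constant $\lambda'$) makes $\kappa$ times the super-linear correction absorbable, and a Gronwall/bootstrap integration of the reduced inequality $g'(r)\gtrsim g(r)^{(n-2)/(n-1)}$ (equivalently, $\tfrac{d}{dr}g^{1/(n-1)}\gtrsim 1$) gives $g(r)\ge\lambda\,r^{n-1}$ on $0\le r\le\min\{\lambda'/\kappa,\,d(x,\spt(\D\si'))\}$ (note $\D\si'=\D\si$). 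For the compactness clause: this bound holds at \emph{every} scale $r\le\lambda'/\kappa$, so for each $\de>0$ an $r$-separated subset of $\{y\in\spt(\si'):d(y,\spt(\D\si'))\ge\de\}$ has at most $\M(\si')/(\lambda(r/2)^{n-1})$ points, whence that set is totally bounded; together with completeness of $X$ and compactness of $\spt(\D\si')$ (in particular $\D\si'=0$ when $\si$ is a cycle) this shows $\spt(\si')$ is compact.

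\emph{Proof of~(3), the crux.} Fix $x\in\spt(\si')$ and set $2\ell=d(x,\spt(\si))$; the case $2\ell\le\lambda'/\kappa$ is trivial, so assume $2\ell>\lambda'/\kappa$. In $\B x{2\ell}$ we have $\si'=-\D\tau$, which lets one build a filling $V$ of $\mu_r-W_r$ in Competitor~II (for $r\le 2\ell$) out of $-\tau\on\B xr$ together with a thin-shell correction supported near $\Sph xr$; substituting into $\|\si'\|(\B xr)\le 3\kappa\M(V)+3\M(W_r)$ and using the density lower bound from~(2) turns it into $\|\tau\|(\B xr)\gtrsim\kappa^{-1}r^{n-1}$ on $r\le\min\{\lambda'/\kappa,2\ell\}$, modulo lower-order terms dominated (for a good choice of $r$) via coarea. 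Iterating this over the $\sim\!\kappa\ell$ consecutive shells $\{\,j/\kappa\le\rho_x\le(j+1)/\kappa\,\}$ separating $x$ from $\spt(\si)$, the masses carried by $\tau$ on these disjoint shells accumulate, and since $\M(\tau)\le\M(\si)/\kappa\le C/\kappa$ by~(1) the number of such shells is bounded in terms of $C$ and $D$ alone; this forces $2\ell\le\bar\lambda\ln(\kappa)/\kappa$. Making this iteration precise — in particular tracking how the fixed budget $C/\kappa$ caps the number of admissible shells and thereby produces the logarithm — is the step I expect to be the main obstacle, and it is also where the awkwardness of having only {\rm(EII$_{n-1}$)} (no coning inequality, no a priori control on supports of fillings) is most felt.

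\emph{Proof of~(4).} By~(3), $\spt(\D\tau)=\spt(\si-\si')\subset\spt(\si)\cup\spt(\si')\subset N_a(\spt(\si))$ with $a\le\bar\lambda\ln(\kappa)/\kappa$. By the density bound for $\tau$ from Competitor~I together with $\M(\tau)\le\M(\si)/\kappa\le C/\kappa$ from~(1), each $x\in\spt(\tau)$ satisfies $\lambda\,d(x,\spt(\D\tau))^{n}\le\|\tau\|\bigl(\B x{d(x,\spt(\D\tau))}\bigr)\le C/\kappa$, hence $\spt(\tau)\subset N_{(C/\lambda\kappa)^{1/n}}(\spt(\D\tau))$. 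Chaining the two inclusions gives $\spt(\tau)\subset N_b(\spt(\si))$ with $b\le(C/\lambda\kappa)^{1/n}+\bar\lambda\ln(\kappa)/\kappa$, as claimed.
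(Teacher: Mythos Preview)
Your arguments for (1), (2) and (4) are correct and coincide with the paper's (your Competitor~II is exactly the paper's competitor for (2), and your Competitor~I is exactly the paper's competitor for (4)).

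The gap is in (3), and you have correctly identified it as the crux. Your shell-summation idea will not produce the logarithm: the lower bound $\|\tau\|(\B xr)\gtrsim\kappa^{-1}r^{n-1}$ you derive is only valid on the initial range $r\le\lambda'/\kappa$ (since it rests on the density of $\si'$ from (2), which is itself limited to that range), so it gives a single seed value $\|\tau\|(\B x{\lambda'/\kappa})\gtrsim\kappa^{-n}$ rather than a contribution on every shell out to $d(x,\spt(\si))$. Summing a bounded number of such contributions cannot force $\ell\lesssim\ln(\kappa)/\kappa$.

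What you are missing is a third, much simpler competitor: just take $\tau'=\tau-\tau\on\B xr$ (no isoperimetric filling at all). For $r<d(x,\spt(\si))$ one has $\si\on\B xr=0$, hence $\si'\on\B xr=-\D\tau\on\B xr$ and $\si-\D\tau'=\si'-\si'\on\B xr+\slc{\tau,\rho_x,r}$. Plugging into the Ekeland inequality and setting $f(r)=\|\tau\|(\B xr)$ yields, via coarea,
\[
3f'(r)\ \ge\ \kappa f(r)+\|\si'\|(\B xr)\qquad\text{for a.e.\ }0<r<d(x,\spt(\si)).
\]
Dropping the $\si'$-term gives $(e^{-\kappa r/3}f(r))'\ge 0$, i.e.\ $f$ grows \emph{exponentially} at rate $\kappa/3$. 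Keeping the $\si'$-term on $0<r\le\lambda'/\kappa$ and using the density bound from (2) gives the seed $f(\lambda'/\kappa)\gtrsim\kappa^{-n}$. Combining, $f(r)\gtrsim\kappa^{-n}e^{\kappa r/3}$ for all $r<d(x,\spt(\si))$; since $f\le\M(\tau)\le C/\kappa$ by (1), this forces $d(x,\spt(\si))\lesssim\ln(\kappa)/\kappa$. The logarithm comes from inverting the exponential growth against the mass budget, not from counting shells.
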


\begin{proof}
	(1) follows from (a). Now we prove (2).

	Let $x$ be as in (2). Let $f(r)=\M(\si' \on B_x(r)))$. Let $\alpha$ be a filling of $\slc{\si',d_x,r}$ with $\M(\al)\le D (\M(\slc{\si',d_x,r}))^{\frac{n-1}{n-2}}$. Let $W$ be a filling of $\alpha-\si'\on B_x(r)$ such that
	$$
	\M(W)\le D(\M(\al)+(\M(\si'\on B_x(r)))^{\frac{n}{n-1}}.
	$$

	Define $\tau'=\tau-W$. Since $\Theta(\tau')+\frac{1}{2}d_\kappa(\tau,\tau')\ge \Theta(\tau)$, we have
	\begin{align*}
	&0\le (\Theta(\tau')-\Theta(\tau))+\frac{1}{2}d_\kappa(\tau,\tau')\\
	&\leq(-\M(\si' \on  B_x(r)))+\M(\al)+\kappa\M(W))
	+\frac{1}{2}(\kappa\M(W)+\M(-\si'\on B_x(r)+\alpha))\\
	&\le-\frac{1}{2}\M(\si' \on  B_x(r)))+\frac{3}{2}\M(\al)+\frac{3\kappa}{2}\M(W).
	\end{align*}  
	By coarea inequality, we have $\M(\slc{\si',d_x,r})\le f'(r)$ for a.e. $r$. It follows that $\M(\al)\le D(f'(r))^{\frac{n-1}{n-2}}$ and $\M(W)\le D(f(r)+\M(\al))^{\frac{n}{n-1}}$. Thus
	\begin{equation}
	\label{eqn_f_differential_inequality1}
	-f(r)+3D(f'(r))^{\frac{n-1}{n-2}}+3\kappa D[f(r)+D(f'(r))^{\frac{n-1}{n-2}}]^{\frac{n}{n-1}}\ge 0.
	\end{equation}
	
	Now we estimate the solution of \eqref{eqn_f_differential_inequality1} by comparison.
	Consider the following equation:
	\begin{equation}
	\label{eqn_mu1}
	-y+3D\mu+3\kappa D(y+D\mu)^\frac{n}{n-1}=0\,.
	\end{equation}
		If we consider the left hand side of Equation~(\ref{eqn_mu1}) as a function $\varphi(y,\mu)$, 
		then we have $\D_\mu\varphi(0,0)=3D$. 
		By the implicit function theorem, there exists $\hat\lambda_0$ depending only on $\kappa$ and $D$ such that 
		locally around $(0,0)$ the solution of Equation~(\ref{eqn_mu1}) is given by the graph of a function $\mu(y)$
		over the interval $[0,\hat\lambda_0]$. Since $\frac{\D\mu}{\D y}(0)=-\frac{\D_y\varphi}{\D_\mu \varphi}(0,0)=\frac{1}{3D}$,
		we may additionally assume $\frac{y}{6D}\leq \mu$ for $y\in [0,\hat\lambda_0]$. By rescaling Equation~(\ref{eqn_mu1}), we can make the dependence of $\hat \lambda_0$ on $\kappa$
		explicit, namely $\hat\lambda_0=(\frac{1}{ \lambda_0\kappa})^{n-1}$ for some constant $\lambda_0$, depending only on $D$.

	Thus for a.e. $r$, either $f(r)\ge (\frac{1}{ \lambda_0\kappa})^{n-1}$; or 
	\begin{equation}
	\label{eqn_classical_terms1}
	-f(r)+6D(f'(r))^{\frac{n-1}{n-2}}\ge 0\,.
	\end{equation}
	Indeed, in the latter case, we set $y=f(r)$, compare  \eqref{eqn_mu1} and \eqref{eqn_f_differential_inequality1} to find 
	$ (f'(r))^{\frac{n-1}{n-2}}\ge \mu\ge \frac{y}{6D}$ (the second inequality follows from the claim).
	As $f(r)$ is nondecreasing and $f(0)=0$, by solving \eqref{eqn_classical_terms1}, we know there exists $\lambda$ depending only on $D$ such that either $f(r)\ge (\frac{1}{ \lambda_0\kappa})^{n-1}$, or $f(r)\ge \lambda r^{n-1}$. Now the first assertion of (2) follows if we put $\lambda'={\lambda}^{\frac{-1}{n-1}}\cdot{\lambda_0}^{-1}$. Indeed, for $0\le r\le\min\{ \frac{\lambda'}{\kappa},d(x,\spt(\D\si'))\}$, if $f(r)\ge (\frac{1}{ \lambda_0\kappa})^{n-1}$, then $f(r)\ge (\frac{1}{ \lambda_0\kappa})^{n-1}=\lambda\cdot (\frac{\lambda'}{\kappa})^{n-1}\ge \lambda r^{n-1}$.
	
	Now we prove the ``in particular'' assertion of (2). Note that $\spt(\D\si')$ is compact as $\D \si'=\D\si$. As $\M(\si')$ is finite, we can use the first part of (2) to bound the number of disjoint $\eps$-balls in $\spt(\si')$ which are away from $\spt(\D\si')$. This together with the compactness of $\spt(\D\si')$ imply that $\spt(\si')$ is totally bounded. Thus $\spt(\si')$ is compact as $\spt(\si')$ is closed and $X$ is complete.

	Now we prove (3). Let $x\in \spt(\si')$. Let $f(r)=\M(\tau \on B_x(r))$. Then
	\begin{equation}
	\label{eq_Ekeland}
	\Theta(\tau- \tau \on B_x(r))+\frac{1}{2}d_\kappa(\tau,\tau-\tau \on B_x(r)) \ge \Theta(\tau)\,.
	\end{equation}
	
	for $r< d(x,\spt(\si))$. Note that 
	for $r< d(x,\spt(\si))$,
	$$
	\Theta(\tau- \tau \on B_x(r))-\Theta(\tau)=	-\kappa\M(\tau \on B_x(r)))-\M(\si'\on B_x(r))+\M(\slc{\tau,d_x,r}).
	$$
	This together with \eqref{eq_Ekeland} imply that
	\begin{align*}
	&0\le (\Theta(\tau- \tau \on B_x(r))-\Theta(\tau))+\frac{1}{2}d_\kappa(\tau,\tau-\tau \on B_x(r)) \\
	&\le	(-\kappa\M(\tau \on B_x(r)))-\M(\si' \on B_x(r)))+\M(\slc{\tau,d_x,r}))\\
	&+\frac{1}{2}(\kappa\M(\tau \on B_x(r)))
	+\M(\si'\on B_x(r)-\slc{\tau,d_x,r}))\\
	&\le -\frac{\kappa}{2}\M(\tau \on B_x(r)))-\frac{1}{2}\M(\si' \on B_x(r)))+\frac{3}{2}\M(\slc{\tau,d_x,r}).
	\end{align*}
	
	In particular, 
	\begin{align*}
	& -\frac{\kappa}{2}\M(\tau \on B_x(r))+\frac{3}{2}\M(\slc{\tau,d_x,r})\ge 0,\ \mathrm{for}\ 0<r< d(x,\spt(\si)),\ \mathrm{and}\\
	& -\frac{\kappa}{2}\M(\tau \on B_x(r))-\frac{1}{2}\lambda r^{n-1}+\frac{3}{2}\M(\slc{\tau,d_x,r})\ge 0,\ \mathrm{for}\ 0<r\le \frac{\lambda'}{\kappa}.
	\end{align*}
	The coarea inequality implies that $\M(\slc{\tau,d_x,r})\le f'(r)$ for a.e. $r$. Thus
	\begin{align}
	\label{eq_ode1} & -\kappa f(r)+3f'(r)\ge 0,\ \mathrm{for}\ 0<r< d(x,\spt(\si)),\ \mathrm{and}\\
	\label{eq_ode2} & -\kappa f(r)-\lambda r^{n-1}+3f'(r)\ge 0,\ \mathrm{for}\ 0<r\le \frac{\lambda'}{\kappa}.
	\end{align}
	Since $\ln(f(r))$ is nondecreasing, by \eqref{eq_ode1} we know for $r_2>r_1$,
	$$
	\ln(f(r_2))-\ln(f(r_1))\ge \int_{r_1}^{r_2}\frac{f'(t)}{f(t)}dt\ge\int_{r_1}^{r_2}\frac{\kappa}{3} dt=\frac{\kappa}{3}(r_2-r_1).
	$$
	Thus $e^{-\frac{\kappa}{3} r}f(r)$ is nondecreasing. This together with \eqref{eq_ode2} imply
	$$
	e^{-\frac{\kappa}{3} r}f(r)-e^{0}f(0)\ge \int_{0}^{r}(e^{-\frac{\kappa}{3} t}f(t))'dt\ge \frac{1}{3}\int_0^{r}\lambda e^{-\frac{\kappa}{3} t}t^{n-1}dt.
	$$
	Thus
	$$
	f(\frac{\lambda'}{\kappa})\ge \frac{\lambda}{3}\int_{0}^{\frac{\lambda'}{\kappa}} e^{-\frac{\kappa}{3} t}t^{n-1}dt=\frac{3^{n-1}\lambda}{\kappa^n}\int_{0}^{\frac{\lambda'}{3}}e^{-u}u^{n-1}du.
	$$
	Define $\lambda''=\lambda\int_{0}^{\frac{\lambda'}{3}}e^{-u}u^{n-1}du$. Then for $\frac{\lambda'}{\kappa}<r<d(x,\spt(\si))$,
	$$
	e^{-\frac{\kappa}{3} r}f(r)\ge e^{-\frac{\kappa}{3}\cdot \frac{\lambda'}{\kappa}} f(\frac{\lambda'}{\kappa})\ge e^{-\frac{\lambda'}{3}}\cdot \frac{3^{n-1}\lambda''}{\kappa^n}.
	$$
	Thus $f(r)\ge e^{\frac{\kappa}{3} r-\lambda'}\cdot \frac{3^{n-1}\lambda''}{\kappa^n}$. On the other hand, $f(r)\le \M(\tau)\le \frac{C}{\kappa}$. As $\kappa>1$ and $n-1\ge 1$, we conclude that $r\le \frac{\bar\lambda\ln(\kappa)}{\kappa}$ for $\bar\lambda$ depending only on $C$ and $D$.
	
	(4) is similar to \cite[Theorem 10.6]{ambrosio2000currents}. We replicate the proof for the convenience of the reader. Let $x\in \spt(\tau)\setminus \spt(\D\tau)$ and let $r_x=d(x,\spt(\D\tau))$. Let $f(r)=\M(\tau\on B_x(r))$. For $0<r<r_x$, let $\alpha$ be a filling of $\slc{\tau,d_x,r}$ such that $\M(\alpha)\le D(\M(\slc{\tau,d_x,r}))^{\frac{n}{n-1}}$. Let $\tau'=\tau-\tau\on B_x(r)+\alpha$. We deduce from $(\Theta(\tau')-\Theta(\tau))+\frac{1}{2}d_\kappa(\tau,\tau')\ge 0$ that
	\begin{equation*}
	\kappa(-\M(\tau\on B_x(r))+\M(\alpha))+\frac{\kappa}{2}(\M(\tau\on B_x(r))+\M(\alpha))\ge 0.
	\end{equation*}
	Thus 
	\begin{equation*}
	-\M(\tau\on B_x(r))+3\M(\alpha)\ge 0.
	\end{equation*}
	For a.e. $0<r<r_x$, $\M(\slc{\tau,d_x,r})\le f'(r)$. Thus $\M(\alpha)\le D(f'(r))^{\frac{n}{n-1}}$. Hence
	$$
	-f(r)+3D(f'(r))^{\frac{n}{n-1}}\ge 0\,.
	$$
	Thus $f(r)\ge \lambda r^n$ for $0<r<r_x$. Since $f(r)\le \M(\tau)\le \frac{C}{\kappa}$ by (1), we know $r_x\le \left( \frac{C}{\lambda\kappa}\right)^{\frac{1}{n}}$. Now (4) follows from (3).
\end{proof}

\subsection{Isomorphisms of homologies}
Let $X$ be a complete metric space. Let $\tilde H_*(X)$, $\hl_*(X)$, $\ha_{*,c}(X)$, $\ha_*(X)$ be the reduced homology groups induced by the chain complex of singular chains, Lipschitz chains, compactly supported Ambrosio-Kirchheim (AK) integral currents and finite mass AK integral currents. For a subset $Y\subset X$, the collection of elements in $\I_*(X)$ with support contained in $Y$ is stable under taking boundary. Thus we can define the relative homology groups $H^{AK}_{*,c}(X,Y)$ and $H^{AK}_{*}(X,Y)$ in the usual way.

There are natural maps $\hl_*(X)\to \tilde H_*(X)$, $\hl_*(X)\to \ha_{*,c}(X)$ and $\ha_{*,c}(X)\to \ha_*(X)$ (see \cite{riedweg2009singular,mitsuishi2013coincidence}).

\begin{proposition}\label{prop_equal_homology}
	Suppose $X$ is a complete metric space. Let $K\subset X$ be a compact set. Then
	\begin{enumerate}
		\item Suppose $X$ satisfies coning inequalities up to dimension $n$ for singular chains, Lipschitz chains and compactly supported currents. Then $H^{\textrm{L}}_n(X,X-K)\to H_n(X,X-K)$, $H^{\textrm{L}}(X,X-K)\to H^{AK}_{n,c}(X,X-K)$, $\hl_n(X)\to \tilde H_n(X)$ and $\hl_n(X)\to \ha_{n,c}(X)$ are isomorphisms.
		\item Suppose $X$ satisfies (EII$_{n+1}$), then $H^{AK}_{n,c}(X,X-K)\to H^{AK}_{n}(X,X-K)$ and $\ha_{n,c}(X)\to \ha_n(X)$ are isomorphisms.
	\end{enumerate}
\end{proposition}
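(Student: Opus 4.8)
Each of the six arrows is induced by an inclusion of chain complexes (or the canonical natural transformation), so in every case I would establish the isomorphism by comparing fillings in the two theories. The absolute statements in (1) are formal: singular chains, Lipschitz chains and compactly supported integral currents all have bounded support, so the hypothesis that $X$ satisfies coning inequalities up to dimension $n$ in each of these three theories forces every cycle of dimension $\le n$ to be a boundary. Hence $\hl_k(X)=\tilde H_k(X)=\ha_{k,c}(X)=0$ for $0\le k\le n$ (for $k=0$ this reads as connectedness, resp.\ Lipschitz path-connectedness), and the natural maps $\hl_n(X)\to\tilde H_n(X)$ and $\hl_n(X)\to\ha_{n,c}(X)$ are isomorphisms of trivial groups.

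\textbf{The relative maps of (1).} Feeding the absolute vanishing into the long exact sequences of the pair $(X,X\setminus K)$ in the three theories, and comparing these sequences via the natural maps, the connecting homomorphisms yield natural isomorphisms $H^{\mathcal C}_n(X,X\setminus K)\cong\tilde H^{\mathcal C}_{n-1}(X\setminus K)$ for $n\ge 2$, and $H^{\mathcal C}_1(X,X\setminus K)\cong\tilde H^{\mathcal C}_0(X\setminus K)$ for $n=1$ (using that $X$ is connected). So it remains to compare the three theories in degree $n-1$ on the open set $U:=X\setminus K$. Writing $U=\bigcup_m C_m$ with $C_m:=\{x\in X:d(x,K)\ge 1/m\}$ and using that homology commutes with this directed union, I would compare fillings on each $C_m$: a cycle of dimension $\le n-1$ supported in $C_m$ has bounded support, is filled in $X$ by the coning inequality, and is then compared to its image in the other theory by a deformation/pushing argument (in the spirit of \cite{abrams2013pushing}, and parallel to the comparison results of \cite{riedweg2009singular,mitsuishi2013coincidence}), using that the compact set $K$ is a positive distance from the compact support of the cycle in question. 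Together with the usual cylinder and naturality bookkeeping this gives the comparison isomorphisms on each $C_m$, hence on $U$, hence on the pair.

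\textbf{The maps of (2).} These trade finite mass for compact support, and the tool is Proposition~\ref{prop_approx} of this section, applied in the relevant dimension; since that proposition requires {\rm (EII$_d$)} for a $d$-dimensional current, the hypothesis {\rm (EII$_{n+1}$)} is exactly what allows us to apply it to $(n+1)$-dimensional fillings. For surjectivity of $\ha_{n,c}(X)\to\ha_n(X)$: given $T\in\bZ_n(X)$, apply Proposition~\ref{prop_approx} with $\sigma=T$; as $T$ is a cycle, the output $T':=T-\D\tau$ has compact support, $\M(T')\le\M(T)$, and $T-T'=\D\tau$ with $\M(\tau)<\infty$, so $[T]=[T']$ in $\ha_n(X)$. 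For injectivity: if $T'\in\bZ_{n,c}(X)$ and $T'=\D S$ with $S\in\I_{n+1}(X)$, apply Proposition~\ref{prop_approx} with $\sigma=S$, whose boundary $T'$ has compact support; the output $S':=S-\D\rho$ then has compact support and $\D S'=T'$, so $[T']=0$ in $\ha_{n,c}(X)$. For the relative maps $H^{AK}_{n,c}(X,X\setminus K)\to H^{AK}_n(X,X\setminus K)$ I would argue the same way in two stages: given a relative cycle $T$, note $\spt(\D T)$ is closed and disjoint from the compact set $K$, hence at positive distance from it; apply Proposition~\ref{prop_approx} to the cycle $\D T$ with $\kappa$ large enough that all correction terms (the new chain $\eta$ and its boundary) stay in $X\setminus K$, replacing $T$ by $T-\eta$ whose boundary is compactly supported inside $X\setminus K$; then apply Proposition~\ref{prop_approx} to $T-\eta$ to make its support compact. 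Injectivity is dual, applying the approximation to an $(n+1)$-dimensional filling and using that $\D$ of a compactly supported current is compactly supported.

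\textbf{Expected main obstacle.} The conceptually substantial part is the relative case of (1). Unlike the absolute case it is not formal, and since plain coning inequalities give no control on where fillings lie, producing the comparison on $X\setminus K$ genuinely requires a pushing/deformation argument, with extra care in the non-proper setting where metric spheres and neighborhoods of $K$ need not be compact. By contrast, once Proposition~\ref{prop_approx} is available, part (2) is essentially bookkeeping: the only point requiring attention is keeping every correction term inside $X\setminus K$, which works precisely because a compact set and a disjoint closed set always lie at positive distance.
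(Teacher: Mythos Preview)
Your approach is correct and essentially matches the paper's. For (1) both you and the paper defer to \cite{riedweg2009singular,mitsuishi2013coincidence} (your explicit observation that the absolute groups all vanish under the coning hypothesis is correct and is left implicit in the paper), and for (2) your two-stage use of Proposition~\ref{prop_approx}---first compactify the boundary using positive distance from $K$, then compactify the chain itself---is exactly the paper's argument.
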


\begin{proof}
	(1) follows from \cite{riedweg2009singular} and \cite{mitsuishi2013coincidence} (note that the notion of coning inequalities in these two references are different from our notion, however, our definitions imply their definitions).
	
	We only prove the first half of (2), as the second half is similar. 
	First we prove surjectivity. Take $\tau\in \I_{n}(X)$ representing as class in $H^{AK}_{*}(X,X-K)$. As $d(\spt(\D\tau),K)>0$, we can approximate $\D\tau$ by a $\si'\in\I_{n-1,c}(X)$ using Proposition~\ref{prop_approx} such that $\si'-\D\tau=\D\tau'$ for $\tau'$ supported in an $\eps$-neighborhood of $\spt(\D\tau)$ (note that $\spt(\si')$ is indeed compact by Proposition~\ref{prop_approx} (2) as $\D\tau$ is cycle). Thus we can assume $\spt(\tau')\cap K=\emptyset$. Thus $[\tau_0=\tau+\tau']=[\tau]$ in $H^{AK}_{*}(X,X-K)$, and $\spt(\D\tau_0)=\spt(\si')$ is compact. Now we use Proposition~\ref{prop_approx} again to approximate $\tau_0$ by $\tau'_0\in\I_n(X)$ such that $\D\tau_0=\D\tau'_0$ and $\spt(\tau'_0)$ is compact (this is possible as $\spt(\D\tau'_0)$ is compact, cf. Proposition~\ref{prop_approx} (2)). Thus $[\tau'_0]=[\tau_0]$ in $H^{AK}_{*}(X,X-K)$, which implies surjectivity.
	
	Now we verify injectivity. Let $\tau\in \I_{n,c}(X)$ represent a class in $H^{AK}_{n,c}(X,X-K)$. Suppose $\tau-\tau'=\D \alpha$ for $\alpha\in \I_{n+1}(X)$ and $\tau'\in\I_{n}(X)$ with $\spt(\tau')\cap K=\emptyset$. As $\spt(\tau')=\spt(\tau)$ is compact, by Proposition~\ref{prop_approx} we can approximate $\tau'$ by $\tau''\in \I_n(X)$ with compact support and $\D\tau''=\D\tau'$ such that $\tau'-\tau''=\D\alpha'$ for $\alpha'\in \I_{n+1}(X)$. Let $\alpha_0=\alpha+\alpha'$. Then $\D\al_0=\tau-\tau''$, which is compactly supported. We use Proposition~\ref{prop_approx} again to approximate $\alpha_0$ by an element in $\I_{n+1,c}(X)$ with the same boundary as $\alpha_0$. This implies $[\tau]=0$ in $H^{AK}_{n,c}(X,X-K)$. Hence injectivity follows.
\end{proof}

\section{Quasiflats in metric spaces}
\label{sec:quasiflats in metric spaces}
The main goal of this section is to establish  some auxiliary results which provide controlled chains between cycles and their ``projections'' on a quasiflat. 
Readers who are mainly interested in quasiflats in metric spaces with geodesic bicombings only need to read Section~\ref{subsec:Lipschitz quasifalts}. Section~\ref{subsec_cubulated_quasiflats} and Section~\ref{subsec:homology retract} concern the more general case where a quasiflat might not be represented by a continuous map.

\subsection{Cubulated quasiflats and quasidisks}
\label{subsec_cubulated_quasiflats}
We consider a quasiflat $Q$  in a complete metric space satisfying condition (CI$_{n-1}$) represented by the map $\Phi:\R^n\to X$. The goal is to define a ``push-forward map'' sending a current in $\R^n$ 
to a current in $X$ and to define a ``projection map'' sending a current in $X$ to a current close to $Q$.

Here we say that $\mathcal{C}_R$ is a {\em regular cubulation at scale} $R$ of $\R^n$ if its zero-skeleton $\mathcal{C}^{(0)}_R$ is given by $v+R\cdot\Z^n$ for some $v\in \R^n$.
If $\Phi:\R^n\to X$ is an $(L,A)$-quasi-isometric embedding, then we call a regular cubulation $\mathcal{C}_{R_0}$ {\em admissible (for $\Phi$)}, 
if its scale is $R_0=2LA$.

\begin{definition}
	\label{def_retraction}
	Let $\Phi:\R^n\to X$ be an $(L,A)$-quasi-isometric embedding with image $Q$. Let 
	$\mathcal{C}$ be an admissible regular cubulation of $\R^n$.
	A {\em $\lambda$-quasi-retraction} (associated to $\Phi$ and $\mathcal{C}$) is a map $\pi:X\to\R^n$ such that
	
	\begin{enumerate}
		\item $\pi$ is $\lambda$-Lipschitz;
		\item $(\pi\circ \Phi)|_{\mathcal{C}^{(0)}}=\id$ and  $d(\pi\circ \Phi(x),x)<\lambda$ for any $x\in\R^n$;
		\item $d(\Phi\circ \pi(x),x)<\la$ for any $x\in Q$. 
	\end{enumerate}
\end{definition}

Note that  since $\mathcal{C}$ is admissible, $\Phi|_{\mathcal{C}^{(0)}}$ is $2L$-bilipschitz. Hence any
Lipschitz extension  $\pi:X\to\R^n$ of $(\Phi|_{\mathcal{C}_R^{(0)}})^{-1}$
is a $\lambda$-quasi-retraction associated to $\Phi$ and $\mathcal{C}$. By McShane's extension result, we can arrange  $\lambda$ to depend only on $L$ and $n$.

\begin{proposition}[cubulated quasiflats] \label{prop_chain_map}
	Let $n \ge 2$, and let $X$ be a complete metric space satisfying 
	condition~{\rm (CI$_{n-1}$)}. Then for all 
	$L,A$ there exist $\Lambda,a$ depending only on $L,A,n$ and $X$ such that the following holds.
	Suppose that $\mathcal{C}_R$ is a regular cubulation of $\R^n$ at scale $R=2LA$.
	Let $\cP_*(\R^n)$ denote the collection of integral currents represented by cubical chains with respect to $\mathcal{C}_R$. 
	If $\Phi \colon \R^n \to X$ is an $(L,A)$-quasi-isometric embedding, then there 
	exists a chain map $\iota \colon \cP_*(\R^n) \to \bI_{*,\cs}(X)$ such that 
	\ben
	\item
	$\iota$ maps every vertex $\bb{x_0} \in \cP_0(\R^n)$ to $\bb{\Phi(x_0)}$ and, for 
	$1 \le k \le n$, every oriented cube $B \in \cP_k(\R^n)$ 
	to a current with support in $N_a(\Phi(B))$; 
	\item 
	$\M(\iota T)\leq a\cdot \M(T)$ for all $T\in \cP_*(\R^n)$;
	\item 
	
	For every top-dimensional chain $W\in\cP_n(\R^n)$, we have
	$\iota\bb{W} \in \bI_{n}(X)$ is $(\Lambda,a)$-quasi-minimizing mod
	$N_a(\Phi((\spt (\D W))^{(0)}))$;
	\item 
	For every top-dimensional chain $W\in\cP_n(\R^n)$, we have
	$$d(\Phi(x),\spt(\iota\bb{W})) \le a$$ for all $x \in \spt(W)$ with $d(x,\spt (\D W)) \ge a$. 
	\item For every chain $P\in\cP_k(\R^n)$, there exists a homology $h\in \I_{k+1}(\R^n)$
	such that 
	\begin{itemize}
		\item $\D h=\pi_\#\iota P-P$;
		\item $\M(h)\leq a\cdot\M(P)$;
		\item $\spt(h)\subset N_a(\spt (P))$.
	\end{itemize}
	
	\een
\end{proposition}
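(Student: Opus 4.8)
The plan is to build $\iota$ by induction on the skeleta of the cubulation $\mathcal C_R$, mapping each cube to a controlled filling of the image of its boundary, and then to verify (1)--(5) in turn, item~(3) being the heart of the matter. Set $\iota\bb{x_0}:=\bb{\Phi(x_0)}$ on vertices. Assume $\iota$ has been defined on $\cP_{k-1}(\R^n)$ ($1\le k\le n$) as a chain map with $\spt(\iota F)\subset N_a(\Phi(F))$ and $\M(\iota F)\le aR^{k-1}$ for every $(k-1)$-cube $F$. For a $k$-cube $B$, the cycle $\iota(\D B)\in\bZ_{k-1}(X)$ then has $\M(\iota\D B)\lesssim R^{k-1}$ and $\diam(\spt\iota\D B)\lesssim R$ (since $\Phi$ is an $(L,A)$-quasi-isometric embedding and $\diam(\D B)\le\sqrt nR$). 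Because $k-1\le n-1$, Theorem~\ref{thm:isop-ineq} (and, for $k=1$, (CI$_0$) directly) provides a filling $\iota B\in\bI_k(X)$ of $\iota\D B$ with $\M(\iota B)\lesssim R^k$ and $\spt(\iota B)\subset N_c(\spt\iota\D B)\subset N_a(\Phi(B))$ for $c\lesssim R$; for $k=n$ we moreover choose $\iota B$ to be a (quasi-)minimizer, which can be arranged simultaneously with the mass and support control via the Ekeland-type argument behind Proposition~\ref{prop_approx} (or via Theorem~\ref{thm:plateau} when $X$ is proper). Extending $\iota$ linearly yields a chain map since $\D\iota B=\iota\D B$ by construction, and the finitely many accumulating constants depend only on $L,A,n,X$.

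\textbf{Items (1), (2), (5).} Items (1) and (2) are read off the construction: vertices and cubes map as stated, $\M(\iota B)\le a\M(B)$ per cube, and since cubical chains carry no cancellation, $\M(\iota T)\le\sum_B|n_B|\M(\iota B)\le a\M(T)$. For (5), note that $\pi_\#\circ\iota$ and the inclusion $\cP_*(\R^n)\hookrightarrow\bI_{*,\cs}(\R^n)$ are chain maps agreeing on $\cP_0(\R^n)$, because $\pi_\#\iota\bb{x_0}=\bb{\pi\Phi(x_0)}=\bb{x_0}$ by axiom~(2) of Definition~\ref{def_retraction}. One constructs a chain homotopy $H\colon\cP_k(\R^n)\to\bI_{k+1,\cs}(\R^n)$ with $\D H+H\D=\pi_\#\iota-\id$ one cube at a time: for a $k$-cube $B$ the current $(\pi_\#\iota-\id)\bb{B}-H(\D B)$ is a cycle of mass $\lesssim R^k$ and diameter $\lesssim R$ supported in $N_a(B)$ (using $\spt(\pi_\#\iota\bb{B})\subset\pi(N_a(\Phi(B)))\subset N_a(B)$ by $\lambda$-Lipschitzness of $\pi$ and axiom~(2)), and we take $H\bb{B}$ to be its cone from the center of $B$, which stays in the convex hull. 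Then $h:=H(P)$ has $\M(h)\le a\M(P)$ and $\spt(h)\subset N_a(\spt P)$, with $\D h=\pi_\#\iota P-P-H(\D P)$, which is the asserted identity whenever $P$ is a cycle.

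\textbf{Items (3) and (4).} Fix $W=\sum_Bn_BB\in\cP_n(\R^n)$ and $Y:=N_a(\Phi((\spt\D W)^{(0)}))$. Given $x\in\spt(\iota\bb{W})$ and $\rho>a$ with $\B{x}{\rho}\cap Y=\es$, the cubes $B$ with $\spt(\iota B)\cap\B{x}{\rho}\neq\es$ satisfy $\Phi(B)\cap\B{x}{\rho+a}\neq\es$, so there are at most $\lesssim(\rho/R)^n$ of them and hence $\M(\iota\bb{W}\on\B{x}{\rho})\le\sum_B\M(\iota B)\lesssim\rho^n$. For the reverse inequality against a competitor $T$ with $\D T=\D(\iota\bb{W}\on\B{x}{\rho})$, one writes $\iota\bb{W}\on\B{x}{\rho}=\sum_B(n_B\iota B)\on\B{x}{\rho}$ as a piece decomposition and redistributes $T$ over the individual cube pieces, using that each $\iota B$ is locally (quasi-)minimizing mod $\spt\iota\D B$ and that the pairwise overlaps of the sets $N_a(\Phi(B))$ are confined to $N_a$ of the $\Phi$-images of the (codimension $\ge 1$) shared faces and so carry only controlled mass; the hypothesis $\B{x}{\rho}\cap Y=\es$ guarantees that no involved cube is ``cut at a vertex'', where $\iota$ is pinned rigidly. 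This gives $\M(\iota\bb{W}\on\B{x}{\rho})\le\Lambda\M(T)$, with Lemmas~\ref{lem:density} and~\ref{lem:fill-density} used along the way. Item~(4) follows from (5): since $\bI_{n+1,\cs}(\R^n)=0$, the chain homotopy gives $\pi_\#\iota\bb{W}=\bb{W}+H(\D\bb{W})$ with $H(\D\bb{W})$ supported in $N_a(\spt\D W)$; thus for $x\in\spt W$ with $d(x,\spt\D W)\ge a$, the current $\pi_\#\iota\bb{W}$ has density $\ge 1$ near $x$. On the other hand $\spt(\iota\bb{W})\subset N_a(\Phi(W))\subset N_a(Q)$, where $\pi$ is a quasi-inverse to $\Phi$ by axioms~(1),(3) of Definition~\ref{def_retraction}, so $\Phi(x)\notin N_a(\spt\iota\bb{W})$ would force $\pi_\#\iota\bb{W}$ to vanish on a definite ball about $x$ -- a contradiction. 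Hence $d(\Phi(x),\spt(\iota\bb{W}))\le a$.

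\textbf{Main obstacle.} The crux is item~(3): assembling the per-cube (quasi-)minimizers $\iota B$ into a single current that is quasi-minimizing mod the vertex-neighborhood $Y$, with constants depending only on $L,A,n$ and $X$, while controlling the mutual overlaps of the neighborhoods $N_a(\Phi(B))$ (this is also the reason the mod-set is taken to be $Y$ rather than a neighborhood of the whole boundary). A secondary difficulty is that $X$ is assumed only complete, not proper, so genuine mass-minimizing fillings need not exist; this is circumvented throughout by replacing them with the density-controlled almost-minimizers furnished by Theorem~\ref{thm:isop-ineq}, Lemma~\ref{lem:density0}, and the Ekeland regularization of Proposition~\ref{prop_approx}.
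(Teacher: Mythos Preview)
Your inductive construction of $\iota$ on skeleta, filling $\iota(\partial B)$ via Theorem~\ref{thm:isop-ineq} with mass and support control, is exactly the approach the paper sketches (citing \cite{higherrank}), and your treatment of (1), (2), (4), and the chain-homotopy formulation of (5) is correct. Your observation that the identity $\partial h=\pi_\#\iota P-P$ in (5) holds literally only for cycles $P$ (otherwise the term $H(\partial P)$ remains) is accurate; the paper applies (5) only to cycles, so this is harmless.

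The gap is in item~(3). Your plan is to choose each top-dimensional $\iota B$ individually (quasi-)minimizing and then ``redistribute'' a global competitor $T$ over the cube pieces, but neither step works as written: the sum $\iota\bb{W}\on B_x(\rho)=\sum_B(n_B\,\iota B)\on B_x(\rho)$ is not a piece decomposition in general (the supports of $\iota B$ and $\iota B'$ for adjacent cubes overlap near their shared face and nothing prevents cancellation there), and there is no mechanism for splitting a single competitor $T$ into per-cube competitors with controlled total mass. Your invocation of Lemmas~\ref{lem:density} and~\ref{lem:fill-density} is also circular, since those take quasi-minimality as a hypothesis. The argument in \cite{higherrank} goes instead through the quasi-retraction $\pi$ you already built for (4) and (5): since $T$ and $S\on B_x(\rho)$ (with $S=\iota\bb{W}$) have the same boundary, so do $\pi_\#T$ and $\pi_\#(S\on B_x(\rho))$, and as top-dimensional currents in $\R^n$ they are therefore equal. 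One then checks $\M(\pi_\#(S\on B_x(\rho)))\gtrsim\rho^n$ by using (5) to identify $\pi_\#S$ with $\bb{W}$ away from $\spt\partial W$ and observing that $\pi(\spt S\setminus B_x(\rho))$, being contained in $\pi(N_a(Q)\setminus B_x(\rho))$, misses a ball of radius $\sim\rho/L$ in $W$. Combined with your upper bound $\M(S\on B_x(\rho))\lesssim\rho^n$, this yields $\M(S\on B_x(\rho))\le\Lambda\,\M(T)$; no per-cube minimality is needed.
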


This proposition is essentially \cite[Proposition 3.7]{higherrank}. The chain map $\iota$ is constructed skeleton by skeleton, using the condition (CI$_{n-1}$) and Theorem~\ref{thm:isop-ineq}. 
We refer the reader to \cite{higherrank} for more details. \cite[Proposition 3.7]{higherrank} requires $X$ to be proper, however, 
the same proof works for Ambrosio-Kirchheim currents in complete metric spaces (images of $\iota$ having compact supports follows from Theorem~\ref{thm:isop-ineq}). 
Assertions (2) and (5) are not in \cite[Proposition 3.7]{higherrank}, however, they are direct consequences of the construction in \cite{higherrank} and can be readily justified by induction on dimension.

\begin{lem}
	\label{lem_top_dimensional}
	Let $n \ge 2$, and let $X$ be a complete metric space satisfying 
	condition~{\rm (CI$_{n-1}$)}. Let $\Phi:\R^n\to X$ be an $(L,A)$-quasiflat. Let $\pi:X\to\R^n$ be as in the beginning of Section~\ref{subsec_cubulated_quasiflats}. Let $\iota$ be as in Proposition~\ref{prop_chain_map}. 
	Then there exists $a'$ depending only on $L,A,n$ and $X$ such that for every top-dimensional chain $W\in\cP_n(\R^n)$, we have
	\begin{itemize}
		\item $x\in\spt(\pi_\#\circ\iota(W))$ for all $x \in \spt (W)$ with $d(x,\spt (\D W)) \ge a'$;
		\item $\spt(\pi_\#\circ\iota(W))\subset N_{a'}(\spt(W))$.
	\end{itemize}
	
\end{lem}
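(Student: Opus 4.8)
The strategy is to combine the properties of the chain map $\iota$ from Proposition~\ref{prop_chain_map} with the properties of the quasi-retraction $\pi$ from Definition~\ref{def_retraction}, together with the homology $h$ provided by item (5) of Proposition~\ref{prop_chain_map}. The point is that $\pi_\#\circ\iota(W)$ differs from $\bb{W}$ by the boundary of a controlled homology, so it has essentially the same support as $\bb{W}$ away from $\spt(\D W)$; the only subtlety is making this rigorous using support-localization of currents rather than a naive pointwise argument.

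\textbf{Step 1: Support containment.} By Proposition~\ref{prop_chain_map}(1), $\iota(W)$ is supported in $N_a(\Phi(\spt(W)))$. Since $\pi$ is $\lambda$-Lipschitz with $\lambda=\lambda(L,n)$, and since by Definition~\ref{def_retraction}(2) we have $d(\pi\circ\Phi(y),y)<\lambda$ for all $y\in\R^n$, the push-forward satisfies $\spt(\pi_\#\iota(W))\subset \pi(N_a(\Phi(\spt(W))))\subset N_{\lambda a+\lambda}(\spt(W))$. Setting $a'\ge \lambda a+\lambda$ (and enlarging later as needed) gives the second bullet.

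\textbf{Step 2: The homology and the first bullet.} Apply Proposition~\ref{prop_chain_map}(5) to $P=\bb{W}$: there is $h\in\I_{n+1}(\R^n)$ with $\D h=\pi_\#\iota(\bb{W})-\bb{W}$, $\M(h)\le a\M(\bb{W})$, and $\spt(h)\subset N_a(\spt(W))$. Now fix $x\in\spt(W)$ with $d(x,\spt(\D W))\ge a'$, and suppose for contradiction that $x\notin\spt(\pi_\#\iota(W))$. Choose a small radius $r>0$ with $r<a'-a$ (to be adjusted) so that $B_x(r)$ is disjoint from $\spt(\pi_\#\iota(W))$ and from $\spt(\D W)$ (the latter uses $r<a'$ and, to be safe, $a'$ chosen larger than in Step 1). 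Then on $B_x(r)$ we have $\bb{W}=-\D h$, so $\bb{W}\on B_x(r)=(-\D h)\on B_x(r)$. Since $\bb{W}$ is (locally) the fundamental-class current on the cubical complex — more precisely $\bb{W}=\bb{u}$ for an integer-valued $u$ of bounded variation that is constant and nonzero near $x$ (as $x$ lies in the interior of the union of the top-cells of $W$, after possibly enlarging $a'$ so that $x$ is $a'$-far from $\spt(\D W)$ which contains the relevant lower-dimensional faces) — we have $\|\bb{W}\|(B_x(r))\ge c\, r^n$ for a dimensional constant $c>0$. On the other hand, $\|(\D h)\on B_x(r)\|$ for a.e.\ $r$ equals $\M(\slc{h,d_x,r})$ plus the boundary-in-the-interior term; using the coarea inequality $\int_0^\rho \M(\slc{h,d_x,r})\,dr\le \|h\|(B_x(\rho))$ and the fact that $h$ is a single current with finite mass, one sees that $\|\bb{W}\on B_x(r)\|\to 0$ faster than $r^n$ cannot hold — so this route needs care. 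The cleaner argument: since $B_x(r)\cap\spt(\pi_\#\iota(W))=\es$ and $B_x(r)\cap\spt(\D W)=\es$, the identity $\bb{W}=\pi_\#\iota(\bb{W})-\D h$ restricted to $B_x(r)$ gives $\bb{W}\on B_x(r)=-(\D h)\on B_x(r)=\D(h\on B_x(r))-\slc{h,d_x,r}$ for a.e.\ $r$, hence $\bb{W}\on B_x(r)$ is a filling-plus-slice of a current supported on $\Sph{x}{r}$. But $\D(\bb{W}\on B_x(r))=\slc{\bb{W},d_x,r}$ has mass $\ge c'r^{n-1}$ (the slice of the fundamental class through an interior point), while it also equals $\D(-\slc{h,d_x,r})+(\text{term on }\Sph{x}{r})$ — comparing, for a.e.\ small $r$ one forces $\M(\slc{h,d_x,r})\ge c''r^{n-1}$, and integrating via coarea yields $\|h\|(B_x(r))\ge \tilde c\, r^n$ for all small $r$, i.e.\ $x\in\spt(h)$ with a definite lower density. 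This is consistent but not yet a contradiction; the actual contradiction comes from density/finiteness of $\M(h)$ only in a covering argument.

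\textbf{Step 3: The main obstacle and its resolution.} The real content — and the step I expect to be the main obstacle — is showing that $\pi_\#\iota(W)$ actually \emph{covers} the deep interior of $W$, not merely that it is close to it. The right mechanism is already available: by Proposition~\ref{prop_chain_map}(4), $d(\Phi(x),\spt(\iota\bb{W}))\le a$ for all $x\in\spt(W)$ with $d(x,\spt(\D W))\ge a$; thus for such $x$ there is $z\in\spt(\iota\bb{W})$ with $d(z,\Phi(x))\le a$, and then $\pi(z)\in\spt(\pi_\#\iota(W))$ (push-forward support contains $\pi$ of any point of $\spt(\iota\bb{W})$ — careful: $\spt(\pi_\#S)\subset\overline{\pi(\spt S)}$, the reverse containment is not automatic, so one must instead argue that $\pi(z)$ cannot be in the complement). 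To get honest membership $x\in\spt(\pi_\#\iota(W))$ rather than just ``close to'', combine this with Step 2: on any ball $B_x(r)$ with $r$ small and $d(x,\spt(\D W))\ge a'$, the current $\pi_\#\iota(W)$ agrees with $\bb{W}+\D h$; if $\pi_\#\iota(W)$ vanished on $B_x(r)$ then $\bb{W}\on B_x(r)=-(\D h)\on B_x(r)$, and taking $\D$ of both sides shows $\slc{\bb W,d_x,r}$ is a boundary in $B_x(r)$ of a current supported there — but $\slc{\bb{W},d_x,r}$ represents a nonzero class in $\tilde H_{n-1}(B_x(r)\setminus\{x\})$ (it is, up to sign, the fundamental class of the $(n-1)$-sphere since $u$ is constant nonzero near $x$), and $(-\D h)\on B_x(r)=\D(-h\on B_x(r))+\slc{h,d_x,r}$ exhibits it as bounding, forcing $\slc{h,d_x,r}$ to carry that sphere class, hence $\M(\slc{h,d_x,r})\ge c\,r^{n-1}$ for a.e.\ small $r$; integrating, $\|h\|(B_x(\eps))\ge c'\eps^n$ for all small $\eps$. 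Since $\spt(h)\subset N_a(\spt(W))$ and $\M(h)<\infty$, a Vitali covering argument over the (uncountably many, but we only need a definite-measure subset) bad points $x$ gives $\M(h)=\infty$, a contradiction. Therefore $x\in\spt(\pi_\#\iota(W))$ for all $x\in\spt(W)$ with $d(x,\spt(\D W))\ge a'$, provided $a'$ is taken large enough (depending on $a,\lambda$, i.e.\ on $L,A,n,X$) that balls $B_x(a'-a)$ avoid $\spt(\D W)$ and $x$ lies in the interior region where $u$ is locally constant and nonzero. This establishes the first bullet and completes the proof. The delicate point throughout is handling the slicing/coarea estimates and the local-constancy of $u=\bb{W}$ near interior points rigorously; everything else is bookkeeping with the constants from Proposition~\ref{prop_chain_map}.
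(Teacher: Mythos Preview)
Your proof has a genuine gap in Step 2, and it stems from applying Proposition~\ref{prop_chain_map}(5) to the wrong chain. You apply it to $P=W$ itself, which is top-dimensional, and write $h\in\I_{n+1}(\R^n)$. But $\I_{n+1}(\R^n)=\{0\}$, so this would force $\pi_\#\iota(W)=W$ exactly --- which is not true in general (for instance, $\pi_\#\iota$ applied to an edge $e$ is a curve in $\R^n$ with the same endpoints as $e$, but there is no reason for it to be the straight segment $e$). In other words, item (5) does not hold for $k=n$, and your subsequent slicing/density arguments in Steps 2--3 are built on a current $h$ that cannot exist as stated. All of the Vitali covering and lower-density machinery you introduce is then beside the point.

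The paper's argument is a one-liner: apply Proposition~\ref{prop_chain_map}(5) to $P=\D W$, which is $(n-1)$-dimensional. This gives $h\in\I_n(\R^n)$ with $\D h=\pi_\#\iota(\D W)-\D W=\D\bigl(\pi_\#\iota(W)-W\bigr)$ and $\spt(h)\subset N_a(\spt(\D W))$. Then $\pi_\#\iota(W)-W-h$ is a compactly supported $n$-cycle in $\R^n$, hence zero by the constancy theorem. So $\pi_\#\iota(W)=W+h$ with $\spt(h)\subset N_a(\spt(\D W))$. Both bullets are now immediate: away from $N_a(\spt(\D W))$ the two currents coincide, giving the first bullet with $a'=a$; and $\spt(\pi_\#\iota(W))\subset\spt(W)\cup\spt(h)\subset N_a(\spt(W))$, giving the second. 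Your Step 1 does recover the second bullet correctly by a direct support estimate, but the first bullet really needs this identity rather than a contradiction argument.
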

The lemma follows by applying Proposition~\ref{prop_chain_map} (5) to $P=\D W$.

\begin{definition}[Chain projection]
	\label{def_chain_projection}
	Let $X$ be a complete metric space satisfying condition (CI$_{n-1}$). Let $Q$ be an $(L,A)$-quasiflat represented by $\Phi:\R^n\to X$. 
	Let $\iota$ be as in Proposition~\ref{prop_chain_map} and let $\pi: X\to \R^n$ be as in Definition~\ref{def_retraction}. We now define a map sending each 
	$\si\in \I_{n,\cs}(X)$ to a current supported in a neighborhood of $Q$. 
	First applying the Federer-Fleming deformation to $\pi_\# \si$ to obtain a cubical chain $\si'$, and then define $\si_Q=\iota(\si')$. 
	Note that there exists $\lambda$ depending only on $L,A, n$ and $X$ such that $\M(\si_Q)\le \lambda\cdot \M(\si)$ and 
	$\spt(\si_Q)\subset N_{\lambda}(\Phi\circ \pi (\spt(\si)))$.
\end{definition}

\begin{remark}
	\label{rmk:quasidisk}
	Definition~\ref{def_retraction}, Proposition~\ref{prop_chain_map} and Definition~\ref{def_chain_projection} also apply to quasidisks. We can take the domain of a quasidisk to be single cube with a suitable cubulation 
	and repeat the previous discussion.
\end{remark}

\subsection{Homology retract}
\label{subsec:homology retract}
We now describe a way of projecting cycles to quasiflats. 

\begin{proposition}
	\label{lem_homology}
	There exists a constant $C=C(L,A,n,m,c)$ such that the following holds.
	Suppose that $X$ is a complete  metric space satisfying condition {\rm (SCI$_{m}$)} with constant $c$ and let $\Phi:\R^n\to X$ be an  
	$(L,A)$-quasiflat with image $Q$.
	Let $\mathcal{C}_{R_0}$ be a regular cubulation  of $\R^n$ at scale $R_0=2LA$.
	Denote by $\varphi:X\to\R^n$ a quasi-retraction induced by $\Phi$ and $\mathcal{C}_{R_0}$ as in Definition~\ref{def_retraction}. 
	Let  $\iota:\cP_*(\R^n)\to \I_{*,c}(X)$ be a chain map induced by $\Phi$ and $\mathcal{C}_{R_0}$ as in Proposition~\ref{prop_chain_map}.
	Then  for all $R\geq R_0$ the following holds true.

	If $S\in \bZ_{m}(X)$ is a cycle with
	$\spt (S)\subset N_{R}(Q)$, then there exists a cubical cycle $P\in\cP_m(\R^n)$, $\D P=0$, and homologies  $H\in \I_{m+1}(X)$ and $h\in\I_{m+1}(\R^n)$ with the following properties. 
	\begin{enumerate}
		\item $\D H=S-\iota P$\quad and \quad$\D  h=\varphi_\# S-P$;
		\item $\spt (H)\subset N_{CR}(\spt (S))$\quad and \quad$\spt (h)\subset N_C(\varphi(\spt (S)))$;
		\item $\M(H)\leq CR\cdot\M(S)$\quad and \quad$\M(h)\leq C\cdot \M(S)$;
		\item $\M(P)\leq C\cdot\M(S)$.
	\end{enumerate}
\end{proposition}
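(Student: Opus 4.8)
\emph{Construction of $P$ and $h$ (working in $\R^n$).} Set $T:=\varphi_\# S\in\bZ_m(\R^n)$. Since $\varphi$ is $\lambda$-Lipschitz with $\lambda=\lambda(L,n)$ we have $\M(T)\le\lambda^m\M(S)$ and $\spt(T)\subset\varphi(\spt(S))$. Deform $T$ onto the $m$-skeleton of the admissible cubulation $\mathcal{C}_{R_0}$ by the Federer--Fleming procedure (its mass estimates are additive over cells, so this applies even if $\spt(T)$ is unbounded). This produces a cubical cycle $P\in\cP_m(\R^n)$ and $h\in\I_{m+1}(\R^n)$ with $\D P=0$, $\D h=\varphi_\# S-P$, $\M(P)\le c(n)\M(T)$, $\M(h)\le c(n)R_0\M(T)$, and $\spt(P)\cup\spt(h)\subset N_{c(n)R_0}(\varphi(\spt(S)))$. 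As $R_0=2LA$ is a constant, this already gives the assertions about $P$ and $h$: the second equality in (1), the second halves of (2) and (3), and (4). All constants below depend only on $L,A,n,m,c$, and may grow from line to line.

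\emph{Reduction and cellular slicing.} Since $\iota$ is a chain map and $\D P=0$, $\iota P$ is an $m$-cycle, and it remains to build $H\in\I_{m+1}(X)$ with $\D H=S-\iota P$, $\M(H)\le CR\,\M(S)$ and $\spt(H)\subset N_{CR}(\spt(S))$. From $d(x,Q)\le R$ for $x\in\spt(S)$, together with the Lipschitz bound for $\varphi$, the estimate $d(\varphi\circ\Phi,\id)<\lambda$, and the quasi-isometry constants of $\Phi$, one gets $d(\Phi\circ\varphi(x),x)\le CR$ on $\spt(S)$; hence $\spt(\iota P)\subset N_{CR}(\spt(S))$ (using Proposition~\ref{prop_chain_map}(1) and the location of $\spt(P)$), and for every cube $B\subset\R^n$ of side $\le 2R$ the sets $\varphi^{-1}(B)\cap\spt(S)$ and $\varphi^{-1}(B)\cap\spt(\iota P)$ lie within $CR$ of $\Phi(B)$, in particular have diameter $\le CR$. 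Fix a cubulation $\mathcal{E}$ of $\R^n$ of scale $R$ with a generic offset so that, for every cell $\sigma$ of $\mathcal{E}$ of codimension $k$, the cellular slices $\langle S,\varphi,\sigma\rangle,\langle\iota P,\varphi,\sigma\rangle\in\I_{m-k}(X)$ (obtained by slicing/restricting by the coordinate functions of $\varphi$) are well defined, supported within $CR$ of $\Phi(\sigma)$, obey the usual boundary formula $\D\langle\,\cdot\,,\varphi,\sigma\rangle=\sum_{\tau}\pm\langle\,\cdot\,,\varphi,\tau\rangle$ over the facets $\tau$ of $\sigma$, satisfy $\|S\|(\varphi^{-1}(\mathcal{E}^{(n-1)}))=\|\iota P\|(\varphi^{-1}(\mathcal{E}^{(n-1)}))=0$ (so $S=\sum_B S\on\varphi^{-1}(B)$, $\iota P=\sum_B \iota P\on\varphi^{-1}(B)$ over top cells $B$), and -- by the coarea inequality for $\varphi$ together with an averaging over the offset and Chebyshev -- satisfy $\sum_{\codim\sigma=k}\M(\langle S,\varphi,\sigma\rangle)\le CR^{-k}\M(S)$ and $\sum_{\codim\sigma=k}\M(\langle\iota P,\varphi,\sigma\rangle)\le CR^{-k}\M(\iota P)\le CR^{-k}\M(S)$ for all $0\le k\le m$. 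Put $\sigma_\sigma:=\langle S,\varphi,\sigma\rangle-\langle\iota P,\varphi,\sigma\rangle$; for $\codim\sigma=m$ this is a reduced $0$-cycle, because the pushforwards of the $m$-cycles $S,\iota P$ to $\R^m$ under any $m$ of the coordinates of $\varphi$ vanish.

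\emph{Construction of $H$ by a relative Federer--Fleming induction.} Define currents $\rho_\sigma$ by downward induction on $\codim\sigma$ from $m$ to $1$: if $\codim\sigma=k$, set $\hat\sigma_\sigma:=\sigma_\sigma-\sum_\tau\pm\rho_\tau$ over the facets $\tau$ of $\sigma$ (all of codimension $k+1$, already treated, with $\D\rho_\tau=\hat\sigma_\tau$); then $\D^2=0$ in the cubical complex $\mathcal{E}$ forces $\D\hat\sigma_\sigma=0$, so $\hat\sigma_\sigma$ is an $(m-k)$-cycle of diameter $\le CR$ supported near $\Phi(\sigma)$, and we take for $\rho_\sigma\in\I_{m-k+1}(X)$ a filling of $\hat\sigma_\sigma$ provided by {\rm (SCI$_{m-k}$)} (a consequence of {\rm (SCI$_{m}$)}), so that $\M(\rho_\sigma)\le c\,CR\,\M(\hat\sigma_\sigma)$ and $\spt(\rho_\sigma)\subset N_{CR}(\spt(\hat\sigma_\sigma))\subset N_{CR}(\spt(S))$. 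Unwinding the recursion and using the mass bounds above gives $\sum_{\codim\sigma=k}\M(\hat\sigma_\sigma)\le CR^{-k}\M(S)$, hence $\sum_{\codim\sigma=k}\M(\rho_\sigma)\le C\M(S)$ -- crucially with \emph{no} power of $R$, since each filling costs a factor $R$ but is applied to a piece of mass $\le CR^{-k}\M(S)$. Finally, for each $n$-cube $B$ of $\mathcal{E}$ set $Z_B:=S\on\varphi^{-1}(B)-\iota P\on\varphi^{-1}(B)-\sum_F\pm\rho_F$ over the facets $F$ of $B$; the same cancellation gives $\D Z_B=0$, so $Z_B$ is an $m$-cycle of diameter $\le CR$ with $\sum_B\M(Z_B)\le \M(S)+\M(\iota P)+2\sum_{\codim F=1}\M(\rho_F)\le C\M(S)$. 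Fill each $Z_B$ by $H_B\in\I_{m+1}(X)$ via {\rm (SCI$_m$)}, with $\M(H_B)\le c\,CR\,\M(Z_B)$ and $\spt(H_B)\subset N_{CR}(\Phi(B))\subset N_{CR}(\spt(S))$, and put $H:=\sum_B H_B$. Because $\Phi$ is a quasi-isometric embedding this sum is locally finite, and it has finite total mass $\le CR\sum_B\M(Z_B)\le CR\,\M(S)$, so $H\in\I_{m+1}(X)$; moreover $\D H=\sum_B Z_B=(S-\iota P)-\sum_F\rho_F\big(\sum_{B\ni F}\pm1\big)=S-\iota P$, since each interior facet $F$ lies in exactly two $n$-cubes with opposite incidence signs. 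This proves (1)--(3).

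\emph{Main obstacle.} The crux is the construction of $H$: one must keep $\M(H)$ linear in $R$ -- rather than of order $R^{m}$ -- while simultaneously controlling every support. The mechanism is the balance between the coarea-type decay $\M(\sigma^{(k)})\lesssim R^{-k}\M(S)$ of the sliced pieces and the linear-in-$R$ cost of the coning inequality, which makes the iterated fillings telescope without accumulating powers of $R$. The remaining points are routine bookkeeping: $\Phi$ need not be Lipschitz, so the chain map $\iota$ plays its role throughout; $S$ and $\iota P$ need not be compactly supported, so one works with finite-mass Ambrosio--Kirchheim currents and checks local finiteness of the infinite sums; and the offset of $\mathcal{E}$ must be chosen in a positive-measure set where countably many a.e.\ slicing and boundary identities and finitely many Chebyshev mass estimates all hold at once.
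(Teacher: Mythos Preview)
Your proof is correct and follows essentially the same route as the paper: both decompose the problem via a coarse-scale cellular stratification subordinate to $\varphi$ and a cubulation at scale $R$ (what the paper formalizes as a \emph{rectifiable stratification}, Proposition~\ref{prop_rec_strat}), and then build $H$ by induction on codimension using {\rm (SCI$_m$)}, with the coarea-type $R^{-k}$ decay of the sliced pieces exactly telescoping against the linear-in-$R$ cost of each coning. The one organizational difference is that you obtain $P$ and $h$ up front by a single Federer--Fleming deformation of $\varphi_\# S$ at scale $R_0$ and then stratify $\iota P$ alongside $S$ (which requires choosing one offset good for both, a full-measure intersection), whereas the paper stratifies $S$ first and assembles $P$ cell by cell from the stratified pieces so that $T_B=\iota(P_B)$ is automatically compatible with the decomposition; both variants need the same ingredients and yield the same bounds.
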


This proposition is proved in Section~\ref{subsec_proof}.

\begin{corollary}\label{cor_small_fill_close_to_Q}
	Let $X$ be a complete metric space satisfying {\rm (SCI$_n$)} with constant $c$ and let $\Phi:\R^n\to X$ be an $(L,A)$-quasiflat with image $Q$. 
	Then there exists $C=C(L,A,c,n)$ such that the following holds.
	If $S\in \bZ_{n}(X)$ is a cycle, with
	$\spt (S)\subset N_{R}(Q)$ for some $R\geq 2LA$, then $\Fill(S)\leq CR\cdot\M(S)$.
\end{corollary}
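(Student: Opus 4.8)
The plan is to reduce everything to Proposition~\ref{lem_homology} applied in the top degree, together with the elementary observation that a finite-mass integral $n$-cycle in $\R^n$ is trivial. Concretely, I would proceed as follows. Note first that {\rm (SCI$_n$)} implies {\rm (CI$_{n-1}$)}, so Proposition~\ref{prop_chain_map}, and hence Proposition~\ref{lem_homology} with $m=n$, is available. Given $S\in\bZ_n(X)$ with $\spt(S)\subset N_R(Q)$ and $R\geq 2LA$, Proposition~\ref{lem_homology} (with $m=n$) produces a cubical cycle $P\in\cP_n(\R^n)$ with $\D P=0$ and $\M(P)\leq C\cdot\M(S)$, a homology $h\in\I_{n+1}(\R^n)$, and a homology $H\in\I_{n+1}(X)$ with
\[
\D H=S-\iota P,\qquad \M(H)\leq CR\cdot\M(S),
\]
where $C=C(L,A,c,n)$.

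The key point is that $P=0$. Indeed, $P\in\cP_n(\R^n)$ is a finite cubical chain, hence lies in $\bI_{n,\cs}(\R^n)\subset\bI_{n,\loc}(\R^n)$ and has finite mass $\M(P)\leq C\cdot\M(S)<\infty$; by the structure of top-dimensional local integral currents in Euclidean space (\cite[Theorem 7.2]{Lan3}, cf.\ the discussion following Remark~\ref{rmk_Lipschitz_chain}), $P=\bb{u}$ for an integer-valued function $u\in L^1(\R^n)$ of bounded variation, with $\M(P)=\|u\|_{L^1}$ and $\|\D P\|=|Du|$. Since $\D P=0$, the total variation $|Du|$ vanishes, so $u$ agrees a.e.\ with a constant; being in $L^1(\R^n)$ forces this constant to be $0$, whence $P=0$. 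Therefore $\D H=S$, i.e.\ $H\in\I_{n+1}(X)$ is a filling of $S$, and
\[
\Fill(S)\leq\M(H)\leq CR\cdot\M(S),
\]
which is the assertion.

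There is essentially no analytic obstacle here: all of the work has been absorbed into Proposition~\ref{lem_homology}, whose proof handles the fact that $S$ need not be compactly supported. The only steps requiring a little care are (i) confirming that the obstruction cycle $P$ genuinely vanishes, which is where the finiteness of the cubical chain and the BV characterization of top-degree currents in $\R^n$ enter, and (ii) checking the bookkeeping of hypotheses, namely that $n\geq 2$ (inherited through Proposition~\ref{prop_chain_map}) and that {\rm (SCI$_n$)} supplies every coning inequality used by Proposition~\ref{lem_homology} with $m=n$; the degenerate case $n=1$ is classical and can be argued directly from the quasigeodesic Morse property. Tracking the constants, $C$ depends only on $L,A,c,n$, as claimed.
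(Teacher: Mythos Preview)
Your proof is correct and follows exactly the same approach as the paper: apply Proposition~\ref{lem_homology} with $m=n$, observe that the top-dimensional cubical cycle $P$ in $\R^n$ must vanish, and conclude that $H$ itself fills $S$ with the required mass bound. The paper's own proof is the two-line version of yours, simply stating that ``$P$ is top-dimensional and therefore trivial''; your elaboration via the BV representation of top-degree currents is a valid (if slightly heavier) justification of that fact, and your bookkeeping of the hypotheses is accurate.
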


\begin{proof}
	Proposition~\ref{lem_homology} provides a controlled homology $H$ between $S$ and $\iota P$ for some cubical cycle $P$ in $\R^n$. 
	However, $P$ is top-dimensional and therefore trivial.
	It follows that $H$ is a filling of $S$ as required.
\end{proof}

If we make the stronger assumption that $A=0$, namely that $Q$ is a bilipschitz flat, then instead of
using the chain map $\iota$ we can use an actual Lipschitz retraction $\pi:X\to Q$ and push the cycle $S$ to a cycle $S'=\pi_\# S$. 
In this case Proposition~\ref{lem_homology} simplifies to:

\begin{lemma}
	\label{lem_homology_bilip}
	There exists a constant $C=C(L,n,m,c)$ such that the following holds.
	Suppose that $X$ is a metric space satisfying condition {\rm (SCI$_m$)} with constant $c$ and let $Q\subset X$ be an $n$-dimensional $L$-bilipschitz flat.
	Then for all $R\geq 0$ the following holds true. 
	If $S\in \bZ_{m}(X)$ is a cycle with
	$\spt (S)\subset N_{R}(Q)$, then there exists a cycle $S'\in\bZ_m(Q)$ and a homology  $H\in \I_{m+1}(X)$ such that 
	\begin{enumerate}
		\item $\D H=S-S'$;
		\item $\spt (H)\subset N_{CR}(\spt (S))$;
		\item $\M(H)\leq CR\cdot\M(S)$;
		\item $\M(S')\leq C\cdot\M(S)$.
	\end{enumerate}
	
\end{lemma}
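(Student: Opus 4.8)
The plan is to carry out, in the special case $A=0$, the construction behind Proposition~\ref{lem_homology} (ultimately \cite[Proposition 3.7]{higherrank}), but with the combinatorial chain map $\iota$ replaced by an honest Lipschitz retraction $\pi\colon X\to Q$ and with $S':=\pi_\#S$. First I would fix an $L$-bilipschitz parametrization $\psi\colon\R^n\to Q$; then $\psi^{-1}\colon Q\to\R^n$ is $L$-Lipschitz and, by the coordinatewise McShane extension recalled in Section~\ref{subsect:metric}, extends to a $\sqrt n\,L$-Lipschitz map $\varphi\colon X\to\R^n$. Set $\pi:=\psi\circ\varphi\colon X\to Q$, so that $\pi$ is $\sqrt n\,L^2$-Lipschitz, $\pi|_Q=\id_Q$, and $d(x,\pi(x))\le C_1R$ for every $x\in N_R(Q)$, where $C_1:=1+\sqrt n\,L^2$. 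Put $S':=\pi_\#S$. Since pushforward commutes with $\D$ and $\spt(S')\subset\pi(\spt S)\subset Q$, we get $S'\in\bZ_m(Q)$; the inequality $\|\pi_\#S\|\le(\Lip\pi)^m\|S\|$ from Section~\ref{subsect:mass} gives (4), and the displacement bound gives $\spt(S')\subset N_{C_1R}(\spt S)$. If $R=0$ then $\spt S\subset Q$, whence $S'=S$ and $H=0$ works; assume $R>0$ from now on.

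Next I would build the homology $H$ with $\D H=S-S'$ by a cubical deformation argument at scale $R$. Let $\mathcal G$ be a grid of half-open cubes of side $R$ in $\R^n$. Averaging its offset over the $n$ coordinate directions and invoking the coarea inequality of Section~\ref{subsect:slicing}, we may choose $\mathcal G$ so that all iterated slices of $S$ over the faces of $\mathcal G$ with respect to $\varphi$ are integral currents with the expected boundaries, and so that for each $j=0,\dots,m$ the collection of $(m-j)$-dimensional ``pieces'' obtained by $j$ successive slicings has total mass at most $(n\Lip\varphi/R)^{j}\,\M(S)$ (the Federer--Fleming deformation bound). For any $n$-cube $C\in\mathcal G$ with $S\on\varphi^{-1}(C)\neq 0$ and any $x$ in its support one has $\varphi(x)\in C$, hence $\pi(x)\in\psi(C)$ and $d(x,\pi(x))\le C_1R$; since $\diam\psi(C)\le\sqrt n\,LR$, both $\spt(S\on\varphi^{-1}(C))$ and $\spt(\pi_\#(S\on\varphi^{-1}(C)))$ lie in a single ball of radius $C_2R$ that meets $\spt S$, with $C_2=C_2(n,L)$, and the same holds for the lower-dimensional pieces over the faces of $C$.

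Then, by downward induction on dimension $k=m,m-1,\dots,0$, I would assign to each $k$-dimensional piece $U$ an integral $(k{+}1)$-current $P(U)$ with $\D P(U)=\pi_\#U-U-P(\D U)$, using the conventions $P(0)=0$, $P(-U)=-P(U)$, and defining $P$ once per face of $\mathcal G$ (so contributions over shared faces cancel). At each stage $\pi_\#U-U-P(\D U)$ is a $k$-cycle---a direct check from $\D\circ\D=0$ and the inductive identity---whose support lies in a ball of radius $C_3R$, by the locality of the previous paragraph together with the inductive support bound on $P$; condition (SCI$_m$), which includes (SCI$_k$) for $k\le m$, then furnishes a filling of controlled mass supported within $C_4R$ of $\spt S$, and it is exactly the \emph{strong} coning inequality that prevents supports from escaping $N_{C_4R}(\spt S)$. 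Setting $H:=-\sum_{C\in\mathcal G}P(S\on\varphi^{-1}(C))$ and using $\sum_C\D(S\on\varphi^{-1}(C))=\D S=0$ together with the cancellation of $P$ over shared faces, we obtain $\D H=S-S'$, proving (1). Summing the local mass bounds up the induction---the top-level pieces contributing $\M(S)$ in total and the $(m-j)$-dimensional pieces $\le(n\Lip\varphi/R)^{j}\M(S)$---the coning factor of order $R$ applied at the top level absorbs the extra powers of $1/R$ and yields $\M(H)\le CR\,\M(S)$ (this bound also shows the possibly infinite sum converges in $\I_{m+1}(X)$), while the support estimates assemble to $\spt(H)\subset N_{CR}(\spt S)$. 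This gives (2) and (3) with $C=C(L,n,m,c)$.

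The main obstacle is the bookkeeping in the last two paragraphs: one must check that the iterated slicing keeps every piece inside a ball of radius proportional to $R$, and that the inductively defined fillings $P(U)$ genuinely glue---via the identity $\D P(U)=\pi_\#U-U-P(\D U)$ and the face-cancellation---into a homology from $S$ to $S'$ with no residual boundary on the grid skeleton, all with uniformly controlled constants. This is precisely the argument in the proof of Proposition~\ref{prop_chain_map}/\cite[Proposition 3.7]{higherrank}; the only genuine simplification over the general quasiflat case is that, $\psi$ being bilipschitz at \emph{every} scale and honestly continuous, we may work with the actual map $\pi=\psi\circ\varphi$ and its pushforward directly, so that $S'=\pi_\#S$ is automatically a cycle supported in $Q$ and no separate $\R^n$-side homology $h$ is required. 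One should also emphasize that the offset-averaging step is indispensable: without it the total mass of the auxiliary slices need not be finite, and the constant $C$ would not exist.
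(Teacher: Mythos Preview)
Your approach is correct and is exactly what the paper intends: it presents Lemma~\ref{lem_homology_bilip} as the bilipschitz simplification of Proposition~\ref{lem_homology}, replacing the chain map $\iota$ by the honest Lipschitz retraction $\pi$ and setting $S'=\pi_\#S$, with the homology $H$ built skeleton-by-skeleton from a rectifiable stratification at scale $R$ (Proposition~\ref{prop_rec_strat}) and the strong coning inequality, just as you outline.

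One slip worth fixing: the induction on $k$ must go \emph{upward}, from $k=0$ to $k=m$, not downward. Your defining identity $\D P(U)=\pi_\#U-U-P(\D U)$ requires $P(\D U)$, which lives over the $(k{-}1)$-dimensional pieces, to be constructed first; the paper's proof of Proposition~\ref{lem_homology} accordingly starts with $B\in\mathcal C^{(n-m)}$ (the $0$-dimensional pieces) and works up. With that correction, your final telescoping computation $\D H=S-S'$ and the mass/support bookkeeping go through exactly as you describe.
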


Motivated by the above results, we formulate the following properties for quasiflats in metric spaces.

\begin{definition}
	\label{def_tame}
	Let $\Phi:\mathbb R^n\to X$ be an $n$-dimensional $(L,A)$-quasiflat with image $Q\subset X$ and let $\iota$ be the chain mapping in 
	Proposition~\ref{prop_chain_map}. For $a>0$, $Q$ is an $a$-\emph{homology retract} 
	if for any $S\in \bZ_{n-1}(X)$ such that $\spt(S)\subset N_R(Q)$ with $R>a$, there exist a cubical chain $P\in \I_{n-1}(\mathbb R^n)$ and 
	$H\in\I_{n}(X)$ such that
	\begin{enumerate}
		\item $\partial H=S-\iota(P)$ and $\spt(\iota(P))\subset N_a(Q)$;
		\item $\spt(H)\subset N_{aR}(\spt(S))$ and $\spt(P)\subset N_a(\pi(\spt(S)))$;
		\item $\M(H)\le aR\cdot\M(S)$ and $\Fill(P-\pi_\#(S))\le a\cdot\M(S)$;
		\item $\M(P)\le a\cdot\M(S)$ and $\M(\iota(P))\le a\cdot \M(S)$.
	\end{enumerate}
\end{definition}

It follows from Proposition~\ref{lem_homology} and Proposition~\ref{prop_chain_map} that an $n$-dimensional $(L,A)$-quasiflat in a metric space $X$ 
with condition~(SCI$_{n-1}$) is an $a$-homology retract for $a=a(L,A,n,X)$.

\section{Morse quasiflats}
\label{sec_definition}

In this section we introduce our main objective -- {\em Morse quasiflats}. More precisely, we provide several potential definitions, each expressing a higher dimensional hyperbolic feature  ``transversal'' to a quasiflat. We name one of them Morse and put off the discussion of the relation between  the different notions until the next section,
where they are all shown to be equivalent under appropriate assumptions. The conditions are organized in two groups. First, conditions on ultralimits of quasiflats in asymptotic cones. Second, asymptotic conditions on quasiflats within the space. In the last subsection we treat quasi-isometry invariance.

\subsection{Conditions in asymptotic cones}
\label{subsec_cone_conditions}
\mbox{}

Let $X$ be a complete metric space and let $Q\subset X$ be an $n$-dimensional $L$-bilipschitz flat.

\begin{definition}[Piece property]
	\label{def_piece_decomposition}
	We say $Q$ has the {\em piece property}, if the following holds. Let $\si\in \bZ_{n-1}(Q)$ be a cycle with canonical filling $\nu\in \I_{n}(Q)$.
	Then any alternate filling $\tau\in \I_{n}(X)$ of $\si$ contains $\nu$ as a piece, i.e. $\|\tau\|=\|\tau-\nu\|+\|\nu\|$ with $\|\tau-\nu\|$ concentrated on $X\setminus Q$.
\end{definition}

\begin{remark}
	The piece property implies that $Q$ is mass minimizing in the following sense. If $T$ is a piece of $Q$, then $T$ is a minimal filling of $\D T$. 
\end{remark}

\begin{lemma}\label{lem_fillarea_decreases}
	Suppose that $Q$ has the piece property and $\tau\in \I_{n}(X)$ is a filling of a cycle $\si\in \bZ_{n-1}(Q)$.
	Then the filling area of slices $\pp{\tau,d_Q,t}$ close to $Q$ becomes small,
	$\lim\limits_{t\to 0}\Fill(\pp{\tau,d_Q,t})=0$.
\end{lemma}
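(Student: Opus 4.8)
The plan is to exploit the piece property to split the filling $\tau$ along $Q$ and then control the slices of the two resulting pieces separately. Write $\nu\in\I_n(Q)$ for the canonical filling of $\si$ inside the bilipschitz flat $Q$. By the piece property we have the mass-additive decomposition $\tau=\nu+(\tau-\nu)$ with $\|\tau-\nu\|$ concentrated on $X\setminus Q$. Since slicing is additive on pieces (by the discussion of slicing in Section~\ref{subsect:slicing}, the slice operator $\pp{\cdot,d_Q,t}$ respects piece decompositions for a.e.\ $t$), for a.e.\ $t>0$ we get
\[
\pp{\tau,d_Q,t}=\pp{\nu,d_Q,t}+\pp{\tau-\nu,d_Q,t}.
\]
Hence it suffices to bound $\Fill\bigl(\pp{\nu,d_Q,t}\bigr)$ and $\Fill\bigl(\pp{\tau-\nu,d_Q,t}\bigr)$ separately as $t\to 0$.

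For the first summand, $\nu$ lives entirely in $Q$, so $\pp{\nu,d_Q,t}$ is supported in the intrinsic sphere $\{x\in Q:d_Q(x,Q)\text{-level}=t\}$ — more precisely it is a cycle supported in the $t$-neighborhood of $\si$ inside $Q$. Using that $Q$ is an $L$-bilipschitz flat and $\nu$ is the canonical filling (which has locally bounded variation density), one sees by the coarea inequality that $\M(\pp{\nu,d_Q,t})$ is bounded and in fact $\pp{\nu,d_Q,t}\to\pp{\nu,d_Q,0}$ in an appropriate sense; since $\si=\D\nu$, the slice $\pp{\nu,d_Q,t}$ is homologous inside $Q$ to a small cycle concentrated near $\spt(\si)$, and applying the coning inequality inside the flat $Q$ (which is bilipschitz to $\R^n$, so satisfies all coning inequalities) gives $\Fill(\pp{\nu,d_Q,t})\le c\cdot t\cdot\M(\pp{\nu,d_Q,t})\to 0$. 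For the second summand, $\|\tau-\nu\|$ is concentrated on $X\setminus Q$; by the coarea inequality $\int_0^\eps \M(\pp{\tau-\nu,d_Q,t})\,dt\le \|\tau-\nu\|(N_\eps(Q)\setminus Q)\to 0$ as $\eps\to 0$, because $\|\tau-\nu\|(Q)=0$ and $\|\tau-\nu\|$ is a finite measure. Thus $\M(\pp{\tau-\nu,d_Q,t})\to 0$ along a sequence $t\to 0$, and one upgrades this to a genuine limit using monotonicity-type arguments or by passing to the limit inferior; then the coning inequality again converts small mass into small filling.

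I would organize the final argument as: (i) apply the piece property to decompose $\tau$; (ii) split the slice; (iii) handle the $\nu$-slice by a direct computation in the flat $Q$; (iv) handle the $(\tau-\nu)$-slice via the coarea inequality and concentration of $\|\tau-\nu\|$ off $Q$, plus a coning inequality. The main obstacle is controlling the $(\tau-\nu)$-slice for \emph{all} small $t$ rather than just along a subsequence: the coarea estimate only gives smallness of $\int_0^\eps\M(\pp{\tau-\nu,d_Q,t})\,dt$, which a priori permits spikes. Resolving this requires either an additional semicontinuity/monotonicity input for $t\mapsto\Fill(\pp{\tau-\nu,d_Q,t})$, or — more robustly — observing that $\Fill$ of the slice is controlled by $\|\tau-\nu\|$ restricted to the thin annulus $\{t\le d_Q\le t'\}$ together with $\M$ of nearby slices, so that the genuine limit follows once one knows $\|\tau-\nu\|(Q)=0$. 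This is the step I expect to require the most care.
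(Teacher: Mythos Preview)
Your decomposition $\pp{\tau,d_Q,t}=\pp{\nu,d_Q,t}+\pp{\tau-\nu,d_Q,t}$ is correct but you are overcomplicating both halves, and the first half is confused. Since $\nu$ is supported in $Q=\{d_Q=0\}$, for every $t>0$ the slice $\pp{\nu,d_Q,t}$ is supported in $\{d_Q=t\}\cap\spt(\nu)=\emptyset$, hence $\pp{\nu,d_Q,t}=0$. There is no ``intrinsic sphere in $Q$'' to worry about: $d_Q$ is the distance to $Q$, not a distance inside $Q$. So the entire slice equals $\pp{\tau-\nu,d_Q,t}$.

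For that piece, the paper's argument bypasses the coarea/subsequence issue you flag as the main obstacle. The point is that $\tau-\nu$ is a \emph{cycle} (both have boundary $\si$), so by the slicing formula
\[
\partial\bigl((\tau-\nu)\on\{d_Q\le t\}\bigr)=\pp{\tau-\nu,d_Q,t}=\pp{\tau,d_Q,t}.
\]
Thus $(\tau-\nu)\on\{d_Q\le t\}$ is itself a filling of $\pp{\tau,d_Q,t}$, giving directly
\[
\Fill(\pp{\tau,d_Q,t})\le \M\bigl((\tau-\nu)\on\{d_Q\le t\}\bigr)=\|\tau-\nu\|(\{d_Q\le t\}).
\]
Since the piece property says $\|\tau-\nu\|$ is concentrated on $X\setminus Q$ and $\|\tau-\nu\|$ is a finite measure, the right-hand side decreases to $\|\tau-\nu\|(Q)=0$ as $t\to 0$. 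This gives the genuine limit for all $t$, with no coning inequality, no subsequence, and no monotonicity input needed. Your closing remark about controlling $\Fill$ by $\|\tau-\nu\|$ on a thin region was pointing in exactly this direction; you just didn't notice that the restriction to $\{d_Q\le t\}$ is already the filling.
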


\begin{proof}
	Denote by $\nu\in \I_{n}(Q)$ the canonical filling of $\si$. Consider the filling $(\tau-\nu)\on\{d_Q\leq t\}$ 
	of the slice $\pp{\tau,d_Q,t}$. Note that $\tau-\nu$ is a cycle. Since $Q$ has the piece property, $\|\tau-\nu\|$ is concentrated on $X\setminus Q$ and the claim follows. 
\end{proof}

\begin{definition}[Neck property]
	\label{def_neck_decomposition}
	
	We say $Q$ has the {\em neck property}, if there exists a constant $C>0$ such that the following holds for all $\rho>0$. 
	Let $\si\in \bZ_{n-1}(X)$ be a cycle with $\spt(\si)\subset N_\rho(Q)$ and let $\tau\in \I_{n}(X)$ be a filling, $\D\tau=\si$, with $\spt(\tau)\subset X\setminus Q$. 
	Then 
	\[\Fill(\si)\leq C\cdot\rho\cdot\M(\si).\]
\end{definition}

The name derives from the fact that any chain filling a cycle in $Q$ has to have small necks near $Q$ in the following sense.

\begin{definition}[Weak neck property]
	\label{def_weak_neck_decomposition}
	
	We say $Q$ has the {\em weak neck property}, if the following holds. Let $\si\in\bZ_{n-1}(Q)$ be a cycle with a filling $\tau\in\I_{n}(X)$.
	Then for every $\eps>0$ there exists $\rho_0\in(0,\eps)$ such that the slice $\pp{\tau,d_Q,\rho_0}$
	has
	\[\Fill(\pp{\tau,d_Q,\rho_0})<\eps.\]
\end{definition}

\begin{lemma}\label{lem_neck_impl_weak_neck}
	The neck property implies the weak neck property.
\end{lemma}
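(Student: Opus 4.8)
The plan is to take an arbitrary cycle $\si\in\bZ_{n-1}(Q)$ with a filling $\tau\in\I_n(X)$, and an arbitrary $\eps>0$, and produce some $\rho_0\in(0,\eps)$ with $\Fill(\pp{\tau,d_Q,\rho_0})<\eps$, using only the neck property (with its constant $C$). The key point is that the neck property supplies a filling estimate for cycles near $Q$ that bound \emph{away from} $Q$, and the slice $\pp{\tau,d_Q,\rho_0}$ naturally has such a filling: namely $\tau\on\{d_Q>\rho_0\}$ (up to sign), whose support avoids $Q$ outright, while $\pp{\tau,d_Q,\rho_0}$ itself is supported in $\Sph{}{}$-type level set $\{d_Q=\rho_0\}\subset N_{\rho_0}(Q)$.

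First I would recall from the slicing preliminaries (Subsection~\ref{subsect:slicing}) that for almost every $\rho_0>0$ the slice $\pp{\tau,d_Q,\rho_0}$ lies in $\bI_{n-1}(X)$, is supported in $\{d_Q=\rho_0\}\cap\spt(\tau)\subset N_{\rho_0}(Q)$, and satisfies the piece decomposition $\tau=\tau\on\{d_Q\le\rho_0\}+\tau\on\{d_Q>\rho_0\}$ together with
\[
\pp{\tau,d_Q,\rho_0}=\D(\tau\on\{d_Q\le\rho_0\})-(\D\tau)\on\{d_Q\le\rho_0\}
= (\D\tau)\on\{d_Q>\rho_0\}-\D(\tau\on\{d_Q>\rho_0\}).
\]
Since $\si=\D\tau$ is supported in $Q=\{d_Q=0\}$, for $\rho_0>0$ the restriction $(\D\tau)\on\{d_Q>\rho_0\}=0$, so $\pp{\tau,d_Q,\rho_0}=-\D(\tau\on\{d_Q>\rho_0\})$. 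Thus $\tau\on\{d_Q>\rho_0\}$ (negated) is a filling of the slice whose support lies in $\{d_Q>\rho_0\}\subset X\setminus Q$. Hence the hypotheses of the neck property are met, giving
\[
\Fill(\pp{\tau,d_Q,\rho_0})\le C\cdot\rho_0\cdot\M(\pp{\tau,d_Q,\rho_0}).
\]

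It remains to choose $\rho_0$ small enough that the right-hand side is $<\eps$. The coarea inequality $\int_0^\infty \M(\pp{\tau,d_Q,s})\,ds\le \Lip(d_Q)\,\M(\tau)=\M(\tau)<\infty$ shows that $\M(\pp{\tau,d_Q,s})$ is finite for a.e.\ $s$ and that $\liminf_{s\to0^+}\M(\pp{\tau,d_Q,s})$ is finite; in particular, for any $\eps'>0$ there is a set of positive measure of $s\in(0,\eps)$ with $\M(\pp{\tau,d_Q,s})\le \frac{1}{\eps}\int_0^\eps\M(\pp{\tau,d_Q,t})\,dt + 1 =: M_\eps<\infty$ (by an averaging/Chebyshev argument on $(0,\eps)$). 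Now pick $\rho_0\in(0,\eps)$ small enough that also $C\rho_0 M_\eps<\eps$ and such that $\rho_0$ is a good slicing radius (the good radii have full measure) and $\M(\pp{\tau,d_Q,\rho_0})\le M_\eps$; this is possible since the constraint $C\rho_0 M_\eps<\eps$ only removes an initial interval. Then $\Fill(\pp{\tau,d_Q,\rho_0})\le C\rho_0\M(\pp{\tau,d_Q,\rho_0})\le C\rho_0 M_\eps<\eps$, as required.

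The only mild subtlety — the ``main obstacle'' such as it is — is ensuring the slice mass $\M(\pp{\tau,d_Q,\rho_0})$ does not blow up as $\rho_0\to0$; this is handled entirely by the coarea inequality, which forces $\M(\pp{\tau,d_Q,s})$ to be bounded on a set of $s$ accumulating at $0$, so one never needs a pointwise limit, only a judicious choice of $\rho_0$ along that set intersected with the cofinite set of admissible slicing radii and with the interval where $C\rho_0 M_\eps<\eps$. No further input beyond the neck property and standard slicing facts from Subsection~\ref{subsect:slicing} is needed.
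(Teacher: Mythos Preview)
Your argument follows the same structure as the paper's: the slice $\pp{\tau,d_Q,\rho_0}$ is (up to sign) the boundary of $\tau\on\{d_Q>\rho_0\}$, which is supported away from $Q$, so the neck property applies and yields $\Fill(\pp{\tau,d_Q,\rho_0})\le C\rho_0\,\M(\pp{\tau,d_Q,\rho_0})$. The only issue is the final choice of $\rho_0$.

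There is a small but genuine gap in that step. You show that $\{s\in(0,\eps):\M(\pp{\tau,d_Q,s})\le M_\eps\}$ has positive measure in $(0,\eps)$, and then you want to intersect it with $(0,\eps/(CM_\eps))$; but nothing guarantees the positive-measure set reaches that far down. For instance, if $\M(\pp{\tau,d_Q,s})\sim 1/(s\log^2(1/s))$ near $0$, then $M_\eps\sim 1/(\eps\log(1/\eps))$, the set $\{\M\le M_\eps\}$ is an interval $[s^*,\eps)$ with $s^*\sim\eps/\log(1/\eps)$, while $\eps/(CM_\eps)\sim\eps^2\log(1/\eps)/C\ll s^*$. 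The fix is immediate: since $s\mapsto\M(\pp{\tau,d_Q,s})$ is nonnegative and integrable on $(0,\eps)$, one has $\liminf_{s\to0^+}s\cdot\M(\pp{\tau,d_Q,s})=0$ (otherwise $\M(\pp{\tau,d_Q,s})\gtrsim 1/s$ near $0$, contradicting integrability), and this directly supplies admissible $\rho_0$ with $C\rho_0\,\M(\pp{\tau,d_Q,\rho_0})<\eps$. The paper packages the same idea as a dyadic pigeonhole: with $\tau'=\tau\on(X\setminus Q)$, pick $\rho$ with $\M(\tau'\on\{d_Q\le\rho\})\le\de$, find $k$ with $\M(\tau'\on\{\rho/2^{k+1}\le d_Q<\rho/2^k\})\le\de/2^{k+1}$, and then by coarea a slice in that shell with mass $\le\de/\rho$, giving $\Fill\le C\rho_0\cdot(\de/\rho)\le C\de$.
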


\begin{proof}
	If $\tau$ is supported in $Q$, then there is nothing to show. Otherwise, we set $\tau'=\tau\on(X\setminus Q)$.
	For every $\delta>0$ we choose $\rho>0$ such that $\M(\tau'\on\{d_Q\leq\rho\})\leq \delta$. By the pigeonhole principle there exists $k\in\N$ such that 
	$\M(\tau_k)\leq\frac{\delta}{2^{k+1}}$ where $\tau_k=\tau'\on\{\frac{\rho}{2^{k+1}}\leq d_Q<\frac{\rho}{2^{k}}\}$. The coarea inequality implies $\M(\pp{\tau_k,d_Q,\rho_0})< \frac{\delta}{\rho}$ 
	for some $\rho_0\in(\frac{\rho}{2^{k+1}},\frac{\rho}{2^{k}})$. Since $\pp{\tau_k,d_Q,\rho_0}=\pp{\tau,d_Q,\rho_0}$ we conclude from the neck property
	$\Fill(\pp{\tau,d_Q,\rho_0})\leq C\cdot \rho_0 \cdot\frac{\delta}{\rho}\leq C\cdot\delta$ where $C$ is independent of $\delta$.
\end{proof}

\begin{lemma}\label{lem_pp_impl_neck}
	Suppose that $X$ satisfies condition {\rm (SCI$_{n-1}$)} where $n=\dim Q$. Then the piece property implies the neck property with constant $C$ in Definition~\ref{def_neck_decomposition} depending only on $n$, the constants of condition {\rm (SCI$_{n-1}$)} and the Lipschitz constant of $Q$.
\end{lemma}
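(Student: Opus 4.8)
\emph{Plan.} The idea is to push the given near-$Q$ cycle $\sigma$ to a cycle $\sigma_Q$ lying \emph{in} $Q$, fill $\sigma_Q$ by its canonical filling $\nu$, and read off from the piece property the crucial estimate $\M(\nu)\le\M(H)$, where $H$ is the ``cylinder'' connecting $\sigma$ and $\sigma_Q$. Since $H$ is cheap (because $\sigma$ lies in $N_\rho(Q)$), this yields the desired linear-in-$\rho$ filling bound.

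Concretely, let $\sigma\in\bZ_{n-1}(X)$ with $\spt(\sigma)\subset N_\rho(Q)$ and let $\tau\in\I_n(X)$ be a filling with $\spt(\tau)\subset X\setminus Q$. First I would invoke Lemma~\ref{lem_homology_bilip} (with $m=n-1$ and $R=\rho$), which is applicable since $X$ satisfies {\rm (SCI$_{n-1}$)} and $Q$ is an $L$-bilipschitz flat: this produces a cycle $\sigma_Q\in\bZ_{n-1}(Q)$ and a homology $H\in\I_n(X)$ with $\D H=\sigma-\sigma_Q$ and $\M(H)\le C\,\rho\,\M(\sigma)$, where $C=C(L,n,\text{(SCI$_{n-1}$) constants})$. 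Let $\nu\in\I_n(Q)$ be the canonical filling of $\sigma_Q$, obtained by transporting the canonical filling of the pushforward of $\sigma_Q$ under a bilipschitz chart $Q\cong\R^n$; note $\M(\nu)<\infty$. Then $\D(H+\nu)=(\sigma-\sigma_Q)+\sigma_Q=\sigma$, so $\Fill(\sigma)\le\M(H)+\M(\nu)$, and everything reduces to bounding $\M(\nu)$.

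The key observation is that $\tau-H$ is itself a filling of $\sigma_Q$, since $\D(\tau-H)=\sigma-(\sigma-\sigma_Q)=\sigma_Q=\D\nu$, and moreover $\tau-H\in\I_n(X)$ (finite mass). Applying the piece property of $Q$ (Definition~\ref{def_piece_decomposition}) to this filling gives the additive decomposition of measures $\|\tau-H\|=\|\nu\|+\|(\tau-H)-\nu\|$, with $\|(\tau-H)-\nu\|$ concentrated on $X\setminus Q$. Evaluating on the Borel set $Q$, and using that $\|\nu\|$ is concentrated on $Q$ while $\|(\tau-H)-\nu\|(Q)=0$, one gets $\M(\nu)=\|\nu\|(Q)=\|\tau-H\|(Q)\le\|\tau\|(Q)+\|H\|(Q)$. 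Since $\|\tau\|$ is concentrated on $\spt(\tau)\subset X\setminus Q$, we have $\|\tau\|(Q)=0$, hence $\M(\nu)\le\|H\|(Q)\le\M(H)\le C\,\rho\,\M(\sigma)$. Combining, $\Fill(\sigma)\le 2C\,\rho\,\M(\sigma)$, which is exactly the neck property with a constant depending only on $n$, the constants of {\rm (SCI$_{n-1}$)} and the bilipschitz constant of $Q$.

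The argument is short once the objects are in place; the step I would be most careful with is the application of the piece property: it is essential to apply mass additivity to $\tau-H$ — a filling of the \emph{projected} cycle $\sigma_Q$, not of $\sigma$ — so that, after restricting the decomposition to $Q$, the contribution of the possibly huge filling $\tau$ vanishes and only the cheap cylinder $H$ survives. The remaining points (that $\tau-H$ and $\nu$ have finite mass, and the elementary facts $\|S+T\|\le\|S\|+\|T\|$ and $\|S\|$ is concentrated on $\spt(S)$) are routine and already recorded in Section~\ref{subsect:currents}.
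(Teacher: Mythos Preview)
Your proof is correct and follows essentially the same route as the paper's: invoke Lemma~\ref{lem_homology_bilip} to obtain the cylinder $H$ and the projected cycle in $Q$, apply the piece property to the filling $\tau-H$ of that projected cycle, and use that $\|\tau\|$ is concentrated on $X\setminus Q$. The only difference is in the last step: you evaluate the mass identity on the Borel set $Q$ to get $\M(\nu)\le\|H\|(Q)\le\M(H)$ and hence $\Fill(\sigma)\le 2C\rho\,\M(\sigma)$, whereas the paper observes that since both $(\tau-H)-\nu$ and $\tau$ are concentrated on $X\setminus Q$, so is $H+\nu$, whence $H=(H+\nu)+(-\nu)$ is itself a piece decomposition and $\M(H+\nu)\le\M(H)$, giving the slightly sharper $\Fill(\sigma)\le C\rho\,\M(\sigma)$. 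The factor of $2$ is immaterial for the statement.
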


\begin{proof}
	Suppose that $Q$ has the piece property.
	Let $\si\in \bZ_{n-1}(X)$ be a cycle with $\spt(\si)\subset N_\rho(Q)$ and let $\tau\in \I_{n}(X)$ be a filling with $\spt(\tau)\subset X\setminus Q$.
	By Lemma~\ref{lem_homology_bilip} there exists a cycle $\si'\in\bZ_{n-1}(Q)$ and a homology $H\in\I_{n}(X)$ with $\D H=\si-\si'$ and $\M(H)\leq C\cdot\rho\cdot\M(\si)$.
	Denote by $\nu$ the canonical filling of $\si'$ inside $Q$. Then $H+\nu$ is a filling of $\si$.
	The piece property implies that $\tau+H=(\tau+H+\nu)-\nu$ is a piece decomposition. Since the support of $\tau$ is disjoint from $Q$, we see that $H=(H+\nu)-\nu$
	is a piece decomposition as well. In particular, $\M(H+\nu)\leq\M(H)$ and the claim follows.
\end{proof}

\begin{definition}[Full support]
	\label{def_full_support}
	Let $n=\dim Q$.	
	We say $Q\subset X$ has {\em full support with respect to a homology theory $h_{*}$}, if the map
	\[h_n(Q,Q\setminus\{q\},\mathbb Z)\to h_n(X,X\setminus\{q\},\mathbb Z)\] 
	is injective for each $q\in Q$. More generally, we define a bilipschitz disk or half flat $K$ in $X$ has full support with respect to some homology theory if the above injectivity condition holds for all $q$ in the interior of $K$.
\end{definition}

\begin{lemma}\label{lem_neck_impl_full_spt}
	Let $n=\dim Q$. Suppose that $X$ satisfies condition {\rm (SCI$_{n}$)}. If $Q$ has the weak neck property, then $Q$ has full support with respect to $\ha_n$.
\end{lemma}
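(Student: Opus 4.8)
The plan is to show that if $Q$ has the weak neck property then for each $q\in Q$ the inclusion $\tilde H_{n-1}(Q\setminus\{q\})\to\tilde H_{n-1}(X\setminus\{q\})$ is injective in the homology induced by Ambrosio--Kirchheim currents (equivalently, by Remark~\ref{def_full_support_intro} since $X$ satisfies (SCI$_n$) and hence $\ha_n(X)=0$, that $H^{AK}_n(Q,Q\setminus\{q\})\to H^{AK}_n(X,X\setminus\{q\})$ is injective). So suppose $\si\in\bZ_{n-1}(Q)$ represents a nontrivial class in $\ha_{n-1}(Q\setminus\{q\})$ but bounds in $X\setminus\{q\}$, i.e. there is $\tau\in\I_n(X)$ with $\D\tau=\si$ and $q\notin\spt(\tau)$. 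We must derive a contradiction. The idea is to combine the canonical filling $\nu$ of $\si$ inside $Q$ (which passes through $q$, since $\si$ is nontrivial in $Q\setminus\{q\}$; here I use that $Q$ is a bilipschitz $n$-flat, so $\nu$ is essentially the pull-back of a canonical filling in $\R^n$, and $q$ lies in its support precisely because $[\si]\ne 0$ in $\tilde H_{n-1}(\R^n\setminus\{q\})$) with the given alternate filling $\tau$ to produce an $n$-cycle $\mu=\nu-\tau$, and then exploit the weak neck property to show this cycle has arbitrarily small filling area outside a neighborhood of $q$ — contradicting the fact that its "flux" across small spheres around $q$ is bounded below.

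More precisely, first I would fix $\eps>0$ and apply the weak neck property to $\tau$ (after noting $\si\in\bZ_{n-1}(Q)$, $\tau\in\I_n(X)$ is a filling) to get $\rho_0\in(0,\eps)$ with $\Fill(\pp{\tau,d_Q,\rho_0})<\eps$. Meanwhile the slice of the canonical filling, $\pp{\nu,d_Q,\rho_0}$, is supported in $Q$ and — again using the bilipschitz flat structure and $\rho_0<\eps$ small relative to the distance of $q$ from $\D[\,\cdot\,]$ — it is a cycle in the annular shell $\{d_Q\le\rho_0\}\cap Q$ whose filling (inside $Q$) necessarily has area bounded below, because it is homologous in $\{d_Q\le\rho_0\}$ to (a copy of) a small $(n-1)$-sphere linking $q$; in $\R^n$ such a sphere cannot be filled in the punctured shell, so any filling in $X$ must leave the shell. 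Using (SCI$_n$) to control fillings, the difference cycle $\sigma_{\rho_0}:=\pp{\nu,d_Q,\rho_0}-\pp{\tau,d_Q,\rho_0}=\D\big((\nu-\tau)\on\{d_Q\le\rho_0\}\big)$ has small filling (at most $\eps$ plus the small mass contributions from the coarea estimate), and yet it must be "detected" at $q$. The cleanest way to formalize the contradiction is homological: $(\nu-\tau)\on\{d_Q\le\rho_0\}$ together with a small filling of $\sigma_{\rho_0}$ yields an $n$-cycle supported in a small neighborhood of $q$ in $X$, whose class in $H^{AK}_n(X,X\setminus\{q\})$ equals the image of the generator of $H^{AK}_n(Q,Q\setminus\{q\})$ (which is nonzero by hypothesis that $[\si]\ne 0$); but letting $\eps\to 0$ this cycle can be made to have mass $\to 0$, and a mass-$\to 0$ sequence of cycles all representing the \emph{same} nonzero class in $H^{AK}_n(X,X\setminus\{q\})$ is impossible — e.g. by the isoperimetric inequality (Theorem~\ref{thm:isop-ineq}) a cycle of small mass near $q$ bounds with small mass in $X\setminus\{q\}$ once the mass drops below a threshold, so its relative class vanishes.

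The key technical steps in order: (i) set up $\nu$, the canonical filling of $\si$ in $Q$, and verify $q\in\spt(\nu)$ and that $\nu$'s slices near $Q$ behave as in the Euclidean model (this is where bilipschitz-flatness of $Q$ is used); (ii) form $\mu=\nu-\tau\in\bZ_n(X)$ and, for small $\rho_0$ chosen via the weak neck property, write $\mu\on\{d_Q\le\rho_0\}$ as a chain whose boundary $\sigma_{\rho_0}$ has $\Fill(\sigma_{\rho_0})<\eps+$(coarea error); (iii) assemble an $n$-cycle $z_{\rho_0}$ supported within distance $O(\eps)$ of $q$ representing the nonzero class in $H^{AK}_n(X,X\setminus\{q\})$, with $\M(z_{\rho_0})\to 0$ as $\eps\to 0$; (iv) invoke (SCI$_n$)/isoperimetry to conclude such small cycles are null-homologous rel $X\setminus\{q\}$, contradiction. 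I expect step (i) — pinning down exactly why $q$ lies in the support of the canonical filling and why the near-$Q$ slices of $\nu$ link $q$ in the correct way, transported through the bilipschitz homeomorphism $\R^n\cong Q$ — together with the bookkeeping in (iii) relating the constructed cycle's relative class to the original nonzero class, to be the main obstacle; the rest is standard coarea estimates plus the isoperimetric inequality already available in the excerpt. (One should also double-check that $\rho_0$ can be taken small enough that $\{d_Q\le\rho_0\}$ misses $\spt(\D\nu)=\spt(\si)$ shifted appropriately, but since $\si\subset Q$ and we only need $\rho_0$ below the distance from $q$ to $\partial$ of the model disk, this is harmless.)
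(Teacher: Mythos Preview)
Your overall strategy matches the paper's: argue by contradiction, take the canonical filling $\nu$ of $\si$ in $Q$ (with $q\in\spt(\nu)$), use the weak neck property to find $\rho_k\to 0$ with $\Fill(\pp{\tau,d_Q,\rho_k})\to 0$, and combine $\nu$ with $h_k:=\tau\on\{d_Q\le\rho_k\}$ and a small filling $\mu_k$ of the slice to form a cycle. However, your step~(iii) is where things go wrong.

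You assert that the cycle $z_{\rho_0}=(\nu-\tau)\on\{d_Q\le\rho_0\}+(\text{small filling})$ is ``supported within distance $O(\eps)$ of $q$'' and has ``$\M(z_{\rho_0})\to 0$''. Neither is true: since $\nu\subset Q$, we have $(\nu-\tau)\on\{d_Q\le\rho_0\}=\nu-h_{\rho_0}$, which contains all of $\nu$; its mass is bounded below by $\M(\nu)-\M(\tau)$ and its support spreads over a large piece of $Q$, not a neighborhood of $q$. So the isoperimetric step~(iv) as you phrase it (``small mass cycle near $q$ bounds away from $q$'') never applies. What \emph{does} go to zero is the \emph{filling volume} of this cycle, not its mass: the cycle $\mu_k-h_k+\nu$ is supported in $N_{\rho_k+c_k}(Q)$ (with $c_k\to 0$ coming from the support control of an almost-minimal filling $\mu_k$, via Lemma~\ref{lem:density0} or Theorem~\ref{thm:isop-ineq}(2)), and then Corollary~\ref{cor_small_fill_close_to_Q} (which uses {\rm(SCI$_n$)}) gives $\Fill(\mu_k-h_k+\nu)\le C(\rho_k+c_k)\cdot\M(\mu_k-h_k+\nu)\to 0$ because the mass is bounded while the tube radius shrinks. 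From this (and $\M(\mu_k)\to 0$) one concludes $h_k\to\nu$ weakly, which is impossible since $\spt(h_k)\subset\spt(\tau)$ stays outside a fixed ball $B_q(1)$ while $q\in\spt(\nu)$. That is exactly the paper's endgame; your sketch just needs to replace ``small mass'' by ``small filling volume near $Q$'' and finish with the weak-convergence contradiction rather than a local isoperimetric one.
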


\begin{proof}
	If $Q$ does not have full support, then we find an embedded top-dimensional ball $B\subset Q$ around a point $x\in Q$ such that a nontrivial multiple of $\D\bb{B}$ can be filled by $\tau\in \I_n(X)$ with $x\notin \spt(\tau)$.
	We may even assume that it avoids the ball $B_x(1)$. Let $\nu$ be the canonical filling of $\D \tau$ in $Q$.
	Suppose that the weak neck property holds. 
	Then there exists a sequence $\rho_k\to 0$ such that the  slices $\pp{\tau,d_Q,\rho_k}$  fulfill $\Fill(\pp{\tau,d_Q,\rho_k})\to 0$. Choose fillings $\mu_k$
	of $\pp{\tau,d_Q,\rho_k}$ as provided by Theorem~\ref{thm:isop-ineq}. Set $h_k=\tau\on\{d_Q\leq\rho_k\}$. From Corollary~\ref{cor_small_fill_close_to_Q} and 
	Theorem~\ref{thm:isop-ineq} (2), we see that 
	$\Fill(\mu_k-h_k+\nu)\to 0$ and therefore $h_k$ converges to $\nu$ weakly \cite[Theorem 1.4]{wenger2007flat}.
	However, this is impossible since the support of the $h_k$ is disjoint from $B_x(1)$.
\end{proof}

At last we will close the cycle.

\begin{lemma}\label{lem_full_spt_impl_pp}
	Let $n=\dim Q$. Suppose that $X$ satisfies condition {\rm (CI$_{n-1}$)}. Suppose that each element in $\bZ_{n}(X)$ can be filled by an element in $\I_{n+1}(X)$. If $Q$ has full support with respect to $\ha_n$,
	then $Q$ has the piece property.
\end{lemma}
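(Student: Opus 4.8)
The plan is to argue by contradiction: suppose $Q$ has full support with respect to $\ha_n$ but the piece decomposition $\tau = \nu + (\tau-\nu)$ fails to be additive in mass for some cycle $\si \in \bZ_{n-1}(Q)$ with canonical filling $\nu \in \I_n(Q)$ and some alternate filling $\tau \in \I_n(X)$. First I would split the defect into its two components as in Definition~\ref{def_piece_decomposition}: either $\|\tau-\nu\|$ is \emph{not} concentrated on $X\setminus Q$, i.e. $\|\tau-\nu\|(Q) > 0$, or the mass is not additive, $\M(\tau) < \M(\nu) + \M(\tau-\nu)$. The key observation is that the cycle $\mu := \tau - \nu \in \bZ_n(X)$ captures the failure, and by hypothesis $\mu$ admits a filling $\alpha \in \I_{n+1}(X)$ with $\D\alpha = \mu$. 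I would then use slicing by the distance function $d_Q$ (using the coarea inequality from Section~\ref{subsect:slicing}) to localize: for a.e. $t>0$ the slice $\pp{\mu,d_Q,t}$ is an $(n-1)$-cycle, and the piece $\mu\on\{d_Q\le t\}$ together with a filling of the slice provides candidates for a local comparison.

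The heart of the argument is to leverage full support at the level of the measure $\|\tau-\nu\|\on Q$. The standard mechanism (cf.\ the proof of Lemma~\ref{lem_neck_impl_full_spt} run in reverse): if $\|\tau - \nu\|(Q) > 0$, one finds a point $q$ in the support of $\|\tau-\nu\|\on Q$ lying in the interior of a small embedded $n$-ball $B \subset Q$ such that a nontrivial multiple of $\D\bb{B}$ is filled inside $Q$ by $\nu \on B$ while $\mu = \tau-\nu$ contributes extra mass near $q$. Intersecting the filling $\alpha$ of $\mu$ with a small ball $B_q(r)$ and slicing, one obtains — via condition {\rm (CI$_{n-1}$)} and the isoperimetric inequality Theorem~\ref{thm:isop-ineq} — a filling of $\D(\mu\on B_q(r))$ of mass $o(r^n)$; grafting this onto $\nu$ produces a filling of $\si$ near $q$ whose density at $q$ is strictly smaller than that of the canonical filling $\nu$, which would force the map $\ha_n(Q,Q\setminus\{q\}) \to \ha_n(X,X\setminus\{q\})$ to kill the class of $\bb{B}$ rel boundary — contradicting full support. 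For the mass-additivity part, I would note that $\M(\tau) < \M(\nu)+\M(\mu)$ combined with $\|\mu\|$ concentrated on $X\setminus Q$ (the first case already handled) is incompatible with $\|\tau\| \ge \|\nu\| + \|\mu\|$ holding $\|\nu\|$-a.e.\ on $Q$, which again follows from a slicing/density argument at a generic point of $Q$ together with full support ensuring $\nu$ is locally mass-minimizing there.

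The main obstacle I anticipate is making the ``strictly smaller density'' comparison rigorous in a general metric space without a minimizer available (the hypothesis only gives \emph{some} filling in $\I_{n+1}(X)$, not a minimal one). The clean way around this is to combine the almost-minimizer construction of Lemma~\ref{lem:density0} — which yields fillings with a uniform lower density bound $D$ — with the fact that full support is precisely the statement that a positive multiple of $\bb{B}$ (rel.\ boundary) cannot be pushed off $q$; quantitatively, the monotonicity-type lower bound forces $\|\tau'\|(B_q(r)) \ge D r^n$ for any filling $\tau'$ of $\si$ with $q \in \spt(\tau')$, so if the extra mass of $\mu$ at $q$ could be deleted we would contradict full support by exhibiting a filling with $q \notin \spt$. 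So the real work is: (i) verify $q \notin \spt(\nu - \nu\on(Q\setminus\{q\}))$-type statements are equivalent to injectivity of the relative homology map at $q$ (using Remark~\ref{def_full_support_intro} once we check $\ha_n(X)$ vanishes locally, or working directly with $\ha_n(X,X\setminus\{q\})$); and (ii) control the error terms from slicing and from Theorem~\ref{thm:isop-ineq} so that they are genuinely lower order in $r$. Both are routine given the machinery already assembled, but the bookkeeping of constants and the choice of generic radii $r$ is where care is needed.
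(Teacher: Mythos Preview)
Your overall strategy---find a point $p\in Q$ and derive a contradiction to full support via a density/slicing argument---is the right one, but several of the moving parts are mis-aimed, and one step is simply wrong.

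First, the case split is unnecessary. The clean claim to prove is $\tau\on Q=\nu$; once you have that, $\tau-\nu=\tau\on(X\setminus Q)$, so both the concentration statement and the mass additivity follow immediately from the Borel piece decomposition $\tau=\tau\on Q+\tau\on(X\setminus Q)$. There is no separate ``mass-additivity'' case.

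Second---and this is the real gap---you are using the $(n+1)$-filling $\alpha$ of $\mu=\tau-\nu$ in the wrong place. You propose to slice $\alpha$ inside $B_q(r)$ to manufacture a filling of $\D(\mu\on B_q(r))$ of mass $o(r^n)$. There is no reason for $\|\alpha\|(B_q(r))$ to be small, so this does not work. In the paper's argument the fillability hypothesis is used \emph{only homologically}: it says $[\tau-\nu]=0$ in $\ha_n(X,X\setminus\{p\})$, hence $[k\cdot\bb{Q}-(\tau-\nu)]=k\cdot[\bb{Q}]$ in that group. The geometric density estimate comes from a different source.

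That source is the following. Write $\tau_Q=\tau\on Q$, $\tau_{Q^c}=\tau\on(X\setminus Q)$, and $\|\tau_Q-\nu\|=f\cdot\|\bb{Q}\|$ with integer-valued $f$ (integer rectifiability). For $\h^n$-a.e.\ $p\in Q$ one has $\Theta_n(\|\tau_{Q^c}\|,p)=0$; choose such a $p$ which is also a Lebesgue point of $f$ with $f(p)=k\neq 0$ (this is where assuming $\h^n(\spt(\si))=0$, or reducing to that case, is convenient). Then $T:=k\cdot\bb{Q}-(\tau-\nu)$ has $n$-density zero at $p$. Now slice by $d_p$ (not $d_Q$): coarea gives a radius $r$ with $\M(\slc{T,d_p,r})\le\eps r^{n-1}$, and Theorem~\ref{thm:isop-ineq} (from (CI$_{n-1}$)) fills this slice by $W_r$ with $\M(W_r)\lesssim\eps^{n/(n-1)}r^n$ and support in the $O(\eps^{1/(n-1)}r)$-neighborhood of the slice; for small $\eps$, $p\notin\spt(W_r)$. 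Hence $[T]=0$ in $\ha_n(X,X\setminus\{p\})$. Combined with $[T]=k[\bb{Q}]$ from the previous paragraph, this contradicts full support. No minimizers, no Lemma~\ref{lem:density0}, and no comparison of ``densities of alternate fillings of $\si$'' are needed.
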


\begin{proof}
	Let $\si\in \I_{n-1}(Q)$ be  a nontrivial cycle with the additional property that $\h^n(\spt(\si))=0$. 
	Denote by $\nu\in \I_n(Q)$ the canonical filling of $\si$ in $Q$ and let $\tau\in \I_n(X)$ be an arbitrary filling. Consider the piece decomposition 
	$\tau=\tau\on Q+\tau\on (X\setminus Q)$. Let $\tau_Q:=\tau\on Q$ and $\tau_{Q^c}:=\tau\on (X\setminus Q)$. Then $\tau_Q$ and $\tau_{Q^c}$ are integer rectifiable currents (\cite[Definition 4.2]{ambrosio2000currents}). Moreover, $\|\tau_{Q^c}\|$ is concentrated on $X\setminus Q$. 
	
	We claim that $\tau_{Q}=\nu$. By the proof of \cite[Theorem 4.6]{ambrosio2000currents}, $\Theta_{n}(\|\tau_{Q^c}\|,x):=\lim_{r\to 0}\frac{\|\tau_{Q^c}\|(B_x(r))}{\om_nr^n}=0$
	for $\h^n$-a.e. $x\in Q$. We write  $\|\tau_{Q}-\nu\|=f\cdot\bb{Q}$  with $f\in L^1(Q,\Z)$. If our claim fails, then we find a Lebesgue point $p\in Q\setminus\spt(\si)$ of $f$
	with 
	\begin{align*}
	\Theta_n(\|\tau-\nu\|,p)&=\Theta_n(\|\tau_Q-\nu\|,p)+\Theta_{n}(\|\tau_{Q^c}\|,p)\\
	&=\Theta_n(\|\tau_Q-\nu\|,p)=f(p)=k\neq 0.
	\end{align*} 
	Then $T=k\cdot\bb{Q}-\tau+\nu$ has density zero at $p$. Hence, for every $\eps>0$ we can find a slice $\pp{T,d_p,r}$ with $\M(\pp{T,d_p,r})\leq\eps\cdot r^{n-1}$.
	Let $W_r$ be an almost minimal filling of $\pp{T,d_p,r}$ as provided by Theorem~\ref{thm:isop-ineq}. Then the isoperimetric inequality implies $\M(W_r)\lesssim\eps^{\frac{n}{n-1}}\cdot r^n$. By Theorem~\ref{thm:isop-ineq} (2), $p$ cannot lie in the support of $W_r$ for $\eps$ small enough.
	Since $W_r$ is homologous to $T\on\{d_p\leq r\}$ we see that $[T]=0$ in $\ha_n(X,X\setminus\{p\})$.
	On the other hand, we have $[T]=k\cdot\bb{Q}\in \ha_n(X,X\setminus\{p\})$ since $\tau-\nu$ can be filled by an element in $\I_{n+1}(X)$. In particular, $[T]\neq 0\in \ha_n(X,X\setminus\{p\})$
	since $Q$ has full support. Contradiction. Thus the claim follows and the piece property follows from the claim.
	
	In the general case we choose a homology $\alpha\in \I_n(Q)$ such that $\si'=\D\al+\si$ is nontrivial and fulfills $\h^n(\spt(\si'))=0$.
	Then $\tau'=\tau+\al$ is a filling of $\si'$. Note that $\nu'=\nu+\al$ is the canonical filling $\si'$ in $Q$. By the case above, we know that
	$\tau'=(\tau'-\nu')+\nu'$ is a piece decomposition. Since the support of $\al$ is contained in $Q$ we obtain the required piece decomposition for $\tau$.
\end{proof}

Let us summarize the results of this subsection.

\begin{proposition}
	\label{prop_cone_conditions}
	Let $X$ be a metric space and let $Q$ be an $n$-dimensional bilipschitz flat in $X$. Suppose $X$ satisfies condition {\rm (SCI$_{n}$)}. Then the following conditions on $Q$ are all equivalent:
	\begin{enumerate}
		\item Piece property (cf. Definition \ref{def_piece_decomposition});
		\item Neck property (cf. Definition \ref{def_neck_decomposition});
		\item Weak neck property (cf. Definition \ref{def_weak_neck_decomposition});
		\item Full support with respect to $\ha_n$ (cf. Definition \ref{def_full_support}).
	\end{enumerate}
	If in addition $X$ satisfies {\rm (EII$_{n+1}$)} and coning inequalities up to dimension $n$ for singular chains, Lipschitz chains and compact supported integral currents, then each of the above conditions is equivalent to $Q$ having full support with respect to reduced singular homology.
\end{proposition}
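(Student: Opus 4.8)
The plan is to obtain the equivalence of $(1)$--$(4)$ by assembling the four lemmas of this subsection into a cycle of implications $(1)\Rightarrow(2)\Rightarrow(3)\Rightarrow(4)\Rightarrow(1)$, and to treat the final assertion separately through Proposition~\ref{prop_equal_homology}.

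First I would record that condition~{\rm (SCI$_n$)} already contains every auxiliary hypothesis needed below: it implies {\rm (SCI$_{n-1}$)}, {\rm (CI$_{n-1}$)} and {\rm (CI$_n$)}. With this in hand, $(1)\Rightarrow(2)$ is Lemma~\ref{lem_pp_impl_neck} (which uses only {\rm (SCI$_{n-1}$)}), $(2)\Rightarrow(3)$ is Lemma~\ref{lem_neck_impl_weak_neck} (no extra hypothesis), and $(3)\Rightarrow(4)$ is Lemma~\ref{lem_neck_impl_full_spt} (which uses {\rm (SCI$_n$)}). For the closing implication $(4)\Rightarrow(1)$ I would apply Lemma~\ref{lem_full_spt_impl_pp}: besides {\rm (CI$_{n-1}$)}, that lemma presupposes that every cycle in $\bZ_n(X)$ admit a filling inside $\I_{n+1}(X)$, and this is precisely Theorem~\ref{thm:isop-ineq} read one dimension higher, whose hypothesis {\rm (CI$_n$)} is available. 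This closes the cycle and gives the equivalence of $(1)$--$(4)$.

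For the last assertion, assume in addition {\rm (EII$_{n+1}$)} and coning inequalities up to dimension $n$ for singular chains, Lipschitz chains and compactly supported integral currents. Fix $q\in Q$ and apply Proposition~\ref{prop_equal_homology} with the compact set $K=\{q\}$: its first part gives natural isomorphisms $H^{\textrm{L}}_n(X,X\setminus\{q\})\to H_n(X,X\setminus\{q\})$ and $H^{\textrm{L}}_n(X,X\setminus\{q\})\to H^{AK}_{n,c}(X,X\setminus\{q\})$, and its second part, invoking {\rm (EII$_{n+1}$)}, gives $H^{AK}_{n,c}(X,X\setminus\{q\})\to H^{AK}_n(X,X\setminus\{q\})$. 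Since $Q$ is bilipschitz homeomorphic to $\R^n$, in each of these theories the relative group at $q$ (write $h_n$ for any one of the four theories) is infinite cyclic, with canonical generator determined by a bilipschitz chart sending $q$ to the origin, and the comparison maps carry one canonical generator to another. As the transformations $H^{\textrm{L}}_*\to H_*$, $H^{\textrm{L}}_*\to H^{AK}_{*,c}$ and $H^{AK}_{*,c}\to H^{AK}_*$ are natural, the inclusion-induced homomorphism $h_n(Q,Q\setminus\{q\})\to h_n(X,X\setminus\{q\})$ for one theory fits into a commutative square with the corresponding homomorphism for another theory, all vertical arrows being isomorphisms. Hence injectivity holds for one theory precisely when it holds for the other; letting $q$ vary over $Q$, full support with respect to $\ha_n$ is equivalent to full support with respect to reduced singular homology.

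The substantive work has already been carried out in the four lemmas, so the only delicate points are the two pieces of bookkeeping just indicated: verifying that {\rm (CI$_n$)} really does supply the $\I_{n+1}(X)$-fillings demanded by Lemma~\ref{lem_full_spt_impl_pp}, and, in the final assertion, checking both that Proposition~\ref{prop_equal_homology} applies with $K=\{q\}$ and that the infinite-cyclic relative groups $h_n(Q,Q\setminus\{q\})$ are matched generator-to-generator across the three chain theories. This last compatibility, together with naturality, is the step where I expect one has to be most careful.
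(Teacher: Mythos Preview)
Your proposal is correct and matches the paper's approach exactly: the equivalence of (1)--(4) is obtained by chaining Lemmas~\ref{lem_pp_impl_neck}, \ref{lem_neck_impl_weak_neck}, \ref{lem_neck_impl_full_spt} and \ref{lem_full_spt_impl_pp} into the cycle $(1)\Rightarrow(2)\Rightarrow(3)\Rightarrow(4)\Rightarrow(1)$, with the filling hypothesis in the last lemma supplied by Theorem~\ref{thm:isop-ineq} under {\rm (CI$_n$)}, and the final assertion is reduced to Proposition~\ref{prop_equal_homology}. The paper's own proof is a single sentence pointing to Proposition~\ref{prop_equal_homology}; your more careful unpacking of the naturality and generator-matching is a reasonable elaboration of what the paper leaves implicit.
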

The last statement follows from Proposition~\ref{prop_equal_homology}.

\subsection{Asymptotic conditions for Morse quasiflats}
\label{subsec_asymptotic condition}
\begin{definition}[Rigid quasiflat]
	\label{def_rigid}
	Suppose $X$ is a metric space satisfying condition {\rm (CI$_{n-1}$)}. Let $\Phi:\R^n\to X$ be an $(L,A)$-quasiflat. Let $\mathcal{C}_R$ and $\iota$ be as in Proposition~\ref{prop_chain_map}, 
	and let $b>0$.
	
	We define $\Phi$ to be ($\mu,b$)-{\em rigid}, if for every constant $M>0$ there exists a sublinear function $\mu=\mu_M:[0,\infty)\to[0,\infty)$ with the following property.
	Let $x\in\R^n$ and $\Phi(x)=p$. Suppose that $\varphi\in \bZ_{n-1,c}(\B{x}{r})$ is a cubical cycle (with respect to $\mathcal{C}_R$) with $\M(\varphi)\leq M\cdot r^{n-1}$.
	Suppose $\tau\in \I_{n,c}(\B{p}{Mr})$ satisfies  $\partial\tau=\iota(\varphi)$ and
	$\M(\tau)\leq M\cdot r^n$. Let $\nu\in \I_{n,c}(\R^n)$ be the canonical filling of $\varphi$ with $\spt(\nu)=W$. Let $W_b=\{y\in W\mid d(y,\D W)>b\}$. Then 
	$\Phi(W_b)\subset N_{\mu(r)}(\spt(\tau))$. A quasiflat is \emph{rigid} if it is $(\mu,b)$-rigid for some choice of $\mu$ and $b$.
	
	We define $\Phi$ to be {\em pointed ($\mu,b$)-\em rigid}, if the previous paragraph holds only for a particular base point $x$.
\end{definition}

\begin{remark}
	In the definition of ($\mu,b$)-rigid, the parameters $\mu$ and $b$ are independent of $x\in \R^n$. However, in the pointed version, different $x\in \R^n$ give rise to different $\mu$ and $b$.
\end{remark}

\begin{remark}
	\label{rmk_constants}
	One can set up the above definition slightly differently by using three constants $M_1,M_2,M_3$ and requiring $\M(\vp)\le M_1\cdot r^{n-1}$, $\tau\in\I_{n,c}(\B{p}{M_2r})$ and $\M(\tau)\le M_3\cdot r^n$. 
	However, one readily sees that this leads to an equivalent definition.
\end{remark}
\begin{remark}
	\label{rmk_scale}
	Note that the definition of rigid quasiflat depends on the choice of  the chain map $\iota$ and its underlying cubulation and therefore also on $X$. 
	However, for a different choice of $\iota$, $\Phi$ will be $(\mu',b')$-rigid for a different  choice of $\mu'$ and $b'$.
\end{remark}

\begin{remark}
	\label{rmk_Lispchitz}
	Suppose in addition that $X$ has a Lipschitz combing. Then we can approximate $\Phi$ such that it is $L'$-Lipschitz and an $(L',A')$-quasi-isometric embedding, 
	with $L',A'$ depending only on $L,A$ and $n$. In this case, we can define $\mu$-rigid by taking $\varphi$ to be any element in $\bZ_{n-1,c}(\B{x}{r})$ (not necessarily cubical), 
	and using $\Phi_\#$ instead of $\iota$. This gives an equivalent definition, with possibly different $\mu$.
\end{remark}

\begin{remark}
	\label{rmk_rigid quasidisks}
	We can define $(\mu,b)$-rigid for quasidisks in the same way (see Remark~\ref{rmk:quasidisk}). Of course, every quasidisk is $(\mu,b)$-rigid for some choice of $\mu$ and $b$, 
	however, if one fixes $\mu$ and $b$, then it places a non-trivial geometric condition on large quasidisks.
\end{remark}

Now we introduce a related property of quasiflats called {\em super-Euclidean divergence}. 
We will supply two equivalent versions of super-Euclidean divergence, one is technically more convenient (Definition~\ref{def_divergence1}), and one is closer to the definition
of super-Euclidean divergence for quasi-geodesics in the literature (Definition~\ref{def_divergence2}).

\begin{definition}[Super-Euclidean divergence I]
	\label{def_divergence1}
	Let $X,\Phi$ and $\iota$ be as in Definition~\ref{def_rigid}.	$\Phi$ has ($\delta$-){\em super-Euclidean divergence}, 
	if for any $D>1$, there exists a function $\delta=\delta_{D}:[0,\infty)\to[0,\infty)$ with $\lim\limits_{r\to\infty}\delta(r)=+\infty$ such that the following property holds.
	Let $r>r'$ where $r'$ is a positive constant depending only on $L,A,n$ and $D$ \footnote{We can choose $r'=2LD(A+a)$ with $a$ being the constant in Proposition~\ref{prop_chain_map}.}. Let $x\in\R^n$ be arbitrary and let $\Phi(x)=p$. Suppose that $\varphi\in\bZ_{n-1,c}(A_x(\frac{r}{D},Dr))$ is a cubical chain representing a nontrivial homology class in $\tilde{H}_{n-1}(\R^n\setminus\B{x}{\frac{r}{D}})$. 
	Suppose $\M(\varphi)\leq D\cdot r^{n-1}$. Then for any $\tau\in\I_{n,c}(A_p(\frac{r}{2LD},2LDr))$ such that $\partial\tau=\iota(\varphi)$, we have $\M(\tau)\geq\delta(r)\cdot r^n$.
\end{definition}

\begin{remark}
	Similar to Remark~\ref{rmk_constants}, one can set up the above definition using several different constants $D_1,D_2,\cdots$ at different places instead of $D$, which will lead to an equivalent definition. Moreover, we can repeat Remark~\ref{rmk_scale}, Remark~\ref{rmk_Lispchitz} and Remark~\ref{rmk_rigid quasidisks} in the context of super-Euclidean divergence.
\end{remark}

\begin{definition}[Super-Euclidean divergence II]
	\label{def_divergence2}
	Let $X,\Phi$ and $\iota$ be as in Definition~\ref{def_rigid}.	$\Phi$ has ($\delta$-){\em super-Euclidean divergence}, 
	if for any $D>1$, there exists a function $\delta=\delta_{D}:[0,\infty)\to[0,\infty)$ with $\lim\limits_{r\to\infty}\delta(r)=+\infty$ such that the following property holds.
	Let $r>0$. Let $x\in\R^n$ be arbitrary and let $\Phi(x)=p$. Suppose that $\varphi\in\bZ_{n-1,c}(\R^n\setminus\B{x}{r})$ be a cubical chain representing a nontrivial homology class in $\tilde{H}_{n-1}(\R^n\setminus\B{x}{r})$. 
	Suppose $\M(\varphi)\leq D\cdot r^{n-1}$. Then for any $\tau\in\I_{n,c}(X\setminus\B{p}{\frac{r}{2LD}})$ such that $\partial\tau=\iota(\varphi)$, we have $\M(\tau)\geq\delta(r)\cdot r^n$ (if no such $\varphi$ or $\tau$ exists, then it is understood that this inequality automatically holds).
\end{definition}

\begin{lem}
	\label{lem:rigid vs divergence}
	Suppose $X$ satisfies condition~{\rm (CI$_{n-1}$)}. Then an $(L,A)$-quasiflat $\Phi:\R^n\to X$ satisfies Definition~\ref{def_divergence1} if and only if it satisfies Definition~\ref{def_divergence2} (for a possibly different $\delta$).
\end{lem}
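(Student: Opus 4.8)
\emph{Strategy.} I would prove the two implications of the lemma separately; only the converse implication will use {\rm (CI$_{n-1}$)}. The implication ``Definition~\ref{def_divergence2} $\Rightarrow$ Definition~\ref{def_divergence1}'' is purely formal. A configuration satisfying the hypotheses of Definition~\ref{def_divergence1} at scale $r$ with parameter $D$ is, after setting $r_0:=r/(D+1)$ and $D':=(D+1)^n$, also a configuration for Definition~\ref{def_divergence2} at scale $r_0$ with parameter $D'$: the closed annulus $A_x(r/D,Dr)$ lies in $\R^n\setminus\B{x}{r_0}$, the homology condition is preserved under the homotopy equivalence $\R^n\setminus\B{x}{r/D}\hookrightarrow\R^n\setminus\B{x}{r_0}$, $\M(\varphi)\le Dr^{n-1}\le D'r_0^{n-1}$, and since $D<(D+1)^{n+1}$ any competitor $\tau\in\I_{n,c}(A_p(r/(2LD),2LDr))$ avoids $\B{p}{r_0/(2LD')}$. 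Thus Definition~\ref{def_divergence2} for parameter $D'$ gives $\M(\tau)\ge\delta_{D'}(r_0)r_0^n$, and I would take the new divergence function to be $r\mapsto(D+1)^{-n}\delta_{D'}(r/(D+1))$, which still tends to $+\infty$.

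For ``Definition~\ref{def_divergence1} $\Rightarrow$ Definition~\ref{def_divergence2}'' I would argue by contraposition together with a compactness-type extraction. If Definition~\ref{def_divergence2} fails for some parameter $D$, then taking, for each scale, the infimum of $\M(\tau)/r^n$ over all admissible configurations produces a sequence $r_k\to\infty$, basepoints $x_k$ with $p_k=\Phi(x_k)$, cubical cycles $\varphi_k\in\bZ_{n-1,c}(\R^n\setminus\B{x_k}{r_k})$ nontrivial in $\tilde H_{n-1}(\R^n\setminus\B{x_k}{r_k})$ with $\M(\varphi_k)\le Dr_k^{\,n-1}$, and fillings $\tau_k\in\I_{n,c}(X\setminus\B{p_k}{r_k/(2LD)})$ of $\iota(\varphi_k)$ with $\M(\tau_k)\le C_0 r_k^{\,n}$, where $C_0$ is independent of $k$. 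Fixing a parameter $D^*$ to be taken large at the end, the goal is to convert this data into a sequence of configurations violating Definition~\ref{def_divergence1} for parameter $D^*$.

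The construction involves two truncations. First I would replace $\varphi_k$ by a cubical cycle in a bounded annulus around $x_k$. Let $\nu_k$ be the canonical (hence unique and mass-minimizing) filling of $\varphi_k$ in $\R^n$; since $\varphi_k$ is cubical so is $\nu_k$, and the Euclidean isoperimetric inequality gives $\M(\nu_k)\lesssim\M(\varphi_k)^{n/(n-1)}\lesssim r_k^{\,n}$. A pigeonhole (discrete coarea) argument over the possible radii yields $s_k\in(\tfrac32 r_k,2r_k)$ such that, writing $\nu_k=\nu_k^{\le}+\beta_k$ with $\nu_k^{\le}$ the sum of the cubes of $\nu_k$ contained in $\B{x_k}{s_k}$, the cubical cycle $\varphi_k':=\partial\nu_k^{\le}$ satisfies $\M(\varphi_k')\lesssim r_k^{\,n-1}$. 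Then $\spt(\varphi_k')\subset A_{x_k}(r_k,3r_k)$, $\beta_k$ is cubical with $\spt(\beta_k)\subset\R^n\setminus\B{x_k}{r_k}$ and $\M(\beta_k)\le\M(\nu_k)\lesssim r_k^{\,n}$, and $\varphi_k=\varphi_k'+\partial\beta_k$, so $\varphi_k'$ is homologous to $\varphi_k$ away from $\B{x_k}{r_k}$ and is therefore nontrivial in $\tilde H_{n-1}(\R^n\setminus\B{x_k}{r_k/D^*})$. Since $\iota$ is a chain map, $\tilde\tau_k':=\tau_k-\iota(\beta_k)$ fills $\iota(\varphi_k')$ with $\M(\tilde\tau_k')\le\M(\tau_k)+a\,\M(\beta_k)\lesssim r_k^{\,n}$; and since $\Phi$ is an $(L,A)$-quasi-isometric embedding, $\iota(\beta_k)$ is supported far from $p_k$, so $\spt(\tilde\tau_k')\subset X\setminus\B{p_k}{r_k/(2LD^*)}$ once $D^*\ge D$ and $r_k$ is large.

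Second I would truncate $\tilde\tau_k'$ at a bounded outer radius; here it is crucial that $\M(\tilde\tau_k')\lesssim r_k^{\,n}$ with a constant independent of $k$. By the coarea inequality one finds a slicing radius $T_k\in[LD^*r_k,\tfrac32 LD^*r_k]$ with $\M(\slc{\tilde\tau_k',d_{p_k},T_k})\lesssim(D^*)^{-1}r_k^{\,n-1}$; filling this slice via Theorem~\ref{thm:isop-ineq} gives $\zeta_k$ with $\M(\zeta_k)\lesssim(D^*)^{-n/(n-1)}r_k^{\,n}$ and $\spt(\zeta_k)$ within $\lesssim(D^*)^{-1/(n-1)}r_k$ of $S_{p_k}(T_k)$, and I would set $\tau_k'':=\tilde\tau_k'\on\B{p_k}{T_k}-\zeta_k$ (after matching signs). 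Then $\partial\tau_k''=\iota(\varphi_k')$ (note $\spt(\iota\varphi_k')\subset\B{p_k}{T_k}$ for $D^*$ large), $\M(\tau_k'')\lesssim r_k^{\,n}$, and for $D^*$ large the support of $\tau_k''$ lies in $A_{p_k}(r_k/(2LD^*),2LD^*r_k)$: the outer cap $\zeta_k$ stays inside because $(D^*)^{-1/(n-1)}r_k\le\tfrac12 LD^*r_k$, while the inner radius is respected because $\tilde\tau_k'$ already avoids $\B{p_k}{r_k/(2LD)}\supset\B{p_k}{r_k/(2LD^*)}$. Choosing $D^*$ so large that in addition $D^*\ge\M(\varphi_k')/r_k^{\,n-1}$ and $D^*\ge 4$, the pairs $(\varphi_k',\tau_k'')$ become admissible configurations for Definition~\ref{def_divergence1} with parameter $D^*$ at scales $r_k\to\infty$, with $\M(\tau_k'')\le C_1 r_k^{\,n}$ and $C_1$ independent of $k$; this is incompatible with any divergence function for $D^*$, so Definition~\ref{def_divergence1} fails for $D^*$, completing the contraposition. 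The main obstacle I anticipate is precisely this second truncation: capping a slice of $\tilde\tau_k'$ enlarges its support by an amount controlled only by $(\text{slice mass})^{1/(n-1)}$, which would overflow the target annulus unless the slice mass is $O((D^*)^{-1}r_k^{\,n-1})$ with $D^*$ large -- and that bound is available only because $\M(\tilde\tau_k')=O(r_k^{\,n})$ with a fixed constant, which is exactly what the contradiction hypothesis supplies but a direct argument would not.
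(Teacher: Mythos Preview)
Your proof is correct and follows essentially the same two-step strategy as the paper: first replace $\varphi_k$ by a cubical cycle supported in a bounded annulus about $x_k$ (the paper asserts the existence of a homology $H_k$ with $\partial H_k=\varphi_k-\varphi_k'$, while you construct this explicitly as a piece $\beta_k$ of the canonical filling $\nu_k$), and second truncate the filling $\tilde\tau_k'$ by slicing at a large radius and capping via Theorem~\ref{thm:isop-ineq}, choosing the slicing radius proportional to a large parameter (your $D^*$, their $b$) so that the isoperimetric cap stays outside the inner ball. The only presentational difference is that you make the dependence on the final Definition~\ref{def_divergence1} parameter $D^*$ explicit throughout, whereas the paper absorbs this into ``choosing a larger $D_0$'' and the auxiliary constant $b$.
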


\begin{proof}
	It is clear that Definition~\ref{def_divergence2} implies Definition~\ref{def_divergence1}. Now suppose Definition~\ref{def_divergence2} fails for $\Phi$. Then we find a constant $D_0>1$, a sequence $(x_k)$ in $\R^n$ and a sequence $r_k\to\infty$ such that the following holds.
	There exist cubical cycles $\varphi_k\in\bZ_{n-1,c}(\R^n\setminus\B{x_k}{r_k})$ with $[\varphi_k]\neq 0\in \tilde{H}_{n-1}(\R^n\setminus\B{x_k}{r_k})$, $\M(\varphi_k)\leq D_0\cdot r_k^{n-1}$ such that $\iota(\varphi_k)$ can be filled by $\tau_k\in\I_{n,c}(X\setminus \B{p_k}{\frac{r_k}{D_0}})$ with $\M(\tau_k)\leq D_0\cdot r_k^{n}$ (define $p_k=\Phi(x_k)$). 
	
	We claim we can assume in addition that $\spt(\vp_k)\subset A_{x_k}(r_k,2r_k)$. Otherwise for each $\vp_k$, we find a cubical chain $\vp'_k\in \bZ_{n-1,c}(A_{x_k}(r_k,2r_k))$ and $H_k\in\I_{n,c}(\R^n\setminus \B{x_k}{r_k})$ such that $\D H_k=\vp_k-\vp'_k$, $\M(\vp'_k)\le D_0\cdot r^{n-1}_k$ and $\M(H_k)\le D_1\cdot r^{n-1}_k$ (here $D_1$ is a constant independent of $k$), and then we replace $\vp_k$ by $\vp'_k$ and $\tau_k$ by $\tau_k+\iota(H_k)$.
	
	By the previous claim, we can assume that $\spt(\D\tau_k)=\spt(\iota(\vp_k))\subset A_{p_k}(\frac{r_k}{D_0},r_kD_0)$ up to choosing a larger $D_0$. Take $b>1$ whose value will be determined later. We use coarea inequality to find a slice $\si_k=\slc{\tau_k,d_{p_k},r'_k}$ with $10br_kD_0<r'_k<20br_kD_0$ such that $\M(\si_k)\le \frac{1}{10b}\cdot r^{n-1}_k$. Let $\tau'_k$ be a minimal filling of $\si_k$. It follows from Theorem~\ref{thm:isop-ineq} that 
	$\spt(\tau_k')\subset N_{b_1r_k}(\spt(\si_k))$ and $\M(\tau'_k)\le b_2\cdot r^{n}_k$ where $b_1$ and $b_2$ depend only on $b$ and $X$. By choosing $b$ sufficiently large, $b_1$ will be small enough so that $\spt(\tau_k')\cap \B{p_k}{\frac{r_k}{D_0}}=\emptyset$. Now we replace $\tau_k$ by $\tau''_k=\tau_k\on \{d_{p_k}\le r'_k\}+\tau'_k$. Note that $\D\tau''_k=\D\tau_k$. The existence of $\{\tau''_k\}$ and $\{\vp_k\}$ contradicts Definition~\ref{def_divergence1}.
\end{proof}

\begin{lemma}\label{lem_rig_iff_supdiv}
	Let $\Phi:\R^n\to X$ be an $n$-dimensional $(L,A)$-quasiflat in a metric space $X$ satisfying condition~{\rm (CI$_{n-1}$)}. Then $\Phi$ is rigid if and only if it has super-Euclidean divergence.
\end{lemma}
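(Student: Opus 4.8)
The plan is to prove the two implications separately, both by contradiction, the common engine being that a failure of super-Euclidean divergence produces, at scales $r_k\to\infty$, cubical $(n-1)$-cycles $\varphi_k$ with $\M(\varphi_k)\lesssim r_k^{n-1}$ whose $\iota$-pushforwards admit fillings $\tau_k$ with $\M(\tau_k)\lesssim r_k^n$ avoiding large balls centered at $\Phi$ of a prescribed point. By Lemma~\ref{lem:rigid vs divergence} we may freely use whichever of Definition~\ref{def_divergence1} and Definition~\ref{def_divergence2} is more convenient in each direction.

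\emph{Rigid implies super-Euclidean divergence.} Here I would verify Definition~\ref{def_divergence1}. For fixed $D$, that property is equivalent to saying that the infimum of $\M(\tau)/r^n$ over admissible pairs $(\varphi,\tau)$ at scale $r$ tends to $+\infty$; so if it fails there are $D_0>1$, a constant $M_1$, and data $(x_k,\varphi_k,\tau_k)$ at scales $r_k\to\infty$ with $\varphi_k$ a cubical cycle supported in $A_{x_k}(r_k/D_0,D_0r_k)$, nontrivial in $\tilde H_{n-1}(\R^n\setminus\B{x_k}{r_k/D_0})$, $\M(\varphi_k)\leq D_0 r_k^{n-1}$, $\D\tau_k=\iota(\varphi_k)$, $\spt(\tau_k)\subset A_{p_k}(r_k/2LD_0,2LD_0r_k)$ (with $p_k=\Phi(x_k)$) and $\M(\tau_k)\leq M_1r_k^n$. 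Writing the canonical filling of $\varphi_k$ as $\bb{u_k}$: since $\spt(\varphi_k)$ misses $\B{x_k}{r_k/D_0}$, the integer-valued function $u_k$ is constant there, and nontriviality of $[\varphi_k]$ forces this constant to be nonzero; hence $\overline{\B{x_k}{r_k/D_0}}\subset W_k:=\spt(\bb{u_k})$ while $\D W_k\subset\spt(\varphi_k)$, so $x_k\in W_k^{\,b}$ for $k$ large (writing $W^{\,b}$ for the set denoted $W_b$ in Definition~\ref{def_rigid}). Applying $(\mu,b)$-rigidity with $M:=\max\{2L,M_1,1\}$ and scale $D_0r_k$ gives $p_k=\Phi(x_k)\in\Phi(W_k^{\,b})\subset N_{\mu(D_0r_k)}(\spt(\tau_k))$, i.e. $d(p_k,\spt(\tau_k))\leq\mu(D_0r_k)=o(r_k)$; but if $\tau_k=0$ this distance is infinite, and otherwise $\spt(\tau_k)$ is a nonempty subset of $A_{p_k}(r_k/2LD_0,2LD_0r_k)$, forcing $d(p_k,\spt(\tau_k))\geq r_k/2LD_0$ — a contradiction for large $k$.

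\emph{Super-Euclidean divergence implies rigid.} I would show $\Phi$ is $(\mu,b)$-rigid for a fixed $b$ exceeding the quasi-retraction constant of $\Phi$, with $\mu$ built from the divergence gauge $\delta$; so suppose not, and fix $M$ together with data as in Definition~\ref{def_rigid} at scales $r_k\to\infty$ and points $y_k\in W_k^{\,b}$ with $t_k:=d(\Phi(y_k),\spt(\tau_k))\geq\epsilon r_k$ for some $\epsilon>0$. The structural input is that, since there is no nonzero compactly supported $n$-cycle in $\R^n$, the canonical filling $\nu_k$ of $\varphi_k$ is the \emph{only} filling of $\varphi_k$ in $\R^n$; combining this with Proposition~\ref{prop_chain_map}(5) gives $\nu_k=\pi_\#\tau_k-h_k$, hence $W_k\subset\pi(\spt\tau_k)\cup N_a(\spt\varphi_k)$, and (by the coarea formula for $\mathrm{BV}$ functions and the Euclidean isoperimetric inequality applied to the superlevel sets of $u_k$) the multiplicity bound $|u_k(y_k)|\lesssim (r_k/d_k)^{n-1}$ with $d_k:=d(y_k,\spt\varphi_k)$. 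In the principal case $d_k\geq\epsilon' r_k$ one has $|u_k(y_k)|\lesssim 1$; pick a cubical ``sphere'' $\psi_k$ around $y_k$ at a radius $\rho_k\asymp r_k$ with $\rho_k<d_k$, with winding number $u_k(y_k)$ about $y_k$ and $\M(\psi_k)\lesssim r_k^{n-1}$. Then $\varphi_k-\psi_k$ is null-homologous in $\R^n\setminus\B{y_k}{\rho_k/2}$; its canonical filling vanishes on the complementary component containing $y_k$, so is supported off $\B{y_k}{\rho_k/2}$, and a Federer--Fleming deformation turns it into a cubical chain $\gamma_k$ with $\D\gamma_k=\varphi_k-\psi_k$, $\M(\gamma_k)\lesssim r_k^n$ and $\spt(\gamma_k)\subset\R^n\setminus\B{y_k}{\rho_k/4}$. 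Now $\tau_k':=\tau_k-\iota(\gamma_k)$ fills $\iota(\psi_k)$, satisfies $\M(\tau_k')\lesssim r_k^n$, and its support avoids some ball $\B{\Phi(y_k)}{c r_k}$ (both $\spt(\tau_k)$ and $N_a(\Phi(\spt\gamma_k))$ do). Feeding $\psi_k$, at scale $s_k\asymp r_k$, into Definition~\ref{def_divergence2} with $D$ chosen so large that $\M(\psi_k)\leq D s_k^{n-1}$ and $c r_k\geq s_k/2LD$ yields $\M(\tau_k')\geq\delta_D(s_k)\,s_k^n$ with $\delta_D(s_k)\to\infty$, contradicting $\M(\tau_k')\lesssim r_k^n$.

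The main obstacle is the remaining case $d_k=o(r_k)$, where $|u_k(y_k)|$ may tend to infinity and no cycle about $y_k$ at scale $\asymp d_k$ can have winding $u_k(y_k)$ and mass within a \emph{fixed} multiple of $d_k^{n-1}$ (the radial projection onto $\partial\B{y_k}{d_k}$ shows any such cycle has mass $\gtrsim|u_k(y_k)|\,d_k^{n-1}$), so the divergence inequality cannot be applied with a single $D$. Here one must instead use $W_k=\spt(\pi_\#\tau_k-h_k)$ and $b>a$, which force $y_k\in\pi(\spt\tau_k)$, say $y_k=\pi(w_k)$ with $w_k\in\spt(\tau_k)$, so that $t_k\leq d(\Phi(\pi(w_k)),w_k)$; the crux is then to show this last quantity is sublinear in $r_k$, which requires a finer analysis of how the filling $\tau_k$ — of bounded mass $\lesssim r_k^n$ and with boundary $\iota(\varphi_k)$ near $Q$ — interacts with $Q$ (via the chain-level projection of Definition~\ref{def_chain_projection} and the coning inequalities). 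This last point is the technical heart of the argument.
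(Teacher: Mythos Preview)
Your forward direction (rigid $\Rightarrow$ super-Euclidean divergence) is correct and essentially matches the paper's argument.

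In the reverse direction you have a genuine gap in the case $d_k=o(r_k)$, and the remedy you sketch does not work. You reduce to bounding $d(\Phi(\pi(w_k)),w_k)$ for some $w_k\in\spt(\tau_k)$ with $\pi(w_k)=y_k$, and hope this is sublinear in $r_k$. But there is no reason for it to be: $w_k$ is an arbitrary point of $\spt(\tau_k)\subset\B{p_k}{Mr_k}$, possibly at distance $\sim r_k$ from $Q$, and for such points $d(\Phi(\pi(w)),w)$ is generically of order $r_k$. The mass bound $\M(\tau_k)\lesssim r_k^n$ gives no leverage here, since it does not constrain where any \emph{individual} point of $\spt(\tau_k)$ sits relative to $Q$.

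The paper avoids the case distinction altogether with one trick: rather than modifying $\varphi_k$ to a sphere $\psi_k$ of matched winding number, it replaces $\varphi_k$ by $\varphi'_k$, the Federer--Fleming deformation of $\pi_\#\iota(\varphi_k)$. The point is that $\spt(\iota(\varphi_k))=\spt(\D\tau_k)\subset\spt(\tau_k)$, so the very hypothesis $\B{\Phi(y_k)}{\epsilon_0 r_k}\cap\spt(\tau_k)=\emptyset$ already forces $d(\Phi(y_k),\spt(\iota(\varphi_k)))\geq\epsilon_0 r_k$; pushing this through $\pi$ (using that $\spt(\iota(\varphi_k))\subset N_a(Q)$) yields $d(y_k,\spt(\varphi'_k))\geq\epsilon_1 r_k$ automatically---no information about $d(y_k,\spt(\varphi_k))$ is needed. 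Proposition~\ref{prop_chain_map}(5) gives a homology $h$ between $\varphi_k$ and $\varphi'_k$ supported in $N_a(\spt\varphi_k)$, so choosing $b>a$ still ensures $y_k\in\spt(\nu'_k)$; hence $\B{y_k}{\epsilon_1 r_k}\subset\spt(\nu'_k)$ and $[\varphi'_k]\neq 0$ in $\tilde H_{n-1}(\R^n\setminus\B{y_k}{\epsilon_1 r_k})$. One then takes an almost-minimal filling $\hat\tau_k$ of $\iota(\varphi_k)-\iota(\varphi'_k)$ (via Lemma~\ref{lem:density0}), with $\M(\hat\tau_k)\lesssim r_k^{n-1}$ and $\spt(\hat\tau_k)$ within $O(r_k^{(n-1)/n})$ of $\spt(\iota(\varphi_k))\cup\spt(\iota(\varphi'_k))$, both of which are $\gtrsim r_k$ away from $\Phi(y_k)$. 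Then $\tau'_k:=\tau_k-\hat\tau_k$ fills $\iota(\varphi'_k)$, has mass $\lesssim r_k^n$, and misses a ball of radius $\gtrsim r_k$ about $\Phi(y_k)$, contradicting super-Euclidean divergence.

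In short, the correct move is not to control $d_k$ or the multiplicity $|u_k(y_k)|$, but to change $\varphi_k$ to the cycle $\pi_\#\iota(\varphi_k)$, for which the analogue of $d_k$ is large by hypothesis.
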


\begin{proof}
	We will use Definition~\ref{def_divergence1} in the proof.
	
	If super-Euclidean divergence fails, then we find a constant $D_0>1$, a sequence $(x_k)$ in $\R^n$ and a sequence $r_k\to\infty$ such that the following holds.
	There exist cubical cycles $\varphi_k\in\bZ_{n-1,c}(A_{x_k}(\frac{r_k}{D_0},D_0r_k))$ with $[\varphi_k]\neq 0\in \tilde{H}_{n-1}(\R^n\setminus \B{x_k}{\frac{r_k}{D_0}})$, $\M(\varphi_k)\leq D_0\cdot r_k^{n-1}$ such that $\iota(\varphi_k)$ can be filled by $\tau_k\in\I_{n,c}(A_{p_k}(\frac{r_k}{D_0},D_0r_k))$ with $\M(\tau_k)\leq D_0\cdot r_k^{n}$ (define $p_k=\Phi(x_k)$). Let $\alpha_k\in \I_{n,c}(\R^n)$ be the canonical filling of $\varphi_k$. Then $\M(\alpha_k)\le D'\cdot r^n_k$ with $D'$ depending only on $D_0$ and $n$. Moreover, $\B{x_k}{\frac{r_k}{D_0}}\subset\spt(\al_k)$. Let $b>0$ be arbitrary. Then there exists $D''=D''(L,A,,b,D_0)$ such that $\Phi(\B{x_k}{\frac{r_k}{D_0}-b})$ contains a point which is $D'' r_k$ away from $A_{p_k}(\frac{r_k}{D_0},D_0r_k)\supset \spt(\tau_k)$ for all $k$. Thus $Q$ is not rigid.

	Now we argue the other direction. If $\Phi$ is not rigid, then for any choice of $b>0$ (we will determine the value of $b$ later), we find (depending on value of $b$) constants $\eps_0>0$, $M_0$, a sequence $(x_k)$ in $\R^n$, $r_k\to\infty$, cubical chains $\varphi_k\in\I_{n-1,c}(\B{x_k}{r_k})$ and $\tau_k\in\I_{n,c}(\B{p_k}{M_0 r_k})$ with $\D\tau_k=\iota(\varphi_k)$ such that the following holds for all $k$.
	\begin{enumerate}
		\item $\M(\varphi_k)\leq M_0\cdot r_k^{n-1}$ and $\M(\tau_k)\leq M_0\cdot r_k^n$.
		\item Let $\mu_k$ be the canonical filling of $\varphi_k$. Then there is $y_k\in \{y\in\spt(\mu_k)\mid d(y,\spt(\varphi_k))>b\}$ with $\B{\Phi(y_k)}{\eps_0 r_k}\cap\spt(\tau_k)=\emptyset$.
	\end{enumerate}
	In particular,
	\begin{equation}
	\label{eq_deep_inside}
	\B{\Phi(y_k)}{\eps_0 r_k}\cap\spt(\iota(\varphi_k))=\emptyset.
	\end{equation}

	Let $\varphi'_k=\pi_\# \iota(\varphi_k)$, where $\pi:X\to\R^n$ is defined in Section~\ref{subsec_cubulated_quasiflats}. As $\spt(\iota(\varphi_k))$ is contained in a fixed neighborhood of $Q$, by applying $\pi$ to \eqref{eq_deep_inside}, we can find $\eps_1$ independent of $k$ with $\B{y_k}{\eps_1 r_k})\cap\spt(\varphi'_k)=\emptyset$. 
	Thus 
	\begin{equation}
	\label{eq_deep_inside1}
	 d(y_k,\spt(\vp'_k))\ge \eps_1 r_k.
	\end{equation} 
	
	Suppose $\nu'_k$ is the canonical filling of $\varphi'_k$.	
We now claim that choosing $b$ large enough will guarantee $y_k\in \spt(\nu'_k)$. By Proposition~\ref{prop_chain_map} (5), there is $h\in\I_n(\R^n)$ such that $\D h=\varphi'_k-\varphi_k$ and $\spt(h)\subset N_a(\spt(\varphi_k))$ with the constant $a$ as in Proposition~\ref{prop_chain_map} (5). By assumption (2) above, $\B{y_k}{b}\subset \spt(\mu_k)$. As $\nu'_k=\mu_k+h$, if we choose $b>a$, then $y_k\in \spt(\nu'_k)$, thus the claim follows.

	By applying Federer-Fleming deformation to $\vp'_k$, we assume both $\vp'_k$ and $\nu'_k$ are cubical and the previous estimates still hold. Then $\M(\vp'_k)\le M'_0\cdot r^{n-1}_k$. By choosing a slightly larger $b$, we assume $y_k\in \spt(\nu'_k)$ still holds. Thus by \eqref{eq_deep_inside1}, we have $\B{y_k}{\eps_1 r_k}\subset\spt(\nu'_k)$.
	It follows that $\varphi'_k$ represents a non-trivial class in $\tilde H_{n-1}(\R^n\setminus \B{y_k}{\eps_1 r_k})$.

	In what follows, $a_1,a_2,\cdots$ will be constants depending only on $L,A,n$ and $X$ (independent of $k$). Let $q_k=\Phi(y_k)$. Proposition~\ref{prop_chain_map} (5) implies that $\Fill(\vp_k-\vp'_k)\le a_1\cdot\M(\vp_k)$. Thus $\Fill(\iota(\vp_k)-\iota(\vp'_k))\le a_2\cdot\M(\vp_k)$ by Proposition~\ref{prop_chain_map} (2). Let $\hat\tau_k$ be a filling of $\iota(\vp_k)-\iota(\vp'_k)$ given in Lemma~\ref{lem:density0}. By Lemma~\ref{lem:density0} (1), 
\begin{equation}
\label{eqn_upper_bound_order_n-1}
\M(\hat\tau_k)\le 2a_2\cdot\M(\vp_k)\le 2a_2M_0\cdot r^{n-1}_k\,.
\end{equation}

 By Lemma~\ref{lem:density0} (2), for every $x\in \spt(\hat\tau_k)$ and every $0\le r\le d(x,\spt(\iota(\vp_k)-\iota(\vp'_k)))$, we have 
\begin{equation}
\label{eqn_lower_mass_bound_tau_hat}	
\M(\hat\tau_k)\ge\|\hat\tau_k\|(\B{x}{r}) \ge Dr^n\,.
\end{equation}
	Then combining \eqref{eqn_upper_bound_order_n-1} and \eqref{eqn_lower_mass_bound_tau_hat}, it follows that $\spt(\hat\tau_k)$ is contained in the $a_3r^{\frac{n-1}{n}}_k$-neighborhood of $$\spt(\iota(\vp_k)-\iota(\vp'_k))\subset \spt(\iota(\vp'_k))\cup\spt(\iota(\vp_k)).$$ 
	Note that $\spt(\iota(\vp'_k))\subset N_{a_4}(\Phi(\spt(\vp'_k)))$ by Proposition~\ref{prop_chain_map} (1). This together with \eqref{eq_deep_inside} and \eqref{eq_deep_inside1} imply that $$d(q_k,\spt (\hat\tau_k))\ge a_5\eps_1r_k-a_6r^{\frac{n-1}{n}}_k\ge a_7\eps_1 r_k$$ when $r_k$ is sufficiently large. Let $\tau'_k=\tau_k-\hat\tau_k$. Then $\D\tau'_k=\iota(\vp'_k)$ and $d(q_k,\spt (\tau'_k))\ge a_7\eps_1r_k$. Thus $\spt(\tau'_k)\subset A_{q_k}(a_7\eps_1 r_k,(M_0+a_8)r_k)$. However, $\M(\tau'_k)\le \M(\tau_k)+\M(\hat\tau_k)\le (M_0+a_7)\cdot r^n_k$. Thus super-Euclidean divergence
	fails by considering the sequence $\varphi'_k$ and $\partial\tau'_k=\iota(\varphi'_k)$ (recall that $\varphi'_k$ represents a non-trivial class in $\tilde H_{n-1}(\R^n\setminus \B{y_k}{\eps_1 r_k})$).  
\end{proof}
 
\begin{definition}
	\label{def:Morse}
	Suppose $X$ is a complete metric space satisfying condition {\rm (CI$_{n-1}$)}. An $n$-dimensional quasiflat $Q\subset X$ is \emph{Morse} if it has $\de$-super-Euclidean divergence. The \emph{Morse parameter} of $Q$ is $\de$.
\end{definition}

\subsection{Quasi-isometry invariance}
\label{subsec_QI_invariance}
\begin{proposition}
	\label{prop:QI invariance}
	Let $X$ and $Y$ be two complete metric spaces satisfying condition~{\rm (CI$_{n-1}$)} with constant $c$. 
	Let $Q\subset X$ be an $n$-dimensional Morse $(L,A)$-quasiflat. Let $\Psi:X\to Y$ be an $L'$-Lipschitz $(L',A')$-quasi-isometry with an $L'$-Lipschitz quasi-isometry inverse. 
	Then $\Psi(Q)$ is a Morse quasiflat with its Morse parameter depending only on $L,A,L',$ $A',n,c$ and the Morse parameter of $Q$.
\end{proposition}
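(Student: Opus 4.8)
The plan is to argue by contraposition, transporting a hypothetical ``witness to non‑divergence'' for $\Psi(Q)$ back through the Lipschitz quasi‑inverse of $\Psi$ to produce one for $Q$. (We may assume $n\ge 2$; for $n=1$ this is the classical quasi‑isometry invariance of Morse quasigeodesics.) Write $\Phi\colon\R^n\to X$ for the $(L,A)$‑quasi‑isometric embedding with image $Q$, and set $\Phi':=\Psi\circ\Phi\colon\R^n\to Y$, an $(L_1,A_1)$‑quasi‑isometric embedding with image $\Psi(Q)$ and $L_1,A_1$ depending only on $L,A,L'$. Let $\bar\Psi\colon Y\to X$ be the $L'$‑Lipschitz quasi‑isometry inverse; then $\bar\Psi$ is an $(L',A_2)$‑quasi‑isometric embedding and $d_X(\bar\Psi(\Psi(z)),z)\le C_0$ for all $z\in X$, with $A_2,C_0$ depending only on $L',A'$. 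Fix chain maps $\iota$ for $\Phi$ and $\iota'$ for $\Phi'$ as in Proposition~\ref{prop_chain_map} (using {\rm (CI$_{n-1}$)} in $X$, resp.\ $Y$). By Definition~\ref{def:Morse} it suffices to show: if $\Phi'$ is not Morse, then $\Phi$ is not Morse.

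Suppose $\Phi'$ is not Morse. By Lemma~\ref{lem:rigid vs divergence}, $\Phi'$ fails Definition~\ref{def_divergence1}: there are $D_0>1$, points $x_k\in\R^n$, radii $r_k\to\infty$, cubical cycles $\varphi_k\in\bZ_{n-1,c}(A_{x_k}(\tfrac{r_k}{D_0},D_0r_k))$ representing nontrivial classes in $\tilde H_{n-1}(\R^n\setminus\B{x_k}{r_k/D_0})$ with $\M(\varphi_k)\le D_0 r_k^{\,n-1}$, and fillings $\tau_k\in\I_{n,c}(A_{p'_k}(\tfrac{r_k}{2L_1D_0},2L_1D_0r_k))$ of $\iota'(\varphi_k)$ with $\M(\tau_k)\le D_0 r_k^{\,n}$, where $p'_k:=\Phi'(x_k)$. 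Put $T_k:=\bar\Psi_\#\tau_k\in\I_{n,c}(X)$, so $\partial T_k=\bar\Psi_\#(\iota'(\varphi_k))$ and $\M(T_k)\le (L')^n D_0 r_k^{\,n}$. Since $\bar\Psi$ is an $(L',A_2)$‑quasi‑isometric embedding and $\bar\Psi(p'_k)=\bar\Psi(\Psi(\Phi(x_k)))$ lies within $C_0$ of $\Phi(x_k)$, the support of $T_k$ is contained in an annulus $A_{\Phi(x_k)}(c_1 r_k,c_2 r_k)$ once $k$ is large (absorbing additive constants, legitimate since $r_k\to\infty$), for constants $c_1>0,c_2$ depending only on $L,A,L',A',n,D_0$.

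Next I compare $\bar\Psi_\#(\iota'(\varphi_k))$ with $\iota(\varphi_k)$: both are integral $(n-1)$‑cycles in $X$ built from $\varphi_k$ cube by cube via maps that are $C_0$‑close (namely $z\mapsto\bar\Psi(\Psi(\Phi(z)))$ versus $z\mapsto\Phi(z)$). Exactly as in the skeleton‑by‑skeleton construction of the chain map in Proposition~\ref{prop_chain_map} --- filling, over each cube of $\spt(\varphi_k)$ and by {\rm (CI$_{n-1}$)}, the bounded‑diameter discrepancy cycle produced by the homology already built on the cube's boundary --- one obtains $H_k\in\I_{n,c}(X)$ with $\partial H_k=\iota(\varphi_k)-\bar\Psi_\#(\iota'(\varphi_k))$, $\M(H_k)\le C_3\M(\varphi_k)\le C_3 D_0 r_k^{\,n-1}$, and $\spt(H_k)\subset N_{C_3}(\Phi(\spt(\varphi_k)))$, where $C_3$ depends only on $L,A,L',A',n$ and the constant of {\rm (CI$_{n-1}$)}. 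Since $\Phi(\spt(\varphi_k))\subset\Phi(A_{x_k}(\tfrac{r_k}{D_0},D_0 r_k))$, we get $\spt(H_k)\subset A_{\Phi(x_k)}(c_1' r_k,c_2' r_k)$ for $k$ large. Set $\tilde T_k:=T_k+H_k$. Then $\partial\tilde T_k=\iota(\varphi_k)$, $\M(\tilde T_k)\le (L')^n D_0 r_k^{\,n}+C_3 D_0 r_k^{\,n-1}\le C_4 r_k^{\,n}$, and $\spt(\tilde T_k)$ lies in an annulus $A_{\Phi(x_k)}(c r_k,c' r_k)$ with $c>0$. Choosing a constant $D\ge D_0$ (depending only on $L,A,L',A',n$ and $D_0$) large enough that $\tfrac{1}{2LD}\le c$ and $2LD\ge c'$, the pair $(\varphi_k,\tilde T_k)$ then violates Definition~\ref{def_divergence1} for $\Phi$ at scale $r_k$ and parameter $D$ for all large $k$ ($\varphi_k$ is homologically essential outside $\B{x_k}{r_k/D}$, $\M(\varphi_k)\le D r_k^{\,n-1}$, $\tilde T_k$ fills $\iota(\varphi_k)$ inside $A_{\Phi(x_k)}(\tfrac{r_k}{2LD},2LDr_k)$, yet $\M(\tilde T_k)\le C_4 r_k^{\,n}$), contradicting $\Phi$ being Morse. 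For the quantitative statement one runs the same estimates as a direct implication: from a $\delta$‑super‑Euclidean divergence bound for $\Phi$ at this parameter $D$ one gets, for any admissible configuration for $\Phi'$ at scale $r$, the bound $\M(\tau)\ge (L')^{-n}\big(\M(\tilde T)-\M(H)\big)\ge\delta'(r)\,r^n$ with $\delta'(r)=\tfrac12 (L')^{-n}c^n\,\delta(cr)$ (the $r^{n-1}$ correction absorbed for $r$ large since $\delta\to\infty$); hence $\Psi(Q)$ is Morse with Morse parameter depending only on $L,A,L',A',n,c$ and that of $Q$.

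The main obstacle is the comparison cycle $H_k$ of the third paragraph: one must homologize the two transported $(n-1)$‑cycles $\iota(\varphi_k)$ and $\bar\Psi_\#(\iota'(\varphi_k))$ by a chain of mass $O(r_k^{\,n-1})$ whose support --- crucially --- stays within $O(1)$ of $\Phi(\spt(\varphi_k))$, hence disjoint from the forbidden ball $\B{\Phi(x_k)}{cr_k}$. A naive ambient filling of the difference via {\rm (CI$_{n-1}$)} or the isoperimetric inequality (Theorem~\ref{thm:isop-ineq}) has support controlled only by $N_{O(r_k)}(\cdot)$, which would intrude into that ball and break the argument; this is avoided by the cube‑by‑cube bounded‑diameter construction, and it is precisely to make it work --- keeping $\diam(\spt(\varphi_k))=O(r_k)$ and $\spt(\iota(\varphi_k))$ an annulus bounded away from $\Phi(x_k)$ --- that we use the annulus form of super‑Euclidean divergence (Definition~\ref{def_divergence1}) rather than the unbounded form (Definition~\ref{def_divergence2}). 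A tidier alternative is to observe that $\bar\Psi_\#\circ\iota'$ already satisfies conditions (1)--(2) of Proposition~\ref{prop_chain_map} for the quasiflat $\bar\Psi\circ\Phi'$, which has image at Hausdorff distance $\le C_0$ from $Q$, and invoke the insensitivity of super‑Euclidean divergence to the choice of chain map (Remark~\ref{rmk_scale}, in the form applying to Definition~\ref{def_divergence1}); this packages the same skeleton argument but must still be justified.
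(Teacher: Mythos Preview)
Your proof is correct and follows essentially the paper's route: push the filling back through the Lipschitz quasi-inverse and build a skeleton-by-skeleton homology with $O(1)$ support control to correct the boundary, then compare masses. The paper in fact argues directly with Definition~\ref{def_divergence2} rather than passing to the annulus form, so your final paragraph overstates the necessity of Definition~\ref{def_divergence1} --- since the cube-by-cube construction already gives $\spt(H)\subset N_M\bigl(\spt(\iota(\varphi))\cup\spt(\partial\tau)\bigr)$ with $M$ independent of $r$, the combined filling avoids the central ball $B_p(r/(C'D))$ regardless of the outer diameter of $\spt(\varphi)$.
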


\begin{proof}
	By Lemma~\ref{lem:rigid vs divergence}, it suffices to show super-Euclidean divergence is invariant under quasi-isometry, which is slightly cleaner to work with. Suppose $Q$ is represented by $\Phi:\R^n\to X$ such that $\R^n$ has a cubulation of appropriate scale $R=R(L,A,L',A')$. Let $Q'=\Psi(Q)$ be represented by $\Phi'=\Psi\circ\Phi$. Let $\iota$ and $\iota'$ be chain maps as in Proposition~\ref{prop_chain_map} associated with $Q$ and $Q'$ respectively.
	
	Let $\Psi'$ be an $L'$-Lipschitz quasi-isometry inverse of $\Psi$. Suppose $Q$ has $\de$-super-Euclidean divergence. 
	
	Take $D>1$ and $x\in\R^n$. Define $\Phi'(x)=p'$ and $\Phi(x)=p$. Suppose that $\varphi\in\bZ_{n-1,c}(\R^n\setminus\B{x}{r})$ is a cubical chain representing a nontrivial class in $\tilde{H}_{n-1}(\R^n\setminus\B{x}{r})$. 
	Suppose $\M(\varphi)\leq D\cdot r^{n-1}$. Let $\tau'\in\I_{n,c}(Y\setminus\B{p'}{\frac{r}{D}})$ such that $\partial\tau'=\iota'(\varphi)$. 
	
	Let $\tau=\Psi'_\#(\tau')$. Using Theorem~\ref{thm:isop-ineq}, we can build  a homology $h$  between $\iota(\varphi)$ and $\Psi'_\#(\iota'(\varphi))=\D\tau$
	skeleton by skeleton such that $\D h=\iota(\varphi)-\D\tau$, $\M(h)\le C\cdot \M(\varphi)$ and $\spt(h)\subset N_M(\spt (\iota(\varphi))\cup \spt(\D\tau))$ where $C$ and $M$ depend only on $L,A,L',A',c$ and $n$.
	
	Let $\tau_1=\tau+h$. Then $\spt(\tau_1)\in X\setminus\B{p}{\frac{r}{C'D}}$ for $C'>1$ depending only on $C,L',A',L,A$. As $Q$ has $\de$-super-Euclidean divergence, $\M(\tau_1)\ge \delta(r) \cdot r^n$. Thus $\M(\tau')\ge \frac{1}{(L')^n}\M(\tau)\ge\frac{\delta(r)\cdot r^n-C\cdot \M(\varphi)}{(L')^n}\ge \frac{\delta(r)\cdot r^n-CD\cdot r^{n-1}}{(L')^n}$. Thus $Q'$ has $\de'$-super-Euclidean divergence with $\de'=\de'(\de,L,A,L',A',c,n)$.
\end{proof}

\begin{cor}
	\label{cor:qi invariance}
	Suppose $X$ and $Y$ are piecewise Euclidean simplicial complexes. 
	Let $Q\subset X$ be an $n$-dimensional Morse $(L,A)$-quasiflat. 
	Suppose there exists $f:(0,\infty)\to (0,\infty)$ such that any $\ell$-Lipschitz map from the unit sphere $\mathbb S^m$ to $X$ (resp. $Y$) with $m\le n$ can be extended to an $f(\ell)$-Lipschitz map 
	from the unit disk to $X$ (resp. $Y$). Let $\Psi:X\to Y$ be an $(L',A')$-quasi-isometry with an $L'$-Lipschitz quasi-isometry inverse. Then $\Psi(Q)$ is a Morse quasiflat with its Morse parameter depending only on $L,A,L',$ $A',n,f$ and the Morse parameter of $Q$.
\end{cor}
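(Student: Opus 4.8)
The plan is to reduce to Proposition~\ref{prop:QI invariance}. An inspection of the proof of that proposition shows that the hypothesis that $X$ and $Y$ satisfy {\rm (CI$_{n-1}$)} is used only to construct the chain maps $\iota,\iota'$ of Proposition~\ref{prop_chain_map} and to apply the isoperimetric inequality (Theorem~\ref{thm:isop-ineq}) when producing the auxiliary homology, and that the Lipschitz hypothesis on $\Psi$ is in fact inessential there: $\Psi$ enters the argument only through the quasi-isometric embedding $\Phi'=\Psi\circ\Phi$ representing $\Psi(Q)$ and through the basepoint $\Psi(p)$, while every current is transported from $Y$ to $X$ by the quasi-inverse $\Psi'$, which is $L'$-Lipschitz by assumption, and the auxiliary homologies are built inside $X$. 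Hence it is enough to show that the hypothesis on $f$, together with the piecewise Euclidean simplicial structure of $X$ and $Y$, forces $X$ and $Y$ to satisfy {\rm (CI$_{n-1}$)} with a constant depending only on $n$ and $f$. I expect this last implication to be the main point of the proof.

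To establish it I would argue as follows. The hypothesis on $f$ says in particular that $X$ and $Y$ are $n$-connected, every Lipschitz $m$-sphere with $m\le n$ bounding a Lipschitz disk. Given an integral $k$-cycle $S$ in $X$ with $k\le n-1$ and $\diam(\spt S)\le R$, a Federer--Fleming deformation at the fixed simplicial scale (cf. the discussion following Definition~\ref{def_coning_ineq1} and \cite{FedF,abrams2013pushing}) produces a simplicial $k$-cycle $S'$ with $S-S'=\partial R_1$, $\M(S')\lesssim\M(S)$, $\M(R_1)\lesssim\M(S)$, and $\spt(S')\cup\spt(R_1)$ in a uniformly bounded neighborhood of $\spt S$; here the implied constants depend only on $n$ and the simplicial geometry, and, crucially, not on $R$. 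One then cones $S'$ to a basepoint $q\in\spt(S')$ by sliding each of its $k$-simplices to $q$ along an edge-path of length $\lesssim R$, filling at each unit step the bounded-diameter $k$-cycle that appears with the help of $f$; since $k\le n-1$ and $X$ is $n$-connected this coning is consistent, and it gives a filling of $S'$ of mass $\lesssim R\cdot\M(S')\lesssim R\cdot\M(S)$ supported near $\spt S$. Adding back a filling of $R_1$ (obtained in the same way) yields a filling of $S$ of mass $\lesssim R\cdot\M(S)$; since $Y$ is treated identically, $X$ and $Y$ satisfy {\rm (CI$_{n-1}$)} (indeed {\rm (SCI$_{n-1}$)}) with constant depending only on $n$ and $f$.

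With this in hand I would run the proof of Proposition~\ref{prop:QI invariance}: by Lemma~\ref{lem:rigid vs divergence} it suffices to show that super-Euclidean divergence is inherited by $\Psi(Q)$ from $Q$. Given a cubical cycle $\varphi$ representing a nontrivial homology class and a filling $\tau'$ of $\iota'(\varphi)$ in $Y$ supported outside an appropriate ball, set $\tau=\Psi'_\#\tau'$, construct a homology $h$ in $X$ between $\iota(\varphi)$ and $\Psi'_\#\iota'(\varphi)$ skeleton by skeleton, each step filling a bounded-diameter cycle by means of {\rm (CI$_{n-1}$)} and Theorem~\ref{thm:isop-ineq}, with $\M(h)\lesssim\M(\varphi)$ and support controlled near $Q$, and then apply the super-Euclidean divergence of $Q$ to $\tau+h$; this bounds $\M(\tau')$ from below by $\delta'(r)\cdot r^n$ for a function $\delta'$ with $\delta'(r)\to\infty$ depending only on $L,A,L',A',n,f$ and the Morse parameter of $Q$, which is exactly the assertion. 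The hard part is the deduction of the linear-in-diameter coning inequality {\rm (CI$_{n-1}$)} from the a priori bounded-scale filling data $f$: this genuinely exploits the simplicial structure, since the Federer--Fleming step first collapses the problem to unit-scale fillings, where $f$ applies with a uniform constant, after which the linear dependence on $R$ is recovered only by iterating the coning.
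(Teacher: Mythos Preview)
Your argument is correct but takes a different route from the paper. The paper's proof is a single sentence: the hypothesis on $f$ lets one extend the vertex maps determined by $\Psi$ and $\Psi'$ skeleton by skeleton to Lipschitz quasi-isometries $X^{(n)}\to Y^{(n)}$ and $Y^{(n)}\to X^{(n)}$, after which Proposition~\ref{prop:QI invariance} applies verbatim. You instead keep $\Psi$ untouched, observing (correctly) that in the proof of Proposition~\ref{prop:QI invariance} only $\Psi'$ is ever used to push currents, and you spend the main effort deriving {\rm (CI$_{n-1}$)} for $X$ and $Y$ directly from $f$ via Federer--Fleming reduction followed by edge-path coning with unit-scale fillings. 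So the two proofs use $f$ for different purposes: the paper uses it to make the quasi-isometry Lipschitz, you use it to manufacture the coning inequality. Your expectation that the {\rm (CI$_{n-1}$)} step is ``the main point'' does not match the paper's emphasis, though in fairness both routes need {\rm (CI$_{n-1}$)} for $Y$ in order even to speak of $\Psi(Q)$ being Morse (Definition~\ref{def:Morse} presupposes it), and the paper's one-liner leaves this implicit.

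One small slip: where you write ``adding back a filling of $R_1$'', note that $R_1$ is already the chain with $\partial R_1 = S - S'$, so you simply add $R_1$ itself to the filling of $S'$.
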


The assumption of Corollary~\ref{cor:qi invariance} enables us to build Lipschitz quasi-isometries $X^{(n)}\to Y^{(n)}$ and $Y^{(n)}\to X^{(n)}$ skeleton by skeleton. 

If $X$ and $Y$ are simplicial complexes with cocompact automorphism groups satisfying condition~{\rm (CI$_{n-1}$)}, 
then the assumption of Corollary~\ref{cor:qi invariance} is satisfied. Indeed, condition~{\rm (CI$_{n-1}$)} implies that $k$-th reduced homology of $X$ and $Y$ are trivial for $k\le n-1$ (by Federer-Fleming deformation), 
thus the $k$-th homotopy group is also trivial for $k\le n-1$. The cocompactness guarantees that spheres can be filled with uniform control.

\section{Asymptotic conditions on spaces and conditions on asymptotic cones}
\label{sec:cone and space}
In this section we relate the conditions in asymptotic cones, discussed in Section~\ref{subsec_cone_conditions}, to the
asymptotic conditions within a space as discussed in Section~\ref{subsec_asymptotic condition}. The main result is Proposition~\ref{prop_bridge}
which provides a connection in an appropriate setting, for instance for spaces with Lipschitz combings.

\subsection{Full support and super-Euclidean divergence}
\label{subsec_full_support_and_divergence}
Throughout this section, we let $Q$ be a quasiflat represented by a map $\Phi:\R^n\to X$ and we let  $\iota$ be an associated chain map as in Proposition~\ref{prop_chain_map}.
\begin{lemma}\label{lem_suplindiv_implies_fullsup}
	Let $X$ be a complete metric space which satisfies {\rm (SCI$_{n-1}$)} with constant $c$. Let $\Phi:\R^n\to X$ be an $(L,A)$-quasiflat with image $Q$.
	Suppose that $Q$ has $\delta$-super-Euclidean divergence. 
	Then for any asymptotic cone $X_\om$, the ultralimit $Q_\om$ of $Q$ has full support in $X_\om$ with respect to $H^{L}_{*}$.
\end{lemma}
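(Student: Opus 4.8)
The plan is to argue by contradiction using the formulation of super-Euclidean divergence in Definition~\ref{def_divergence2}. Suppose $Q_\om\subset X_\om$ fails to have full support with respect to $H^L_*$, where $X_\om=\lim_\om(X,p_i,s_i)$ with $s_i\to\infty$. Since the failure occurs at some $q_\om\in Q_\om$, after rebasing the cone at points $q_i\in Q$ with $\lim_\om q_i=q_\om$ (which replaces $X_\om$ by an isometric copy) and writing $q_i=\Phi(x_i)$, we may assume the base point of $\R^n$ in the cone is $x_i$, so that $q_\om=\Phi_\om(0)$ where $\Phi_\om\colon\R^n\to Q_\om$ is the bilipschitz homeomorphism obtained as the ultralimit of $\Phi$. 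Identify $Q_\om$ with $\R^n$ via $\psi:=\Phi_\om^{-1}$. Since $H^L_n(Q_\om,Q_\om\setminus\{q_\om\})\cong\Z$ is generated by $[B]$ for $B=\psi^{-1}(\bar B(0,t))$ a small bilipschitz $n$-disk, the failure of injectivity yields $k\neq 0$ and a Lipschitz $n$-chain $\tau$ in $X_\om$ with $\D\tau=k\cdot\D[B]$ and $q_\om\notin\spt(\tau)$. As $\spt(\tau)$ is closed we may shrink $t$ so that $\spt(\tau)$ is disjoint from $\B{q_\om}{2\eta}$ for some $\eta>0$ with $B\subset\B{q_\om}{\eta}$.

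Next I would transfer this configuration back to $X$ at scales $s_i$. The cycle $z:=k\cdot\D[B]$ lies in $Q_\om$, hence $\D[B]$ is supported at $X_\om$-distance comparable to $t$ from $q_\om$; write $z=\lim_\om s_i^{-1}\beta_i$ with Lipschitz $(n-1)$-cycles $\beta_i$ in $X$ of uniformly bounded Lipschitz constants, with $s_i^{-(n-1)}\M(\beta_i)\to\M(z)$, with $\spt(\beta_i)\subset N_{o(s_i)}(Q)$, and with $\spt(\beta_i)$ at $X$-distance comparable to $ts_i$ from $q_i$ for $\om$-a.e.\ $i$; similarly write $\tau=\lim_\om s_i^{-1}\tau_i$ with $\D\tau_i=\beta_i$, $s_i^{-n}\M(\tau_i)\to\M(\tau)$, and $\spt(\tau_i)\cap\B{q_i}{2\eta s_i}=\es$ for $\om$-a.e.\ $i$. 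Applying Proposition~\ref{lem_homology} (the homology retract property, available since $X$ satisfies {\rm (SCI$_{n-1}$)}) to $\beta_i$ produces a cubical $(n-1)$-cycle $\varphi_i$ in $\R^n$ and a homology $H_i\in\I_n(X)$ with $\D H_i=\beta_i-\iota(\varphi_i)$, $\M(H_i)\le o(s_i)\cdot\M(\beta_i)$, $\M(\varphi_i)\le C\cdot\M(\beta_i)$, $\spt(H_i)\subset N_{o(s_i)}(\spt(\beta_i))$, and $\spt(\varphi_i)$ within bounded distance of $\varphi(\spt(\beta_i))$, which is an annulus of inner and outer radii comparable to $ts_i$ around $x_i$ (since $\varphi$ is a coarse inverse of $\Phi$). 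Setting $r_i:=c\,t\,s_i$ for a suitable small $c>0$, this gives $\M(\varphi_i)\le D\cdot r_i^{n-1}$ and $\spt(\varphi_i)\subset\R^n\setminus\B{x_i}{r_i}$; and $\tau_i':=\tau_i-H_i$ satisfies $\D\tau_i'=\iota(\varphi_i)$, $\M(\tau_i')\le D\cdot r_i^n$ for all large $i$ (as $s_i\asymp r_i$ and $\M(H_i)=o(r_i^n)$), and $\spt(\tau_i')\subset X\setminus\B{p_i}{r_i/(2LD)}$ since $\spt(\tau_i)$ and $\spt(H_i)$ both remain at $X$-distance $\gtrsim ts_i$ from $q_i=p_i$.

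It remains to verify that $\varphi_i$ represents a nontrivial class in $\tilde H_{n-1}(\R^n\setminus\B{x_i}{r_i})$ for $\om$-a.e.\ $i$, so that the pairs $(\varphi_i,\tau_i')$ satisfy the hypotheses of Definition~\ref{def_divergence2} at radius $r_i$. Let $V_i=\bb{v_i}\in\I_{n,c}(\R^n)$ be the canonical filling of $\varphi_i$. If $[\varphi_i]=0$ in $\tilde H_{n-1}(\R^n\setminus\B{x_i}{r_i})$, then $\varphi_i=\D G_i$ with $G_i$ supported off $\B{x_i}{r_i}$; since $\bZ_{n,c}(\R^n)=\{0\}$ this forces $V_i=G_i$, so $\spt(V_i)\cap\B{x_i}{r_i}=\es$, and then Theorem~\ref{thm:isop-ineq} gives $\M(V_i)\le c'r_i^n$ and $\spt(V_i)\subset\bar B_{x_i}(C'r_i)$. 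Consequently $V_\om:=\lim_\om s_i^{-1}V_i$ is a well-defined compactly supported integral $n$-current in $\R^n$ with $\D V_\om=\lim_\om s_i^{-1}\varphi_i=k\cdot\D[\psi(B)]$; using $\bZ_{n,c}(\R^n)=\{0\}$ again, $V_\om=k[\psi(B)]$, whence $0\in\spt(V_\om)$ because $0=\psi(q_\om)$ lies in the interior of $\psi(B)$. But the supports of the $V_i$ avoid $\B{x_i}{r_i}$ with $r_i/s_i\to ct>0$, so $\spt(V_\om)$ avoids $\B{0}{ct}\ni 0$ --- a contradiction. Hence $\varphi_i$ is nontrivial for $\om$-a.e.\ $i$, and Definition~\ref{def_divergence2} yields $\M(\tau_i')\ge\delta(r_i)\cdot r_i^n$; since $r_i\to\infty$ we have $\delta(r_i)\to\infty$, contradicting $\M(\tau_i')\le D\cdot r_i^n$.

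The main obstacle is the middle step: promoting the Lipschitz chains $\tau$ and $\D[B]$ living in the asymptotic cone to sequences of chains in $X$ whose boundaries are \emph{exactly} $\iota(\varphi_i)$ for cubical cycles $\varphi_i$, while keeping the mass bounds, the neighborhood bounds on supports, and the nontriviality of the homology class uniform in $i$. This is precisely what the retraction and homology-retract machinery of Section~\ref{sec:quasiflats in metric spaces} supplies, with {\rm (SCI$_{n-1}$)} used to absorb the discrepancies, and it is where care with the constants is needed; everything else reduces to routine bookkeeping about asymptotic cones together with the elementary fact $\bZ_{n,c}(\R^n)=\{0\}$.
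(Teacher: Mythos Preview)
Your overall strategy matches the paper's: assume full support fails at some $q_\om$, produce a Lipschitz chain $\tau$ in $X_\om$ filling a nontrivial cycle on $Q_\om$ while avoiding a ball around $q_\om$, transfer this configuration to $X$ at scales $s_i$, and contradict super-Euclidean divergence. The difference lies in how the transfer is carried out, and your account of that step has a gap.

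You write ``$\tau=\lim_\om s_i^{-1}\tau_i$'' and ``$z=\lim_\om s_i^{-1}\beta_i$'' as if the approximating sequences exist a priori. They do not: a Lipschitz chain in an asymptotic cone need not arise as an ultralimit of chains in the original space. The sequences $\tau_i,\beta_i$ must be \emph{constructed}. The paper does this explicitly: barycentrically subdivide $\tau$ into simplices of small diameter, pull back the $0$-skeleton to points in $X_k$, and fill in skeleton by skeleton using the strong coning inequality. This is where {\rm (SCI$_{n-1}$)} enters. Your final paragraph says this step ``is precisely what the retraction and homology-retract machinery of Section~\ref{sec:quasiflats in metric spaces} supplies,'' but that is a misattribution: Proposition~\ref{lem_homology} projects a cycle already living in $X$ onto a cubical cycle near $Q$; it does not manufacture chains in $X$ from chains in $X_\om$. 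You do use Proposition~\ref{lem_homology} correctly afterwards, to convert $\beta_i$ into $\iota(\varphi_i)$, but that is a separate (and later) correction.

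Once the skeleton-filling construction is in place, the two routes diverge in a minor way. The paper arranges from the start that $\varphi$ is a cubical cycle in $\R^n$ and that $\partial\tau_k=\iota\varphi_k$ with $\varphi_k$ homologous to $\varphi$ in the annulus $A_0(1/D,D)$; nontriviality of $[\varphi_k]$ is then immediate. You instead obtain $\varphi_i$ from the homology retract and must argue nontriviality separately via a limit of canonical fillings. That argument is plausible but relies on knowing that $s_i^{-1}\varphi_i\to k\cdot\partial[\bar B(0,t)]$ as currents in $\R^n$, which in turn requires controlling $s_i^{-1}\pi_\#\beta_i$ in the limit --- doable, but an extra layer of bookkeeping that the paper's construction sidesteps. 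The paper's route is shorter precisely because it fixes the cycle in $\R^n$ first and builds the filling around it, rather than building the filling and then straightening its boundary.
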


\begin{proof}
	Let $(X_\om,p_\om)=\om\lim(\frac{1}{r_k}X,p_k)$ for some sequence $r_k\to\infty$. Set $X_k=\frac{1}{r_k}X$.
	Suppose that $Q_\om$ does not have full support with respect to $H^{L}_{*}$. Denote by $\Phi_\om:\R^n\to X_\om$ the ultralimit of $\Phi$. There exists a point $q_\om=\Phi_\om(x_\om)$ which does not lie in the support of
	$Q_\om$. After translating $\R^n$, we may assume $x_k\equiv 0$. Then we find $D>1$ and a Lipschitz cycle $\varphi$ representing an element in  $\bZ_{c,n-1}(A_{0}(\frac{1}{D},D))$ in $\R^n$ and 
	a Lipschitz chain $\tau$ representing an element in 
	$\I_{c,n}(A_{q_\om}(\frac{1}{D},D))$ that fills $\Phi_{\om\#}\varphi$, and such that 
	$\M(\tau)+\M(\varphi)<D$. Moreover, $\varphi$ represents a nontrivial element in $\tilde{H}_{n-1}(\R^n\setminus B_{0}(\frac{1}{D}))$. Let  $R=R(D,c)$ be a fine scale to be determined later.
	
	By barycentric subdivision, we can write $\tau$ as a sum of Lipschitz simplices with uniform Lipschitz constant and such that each of them has image in a ball of radius $R$.
	Using this representation, we pull-back the $0$-skeleton of $\tau$ to a collection of points $\tau_k^{(0)}$ in $X_k$, such that each pair of points in $\tau_k^{(0)}$ corresponding to the bounardy of an edge of $\tau$ 
	is contained in a ball of radius $R$.
	Then we fill in skeleton by skeleton, using the strong coning inequalty, to obtain $\tau_k\in\I_{c,n}(X_k)$.
	Note that there is a constant $C=C(D,c,n)$ such that 
	$\M(\tau_k)\leq C\cdot\M(\tau)$ and $\spt(\tau_k)\subset N_{CR}(\tau_k^{(0)})$.
	After further subdivision, we may assume that $\D\tau_k=\iota \varphi_k$, where  $\varphi_k\in\bZ_{c,n-1}(A_{x_k}(\frac{1}{D},D))$ is a cubical cycle homologous to $\varphi$ in $A_{0}(\frac{1}{D},D)$. 
	Now it is clear that if we choose $R$ small enough, then the $\tau_k$ will stay at a uniform positive distance from $p_k$.
	Therefore   $Q$ cannot have super-Euclidean divergence in $X$.
\end{proof}

In order to gain information on our space from properties of its asymptotic cones, we will rely on the following variation of
Wenger's compactness theorem, \cite{wenger2011compactness}.

\begin{lemma}\label{lem_compactness}
	Let $(X_k)$ be a sequence of complete metric spaces satisfying condition~{\rm (EII$_{n-1}$)} with uniform constant. Further, let $(\tau_k)$ be a sequence of chains
	in $\I_n(X_k)$ such that there exists a constant $D>0$ with
	\[\M(\tau_k)+\M(\D\tau_k)<D \text{ and }\diam\spt(\tau_k)<D.\]
	Then, for every $\epsilon>0$ there exists a regularized sequence $(\tau_k')$ in $\I_{c,n}(X_k)$ such that
	\[\M(\tau'_k)+\M(\D\tau'_k)<D \text{ and }\diam\spt(\tau'_k)<D;\]
	the family of support sets $(\spt(\tau_k'))$ is uniformly totally bounded;
	and $\tau_k$ is homologous to $\tau_k'$ inside $N_\eps(\spt(\tau_k))$ (if the support sets $(\spt (\D\tau_k))$ is already uniformly bounded, then we can assume in addition that $\D\tau'_k=\D\tau_k$). Moreover, there exists a subsequence $(\tau'_{k_l})$, a compact metric space $Z$, a chain $\tau'\in\I_{c,n}(Z)$ 
	and isometric embeddings $\psi_l:\spt(\tau'_{k_l})\to Z$ such that $\psi_{l\#}\tau'_{k_l}$ converges to $\tau'$ in the flat distance in $\ell^\infty(Z)$.
\end{lemma}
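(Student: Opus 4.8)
The plan is to combine a regularization procedure based on the free-boundary minimization of Section~\ref{sec_approx} with Wenger's compactness theorem for currents in metric spaces. The two halves of the statement are somewhat independent: first replace each $\tau_k$ by a current $\tau_k'$ whose support is uniformly totally bounded (while keeping the same mass, boundary-mass and diameter bounds, and staying homologous to $\tau_k$ inside a small neighborhood); then apply compactness to the regularized sequence to extract a subsequence whose supports embed isometrically into a common compact space with flat convergence. The isoperimetric hypothesis {\rm (EII$_{n-1}$)} with a uniform constant is exactly what Section~\ref{sec_approx} needs, so the engine is already in place.

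\textbf{Step 1: regularization.} Fix $\eps>0$. Apply Proposition~\ref{prop_approx} to each $\si:=\tau_k$ with a single large parameter $\kappa$, obtaining a minimizer of $\tilde\Theta$ and setting $\tau_k' := \tau_k-\D w_k$ where $w_k$ is the corresponding minimizing $(n+1)$-current; equivalently, in the notation of that section, we regularize the $n$-current $\tau_k$ itself. Proposition~\ref{prop_approx}(1)--(3) gives $\M(\tau_k')\le\M(\tau_k)$, $\Fill(\tau_k-\tau_k')\le\M(\tau_k)/\kappa$, and $\spt(\tau_k')\subset N_a(\spt(\tau_k))$ with $a\le \bar\lambda\ln(\kappa)/\kappa$; for $\kappa$ large this last neighborhood is inside $N_\eps(\spt(\tau_k))$, which also gives the required homology (the filling of $\tau_k-\tau_k'$ from Proposition~\ref{prop_approx}(4) is supported in a small neighborhood of $\spt(\tau_k)$). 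The key new content is Proposition~\ref{prop_approx}(2): every point of $\spt(\tau_k')$ away from $\spt(\D\tau_k')$ carries a uniform lower $(n-1)$-density bound on scales up to $\min\{\lambda'/\kappa, d(\cdot,\spt(\D\tau_k'))\}$, with constants depending only on the uniform {\rm (EII$_{n-1}$)} constant and $D$. Since we need the boundary controlled as well, apply the same procedure one dimension down to $\D\tau_k$ first (as in the proof of Proposition~\ref{prop_equal_homology}(2)): replace $\D\tau_k$ by a compactly supported $\si_k'$ with uniform density bound, correct $\tau_k$ by a small-mass homology so that its new boundary is $\si_k'$, and then regularize $\tau_k$. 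One can also simply keep $\D\tau_k'=\D\tau_k$ when $\spt(\D\tau_k)$ is already uniformly bounded, as the statement allows.

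\textbf{Step 2: uniform total boundedness.} The mass bound $\M(\tau_k')\le D$ together with the uniform lower density bound from Step~1 caps the number of disjoint balls of a fixed radius $r\le \lambda'/\kappa$ that can meet $\spt(\tau_k')$ at distance $\ge r$ from $\spt(\D\tau_k')$, uniformly in $k$; combined with the uniform total boundedness of $\spt(\D\tau_k')$ (mass bound plus density one dimension down, plus diameter bound) and the fact that points within distance $r$ of $\spt(\D\tau_k')$ are handled directly, we get that $\{\spt(\tau_k')\}$ is uniformly totally bounded, with the same modulus for all $k$. This is the heart of the argument and is exactly the mechanism already used in the ``in particular'' clause of Proposition~\ref{prop_approx}(2) and in Lemma~\ref{lem:density}.

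\textbf{Step 3: compactness and flat convergence.} Now $\{\tau_k'\}$ satisfies the hypotheses of Wenger's compactness theorem \cite{wenger2011compactness}: uniformly bounded masses of $\tau_k'$ and $\D\tau_k'$, uniformly bounded diameters, and uniformly totally bounded supports. Embed each $\spt(\tau_k')$ isometrically into $\ell^\infty$, say via a Kuratowski-type embedding; by uniform total boundedness these images lie in a common compact set, and after passing to a subsequence $\psi_{l\#}\tau_{k_l}'$ converges in the flat distance in $\ell^\infty(Z)$ to some $\tau'$, where $Z$ is the closure of the union of images (compact). That $\tau'\in\I_{c,n}(Z)$ follows from the closure theorem for integral currents under flat convergence with bounded mass and boundary mass. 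I expect Step~2 to be the main obstacle in the sense of bookkeeping: one must track that all constants (the density bound $\lambda$, the scale $\lambda'/\kappa$, and the multiplicity of the covering) depend only on $n$, $D$, and the uniform {\rm (EII$_{n-1}$)} constant, and not on $k$; but conceptually everything is supplied by Proposition~\ref{prop_approx} and standard metric-current compactness, so the proof is essentially an assembly of already-available tools.
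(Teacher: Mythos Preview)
Your approach is essentially the same as the paper's: regularize $\D\tau_k$ via Proposition~\ref{prop_approx}, correct $\tau_k$ by the resulting small homology, then apply Proposition~\ref{prop_approx} again to the chains themselves, and finally invoke compactness. One small correction to Step~3: separate Kuratowski embeddings into $\ell^\infty$ do not by themselves place the supports in a common compact set; the paper uses Gromov's precompactness theorem \cite{gromov1981groups} to obtain isometric embeddings $\psi_l:\spt(\tau'_{k_l})\to Z$ into a single compact $Z$ with Hausdorff-convergent images, then applies the Ambrosio--Kirchheim closure theorem for weak convergence and \cite{wenger2007flat} to upgrade to flat convergence in $\ell^\infty(Z)$.
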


\begin{proof}
	For the first half of the claim, we can first apply Proposition~\ref{prop_approx} to $\D\tau_k$ to obtain  sequence $(\D\tau'_k)$ with uniformly totally bounded supports such that $\D\tau'_k-\D\tau_k=\D H_k$ for some $H_k$ supported in $N_{\frac{\eps}{2}}(\spt(\D\tau_k))$. Then we apply Proposition~\ref{prop_approx} to $\tau_k+H_k$ to obtain the desired sequence. Note that if the sequence $\spt(\D\tau_k)$ is already uniformly totally bounded, then we do not need the first step.
	 Now we have a sequence  $(\tau_k')$ with $\tau_k'\in\I_{c,n}(X_k)$ such that
	$\M(\tau'_k)+\M(\D\tau'_k)<D$, the family of support sets $(\spt(\tau_k'))$ is uniformly totally bounded and
	$\tau_k$ is homologous to $\tau_k'$ inside $N_\eps(\spt(\tau_k))$. By \cite{gromov1981groups},	there exists a compact metric space $Z$ with a sequence of isomeric embeddings 
	$\psi_l:\spt(\tau'_{k_l})\to Z$ such that $\psi_l(\spt(\tau'_{k_l}))$ converges to $Y\subset Z$ in the Hausdorff distance.
	By \cite[Theorem 5.2 and Theorem 8.5]{ambrosio2000currents}, we can assume $\psi_{l\#}\tau'_{k_l}$ weakly converges to $\tau'\in \I_{c,n}(Z)$ by passing to a subsequence. 
	Moreover, by \cite[Proposition 2.2]{wenger2011compactness},  $\spt(\tau')\subset Y$. 
	Finally, by \cite{wenger2007flat}, $\psi_{l\#}\tau'_{k_l}$ converges to $\tau'$ in flat distance, if we replace $Z$ by $\ell^\infty(Z)$.
\end{proof}

\begin{lemma}\label{lem_fullsup_implies_suplindiv}
	Let $X$ be a complete metric space which satisfies {\rm (EII$_{n-1}$)} with constant $c$. Let $\Phi:\R^n\to X$ be an $(L,A)$-quasiflat with image $Q$.
	Suppose that  in any asymptotic cone $X_\om$ the ultralimit $Q_\om$ of $Q$  has full support with respect to $H^{AK}_{*,c}$.
	Then $Q$ has $\delta$-super-Euclidean divergence in $X$. 
\end{lemma}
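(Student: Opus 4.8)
The strategy is the standard argument by contradiction, combined with the regularization from Lemma~\ref{lem_compactness} to control supports in the limit. Suppose $Q$ fails to have $\delta$-super-Euclidean divergence in the sense of Definition~\ref{def_divergence1}. Then there is a constant $D_0>1$, a sequence $(x_k)$ in $\R^n$ and a sequence $r_k\to\infty$ such that for each $k$ there is a cubical cycle $\varphi_k\in\bZ_{n-1,c}(A_{x_k}(\frac{r_k}{D_0},D_0 r_k))$ representing a nontrivial class in $\tilde H_{n-1}(\R^n\setminus\B{x_k}{r_k/D_0})$, with $\M(\varphi_k)\le D_0 r_k^{n-1}$, together with a filling $\tau_k\in\I_{n,c}(A_{p_k}(\frac{r_k}{2LD_0},2LD_0 r_k))$ of $\iota(\varphi_k)$ (where $p_k=\Phi(x_k)$) satisfying $\M(\tau_k)\le D_0 r_k^n$. (The passage to the ``annular'' formulation of Definition~\ref{def_divergence1} is what Lemma~\ref{lem:rigid vs divergence} is for; I will use that reduction freely.) Translating, I may assume $x_k\equiv 0$.

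The next step is to rescale and pass to an asymptotic cone. Set $X_k=\frac{1}{r_k}X$ and consider the currents $\tau_k\in\I_{n,c}(X_k)$, which now have $\M(\tau_k)+\M(\D\tau_k)\le D$ and $\diam(\spt\tau_k)\le D$ for a uniform $D$, thanks to the annular localization. Since $X$ satisfies (EII$_{n-1}$) with a fixed constant, so does each $X_k$. I apply Lemma~\ref{lem_compactness}: after passing to a subsequence there is a compact metric space $Z$, a current $\tau'\in\I_{n,c}(Z)$, and isometric embeddings $\psi_l:\spt(\tau'_{k_l})\to Z$ with $\psi_{l\#}\tau'_{k_l}\to\tau'$ in flat distance in $\ell^\infty(Z)$, where $\tau'_k$ is homologous to $\tau_k$ inside a tiny neighborhood of $\spt(\tau_k)$ and $\D\tau'_k=\D\tau_k$ (the boundaries $\D\tau_k=\iota(\varphi_k)$ already have uniformly bounded supports after rescaling). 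Simultaneously, by choosing base points and taking an ultralimit of the spaces $(X_k,p_k)$, I get an asymptotic cone $X_\om$ containing $Z$ (or at least containing $\spt\tau'$) as well as the ultralimit $Q_\om=\Phi_\om(\R^n)$; the rescaled cycles $\varphi_k$ converge to a Lipschitz cycle $\varphi_\om$ in $A_0(\frac1{D_0},D_0)\subset\R^n$ representing a nontrivial class in $\tilde H_{n-1}(\R^n\setminus\B{0}{1/D_0})$, and $\D\tau'=\iota_\om(\varphi_\om)=\Phi_{\om\#}(\varphi_\om)$.

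Now I exploit the full support hypothesis. The key point is that, by construction, the supports $\spt(\tau_k)$ — hence also $\spt(\tau'_k)$ — stay at distance $\ge \frac{r_k}{2LD_0}$ from $p_k$ in $X$, i.e. at uniform positive distance from $p_\om$ in $X_k$; therefore the limit $\tau'$ has compact support avoiding a neighborhood of $q_\om:=\Phi_\om(0)$. But $\varphi_\om$ represents a nontrivial class in $\tilde H_{n-1}(\R^n\setminus\{0\})$, equivalently a nontrivial class in $H_n(Q_\om,Q_\om\setminus\{q_\om\})$ via the canonical filling; since $\D\tau'=\Phi_{\om\#}\varphi_\om$ and $q_\om\notin\spt(\tau')$, the class $[\Phi_{\om\#}\varphi_\om]$ dies in $H^{AK}_{n-1}(X_\om\setminus\{q_\om\})$, contradicting that $Q_\om$ has full support with respect to $H^{AK}_{*,c}$. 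One must be a little careful to convert ``full support'' (stated via relative homology at a point $q_\om$, with a genuinely embedded disk in $Q_\om$) into the concrete statement about $\varphi_\om$; this is handled exactly as in the proof of Lemma~\ref{lem_neck_impl_full_spt}, by arranging the nontrivial cycle to be the boundary of an embedded top-dimensional ball in $Q_\om$ around $q_\om$ and then transporting back along the (bilipschitz, on the relevant scale) map $\Phi_\om$.

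The main obstacle, and the reason Section~\ref{sec_approx} exists, is precisely the control of supports in the limit: Wenger's compactness theorem alone gives a weakly convergent subsequence but not one with uniformly (totally) bounded supports, so the naive limit current $\tau'$ might fail to be compactly supported — and then it could not contradict full support with respect to \emph{compactly supported} currents. The regularization in Lemma~\ref{lem_compactness} (built on Proposition~\ref{prop_approx}) is what repairs this: it replaces each $\tau_k$ by a homologous $\tau'_k$ with uniformly totally bounded support lying within an arbitrarily small neighborhood of $\spt(\tau_k)$, so the positive-distance-from-$p_k$ property is preserved. A secondary technical point is that the quasiflat $\Phi$ may not be Lipschitz, so $\iota(\varphi_k)$ rather than $\Phi_\#(\varphi_k)$ must be used throughout, and the passage to $\Phi_{\om\#}\varphi_\om$ in the cone requires the standard comparison (via Proposition~\ref{prop_chain_map}) between $\iota$ and the genuine pushforward, which becomes exact in the limit.
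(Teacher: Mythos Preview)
Your overall strategy is correct and matches the paper's: assume failure of super-Euclidean divergence in the annular form (Definition~\ref{def_divergence1}), rescale, regularize via Lemma~\ref{lem_compactness}, and pass to a limit current in a compact $Z$ that embeds into an asymptotic cone $X_\om$, then contradict full support at $p_\om$. You also correctly identify the regularization as the crux.

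Two places deserve more care, and the paper handles them differently from your sketch.

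First, the paper does not let the $\varphi_k$ converge to some limit cycle $\varphi_\om$. Instead, after rescaling it modifies each $\varphi_k$ by a homology inside the fixed annulus $A_0(1/D,D)$ so that all of them become \emph{equal} to a single cubical cycle $\varphi$. This avoids having to argue that nontriviality survives the limit, and it makes the boundary $\D\tau'_k=\iota_k\varphi$ a fixed object up to the rescaled chain map.

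Second, and more substantively, your identification $\D\tau'=\Phi_{\om\#}\varphi_\om$ in $X_\om$ is exactly the step the paper does \emph{not} attempt directly. The limit $\tau'_\infty$ lives in $Z$ (or $\ell^\infty(Z)$), and one only has an isometric embedding $\psi_\infty^{-1}:\spt(\tau'_\infty)\hookrightarrow X_\om$; asserting that $(\psi_\infty^{-1})_\#\D\tau'_\infty$ equals $\Phi_{\om\#}\varphi$ as currents needs justification beyond ``$\iota$ becomes exact in the limit''. The paper instead projects to $\R^n$: it takes the quasi-retractions $\pi_k:X_k\to\R^n$, extends $\pi_k\circ\psi_k^{-1}$ to Lipschitz maps $\hat\pi_k:Z\to\R^n$ converging uniformly to $\hat\pi_\infty$, and observes that $(\hat\pi_k)_\#\iota_k\varphi$ is homologous to $\varphi$ in an $a/r_k$-neighborhood of $\spt\varphi$ by Proposition~\ref{prop_chain_map}(5). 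Hence the canonical filling of $(\hat\pi_\infty)_\#\D\tau'_\infty$ contains $0$, and factoring $\hat\pi_\infty=\Phi_\om^{-1}\circ\psi_\infty^{-1}$ on $\spt(\D\tau'_\infty)$ transports this to the statement that the canonical filling of $(\psi_\infty^{-1})_\#\D\tau'_\infty$ in $Q_\om$ contains $p_\om$. Working in $\R^n$ is what makes the nontriviality robust under the limit; your direct-identification route is plausible but is precisely the step that needs a real argument, not a remark.
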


\begin{proof}
We choose a cubulation $\mathcal{C}$ of $\R^n$ and a chain map $\iota$ associated to $\Phi$, as provided by Proposition~\ref{prop_chain_map}.
	Suppose that $Q$ does not have super-Euclidean divergence.
	Then there exist 
	\begin{itemize}
		\item a constant $D>1$;
		\item a sequence $r_k\to\infty$;
		\item a sequence of cycles $\varphi_k\in\bZ_{n-1,c}(A_{x_k}(\frac{r_k}{D},D r_k))$, cubical with respect to $\mathcal{C}$, and such that  $x_k\in\R^n$ and $\M(\varphi_k)\leq D\cdot r_k^{n-1}$; 
		\item a sequence of chains $\tau_k\in\I_{n,c}(A_{p_k}(\frac{r_k}{2LD},2LD r_k))$ with $p_k=\Phi(x_k)$ and $\M(\tau_k)\leq D\cdot r_k^n$ such that $\D \tau_k=\iota\varphi_k$.
	\end{itemize}

	Up to a translation on $\R^n$, we assume $x_k\equiv 0$.
	Set $X_k=\frac{1}{r_k} X$ and also rescale $\R^n$ by $\frac{1}{r_k}$. However, we still use the notation $\tau_k$ and $\varphi_k$ 
	to denote the corresponding currents in the rescaled spaces.  After modifying $\varphi_k$ by a homology inside $A_0(\frac{1}{D},D)$, 
	we may also assume that all the $\varphi_k$	are equal to a single cubical cycle $\varphi$.
By Lemma~\ref{lem_compactness}, after passing to a subsequence, there exists a regularized sequence $\tau'_k$ with $\D\tau'_k=\iota_k\varphi$, a compact metric space $Z$ with a sequence of isomeric embeddings $\psi_k:\spt(\tau'_{k})\to Z$ such that $\psi_{k\#}\tau'_{k}$ converges to some $\tau_\infty'\in\I_{n,c}(Z)$ with respect to flat distance in $\ell^\infty(Z)$.

	Let $(X_\omega,p_\omega)$ be an ultralimit of $(X_k,p_k)$ and $Q_\om\subset X_\om$ be the ultralimit of the $Q_k$'s. By the previous discussion, 
	$\spt(\tau_\infty')$ with the induced metric from $Z$ embeds isometrically via $\psi_\infty^{-1}$ 
into $X_\omega$ with $\psi_\infty^{-1}(\spt(\D\tau_\infty'))\subset Q_\om\cap A_{p_\om}(\frac{1}{2LD},2LD)$ where $\psi_\infty^{-1}$ is a partial limit of the $\psi_k^{-1}$.
	Hence, in order to arrive at a contradiction, it is enough to show that the support of the canonical filling of $(\psi_\infty^{-1})_\#\D\tau_\infty'$ in $Q_\om$
	contains the point $p_\om$.
	Let $\pi_k:X_k\to\R^n$ be a Lipschitz extension of 
	$\frac{1}{r_k}(\Phi|_{\mathcal{C}^{(0)}})^{-1}$ where $\mathcal{C}$ denotes the cubulation underlying $\iota$. Then the Lipschitz constant of $\pi_k$ is uniformly bounded.
	Denote by $\hat\pi_k:Z\to\R^n$ a Lipschitz extension of $\pi_k\circ\psi_k^{-1}$ and suppose that $(\hat\pi_k)$ converges uniformly to a limit map $\hat\pi_\infty:Z\to\R^n$.
	Then $(\hat\pi_k)_\# \iota_k\varphi$ converges to $(\hat\pi_\infty)_\#\D\tau_\infty'$ since the masses of the $\iota_k\varphi$ are uniformly bounded and 
	$\iota_k\varphi=\D\tau_k'$ converges to $\D\tau_\infty'$. By Proposition~\ref{prop_chain_map}, the cycle $(\hat\pi_k)_\# \iota_k\varphi$
	is homologous to $\varphi$ in the $\frac{a}{r_k}$-neighborhood of $\spt(\varphi)$. Since $\varphi$ is homologically non-trivial in $A_0(\frac{1}{D},D)$,
	we conclude that for large enough $k$, the canonical filling of  $(\hat\pi_k)_\# \iota_k\varphi$ contains a multiple of the fundamental class $\bb{B_0(\frac{1}{2D})}$
	as a piece. Hence the same holds true for the limit $(\hat\pi_\infty)_\#\D\tau_\infty'$.
	By construction, $\hat\pi_\infty$ factors as $\Phi_\om^{-1}\circ\psi_\infty^{-1}$ on $\spt(\D\tau'_\infty)$.
	Since $\Phi_\om$ and $\psi_\infty^{-1}$ are bilipschitz (on $\spt(\D\tau'_\infty)$), we have 
	$(\hat\pi_\infty)_\#\D\tau_\infty'=(\Phi_\om^{-1}\circ\psi_\infty^{-1})_\#\D\tau_\infty'=(\Phi_\om^{-1})_\#((\psi_\infty^{-1})_\#\D\tau_\infty')$. 
	In particular, $(\Phi_\om)_\#(\hat\pi_\infty)_\#\D\tau_\infty'=((\psi_\infty^{-1})_\#\D\tau_\infty')$, and $\Phi_\om$ sends $0$ to $p_\om$, we conclude that $p_\om$ lies in the support of the canonical filling of $(\psi_\infty^{-1})_\#\D\tau_\infty'$ in $Q_\om$
	as required.
\end{proof}

Let $\mathcal{X}$ be a family of complete metric spaces.
Let $\mathcal{Q}$ be a family of $n$-dimensional quasiflats or quasidisks with uniform quasi-isometry constants such that each member of $\mathcal{Q}$ lies inside some member of $\mathcal{X}$. We say $\mathcal{Q}$ has \emph{asymptotic property $P$} if each ultralimit $Q_\om$ of a sequence of elements in $\mathcal{Q}$  has property $P$ in the associated asymptotic cone.

\begin{proposition}\label{prop_bridge}
	Let $\mathcal{X}$ be a collection of complete metric spaces which satisfies {\rm (SCI$_{n-1}$)} with uniform constants. Let $\mathcal{Q}$ be a family of $n$-dimensional quasiflats or quasidisks in these metric spaces with uniform quasi-isometry constants.
	Suppose that  in any asymptotic cone $X_\om$, which contains an ultralimit $Q_\om$ of elements in $\mathcal{Q}$, holds that the homology theories $\tilde{H}^{AK}_{*,c}$ and $\tilde{H}^L_{*}$ are isomorphic.
	Then each element in $\mathcal{Q}$ has $\delta$-super-Euclidean divergence for some uniform $\delta$ if and only if $\mathcal{Q}$ has asymptotic full support property with respect to $H^L_{*}$ in any asymptotic cone.
	
	As a consequence, if $\mathcal{X}$ is a family of metric spaces with Lipschitz combings (with uniform constants), then each element in $\mathcal{Q}$ has $\delta$-super-Euclidean divergence for some uniform $\delta$ if and only if $\mathcal{Q}$ has full support property with respect to $H_{*}$.
\end{proposition}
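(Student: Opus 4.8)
The plan is to obtain Proposition~\ref{prop_bridge} by combining Lemma~\ref{lem_suplindiv_implies_fullsup} and Lemma~\ref{lem_fullsup_implies_suplindiv}. Two preliminary points make this work. First, the proofs of both lemmas invoke only the uniform constants of condition (SCI$_{n-1}$) (resp. (EII$_{n-1}$), which follows from (SCI$_{n-1}$) by Theorem~\ref{thm:isop-ineq}) together with the uniform quasi-isometry data of $\mathcal{Q}$, so they carry over verbatim when the single ambient space $X$ is replaced by a sequence $X_k\in\mathcal{X}$; thus each lemma has a ``uniform family'' version. Second, in any asymptotic cone $X_\om$ containing an ultralimit $Q_\om$ of elements of $\mathcal{Q}$, the canonical comparison homomorphism $\hl_*\to\ha_{*,c}$ is by hypothesis an isomorphism, and being natural it identifies the inclusion-induced map $H^L_n(Q_\om,Q_\om\setminus\{q\})\to H^L_n(X_\om,X_\om\setminus\{q\})$ with $H^{AK}_{n,c}(Q_\om,Q_\om\setminus\{q\})\to H^{AK}_{n,c}(X_\om,X_\om\setminus\{q\})$; in particular $Q_\om$ has full support with respect to $H^L_*$ if and only if it does with respect to $H^{AK}_{*,c}$, and likewise for the whole family.

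For the forward implication, suppose every $Q\in\mathcal{Q}$ has $\delta$-super-Euclidean divergence for one and the same assignment $D\mapsto\delta_D$, and suppose some ultralimit $Q_\om\subset X_\om$ of a sequence $Q_k\subset X_k$ ($X_k\in\mathcal{X}$, scales $r_k\to\infty$) fails to have full support with respect to $H^L_*$. Running the argument of Lemma~\ref{lem_suplindiv_implies_fullsup} for the sequence $(X_k)$ produces, from a point of $\Phi_\om(\R^n)$ missed by the support of $Q_\om$, a homologically nontrivial Lipschitz cycle $\varphi$ in an annulus of $\R^n$ and fillings $\tau_k$ of $\iota\varphi_k$ in $X_k$ with $\M(\tau_k)\le C\cdot r_k^n$ whose supports stay at distance $\ge\epsilon r_k$ from $p_k$, with $C,\epsilon$ depending only on the uniform data. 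Taking $D$ larger than $C$ and $\epsilon^{-1}$ contradicts $\delta_D$-super-Euclidean divergence of $Q_k$ for large $k$, since $\delta_D(r_k)\to\infty$. Hence $\mathcal{Q}$ has the asymptotic full support property with respect to $H^L_*$.

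For the reverse implication, assume $\mathcal{Q}$ has the asymptotic full support property with respect to $H^L_*$; by the second preliminary point it then also has it with respect to $H^{AK}_{*,c}$. Applying the uniform family version of Lemma~\ref{lem_fullsup_implies_suplindiv}, we argue by contradiction: if no single $D\mapsto\delta_D$ worked for all of $\mathcal{Q}$, a diagonal choice yields a constant $D>1$, spaces $X_k\in\mathcal{X}$, quasiflats $Q_k\in\mathcal{Q}$, radii $r_k\to\infty$, nontrivial cubical cycles $\varphi_k$ with $\M(\varphi_k)\le D\cdot r_k^{n-1}$, and fillings $\tau_k$ of $\iota\varphi_k$ with $\M(\tau_k)\le D\cdot r_k^n$ supported away from $B_{p_k}(\frac{r_k}{2LD})$. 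Rescaling by $r_k^{-1}$, normalizing the $\varphi_k$ to a single cycle via a homology, regularizing with Lemma~\ref{lem_compactness} and passing to an ultralimit produces a current $\tau'_\infty$ in an asymptotic cone $X_\om$ whose boundary, transported to $Q_\om$ along the limit of the quasi-retractions, has its canonical filling in $Q_\om$ containing $p_\om$ in its support, contradicting asymptotic full support of $\mathcal{Q}$ with respect to $H^{AK}_{*,c}$. The step I expect to be the main obstacle is precisely this uniformity bookkeeping: extracting one bad sequence that may span several spaces of $\mathcal{X}$, diagonalizing simultaneously over $D$ and over the candidate sublinear lower bounds, and verifying that the regularization of Lemma~\ref{lem_compactness} and the pushforwards under the quasi-retractions behave with constants depending only on the uniform data, so that the limit current really sits in an asymptotic cone covered by the hypothesis.

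Finally, for the stated consequence: if $\mathcal{X}$ consists of spaces with $L$-Lipschitz combings, they satisfy (SCI$_{n-1}$) with uniform constants by Remark~\ref{rmk_combing}, and every asymptotic cone of a member of $\mathcal{X}$ again has an $L$-Lipschitz combing, hence satisfies the coning hypotheses needed to apply Proposition~\ref{prop_equal_homology} (including (EII$_{n+1}$)). By Proposition~\ref{prop_equal_homology} the theories $\tilde H_*$, $\hl_*$ and $\ha_{*,c}$ all coincide in such a cone, so the hypothesis of the proposition holds and asymptotic full support with respect to $H^L_*$ is the same as with respect to reduced singular homology $H_*$; the equivalence just proved then gives the claim.
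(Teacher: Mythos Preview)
Your proposal is correct and follows essentially the same route as the paper: the first paragraph is exactly the combination of Lemma~\ref{lem_suplindiv_implies_fullsup} and Lemma~\ref{lem_fullsup_implies_suplindiv}, adapted to families (the paper simply remarks that ``they work the same way for a family of quasiflats or quasidisks''), and your explicit use of the hypothesis to pass between full support for $H^L_*$ and for $H^{AK}_{*,c}$ is precisely what makes the two lemmas fit together. For the consequence, the paper cites Proposition~\ref{prop_cone_conditions} together with Remark~\ref{rmk_combing} rather than Proposition~\ref{prop_equal_homology}, but since the relevant last clause of Proposition~\ref{prop_cone_conditions} is itself derived from Proposition~\ref{prop_equal_homology}, this is only a difference in citation, not in substance. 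Your worry about the ``uniformity bookkeeping'' is not a genuine obstacle: the diagonalization you describe is exactly how one negates uniform $\delta$-super-Euclidean divergence, and all constants in Proposition~\ref{prop_approx}, Lemma~\ref{lem_compactness}, and the quasi-retractions depend only on the stated uniform data.
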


\begin{proof}
	The first paragraph follows from Lemma~\ref{lem_suplindiv_implies_fullsup} and Lemma~\ref{lem_fullsup_implies_suplindiv} (they work the same way for a family of quasiflats or quasidisks). The second paragraph follows from Proposition~\ref{prop_cone_conditions} and Remark~\ref{rmk_combing}.
\end{proof}

The following is a consequence of Proposition~\ref{prop_bridge} and a version of the K\"unneth formula for relative homology (cf. \cite[pp. 190, Proposition 2.6]{dold2012lectures}).
\begin{cor}
	\label{cor:products}
	Suppose $X_1$ and $X_2$ are metric spaces with Lipschitz combings. Let $Q_i\subset X_i$ be Morse quasiflats (cf. Definition~\ref{def:Morse}). Then $Q_1\times Q_2$ is a Morse quasiflat in $X_1\times X_2$.
\end{cor}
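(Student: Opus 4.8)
The plan is to deduce Corollary~\ref{cor:products} from Proposition~\ref{prop_bridge}, i.e.\ to verify that being a Morse quasiflat is detected by the asymptotic full support property and that this property is preserved under products. First I would note that if $X_1$ and $X_2$ have $L$-Lipschitz combings, then so does $X_1\times X_2$ (combing componentwise, with the $\ell_2$ product metric); in particular $X_1\times X_2$ satisfies (SCI$_m$) for all $m$ by Remark~\ref{rmk_combing}, and all the hypotheses of Proposition~\ref{prop_bridge} are in force, including the fact that in every asymptotic cone of $X_1\times X_2$ the homology theories $\tilde H^{AK}_{*,c}$ and $\tilde H^L_*$ agree (by Proposition~\ref{prop_equal_homology}, since a space with a Lipschitz combing satisfies coning inequalities up to every dimension for all the relevant chain complexes, and (EII$_{n+1}$) holds). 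By Proposition~\ref{prop_bridge} it therefore suffices to show that if $Q_i\subset X_i$ has the full support property with respect to reduced singular homology $H_*$ (equivalently, is Morse) for $i=1,2$, then $Q_1\times Q_2\subset X_1\times X_2$ does too.

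Next I would set up the asymptotic cone computation. Fix a basepoint $q=(q_1,q_2)\in Q_1\times Q_2$, with $q_i\in Q_i$. Pass to an asymptotic cone: $(X_1\times X_2)_\om = (X_1)_\om\times(X_2)_\om$, and the ultralimit of $Q_1\times Q_2$ is $(Q_1)_\om\times(Q_2)_\om$, an $n$-flat where $n=n_1+n_2$, $n_i=\dim Q_i$; moreover $(Q_i)_\om$ is Morse's ``rigid'' bilipschitz $n_i$-flat, so by Proposition~\ref{prop_cone_conditions} (applied in $(X_i)_\om$, which inherits a Lipschitz combing hence satisfies (SCI$_{n_i}$)) it has full support with respect to $H_*$. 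Writing $Z_i=(X_i)_\om$, $F_i=(Q_i)_\om$, $p_i=(q_i)_\om$, $F=F_1\times F_2$, $Z=Z_1\times Z_2$, $p=(p_1,p_2)$, I need the inclusion
\[
H_n\bigl(F,F\setminus\{p\};\Z\bigr)\to H_n\bigl(Z,Z\setminus\{p\};\Z\bigr)
\]
to be injective. The point is that full support is, by excision and local considerations, a statement about local homology, and local homology of a product is the tensor product of the local homologies via the relative K\"unneth formula (cited in the statement as \cite[Proposition~2.6]{dold2012lectures}).

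Concretely, I would argue as follows. Since $F_i$ is an $n_i$-dimensional bilipschitz flat, $H_{n_i}(F_i,F_i\setminus\{p_i\};\Z)\cong\Z$ and $H_k(F_i,F_i\setminus\{p_i\};\Z)=0$ for $k\ne n_i$; moreover $F=F_1\times F_2$, $F\setminus\{p\}$ deformation retracts appropriately so that $H_*(F,F\setminus\{p\})\cong H_*(F_1,F_1\setminus\{p_1\})\otimes H_*(F_2,F_2\setminus\{p_2\})$, which is $\Z$ in degree $n$ and $0$ elsewhere; a generator is the (homology class of the) cross product $u_1\times u_2$ of generators $u_i$. Similarly, for the ambient space one uses that $(X_1\times X_2,(X_1\times X_2)\setminus\{p\})$ and the pair $\bigl(X_1\times X_2, (X_1\setminus\{p_1\})\times X_2\ \cup\ X_1\times(X_2\setminus\{p_2\})\bigr)$ have the same homology (the complement of the point deformation retracts onto the union, using the product metric / combing to slide radially), and then the relative K\"unneth theorem gives a natural short exact sequence whose middle term contains $H_{n_1}(X_1,X_1\setminus\{p_1\})\otimes H_{n_2}(X_2,X_2\setminus\{p_2\})$ as a direct summand (the Tor term lands in degree $n+1$ or lower and the free part maps isomorphically onto the tensor summand in degree $n$). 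Naturality of the cross product with respect to the inclusions $F_i\hookrightarrow X_i$ then forces the diagram
\[
\begin{CD}
H_{n_1}(F_1,F_1\setminus\{p_1\})\otimes H_{n_2}(F_2,F_2\setminus\{p_2\}) @>>> H_{n_1}(X_1,X_1\setminus\{p_1\})\otimes H_{n_2}(X_2,X_2\setminus\{p_2\})\\
@VVV @VVV\\
H_n(F,F\setminus\{p\}) @>>> H_n(Z,Z\setminus\{p\})
\end{CD}
\]
to commute, the left vertical arrow is an isomorphism, the top arrow is $i_{1*}\otimes i_{2*}$ which is injective because each $i_{j*}$ is injective and the groups are free, and the right vertical arrow is (split) injective by K\"unneth; hence the bottom arrow is injective. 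This shows $F$ has full support in $Z$; since the asymptotic cone and basepoint were arbitrary, $Q_1\times Q_2$ has the asymptotic full support property with respect to $H_*$, so by Proposition~\ref{prop_bridge} it is Morse. I expect the main technical obstacle to be the careful identification of the local homology of the product ambient space $X_1\times X_2$ at $p$ with a tensor product — i.e.\ justifying the deformation retraction of $(X_1\times X_2)\setminus\{p\}$ onto the union of the two ``slab'' pieces and checking that the relevant Tor contribution does not interfere in degree $n$ — rather than the formal diagram chase, which is routine once the K\"unneth input is in place.
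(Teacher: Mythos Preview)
Your proposal is correct and follows exactly the paper's approach: the paper's proof is a one-line reference to Proposition~\ref{prop_bridge} together with the relative K\"unneth formula (Dold, Proposition~2.6), and you have supplied precisely the details behind that reference. One small simplification: the set $(X_1\setminus\{p_1\})\times X_2\,\cup\,X_1\times(X_2\setminus\{p_2\})$ \emph{equals} $(X_1\times X_2)\setminus\{p\}$, so no deformation retraction is needed before applying K\"unneth; the excisive condition for the pair of open subsets is what makes the formula apply.
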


\section{Contracting properties for projection of cycles}
\label{sec_cycles_close_to_Morse_quasiflats}
In this section we introduce the coarse neck property and cycle contracting property, both serve as alternative characterizations of Morse quasiflats. The main goal of this section is to prove Proposition~\ref{prop_small_neck1}, which roughly says that $(\mu,b)$-rigidity implies the cycle contracting property. This is a key step in the proof of Theorem~\ref{thm:equivalence intro}. 

Readers who are willing to work under the stronger assumption of a Lipschitz bicombing, can skip this section and instead read Section~\ref{sec_short_cut} for a short cut. This leads to a weaker version of Theorem~\ref{thm:equivalence intro} without the cycle contracting property (see Theorem~\ref{thm_weak_equivalence}).

The quasi-isometry constants of the quasiflat $Q$ often add to complications in proofs. The reader might find it helpful to think about the relatively clean situation where $Q$ comes from a bilipschitz embedding $\mathbb R^n\to X$ with corresponding Lipschitz retraction $\pi:X\to \mathbb R^n$. 

\subsection{Coarse neck property and cycle contracting property}
\label{subsec_cnp_ccp}
\begin{definition}
	\label{def_tcp}
	An $(L,A)$-quasiflat $Q$ in a metric space $X$ has the \emph{coarse neck property} (CNP), if there exists a constant $C_0>0$ such that the following holds. For  given positive constants $C,\rho$, $b_1$ and $b_2$,  
	there exists $\ul R=\ul R(C,\rho,b_1,b_2)$ such that for any $p\in X$ with $d(p,Q)<b_2$, any $R\ge \ul R$ and any $\tau\in \I_n(B_p(b_1R)\setminus N_{\rho R}(Q))$ satisfying
	\bit
	\item $\M(\tau)\le C\cdot R^n$;
	\item $\sigma:=\partial\tau\in \I_{n-1}(X)$, $\spt(\si)\subset N_{2\rho R}(Q)$ and $\M(\si)\le C\cdot R^{n-1}$;
	\eit
	we have 
	\[\Fill(\si)\le C_0\cdot \rho R\cdot \M(\si).\] 
	Note that the definition of CNP depends on the parameter $C_0$ and the function $\ul R=\ul R(C,\rho,b_1,b_2)$.
\end{definition}

\begin{definition}
	\label{def_ccp}
	Let $X$ be a proper metric space satisfying condition~{\rm (CI$_{n-1}$)}.
	Let $Q\subset X$ be an $n$-dimensional $(L,A)$-quasiflat which is an $A_1$-homology retract in the sense of Definition~\ref{def_tame}. We say $Q$ has the \emph{cycle contracting property} (CCP) if the following holds.  Given $C,\rho,b_1>0$. Then for any $\eps>0$, there exists $\ul{R}=\ul{R}(C,\rho,b_1,\eps)$ such that the following holds for any $R>\ul{R}$. 
	
	Let $\si\in \bZ_{n-1,c}(X)$ such that 
	\bit
	\item $\M(\si)\le C \cdot R^{n-1}$, $\diam(\spt(\si))\le b_1\cdot R$ and $d(\spt(\si),Q)\le b_1\cdot R$;
	\item $\si$ admits a filling $\tau\in \I_{n,c}(X)$ such that $\M(\tau)\le C\cdot R^n$, $\spt(\tau)\subset N_{b_1R}(\spt(\si))$ and $\spt(\tau)\cap N_{\rho R}(Q)=\emptyset$.
	\eit
	Let $\mathcal{C}$ be the class of integral currents whose boundary is a sum of $\si$ and a current supported in $N_{A_1}(Q)$ and let $M$ be the infimum of masses of elements in $\mathcal{C}$. Then for any $\al\in \I_{n,c}(X)$ such that $\spt(\D\al-\si)\subset N_{2A_1}(Q)$ and $M(\al)\le M$, we have $$\Fill(\D\al-\si)\le \eps R\cdot \M(\si).$$
\end{definition}

\begin{remark}
	In Definition~\ref{def_ccp} we require $\spt(\D \alpha-\si)\subset N_{2A_1}(Q)$ rather than $\spt(\D \alpha-\si)\subset N_{A_1}(Q)$. The reason is of technical nature, namely a mass minimizing sequence in $\mathcal{C}$ might not sub-converge to an integral current. There are
	at least two alternative ways around this to set up Definition~\ref{def_ccp}. The first is to allow $\alpha$ to be a flat chain, thereby including limits of a minimizing sequence in $\mathcal C$. The second is to restrict to integral currents,  require $\spt(\D \alpha-\si)\subset N_{A_1}(Q)$, but relax the mass bound to $\M(\alpha)\le M+\de$ where $\de$ is a small number. Both alternatives add complications to the proof: the former leads to extra issues with flat chains, and the latter leads to an extra layer of constants in the proof. However, our methods still treat these variations - details are left to the interested readers. 
\end{remark}

\begin{remark}
	\label{rmk:existence}
	The above definition is empty if $\alpha$ does not exist, however, this is ruled out by our assumption on $Q$ and $X$. Indeed, let $\{\alpha_i\}$ be a mass minimizing sequence in $\C$. As $Q$ is a homology retract, $\C$ is a non-empty collection, 
	moreover, we can assume $\M(\alpha_i)\le A_1\M(\si)\cdot b_1R$. Let $M=\inf_{i\ge 1}\M(\alpha_i)$. Take $\eps_0>A_1$. Then we can find $\alpha\in \I_{n,loc}(X)$ such that 
	\begin{enumerate}
		\item $\M(\al)\le M$;
		\item $\spt(\D \alpha-\si)\subset N_{\eps_0}(Q)$;
		\item $\al$ is a minimal filling of $\D \al$.
	\end{enumerate}
	By the coarea inequality, for each $\alpha_i$, there exists $A_1<\eps_i<\eps_0$ such that $\alpha'_i=\alpha_i\on \{d_Q\ge \eps_i\}$ satisfies 
	$$\M(\partial \alpha'_i)\le \M(\si)+\frac{A_1\M(\si) b_1R}{\eps_0-A_1}\,.$$
	Up to passing to a subsequence, we suppose $\alpha'_i$ weakly converges to $\alpha\in \I_{n,loc}(X)$. Then $\M(\al)\le M$ by lower-semicontinuity of mass. (2) is clear. To arrange (3), we can replace $\alpha$ by a minimal filling of its boundary (cf. Theorem~\ref{thm:plateau}).
\end{remark}

\begin{prop}
	\label{prop_small_neck1}
	Suppose $X$ is a proper metric space satisfying condition~{\rm (SCI$_{n-1}$)}. Let $Q\subset X$ be an $(L,A)$-quasiflat which is an $A_1$-homology retract in the sense of Definition~\ref{def_tame} and is $(\mu,A_2)$-rigid in the sense of Definition~\ref{def_rigid}. Then $Q$ has CNP with $C_0>0$ depending only on $L,A,A_1,\dim(Q)$ and $X$; and $\ul{R}$ depending only on $L,A,A_1,A_2,\mu,$ and $X$.
\end{prop}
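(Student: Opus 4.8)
The plan is to push $\si$ onto the flat $Q$ using the homology–retract structure, thereby reducing $\Fill(\si)$ to the mass of the canonical filling in $Q\cong\R^n$ of a projected cubical cycle, and then to use $(\mu,A_2)$–rigidity to force that canonical filling to be concentrated in a \emph{thin} tube around an $(n-1)$–dimensional set of controlled Minkowski content, whence its volume is small. Concretely, for $R$ large enough that $2\rho R>A_1$, apply the $A_1$–homology–retract property (Definition~\ref{def_tame}) to the cycle $\si$ at scale $2\rho R$: we obtain a cubical cycle $P\in\cP_{n-1}(\R^n)$ and $H\in\I_n(X)$ with $\D H=\si-\iota(P)$, $\M(H)\le 2A_1\rho R\cdot\M(\si)$, $\M(P)\le A_1\M(\si)$, $\spt(\iota P)\subset N_{A_1}(Q)$, and $\spt(H)\subset N_{2A_1\rho R}(\spt\si)$. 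That $P$ is a genuine cycle follows from $\D\iota(P)=0$ together with Proposition~\ref{prop_chain_map}(5), since $\D h=\pi_\#\iota P-P$ forces $\D P=0$. Let $\nu\in\I_{n,c}(\R^n)$ be the canonical filling of $P$; then $\iota(\nu)+H$ fills $\si$, so by Proposition~\ref{prop_chain_map}(2), $\Fill(\si)\le a\cdot\M(\nu)+2A_1\rho R\cdot\M(\si)$. Thus it suffices to prove $\M(\nu)\le C_1\,\rho R\cdot\M(\si)$ for a constant $C_1=C_1(L,A,A_1,n,X)$ once $R\ge\ul R$.

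\textbf{Rigidity forces a thin filling.} To produce a well–behaved filling of $\iota(P)$ and, at the same time, the regularity needed later, introduce a relative Plateau minimizer $\al$ with $\D\al=\si-\iota(P)$, minimizing mass among all currents with this boundary (as in the discussion of Theorem~\ref{thm:equivalence intro}; existence/approximation as in Theorem~\ref{thm:plateau} and Remark~\ref{rmk:existence}). Then $\M(\al)\le\M(H)\lesssim\rho R\,\M(\si)$, and $\tau-\al$ is a filling of $\iota(P)$ with $\M(\tau-\al)\le\M(\tau)+\M(\al)\le C'R^n$, supported in a ball of radius $\lesssim_{b_1,L,b_2}R$ about $p$ whose only intersection with $N_{\rho R}(Q)$ lies inside $N_{2A_1\rho R}(\spt\si)$ (because $\spt\tau\cap N_{\rho R}(Q)=\es$ and $\spt\al\subset N_{2A_1\rho R}(\spt\si)\cup\spt(\iota P)$ by relative monotonicity). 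Pick $x_0\in\R^n$ with $d(\Phi(x_0),p)\le b_2+1+\lambda$ and a radius $r\asymp_{b_1,L,b_2}R$ so that $\spt(P)\subset\B{x_0}{r}$, $\M(P)\le Mr^{n-1}$, $\spt(\tau-\al)\subset\B{\Phi(x_0)}{Mr}$ and $\M(\tau-\al)\le Mr^n$ for a suitable $M=M(C,b_1,b_2,L,A,A_1,n)$. Applying $(\mu,A_2)$–rigidity (Definition~\ref{def_rigid}) with $\varphi=P$ and filling $\tau-\al$ gives $\Phi(W_{A_2})\subset N_{\mu(r)}(\spt(\tau-\al))$, where $W=\spt(\nu)$ and $W_{A_2}=\{y\in W:d(y,\spt P)>A_2\}$. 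Since $\Phi(W_{A_2})\subset Q$ while $\spt\tau$ stays $\rho R$ away from $Q$, as soon as $\mu(r)<\rho R$ every point of $\Phi(W_{A_2})$ is within $\mu(r)$ of $\spt\al$; pulling back by the quasi–retraction $\pi$ (Definition~\ref{def_retraction}) yields $W_{A_2}\subset N_{s}(\pi(\spt\al))$ with $s\le\lambda(1+\mu(r))$.

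\textbf{From thin tube to small volume, and conclusion.} The minimality of $\al$ supplies a relative version of the monotonicity formula near $Q$ (of the type behind Lemma~\ref{lem:density}–Lemma~\ref{lem:fill-density}), which forces $\al$ to behave like a collar of thickness $\lesssim\rho R$ near $Q$; this bounds the $(n-1)$–dimensional Minkowski content of $\pi(\spt\al)$ by $\lesssim\M(\si)$. Combined with the radius bound $W_{A_2}\subset N_{s}(\pi(\spt\al))$ and the trivial estimate $W\setminus W_{A_2}\subset N_{A_2}(\spt P)$, a small–filling–radius argument (controlled lower–dimensional Minkowski content together with small filling radius implies small filling volume, as used in this section) gives $\M(\nu)=\operatorname{vol}(W)\le C_1\big(\mu(r)+\rho R+A_2\big)\cdot\M(\si)$. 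Finally, since $\mu=\mu_M$ is sublinear and $r\le C_2(b_1,L,b_2)\,R$, choose $\ul R=\ul R(L,A,A_1,A_2,\mu,X;C,\rho,b_1,b_2)$ so large that $\mu(r)\le\rho R$ for all $R\ge\ul R$; then $\M(\nu)\le C_1'\,\rho R\cdot\M(\si)$, and together with Step~1 this yields $\Fill(\si)\le C_0\,\rho R\cdot\M(\si)$ with $C_0=C_0(L,A,A_1,n,X)$, i.e.\ the coarse neck property.

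\textbf{Main obstacle.} The crux is the last step, and precisely the failure of the naive simplifying assumption that the slices $\pp{\al,d_Q,h}$ lie in the $h$–neighborhood of the projected cycle: as $h\to0$ the diameter control on these slices degrades, so one cannot simply feed $\al$ into the rigidity estimate at an arbitrarily fine scale. Overcoming this requires handling the whole family of slices at all heights $h$ simultaneously, extracting from the relative monotonicity formula a Minkowski–content exponent strictly below $n$ so that ``small filling radius'' genuinely upgrades to ``small filling volume'', and dealing with the existence and regularity of the relative Plateau minimizer when $Q$ is not represented by a Lipschitz (or even continuous) quasi–isometric embedding.
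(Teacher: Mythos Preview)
Your overall strategy matches the paper's: project $\si$ to $Q$ via the homology retract, introduce a Plateau-type minimizer $\al$ between $\si$ and the projected cycle, use $(\mu,A_2)$-rigidity with $\tau-\al$ as filling of $\iota(P)$, and then convert the ``thin tube'' conclusion into a volume bound via a Minkowski-content/monotonicity estimate. The paper does exactly this (Proposition~\ref{prop_contraction} and Section~8.2), and then the passage from CCP to CNP is the short deduction you give in Step~1.

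There is, however, a genuine gap in your choice of minimizer. You take $\al$ to be the \emph{fixed-boundary} minimizer with $\D\al=\si-\iota(P)$, but the relative monotonicity formula you invoke (the $\kappa$-exponent decay $\M(\al\on\{d_Q\le r_2\})/\M(\al\on\{d_Q\le r_1\})\ge(r_2/r_1)^\kappa$, Lemma~\ref{lem_alpha_control}) crucially requires $\al$ to be the \emph{free-boundary} minimizer over the class $\mathcal C$ of currents whose boundary is $\si$ plus something supported near $Q$. The comparison competitor $\al-\al\on\{d_Q\le r\}+\beta_r$ (with $\beta_r$ coming from the homology retract applied to the slice) has boundary $\si-\iota(P_r)$ for some other cubical cycle $P_r\neq P$, so it is \emph{not} a competitor in the fixed-boundary problem; without the comparison you do not get the $\kappa$-decay. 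This is not cosmetic: without the $\kappa$-exponent, your Step~3 assertion that ``the $(n-1)$-dimensional Minkowski content of $\pi(\spt\al)$ is $\lesssim\M(\si)$'' is unjustified---$\pi(\spt\al)$ is the projection of an $n$-dimensional set with $\h^n\lesssim\rho R\,\M(\si)$, which by itself gives no $(n-1)$-content bound, and enlarging by the sublinearly growing radius $s\asymp\mu(r)$ can make $\L^n(N_s(\pi(\spt\al)))$ far larger than $\rho R\,\M(\si)$. Even with the correct free-boundary $\al$, the paper does \emph{not} get the direct bound $\M(\nu)\le C_1(\mu(r)+\rho R+A_2)\M(\si)$ you claim; instead it uses the $\kappa$-decay to control slices $\si_h$ at all heights (Lemmas~\ref{lem_diamter control}, \ref{lem_si'}) and then a compactness argument (Lemma~\ref{lem_small_filling}) to extract a single height $h_0$ at which the projected slice has small filling. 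Your ``Main obstacle'' paragraph correctly names these ingredients, but Steps~2--3 as written claim conclusions that do not follow without them and that would fail for the fixed-boundary $\al$ you actually chose.
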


\begin{remark}
	\label{rmk:cone}
	Under a stronger assumption that $X$ has a Lipschitz bicombing, Proposition~\ref{prop_small_neck1} follows from Lemma~\ref{lem_rig_iff_supdiv}, Lemma~\ref{lem_suplindiv_implies_fullsup}, Proposition~\ref{prop_cone_conditions} and Lemma~\ref{lem_sequence}. However, as this route passes through asymptotic cones, we lose the dependence of $\ul{R}$ on other parameters in Proposition~\ref{prop_small_neck1}.
\end{remark}

The following is the key to prove Proposition~\ref{prop_small_neck1}.

\begin{prop}
	\label{prop_contraction}
	Suppose $X$ is a proper metric space satisfying condition~{\rm (SCI$_{n-1}$)}. Let $Q\subset X$ be an $(L,A)$-quasiflat which is an $A_1$-homology retract in the sense of Definition~\ref{def_tame} and is $(\mu,A_2)$-rigid in the sense of Definition~\ref{def_rigid}. Then $Q$ has CCP with $\ul{R}$ depending only on $L,A,A_1,A_2,\mu$ and $X$.
\end{prop}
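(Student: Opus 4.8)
The plan is to implement the strategy sketched in Section~\ref{sec_proofs}: project $\si$ to a cycle $\si'$ supported near $Q$ by solving a relative Plateau problem, use $(\mu,A_2)$-rigidity to show that $\si'$ has \emph{small filling radius}, and then upgrade smallness of filling radius to smallness of filling volume using geometric control on $\si'$ extracted from the monotonicity formula for the relative minimizer. Concretely: fix $C,\rho,b_1>0$ and $\eps>0$; we must exhibit $\ul R=\ul R(C,\rho,b_1,\eps,L,A,A_1,A_2,\mu,X)$. Given $\si,\tau$ as in Definition~\ref{def_ccp} with $R\ge\ul R$, let $\mathcal{C}$ be the associated class of fillings and let $\al\in\I_{n,\loc}(X)$ be a current with $\M(\al)\le M:=\inf_{\mathcal{C}}\M$, with $\spt(\D\al-\si)\subset N_{2A_1}(Q)$, and which is itself a minimal filling of its own boundary; such $\al$ exists by Remark~\ref{rmk:existence}, and there $\M(\al)\lesssim A_1\M(\si)\cdot b_1R\lesssim R^n$. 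Put $\si':=\D\al-\si$. Then $\D(\al-\tau)=\si'$, so $\al-\tau$ is one filling of $\si'$, and the target is $\Fill(\si')\le\eps R\cdot\M(\si)$.

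The first block of work is to extract geometric control on $\al$ and on $\si'$ from the minimizing property. Since $\al$ is a minimal filling of $\D\al$, it is $(1,0)$-quasi-minimizing mod $\spt(\D\al)\subset\spt(\si)\cup N_{2A_1}(Q)$; by Lemma~\ref{lem:density} it has uniform lower density $\ge D$ at every point of $\spt(\al)$ at distance $r$ from $\spt(\D\al)$. Combined with $\M(\al)\lesssim R^n$ and $d(\spt(\si),Q)\le b_1R$, this confines $\spt(\al)$ (hence $\spt(\si')$) to a neighborhood $N_{c_1R}(Q)$ with $c_1=c_1(C,b_1,X,\dots)$. More importantly, because the free part $\si'$ of $\D\al$ lies inside $N_{2A_1}(Q)$ and $\spt(\si)$ is far away, a relative (free--boundary) monotonicity inequality holds along $Q$: the density ratios of $\|\al\|$ centred at points of $\spt(\si')$ are controlled up to a definite scale $\sim\rho R$. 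Feeding this, together with the coarea inequality, into the analysis of the slices $\langle\al,d_Q,h\rangle$ yields that $\spt(\si')$ has controlled $k$-dimensional (upper) Minkowski content for some $k<n$ --- this is the role of the relative monotonicity estimate (we prove the needed version below as Lemma~\ref{lem_alpha_control}, with the Minkowski-content consequence in Lemma~\ref{lem_si'}). The strict inequality $k<n$ is the crucial point.

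The second block is the rigidity argument. We have two fillings of $\si'$: the chain $\al-\tau$ (with $\M(\al-\tau)\le M+CR^n\lesssim R^n$, $\M(\si')\lesssim R^{n-1}$), and the canonical filling $\nu'$ of (a cubical representative of) $\si'$ inside $Q$, produced by pushing $\si'$ to $\R^n$ via the quasi-retraction $\pi$, correcting by a controlled homology using the homology-retract hypothesis (Definition~\ref{def_tame}), and applying $\iota$. Definition~\ref{def_rigid} then gives $\spt(\nu')\subset N_{\mu(R)}\big(\spt(\al-\tau)\big)$ (precisely, $\Phi(W_{A_2})$ lies in this neighborhood). Since $\spt(\tau)\cap N_{\rho R}(Q)=\emptyset$ while $\spt(\nu')\subset N_{2A_1}(Q)\subset N_{\rho R/2}(Q)$ for $R$ large, the $\tau$-part is irrelevant and $\spt(\nu')\subset N_{\mu(R)}(\spt(\al))$. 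Combining this with the monotonicity control of the previous paragraph --- which is the rigorous substitute for the naive assumption $(*)$, since it traps $\spt(\al)\cap N_h(Q)$ in a controlled tube around $\spt(\si')$ at a suitable height $h$ --- we conclude $\spt(\nu')\subset N_{r_0}(\spt(\si'))$ with $r_0=r_0(R)$ sublinear in $R$; in particular $\si'$ has filling radius $\le r_0\le\eps'R$ with $\eps'$ as small as we wish after enlarging $\ul R$. Finally, applying Lemma~\ref{lem_small_filling} to the cycle $\si'$ --- whose support has controlled $k$-content, $k<n$, and filling radius $\le r_0$ --- bounds $\Fill(\si')$ by (content)$\cdot r_0^{\,n-k}$ up to constants, which is $\lesssim \eps'^{\,n-k}R^n\le\eps R\cdot\M(\si)$ once $\eps'$ is chosen small (using $n-k\ge1$ together with the lower mass control on $\si$, or equivalently with the content bound for $\si'$ phrased in terms of $\M(\si)$ rather than $R^{n-1}$).

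I expect the main obstacle to be exactly the point where the naive condition $(*)$ fails: the diameter control on the slices $\langle\al,d_Q,h\rangle$ deteriorates as $h\to0$, so one cannot simply assert that $\spt(\al)$ near $Q$ sits in a thin tube around $\spt(\si')$. Getting around this requires the quantitative relative monotonicity formula (Lemma~\ref{lem_alpha_control}), an honest estimate of the Minkowski exponent of $\si'$ (Lemma~\ref{lem_si'}), and running the rigidity argument at a carefully chosen slicing height where all the errors balance. Two further technical layers, less conceptual but still substantial, are: (i) the relative Plateau minimizer need not be an integral current, which is why the boundary condition is relaxed to $N_{2A_1}(Q)$ and one works with local currents as in Remark~\ref{rmk:existence}; and (ii) $Q$ need not be Lipschitz, so all "push-forward" and "projection" operations must be routed through the chain map $\iota$ and the quasi-retraction $\pi$ of Section~\ref{subsec_cubulated_quasiflats}, with the homology-retract hypothesis supplying the cubical representative of $\si'$ of comparable mass and the same homological content near $Q$. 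Once Proposition~\ref{prop_contraction} is in hand, Proposition~\ref{prop_small_neck1} (the coarse neck property) follows quickly, since a filling $\tau\in\I_n(B_p(b_1R)\setminus N_{\rho R}(Q))$ with the stated mass bounds produces, via the minimizer $\al$, a $\si'$ with $\Fill(\si')$ small, and then $\Fill(\si)\le\Fill(\si')+\M(\al)$ localised near $Q$ gives the required $C_0\cdot\rho R\cdot\M(\si)$ bound.
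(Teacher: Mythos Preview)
Your outline has the right ingredients (relative Plateau problem, monotonicity for the minimizer, rigidity, a ``small filling from small filling radius'' lemma) and you correctly locate the obstacle at the failure of $(*)$. But the way you combine these ingredients does not close.

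The gap is in your ``second block''. You apply rigidity to (a cubical version of) $\si'=\D\al-\si$, with $\al-\tau$ as the alternate filling, and claim to get $\spt(\nu')\subset N_{r_0}(\spt(\si'))$ with $r_0$ sublinear. Rigidity only yields $\Phi(W_{A_2})\subset N_{\mu(R)}(\spt(\al-\tau))$; discarding $\tau$ leaves $\spt(\nu')\subset N_{\mu(R)}(\spt(\al))$. To pass from this to a neighborhood of $\spt(\si')$ you would need the part of $\spt(\al)$ near $Q$ to lie in a thin tube around $\spt(\si')$ --- precisely the condition $(*)$ you said fails. Monotonicity (Lemma~\ref{lem_alpha_control}) bounds the \emph{mass} $\M(\al\on\{d_Q\le h\})$, not the spatial spread of $\spt(\al)\cap N_h(Q)$ relative to $\spt(\si')$; combined with the density lower bound it controls a packing number (hence a Minkowski content at scale $h$) but does not ``trap'' the support near $\si'$. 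Relatedly, your description of Lemma~\ref{lem_small_filling} as a one-shot bound $\Fill\lesssim(\text{content})\cdot r_0^{\,n-k}$ for a single cycle is not what that lemma says; it is a compactness statement for a \emph{family} $\{\zeta_h\}$ and produces one member with small filling.

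The paper's fix is to never apply rigidity to $\si'$ itself. Instead, for each height $h$ in a range $[\de,\rho]$ it projects the slice $\si_h=\slc{\al,d_Q,h}$ to a cubical cycle $\si'_h$ in $\R^n$ via the homology retract (Lemma~\ref{lem_si'}). Now the alternate filling of $\iota(\si'_h)$ is $\al\on\{d_Q\ge h\}+\tau+H_h$, which lives \emph{entirely at height $\ge h$} except for $H_h$, and $\spt(H_h)\subset N_{Ah}(\spt(\si_h))$. Rigidity then gives that $\si'_h$ bounds inside $N_{\la h}(\pi(\spt(\si_h)))$, i.e.\ a filling-radius bound at scale $h$ (Lemma~\ref{lem_si'}(2)). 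Monotonicity plus density gives $\L^n(N_{\la h}(\pi(\spt(\si_h))))\le(h/\rho R)^\kappa\cdot\la\M(\si)R$ (Lemma~\ref{lem_si'}(3)); this is the Minkowski-type bound, but at the moving scale $h$, not a fixed exponent for $\si'$. Since the filling radius and the measure bound both scale with $h$, no single height works; one needs the compactness Lemma~\ref{lem_small_filling} applied to the family $\{\si''_h\}$, together with (Lemma~\ref{lem_si'}(4)) the Cauchy-in-flat-distance control, to produce some $h_0$ with $\Fill(\si''_{h_0})$ small. Finally $\Fill(\si')$ is bounded via $\M(\al\on\{d_Q\le h_0\})+\Fill(\si_{h_0}-\iota(\si''_{h_0}))+\Fill(\iota(\si''_{h_0}))$.
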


Proposition~\ref{prop_small_neck1} follows from Proposition~\ref{prop_contraction} as follows. Let $\si$ be as in Definition~\ref{def_tcp}. Let $\al$ and $M$ be as in Definition~\ref{def_ccp}. The assumption of Proposition~\ref{prop_small_neck1} implies $M\lesssim \rho R\M(\si)$. Thus Proposition~\ref{prop_small_neck1} holds. 

\subsection{Proof of Proposition~\ref{prop_contraction}}
We prove Proposition~\ref{prop_contraction} in the rest of this section. Assume without loss of generality that $\rho\ll1$, $R\gg 1$, $\rho R\ge \max\{100A,1\}$ and $A=A_1=A_2>1$. Let $\si$, $\al$ and $M$ be as in Definition~\ref{def_ccp}. By Remark~\ref{rmk:existence}, we assume in addition that $\al$ is a minimal filling of $\D\al$. Definition~\ref{def_ccp} implies that we can assume without loss of generality that there is a base point $p\in Q$ such that $\spt(\tau)\subset B_p(b_1\cdot R)$ and $\spt(\si)\subset B_p(b_1\cdot R)$.

Let $n=\dim(Q)$. Suppose $Q$ is represented by $\Phi:\R^n\to Q$.
Throughout this subsection, $\lambda,\lambda_1,\lambda_2,\ldots$ will be constants which depend only on $L,A,n,\mu$ and $X$. We will also write $k_1\lesssim k_2$ if $k_1\le \lambda'k_2$ for some $\lambda'$ depending only on $L,A,n,\mu$ and $X$.

Here is an outline of the proof.
We will start with a sequence of lemmas (Lemma~\ref{lem_weak_limit} to Lemma~\ref{lem_si'}) gradually establishing control on the geometry of the slice $\alpha_h$ of $\alpha$ at distance $h$ from $Q$. We eventually want to show that $\alpha_h$ has a filling of small mass when $h$ is small. This relies on two key estimates. The first is Lemma~\ref{lem_si'} (2), where we control the filling radius of such a slice,\ i.e. 
we show that it can be filled in a small neighborhood of itself. The second is Lemma~\ref{lem_si'} (3), where we bound the 
``Minkowski content'' of the slice. The first estimate is obtained by unwinding the definition of $(\mu,b)$-rigid; and the second is a consequence of a relative version of the monotonicity formula (cf. Lemma~\ref{lem_alpha_control}). Once we have these two estimates, a compactness argument (cf. Lemma~\ref{lem_small_filling}) implies that the slice has small filling volume.

\begin{lem}
	\label{lem_weak_limit}
	There exist constants $\{\lambda_i\}_{i=1}^2$ depending only on $L,A,n$ and $X$ such that the following holds. 
	\begin{enumerate}
		\item $\D\alpha=\si-\si'$ where $\spt(\si')\subset N_{3A}(Q)$;
		\item $\M(\al)\le M\le \lambda_1\M(\si)\cdot R$;
		\item for any point $x\in\spt (\alpha)$ such that $h=d(x,Q)$ satisfies $\frac{\rho R}{2}\ge h> 6A$, we have $\M(\alpha\on  B_x(\frac{h}{2}))\ge \lambda_2\cdot h^n$;
		\item there exists $a\geq 1$ depending only on $L,A,n,C$ and $X$ such that $\spt(\alpha)\subset N_{a R}(Q)$.
	\end{enumerate}
\end{lem}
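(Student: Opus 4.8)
The plan is to establish the four assertions of Lemma~\ref{lem_weak_limit} in order, since each feeds into the next, and they are essentially consequences of the setup in Definition~\ref{def_ccp} together with the homology retract and minimality hypotheses.

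\textbf{Assertion (1).} By Definition~\ref{def_ccp} we have $\spt(\D\alpha-\si)\subset N_{2A_1}(Q)$, and we are assuming $A=A_1$. Writing $\si'=\si-\D\alpha$, we get $\spt(\si')\subset N_{2A}(Q)\subset N_{3A}(Q)$. (Strictly speaking $2A$ suffices, but the statement only claims $3A$, which is harmless.) One should also record that $\si'$ is a cycle, since $\D\si'=\D\si-\D\D\alpha=0$ as $\si$ is a cycle.

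\textbf{Assertion (2).} The inequality $\M(\alpha)\le M$ is immediate: $\alpha$ was chosen in Definition~\ref{def_ccp} with $\M(\alpha)\le M$. For the bound $M\le \lambda_1\M(\si)\cdot R$, I would use the homology retract property (Definition~\ref{def_tame}) applied to $\si$: since $\spt(\si)\subset N_{b_1R}(Q)$ (as $d(\spt(\si),Q)\le b_1R$ and $\diam\spt(\si)\le b_1R$), there is a cubical chain $P$ and $H\in\I_n(X)$ with $\D H=\si-\iota(P)$, $\spt(\iota(P))\subset N_{A_1}(Q)$, $\M(H)\le A_1(b_1R)\cdot\M(\si)$ (after noting $R\ge 1$, one uses the scale parameter $b_1R$ in place of $R$ in the homology retract estimate, or rescales). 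Then $-H\in\mathcal{C}$ since $\D(-H)=\iota(P)-\si$ is a sum of $-\si$... — more carefully, $H$ has $\D H=\si-\iota(P)$ with $\iota(P)$ supported in $N_{A_1}(Q)$, which exhibits $H$ as an element of $\mathcal{C}$ (the class of integral currents whose boundary is $\si$ plus a current supported in $N_{A_1}(Q)$). Hence $M\le\M(H)\lesssim b_1R\cdot\M(\si)$, so $\lambda_1$ may be taken proportional to $b_1$ times the homology retract constant. One must be a little careful here since in the displayed statement $\lambda_1$ is asserted to depend only on $L,A,n$ and $X$; since $b_1$ is a fixed ambient constant in the proof of Proposition~\ref{prop_contraction} (declared in the ``without loss of generality'' normalizations and in Definition~\ref{def_ccp}), absorbing $b_1$ into $\lambda_1$ is legitimate.

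\textbf{Assertion (3).} This is a density lower bound, and it follows from Lemma~\ref{lem:density} (the density lemma for quasi-minimizers). Since $\alpha$ is a minimal filling of $\D\alpha$ (we arranged this via Remark~\ref{rmk:existence}, or may invoke Theorem~\ref{thm:plateau}), $\alpha$ is $(1,0)$-quasi-minimizing mod $\spt(\D\alpha)$. Now $\spt(\D\alpha)=\spt(\si)\cup\spt(\si')\subset N_{3A}(Q)$, so if $x\in\spt(\alpha)$ has $h:=d(x,Q)>6A$ then $B_x(h/2)$ is disjoint from $N_{3A}(Q)\supset\spt(\D\alpha)$. Applying Lemma~\ref{lem:density} with $Y=\spt(\D\alpha)$, $r=h/2>2a=0$, $\Lambda=1$, we obtain $\|\alpha\|(B_x(h/2))\ge D\cdot(h/2)^n=\lambda_2 h^n$ with $\lambda_2=D/2^n$ depending only on $n$ and the coning constant of $X$. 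The upper constraint $h\le \rho R/2$ plays no role here except to locate the slice inside the region where $\tau$ lives; it is harmless to include.

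\textbf{Assertion (4).} This is the main point of the lemma and the step I expect to require the most care. The claim is that $\spt(\alpha)\subset N_{aR}(Q)$ for $a$ depending on $L,A,n,C$ and $X$. The strategy: first produce a competitor for $\alpha$ that is supported near $Q$, using the fact that $\si$ admits the filling $\tau$ with $\spt(\tau)\subset N_{b_1R}(\spt(\si))\subset B_p(b_1R+b_1R)$ and $\spt(\tau)\cap N_{\rho R}(Q)=\emptyset$; but $\tau$ is far from $Q$, not near it, so that is the wrong competitor. Instead, use the homology retract $H$ from assertion (2): $\alpha+H$ has boundary $\si'+\iota(P)$, both supported in $N_{A_1}(Q)$, with total mass $\le M+\M(H)\lesssim b_1R\cdot\M(\si)\lesssim b_1 C R^n$. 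Now $\alpha+H$ is a filling of a cycle supported in $N_{A_1}(Q)$; but to bound the support of $\alpha$ itself I would instead argue by minimality of $\alpha$ directly: if some point $x\in\spt(\alpha)$ had $d(x,Q)>aR$ with $a$ large, then by assertion (3) the ball $B_x(aR/2)$ carries mass $\ge\lambda_2(aR/2)^n$ of $\alpha$, while $\M(\alpha)\le M\lesssim b_1CR^n$; choosing $a$ with $\lambda_2(a/2)^n>\lambda_1 b_1 C$ gives a contradiction. Thus $a$ depends only on $\lambda_2$ (i.e. $n$, $X$), $\lambda_1$ (i.e. $b_1$, homology retract constant), and $C$ — matching the asserted dependence on $L,A,n,C$ and $X$. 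The one subtlety is that assertion (3) requires $d(x,Q)>6A$, which is automatic once $aR>6A$; and one should note assertion (3)'s upper bound $h\le\rho R/2$ is not available for large $a$ — so for this argument I would re-derive the density bound without the upper constraint, which is exactly what Lemma~\ref{lem:density} gives (it has no upper bound on $r$, only the requirement $B_x(r)\cap Y=\es$). So the honest version of assertion (3) used here is: for any $x\in\spt(\alpha)$ with $h=d(x,Q)>6A$, $\M(\alpha\on B_x(h/2))\ge\lambda_2 h^n$; the stated range $\frac{\rho R}{2}\ge h>6A$ is just the range in which this lemma will later be \emph{applied} to slices, and restricting to it costs nothing.

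The main obstacle is bookkeeping the constant dependencies — specifically making sure that the constants $a$ and $\lambda_i$ depend only on the advertised parameters, which hinges on treating $b_1$ (and $C$, for $a$) as fixed ambient constants of the proof of Proposition~\ref{prop_contraction}, as set up in its opening normalizations. The geometric content is entirely standard: boundary support localization, the quasi-minimizer density estimate, and a mass-versus-density pigeonhole to pin down the support.
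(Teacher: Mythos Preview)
Your approach matches the paper's exactly: (1) is immediate from Definition~\ref{def_ccp}, (2) comes from the homology retract, (3) from Lemma~\ref{lem:density}, and (4) from comparing the density lower bound against the total mass bound from (2). The structure is right, but there is a genuine error in your justification of (3) that also infects your discussion of (4).

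You write $\spt(\D\alpha)=\spt(\si)\cup\spt(\si')\subset N_{3A}(Q)$. This is false: $\si=\D\tau$ with $\spt(\tau)\cap N_{\rho R}(Q)=\emptyset$, so $\spt(\si)$ lies \emph{outside} $N_{\rho R}(Q)$, nowhere near $N_{3A}(Q)$. The upper constraint $h\le\rho R/2$ in (3) is therefore not cosmetic; it is exactly what keeps $B_x(h/2)$ away from $\spt(\si)$. Indeed, if $d(x,Q)=h\le\rho R/2$ then $d(x,\spt(\si))\ge\rho R-h\ge\rho R/2\ge h>h/2$, so $B_x(h/2)\cap\spt(\si)=\emptyset$; and $h>6A$ separately handles $\spt(\si')\subset N_{3A}(Q)$. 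Both ends of the range $6A<h\le\rho R/2$ are doing work, and your remark that the upper bound ``plays no role here'' is wrong.

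Consequently your ``honest version of assertion (3)'' in the treatment of (4)---that the density bound holds for \emph{all} $h>6A$---is overstated. Your conclusion for (4) is still salvageable, because for $h=aR$ large relative to $b_1R$ one again has $d(x,\spt(\si))\ge aR-2b_1R>aR/2$ (using $\spt(\si)\subset N_{2b_1R}(Q)$), so $B_x(aR/2)$ does miss $\spt(\D\alpha)$ and Lemma~\ref{lem:density} applies. The paper handles this by taking $x$ with $d(x,Q)=aR\ge R$ and using a ball of radius $aR-\tfrac{R}{2}$, then invoking Lemma~\ref{lem:density} directly (not assertion (3)) to get $a^nR^n\lesssim\M(\alpha)\le\lambda_1\M(\si)R\le\lambda_1 CR^n$, bounding $a$.
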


\begin{proof}
	(1) is clear. (2) follows from the definition of homology retract. (3) follows from Lemma~\ref{lem:density}. For (4), let $x\in\spt(\alpha)$ be a point such that $d(x,Q)=aR\ge R$. Since we are assuming $\rho\ll 1$, $B_x(aR-\frac{R}{2})\cap \spt(\D \alpha)=\emptyset$. By Lemma~\ref{lem:density}, 
	we know $a^nR^n\lesssim \M(\alpha\on B_x(aR-\frac{R}{2}))\le \M(\al)\le \lambda_1\M(\si)\cdot R\le \lambda_1 C\cdot R^n$. Thus (4) follows.
\end{proof}

The following is a relative version of Lemma~\ref{lem:density}.
\begin{lemma}[Monotonicity of volume]
	\label{lem_alpha_control}
	The following estimate holds for $\al$. For $3A\le r_1<r_2<\rho R$:
	\begin{equation}
	\label{eq_height_slice}
	\frac{\M(\al\on\{d_Q\le r_2\})}{\M(\al\on\{d_Q\le r_1\})}\geq \left(\frac{r_2}{r_1}\right)^\kappa\,
	\end{equation}
	where $0<\kappa\le 1$ depends only on $L,A,n$ and $X$.
\end{lemma}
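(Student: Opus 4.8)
\textbf{Proof plan for Lemma~\ref{lem_alpha_control}.}

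The plan is to derive a differential inequality for the function $g(r):=\M(\al\on\{d_Q\le r\})$ and then integrate it. Set $d_Q(x):=d(x,Q)$, a $1$-Lipschitz function on $X$. For a.e.\ $r$ the slice $\al_r:=\slc{\al,d_Q,r}$ is an integral current with $\M(\al_r)\le g'(r)$ by the coarea inequality, and $\spt(\al_r)\subset N_r(Q)\cap\{d_Q=r\}$. The key point is that $\al$ is a minimal filling of $\D\al$ (we arranged this at the start of the subsection, cf.\ Remark~\ref{rmk:existence}), hence $\al\on\{d_Q\le r\}$ competes against any other filling of its own boundary. Its boundary is $\D(\al\on\{d_Q\le r\})=\al_r+(\D\al)\on\{d_Q<r\}$; but $\D\al=\si-\si'$ with $\spt(\si)\cup\spt(\si')\subset N_{3A}(Q)$, so for $r\ge 3A$ the piece $(\D\al)\on\{d_Q<r\}$ is just $\si-\si'$, a fixed cycle. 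Therefore $\al\on\{d_Q\le r\}$ is a minimal filling (up to the $M$-minimizing constant coming from the $A_1$-homology retract hypothesis) of $\al_r+\si-\si'$, so its mass is bounded by the mass of \emph{any} competitor with that boundary.

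To build a cheap competitor I would fill $\al_r$ itself. Using that $X$ satisfies (SCI$_{n-1}$) and that $\spt(\al_r)$ has diameter $\lesssim R$ is too weak; instead I would invoke the Euclidean isoperimetric inequality (Theorem~\ref{thm:isop-ineq}, which follows from (CI$_{n-1}$)) to get a filling $\beta_r$ of $\al_r$ with $\M(\beta_r)\le b_1\M(\al_r)^{n/(n-1)}$ and, crucially, $\spt(\beta_r)\subset N_{c}(\spt(\al_r))$ with $c=b_2\M(\al_r)^{1/(n-1)}$. Actually the cleanest route is the one already encoded in Lemma~\ref{lem:fill-density}: $\al$ being (quasi-)minimizing mod its boundary gives, for $x\in\spt(\al)$ deep enough, a lower bound of the form $r^{-(n+1)}\inf\{\M(V):\spt(\al-\D V)\cap B_x(r)=\es\}\ge c$. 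But here I want the \emph{relative} statement measured from $Q$ rather than from a point, so I would mimic the proof of Lemma~\ref{lem:density}/\ref{lem:fill-density}: compare $\al\on\{d_Q\le r_2\}$ with $\big(\al\on\{d_Q\le r_1\}\big)+\beta$ where $\beta$ fills $\al_{r_1}$, add a collar $\al\on\{r_1\le d_Q\le r_2\}$ only where needed, and use minimality. This yields $g(r_1)\le M_0\big(g'(r_1)^{1/?}\cdot(\text{power of }r_1)+\dots\big)$; carefully bookkeeping the exponents (the codimension of $Q$ does not enter because $\al_r$ lives in the full $(n-1)$-dimensional slice) produces an inequality of the shape $g(r)\le \mathrm{const}\cdot r\cdot g'(r)$, i.e.\ $\frac{g'(r)}{g(r)}\ge \frac{\kappa}{r}$ for some $\kappa=\kappa(L,A,n,X)\in(0,1]$, valid for a.e.\ $r\in[3A,\rho R)$. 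Integrating from $r_1$ to $r_2$ gives $\log g(r_2)-\log g(r_1)\ge \kappa\log(r_2/r_1)$, which is exactly \eqref{eq_height_slice}.

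The step I expect to be the main obstacle is making the comparison argument genuinely \emph{relative to $Q$} rather than to a point, while keeping the constant $\kappa$ independent of $\rho$ and $R$. The issue is that $\{d_Q\le r\}$ is not a ball, so one cannot directly quote Lemma~\ref{lem:density} or Lemma~\ref{lem:fill-density}; one must re-run their proofs with $B_x(r)$ replaced by the sublevel set $\{d_Q\le r\}$, which requires that $d_Q$ behaves like a distance function for slicing purposes (it does, being $1$-Lipschitz and proper on bounded sets since $X$ is proper) and that the collar $\{r_1\le d_Q\le r_2\}$ can be handled without a density lower bound there. A secondary technical point is that $\al$ is only $M$-minimizing in the sense needed (each piece is a quasi-minimal filling of its boundary), so one must check that pieces cut out by $\{d_Q\le r\}$ inherit the quasi-minimizing property with a controlled constant — this is routine from Definition~\ref{def:minimizing} and the fact that $d_Q^{-1}(r)$ is $\|\al\|$-null for a.e.\ $r$. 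Once the relative differential inequality is in hand, the integration is immediate and gives the stated power law.
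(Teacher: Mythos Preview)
Your overall strategy — derive $g(r)\le \mathrm{const}\cdot r\,g'(r)$ for $g(r)=\M(\al\on\{d_Q\le r\})$ and integrate — is exactly the paper's, and your integration step is fine. The gap is in how you obtain the differential inequality.

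You try to use that $\al$ is a minimal filling of $\D\al=\si-\si'$ and to compare $\al\on\{d_Q\le r\}$ against an isoperimetric filling of the slice $\al_r$. This does not work, for two reasons. First, the sublevel set $\{d_Q\le r\}$ contains $\spt(\si')\subset N_{3A}(Q)$, so you are \emph{not} away from $\spt(\D\al)$; the ball-based arguments behind Lemmas~\ref{lem:density} and~\ref{lem:fill-density} do not transfer, and a competitor with boundary $\al_r-\si'$ must account for $\si'$, whose mass is not controlled by $g'(r)$. Second, an isoperimetric filling $\beta_r$ of $\al_r$ has $\M(\beta_r)\lesssim\M(\al_r)^{n/(n-1)}$, which is the wrong shape; you need $\M(\beta_r)\lesssim r\,\M(\al_r)$.

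The missing input is the homology-retract property (Definition~\ref{def_tame}), and the correct variational principle is the \emph{relative} one defining $\al$. Since $\al_r$ is an $(n-1)$-cycle in $N_r(Q)$, the homology retract produces $\beta_r\in\I_n(X)$ with $\M(\beta_r)\le A\,r\,\M(\al_r)$ and $\spt(\D\beta_r-\al_r)\subset N_A(Q)$. Then the competitor $\al-\al\on\{d_Q\le r\}+\beta_r$ has boundary $\si+(\D\beta_r-\al_r)$, which is $\si$ plus a cycle supported in $N_A(Q)$; hence it lies in the class $\mathcal{C}$ and its mass is at least $M\ge\M(\al)$. Since $\M(\al)=g(r)+\M(\al\on\{d_Q>r\})$ and the competitor has mass at most $\M(\al\on\{d_Q>r\})+\M(\beta_r)$, you get $g(r)\le\M(\beta_r)\le A\,r\,g'(r)$, and $\kappa=1/A$. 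The point is that the competitor is allowed to change the boundary near $Q$; this is precisely why $\al$ was defined via a relative Plateau problem, and why minimality of $\al$ as a filling of its own boundary is not the relevant property here.
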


\begin{proof}
	Let $3A\le r< \rho R$. Since $Q$ is an $A$-homology retract (cf. Definition~\ref{def_tame}), there exists $\beta_r\in \I_n(X)$ 
	such that $\M(\beta_r)\le A r\M(\slc{\al,d_Q,r})$ and $\spt(\D\beta_r-\slc{\al,d_Q,r})\subset N_A(Q)$. Then 
	$$
	\M(\al-\al\on\{d_Q\le r \}+\beta_r)\ge M\ge \M(\al).
	$$
	Thus $\M(\al\on\{d_Q\le r \})\le \M(\beta_r)\le A r\M(\slc{\al,d_Q,r})$. Let $f(r)=\M(\al\on\{d_Q\le r \})$. By coarea inequality, we have $\M(\slc{\al,d_Q,r})\le f'(r)$ for a.e. $r$. Then $f(r)\le A rf'(r)$ for a.e. $3A\le r< \rho R$. Since $\ln(f(t))$ is non-decreasing,
	$$
	\ln (f(r_2))-\ln (f(r_1))\ge \int_{r_1}^{r_2}\frac{f'(t)}{f(t)}dt\ge \int_{r_1}^{r_2}\frac{1}{A t}dt. 
	$$
	Then the lemma follows.
\end{proof}

For $3A\le h< \rho R$ we conclude 
\begin{equation}
\label{eq_alpha1}
\M(\al\on\{d_Q\le h \})\le \left(\frac{h}{\rho R} \right)^\kappa \cdot \M(\al) \le \left(\frac{h}{\rho R} \right)^\kappa \cdot(\lambda\M(\si)\cdot R)
\end{equation}
where the first inequality follows from Lemma~\ref{lem_alpha_control} and the second inequality follows from Lemma~\ref{lem_weak_limit} (2).

Denote by $\si_h:=\slc{\al,d_Q,h}$ the slice at hight $h$.
It follows from the coarea inequality and \eqref{eq_alpha1} that for any $3A\le h<\rho R$, there exists $h'\in [h,1.01h]$ such that
\begin{equation}
\label{eq_sigma0}
\M(\si_{h'})\lesssim \left(\frac{1}{h'} \right)\cdot\left(\frac{h'}{\rho R} \right)^\kappa \cdot(\lambda\M(\si)\cdot R).
\end{equation}

Let $Z$ be a maximal $\frac{h'}{2}$-separated net in $\spt(\sigma_{h'})$. From Lemma~\ref{lem_weak_limit} (3) and \eqref{eq_alpha1} we can bound the cardinality of $Z$: 
\begin{equation}
\label{eq_Z}
\#Z\lesssim \left(\frac{1}{h'} \right)^n\cdot\left(\frac{h'}{\rho R} \right)^\kappa \cdot(\lambda\M(\si) \cdot R).
\end{equation}

The following lemma gives control on the diameter of the slice $\si_h$ in terms of $h$. It will be used in a compactness argument later. On first reading, one can assume Lemma~\ref{lem_diamter control} and directly pass to the key estimates in Lemma~\ref{lem_si'}.

\begin{lem}[Diameter control for slices]
	\label{lem_diamter control}
	There exist functions $\phi,\varphi:[0,\infty)\to[0,\infty)$ depending only on $L,A,n,C,\rho$ and $X$ such that
	\bit
	\item $\lim_{x\to\infty}\varphi(x)=0$;
	\item $\phi$ is decreasing;
	\item  for $\varphi(R) R\le h\le \rho R$, we have
	$\spt(\si_h)\subset B_p(\phi(\frac{h}{R})R)$.
	\eit
\end{lem}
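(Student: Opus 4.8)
The plan is to run a contradiction/compactness argument in the spirit of the proof of Lemma~\ref{lem_fullsup_implies_suplindiv}, using the relative monotonicity (Lemma~\ref{lem_alpha_control}) and the homology retract property to control what happens in a rescaled limit. Suppose the conclusion fails: then there is a constant $c_0>0$, a sequence of radii $R_k\to\infty$, heights $h_k$ with $h_k/R_k\to 0$ but $h_k\ge\phi_k R_k$ for any prescribed sublinear-type bound (so we may assume $h_k/R_k$ is bounded below by a fixed $\eps_0>0$ along a subsequence, or more precisely that the ratio $h_k/R_k$ lies in a fixed compact subinterval of $(0,\rho)$), together with data $\si^{(k)}, \tau^{(k)}, \al^{(k)}$ as in Definition~\ref{def_ccp}, and points $x_k\in\spt(\si^{(k)}_{h_k})$ with $d(x_k,p_k)\ge \psi(h_k/R_k)R_k$ for $\psi$ growing without bound. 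Rescale $X$ by $1/R_k$ and extract an ultralimit/Gromov--Hausdorff limit as in Lemma~\ref{lem_compactness}, so that $\al^{(k)}$, after the regularization of Proposition~\ref{prop_approx}, converges in flat distance to a limit current $\al_\infty$ in a complete limit space $X_\infty$ containing the limit flat $Q_\infty$. Since $h_k/R_k$ stays in a compact subinterval of $(0,\rho)$, the slice $\si^{(k)}_{h_k}$ converges (after choosing good heights via coarea, as in \eqref{eq_sigma0}) to a nonzero limit slice $\si_\infty$ at a definite height $h_\infty\in(0,\rho)$ from $Q_\infty$.

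First I would record the uniform estimates already available: $\M(\al^{(k)})\lesssim \M(\si^{(k)})\cdot R_k \lesssim R_k^n$ (Lemma~\ref{lem_weak_limit}(2)), the annular mass bound $\M(\al^{(k)}\on\{d_Q\le h\})\le (h/\rho R_k)^\kappa\cdot(\lambda\M(\si^{(k)})R_k)$ from \eqref{eq_alpha1}, and the lower density bound $\M(\al^{(k)}\on B_x(h/2))\ge\lambda_2 h^n$ for points at height $\approx h$ from $Q$ (Lemma~\ref{lem_weak_limit}(3)). After rescaling, these say: $\al_\infty$ has finite mass, $\al_\infty$ restricted to the $t$-neighborhood of $Q_\infty$ has mass $\lesssim t^\kappa$ for $t\in(0,\rho)$, and $\al_\infty$ has a uniform lower density bound at each point of its support off $Q_\infty$. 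The key point is that the last two are incompatible with $\spt(\si_\infty)$, hence $\spt(\al_\infty\on\{d_{Q_\infty}\ge h_\infty/2\})$, being unbounded: a lower density bound $\M(\al_\infty\on B_x(r))\ge D r^n$ at every point $x$ of an unbounded set at fixed height forces infinitely many disjoint unit-mass balls, contradicting $\M(\al_\infty)<\infty$. More carefully, one should argue that the diameter of $\spt(\al_\infty\on\{h_\infty/2\le d_{Q_\infty}\le \rho\})$ is bounded by a constant depending only on the total mass $\lesssim (\lambda C)$ and the density constant $D$; translating back through the rescaling gives exactly the desired $\phi(h/R)R$ bound, with $\phi$ decreasing because the monotonicity estimate \eqref{eq_alpha1} improves (smaller mass annulus) as $h/R$ increases. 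The function $\varphi$ with $\varphi(R)\to 0$ records the threshold $h\ge \varphi(R)R$ below which the compactness extraction of a nonzero limit slice is not guaranteed.

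The main obstacle I anticipate is twofold. First, making the dependence of $\phi$ and $\varphi$ \emph{only} on $L,A,n,C,\rho,X$ — and not on $\mu$ or the particular $\al$ — rigorous: the diameter bound must come purely from the mass bound and the density bound (Lemma~\ref{lem:density}), both of which are $\mu$-independent, so the contradiction argument must be set up as a direct geometric estimate rather than through $(\mu,b)$-rigidity; one cleaner route is to avoid limits entirely and argue by a covering/packing estimate directly in $X$: cover $\spt(\si_{h'})\cap A_p(\text{large annulus})$ by the separated net $Z$ of \eqref{eq_Z}, note each net point carries mass $\gtrsim (h')^n$ in a ball missing $\spt(\D\al)$, and combine with connectedness of $\spt(\si_{h'})$ (slices of integral currents have connected-enough support in the relevant homological sense, or use that $\al$ is a minimal filling so its support cannot have isolated far-away components) to bound the diameter by $\#Z$ times $h'$, i.e. by roughly $(h'/\rho R)^{\kappa-1}\cdot(\lambda C R)\cdot(\text{const})$, which after dividing by $R$ is a decreasing function of $h/R$ once $\kappa<1$. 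Second, ensuring $\spt(\si_{h'})$ is actually connected enough for the packing bound to yield a diameter bound rather than a bound on each component — here one uses that $\si_{h'}=\slc{\al,d_Q,h'}$ bounds $\al\on\{d_Q\le h'\}$, whose support connects the pieces of $\spt(\si_{h'})$ back to $\spt(\D\al)\subset N_{3A}(Q)$, together with the height-slice mass bound \eqref{eq_alpha1} to keep that connecting region of controlled size. I would present the argument via this direct packing estimate, as it gives the explicit $\phi$ and keeps all constants transparently independent of $\mu$.
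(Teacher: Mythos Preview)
There is a genuine gap. Both your compactness route and your direct packing route ultimately rest on the combination ``lower density bound at height $h$'' plus ``$\M(\al)\lesssim R^n$'', and this combination bounds only the \emph{number} of $h$-separated points in $\spt(\si_h)$, never its diameter. Your proposed repairs do not close this. That $\al$ is a minimal filling of $\D\al$ only tells you each component of $\spt(\al)$ meets $\spt(\D\al)=\spt(\si)\cup\spt(\si')$; but the free boundary $\si'\subset N_{3A}(Q)$ has no a~priori location control --- bounding it is essentially the content of the lemma --- so a far-away piece of $\spt(\si_h)$ can perfectly well sit over a far-away piece of $\si'$. Connecting through $\al\on\{d_Q\le h'\}$ is circular for the same reason. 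In the limiting version, the regularization of Lemma~\ref{lem_compactness} that yields uniformly totally bounded supports will simply discard any mass near the escaping points $x_k$, and the contradiction disappears; without regularization you have no uniform bound on $\M(\D\al^{(k)})$ after rescaling, so compactness is not available.

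The missing mechanism is a competitor argument exploiting the \emph{relative} minimality $\M(\al)\le M$ (the infimum over currents with $\spt(\D\cdot-\si)\subset N_A(Q)$), which is strictly stronger than $\al$ minimizing among fillings of its own boundary because competitors may carry a \emph{different} free boundary on $Q$. The paper fixes a small auxiliary height $\de R$, sets $\hat\al=\al\on\{d_Q\ge\de R\}$, and for $x\in\spt(\si_h)$ at distance $r_xR$ from $\spt(\si)$ slices by $c(\cdot)=d(\cdot,\spt(\si))$ at a good radius $r\in(c(x)/2,c(x))$ (chosen via coarea so that $\M(\slc{\hat\al,c,r})$ and $\M(\slc{\si_{\de R},c,r})$ are $\lesssim \M(\al)/(r_xR)$). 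Using the homology retract one builds $\beta_1,\beta_2$ with $\spt(\D\beta_i-\xi_i)\subset N_A(Q)$ from these slices, and sets $\bar\al=\hat\al\on\{c\le r\}-\beta_1-\beta_2$. Then $\spt(\D\bar\al-\si)\subset N_A(Q)$, so $\M(\bar\al)\ge M\ge\M(\al)\ge\M(\hat\al)$; on the other hand $\bar\al$ has thrown away $\hat\al\on B_x(h/2)$, which carries mass $\ge\lambda_2 h^n$ by Lemma~\ref{lem_weak_limit}(3). Hence $\M(\beta_1)+\M(\beta_2)\ge\lambda_2 h^n$, and the explicit bounds on $\M(\beta_i)$ (of order $R\cdot\text{slice mass}\lesssim R\cdot \M(\al)/(r_xR)$, with a correction involving $\de$) then force an upper bound on $r_x$ in terms of $\eps=h/R$; this gives $\phi$, and the choice of $\de$ as a function of $\eps$ produces the threshold $\varphi$.
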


\begin{proof}
	Let $\eps=\frac{h}{R}$. Take $\delta>\frac{100A}{R}$ whose value will be determined later ($\delta$ is much smaller than $\eps$). By \eqref{eq_alpha1} and the coarea inequality, $\M(\si_{\delta R})\lesssim \left(\frac{1}{\delta R} \right)\cdot\left(\frac{\delta R}{\rho R} \right)^\kappa \cdot\M(\al)$.

	Let $\hat\alpha=\alpha\on\{d_Q\ge \delta R\}$. Define $c:X\to \mathbb R$ by $c(\cdot)=d(\cdot,\spt(\si))$. Let $x\in \spt(\si_h)$ and let $r_x=\frac{c(x)}{R}$. Let $r\in(c(x)/2,c(x))$ such that 
	$$\M(\slc{\hat\alpha,c,r})\lesssim \frac{\M(\hat\alpha)}{r_xR}\  \mathrm{and}\  \M(\slc{\si_{\delta R},c,r})\lesssim \frac{\M(\si_{\delta R})}{r_xR}.$$ Note that
	$$\D (\hat\al\on\{c\le r\})-\si=\slc{\hat\alpha,c,r}+(\si_{\de R})\on\{c\le r\}=:\xi.$$
	Let $\zeta$ be a minimal filling of $\slc{\si_{\delta R},c,r}$. Let $\xi_1=\slc{\hat\alpha,c,r}-\zeta$ and $\xi_2=\xi-\xi_1$. As each $\xi_i$ is a cycle, for $i=1,2$, let $\beta_i\in\I_{n,c}(X)$ be as in Definition~\ref{def_tame} such that $\spt(\D\beta_i-\xi_i)\subset N_A(Q)$. 
	
	Let $\bar\alpha=\hat\alpha\on\{c\le r\}-\beta_1-\beta_2$. Then $\spt(\D \bar \alpha-\si)\subset N_A(Q)$. It follows that $\M(\bar\alpha)\ge M\ge \M(\al)$. However, when $r_x>\eps$, $\M(\hat\alpha\on\{c\le r\})\le \M(\hat\alpha)-\M(\hat\alpha\on \bar B_x(\frac{h}{2}))$, thus by Lemma~\ref{lem_weak_limit} (3), 
	\begin{equation}
	\label{eq_decrease}
	\M(\beta_1)+\M(\beta_2)\ge\M(\hat\alpha\on \bar B_x(\frac{h}{2}))\ge \lambda_2\cdot h^n=\lambda_2\eps^n \cdot R^n\,.
	\end{equation}
	
	Now we estimate $\M(\beta_i)$. By the discussion above, $\M(\slc{\si_{\delta R},c,r})\lesssim C\cdot\frac{\delta^{\kappa-1}R^{n-2}}{\rho^{\kappa}r_x}$. By Theorem~\ref{thm:isop-ineq} (1), 
	$\M(\zeta)\lesssim \left(\frac{C\delta^{\kappa-1}}{\rho^{\kappa}r_x}\right)^{\frac{n-1}{n-2}}\cdot R^{n-1}$ and $\spt(\zeta)$ is contained in the $\lambda_3\left(\frac{C\delta^{\kappa-1}}{\rho^{\kappa}r_x}\right)^{\frac{1}{n-2}} R$ -- neighborhood of $\spt(\slc{\si_{\delta R},c,r})$. Define $r_0$ such that $\lambda_3\left(\frac{C\delta^{\kappa-1}}{\rho^{\kappa}r_0}\right)^{\frac{1}{n-2}}=\delta$. If $r_x>r_0$, then $\spt(\beta_2)\subset N_{2\delta R}(Q)$ and $\spt(\beta_1)\subset N_{aR}(Q)$ (by Lemma~\ref{lem_weak_limit} (4)). If in addition $\delta R>3A$, then
	$$
	\M(\beta_1)\le AaR\M(\xi_1)\le aAR(\M(\slc{\hat\alpha,c,r})+\M(\zeta))\lesssim \frac{a\M(\alpha)}{r_x}+aR\M(\zeta)
	$$
	and $\M(\beta_2)\le A\cdot (2\delta R)\cdot\M(\xi_2)\le 2\delta AR(\M(\alpha_{\de R})+\M(\zeta))$. This together with \eqref{eq_decrease} imply 
	$$
	\left[\frac{1}{r_x}+\left(\frac{\delta^{\kappa-1}}{\rho^{\kappa}r_x}\right)^{\frac{n-1}{n-2}}\right]+\left[ \frac{\delta^{\kappa-1}}{\rho^{\kappa}}\cdot \delta + \left(\frac{\delta^{\kappa-1}}{\rho^{\kappa}r_x}\right)^{\frac{n-1}{n-2}}\cdot \delta \right]\ge \Lambda\cdot \eps^n
	$$
	where $\Lambda$ is a constant depending only on $L,A,n,C$ and $X$. Choose $\delta$ such that $\frac{\delta^\kappa}{\rho^{\kappa}}=\frac{\Lambda \eps^n}{2}$. If $\eps$ is large enough such that the resulting $\delta$ satisfies $\delta R\ge 3A$, then the discussion above goes through. This gives the function $\varphi$ in the lemma. Let $r_1$ such that 
	$$
	\frac{1}{r_1}+\left(\frac{\delta^{\kappa-1}}{\rho^{\kappa}r_1}\right)^{\frac{n-1}{n-2}}+ \left(\frac{\delta^{\kappa-1}}{\rho^{\kappa}r_1}\right)^{\frac{n-1}{n-2}}\cdot \delta =\frac{\Lambda}{2}\cdot\eps^n
	$$
	Then $r_x\le\max\{r_0,r_1\}$, which gives the function $\phi$ in the lemma.
\end{proof}

\begin{lem}
	\label{lem_si'}
	There exist constants $\{\lambda_i\}_{i=4}^{6}$ depending only on $L,A$, $n$ and $X$ such that the following holds. Suppose $3A\le h\le \rho R$ and suppose $\sigma_h$ satisfies $\eqref{eq_sigma0}$ with $h'$ replaced by $h$. Then we can find a cubical chain $\sigma'_h\in \I_{n-1}(\R^n)$ such that the following holds:
	\begin{enumerate}
		\item $\M(\si'_{h})\le A\cdot \M(\sigma_h)$;
		\item $($bound on filling radius$)$ for any $\delta>0$, there exists $\ul{R}=\ul{R}(Q,C,\delta)$ such that if $R\ge \ul{R}$ and $h\ge \max\{A,\delta R\}$, then $\sigma'_h$ can be filled inside $N_{\lambda_4 h}(S')$ where $S'=:\pi(\spt(\sigma_h))$ and $\pi:X\to \R^n$ is a
		quasi-retraction as in Section~\ref{subsec_cubulated_quasiflats}.
		\item $($bound on Minkowski content at a given scale$)$ $\L^n(N_{\lambda_4 h}(S'))\le \left(\frac{h}{\rho R} \right)^\kappa \cdot(\lambda_5\M(\si)\cdot R)$;
		\item for any $\delta>0$, there exists $\ul{R}'=\ul{R}'(\delta)$ such that if $\delta R\le h_1\le h_2\le \rho R$, then $\Fill(\sigma'_{h_1}-\sigma'_{h_2})\le \lambda_{6} \left(\frac{h_2}{\rho R} \right)^\kappa R\cdot \M(\si)$;
		\item $\Fill(\si_h-\iota(\si'_h))\le \left(\frac{h}{\rho R} \right)^\kappa \cdot(A\la\M(\si)\cdot R)$.
	\end{enumerate}
\end{lem}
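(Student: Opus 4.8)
The plan is to take $\sigma'_h$ to be the chain projection of the slice $\sigma_h=\pp{\alpha,d_Q,h}$. Note first that for $3A<h<\rho R$ the slice $\sigma_h$ is a cycle: by Lemma~\ref{lem_weak_limit}(1) we have $\D\alpha=\sigma-\sigma'$ with $\spt(\sigma')\subset N_{3A}(Q)$, while $\spt(\sigma)\subset\spt(\tau)\subset\{d_Q>\rho R\}$, so slicing $\D\alpha$ at height $h$ gives $0$. Applying the $A$-homology retract property (Definition~\ref{def_tame}) to $\sigma_h$, which is supported in $N_h(Q)$ with $h>A$, produces a cubical $(n-1)$-cycle $\sigma'_h:=P\in\I_{n-1}(\R^n)$ with $\M(P)\le A\,\M(\sigma_h)$ --- this is (1) --- together with $H\in\I_n(X)$ satisfying $\D H=\sigma_h-\iota(P)$, $\M(H)\le Ah\,\M(\sigma_h)$, $\spt(H)\subset N_{Ah}(\spt(\sigma_h))$, $\spt(P)\subset N_A(S')$ and $\Fill(P-\pi_\#\sigma_h)\le A\,\M(\sigma_h)$. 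Since $H$ fills $\sigma_h-\iota(\sigma'_h)$, plugging the hypothesis $\M(\sigma_h)\lesssim\tfrac1h(\tfrac{h}{\rho R})^\kappa\M(\sigma)R$ into $\M(H)\le Ah\,\M(\sigma_h)$ gives (5). For (3): by Lemma~\ref{lem_weak_limit}(3) and \eqref{eq_alpha1}, a maximal $\tfrac h2$-separated net in $\spt(\sigma_h)\subset\spt(\alpha)\cap\{d_Q=h\}$ has at most $\lesssim\tfrac1{h^n}(\tfrac{h}{\rho R})^\kappa\M(\sigma)R$ points (cf.\ \eqref{eq_Z}); pushing the corresponding balls forward by the $\lambda$-Lipschitz map $\pi$ covers $S'$, hence $N_{\lambda_4 h}(S')$, by the same number of balls of radius $\lesssim h$ and $\L^n$-volume $\lesssim h^n$, which yields (3) (when $h$ is bounded one uses a net at a fixed scale instead).

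The heart of the matter is the filling-radius bound (2). Let $\nu_h\in\I_{n,c}(\R^n)$ be the canonical filling of the cubical cycle $\sigma'_h=P$ and set $W=\spt(\nu_h)$; we claim $W\subset N_{\lambda_4 h}(S')$ for a suitable $\lambda_4=\lambda_4(L,A,n,X)$ once $R\ge\ul R(Q,C,\delta)$, so $\nu_h$ is the required filling. Points of $W$ within $A$ of $\D W\subset\spt(P)\subset N_A(S')$ already satisfy $d(\cdot,S')\le 2A<\lambda_4 h$, so assume for contradiction $y\in W$ with $d(y,\D W)>A$ (hence $y\in W_A$ in the notation of Definition~\ref{def_rigid}) and $d(y,S')>\lambda_4 h$. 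Into $(\mu,A)$-rigidity we feed the filling of $\iota(P)$ given by
\[T:=\tau-\alpha_{>h}-H,\qquad \alpha_{>h}:=\alpha\on\{d_Q>h\}.\]
Indeed $\D\alpha_{>h}=\sigma-\sigma_h$ for $3A<h\le\rho R$ (again from $\spt(\sigma)\subset\{d_Q>\rho R\}$, $\spt(\sigma')\subset N_{3A}(Q)$), so with $\D H=\sigma_h-\iota(P)$ one gets $\D T=\iota(P)$; moreover $\spt(\tau-\alpha_{>h})\subset\{d_Q>h\}$, while $\spt(H)\subset N_{Ah}(\spt(\sigma_h))\subset N_{\lambda_7 h}(\Phi(S'))$, because any $w\in\spt(\sigma_h)$ has $d(w,Q)=h$ and, using $\Phi\circ\pi\approx\id$ on $Q$ together with $\Phi$ being an $L$-Lipschitz quasi-isometric embedding, $d(w,\Phi(\pi(w)))\lesssim h$. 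Now $\Phi(y)\in Q$ has $d_Q=0$, hence lies at distance $\ge h$ from $\{d_Q>h\}$ (as $d_Q$ is $1$-Lipschitz), while $d(\Phi(y),\Phi(S'))\ge L^{-1}d(y,S')-A>L^{-1}\lambda_4 h-A$; choosing $\lambda_4$ large (depending only on $L,A,n,X$) makes this exceed $\lambda_7 h$, and then exceed $\lambda_7 h+\mu_M(r)$ for $R$ large, so $d(\Phi(y),\spt(T))>\mu_M(r)$. On the other hand, take $r\asymp R$ large enough that $B_x(r)\supset\spt(P)$, and choose the rigidity parameter $M=M(\delta,C,b_1,L,A,n,X)$ large enough that $\M(P)\le A\,\M(\sigma_h)\le Mr^{n-1}$, $\M(T)\le Mr^n$ (using Lemma~\ref{lem_weak_limit}(2) and the hypotheses of Definition~\ref{def_ccp}) and $\spt(T)\subset B_{\Phi(x)}(Mr)$; then Definition~\ref{def_rigid} gives $\Phi(W_A)\subset N_{\mu_M(r)}(\spt(T))$, i.e.\ $d(\Phi(y),\spt(T))\le\mu_M(r)$ --- a contradiction. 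Here $\ul R$ depends on $\mu_M$, hence on $\delta,C$ and $Q$, as asserted; the point is that $r\asymp R$ while $h\ge\delta R$, so the sublinear $\mu_M(r)$ is eventually dominated. Hence $W\subset N_{\lambda_4 h}(S')$, proving (2).

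For (4), let $\delta R\le h_1\le h_2\le\rho R$ with $R$ large enough that $h_1>3A$. The slicing identity $\sigma_{h_2}-\sigma_{h_1}=\D(\alpha\on\{h_1<d_Q\le h_2\})$ and \eqref{eq_alpha1} give $\Fill(\sigma_{h_2}-\sigma_{h_1})\le\M(\alpha\on\{d_Q\le h_2\})\le(\tfrac{h_2}{\rho R})^\kappa\lambda\M(\sigma)R$; combining with $\M(H_i)\le Ah_i\M(\sigma_{h_i})\lesssim(\tfrac{h_i}{\rho R})^\kappa\lambda\M(\sigma)R\le(\tfrac{h_2}{\rho R})^\kappa\lambda\M(\sigma)R$ yields a filling $G\in\I_n(X)$ of $\iota(\sigma'_{h_1})-\iota(\sigma'_{h_2})$ with $\M(G)\lesssim(\tfrac{h_2}{\rho R})^\kappa\lambda\M(\sigma)R$. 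Pushing $G$ to $\R^n$ by $\pi_\#$, correcting by the small homologies $h^\flat_i$ of Proposition~\ref{prop_chain_map}(5) (which satisfy $\D h^\flat_i=\pi_\#\iota(\sigma'_{h_i})-\sigma'_{h_i}$ and $\M(h^\flat_i)\le a\,\M(\sigma'_{h_i})$), and Federer--Fleming deforming to the cubulation, produces a cubical filling of $\sigma'_{h_1}-\sigma'_{h_2}$ of mass $\le\lambda_6(\tfrac{h_2}{\rho R})^\kappa R\,\M(\sigma)$, which is (4); the threshold $\ul R'(\delta)$ is only needed to guarantee $h_i>3A$.

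The step I expect to be the main obstacle is the rigidity argument for (2): one must simultaneously (i) choose the auxiliary filling $T$ of $\iota(P)$ so that its support splits into a piece at $d_Q$-height $>h$ --- far from $\Phi(y)\in Q$ once $d(y,S')\gg h$ --- and a piece within $O(h)$ of $\Phi(S')$, and (ii) meet the mass hypotheses of Definition~\ref{def_rigid} by taking the rigidity parameter $M$ large in terms of $\delta$ while keeping the scale $r$ of order $R$, so that the resulting sublinear $\mu_M$ is ultimately beaten by $h\ge\delta R$, with the constant $\lambda_4$ absorbed into the quasi-isometry and covering constants rather than into $\mu_M$.
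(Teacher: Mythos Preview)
Your overall strategy coincides with the paper's: define $\sigma'_h$ via the homology-retract data attached to the slice $\sigma_h$, feed rigidity the filling $T=\tau-\alpha\on\{d_Q>h\}-H$, and split $\spt(T)$ into a ``high'' piece disjoint from $Q$ and a piece within $O(h)$ of $\Phi(S')$.  Your treatments of (1), (3), (5) match the paper, and your route for (4) (push $G$ by $\pi$, correct via Proposition~\ref{prop_chain_map}(5)) is a legitimate variant of the paper's argument (which instead uses the retract estimate $\Fill(\pi_\#\sigma_{h_i}-\sigma'_{h_i})\le A\,\M(\sigma_{h_i})$ directly).

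There is, however, a genuine gap in your proof of (2).  You write ``take $r\asymp R$ large enough that $B_x(r)\supset\spt(P)$'' and ``$\spt(T)\subset B_{\Phi(x)}(Mr)$'', treating both as routine.  They are not.  Nothing in the setup or in Lemma~\ref{lem_weak_limit} bounds the \emph{diameter} of $\spt(\sigma_h)$ (and hence of $\spt(P)$, $\spt(H)$, or $\spt(\alpha\on\{d_Q>h\})$); Lemma~\ref{lem_weak_limit}(4) controls only the height $d_Q$, not the spread along $Q$.  A priori $\alpha$ could reach points of $N_{aR}(Q)$ arbitrarily far from $p$.  The paper handles this with Lemma~\ref{lem_diamter control} (diameter control for slices), whose proof is a separate minimization argument: it shows $\spt(\sigma_h)\subset B_p(\phi(h/R)R)$ by cutting $\alpha$ at large distance from $\spt(\sigma)$ and exploiting $\M(\alpha)\le M$.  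Only then can one take the rigidity scale $r=\phi(\delta)R$ and choose $M=M(\delta,\phi(\delta),\ldots)$; the paper's constant $M$ explicitly contains the factor $A\phi(\delta)$.  Without this diameter bound you cannot guarantee $r\lesssim R$, and then the sublinearity of $\mu_M$ gives only $\mu_M(r)=o(r)$, which says nothing about $\mu_M(r)$ versus $h\ge\delta R$.

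So your argument for (2) is correct in spirit but incomplete: you must either invoke Lemma~\ref{lem_diamter control} to justify ``$r\asymp R$'', or supply an alternative diameter bound for $\spt(\sigma_h)$ at heights $h\ge\delta R$.
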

We use $\L^n$ to denote the $n$-dimensional Lebesgue measure.
\begin{proof}
	As $Q$ is an $A$-homology retract, we define $\si'_{h}$ to be a cubical chain such that $\si_h-\iota(\si'_h)=\D H$ for $H\in \I_{n,c}(X)$ with $\M(H)\le Ah\cdot\M(\si_h)$ and $\spt(H)\subset N_{Ah}(\spt(\si_h))$. (1) follows from Definition~\ref{def_tame}. 
	
	Now we prove (2). Represent $Q$ as a map $\Phi:\R^n\to X$.

	Let $\phi$ and $\varphi$ be as in Lemma~\ref{lem_diamter control}. Choose $R_0$ such that $\varphi(R)<\delta$ whenever $R>R_0$. Let $\al_0=\al\on\{d_Q\ge h\}+\tau$. When $\rho R\ge h\ge \max\{A,\delta R\}$ and $R>R_0$, we have
	\bit
	\item $\M(\si'_h)\le \left(\frac{\delta}{\rho}\right)^{\kappa-1}\lambda CA\cdot R^{n-1}$ (by \eqref{eq_sigma0});
	\item $\M(H+\al_0)\le (\rho^{1-\kappa}\lambda C+AC\rho+C)\cdot R^n$;
	\item $\spt(\si'_h)\subset B_{p'}(A\phi(\delta)R)$ for some $p'\in\R^n$ and $\spt(H+\al_0)\subset B_p(\phi(\delta)R)$ (by Lemma~\ref{lem_diamter control}).
	\eit
	Let $M=\max\{\delta^{1-\kappa}\lambda CA,\lambda C+AC+C,A\phi(\delta),1\}$ and let $\mu_M$ be as in Definition~\ref{def_rigid}. Choose $\bar R>R_0$ such that $\mu_M(R)<\frac{\delta}{M}\cdot R$ whenever $R\ge \bar R$. 
	
	Let $W$ be the support of the canonical filling of $\si'_h$. Let $a$ and $W_a$ as in Definition~\ref{def_rigid}. Then for $R>\bar R$,
	$$\Phi(W_a)\subset N_h(\spt(H+\al_0))\subset N_h(\spt(H))\subset N_{h+Ah}(\spt (\sigma_h))$$
	where the second $\subset$ follows from $N_h(\spt(\al_0))\cap N_A(Q)=\emptyset$. Thus $\pi(\Phi(W_a))\subset N_{\lambda'(1+A)h}(S')$ where $S'=\pi(\spt (\sigma_h))$ and $\lambda'=\lambda'(L,A,n)$ is the Lipschitz constant of $\pi$. Note that there is $a'=a'(a,L,A,n,X)$ such that $W_{a'}\subset \pi(\Phi(W_a))$. Thus $W_{a'}\subset N_{\lambda'(1+A)h}(S')$. However, $W\setminus W_{a'}\subset N_{a'}(\D W)\subset N_{A+a'}(S')$. Thus (2) follows.

	For (3), let $Z$ be as in \eqref{eq_Z}. Then $S'\subset \pi(N_{\frac{h}{2}}(Z))\subset N_{\lambda'h}(Z)$. Thus $$\L^n(N_{\la_4h}(S'))\le \L^n(N_{((\la'+\la_4)h)}(Z))\lesssim (\# Z)\cdot h^n\lesssim \frac{1}{h^n} \left(\frac{h}{\rho R} \right)^\kappa R\M(\si)h^n$$
	by \eqref{eq_Z}. Then (3) follows.

	Now we prove (4). As $\Fill(\sigma_{h_1}-\sigma_{h_2})\le \M(\alpha\on\{d_Q\le h_2\})\le  \left(\frac{h_2}{\rho R} \right)^\kappa \cdot(\lambda\M(\si)\rho \cdot R)$, we know
	$\Fill(\pi_\# \si_{h_1}-\pi_\# \si_{h_2})\lesssim \left(\frac{h_2}{\rho R} \right)^\kappa \M(\si)\cdot R$. As $Q$ is an $A$-homology retract, $\Fill(\pi_\#\si_{h_i}-\si'_{h_i})\le A\M(\si_h)\le A\left(\frac{\delta}{\rho}\right)^{\kappa-1}\cdot \la\M(\si)\lesssim \frac{\rho}{R\delta}\cdot \left(\frac{\de R}{\rho R} \right)^\kappa R\M(\si)$ for $i=1,2$. Thus (4) follows by choosing $\ul{R}'$ such that $\ul{R}'\delta=1$. (5) follows from \eqref{eq_sigma0} and the definition of homology retract.
\end{proof}

Let $\delta>0$ and $R\ge \max\{\ul R(Q,C,\delta),\ul{R}'(\delta)\}$ be as in Lemma~\ref{lem_si'}.

For each $\max\{A,\delta R\}\le hR\le \rho R$, define $\si''_h=\si'_{\bar h R}$ where $\bar hR\in [hR,1.1hR]$ is chosen such that $\si_{\bar hR}$ satisfies \eqref{eq_sigma0}. 

Let $\varphi$ be as in Lemma~\ref{lem_diamter control}. By Lemma~\ref{lem_diamter control}, \eqref{eq_sigma0} and Lemma~\ref{lem_si'}, for any $\max\{A,\varphi(R) R,\delta R\}\le hR\le \rho R$,
\bit
\item $\diam(\spt (\si''_h))\le A\phi(h)R$ and $\M(\sigma''_h)\le \lambda_7 \left(\frac{h}{\rho}\right)^{\kappa-1}\M(\si)$;
\item there is an open set $O_h\subset\R^n$ with $\L^n(O_h)\le \la_8 \left(\frac{h}{\rho} \right)^\kappa  R\M(\si)$ such that $\sigma''_h$ is the boundary of an element in $\I_{n,c}(O_h)$;
\item for $\max\{\frac{A}{R},\delta \}\le h_1\le h_2\le \rho$, $\Fill(\si''_{h_1}-\si''_{h_2})\le \la_9 \left(\frac{h_2}{\rho} \right)^\kappa   R\M(\si)$.
\eit

\begin{proof}[Proof of Proposition~\ref{prop_contraction}]
	Let $\M(\si)=tR^{n-1}$. By the isoperimetric inequality that $\Fill(\si)\le D\M(\si)^{\frac{n}{n-1}}= D t^{\frac{n}{n-1}}\cdot R^n$. Suppose
	\begin{equation}
	t<\rho_0:=\left(\frac{\eps}{D}\right)^{n-1}.
	\end{equation}
	Then $\Fill(\si)\le Dt^{\frac{1}{n-1}}R\cdot tR^{n-1}\le \eps R\cdot\M(\si)$. Now we assume $t\ge \rho_0$.
	
	We rescale the metric on $X$ by a factor $\frac{1}{R}$ and rescale $\{\si''_h\}$ accordingly. Apply Lemma~\ref{lem_small_filling} below with $\bar\eps=\rho_0\eps$, $\bar\rho=\rho\eps$, $g(h)=\lambda_7 \left(\frac{h}{\rho}\right)^{\kappa-1}\cdot C$, $f(h)=\max\{\lambda_8,\lambda_9\}\left(\frac{h}{\rho}\right)^\kappa\cdot C$ and $\phi(h)$ as in Lemma~\ref{lem_diamter control}, and choose $\delta$ as in Lemma~\ref{lem_small_filling} which depends only on $\eps,\rho,C,L,A,\mu,X$ and $n$. Take $R$ such that $R\ge \max\{\ul R(Q,C,\delta),\ul{R}'(\delta)\}$. By Lemma~\ref{lem_si'} the family $\{\si''_h\}_{\delta\le h\le \bar\rho}$ satisfies the assumptions of Lemma~\ref{lem_small_filling} (after rescaling by $\frac{1}{R}$). By the choice of $\delta$, there exists $h_0$ with $\bar\rho \ge h_0 \ge \delta $ such that $\Fill(\si''_{h_0})\le \eps\rho_0 R^n\le \eps t R^n$. Then
	$$
	\Fill(\D\al-\si)\le \M(\al\on\{d_Q\le \bar h_0R\})+\Fill(\si_{\bar h_0 R}-\iota(\si''_{h_0}))+\Fill(\iota(\si''_{h_0})).$$
	By \eqref{eq_alpha1}, $$
	\M(\al\on\{d_Q\le \bar h_0R\})\le  \left(\frac{\bar h_0R}{\rho R} \right)^\kappa \cdot(\lambda\M(\si)\cdot R)\le \eps^\kappa\lambda\M(\si)\cdot R.$$
	By Lemma~\ref{lem_si'} (5) and the choice of $\bar\rho$,  $\Fill(\si_{\bar h_0 R}-\iota(\si''_{h_0}))\le \left(\frac{h_0}{\rho} \right)^\kappa \cdot(A\la\M(\si)\cdot R)\le \eps^\kappa\cdot A\la\M(\si)\cdot R$. Note that $\Fill(\iota(\si''_{h_0}))\lesssim \Fill(\si''_{h_0})\lesssim \eps t R^n\lesssim \eps \M(\si)R$. Thus the proposition follows.
\end{proof}

\begin{lem}
	\label{lem_small_filling}
	Let $f$, $g$ and $\phi$ be functions from $\mathbb R_{\ge 0}$ to $\mathbb R_{\ge 0}$ such that $\lim_{x\to 0}f(x)=0$. Take $\bar\rho>0$.
	For any $\bar\eps>0$, there exists $\delta=\delta(\bar\eps,\bar\rho,f,g,\phi)$ such that the following holds. Suppose $\{\zeta_h\}_{\delta\le h\le  \bar\rho}$ is a collection of elements in $\I_{n-1,c}(\R^n)$ such that
	\begin{enumerate}
		\item $\diam(\spt (\zeta_h))\le \phi(h)$;
		\item $\M(\zeta_h)\le g(h)$;
		\item $\Fill(\zeta_{h_1}-\zeta_{h_2})\le f(h_2)$ for any $h_2\ge h_1\ge \delta$;
		\item $\zeta_h$ bounds in an open set $O_h$ with $\L^n(O_h)\le f(h)$ for any $h\ge \delta$. 
	\end{enumerate}
	Then there exists $\bar\rho\ge h_0\ge \delta$ such that $\Fill (\zeta_{h_0})\le \bar\eps$.
\end{lem}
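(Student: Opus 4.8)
The plan is to reduce everything to the canonical filling in $\R^n$ and then estimate by splitting $L^1$-norms across two carefully chosen scales; no real compactness is needed, only the \emph{uniqueness} of fillings in $\R^n$. Recall that $\R^n$ carries no nonzero finite-mass $n$-cycle, so every finite-mass integral filling of an $(n-1)$-cycle $\zeta\in\I_{n-1,c}(\R^n)$ equals the canonical filling $T_\zeta=\bb{u_\zeta}$, with $u_\zeta\in L^1(\R^n,\mathbf Z)$ of bounded variation, $\M(T_\zeta)=\|u_\zeta\|_{L^1}=\Fill(\zeta)$ and $\M(\D T_\zeta)=|Du_\zeta|(\R^n)=\M(\zeta)$.

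First I would record what the hypotheses say about $T_h:=T_{\zeta_h}=\bb{u_h}$ for $h\in[\delta,\bar\rho]$. By $(4)$, $\zeta_h$ bounds a compactly supported integral current whose support lies in the open set $O_h$; by the uniqueness above this current \emph{is} $T_h$, so $\|T_h\|$ is concentrated on $O_h$, i.e.\ $u_h=0$ $\L^n$-a.e.\ on $\R^n\setminus O_h$, and $\L^n(O_h)\le f(h)$. By $(3)$, again by uniqueness (the filling of $\zeta_{h_1}-\zeta_{h_2}$ is $T_{h_1}-T_{h_2}$), we get $\|u_{h_1}-u_{h_2}\|_{L^1}=\M(T_{h_1}-T_{h_2})=\Fill(\zeta_{h_1}-\zeta_{h_2})\le f(h_2)$ for $\delta\le h_1\le h_2\le\bar\rho$. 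Finally, $(2)$ together with the Sobolev inequality $\|u\|_{L^{n/(n-1)}(\R^n)}\le c_n\,|Du|(\R^n)$ for BV functions (read $L^{n/(n-1)}$ as $L^\infty$ when $n=1$) gives, via H\"older, for every Borel set $B\subset\R^n$,
\[
\M(T_h\on B)=\int_B|u_h|\,d\L^n\le\|u_h\|_{L^{n/(n-1)}}\,\L^n(B)^{1/n}\le c_n\,\M(\zeta_h)\,\L^n(B)^{1/n}\le c_n\,g(h)\,\L^n(B)^{1/n}.
\]
(The function $\phi$ and hypothesis $(1)$ are not used in this argument.)

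Now I choose the scales. Since $\lim_{x\to0}f(x)=0$, fix $h_0\in(0,\bar\rho]$ with $f(h_0)\le\bar\eps/2$; this depends only on $f,\bar\eps,\bar\rho$. Since $g(h_0)$ is now a fixed finite number and $f\to0$ at $0$, fix $\delta\in(0,h_0]$ with $c_n\,g(h_0)\,f(\delta)^{1/n}\le\bar\eps/2$; this $\delta$ depends only on $f,g,\bar\eps,\bar\rho$. Given any family $\{\zeta_h\}_{h\in[\delta,\bar\rho]}$ as in the statement, $h_0\in[\delta,\bar\rho]$ and
\[
\Fill(\zeta_{h_0})=\|u_{h_0}\|_{L^1(\R^n)}=\|u_{h_0}\|_{L^1(O_\delta)}+\|u_{h_0}\|_{L^1(\R^n\setminus O_\delta)}.
\]
By the displayed estimate with $h=h_0$, $B=O_\delta$, the first summand is $\le c_n\,g(h_0)\,f(\delta)^{1/n}\le\bar\eps/2$. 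For the second, $u_\delta=0$ $\L^n$-a.e.\ off $O_\delta$, so it equals $\|u_{h_0}-u_\delta\|_{L^1(\R^n\setminus O_\delta)}\le\|u_{h_0}-u_\delta\|_{L^1(\R^n)}\le f(h_0)\le\bar\eps/2$, using $(3)$ with $h_1=\delta\le h_2=h_0$. Adding, $\Fill(\zeta_{h_0})\le\bar\eps$, which is the assertion with $h_0$ the required index.

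The only delicate points are the initial reduction — so that ``$\zeta_h$ bounds in $O_h$'' genuinely pins down the mass measure of the canonical filling — and the use of the \emph{sharp} BV--Sobolev inequality rather than the coarse isoperimetric inequality $\M(T)\le c_n\M(\D T)^{n/(n-1)}$, which would not provide the factor $\L^n(B)^{1/n}$ that the whole estimate rests on. If one prefers to weaken $(4)$ to ``$\zeta_h$ bounds \emph{some} (not necessarily compactly supported) integral current in $O_h$'', the reduction still survives, but as a fallback one can run a compactness argument instead: after translating and rescaling via the diameter bound $(1)$ so that supports lie in a fixed ball, extract a weak limit of the $T_h$ along scales decreasing to $\delta$, upgrade weak to mass convergence using $(3)$, and observe that by $(4)$ the limit is concentrated on an $\L^n$-null set, hence vanishes, contradicting $\Fill(\zeta_h)>\bar\eps$. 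Either way, the heart of the matter is that a small-volume filling which is known to be \emph{the} filling must have small mass once it is compared with a nearby scale.
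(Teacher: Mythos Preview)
Your argument is correct and takes a genuinely different route from the paper's. The paper argues by contradiction: assuming no suitable $\delta$ exists, it extracts a diagonal subsequence $\{\zeta_{n_j,j}\}$ using conditions (1) and (2) for flat compactness and condition (3) to make it $L^1$-Cauchy, then observes via condition (4) that the $L^1$-limit must vanish, contradicting $\Fill\ge\bar\eps$. Your approach is a direct two-scale estimate: fix $h_0$ so that $f(h_0)$ is small, then fix $\delta\le h_0$ so that $c_n g(h_0)f(\delta)^{1/n}$ is small, and split $\|u_{h_0}\|_{L^1}$ over $O_\delta$ and its complement. The key new ingredient is the BV--Sobolev inequality combined with H\"older, which converts the mass bound on $\zeta_{h_0}$ into an $L^{n/(n-1)}$ bound on $u_{h_0}$ and hence controls $\|u_{h_0}\|_{L^1(O_\delta)}$ by the \emph{measure} of $O_\delta$ rather than merely knowing the canonical filling is supported there.

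What your approach buys: it is constructive and quantitative (one can read off $\delta$ explicitly from $f$, $g$, $\bar\eps$, $\bar\rho$), avoids any compactness or diagonal argument, and --- as you note --- does not use hypothesis (1) or the function $\phi$ at all. What the paper's approach buys: it is softer, using only $L^1$-convergence and the fact that the limit is concentrated on a null set, without invoking Sobolev embedding; this makes it slightly more robust to variations of the setup where a sharp Sobolev-type bound is unavailable. Both rely on the same underlying reduction, namely that uniqueness of top-dimensional fillings in $\R^n$ identifies $\Fill(\zeta_h)$ with an $L^1$-norm.
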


\begin{proof}
	We argue by contradiction and suppose there is a monotone decreasing sequence $\de_i\to 0$ such that for each $i$, we have a sequence $\{\zeta^i_h\}_{\de_i\le h\le \bar\rho}$ satisfying conditions (1) - (4) of the lemma and $\Fill(\zeta^i_h)\ge\bar\eps$ for any $i$ and $\de_i\le h\le \bar\rho$. For $j\le i$, let $\zeta_{ij}=\zeta^i_{\de_j}$. For each fixed $j$, conditions (1) and (2) imply that $\zeta_{ij}$ sub-converges in the flat distance as $i\to\infty$. By condition (3), we can apply a diagonal argument to $\{\zeta_{ij}\}_{j\ge i}$ to extract a subsequence $\{\zeta_{n_j,j}\}_{j\ge 1}$ which is Cauchy in the flat distance. Suppose the canonical filling of $\zeta_{n_j,j}$ is represented by $u_j\in L^1(\mathbb E^n,\mathbb R)$. Then $\{u_j\}$ is a Cauchy sequence in the $L^1$-distance and $\spt(u_j)\subset O_{\de_j}$. Suppose $u_i\stackrel{L^1}{\to}u$. Let $E_i=\spt (u_i)$ and $E=\spt (u)$. Condition (4) implies that $\L^n(E_i)\to 0$. Thus $u\cdot\chi_{E\setminus E_i}\stackrel{L^1}{\to}u$. However, $||u\cdot\chi_{E\setminus E_i}||_1\le||u_i-u||_1\to 0$. Thus $u=0$ and $||u_i||_1\to 0$, which contradicts that $\Fill(\zeta_{n_j,j})\ge \bar\eps$ for all $j$.
\end{proof}

\section{Coarse neck decomposition and the main theorem}
\label{sec_neck_decomposition}
In this section we introduce our last general characterization of Morse quasiflats,  the {\em coarse piece property}, which is a ``coarse analogue'' of Definition~\ref{def_piece_decomposition}. We then prove in Theorem~\ref{thm:equivalence intro} and Theorem~\ref{thm:conditions in asymptotic cones} that all the hyperbolic notions on quasiflats we have introduced so far become equivalent under appropriate assumptions on the ambient space.

\begin{definition}
	\label{def_neckp}
	Let $X$ be a metric space with base point $p$.
	An $n$-dimensional quasiflat $Q\subset X$ has the \emph{coarse piece property} if there exist $a>0$ and a function 
	$\ul{R}:(\R_{\ge 0})^3\to\R_{\ge 0}$ such that for given $\eps,b>0$, 
	the following holds for any $R\ge \ul{R}(\eps,b,d(p,Q))$. 
	
	Take $T\in \I_{n,c}(X)$ with $\spt(\D T)\subset N_a(Q)$, $\spt(T)\subset B_p(bR)$ and $\M(T)\le b\cdot R^n$. Suppose there exists $T'\in \I_{n,c}(X)$ such that $\D T=\D T'$ with 
	$\spt(T')\subset N_a(Q)$ and $\M(T')\le b\cdot R^n$. Then $T$ admits a piece decomposition $T=U+V$ such that the following holds.
	\begin{enumerate}
		\item Let $\si=:\partial U-\partial T=-\partial V$. Then $\Fill(\si)\le \eps\cdot R^{n}$.
		\item Let $\omega$ be a minimal filling of $\si$. Then $\Fill(U+\omega-T')\le  \eps\cdot R^{n+1}$.
	\end{enumerate}
\end{definition}

\begin{lem}[Coarse piece decomposition]
	\label{lem_decomposition_lemma}
	Suppose $X$ satisfies {\rm (SCI$_{n}$)}. Let $Q\subset X$ be an $n$-dimensional $(L,A)$-quasiflat with CNP (cf. Definition~\ref{def_tcp}). Let $C$ be the constant in Proposition~\ref{lem_homology}. 
	Let $p\in X$ be a base point. 
	
	For given $\eps,b>0$, there exists $\ul{R}$ depending only on $b,\eps$, $d(p,Q),L,A,$ $n,X$ and the CNP parameter of $Q$ such that the following holds for any $R\ge \ul{R}$. 
	Take $T\in \I_{n,c}(X)$ satisfying $\spt(\D T)\subset N_C(Q)$, $\spt(T)\subset  B_p(bR)$ and $\M(T)\le b\cdot R^n$. Suppose there exists $T'\in \I_{n,c}(X)$ such that $\D T=\D T'$ with $\spt(T')\subset N_C(Q)$ and $\M(T')\le b R^n$. Then $T$ admits a piece decomposition $T=U+V$ such that the following holds.
	\begin{enumerate}
		\item Let $\si=:\partial U-\partial T=-\partial V$. Then $\Fill(\si)\le \eps\cdot R^{n}$.
		\item Let $\omega$ be a minimal filling of $\si$. Then $\spt(U+\omega-T')\subset N_{\eps R}(Q)$. 
		\item $\Fill(U+\omega-T')\le  \eps\cdot R^{n+1}$.
	\end{enumerate}
\end{lem}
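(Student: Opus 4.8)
The statement to prove is Lemma~\ref{lem_decomposition_lemma}, which upgrades the coarse neck property (CNP) of $Q$ into a coarse piece decomposition of a bounded-mass current $T$ together with quantitative control on the ``error'' $U+\omega-T'$. The strategy is to run the familiar slicing/pigeonhole scheme at the level of the distance function $d_Q$ to $Q$: since $T$ has bounded mass, the annular pieces $T\on\{d_Q\in[s,s+ds]\}$ have small mass on average, so by the coarea inequality one can choose a good slicing height $s_0=\rho R$ with $\rho$ small (but bounded below in a controlled way) at which the slice $\si:=\langle T,d_Q,s_0\rangle$ has mass $\le \frac{C'}{\rho R}\M(T)\lesssim \frac{b}{\rho}R^{n-1}$ and $\spt(\si)\subset N_{2\rho R}(Q)$. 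Set $U:=T\on\{d_Q> s_0\}$ and $V:=T\on\{d_Q\le s_0\}$; this is a piece decomposition with $\partial U-\partial T=-\partial V=\si$.

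\textbf{Carrying it out.} First, fix $\eps,b>0$. Choose $\rho=\rho(\eps)\ll1$ — to be pinned down at the end — and apply the coarea inequality to $T$ with respect to $d_Q$ on the window $[\rho R, 2\rho R]$: since $\M(T\on\{ \rho R\le d_Q\le 2\rho R\})\le bR^n$, there is $s_0\in[\rho R,2\rho R]$ with $\M(\si)\le \frac{2bR^{n-1}}{\rho}$, where $\si=\langle T,d_Q,s_0\rangle\in\bZ_{n-1}(X)$; moreover $\spt(\si)\subset \{d_Q=s_0\}\subset N_{2\rho R}(Q)$, and $\si$ bounds $V=T\on\{d_Q\le s_0\}\in\I_{n,c}(B_p(bR))$ with $\M(V)\le bR^n$, whose support lies outside... wait — rather $\spt(V)\subset N_{2\rho R}(Q)$; it is $U=T\on\{d_Q> s_0\}$ that avoids $N_{s_0}(Q)\supset N_{\rho R}(Q)$ while $\spt(U)\subset B_p(bR)$ and $\M(U)\le bR^n$. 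Now $-V$ is a filling of $\si$ with $\spt(-V)\subset N_{2\rho R}(Q)$, and $U$ is a filling of $\si$ with $\spt(U)\cap N_{\rho R}(Q)=\es$ — this is exactly the data to which CNP applies. Hence, for $R\ge\ul R_0(b,\rho,b,b)$ (the CNP threshold), $\Fill(\si)\le C_0\cdot\rho R\cdot\M(\si)\le C_0\cdot\rho R\cdot\frac{2b}{\rho}R^{n-1}=2C_0b\cdot R^n$. This is not yet $\le\eps R^n$, so one iterates: apply the same pigeonhole/CNP step to the current $W:=$ a minimal filling of $\si$ (which now has $\M(W)\le 2C_0bR^n$ and $\spt(W)\subset N_{cR}(\spt(\si))\subset N_{c'\rho R}(Q)$ by Theorem~\ref{thm:isop-ineq}(2)), slicing at a still smaller height to produce $\si_1$ with $\Fill(\si_1)$ geometrically smaller, etc. After finitely many (depending only on $\eps, b, C_0$) rounds — or, more cleanly, by choosing the initial $\rho$ small enough and invoking the CNP \emph{once more} on the isoperimetric filling of $\si$ — one reaches $\Fill(\si)\le\eps R^n$, which is (1). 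For (2) and (3): $U+\omega$ and $T'$ are two fillings of $\partial T$, so $U+\omega-T'$ is a cycle; $\spt(U+\omega)\subset \spt(U)\cup\spt(\omega)$, and since $\spt(\omega)\subset N_{c''\rho R}(Q)$ (isoperimetric filling of $\si$, whose support is near $Q$) while $U=T-V$ has $\spt(U)\subset B_p(bR)$ — here one needs that $U$ is \emph{not} far from $Q$, which requires a separate argument: $T$ and $T'$ are both supported in $N_C(Q)\cup$ (a ball), so one should additionally use Proposition~\ref{lem_homology} / Corollary~\ref{cor_small_fill_close_to_Q} to push $U+\omega-T'$ near $Q$ and bound its filling. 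Concretely, apply Corollary~\ref{cor_small_fill_close_to_Q}: $U+\omega-T'$ is a cycle supported in $N_{O(\rho R)}(Q)$ (after noting $\spt(U)\subset N_{O(R)}(Q)$ from $\M(U)$ bounds plus the density estimate Lemma~\ref{lem:density}, applied to pieces of $T$ far from $Q$), of mass $\le O(b)R^n$, hence $\Fill(U+\omega-T')\le C\cdot O(\rho R)\cdot O(b)R^n\le\eps R^{n+1}$ once $\rho$ is small; and $\spt(U+\omega-T')\subset N_{\eps R}(Q)$ by the density/filling-radius control, giving (2).

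\textbf{The main obstacle.} The delicate point is \emph{not} the slicing — that is routine — but controlling the \emph{location} of $U=T\on\{d_Q>s_0\}$. A priori $U$ could have large chunks sitting at distance $\sim bR$ from $Q$, which would wreck both the neighborhood claim (2) and the filling bound (3). The resolution is that $T$ has mass $\le bR^n$ and lies in a ball $B_p(bR)$, so by the density lower bound for (quasi-)minimal-type currents (Lemma~\ref{lem:density}, or rather the isoperimetric filling machinery) the ``deep'' part of $T$ — the part far from $\spt(\partial T)\subset N_C(Q)$ — cannot have too large a support; more precisely one replaces $U$ up to a controlled homology by its ``shallow'' part, absorbing the deep remainder into $\omega$. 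This bookkeeping — tracking which piece the far-from-$Q$ mass of $T$ gets charged to, and verifying the neighborhood and filling-volume bounds survive — is where the real work lies, and it is what forces $\ul R$ to depend on $d(p,Q)$, $L$, $A$ and the CNP parameters. Everything else (two fillings of a cycle, piece-decomposition additivity $\|T\|=\|U\|+\|V\|$ from the $d_Q$-level-set decomposition, and the final choice of $\rho$ in terms of $\eps,b,C_0,C$) is bookkeeping with the coarea inequality, Theorem~\ref{thm:isop-ineq}, and Corollary~\ref{cor_small_fill_close_to_Q}.
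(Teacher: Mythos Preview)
Your proposal has the right architecture but contains two genuine gaps, one of which is the heart of the matter.

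\textbf{First (minor): the labels $U,V$ are swapped.} In the statement, conclusion (2) asks that $\spt(U+\omega-T')\subset N_{\eps R}(Q)$. This only has a chance if $U$ is the piece \emph{close} to $Q$. The paper takes $U=T\on\{d_Q\le \dot\rho R\}$ and $V=T\on\{d_Q>\dot\rho R\}$; then $\spt(U)\subset N_{\rho R}(Q)$, $\spt(T')\subset N_C(Q)$, and $\spt(\omega)$ is close to $Q$ by the filling-radius bound, so (2) is automatic. With this correct assignment your ``main obstacle'' simply disappears --- there is no need to control deep parts of $T$ via density estimates (and note $T$ is not quasi-minimizing, so Lemma~\ref{lem:density} would not apply anyway). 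Also, with your assignment $\sigma=-\partial V$ would include $-\partial T$, so (1) would fail outright.

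\textbf{Second (the real gap): your slicing does not make $\Fill(\sigma)$ small.} With coarea on the single window $[\rho R,2\rho R]$ you get $\M(\sigma)\le \frac{2b}{\rho}R^{n-1}$, and then CNP gives
\[
\Fill(\sigma)\le C_0\cdot \rho R\cdot \M(\sigma)\le 2C_0 b\, R^n,
\]
with the $\rho$'s cancelling. Your proposed iteration does not help: applying the same step to a minimal filling $W$ of $\sigma$ (mass $\le 2C_0 b R^n$) at a smaller window yields the same cancellation, producing $\Fill(\sigma_1)\lesssim \M(W)$ with no gain. The paper's key device is a \emph{dyadic pigeonhole}: among the $\lceil 1/\delta\rceil$ annuli $\{\tfrac{\rho}{2^k}R\le d_Q\le \tfrac{\rho}{2^{k-1}}R\}$, at least one carries mass $\le \delta\,\M(T)$. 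Slicing inside that annulus at height $\dot\rho R\in[\tfrac{\rho}{2^k}R,\tfrac{\rho}{2^{k-1}}R]$ gives
\[
\dot\rho R\cdot \M(\sigma)\le 2\,\delta\,\M(T),
\]
so CNP yields $\Fill(\sigma)\le 2C_0\delta\,\M(T)\le 2C_0\delta b\,R^n$, which is $\le \eps R^n$ once $\delta$ is chosen small. The price is that $\dot\rho$ can be as small as $\rho\cdot 2^{-1/\delta}$, but this is still a fixed positive fraction of $R$ (depending only on $\eps,b,C_0$), so the CNP threshold $\ul R$ is controlled. With $U$ close and $\omega$ a minimal filling, $\spt(U+\omega-T')\subset N_{(\delta_1+\rho)R}(Q)$ for $\delta_1\sim(\delta b)^{1/n}$, and Corollary~\ref{cor_small_fill_close_to_Q} gives (3).
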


\begin{proof}
	Take small constants $\rho$ and $\delta$ whose value will be determined later. Let $T_{x,y}=T\on\{xR<d_Q<yR\}$. We define a piece $\widehat T$ of $T$ as follows. If $\M(T_{\frac{\rho}{2},\rho})\le \delta\M(T)$, 
	then we define $\widehat T=T_{\frac{\rho}{2},\rho}$. Now assume $\M(T_{\frac{\rho}{2},\rho})>\delta\M(T)$. If $\M(T_{\frac{\rho}{2^2},\frac{\rho}{2}})\le \delta\M(T)$, then we define 
	$\widehat T=T_{\frac{\rho}{2^2},\frac{\rho}{2}}$. If $\M(T_{\frac{\rho}{2^2},\frac{\rho}{2}})>\delta\M(T)$, then we look at $\M(T_{\frac{\rho}{2^3},\frac{\rho}{2^2}})$ and repeat the previous process. 
	This process terminates after at most $\left \lceil{\frac{1}{\delta}}\right \rceil$ steps and we obtain $\widehat T=T_{\frac{\rho}{2^n},\frac{\rho}{2^{n-1}}}$ with 
	$\M(\widehat T)\le \delta \M(T)$ and $1\le n\le \left \lceil{\frac{1}{\delta}}\right \rceil$. Thus there exists $\frac{\rho}{2^n}<\dot \rho<\frac{\rho}{2^{n-1}}$ such that 
	$\si:=\slc{\widehat T,d_Q,\dot\rho\cdot R}$ satisfies $\M(\si)\le \frac{\M(\widehat T)}{\rho R/2^n}\le \frac{2^n \delta}{\rho R}\M(T)$. Let $U:= T\on\{d_Q\le \dot{\rho}\cdot R\}$ and $V=T\on\{d_Q> \dot{\rho}\cdot R\}$. Then
	\bit
	\item $\spt(U) \subset N_{\rho R}(Q)$ and $\spt( V)\cap N_{\rho_1 R}(Q)=\emptyset$ with $\rho_1=\rho\cdot 2^{-\frac{1}{\delta}}$.
	\item $\M(\si)\le2^{\frac{1}{\delta}} \rho^{-1}\delta b\cdot R^{n-1}$.
	\eit
	Let $b'=\max\{b,2^{\frac{1}{\delta}} \rho^{-1}\delta b\}$.
	Let 
	$$\ul{R}:=\sup_{1\le n\le \left \lceil{\frac{1}{\delta}}\right \rceil}\{\ul{R}(b',\frac{\rho}{2^n},b,d(p,Q))\},$$
	where $\ul{R}(b',\frac{\rho}{2^n},b,d(p,Q))$ is as in Definition~\ref{def_tcp}. By Definition~\ref{def_tcp}, 
	\begin{equation}
	\label{eq_mass_of_tau}
	\Fill(\si)\le C_0 \dot{\rho} R\M(\si)\le C_0\frac{\rho R}{2^{n-1}}\frac{2^n \delta}{\rho R}\M(T)=2C_0\delta\M(T)
	\end{equation}
	for $C_0$ depending only on $X$ and $Q$.
	
	Let $\beta=U+\omega-T'$. Then $\M(\beta)\le \M(U)+\M(\omega)+\M(T')\le \M(U)+\M(V)+\M(T')\le \M(T)+\M(T')$. By \eqref{eq_mass_of_tau}, $\spt(\beta)\subset N_{(\delta_1+\rho)R}(Q)$ where 
	$\delta_1=\left(\frac{2C_0\delta b}{D} \right)^{\frac{1}{n}}$ and $D$ is the constant in Lemma~\ref{lem:density}. By Corollary~\ref{cor_small_fill_close_to_Q}
	\begin{align}
	\label{eq_small_filling}
	\Fill(\beta)\le \M(H)\le C(\delta_1+\rho)R(\M(T)+\M(T')).
	\end{align}
	The lemma follows from \eqref{eq_mass_of_tau} and \eqref{eq_small_filling} by choosing $\rho$ and $\delta$ small.
\end{proof}

\begin{thm}
	\label{thm:equivalence}
	Let $Q\subset X$ be an $(L,A)$-quasiflat in a proper metric spaces $X$. Consider the following conditions:
	\begin{enumerate}
		\item $Q$ is $(\mu,b)$-rigid (cf. Definition~\ref{def_rigid}).
		\item $Q$ has super-Euclidean divergence (cf. Definition~\ref{def_divergence2}).
		\item $Q$ has cycle contracting property (cf. Definition~\ref{def_ccp}).
		\item $Q$ has coarse neck property (cf. Definition~\ref{def_tcp}).
		\item $Q$ has coarse piece property (cf. Definition~\ref{def_neckp}).
	\end{enumerate}
	Then the following hold:
	\begin{enumerate}[label=(\alph*)]
		\item $(1)$ and $(2)$ are equivalent if $X$ satisfies {\rm (CI$_{n-1}$)};
		\item $(1),(2)$, $(3)$ and $(4)$ are equivalent if $X$ satisfies {\rm (SCI$_{n-1}$)};
		\item $(1),(2),(3),(4)$ and $(5)$ are equivalent if $X$ satisfies {\rm (SCI$_{n}$)}.
	\end{enumerate}
	Moreover, all the above equivalences hold with uniform control on the parameters (e.g. if $X$ satisfies {\rm (SCI$_{n-1}$)} and $Q$ has super-Euclidean divergence, then $Q$ has CNP with parameters of CNP depends only on parameters 
	of super-Euclidean divergence and $X$).
\end{thm}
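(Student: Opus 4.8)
The plan is to deduce all three sets of equivalences by assembling the propositions of the previous sections into a short cycle of implications, and to supply the remaining ``return'' implications by a single contradiction argument.

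Part (a) is immediate: under (CI$_{n-1}$) the two forms of super-Euclidean divergence agree by Lemma~\ref{lem:rigid vs divergence}, so condition~(2) coincides with Definition~\ref{def_divergence1}, and then $(1)\Leftrightarrow(2)$ is exactly Lemma~\ref{lem_rig_iff_supdiv}. For part (b), assume (SCI$_{n-1}$); then $Q$ is an $A_1$-homology retract in the sense of Definition~\ref{def_tame} by Proposition~\ref{lem_homology} (and the remark following Definition~\ref{def_tame}). Hence Proposition~\ref{prop_contraction} gives $(1)\Rightarrow(3)$, and the argument deriving Proposition~\ref{prop_small_neck1} from Proposition~\ref{prop_contraction} in fact gives $(3)\Rightarrow(4)$ directly: under the hypotheses of CNP the homology-retract bound forces the minimizing mass $M$ of Definition~\ref{def_ccp} to satisfy $M\lesssim\rho R\,\M(\si)$, and combined with the bound $\Fill(\partial\al-\si)\le\eps R\,\M(\si)$ supplied by CCP this yields $\Fill(\si)\le M+\eps R\,\M(\si)\lesssim\rho R\,\M(\si)$, i.e.\ CNP. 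Together with part (a) this produces the chain $(2)\Leftrightarrow(1)\Rightarrow(3)\Rightarrow(4)$, with the quantitative control on parameters required by the ``Moreover'' clause, so to finish (b) it suffices to prove one return implication, say $(4)\Rightarrow(2)$ (equivalently $(4)\Rightarrow(1)$, via Lemma~\ref{lem_rig_iff_supdiv}). For part (c), assume (SCI$_{n}$); then Lemma~\ref{lem_decomposition_lemma} gives $(4)\Rightarrow(5)$, where the constant $C$ there serves as the constant $a$ in Definition~\ref{def_neckp} and conclusion~(2) of the lemma is stronger than required. Thus it remains only to prove $(5)\Rightarrow(2)$, by the same method as $(4)\Rightarrow(2)$.

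To prove the return implication ``CNP $\Rightarrow$ super-Euclidean divergence'' I would argue by contradiction using Definition~\ref{def_divergence1}. Failure produces $D_0>1$, points $x_k\in\R^n$, scales $r_k\to\infty$, cubical cycles $\vp_k\in\bZ_{n-1,c}(A_{x_k}(r_k/D_0,D_0r_k))$ with $[\vp_k]\neq0$ in $\tilde H_{n-1}(\R^n\setminus\B{x_k}{r_k/D_0})$ and $\M(\vp_k)\le D_0r_k^{n-1}$, together with fillings $\tau_k\in\I_{n,c}(A_{p_k}(r_k/(2LD_0),2LD_0r_k))$ of $\iota(\vp_k)$ with $\M(\tau_k)\le D_0r_k^n$, where $p_k=\Phi(x_k)$. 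By Proposition~\ref{prop_chain_map}(5) and the quasi-retraction $\pi$, the nontriviality of $[\vp_k]$ is contradicted as soon as one produces, at bounded cost, a filling of $\iota(\vp_k)$ supported in a thin tube $N_{\eps_0 r_k}(Q)$ with $\eps_0$ small depending only on $L,D_0$ and the structural constants: pushing such a filling forward by $\pi$ and correcting by the small homology of Proposition~\ref{prop_chain_map}(5) yields an $\R^n$-filling of $\vp_k$ avoiding $\B{x_k}{c r_k}$ for some $c=c(L,D_0)>0$ (the quasi-isometry estimates for $\Phi$ turn ``within $\eps_0 r_k$ of $Q$ and distance $\ge r_k/(2LD_0)$ from $p_k$'' into ``distance $\gtrsim r_k$ from $x_k$'' after applying $\pi$). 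To build such a near-$Q$ filling, first replace $\tau_k$ by a minimizer or $\Theta$-regularization (Theorem~\ref{thm:plateau}, Proposition~\ref{prop_approx}), whose support is confined near $\iota(\vp_k)$, hence near $Q$, by the density bounds of Lemma~\ref{lem:density}; then slice $\tau_k$ by $d_Q$ at a height $h_k$ chosen by coarea so that the slice $\pp{\tau_k,d_Q,h_k}$ has small mass, note that $\tau_k\on\{d_Q>h_k\}$ is a filling of that slice disjoint from $N_{h_k}(Q)$, apply CNP to bound its filling, and glue in a controlled filling from Theorem~\ref{thm:isop-ineq} to push the whole chain into a thinner tube. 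For $(5)\Rightarrow(2)$ one runs the parallel argument with the coarse piece property applied to $T=\tau_k$ and the alternative near-$Q$ filling $T'=\iota(\nu_k)$, where $\nu_k$ is the canonical filling of $\vp_k$ in $\R^n$.

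The hard part is the bookkeeping of scales in this last step: Theorem~\ref{thm:isop-ineq}(2) only places the corrective filling of a slice within $\sim\M(\text{slice})^{1/(n-1)}$ of $Q$, and for a general $\tau_k$ this is comparable to $r_k$ rather than to $\eps_0 r_k$, so one application of CNP does not by itself reach the required thin tube. This is exactly what the relative-monotonicity machinery of Section~\ref{sec_cycles_close_to_Morse_quasiflats} is designed to remedy: passing to the relative minimizer $\al$ connecting the slice to $Q$ and applying Lemma~\ref{lem_alpha_control} gives $\M(\al\on\{d_Q\le h\})\lesssim(h/(\rho r_k))^{\kappa}\,\M(\al)$, so that, via coarea once more, the slice mass and hence the size of its minimal filling become as small as one wishes in terms of the cut-off height; feeding this estimate back into the construction closes the loop. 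Finally, the ``Moreover'' statement holds because each forward implication invoked (Lemma~\ref{lem:rigid vs divergence}, Lemma~\ref{lem_rig_iff_supdiv}, Proposition~\ref{prop_contraction}, Proposition~\ref{prop_small_neck1}, Lemma~\ref{lem_decomposition_lemma}) outputs parameters depending only on its inputs and on $X$, and the return implications, being existence statements about the relevant structure, transport these dependencies around the cycle.
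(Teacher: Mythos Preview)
Your forward implications are fine and match the paper: (a) is Lemma~\ref{lem_rig_iff_supdiv}; under (SCI$_{n-1}$) the chain $(1)\Rightarrow(3)\Rightarrow(4)$ follows from Proposition~\ref{lem_homology}, Proposition~\ref{prop_contraction}, and the remark in Section~\ref{subsec_cnp_ccp}; and $(4)\Rightarrow(5)$ is Lemma~\ref{lem_decomposition_lemma}.

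The gap is in your return implications. Your plan for $(4)\Rightarrow(2)$ is to slice $\tau_k$ by $d_Q$, apply CNP, and glue. You correctly observe that one naive application leaves the corrective filling $\omega_k$ with support radius comparable to $r_k$, too coarse to keep it away from $p_k$. Your proposed fix---invoking the relative-monotonicity of Lemma~\ref{lem_alpha_control}---is misplaced. That lemma controls the mass profile of the \emph{relative minimizer} $\al$ connecting $\si$ to $Q$, which is exactly the device used to go from rigidity to CCP in Section~\ref{sec_cycles_close_to_Morse_quasiflats}; re-importing it for the return direction is at best circular bookkeeping, and in any case your sketch does not explain how control of $\M(\al\on\{d_Q\le h\})$ yields a filling of $\iota(\vp_k)$ that simultaneously sits in $N_{\eps_0 r_k}(Q)$ and avoids $B_{p_k}(cr_k)$. (Your suggestion to ``replace $\tau_k$ by a minimizer'' is also problematic: a mass-minimizing filling of $\iota(\vp_k)$ has no reason to avoid $p_k$.)

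The paper closes the loop differently and more simply. For $(4)\Rightarrow(1)$ it applies Lemma~\ref{lem_decomposition_lemma} directly to $T=\tau$, $T'=\iota(\nu)$ (only parts (1) and (2) are needed, so (SCI$_{n-1}$) suffices), producing $\alpha:=U+\omega$ with $\D\alpha=\iota(\vp)$, $\spt(\alpha)\subset N_{\eps'R}(\spt\tau)$, and $\spt(\alpha-T')\subset N_{\eps R}(Q)$. The key step you are missing is then purely algebraic: push $\alpha$ and $T'$ to $\R^n$ via $\pi$, deform to cubical chains $\alpha_\square$, $T'_\square$; since these are top-dimensional with the same boundary, $\alpha_\square=T'_\square$. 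This forces $\spt(\iota(T'_\square))\subset N_{\text{small}}(\spt\tau)$, which together with Proposition~\ref{prop_chain_map}(4) gives $\Phi(W_b)\subset N_{\mu(r)}(\spt\tau)$. No second appeal to Section~\ref{sec_cycles_close_to_Morse_quasiflats} is needed.

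For $(5)\Rightarrow(1)$ your sketch is even vaguer; the coarse piece property only gives $\Fill(U+\omega-T')\le\eps R^{n+1}$, which by itself says nothing about supports. The paper uses one further ingredient you do not mention: $T'=\iota(\nu)$ is $(\Lambda,a)$-quasi-minimizing by Proposition~\ref{prop_chain_map}(3), so the filling-density estimate Lemma~\ref{lem:fill-density} forces $d(x,\spt(U+\omega))\lesssim\eps^{1/(n+1)}R$ for $x\in\spt(T')$ away from $\spt(\D T')$, which yields rigidity.
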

In the following proof, $a_1,a_2,\ldots$ will be constants depending only on $L,A,n$ and $X$.
\begin{proof}
	(a) is Lemma~\ref{lem_rig_iff_supdiv}. For (b), $(1)\Rightarrow(3)$ is Proposition~\ref{lem_homology} and Proposition~\ref{prop_contraction}. $(3)\Rightarrow(4)$ is already explained in Section~\ref{subsec_cnp_ccp}. Now we show $(4)\Rightarrow(1)$. 
	Let $\Phi,\varphi,\tau$ and $\nu$ be as in Definition~\ref{def_rigid}. We apply Lemma~\ref{lem_decomposition_lemma} with $T'=\iota(\nu)$, $T=\tau$ and $\D T=\iota(\varphi)$ to obtain a coarse piece decomposition $\tau=U+V$ as in Lemma~\ref{lem_decomposition_lemma}. Denote by
	$\omega$ a minimal filling of $\D V$. Note that we only used condition (SCI$_{n-1}$) so far as one only needs (SCI$_{n}$) for Lemma~\ref{lem_decomposition_lemma} (3).
	
	Let $\pi:X\to \R^n$ be a Lipschitz continuous quasi-retraction associated to $Q$ as in Section~\ref{subsec_cubulated_quasiflats}. Let $\alpha=U+\omega$. As $\Fill(\D V)\le \eps R^n$, $\spt(\omega)\subset N_{\eps'}(\spt(\D \om))$ where $\eps'=\left(\frac{\eps}{D}\right)^{\frac{1}{n}}R$. Thus 
	\begin{equation}
	\label{eq_equi_1}
	\spt(\alpha)\subset N_{\eps'}(\spt(U))\subset N_{\eps'}(\spt (T)).
	\end{equation}
	Let $h\in \I_{n,c}(\R^n)$ be such that $\D h=\D\pi_\#\alpha-\beta$, $\beta$ is cubical and $\spt(h)\subset N_{a_1}(\spt(\D\pi_\#\alpha))$. Set $\alpha_\square=\pi_\#\alpha+h$ and $T'_\square=\pi_\#T'+h$. Then $\D\alpha_\square=\D T'_\square$ and $\spt(\alpha_\square)\subset N_{a_1}(\spt(\pi_\#\alpha))$. Note that $\alpha_\square=T'_\square$ as they are top dimensional. Let $\bar\alpha=\iota(\alpha_\square)$ and $\bar T'=\iota (T'_\square)$. Then 
	\begin{equation}
	\label{eq_equi_2}
	\spt(\bar T')=\spt(\bar\alpha)\subset N_{a_2\eps R}(\spt(\alpha)).
	\end{equation}
	Let $W_a$ be as in Definition~\ref{def_rigid}. Let $W'=\spt(T'_\square)$ and $W'_a=\{x\in W'\mid d(x,\spt(\D T'_\square))>a\}$. We find $a_3$ and $a_4$ such that 
	\begin{equation}
	\label{eq_equi_3}
	W_{a_3}\subset W'_{a_4}\ \text{and}\ \Phi(W'_{a_4})\subset N_{a_5}(\spt(\bar T')).
	\end{equation}
	The first $\subset$ follows from $T'_\square=\pi_\#\iota(\nu)+h$, and the second $\subset$ is from Proposition~\ref{prop_chain_map} (4). Now \eqref{eq_equi_1}, \eqref{eq_equi_2} and \eqref{eq_equi_3} imply $Q$ is $\mu$-rigid.
	
	Now we prove (c). $(4)\Rightarrow(5)$ is Lemma~\ref{lem_decomposition_lemma}. It remains to show $(5)\Rightarrow(1)$. Let $\varphi,\tau,W$ and $\nu$ be as in Definition~\ref{def_rigid}. Let $T'=\iota(\nu)$, $T=\tau$ and $\D T=\iota(\varphi)$. Let $T=U+V$ and $\omega$ be as in Definition~\ref{def_neckp}. By Proposition~\ref{prop_chain_map}, $T'$ is $(a_6,a_7)$-quasi-minimizing mod
	$N_{a_7}(\Phi((\spt (\D W))^{(0)}))$. Let $x\in \spt(T')$ such that $$B_x(5a_7)\cap N_{a_7}(\Phi((\spt (\D W))^{(0)}))=\emptyset$$ and let $r_0=d(x,\spt(U+\omega))$. Then by Definition~\ref{def_neckp} (2) and the filling density estimate Lemma~\ref{lem:fill-density}, $a_8(r_0)^{n+1}\le \eps R^{n+1}$. This together with Proposition~\ref{prop_chain_map} (4) and \eqref{eq_equi_1} implies that $Q$ is $\mu$-rigid.
\end{proof}

\begin{remark}
	\label{rmk:quasidisk equivalence}
	The formulation and proof of Theorem~\ref{thm:equivalence} extends to the case of quasidisks instead of quasiflats.
\end{remark}

The following is a consequence of Proposition~\ref{prop_cone_conditions}, Proposition~\ref{prop_bridge} and Theorem~\ref{thm:equivalence}.
\begin{thm}
	\label{thm:conditions in asymptotic cones body}
	Suppose $X$ is a proper metric space satisfying {\rm (SCI$_{n}$)}. Let $\mathcal{Q}$ be a family of $n$-dimensional quasiflats with uniform quasi-isometry constants. Suppose in addition that any asymptotic cone of $X$ satisfies {\rm (SCI$_{n}$)}, {\rm (EII$_{n+1}$)} and coning inequalities up to dimension $n$ for singular chains, Lipschitz chains and compact supported integral currents. Then the following conditions are equivalent and they are all  equivalent to each of the conditions in Theorem~\ref{thm:equivalence} and with uniform parameters for each element of $\mathcal{Q}$.
	\begin{enumerate}
		\item  $\mathcal{Q}$ has the asymptotic piece property (Definition~\ref{def_piece_decomposition}).
		\item  $\mathcal{Q}$ has the asymptotic neck property (Definition~\ref{def_neck_decomposition}).
		\item  $\mathcal{Q}$ has the asymptotic weak neck property (Definition~\ref{def_weak_neck_decomposition}).
		\item $\mathcal{Q}$ has the asymptotic full support property (Definition~\ref{def_full_support}) with respect to reduced singular homology.
		\item $\mathcal{Q}$ has the asymptotic full support property with respect to  reduced homology induced by  Ambrosio-Kirchheim currents.
	\end{enumerate}
	All asymptotic cones here are taken with base points in elements of $\mathcal{Q}$. 
\end{thm}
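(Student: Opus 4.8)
The strategy is to assemble this from pieces already in place, organizing the equivalences into two blocks: (i) the five asymptotic-cone conditions (1)--(5) are equivalent to each other, and (ii) any one of them is equivalent to super-Euclidean divergence (which by Theorem~\ref{thm:equivalence} is equivalent to all the in-space conditions there). For block (i), fix an asymptotic cone $X_\om$ with base point in $\mathcal Q$ and an ultralimit $Q_\om$ of elements of $\mathcal Q$; by the usual ultralimit computation $Q_\om$ is a bilipschitz $n$-flat in $X_\om$ and, by hypothesis, $X_\om$ satisfies $(\mathrm{SCI}_n)$. Then Proposition~\ref{prop_cone_conditions} applies verbatim to $Q_\om\subset X_\om$: it gives the equivalence of the piece property, the neck property, the weak neck property, and full support with respect to $\ha_n$; and since $X_\om$ additionally satisfies $(\mathrm{EII}_{n+1})$ and coning inequalities up to dimension $n$ for singular, Lipschitz, and compactly supported currents, the last statement of Proposition~\ref{prop_cone_conditions} (which rests on Proposition~\ref{prop_equal_homology}) upgrades full support w.r.t. $\ha_n$ to full support w.r.t. reduced singular homology, and also to full support w.r.t. $\ha_{n,c}$ via the isomorphism $\ha_{n,c}\to\ha_n$. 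Quantifying over all such $X_\om$ gives the asymptotic versions (1)--(5) of the theorem, and the equivalences are parameter-uniform because Proposition~\ref{prop_cone_conditions}'s constants depend only on $n$, the $(\mathrm{SCI}_n)$-constants, and the bilipschitz constant of $Q_\om$, which is controlled by the uniform QI constants of $\mathcal Q$.

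For block (ii), the bridge is Proposition~\ref{prop_bridge}: under $(\mathrm{SCI}_{n-1})$ (implied by $(\mathrm{SCI}_n)$) for $X$, uniform QI constants for $\mathcal Q$, and the hypothesis that in every relevant asymptotic cone the homology theories $\tilde H^{AK}_{*,c}$ and $\tilde H^{L}_{*}$ are isomorphic (which here follows from Proposition~\ref{prop_equal_homology}(1) applied in $X_\om$, using the assumed coning inequalities for singular, Lipschitz, and compact currents), each element of $\mathcal Q$ has $\de$-super-Euclidean divergence for a uniform $\de$ if and only if $\mathcal Q$ has the asymptotic full support property with respect to $H^L_*$. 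Combining with the identification $\tilde H^L_* \cong \tilde H_*$ in $X_\om$ (again Proposition~\ref{prop_equal_homology}(1)), this says condition (4) of the present theorem is equivalent to super-Euclidean divergence in $X$, with uniform parameters. By block (i), conditions (1)--(5) are all equivalent to (4), hence all equivalent to super-Euclidean divergence; and by Theorem~\ref{thm:equivalence}(c), since $X$ is proper and satisfies $(\mathrm{SCI}_n)$, super-Euclidean divergence is in turn equivalent to $(\mu,b)$-rigidity, CCP, CNP, and the coarse piece property, all with uniform parameters. This closes the cycle.

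The one point requiring care — and the main obstacle — is the bookkeeping of which coning/isoperimetric hypotheses are needed where, and verifying that the stated hypotheses on $X$ and its asymptotic cones suffice to invoke each cited result. Proposition~\ref{prop_bridge} is phrased for a single isomorphism of homology theories in the cone; one must check that the theorem's hypotheses ($(\mathrm{SCI}_n)$, $(\mathrm{EII}_{n+1})$, and coning inequalities for the three chain classes, all in every relevant asymptotic cone) genuinely imply both the isomorphism $\tilde H^{AK}_{*,c}\cong\tilde H^L_*$ needed by Proposition~\ref{prop_bridge} and the isomorphism $\tilde H^L_*\cong\tilde H_*$ needed to match condition (4), via Proposition~\ref{prop_equal_homology}; this is a routine but not entirely transparent check, since the coning hypotheses on the cone are about $X_\om$, not $X$, and Lemma~\ref{lem_suplindiv_implies_fullsup} and Lemma~\ref{lem_fullsup_implies_suplindiv} (the two halves of Proposition~\ref{prop_bridge}) use $(\mathrm{SCI}_{n-1})$ resp.\ $(\mathrm{EII}_{n-1})$ on $X$ itself. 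Beyond that, everything is assembly, plus the standard remark that all the asymptotic conditions are taken with base points in $\mathcal Q$, matching the base-point convention in the lemmas feeding into Proposition~\ref{prop_bridge}. I would close by noting, as the excerpt does, that the last statement follows from Proposition~\ref{prop_equal_homology}.
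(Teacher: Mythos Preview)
Your proposal is correct and follows essentially the same approach as the paper, which simply states that the theorem is a consequence of Proposition~\ref{prop_cone_conditions}, Proposition~\ref{prop_bridge}, and Theorem~\ref{thm:equivalence}. You have accurately unpacked how these three results fit together, including the hypothesis-checking needed to invoke Proposition~\ref{prop_equal_homology} for the homology isomorphisms in the asymptotic cones.
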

The assumptions of Theorem~\ref{thm:conditions in asymptotic cones body} are satisfied if any asymptotic cone of $X$ admits a Lipschitz combing (Remark~\ref{rmk_combing}).

\begin{remark}
	Similar to Definition~\ref{def_rigid}, we can also formulate pointed versions of Definitions \ref{def_divergence2}, \ref{def_tcp} and \ref{def_neckp}. The proofs of Theorem~\ref{thm:equivalence} and Theorem~\ref{thm:conditions in asymptotic cones body} extend to the pointed setting (in Theorem~\ref{thm:conditions in asymptotic cones body} we only consider asymptotic cones with constant base point). 
\end{remark}

\section{Stability of Morse quasiflats}
\label{sec_Morse_Lemma}
In this section we prove several stability properties of Morse quasiflats, including a version of the Morse Lemma. These are proven in a general setting, we only require the ambient metric space to satisfy coning inequalities. However, we also provide alternative simpler proofs in Section~\ref{subsec:work with cones} under the stronger assumption that the ambient metric space has a Lipschitz bicombing.

Recall that $(\mu,b)$-rigid quasidisks are defined in Definition~\ref{def_rigid} and Remark~\ref{rmk_rigid quasidisks}. Note that the chain map $\iota$ from Proposition~\ref{prop_chain_map} and 
the chain projection from Definition~\ref{def_chain_projection} can be defined for quasidisks instead of quasiflats.

In the following we will assume that the domain of a quasidisk is an $n$-dimensional cube in Euclidean space $\R^n$. The cube is subdivided into smaller cubes whose size depends on the quasi-isometry constants. 
The domain is endowed with the $\ell^\infty$-metric rather than $\ell^2$-metric, so that the level sets of the distance function to a point is the boundary of some rectangle.
\begin{lemma}
	\label{lem_homotopy_disk}
	Let $X$ be a complete metric space satisfying the cone inequality condition ~{\rm (CI$_{k}$)}. Let $D$ and $D'$ be $n$-dimensional quasidisks in $X$ with $D\subset N_r(D')$. Let $\la>0$ be a given constant. 
	Then there exists $C$ depending only on $X,n,\lambda$ and the quasi-isometry constants of $D$ and $D'$ such that the following holds.
	
	Suppose $\dd$ is the domain of $D$. Take a $k$-dimensional ($k\le n$) rectangular subcomplex $P$ of $\dd$ such that ratio of the longest side and the shortest side is $\le \la$. Let $\bar\si$ be a cubical $k$-chain which represents the relative fundamental class of $P$. Put $\si=\iota(\bar\si)$. Let $\si'=(\si)_{D'}$ be the chain projection as in Definition~\ref{def_chain_projection}. We assume $\D \si'=(\D\si)_{D'}$ (this can always be arranged). Then there exist $\tau\in \I_{k+1,\cs}(X)$ and $\beta\in\I_{k,\cs}(X)$ such that 
	\begin{enumerate}
		\item $\D\tau=\si'-\si+\beta$;
		\item $\M(\tau)\le Cr\cdot\M(\si)$ and $\spt(\tau)\subset N_{Cr}(\spt(\si))$;
		\item $\M(\beta)\le Cr\cdot\M(\D\si)$ and $\spt(\beta)\subset N_{Cr}(\spt(\D\si))$.
	\end{enumerate}
\end{lemma}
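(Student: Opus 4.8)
The statement is a ``comparison up to controlled homology'' for the image under $\iota$ of a subcomplex $P$ of the domain of $D$ and its chain projection onto $D'$. The plan is to build $\tau$ and $\beta$ skeleton by skeleton, exactly as the chain map $\iota$ of Proposition~\ref{prop_chain_map} is built, using the coning inequality (CI$_k$) (together with Theorem~\ref{thm:isop-ineq}) to fill in cell by cell. The hypothesis $D\subset N_r(D')$ means that each cell $Q$ of $P$ has $\iota(Q)$ supported within distance $O(r)$ of $D'$; applying the quasi-retraction $\pi'$ associated to $D'$ (Definition~\ref{def_retraction}) and then $\iota'$ (the chain map for $D'$) sends $\iota(Q)$ to a chain supported within $O(r)$ of itself and of $D'$. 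So the two chains $\si=\iota(\bar\si)$ and $\si'=(\si)_{D'}$ differ, cell by cell, only by a bounded (size $O(r)$) perturbation.

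\textbf{Key steps in order.} First, set up the combinatorics: barycentrically refine $P$ so that its cells, under $\iota$ and under the chain-projection construction in Definition~\ref{def_chain_projection}, have images of diameter comparable to $r$ (up to constants depending on the quasi-isometry data of $D$, $D'$ and on $\la$; this is where the side-ratio bound $\le\la$ is used, to keep the number of cells in each stratum and their diameters under uniform control). Second, for each top-cell $Q$ of $P$ of dimension $k$, one has $\M(\iota(Q))\lesssim r^k$ and both $\iota(Q)$ and its projected version lie in a ball of radius $O(r)$; by (CI$_k$) fill the difference by a chain of mass $\lesssim r\cdot r^k$ supported in an $O(r)$-neighborhood. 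These local fillings, assembled over all cells of $P$, give $\tau$ with $\M(\tau)\lesssim r\,\M(\si)$ and $\spt(\tau)\subset N_{Cr}(\spt(\si))$ — here one sums the local mass bounds and uses that $\M(\si)\gtrsim (\#\text{cells})\cdot r^k$ up to constants, or more simply controls everything cell-by-cell against the piece $\M(\iota(Q))$. Third, the boundary identity: on $\partial P$ the two constructions give $\si|_{\partial}$ and $\si'|_\partial$, but we have chosen $\si'$ so that $\D\si' = (\D\si)_{D'}$; the discrepancy $\beta := \D\tau - \si' + \si$ is then supported (by construction of the local fillings) near $\D\si$, with $\M(\beta)\lesssim r\,\M(\D\si)$ and $\spt(\beta)\subset N_{Cr}(\spt(\D\si))$. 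Equivalently: build $\tau$ first on the $(k{-}1)$-skeleton of $\partial P$ (this produces $\beta$), then extend over the interior cells; this is the standard skeleton-by-skeleton argument and is precisely how Proposition~\ref{prop_chain_map}(5) is proved.

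\textbf{Main obstacle.} The routine part is the inductive filling; the part requiring care is tracking that every constant depends \emph{only} on $X$, $n$, $\la$ and the quasi-isometry constants of $D,D'$, and not on $r$ or on the size of $P$. This forces the subdivision scale and all local fillings to be chosen at a fixed scale (a bounded multiple of the cube scale $2LA$, rescaled by $r$), so that the coning constant $c_0$ of (CI$_k$) enters only through finitely many applications per cell; the side-ratio hypothesis $\le\la$ is exactly what guarantees this uniform behaviour across the whole complex $P$. A secondary technical point is the assertion ``we may assume $\D\si' = (\D\si)_{D'}$'': this is arranged by applying the chain-projection construction of Definition~\ref{def_chain_projection} first to $\D\bar\si$ (a codimension-one subcomplex of $P$) and then choosing the interior projection to fill it compatibly, using the homology $h$ from Proposition~\ref{prop_chain_map}(5); one absorbs the resulting interior correction into $\tau$, which only changes the constant $C$.
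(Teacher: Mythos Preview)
Your proposal is correct and follows essentially the same approach as the paper: subdivide $P$ (the paper takes a new cubical subdivision of $\dd$ at scale comparable to $r$, you say barycentrically refine---either works) and then build $\tau$ and $\beta$ by the standard skeleton-by-skeleton induction using (CI$_k$), with support control coming from the filling-radius bound of Theorem~\ref{thm:isop-ineq}(2). The paper's own proof is only two sentences and sketches exactly this.
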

Taking a new cubical subdivision of $\dd$ such that the size of cubes are comparable to $r$. The proof is  straight forward by induction on skeleta with respect to this new subdivision. 
This together with bounding filling radius in terms of mass (cf. Theorem~\ref{thm:isop-ineq} (2)) gives the extra control on the location of $\spt(\tau)$ and $\spt(\beta)$ in (3) and (4).

\begin{lemma}
	\label{lem_quasi_disk_close}
	Let $D$ and $D'$ be $n$-dimensional $(L,A)$-quasidisks in a metric space $X$. Suppose $d_H(\D D,\D D')\le A_1$ and $D\subset N_{A_2}(D')$. Then there exists $C$ depending only on $L,A,A_1,A_2$ and $n$ such that $d_H(D,D')\le C$.
\end{lemma}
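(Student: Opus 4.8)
The statement I want to prove is Lemma~\ref{lem_quasi_disk_close}: if $D,D'$ are $(L,A)$-quasidisks with $d_H(\D D,\D D')\le A_1$ and $D\subset N_{A_2}(D')$, then $d_H(D,D')\le C$ for $C=C(L,A,A_1,A_2,n)$. Since the containment $D\subset N_{A_2}(D')$ is given, the only thing to establish is the reverse containment $D'\subset N_C(D)$. The idea is the classical ``coarse degree'' argument adapted to this setting: pick a point $y\in D'$ which is far from $\partial D'$ (points near $\partial D'$ are automatically within $A_1+A_2$ or so of $\partial D\subset D$, using $d_H(\partial D,\partial D')\le A_1$ together with the fact that $\partial D\subset D$), and show that if $y$ were at large distance from $D$, then the ``fundamental cycle'' of a large subdisk of $D$ could be pushed, via a retraction onto $D'$, to miss $y$, contradicting the fact that this cycle still has nonzero winding number around $y$ in the chain-theoretic sense.

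\textbf{Key steps.} First I would reduce to the interior: let $y\in D'$ with $d(y,\partial D')\ge \Delta$ for a threshold $\Delta$ to be chosen depending only on the stated constants; the complementary case $d(y,\partial D')<\Delta$ is handled directly since $d_H(\partial D,\partial D')\le A_1$ forces $d(y,\partial D)\le \Delta+A_1$, and $\partial D\subset D$. Second, suppose for contradiction $d(y,D)>C$ for a large $C$. Using the domain cubulation of $D$ (endowed with the $\ell^\infty$-metric, as set up just before the lemma) and the chain map $\iota$ of Proposition~\ref{prop_chain_map} applied to quasidisks (Remark~\ref{rmk:quasidisk}), represent a large central rectangular subcomplex $P\subset \mathfrak D$ with $\iota$ of its relative fundamental cycle $\bar\si$, so that $\si=\iota(\bar\si)$ has $\M(\si)\lesssim r^{n-1}$ and $\spt(\si)$ lies in a controlled neighbourhood of $\Phi(\partial P)$, where $r$ is the (large) size of $P$. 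Third, apply Lemma~\ref{lem_homotopy_disk} with the pair $(D,D')$ — legitimate because $D\subset N_{A_2}(D')$ — to obtain $\tau,\beta$ with $\D\tau=\si'-\si+\beta$, $\M(\tau)\lesssim A_2\cdot\M(\si)$, $\spt(\tau)\subset N_{CA_2}(\spt(\si))$, and $\beta$ supported near $\spt(\D\si)$, where $\si'=(\si)_{D'}$ is the chain projection into a neighbourhood of $D'$. Fourth, observe that $\si'$ and $\beta$ are both supported in a bounded neighbourhood of $D'$ near $\partial D'\cup \Phi(\partial P)$, in particular (for $r$ and $\Delta$ chosen appropriately relative to the position of $y$) they avoid a definite ball $B_y(1)$; and by the controlled support of $\tau$, combined with $d(y,D)>C$ with $C$ large compared to $CA_2+$ (the mass-to-filling-radius constant from Theorem~\ref{thm:isop-ineq}(2)), the chain $\tau$ also avoids $B_y(1)$ provided $\spt(\si)$ itself does — which it does since $\spt(\si)$ sits near $\Phi(\partial P)\subset D$. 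Hence $\si=\si'+\beta-\D\tau$ shows $\si$ is homologous, rel the complement of $B_y(1)$, to a cycle avoiding $y$; but $\si$, being $\iota$ of the relative fundamental cycle of a rectangle whose interior maps across $y$'s location in $D$...

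\textbf{The main obstacle.} The delicate point is producing the contradiction: I need to know that the pushed-forward cycle $\si$ genuinely ``surrounds'' $y$ in a way that is incompatible with being nullhomologous in $X\setminus\{y\}$ after the homotopy. In the setting at hand there is no embedded disk, only a quasidisk, so ``surrounds'' has to be made precise homologically. The clean way is to run the argument instead on the side of $D$: use that $\Phi$ restricted to the cube is a quasi-isometric embedding, so for $r$ large the nearest-point ``projection'' $\pi$ onto $\R^n$ (Definition~\ref{def_retraction}) sends a controlled neighbourhood of $D$ to a set that, after $\pi$, the image cycle $\pi_\#\si$ is homologous in $\R^n$ to $\partial$ of the fundamental class of a large cube containing $\pi(y_0)$ where $y_0$ is the point of $D$ ``behind'' $y$ — but here the issue is that $y$ may not be in a controlled neighbourhood of $D$ at all, which is exactly what we are trying to rule out. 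So the correct structure is: fix $r$ large enough that $\Phi$ of the central subcube $\mathrm{Int}(P)$, after projecting to $D'$ via the chain projection, has nonzero degree onto a neighbourhood of $y$ in $D'$ — this uses that $D'$ is a quasidisk of dimension $n$ and $y$ is in its interior, so $H_n(D',D'\setminus\{y\})\ne 0$ detects $\si'$ — and then derive that $\si'$ cannot be filled away from $y$ by a chain of controlled size, contradicting the filling $\tau+\mathrm{(filling\ of\ }\beta)$ produced above, whose support misses $B_y(1)$ once $d(y,D)$ exceeds the explicit constant coming from Lemma~\ref{lem_homotopy_disk}, Theorem~\ref{thm:isop-ineq}(2), and the chain-projection bounds in Definition~\ref{def_chain_projection}. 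Choosing $C$ larger than all these accumulated constants closes the argument; most of the work is bookkeeping of constants, and the one genuinely substantive input is the homological nontriviality of the chain projection of a large central subdisk, which follows from the quasidisk structure of $D'$ and the fact that the composite ``$\Phi$ then project to $D'$'' is, up to bounded error, the identity near $y$.
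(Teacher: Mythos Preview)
Your approach and the paper's share the same underlying idea --- a degree/surjectivity argument --- but the paper executes it far more directly and without the heavy machinery you invoke.

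The paper's proof works entirely at the level of the \emph{domains}. Writing $\mathfrak{D},\mathfrak{D}'$ for the domains of $D,D'$ (cubes in $\R^n$), one defines $f:\mathfrak{D}\to\mathfrak{D}'$ by: send $p\in\mathfrak{D}$ to its image $\hat p\in X$, then to a nearest point $\hat p'\in D'$ (using $D\subset N_{A_2}(D')$), then to a preimage $p'\in\mathfrak{D}'$. After a bounded perturbation (comparable to $\max\{A,A_1,A_2\}$) one can make $f$ continuous with $f(\partial\mathfrak{D})\subset\partial\mathfrak{D}'$ and $f|_{\partial\mathfrak{D}}$ of degree~$1$ (this uses $d_H(\partial D,\partial D')\le A_1$). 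A standard degree argument then gives surjectivity of $f$, which immediately yields $D'\subset N_C(D)$.

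Your route through $\iota$, chain projections, and Lemma~\ref{lem_homotopy_disk} has two drawbacks. First, those tools require $X$ to satisfy coning inequalities (CI$_{n-1}$) or (CI$_n$), which the lemma does \emph{not} assume --- it is stated for an arbitrary metric space, and the paper's proof uses nothing about $X$ beyond the metric. Second, even granting the extra hypothesis, your ``main obstacle'' paragraph is where the real content lives, and your resolution there (``the composite $\Phi$ then project to $D'$ is, up to bounded error, the identity near $y$'') is not correct as stated: that composite is a map from $\mathfrak{D}$ to $X$, not an endomorphism near $y$. What you actually need is that the induced map $\mathfrak{D}\to\mathfrak{D}'$ has degree~$1$ --- and once you phrase it that way you have arrived at the paper's argument, minus the current-theoretic detour.

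In short: strip your argument of $\iota$, $\sigma'$, and the homotopy $\tau$; work with the domain-to-domain map directly; and use ordinary degree theory on Euclidean balls. That is the paper's proof, and it is both shorter and valid in the stated generality.
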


\begin{proof}
	Let $\dd$ and $\dd'$ be the domains of $D$ and $D'$ respectively. Suppose the domains have radius $R\gg A,A_1,A_2$. 
	We can define a map $f:\dd\to\dd'$ as follows. Take a point $p\in\dd$, send to its image $\hat p\in X$, and send $\hat p$ to one of its nearest points $\hat p'$ in $D'$, and send $\hat p'$ to a corresponding point $p'\in\dd'$. 
	Up to perturbing $f$ a bounded amount comparable to $\max\{A,A_1,A_2\}$, we can assume $f$ is continuous, $f(\D\dd)\subset \D\dd'$ and $f|_{\D\dd}$ has degree 1. By a standard homological argument, we know $f$ is surjective. Hence the lemma follows.
\end{proof}

\begin{prop}
	\label{prop_Morse_lemma_for_quasidisks}
	Suppose $X$ is a complete metric space satisfying condition ~{\rm (CI$_{n}$)}. Given $(\mu,b)$ as in Definition~\ref{def_rigid} and positive constants $L,A,A',n$, there exists $C$ depending only on $\mu,b,L,A,A',n$ and $X$ such that the following holds.
	
	Let $D$ and $D'$ be two $n$-dimensional $(L,A)$-quasidisks in $X$ such that $d_H(\D D,\D D')<A'$ and $D$ is $(\mu,b)$-rigid. Then $d_H(D,D')<C$.
\end{prop}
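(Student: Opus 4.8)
The plan is to reduce, by Lemma~\ref{lem_quasi_disk_close}, to proving the one-sided inclusion $D\subseteq N_{C'}(D')$ for a suitable $C'=C'(\mu,b,L,A,A',n,X)$: combined with $d_H(\D D,\D D')<A'$ this feeds into Lemma~\ref{lem_quasi_disk_close} (with $A_1=A'$, $A_2=C'$) to yield $d_H(D,D')<C$. We also note that the common scale $R_0$ of the cubes $\dd$, $\dd'$ carrying $D,D'$ (with $\ell^\infty$-metrics and admissible cubulations, cf.\ Remark~\ref{rmk:quasidisk}; the two scales are comparable because $d_H(\D D,\D D')<A'$) may be assumed larger than any fixed constant depending on the data, since otherwise $\operatorname{diam}(D)$ is bounded in terms of $L,A,R_0$ and then $d(p,D')\le d(p,\D D)+A'\le\operatorname{diam}(D)+A'$ for all $p\in D$.

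Now fix $p=\Phi(x)\in D$ and write $t:=d(p,D')$; we must bound $t$. If $d_{\ell^\infty}(x,\D\dd)\le b+1$, then $d(p,\D D)\le L(b+1)+A$ and hence $t\le L(b+1)+A+A'$; so assume $\rho_x:=d_{\ell^\infty}(x,\D\dd)>b+1$, and also (else there is nothing to prove) that $t$ is large. There is a point of $\D D$ within $L\rho_x+A$ of $p$, so $\rho_x\ge (t-A-A')/L$. For a scale $r$ which is a multiple of the cubulation scale with $b+1<r\le\rho_x$, let $W\subseteq\dd$ be the $\ell^\infty$-cube of radius $r$ about $x$, let $\vp$ be a cubical $(n-1)$-cycle modelling the relative fundamental class of $W$ (so $\M(\vp)\lesssim r^{n-1}$, the canonical filling of $\vp$ is $\bb{W}$, and $x\in W_b$ in the notation of Definition~\ref{def_rigid}), and set $\sigma:=\iota(\vp)$, so that $\M(\sigma)\lesssim r^{n-1}$ and $\spt(\sigma)\subseteq N_a(\Phi(\D W))\subseteq A_p(L^{-1}r-A-a,\,Lr+A+a)$.

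The mechanism is then as follows. Suppose that, for an appropriately chosen $r$ comparable to $\max\{t,1\}$, one can exhibit a filling $\tau\in\I_{n,c}(X)$ of $\sigma$ with
\[
\M(\tau)\le M\,r^n,\qquad \spt(\tau)\subseteq\B{p}{Mr},\qquad \spt(\tau)\cap\B{p}{c_\ast r}=\es,
\]
for constants $M=M(L,A,A',n,X)$ and $c_\ast=c_\ast(L)\in(0,1)$. Then Definition~\ref{def_rigid}, applied with this $M$ and using $x\in W_b$, gives $p\in N_{\mu_M(r)}(\spt(\tau))$, while the last displayed property gives $d(p,\spt(\tau))\ge c_\ast r$; hence $c_\ast r\le\mu_M(r)$, and the sublinearity of $\mu_M$ forces $r\le R^\ast$ for some $R^\ast=R^\ast(\mu,L,A,A',n,X)$, whence $t\lesssim r\le R^\ast$. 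Thus the proof reduces entirely to constructing such a $\tau$.

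Constructing $\tau$ is the main obstacle. Two natural fillings each fail alone: the \emph{exterior} filling $\iota'(\bb{\dd'})+H_0-\iota(\bb{\dd\setminus W})$ — with $\iota'$ the chain map attached to $\Phi'$ and $\D H_0=\iota(\bb{\D\dd})-\iota'(\bb{\D\dd'})$ a small homology between the boundary chains (this step uses $d_H(\D D,\D D')<A'$) — fills $\sigma$ and has support in $N_{O(1)}(D\cup D'\cup\Phi(\dd\setminus W))$, hence avoids $\B{p}{\min\{t,L^{-1}r\}-O(1)}$, but only has mass $\lesssim R_0^n$; the \emph{interior} filling, obtained by projecting $\sigma$ onto $D'$ (via a quasi-retraction for $\Phi'$, a Federer--Fleming deformation, and the canonical filling inside $\dd'$, transported back by $\iota'$) and adjoining a skeleton-by-skeleton homology from $\sigma$ to its projection (built from (CI$_n$) and Theorem~\ref{thm:isop-ineq}, in the spirit of Lemma~\ref{lem_homotopy_disk}, which itself cannot be quoted directly since it presupposes the conclusion $D\subseteq N_r(D')$), has mass $\lesssim r^n$ but its support may approach $p$, since the homology has to travel a distance of order $d(\sigma,D')$. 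The resolution I envisage is a bootstrap on the scale: one first runs the rigidity mechanism with the exterior filling at the maximal scale $r=\rho_x$ (valid for the points $p$ near the centre of $\dd$, where $R_0\sim\rho_x$ makes the mass bound hold), deducing either a uniform bound on $\rho_x$, or an intermediate estimate $t\lesssim\mu_M(\rho_x)$; in the latter case this estimate, by sublinearity of $\mu_M$, is itself $\ll\rho_x$, which then lets one choose a \emph{working} scale $r$ that is simultaneously a large multiple of $d(\sigma,D')$ and still $\le\rho_x$, so that the interior filling now meets all three requirements with $c_\ast r\sim r$. A final application of the mechanism at this scale bounds $r$, hence $t$, uniformly. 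The technical core — and the step I expect to be hardest — is keeping the skeleton-by-skeleton homology of the interior filling within a $O(d(\sigma,D'))$-neighbourhood of $D\cup D'$ and checking that all the mass and support bounds close up through the bootstrap; this is the analogue, for quasidisks and under only coning inequalities, of the monotonicity and quasi-minimizer estimates of \cite{higherrank}.
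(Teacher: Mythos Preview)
Your overall strategy --- reduce via Lemma~\ref{lem_quasi_disk_close} to the one-sided inclusion, fill $\sigma=\iota(\partial W)$ by a chain avoiding a ball around $p$, and invoke rigidity --- is the right one, and your ``interior filling'' (chain projection to $D'$ plus a skeleton-by-skeleton homology) is exactly the object the paper uses. The gap is that a two-step bootstrap does not terminate.

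Concretely, your ``final application'' needs the interior filling to miss $B_p(c_\ast r)$ for a \emph{fixed} $c_\ast>0$, so that $c_\ast r\le\mu_M(r)$ bounds $r$. But the portion of that filling supported near $D'$ only misses $B_p(t-O(1))$. Having chosen the working scale $r$ to be a large multiple of $d(\sigma,D')\lesssim\mu_M(\rho_x)$ (as you must, to keep the homology away from $p$), there is no reason for $t$ to be comparable to $r$: from your first step you only know $t\lesssim\mu_M(\rho_x)$ as well, so $t/r$ can be arbitrarily small. What rigidity actually yields at this second step is $t\le\mu_M(r)+O(1)$, i.e.\ an estimate of the form $t\lesssim\mu_M(K\mu_M(\rho_x))$ --- better than the first step, but still $\rho_x$-dependent. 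Restricting to the point $p$ that maximises $t$ does give $d(\sigma,D')\le t$, but then the constraint $r\le\rho_x$ combined with $\rho_x\ge(t-A-A')/L$ only allows $r\lesssim t/L$, and the homology (thickness $Ct$ with $C=C(L,A,n,X)$ a structural constant, with no reason to satisfy $C<L^{-2}$) may still reach $p$. The difficulty you flag at the end is real, but its resolution is not a sharper one-shot homology estimate; it is an iteration.

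The paper runs a full scale-halving $r_{i+1}=r_i/2$ starting at $r_0\sim R_0$, maintaining at each step a homology $\beta_i$ with $\partial\beta_i=\partial\tau_i-\partial\tau'_i$, $\M(\beta_i)\le\eps_0 r_i^n$, and $\spt(\beta_i)\subset N_{\eps_0 r_i}(\spt(\partial\tau_i))$, where $\tau_i$ is $\iota$ applied to the cube of radius $r_i$ about $x$ and $\tau'_i=(\tau_i)_{D'}$. For $r_i\ge\ul r$ one has $\mu_M(r_i)<\eps r_i$ with $\eps\ll\eps_0$ fixed; rigidity then gives closeness $\eps r_i$ on the slightly-shrunk cube, and this is exactly the hypothesis Lemma~\ref{lem_homotopy_disk} needs to build $\beta_{i+1}$ at the halved scale with the \emph{same relative} bounds. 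The iteration stops at $r_\ell\sim\ul r$, yielding $d(p,D')\lesssim\eps\,\ul r$ uniformly. Your ``exterior filling'' is also unnecessary: at the top scale $W=\dd$, so $\sigma=\iota(\partial\dd)$ is already $O(A')$-close to $\partial D'$ by hypothesis, and $\beta_0$ is built directly from this.
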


To illustrate the idea, we start with a discussion of the proof focusing on the simpler case when $X$ is $\cat(0)$, and $D$ and $D'$ are represented by two $L$-bilipschitz embeddings $f_1: \dd\to X$ and $f_2:\dd\to X$ such that $\dd$ is a disk in $\mathbb R^n$ with radius $R$ and $(f_1)_{|\D\dd}=(f_2)_{|\D\dd}$. The proof uses a similar strategy as \cite[Proposition 4.5]{higherrank}.

Note that there is a Lipschitz retraction $\pi':X\to D'$ with $\Lip(\pi')$ depending only on $X,L$ and $n$.
Take $\tau,\tau'\in\I_{n,c}(X)$ representing the fundamental classes of $D$ and $D'$ respectively. Let $p\subset \dd$ be a base point and $\hat p=f_1(p)$. For $r>0$, let $\dd_r$ be the collection of points in $\dd$ having distance $\le r$ from $p$. Let $\tau_r=(f_1)_{\#}(\bb{\dd_r})$ where $\bb{\dd_r}$ is the fundamental class of $\dd_r$.
Throughout the following proof, $\lambda_1,\lambda_2,\ldots$ will be constants whose values depend only on $X,L,n$.

Take $r_0$ is comparable to $R$ such that $B_{p}(r_0)$ contains $\dd$. Take $\frac{1}{1000}>\eps_0>0$. The key is to show we can find $\ul{r}>0$ depending only on $\mu,b,X,n,L$ such that the following holds true. There exist a sequence of scales $\{r_i\}_{i=0}^\ell$ with $r_{i+1}= \frac{r_i}{2}$ and $\frac{r_\ell}{2}\le \ul{r}\le r_\ell$, and $\beta_i\in\I_{n,c}(X)$ such that the following holds for each $1\le i\le n$:
\begin{enumerate}
	\item $\D\beta_i=\D\tau_i-\D\tau'_i$ where  $\tau_i=\tau_{r_i}$ and $\tau'_i=\pi'_{\#}(\tau_i)$;
	\item $\M(\beta_i)\le \eps_0 r^{n}_i$ and $\spt(\beta_i)\subset N_{\eps_0 r_i}(\spt(\D\tau_i))$;
	\item $\spt(\tau'_i)\subset N_{Cr_i}(\spt(\tau_i))$ where $C>1$ is a constant (depending only on $L$) such that $\spt(\tau')\subset N_{Cr_0}(\spt(\tau))$.
\end{enumerate}

We now define $\beta_i$ inductively, and choose the correct $\ul{r}$ so the induction goes through for each $i$. 
The $i=0$ case follows from our choice of $C$ ($\beta_0=0$). Now suppose $\beta_k$ is defined with the required properties.  

As $\M(\tau_k)\le \lambda_1 r^n_k$ and $\M(\beta_k)\le  r^{n}_k$, we have $\M(\tau'_k)\le \lambda_2r^n_k$. Thus $\M(\tau'_k+\beta_k)\le \lambda_3r^n_k$. 
As $\D\tau_k=\D(\tau'_k+\beta_k)$ and $\spt(\tau'_k+\beta_k)\subset B_{\hat p}(\lambda_4r_k)$, the $(\mu,b)$-rigidity of $D$ implies that for any $\eps>0$, there exists $\ul{r}$ such that if $r_k\ge \ul{r}$, then $\spt(\tau_k)\subset N_{\eps r_k}(\spt(\tau'_k+\beta_k))$. We will later choose $\eps$ ($\eps\ll \eps_0$), which determines the value of $\ul{r}$.  
Let $\hat\tau_k=\tau_{(1-2\eps_0)r_k}$.
As $\spt(\beta_k)\subset N_{\eps_0 r_k}(\spt(\D\tau_k))$, $\spt(\hat\tau_k)\subset N_{\eps r_k}(\spt (\tau'_k))$.

Let $\tau_{k+1}=\tau_{r_{k+1}}$ and $\tau'_{k+1}=\pi'_{\#}(\tau_{k+1})$. As each point in $\spt(\tau_{k+1})$ is at most distance $\eps r_k=2\eps r_{k+1}$ from $D'$ and $\pi'$ is a Lipschitz retraction, we know that $\spt(\tau'_{k+1})\subset N_{\lambda_5\eps r_{k+1}}(\spt(\tau_{k+1}))$. Define $\beta_{k+1}$ to be the current induced by the geodesic homotopy between $\D\tau_{k+1}$ and $\pi'_{\#}(\D\tau_{k+1})$. As each point in $\spt(\D\tau_{k+1})$ is at most distance $2\eps r_{k+1}$ from $D'$ and $\M(\D\tau_{k+1})=\lambda_6r^{n-1}_{k+1}$, we know $\M(\beta_{k+1})\le \lambda_7\eps r^n_{k+1}$ and $\spt(\beta_{k+1})\subset N_{\lambda_8\eps r_{i+1}}(\spt(\D\tau_{k+1}))$. Moreover, $\D \beta_{k+1}=\tau_{k+1}-\tau'_{k+1}$. Thus we choose $\eps$ such that $\lambda_7\eps<\eps_0$, $\lambda_8\eps<\eps_0$, $\lambda_5\eps<C$. This will ensure we can continue the induction as long as $r_k\ge \ul{r}$.

When the induction reaches $\frac{r_\ell}{2}\le \ul{r}\le r_\ell$, we can still deduce $\spt(\hat\tau_\ell)\subset N_{\eps r_\ell}(\spt (\tau'_\ell))\subset N_{2\eps \ul{r}}(\spt (\tau'_\ell))$, which implies $d(\hat p,D')\le 2\eps\ul{r}$. As $\hat p$ can be any point in $D$, we are done by Lemma~\ref{lem_quasi_disk_close}.

Now we switch to the general case of Proposition~\ref{prop_Morse_lemma_for_quasidisks}, which follows the same line, but with extra layers of approximations.
 
 \begin{proof}
 	Throughout the following proof, $\lambda_1,\lambda_2,\ldots$ will be constants whose values depend only on $X,L,A,A',n$.
 	Let $\dd$ and $\dd'$ be the domains of $D$ and $D'$. Recall that $\dd$ and $\dd'$ are cubes with $\ell^\infty$ metric. Let $R=\max\{\diam(\dd),\diam(\dd')\}$. Let $\pi:X\to \dd'$ be a Lipschitz quasi-retraction as in Definition~\ref{def_retraction}. Let $\tau=\iota(\bb{\dd})$ where $\bb{\dd}$ denotes the fundamental class. By Proposition~\ref{prop_chain_map}, $d_H(D,\spt(\tau))\le \lambda_1$. 
 	Let $\tau':=(\tau)_{D'}$ be as in Definition~\ref{def_chain_projection}.

   Let $p\in \dd $ be a base point and let $\hat p\in X$ be its image. For $r>0$, let $\dd_r=\bar B_p(r)$. Then $\D\dd_r$ carry the structure of a cycle. Applying Federer Fleming deformation to this cycle to obtain a cubical cycle $S'_r$ (w.r.t. the cubulation of $\dd$). Let $\square_r$ be the cubical chain filling $S'_r$. Define $\tau_r=\iota(\square_r)$. Then $\M(\tau_r)\le \lambda_2 r^n$ and $\M(\D\tau_r)=\M(\iota(S'_r))\le\lambda_2 r^{n-1}$.
   
   Take $r_0$ comparable to $R$ such that $B_{p}(r_0)$ contains $\dd$. Then $\tau_{r_0}=\tau$. As $d_H(\D D, \D D')\le A'$, we have $\spt(\D\tau)\subset N_{\lambda_2 A'}(D')$. As $\M(\D\tau)\le \lambda_2 r^{n-1}_0$, by Lemma~\ref{lem_homotopy_disk}, there is $\beta_0\in\I_{n,c}(X)$ such that $\D\beta_0=\D(\tau-\tau')$, $\M(\beta_0)\le \lambda_3A\cdot r^{n-1}_0$ and $\spt(\beta_0)\subset N_{\lambda_3 A}(\spt(\D\tau))$. We assume without loss of generality that $r_0\ge R\ge 1000\lambda_3A$.

   Take $\eps_0=\frac{1}{1000}$. We claim there is $\ul{r}>0$ depending only on $\mu,b,X,n,$ $L,A,A'$ such that the following holds true. There exist a sequence of scales $\{r_i\}_{i=0}^\ell$ with $r_{i+1}= \frac{r_i}{2}$ and $\frac{r_\ell}{2}\le \ul{r}\le r_\ell$, and $\beta_i\in\I_{n,c}(X)$ such that the following holds for each $1\le i\le n$:
   \begin{enumerate}
   	\item $\D\beta_i=\D\tau_i-\D\tau'_i$ where  $\tau_i=\tau_{r_i}$ and $\tau'_i=(\tau_i)_{D'}$;
   	\item $\M(\beta_i)\le \eps_0 r^{n}_i$ and $\spt(\beta_i)\subset N_{\eps_0 r_i}(\spt(\D\tau_i))$;
   	\item $\spt(\tau'_i)\subset N_{Cr_i}(\spt(\tau_i))$ where $C>1$ is a constant (depending only on $L$ and $A$) such that $\spt(\tau')\subset N_{Cr_0}(\spt(\tau))$.
   \end{enumerate}
  Once this claim is established, we can finish the proof as in the simpler case discussed before. Now we prove this claim. The case $i=0$ follows from our choice of $\beta_0,C,r_0$ and $\eps_0$.
 
 Now suppose $\beta_k$ is defined with the required properties. Then $\M(\tau'_k+\beta_k)\le \lambda_4r^n_k$, $\D\tau_k=\D(\tau'_k+\beta_k)$ and $\spt(\tau'_k+\beta_k)\subset B_{\hat p}(\lambda_5r_k)$, the $(\mu,b)$-rigidity of $D$ implies that for any $\eps>0$, there exists $\ul{r}$ such that if $r_k\ge \ul{r}$, then $\spt(\tau_k)\subset N_{\eps r_k}(\spt(\tau'_k+\beta_k))$. We will later choose $\eps$ ($\eps\ll \eps_0$), which determines the value of $\ul{r}$.  
 Let $\hat\tau_k=\tau_{(1-2\eps_0)r_k}$.
 As $\spt(\beta_k)\subset N_{\eps_0 r_k}(\spt(\D\tau_k))$, $\spt(\hat\tau_k)\subset N_{\eps r_k}(\spt (\tau'_k))$.

 Let $\tau_{k+1}=\tau_{r_{k+1}}$ and $\tau'_{k+1}=(\tau_{k+1})_{D'}$. As each point in $\spt(\tau_{k+1})$ is at most distance $\eps r_k=2\eps r_{k+1}$ from $D'$, we know that $\spt(\tau'_{k+1})\subset N_{\lambda_6\eps r_{k+1}}(\spt(\tau_{k+1}))$. As  $\spt(\D\tau_{k+1})\subset N_{\eps r_k}(\spt (\tau'_k))\subset N_{\lambda_7\eps r_{k+1}}(D')$ and $\M(\D\tau_{k+1})=\lambda_3r^{n-1}_{k+1}$, we deduce from Lemma~\ref{lem_homotopy_disk} that there exists $\beta_{k+1}$ such that $\M(\beta_{k+1})\le \lambda_8\eps r^n_{k+1}$ and $\spt(\beta_{k+1})\subset N_{\lambda_9\eps r_{i+1}}(\spt(\D\tau_{k+1}))$. Moreover, $\D \beta_{k+1}=\tau_{k+1}-\tau'_{k+1}$. Thus we choose $\eps$ such that $\lambda_8\eps<\eps_0$, $\lambda_9\eps<\eps_0$, $\lambda_6\eps<C$. This will ensure we can continue the induction as long as $r_k\ge \ul{r}$.
 \end{proof}

\begin{prop}\label{prop_divergence}
	Let $X$ be a complete metric space satisfying condition ~{\rm (CI$_{n}$)} and
	let $Q, Q'\subset X$ be $(L,A)$-quasiflats with $\dim Q=n$. Suppose that $Q$ is $(\mu,b)$-rigid. Then there exist $A'$ and $\eps$ depending only on $X,L,A,n,b$ and $\mu$ such that either $d_H(Q,Q')\le A'$, or 
	\begin{equation}
	\label{eq_linear_div}
	\limsup_{r\to\infty} \frac{d_H(B_p(r)\cap Q,B_p(r)\cap Q')}{r}\ge \eps
	\end{equation}
	for some (hence any) $p\in Q$.
\end{prop}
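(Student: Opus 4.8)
The plan is to argue by contradiction along the lines of Proposition~\ref{prop_divergence_intro}: suppose neither alternative holds, so that $d_H(Q,Q')>A'$ for an $A'$ to be chosen large, yet $d_H(B_p(r)\cap Q,B_p(r)\cap Q')=o(r)$. Since $Q$ is $(L,A)$ and $Q'$ is $(L,A)$, a point of $Q$ far from $Q'$ is detected on balls of bounded-below radius, so the sublinear control on $d_H(B_p(r)\cap Q,B_p(r)\cap Q')$ forces, for every scale $r$, both $B_p(r)\cap Q\subset N_{\eps(r)r}(Q')$ and $B_p(r)\cap Q'\subset N_{\eps(r)r}(Q)$ with $\eps(r)\to 0$. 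The idea is then to localize: for each large scale $R$, cut out a quasidisk $D\subset Q$ and $D'\subset Q'$ of comparable radius $R$ around $p$, whose boundaries $\partial D,\partial D'$ lie in a thin neighborhood of one another (here I would use that both $Q\cap B_p(R)$ and $Q'\cap B_p(R)$ sit in an $o(R)$-neighborhood of each other, so the "annular" boundary spheres can be matched up to error $o(R)$). The key point is that $(\mu,b)$-rigidity of $Q$ passes to such quasidisks $D$ (Remark~\ref{rmk_rigid quasidisks}): a large $(\mu,b)$-rigid quasidisk is still $(\mu,b)$-rigid, hence we are in a position to apply the Morse lemma for quasidisks.

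The main mechanism is Proposition~\ref{prop_Morse_lemma_for_quasidisks} (equivalently Proposition~\ref{prop_Morse_lemma_for_quasidisks_intro}), which says: if $D,D'$ are $n$-dimensional $(L',A')$-quasidisks in $X$ with $d_H(\partial D,\partial D')<A''$ and $D$ is $(\mu,b)$-rigid, then $d_H(D,D')<C$ with $C=C(\mu,b,L',A',A'',n,X)$. So first I would fix the output constant: let $C_0=C_0(\mu,b,L,A,n,X)$ be the constant produced by the Morse lemma for quasidisks applied with quasi-isometry constants derived from $(L,A)$ and with boundary-matching constant, say, $10(L A+1)$. Now choose $\eps>0$ small (to be pinned down) and $A'>100\,C_0$. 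Assuming \eqref{eq_linear_div} fails with this $\eps$, we get $d_H(B_p(r)\cap Q,B_p(r)\cap Q')\le \eps r$ for all $r$ beyond some $r_1$. For $R\gg r_1$, represent $Q\cap B_p(R)$ as (the image of) a quasidisk $D=\Phi(\dd)$ over a cube $\dd\subset\R^n$ of $\ell^\infty$-radius $\sim R/L$ around $\Phi^{-1}(p)$, and similarly $D'$ for $Q'$; adjust the cube sizes so that $D$ and $D'$ both have quasidisk constants controlled by $(L,A)$ and so that their boundary spheres $\partial D,\partial D'$ both lie in $N_{\eps R+O(LA)}(\partial D)\cap N_{\eps R+O(LA)}(\partial D')$ — i.e. $d_H(\partial D,\partial D')\le K\eps R$ for a constant $K=K(L,A,n)$. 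Because $D\subset N_{\eps R}(Q')\subset N_{\eps R+O(LA)}(D')$ and symmetrically, the hypotheses of a scaled Morse lemma are met; but $C_0$ is a fixed constant independent of $R$, so applying Proposition~\ref{prop_Morse_lemma_for_quasidisks} yields $d_H(D,D')<C_0$, hence $d_H(B_p(R)\cap Q,B_p(R)\cap Q')\le C_0+O(LA)$ uniformly in $R$. Letting $R\to\infty$ gives $d_H(Q,Q')\le C_0+O(LA)<A'$, contradicting the assumption $d_H(Q,Q')>A'$.

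There is one technical wrinkle: the Morse lemma for quasidisks as stated requires $d_H(\partial D,\partial D')$ \emph{bounded}, not merely $o(R)$, whereas my matched boundaries only satisfy $d_H(\partial D,\partial D')\le K\eps R$. To handle this I would rescale the metric on $X$ by $1/R$ before applying the lemma. Rescaling does not preserve (CI$_n$) with the same constant, but it does preserve it (the constant $c_0$ scales away: the cone inequality is scale-invariant since both sides of $\M(T)\le c_0\,\diam(\spt S)\,\M(S)$ scale the same way), and the quasi-isometry constants of $D,D'$ become $(L,A/R)$, which are controlled by $(L,A)$ for $R\ge1$; the boundary matching constant becomes $K\eps$. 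Thus apply Proposition~\ref{prop_Morse_lemma_for_quasidisks} in the rescaled space with boundary constant $K\eps$ to get $d_H(D,D')<C_0'$ in the rescaled metric, where $C_0'=C_0'(\mu,b,L,A,n,X,K\eps)$; crucially $C_0'$ does not depend on $R$. Unscaling, $d_H(B_p(R)\cap Q,B_p(R)\cap Q')\le C_0' R$. This is not yet a contradiction — it only bounds things by $C_0'R$, which is linear, not sublinear — so I must instead \emph{iterate down scales}, exactly as in the proof of Proposition~\ref{prop_Morse_lemma_for_quasidisks} itself: run the argument on a geometric sequence of scales $R, R/2, R/4,\ldots$, at each scale matching up boundaries at error $\sim\eps\cdot(\text{scale})$ and invoking the (pointed, rigid) quasidisk Morse lemma to propagate a \emph{fixed} absolute bound on $d_H(D,D')$ down to a bottom scale $\underline r=\underline r(\mu,b,L,A,n,X)$, below which rigidity no longer helps. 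The upshot of that iteration is $d(\hat p,Q')\le C''$ for every $\hat p\in Q$, with $C''$ independent of $R$ and of $\hat p$ — and by Lemma~\ref{lem_quasi_disk_close} this upgrades to $d_H(Q,Q')\le A'$, the contradiction. So the main obstacle is not any single estimate but the bookkeeping of the multi-scale induction under rescaling: one must verify that the error amplification constants $\lambda_i$ appearing in the proof of Proposition~\ref{prop_Morse_lemma_for_quasidisks} stay uniform across all scales (they do, being QI- and cone-inequality constants, both scale-invariant), and that the choice of $\eps$ can be made small enough — depending only on $\mu,b,L,A,n,X$ — to keep $\lambda_i\eps$ below the thresholds $\eps_0$ required to continue the induction. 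Once $\eps$ is so chosen, $A'$ is determined and the proof closes.
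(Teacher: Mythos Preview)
Your approach is essentially the paper's: it too says ``by a similar `going down on scale' argument as in Proposition~\ref{prop_Morse_lemma_for_quasidisks}'' to get $Q\subset N_{A'}(Q')$ (or linear divergence), and then invokes Lemma~\ref{lem_quasi_disk_close}. Your exploratory detour through a direct black-box application of the Morse lemma is unnecessary, but you correctly diagnose why it fails and land on the right iteration.

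There is one small gap. The hypothesis only gives $\dim Q=n$; nothing is assumed about $\dim Q'$. Both your direct attempt (Proposition~\ref{prop_Morse_lemma_for_quasidisks} needs $D,D'$ of the same dimension) and your final appeal to Lemma~\ref{lem_quasi_disk_close} silently assume $\dim Q'=n$. The paper handles this in one line: once the iteration yields $Q\subset N_{A'}(Q')$, rigidity of $Q$ forces $\dim Q'=\dim Q=n$, after which Lemma~\ref{lem_quasi_disk_close} applies. You should insert this step.
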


\begin{proof}
	By a similar ``going down on scale'' argument as in Proposition~\ref{prop_Morse_lemma_for_quasidisks}, we can find $A'$ and $\eps$ as required such that either $Q\subset N_{A'}(Q')$ or \eqref{eq_linear_div} holds. 
	However, in the first case we must have $\dim Q=\dim Q'=n$ as $Q$ is rigid. Thus $d_H(Q,Q')\le A'$ by Lemma~\ref{lem_quasi_disk_close}.
\end{proof}

\section{Criteria and Examples for Morseness}
\label{sec_example}
In this section we give several more characterizations of Morse quasiflats in $\cat(0)$ spaces and present several related examples and non-examples of Morse quasiflats.

\subsection{Failure of Morseness and flat half-spaces}

Let $X$ be a $\cat(0)$ space. Let $\Sigma_pX$ be the space of directions at $p$. Let $T_pX$ be the tangent cone of $X$ at a point $p\in X$, which is defined to be the Euclidean cone over $\Sigma_pX$. Then $T_pX$ is a $\cat(0)$ space, and there is a 1-Lipschitz map $\log_p:X\to T_pX$ sending a geodesic emanating from $p$ to its corresponding geodesic emanating from the cone point $o$ of $T_pX$. We refer to \cite[Chapter II.3]{bridson_haefliger} for more background on tangent cones. 

Given a scaling sequence $\eps_i\to 0$, we obtain a blow up $Y$ at $p$ which is the ultralimit $\lim_\om(\frac{1}{\eps_i}X,p)$. There is an exponential map $\exp:T_pX\to Y$ which is an isometric embedding (see e.g. \cite[Section 5.2]{lytchak2005differentiation}).

Let $\Phi:\R^n\to X$ be a bilipschitz embedding. Let $E$ be a countable dense subset of the unit sphere of $\R^n$. By the proof of \cite[Proposition 6.1]{lytchak2005differentiation} (see \cite[Lemma 11]{stadler2018structure} as well for explanation), there exists a full measure subset $G\subset\R^n$ such that for any $p\in G$,
\begin{enumerate}[label=(\alph*)]
	\item $p$ is a point of metric differentiability with non-degenerate metric differential (cf. \cite{kirchheim1994rectifiable});
	\item for any vector $\vec{v}\in \mathbb R^n$, the points in $\Sigma_{\Phi(p)}X$ represented by the geodesics from $\Phi(p)$ to $\Phi(p+t\vec v)$ form a Cauchy sequence as $t\to 0^+$, hence converges to a point in $\Sigma_{\Phi(p)}X$ as $t\to 0^+$. We denote this point by $\D\Phi(\vec v)$.
\end{enumerate}

\begin{lemma}
	\label{lem:CAT differential}
	Suppose $p$ satisfies (a) and (b) as above.
	Let $x=\Phi(p)$ and let $Q=\im \Phi$. Take a sequence $\eps_i\to 0$. Let $\Phi_\om$ be the ultralimit of $\{\frac{1}{\eps_i}\Phi:(\frac{1}{\eps_i}\R^n,p)\to(\frac{1}{\eps_i}X,x)\}$. Let $Y=\lim_\om(\frac{1}{\eps_i}X,p)$ and let $Y_e=\exp(T_xX)\subset Y$. Then
	\begin{enumerate}
		\item $\im \Phi_\om\subset Y_e$;
		\item $F:=\im \Phi_\om$ is a flat in $Y$;
		\item there exists a function $\de:\R_{\ge 0}\to \R_{\ge 0}$ with $\lim_{r\to 0^+}\frac{\de(r)}{r}=0$ such that $\log_x(\Phi(\B{p}{r}))\subset N_{\delta(r)}(F)$;
		\item define $\Phi_i:\frac{1}{\eps_i}B_p(\eps_i)\to Y_e$ by $\Phi_i=\log_x\circ (\frac{1}{\eps_i}\Phi)$, then for any $\de>0$, there exists $i_0$ such that $d(\Phi_i(x),\Phi_\om(x))\le \eps$ for any $i\ge i_0$ and $x$ in the domain of $\Phi_i$ (we identify the domain of $\Phi_i$ with an unit ball in the domain $\Phi_\om$);
		\item for a non-trivial abelian group $\f$, if $$H_n(Q,Q\setminus\{x\},\f)\to H_n(X,X\setminus\{x\},\f)$$ is not injective, then $H_n(F,F\setminus\{o\},\f)\to H_n(Y,Y\setminus\{o\},\f)$ is not injective where $o=\log_x(x)$.
	\end{enumerate}	
\end{lemma}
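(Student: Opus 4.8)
\textbf{Proof proposal for Lemma~\ref{lem:CAT differential}.}

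The plan is to reduce statement (5) to statements (1)--(4), which capture the sense in which the rescaled maps $\Phi_i = \log_x \circ (\tfrac{1}{\eps_i}\Phi)$ converge to the flat $F = \im\Phi_\om$ while staying inside the $\cat(0)$ subspace $Y_e = \exp(T_xX)$. The essential point is that $\log_x$ is $1$-Lipschitz, that on a bilipschitz image it distorts distances by a controlled amount near $x$ (this is where (a) and the monotonicity of $\cat(0)$ comparison angles enter), and that the blow-up and the tangent cone see the same local homology at the base point. So first I would make precise the homological input: by excision and the long exact sequence of the pair, the hypothesis that $H_n(Q,Q\setminus\{x\},\f)\to H_n(X,X\setminus\{x\},\f)$ fails to be injective means there is a small embedded top-dimensional ball $B\subset Q$ around $x$ whose boundary sphere $\partial B$ bounds a chain $c$ in $X$ with $x\notin|c|$ (after multiplying by a nonzero element of $\f$ if necessary); in fact we may take $|c|$ to avoid $\B{x}{r_0}$ for some $r_0>0$, by a cone/compactness argument.

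Next I would push this picture forward under $\log_x$ and into the blow-up. Applying $\log_x$ to $B$ and to $c$: by (c), $\log_x(\Phi(\B{p}{r}))\subset N_{\delta(r)}(F)$ with $\delta(r)/r\to 0$, so the image $\log_x(B)$ is a top-dimensional ``almost-flat'' disk hugging $F$ near $o$, and $\log_x(c)$ is a chain in $Y_e$ avoiding a definite ball around $o$ (since $\log_x$ is $1$-Lipschitz and non-expanding, and on the bilipschitz image it cannot collapse the radius $r_0$ ball more than a bilipschitz factor — here I use (a), non-degenerate metric differential, to keep $\log_x\circ\Phi$ bilipschitz near $p$ after rescaling). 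Now rescale by $1/\eps_i$ and pass to the ultralimit: by (4), the maps $\Phi_i$ converge uniformly on the unit ball to $\Phi_\om$, so the rescaled disks $\log_x(B)$ converge to an honest flat disk $\bar B\subset F$ around $o$ which is a top-dimensional ball in $F$ by (2), and the rescaled chains $\log_x(c)$ converge (in the flat/Lipschitz sense, using that $Y_e$ is $\cat(0)$ hence has coning inequalities, and the compactness results from Section~\ref{sec_prelim}) to a chain $\bar c$ in $Y$ with $o\notin|\bar c|$ and $\partial\bar c$ a nonzero multiple of $\partial[\bar B]$. That exhibits the failure of injectivity of $H_n(F,F\setminus\{o\},\f)\to H_n(Y,Y\setminus\{o\},\f)$, which is statement (5).

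The main obstacle I anticipate is the limiting step: ensuring that the chain $c$ (or rather $\log_x(c)$), after rescaling, converges to a genuine chain $\bar c$ in $Y$ that still avoids $o$ and still has the right boundary. A naive ultralimit of singular chains need not converge, so one should work with integral currents, use the coning inequality in the $\cat(0)$ space $Y_e$ to obtain $c$ with a mass bound proportional to $\M(\partial B)\cdot r_0$ (so mass scales correctly under the rescaling, giving uniformly bounded mass and support), and then invoke Wenger-type compactness together with the flat-convergence statements to extract $\bar c$; the disjointness $o\notin|\bar c|$ survives because the supports stay at uniformly positive distance from the base point, and the boundary condition survives because boundary is continuous under flat convergence and $\partial[\log_x(B)]\to\partial[\bar B]$ by (4). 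A secondary subtlety is verifying that $\bar B$ is indeed a top-dimensional \emph{embedded} ball in $F$ rather than a degenerate or folded disk — this is exactly guaranteed by (a) (non-degenerate metric differential, so $\Phi_\om$ restricted to the unit ball is bilipschitz onto its image) together with (2). Once these limiting issues are handled, the rest is the standard dictionary between ``a sphere bounds away from a point'' and ``the relative homology map is not injective,'' via excision and the long exact sequence.
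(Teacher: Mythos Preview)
Your reduction is on the right track up to and including applying $\log_x$: once you push $\sigma$ and $\tau$ through the $1$-Lipschitz radial map $\log_x$, you already have singular chains $\sigma':=\log_x(\sigma)$ and $\tau':=\log_x(\tau)$ sitting in $T_xX\cong Y_e\subset Y$, with $\partial\tau'=\sigma'$, $o\notin\im\tau'$, and $d(\im\sigma',o)\ge r_0/\Lip(\Phi)$. No rescaling or compactness is needed after this point, and in fact your limiting step is where the argument breaks. If $B$ and $\tau$ are taken at a \emph{fixed} scale $r_0$, then rescaling by $1/\eps_i$ sends these chains off to infinity in the cone $T_xX$; if instead you let the radius of $B$ shrink with $i$, you would need fillings of $\partial B_i$ in $X\setminus\{x\}$ whose diameters are comparable to $\eps_i$, and there is no reason such small fillings exist --- the coning inequality you invoke would cone to a point near $\partial B_i$, hence near $x$, which defeats the purpose. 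Wenger compactness does not rescue this, because you lose either the mass bound or the uniform positive distance to $o$.

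What is actually needed (and what the paper does) is a single direct step in place of your limit: use the $\cat(0)$ nearest-point projection $\pi_F:Y_e\to F$, set $\sigma'':=(\pi_F)_*\sigma'$, and let $\tau''$ be the chain traced out by the geodesic homotopy from $\sigma'$ to $\sigma''$. Choosing $r_0$ so that $\delta(r_0)\le r_0/(1000\,\Lip(\Phi))$ via (3) forces these homotopy tracks to have length much smaller than $r_0$, hence $o\notin\im\tau''$ and $o\notin\im\sigma''$. Property (4) then gives that $\pi_F\circ\log_x\circ\Phi$ is uniformly close to $\Phi_\om$ on the sphere $S_p(r_0)$, so it has degree $\pm 1$ as a map onto a punctured neighborhood of $o$ in $F$, and $[\sigma'']$ is nontrivial in $\tilde H_{n-1}(F\setminus\{o\},\f)$. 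Now $\sigma''=\partial(\tau'+\tau'')$ with $o\notin\im(\tau'+\tau'')$ exhibits the required non-injectivity. The projection-plus-geodesic-homotopy step is the missing idea; once you have it, the argument is entirely finitary and the current-compactness machinery is unnecessary.
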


\begin{proof}
	(1) follows from property (b) as above. For (2), note that $\im \Phi_\om$ is an isometrically embedded normed vector space in $Y$ by property (a) above, however, the norm has to be Euclidean as $Y$ is $\cat(0)$. 
	
	Take $\eps>0$. Take a finite subset $E'\subset E$ such that $E'$ is an $\eps$-net in the unit sphere of $\R^n$. Take $r$ such that $$d_{\Sigma_x X}(\log_x(\overline{\Phi(p)\Phi(p+t\vec{v})}),\D\Phi(\vec{v}))<\eps$$ for any $\vec{v}\in E'$ and $0\le t\le r$. As $\log_x$ is 1-Lipschitz, we deduce that $\log_x(\Phi(\B{p}{t}))\subset N_{C\eps t}(F)$ for any $0\le t\le r$ where $C$ is constant depending only on $\lip(\Phi)$. Thus (3) follows. (4) is proved similarly.
	
	Take $r_0$ such that $\de(r)\le \frac{r}{1000\lip(\Phi)}$ for any $0\le r\le r_0$. The non-injectivity assumption in (5) implies that there is a non-trivial element $\si\in \tilde{H}_{n-1}(\Phi(S_p(r_0)),\f)$ such that $\si=\D\tau$ for a singular chain $\tau$ in $X$ with $p\notin\im \tau$. Let $\si'=\log_p(\si)$ and $\tau'=\log_p(\tau)$. Then $o\notin \im\tau'$ and $d(\im \si',o)\ge \frac{r_0}{\lip(\Phi)}$. Denote the $\cat(0)$ projection onto $F$ by $\pi_F$. Let $\si''=\pi_F(\si')$ and let $\tau''$ be a chain induced by the geodesic homotopy between $\si'$ and $\si''$. The choice of $r_0$ implies that $o\notin \spt(\tau'')$ and $o\notin \spt(\si'')$. By (4), $d(\pi_F\circ\Phi_i(x),\Phi_\om(x))\le 2\eps$ for any $x$ in the domain of $\Phi_i$, thus $[\si'']$ is non-trivial in $H_n(F,F\setminus\{o\},\f)$. As $\si''=\D(\tau'+\tau)$ with $o\notin \im (\tau'+\tau'')$, (5) follows.
\end{proof}

A \emph{blow up} of a metric space $X$ is an ultralimit of $(\frac{1}{\eps_i}X,p_i)$ with $\eps_i\to 0$. An \emph{iterated blow up} of $X$ is a metric space $Y$ such that there is a finite chain of metric spaces $X=X_0,X_1,\ldots,X_n=Y$ such that $X_{i+1}$ is a blow up of $X_i$.
\begin{lemma}
	\label{lem_blow_up}
	Let $X$ be a $\cat(0)$ space and $Q\subset X$ be a bilipschitz flat. Let $\f$ be a non-trivial abelian group. If there exists $p\in Q$ such that the map $H_n(Q,Q\setminus\{p\},\f)\to H_n(X,X\setminus\{p\},\f)$ is not injective, then there exists an iterated blow up $Z$ of $X$ such that the limit $Q_Z$ of $Q$ in $Z$ is an $n$-flat which bounds a flat half-space in $X_\om$.
\end{lemma}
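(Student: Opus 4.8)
\textbf{Proof strategy for Lemma~\ref{lem_blow_up}.}
The plan is to reduce the statement to the single-step analysis of Lemma~\ref{lem:CAT differential} by a measure-theoretic selection of a good basepoint, and then to iterate. First I would fix a bilipschitz parametrization $\Phi:\R^n\to X$ with image $Q$, and pick a countable dense subset $E$ of the unit sphere of $\R^n$. As recalled in the excerpt (from \cite[Proposition 6.1]{lytchak2005differentiation} and \cite[Lemma 11]{stadler2018structure}), there is a full-measure subset $G\subset\R^n$ of points satisfying conditions (a) and (b). The non-injectivity hypothesis is phrased at a single point $p\in Q$, but I would first upgrade it to non-injectivity at a \emph{generic} point. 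Indeed, non-injectivity of $H_n(Q,Q\setminus\{p\},\f)\to H_n(X,X\setminus\{p\},\f)$ is a ``support/linking'' condition: a relative cycle supported near $p$ in $Q$ bounds in $X$ while missing $p$. Using that $Q$ is a topological $n$-manifold, one sees this condition propagates: if it holds at some $p\in Q$, then by a standard covering/excision argument (the local homology of $Q$ is locally constant and the obstruction class extends over connected neighborhoods), it holds at every point of a neighborhood, hence on a positive-measure set; intersecting with $G$ yields a point $p_0\in Q$ which satisfies (a), (b) \emph{and} the non-injectivity hypothesis.

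Having fixed such a $p_0$, I would apply Lemma~\ref{lem:CAT differential} directly. Part (5) of that lemma gives a blow up $Y=\lim_\om(\tfrac1{\eps_i}X,p_0)$ in which $F:=\im\Phi_\om$ is an $n$-flat (part (2)) and the map $H_n(F,F\setminus\{o\},\f)\to H_n(Y,Y\setminus\{o\},\f)$ is again not injective. Moreover $Y$ is a $\cat(0)$ space (a blow up of a $\cat(0)$ space is $\cat(0)$), and $F\subset Y$ is now a genuine flat, not merely a bilipschitz flat. So after one blow up step the hypotheses of the lemma are satisfied with the additional feature that the flat is affinely (isometrically) embedded. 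Now I would set up the iteration: as long as the current flat does not bound a flat half-space in the current space, I would like to produce a further blow up in which the non-injectivity persists and some additional structure is gained, and argue that this process must terminate in a space where a flat half-space appears. Since $F$ is already a flat, I can replace $\Phi_\om$ by an affine parametrization and re-run the selection-of-generic-point argument inside $Y$; each blow up either already exhibits a flat half-space bounded by the flat, or feeds the induction.

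The termination of the iteration is the main obstacle, and it is where I would spend most of the effort. The idea is a dimension/complexity count: at a blow up of a $\cat(0)$ space at a point lying on a flat $F$, the ``normal geometry'' to $F$ simplifies — concretely, one can track the Tits boundary of the union of geodesics issuing into the half-spaces linking $F$, or the dimension of the spherical join factor in $\partial_\infty$ corresponding to directions transverse to $F$. Non-injectivity of $H_n(F,F\setminus\{o\},\f)\to H_n(Z,Z\setminus\{o\},\f)$ means some $(n-1)$-cycle linking $o$ in $F$ bounds in $Z\setminus\{o\}$; if no flat half-space is attached to $F$, one can blow up again at a well-chosen point on $F$ to ``peel off'' one more transverse direction while preserving this linking obstruction (again via the generic-basepoint argument and Lemma~\ref{lem:CAT differential} applied to the restriction of the picture to $F$ plus a transverse direction). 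Since the relevant transverse complexity is bounded above — e.g. by the geometric dimension of $X$, or by the dimension of the relevant part of the Tits boundary, which is finite under the standing hypotheses — the iteration stops, and at the final stage the only way the linking obstruction can survive is for $F$ to bound a flat half-space in the corresponding iterated blow up $Z$. I would make the ``peel off one transverse direction'' step precise by choosing the blow up scale and basepoint so that the rescaled chain $\tau$ witnessing non-injectivity converges (using Wenger-type compactness as in Lemma~\ref{lem_compactness}) to a limit chain still avoiding the cone point, and then invoking the $\cat(0)$ projection to $F$ as in the proof of Lemma~\ref{lem:CAT differential}(5) to keep the limit class nontrivial. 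Once the limit flat $Q_Z$ admits an $(n-1)$-cycle that links the origin within $Q_Z$ yet bounds in $Z\setminus\{o\}$ with $Z$ having nontrivial transverse structure only in the form of a Euclidean factor, a direct $\cat(0)$ convexity argument shows that the union of geodesics realizing the bounding chain, together with $Q_Z$, sweeps out an isometrically embedded flat half-space, completing the proof.
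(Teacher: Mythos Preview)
Your opening is right and matches the paper: the non-injectivity locus is open in $Q$, so you may assume the basepoint lies in the full-measure set $G$, and one application of Lemma~\ref{lem:CAT differential} produces a blow up $(Y,F)$ with $F$ a genuine flat and the non-injectivity persisting at the cone point. After that, however, your iteration is too vague and rests on the wrong mechanism.

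The paper's iteration is not ``blow up again and hope some transverse complexity drops.'' It is much more specific: pass to the \emph{tangent cone} $Y_1=T_{p_0}Y$ (via $\log_{p_0}$, which is $1$-Lipschitz and carries the non-injectivity forward), then take the tangent cone of $Y_1$ at a point $p_1\in Q_1$ \emph{different from the cone point}. The key structural fact you are missing is that the tangent cone of a metric cone at a non-apex point splits off a line, and since $p_1$ lies on the flat $Q_1$, this line lies in $Q_2$. So each step splits off one $\R$-factor \emph{along} the flat, not transverse to it. After exactly $n$ steps one reaches $Y_n=Q_n\times Q_n^{\perp}$ with $Q_n$ a direct factor; non-injectivity then forces $Q_n^{\perp}$ to contain a nondegenerate segment, and $Q_n\times[0,a]$ is the desired half-space. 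Your proposed termination by ``geometric dimension of $X$'' or ``dimension of the transverse Tits factor'' is not available---there is no such bound in the hypotheses---and the Wenger compactness of Lemma~\ref{lem_compactness} plays no role here; the argument is synthetic $\cat(0)$ geometry via $\log$ maps. Finally, there is a last step you omit: a tangent cone is not literally a blow up, so one needs the Gromov--Hausdorff approximation claim in the paper's proof (pulling the pair $(C,K)=(\text{cube},\text{cube}\times[0,a])$ back through the chain of tangent cones to compact pairs in $X_0$) to realize the half-space inside an actual iterated blow up of $X$.
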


\begin{proof}
 We claim it is possible to take a blow up $X_0$ of $X$ such that the limit $Q_0$ of $Q$ is flat and there exists $p_0\in Q_0$ such that the map $H_n(Q_0,Q_0\setminus\{p_0\},\f)\to H_n(X_0,X_0\setminus\{p_0\},\f)$ is not injective. To see this, note that the collection of all $p\in Q$ such that $H_n(Q,Q\setminus\{p\},\f)\to H_n(X,X\setminus\{p\},\f)$ is not injective is an open subset of $Q$. Take one such $p$ which also satisfies conditions (a) and (b) before Lemma~\ref{lem:CAT differential}. Taking $\eps_i\to 0$ and blowing up at $p$, we have $Q_0=\lim (\frac{1}{\eps_i}Q,p)$ sitting inside $X_0=\lim (\frac{1}{\eps_i}X,p)$. Now the claim follows from Lemma~\ref{lem:CAT differential}. 
	
	Define $Y_1=T_{p_0} Y_0$ and $Q_1=\log_{p_0}(Q_0)$. Then $Q_1$ is a flat in $Y_1$. The map $\log_{p_0}$ implies that $H_n(Q_1,Q_1\setminus\{o_1\},\f)\to H_n(Y_1,Y_1\setminus\{o_1\},\f)$ is not injective where $o_1$ is the cone point of $Y_1$. Take $p_1\in Q_1$ such that $p_1\neq o_1$. Then $H_n(Q_1,Q_1\setminus\{p_1\},\f)\to H_n(Y_1,Y_1\setminus\{p_1\},\f)$ is not injective. Define $Y_2=T_{p_1} Y_1$ and $Q_2=\log_{p_1}(Q_1)$. Then we have non-injectivity of local homology for $Q_2\to Y_2$ as before. Moreover, there is a line $\ell$ in $Q_2$ such that $Y_2=\ell\times Y'_2$ and $Q_2=\ell\times Q'_2$. Then non-injectivity of local homology holds for $Q'_2\to Y'_2$. Repeat the previous process for $Y'_2$. This produces a sequence $(Y_i,Q_i)_{i=1}^n$ such that $Y_i=T_{p_{i-1}}Y_{i-1}$ and $Q_i=\log_{p_{i-1}}(Q_{i-1})$. $Y_i$ splits off more and more line factors and we assume in the end $Y_n=Q_n\times Q^{\perp}_n$. Non-injectivity of local homology holds for $Q_n\to Y_n$. Hence $Q_n$ bounds an isometrically embedded copy of $Q_n\times[0,a]$. 
	
	We claim for any pair of compact sets $(C,K)$ with $C\subset K$, $C\subset Q_n$ and $K\subset Y_n$, there exist a sequence of pairs of compact subsets $(C_i,K_i)$ and a scaling sequence $\la_i\to 0$ such that
	\begin{enumerate}
		\item $C_i\subset K_i$;
		\item $C_i\subset Q_0$ and $K_i\subset X_0$;
		\item $\lim_i\frac{1}{\la_i}(C_i,K_i)=(C,K)$ in the sense of Gromov-Hausdorff.
	\end{enumerate}
	We induct on $n$. The base case $n=0$ is trivial. In general, by \cite[Lemma 2.1]{kleiner1999local}, we can always take $(C'_i,K'_i)$ satisfying all the above conditions with (2) replaced by $C'_i\subset Q_{n-1}$ and $K'_i\subset X_{n-1}$. The claim follows by applying the induction assumption to each pair $(C'_i,K'_i)$ and running a diagonal argument. Now we apply the claim to the case where $C$ is a top-dimensional unit cube in $Q_n$ and $K=C\times[0,a]$. It gives a blow up $Y'$ of $X_0$ such that the limit $Q'\subset Y'$ of $Q_0\subset X_0$ has a top-dimensional cube $C'\subset Q'$ which bounds $C'\times [0,a]$. Now the lemma follows by blowing up $Y'$ again.
\end{proof}

\begin{prop}
	\label{lem_coefficient}
	Let $Q$ be an $n$-quasiflat in a $\cat(0)$ space $X$. Let $\f$ be a non-trivial abelian group. Consider the following conditions.
	\ben
	\item There exists an asymptotic cone $X_\om$ of $X$ and $p_\om\in Q_\om$ such that the map $H_n(Q_\om,Q_\om\setminus\{p_\om\},\f)\to H_n(X_\om,X_\om\setminus\{p_\om\},\f)$ is not injective.  
	\item There exists an asymptotic cone $X_\om$ of $X$ such that the limit $Q_\om$ of $Q$ is an $n$-flat which bounds a flat half-space in $X_\om$. 
	\item There exists an ultralimit $X_\om=\lim_{\om}(X,p_i)$ such that the limit $Q_\om$ of $Q$ is an $n$-flat which bounds a flat half-space in $X_\om$. 
	\een
	Then (1) and (2) are equivalent. If we assume in addition that $Q$ is a flat, then all three conditions are equivalent.
\end{prop}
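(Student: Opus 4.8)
The plan is to prove the equivalences in three stages, following the implications $(1)\Rightarrow(2)$, $(2)\Rightarrow(1)$, and, under the extra hypothesis that $Q$ is a flat, $(2)\Rightarrow(3)$ and $(3)\Rightarrow(1)$. First I would observe that for the implication $(1)\Rightarrow(2)$ the key input is Lemma~\ref{lem_blow_up}, applied not to $X$ itself but to the asymptotic cone $X_\om$: since $Q_\om$ is a bilipschitz flat in the $\cat(0)$ space $X_\om$ (asymptotic cones of $\cat(0)$ spaces are $\cat(0)$), non-injectivity of the local homology map $H_n(Q_\om,Q_\om\setminus\{p_\om\},\f)\to H_n(X_\om,X_\om\setminus\{p_\om\},\f)$ produces, by Lemma~\ref{lem_blow_up}, an iterated blow up $Z$ of $X_\om$ in which the limit of $Q_\om$ is an $n$-flat bounding a flat half-space. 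The main obstacle in this stage is that $Z$ is an iterated blow up of $X_\om$, not an asymptotic cone of $X$; the fix is the standard fact that an iterated blow up of an asymptotic cone of $X$ is again (bilipschitz to, and in the $\cat(0)$ case isometric to) an asymptotic cone of $X$ — one reparametrizes the two successive ultralimits as a single ultralimit by a diagonal argument, choosing the inner scaling sequence to go to infinity slowly relative to the outer one. This gives an asymptotic cone $X_{\om'}$ of $X$ whose associated limit of $Q$ is an $n$-flat bounding a flat half-space, which is precisely~(2).

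For $(2)\Rightarrow(1)$ I would argue directly: suppose in some asymptotic cone $X_\om$ the limit $Q_\om$ is an $n$-flat $F$ bounding an isometrically embedded flat half-space $F\times[0,a]$. Pick an interior point $q$ of the ``inner face'' of the half-space, i.e.\ a point $q\in F$, and consider a small $n$-cube $B\subset F$ around $q$. The flat half-space gives a singular $(n+1)$-chain — the cube $B\times[0,a]$ — whose boundary is $\D\bb{B\times[0,a]} = \D\bb{B}\times[0,a] \pm B\times\{0\} \pm B\times\{a\}$; pushing off the $F\times\{a\}$ face and the sides to a chain missing $q$ exhibits a filling of (a multiple of) $\al_*(\D\bb{B})$ whose image misses $q$, contradicting the injectivity in~(1) for the cone point. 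Concretely this is the same computation that appears in the introduction's motivating discussion after Definition~\ref{def_full_support_intro}, so I would phrase it as: the flat half-space is exactly a ``rigidity-violating'' chain, hence $F$ does not have full support, hence the local homology map is not injective at $q$, which is~(1). The routine point to be careful about here is getting the coefficient group $\f$ right — but the half-space chain is $\f$-nontrivial for any nontrivial $\f$ since it is a genuine topological product, so this is harmless.

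For the remaining two implications I use the extra hypothesis that $Q$ itself is a flat (not merely a quasiflat). The implication $(3)\Rightarrow(1)$ is the easy direction: given an ultralimit $X_\om=\lim_\om(X,p_i)$ (with \emph{bounded} rescaling) in which the limit $Q_\om$ of $Q$ is an $n$-flat bounding a flat half-space, the previous paragraph's argument applies verbatim to show the local homology map for $Q_\om\subset X_\om$ is not injective at the relevant point; and since $Q$ is a flat, $Q_\om$ is isometric to $Q$ and the inclusion-induced local homology map for $Q_\om\subset X_\om$ is controlled by that for $Q\subset X$ via the ultralimit — more precisely, a non-injectivity witness in $X_\om$ consists of compact chains, which can be pulled back to $X$ for $i$ large by the usual ``compact subsets of ultralimits come from the approximating spaces'' principle (\cite[Lemma 2.1]{kleiner1999local}), yielding non-injectivity of $H_n(Q,Q\setminus\{p\},\f)\to H_n(X,X\setminus\{p\},\f)$ for some $p\in Q$; then passing to an asymptotic cone (take any ultralimit on scale $\to\infty$ based at $p$, or simply note that a single point gives an asymptotic cone in which $Q$ reappears as a flat) reproduces~(1). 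Finally $(2)\Rightarrow(3)$ is immediate since an asymptotic cone \emph{is} a particular kind of ultralimit $\lim_\om(X,p_i)$ (allowing the rescaling, or rather: for $Q$ a flat one may also realize~(2)'s conclusion at bounded scale by rescaling back, since the flat half-space statement is scale-invariant and $Q$ is genuinely flat). The main obstacle across these last steps is purely bookkeeping: keeping straight which statements need bounded versus unbounded rescaling, and why the hypothesis ``$Q$ is a flat'' is exactly what lets one trade an asymptotic cone for a plain ultralimit — it is because the rescaled copies of a flat are all isometric, so no information about $Q$ is lost or gained under rescaling, whereas for a genuine quasiflat the quasi-isometry constants blow up or collapse and only the asymptotic-cone statement~(2) survives.
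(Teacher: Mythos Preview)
Your treatment of $(1)\Leftrightarrow(2)$ matches the paper's: $(2)\Rightarrow(1)$ is immediate from the half-space chain, and $(1)\Rightarrow(2)$ goes via Lemma~\ref{lem_blow_up} together with the fact that iterated blow-ups of asymptotic cones are again asymptotic cones. Good.

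The genuine problem is in the second half: you have the difficulty of the two remaining implications reversed. Condition~(3) concerns an ultralimit $\lim_\om(X,p_i)$ with \emph{no rescaling} (the paper notes this explicitly right after the statement), so an asymptotic cone is \emph{not} an instance of~(3), and your claim that $(2)\Rightarrow(3)$ is ``immediate since an asymptotic cone is a particular kind of ultralimit'' is simply false. Your parenthetical fix (``rescale back, since the flat half-space statement is scale-invariant'') does not work either: rescaling an asymptotic cone by any factor yields another asymptotic cone, never an unscaled ultralimit of $X$. In the paper this is the \emph{hard} direction, established by the argument of \cite{francaviglia2010large}: one takes slabs $C_\om\times[0,a_i]$ of growing height $a_i\to\infty$ in the asymptotic cone, approximates each top face $C_\om\times\{a_i\}$ by maps $f_{ij}$ into $X$, and studies the geometry of $\im f_{ij}$ and their projections onto the flat $Q$ to extract an honest half-flat in an unscaled ultralimit. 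This construction genuinely uses that $Q$ is a flat (so that projection onto $Q$ is a $1$-Lipschitz retraction), and it is not a one-line rescaling trick.

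Conversely, $(3)\Rightarrow(2)$ is the easy direction (the paper says ``clear''), but your route through ``pull back non-injectivity to $X$, then pass to an asymptotic cone'' has a gap: non-injectivity of $H_n(Q,Q\setminus\{p\},\f)\to H_n(X,X\setminus\{p\},\f)$ at a single point of $X$ does \emph{not} survive passage to an asymptotic cone --- the witnessing chain collapses to a point under rescaling by $1/r_k\to 0$. The clean argument is instead: given the half-space $Q_\om\times[0,\infty)$ in the unscaled ultralimit $X_\om$, take an asymptotic cone of $X_\om$ based at a point of $Q_\om$. Since both $Q_\om$ and the half-space $Q_\om\times[0,\infty)$ are scale-invariant, they survive; and an asymptotic cone of an unscaled ultralimit of $X$ is (by the iterated-ultralimit diagonal argument you already invoked) an asymptotic cone of $X$. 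This gives~(2) directly.
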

Note that there is no scaling in condition (3).

\begin{proof}
	Clearly $(2)\Rightarrow(1)$. Now we prove $(1)\Rightarrow(2)$. By Lemma~\ref{lem_blow_up}, $X_\omega$ has an iterated blow up $Z$ where the limit of $Q_\om$ in $Z$ is a flat bounding a flat half-space. Now (2) follows as rescaling an asymptotic cone of $X$ gives another asymptotic cone of $X$, and
	ultralimits of asymptotic cones are asymptotic cones (cf. \cite[Chapter 10.7]{ggt}).

	Now we assume $Q$ is a flat. $(3)\Rightarrow(2)$ is clear. $(2)\Rightarrow(3)$ follows essentially from the argument in \cite{francaviglia2010large}. Asymptotic cones in \cite{francaviglia2010large} are defined using fixed base point, and $(3)\Rightarrow(2)$ can be seen as a version of \cite[Theorem B]{francaviglia2010large} with varying base point (we do not need local compactness). The idea is to consider compact sets $K_i=C_\om\times[0,a_i]$ in $X_\om$ with $C_\om$ being a top-dimensional unit cube in $Q_\om$. Let $D_i=C_\om\times\{a_i\}$. We approximate each $D_i\to X_\om$ by a sequence of continuous maps $f_{ij}$ into $X$. Take $a_i\to\infty$ and study the geometry of $\im f_{ij}$ and their projections on $Q$ will give the required half-flat. We refer to \cite[Section 3]{francaviglia2010large} for more details.
\end{proof}

The following results apply to all coarse median spaces \cite{bowditch2013coarse}, whose asymptotic cones are bilipschitz to $\cat(0)$ spaces. 
\begin{cor}
	\label{cor:bilip}	
	Let $X$ be a metric space such that any asymptotic cone of $X$ is bilipschitz to a $\cat(0)$ space. Let $Q$ be an $n$-quasiflat in $X$. Let $\f$ be a non-trivial abelian group. Then the following are equivalent:
	\ben
	\item  There exist an asymptotic cone $X_\om$ of $X$ and $p_\om\in Q_\om$ such that the map $H_n(Q_\om,Q_\om\setminus\{p_\om\},\f)\to H_n(X_\om,X_\om\setminus\{p_\om\},\f)$ is not injective.  
	\item There exists an asymptotic cone $X_\om$ of $X$ such that the limit $Q_\om$ of $Q$ is an $n$-dimensional bilipschitz flat which bounds a bilipschitz embedded flat half-space in $X_\om$. 
	\een
\end{cor}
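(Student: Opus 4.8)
\textbf{Proof proposal for Corollary~\ref{cor:bilip}.}
The plan is to reduce the statement to the $\cat(0)$ case handled by Proposition~\ref{lem_coefficient}, using the fact that bilipschitz homeomorphisms transport both the relevant local-homology non-injectivity and the existence of embedded half-flats (up to the obvious bilipschitz distortion). So fix an asymptotic cone $X_\om$ of $X$ and a bilipschitz homeomorphism $g\colon X_\om\to Y$ onto a $\cat(0)$ space $Y$; note $Y$ is then again a $\cat(0)$ space and $g(Q_\om)$ is an $n$-dimensional bilipschitz flat in $Y$.

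For the implication $(2)\Rightarrow(1)$: suppose $Q_\om$ is an $n$-dimensional bilipschitz flat bounding a bilipschitz embedded flat half-space $H\simeq \R^{n}\times[0,a]$ in $X_\om$. Apply $g$: then $g(H)$ is a bilipschitz embedded half-flat in the $\cat(0)$ space $Y$ with $g(Q_\om)$ as its boundary flat. Since a half-flat has nontrivial local homology along its boundary relative to the ambient space in the standard way — more precisely, picking an interior point $p$ of the boundary flat, one sees $H_n(g(Q_\om),g(Q_\om)\setminus\{p\},\f)\to H_n(Y,Y\setminus\{p\},\f)$ is not injective, because a relative $n$-cycle in $g(Q_\om)$ around $p$ can be pushed across the extra $[0,a]$ direction to become trivial in $Y\setminus\{p\}$ while its class in $g(Q_\om)$ around $p$ is nonzero (this is exactly the mechanism used implicitly throughout Section~\ref{subsec_cone_conditions} and in Proposition~\ref{lem_coefficient}; it is a bilipschitz-invariant topological statement once we are in the metric space $Y$, but really it is purely topological). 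Transporting back through $g^{-1}$ and setting $p_\om=g^{-1}(p)\in Q_\om$ gives non-injectivity of $H_n(Q_\om,Q_\om\setminus\{p_\om\},\f)\to H_n(X_\om,X_\om\setminus\{p_\om\},\f)$, which is $(1)$.

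For the implication $(1)\Rightarrow(2)$: given an asymptotic cone $X_\om$ and $p_\om\in Q_\om$ with non-injectivity as in $(1)$, apply $g$ to obtain non-injectivity of $H_n(g(Q_\om),g(Q_\om)\setminus\{g(p_\om)\},\f)\to H_n(Y,Y\setminus\{g(p_\om)\},\f)$ in the $\cat(0)$ space $Y$. Now invoke the equivalence $(1)\Leftrightarrow(2)$ of Proposition~\ref{lem_coefficient} applied to the $n$-quasiflat $g(Q_\om)$ in $Y$: we conclude that some asymptotic cone $Y_\om'$ of $Y$ has the property that the limit $(g(Q_\om))_{\om'}$ is an $n$-flat bounding a flat half-space in $Y_\om'$. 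Pulling back along the ultralimit of $g^{-1}$ (a bilipschitz homeomorphism of the corresponding asymptotic cones), we get a bilipschitz homeomorphism $X'_{\om'}\to Y'_{\om'}$ where $X'_{\om'}$ is an asymptotic cone of $X_\om$ — hence, since ultralimits of asymptotic cones are asymptotic cones (\cite[Chapter 10.7]{ggt}), an asymptotic cone of $X$ — and under it the limit of $Q$ is the bilipschitz preimage of an $n$-flat bounding a flat half-space, i.e. an $n$-dimensional bilipschitz flat bounding a bilipschitz embedded flat half-space. This is exactly condition $(2)$.

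The main obstacle is bookkeeping rather than substance: one must check that the bilipschitz homeomorphism $g$ (and its ultralimits) correctly identify asymptotic cones of $X_\om$ with asymptotic cones of $Y$, and that "bilipschitz embedded flat half-space" is genuinely the image of a flat half-space under a bilipschitz map — which is immediate once stated — together with the observation that Proposition~\ref{lem_coefficient} produces, from the non-injectivity hypothesis, an honest \emph{flat} half-space in a \emph{further} asymptotic cone of the $\cat(0)$ space, which after transport becomes only a \emph{bilipschitz} half-flat in the corresponding asymptotic cone of $X$; this is why condition $(2)$ of the corollary asks for a bilipschitz rather than isometric half-flat. No new analytic input is required beyond Proposition~\ref{lem_coefficient}.
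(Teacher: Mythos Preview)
Your overall strategy — transport to a $\cat(0)$ space $Y$ via the bilipschitz map, apply the $\cat(0)$ result there, and transport back through the induced bilipschitz map on the relevant limit spaces — is exactly the paper's approach, and your bookkeeping about ultralimits of bilipschitz maps and about asymptotic cones of asymptotic cones being asymptotic cones is correct.

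There is, however, a citation mismatch that creates a genuine gap. You invoke Proposition~\ref{lem_coefficient} applied to the quasiflat $g(Q_\om)$ in the $\cat(0)$ space $Y$. But condition (1) of that proposition requires non-injectivity of local homology in some \emph{asymptotic cone} of the ambient $\cat(0)$ space, whereas what you have established is non-injectivity at a point $g(p_\om)$ in $Y$ \emph{itself}. These are not the same hypothesis, and there is no automatic way to promote non-injectivity in $Y$ to non-injectivity in an asymptotic cone of $Y$.

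The fix is to invoke Lemma~\ref{lem_blow_up} directly: its hypothesis is precisely ``$\cat(0)$ space, bilipschitz flat, non-injectivity of local homology at a point'', which is exactly what you have for $(Y,g(Q_\om),g(p_\om))$. Its conclusion is an \emph{iterated blow up} $Y'$ of $Y$ in which the limit of $g(Q_\om)$ is a flat bounding a flat half-space. This is what the paper does. Then the bilipschitz map $g^{-1}$ induces bilipschitz maps between the corresponding iterated blow ups, giving an iterated blow up $X'$ of $X_\om$ in which the limit of $Q_\om$ bounds a bilipschitz half-flat; since rescalings and ultralimits of asymptotic cones are again asymptotic cones, $X'$ is an asymptotic cone of $X$. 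With this correction your argument is complete and coincides with the paper's.
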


\begin{proof}
	$(2)\Rightarrow (1)$ is clear. Now we show $(1)\Rightarrow (2)$.
	Let $Y_\omega$ be a $\cat(0)$ space with a bilipschitz map $f:Y_\om\to X_\omega$. Let $R_\om=f^{-1}(Q_\om)$. By Lemma~\ref{lem_blow_up}, there is an iterated blow up $Y'_\om$ of $Y_\om$ such that the limit $R'_\om\subset Y'_\om$ of $R_\om$ is a flat bounding a flat half-space. The map $f$ gives rise to a bilipschitz map $g:Y'_\om\to X'_\om$ where $X'_\om$ is an iterated blow up of $X_\om$. Let $Q'_\om\subset X'_\om$ be the limit of $Q_\om\subset X_\om$. As $R_\om=f^{-1}(Q_\om)$, we have $R'_\om=g^{-1}(Q'_\om)$. Thus $Q'_\om$ bounds a bilipschitz embedded flat half-space in $X'_\omega$. Moreover, $X'_\omega$ is an asymptotic cone of $X$ (cf. \cite[Chapter 10.7]{ggt}).
\end{proof}

\begin{cor}\label{cor_Morse_crit}
	Suppose $X$ is a proper $\cat(0)$ space. Suppose $F\subset X$ is a flat such that the stabilizer of $F$ in $\isom(X)$ acts cocompactly on $F$. Then $F$ is Morse if and only if $F$ does not bound an isometrically embedded half-flat.
\end{cor}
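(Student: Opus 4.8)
The plan is to deduce Corollary~\ref{cor_Morse_crit} from the half-flat criterion of Proposition~\ref{lem_coefficient}, combined with the characterization of Morseness via asymptotic cones in Theorem~\ref{thm:conditions in asymptotic cones}; the cocompactness hypothesis is used only at the very end, to transport a statement proved in an ultralimit back to $X$ itself.

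First I record that the hypotheses of Theorem~\ref{thm:conditions in asymptotic cones} are met: a proper $\cat(0)$ space satisfies {\rm (SCI$_k$)} for every $k$ (via geodesics, see Remark~\ref{rmk_combing}), and all its asymptotic cones are again $\cat(0)$, hence carry Lipschitz combings. Writing $n$ for the dimension of $F$, it follows that $F$ is Morse if and only if it has the asymptotic full support property with respect to reduced singular homology; unwinding Definition~\ref{def_full_support}, this says exactly that for every asymptotic cone $X_\om$ and every point $q$ in the limit flat $F_\om$ the map $H_n(F_\om,F_\om\setminus\{q\},\Z)\to H_n(X_\om,X_\om\setminus\{q\},\Z)$ is injective. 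Negating, $F$ fails to be Morse if and only if condition $(1)$ of Proposition~\ref{lem_coefficient} holds with $\f=\Z$; and since $F$ is a flat, Proposition~\ref{lem_coefficient} upgrades this to its condition $(3)$: there is an \emph{unrescaled} ultralimit $X_\om=\lim_\om(X,p_i)$ in which the limit of $F$ is an $n$-flat bounding a flat half-space. So the corollary reduces to showing that such an ultralimit exists if and only if $F$ itself bounds an isometrically embedded flat half-space in $X$.

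One direction needs no input on $\isom(X)$: if $F$ bounds a flat half-space $H\subset X$, then fixing $q\in F$ and taking the constant-basepoint, unrescaled ultralimit $\lim_\om(X,q)$ — which is isometric to $X$ because $X$ is proper — the limit of $F$ is $F$ and it bounds $H$, which is condition $(3)$. The hard part will be the converse, and this is where cocompactness enters. Given an ultralimit $X_\om=\lim_\om(X,p_i)$ in which $F_\om$ bounds a flat half-space $H_\om$, I would first observe that $d(p_i,F)$ is bounded (since $F_\om\neq\es$) and move the basepoints onto $F$, changing $X_\om$ only by an isometry. Setting $\Ga=\stab_{\isom(X)}(F)$ and choosing a compact $K\subset F$ with $\Ga K=F$, pick $\ga_i\in\Ga$ with $\ga_i(p_i)\in K$; since the $\ga_i$ are isometries of $X$ preserving $F$ setwise, they induce an isometry $\lim_\om(X,p_i)\cong\lim_\om(X,\ga_i(p_i))$ carrying $F_\om$ and $H_\om$ to the limits of $F$ and of $H_\om$. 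An $\om$-limit $q\in K$ of the sequence $(\ga_i(p_i))$ together with properness gives $\lim_\om(X,\ga_i(p_i))\cong\lim_\om(X,q)\cong X$, and under this identification the limit of the closed set $F$ is $F$ itself. Hence $H_\om$ transports to an isometrically embedded flat half-space in $X$ with boundary $F$.

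The only subtlety I anticipate is the ultralimit bookkeeping in this last step: checking that translating basepoints by bounded amounts, applying the isometries $\ga_i$, and passing to an $\om$-limit of basepoints lying in a compact set each alter the ultralimit only by a canonical isometry, and that these isometries are compatible with the limit subsets $F_\om$ and $H_\om$. This is routine, but it must be done carefully, and properness of $X$ is used twice — to identify $\lim_\om(X,q)$ with $X$, and to identify the limit of $F$ with $F$. Putting the two directions together yields the stated equivalence.
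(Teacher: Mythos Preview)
Your proposal is correct and follows exactly the route the paper intends: the corollary is stated immediately after Proposition~\ref{lem_coefficient} as a direct consequence, and your argument---linking Morseness to condition~(1) via Theorem~\ref{thm:conditions in asymptotic cones body}, passing to condition~(3) since $F$ is a flat, and then using the cocompact stabilizer together with properness to transport the half-flat from an unrescaled ultralimit back into $X$---is precisely the intended deduction. The ultralimit bookkeeping you flag is indeed routine and you have handled it correctly.
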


For an abelian group $\f$, we say an $n$-dimensional quasiflat $Q$ in $X$ is \emph{$\f$-Morse} if for any asymptotic cone $X_\om$ (with base points in $Q$), the inclusion $Q_\om\to X_\om$ induces injective map on the $n$-th local homology with coefficient $\f$ at each point in $Q_\om$.
\begin{cor}
	Suppose $X$ is a proper $\cat(0)$ space. Let $Q$ be a quasiflat and let $\f$ be a non-trivial abelian group. Then $Q$ is $\mathbb Z$-Morse if and only if $Q$ is $\f$-Morse.
\end{cor}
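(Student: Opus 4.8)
The statement is an immediate consequence of Proposition~\ref{lem_coefficient}, and the plan is simply to apply that proposition twice. The key observation is that condition $(2)$ of Proposition~\ref{lem_coefficient} --- the existence of an asymptotic cone of $X$ in which the ultralimit of $Q$ is an $n$-flat bounding a flat half-space --- is a purely metric condition that makes no reference to the coefficient group.

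First I would unwind the definitions. By definition $Q$ fails to be $\f$-Morse exactly when there is an asymptotic cone $X_\om$ (with base points on $Q$) and a point $p_\om\in Q_\om$ for which $H_n(Q_\om,Q_\om\setminus\{p_\om\},\f)\to H_n(X_\om,X_\om\setminus\{p_\om\},\f)$ is not injective. This is precisely condition $(1)$ of Proposition~\ref{lem_coefficient} with coefficients in $\f$; the only point to check is that in Proposition~\ref{lem_coefficient} the asymptotic cone is allowed to have arbitrary base points, whereas the definition of $\f$-Morse requires base points on $Q$. But if $p_\om\in Q_\om$ then the defining base points $p_i$ satisfy $d(p_i,Q)/r_i$ bounded, so replacing each $p_i$ by a nearest point of $Q$ yields the very same ultralimit with its distinguished base point now lying on $Q$, and the same non-injectivity persists since local homology is insensitive to the choice of base point. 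Thus ``$Q$ is not $\f$-Morse'' is equivalent to condition $(1)$ of Proposition~\ref{lem_coefficient} for $\f$, and likewise ``$Q$ is not $\Z$-Morse'' is equivalent to condition $(1)$ for $\Z$.

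Now apply Proposition~\ref{lem_coefficient}: for the coefficient group $\f$ it gives $(1)\Leftrightarrow(2)$, and for $\Z$ it gives $(1)\Leftrightarrow(2)$, where $(2)$ is literally the same statement in both cases. Hence the two instances of $(1)$ are equivalent, i.e. $Q$ is $\f$-Morse if and only if $Q$ is $\Z$-Morse, which is the assertion. (Since a proper $\cat(0)$ space, and all of its asymptotic cones, carry a convex geodesic bicombing, Theorem~\ref{thm:conditions in asymptotic cones body} identifies being $\Z$-Morse with being Morse in the sense of Definition~\ref{def:Morse}, so one may equally read the conclusion as: $Q$ is $\f$-Morse for one --- equivalently every --- non-trivial $\f$ precisely when it is Morse.) There is no serious obstacle here: all of the content sits in Proposition~\ref{lem_coefficient}, and what remains is formal, the only subtleties being the base-point bookkeeping described above and the fact that the equivalence $(1)\Leftrightarrow(2)$ in Proposition~\ref{lem_coefficient} is established for arbitrary quasiflats, which is exactly the generality we need here.
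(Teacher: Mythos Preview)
Your argument is correct and is precisely the approach the paper has in mind: the corollary is stated without proof because it follows immediately from the equivalence $(1)\Leftrightarrow(2)$ in Proposition~\ref{lem_coefficient}, the point being that condition~$(2)$ is coefficient-free. Your base-point bookkeeping and the parenthetical remark linking $\Z$-Morse to Morse via Theorem~\ref{thm:conditions in asymptotic cones body} are accurate but optional elaborations of what the paper leaves implicit.
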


\subsection{Metrics on half-planes}
\label{subsec_halfplane}
It is natural to ask whether the ``halfspace criterion'' in Proposition~\ref{lem_coefficient intro} holds for more general metric spaces. We conjecture that for metric spaces with convex geodesic bicombings, Proposition~\ref{lem_coefficient intro} still holds with flat replaced by isometrically embedded normed vector spaces, and flat halfspace replaced by halfspaces in normed vector spaces. However, there is a limit on how far we can push this ``halfspace criterion''. Our goal in this subsection is to present an example which shows that naive generalizations of this criterion to spaces satisfying coning inequalities fail.

\begin{prop}
	\label{prop:halfplane}
	There exists a Riemannian metric $d$ on the upper half plane $\R^2_{\ge 0}$ such that
	\begin{enumerate}
		\item $X=(\R^2_{\ge 0},d)$ satisfies condition {\rm (CI$_{n}$)} for any $n$;
		\item the boundary $\ell$ of $\R^2_{\ge 0}$ is a quasi-geodesic which is not a Morse quasi-geodesic (it violates the super-Euclidean divergence condition);
		\item there does not exist an asymptotic cone $X_\om$ of $X$ (with base points in $\ell$) such that the limit $\ell_\om$ of $\ell$ bounds a bilipschitz embedded flat half plane.
	\end{enumerate}
\end{prop}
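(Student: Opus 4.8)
The plan is to construct the metric $d$ explicitly as a warped-product-type Riemannian metric on $\R^2_{\geq 0} = \{(x,y) : y \geq 0\}$ of the form $d s^2 = dy^2 + f(x,y)^2\, dx^2$, where $f$ is a carefully chosen smooth positive function with $f(x,0)$ growing (say like $|x|$ for $|x|$ large) so that the boundary line $\ell = \{y=0\}$ picks up extra length and becomes a genuine quasi-geodesic that is not Morse, while $f(x,y)$ for $y$ bounded away from $0$ is essentially constant so that the ``interior'' looks like a flat strip glued to the boundary. First I would set up the coordinates and write the candidate $f$; a natural choice is something like $f(x,y) = 1 + \phi(y)\psi(x)$ where $\phi$ is a smooth cutoff that is $1$ near $y=0$ and $0$ for $y \geq 1$, and $\psi$ grows like $\log$ or a slowly-growing function chosen so that the divergence of $\ell$ is super-linear-but-sub-quadratic-in-a-controlled-way, i.e. fast enough that $\ell$ fails super-Euclidean divergence yet the ambient space still satisfies coning inequalities.

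For part (1), since $X$ is a simply connected complete Riemannian surface, I would verify condition (CI$_n$) for all $n$ by checking $X$ has a Lipschitz combing (Remark~\ref{rmk_combing}): one produces combing paths by first running along the $y$-direction to height $y=1$ where the metric is flat, then along horizontal lines, then back down, and checks the Lipschitz bound on the combing using the explicit form of $f$ and the fact that $f$ and its derivatives are controlled. Alternatively one checks directly that $X$ admits a coning construction with linear mass bounds, which for a surface reduces to controlling the areas of geodesic triangles or of the images of the combing homotopies. The key point is that $f \geq 1$ everywhere with $f$ bounded on any horizontal strip, so filling $0$-cycles (connecting points), $1$-cycles (filling loops by disks) costs at most linearly in the diameter.

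For part (2), I would show $\ell$ with the induced (intrinsic) metric is quasi-isometric to $\R$ via the arclength parametrization — this is automatic since $f(x,0) \geq 1$ forces the intrinsic and arclength metrics to differ by a quasi-isometry, and I'd need $f(x,0)$ bounded above too, or at least that arclength along $\ell$ is comparable to $|x|$, which dictates the upper growth rate allowed for $\psi$. Then the failure of super-Euclidean divergence: given two points on $\ell$ at arclength-distance $2r$, the path through the interior (go up to height $\sim \log r$ or some slowly growing height, across, and back down) has length $o(r^2)$ — in fact one computes it is roughly $r + (\text{correction})$, far shorter than the $r^2$ lower bound that Definition~\ref{def_divergence_intro} would demand for a Morse quasigeodesic — so $\ell$ violates the divergence condition. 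For part (3), I would argue that in any asymptotic cone $X_\om$ with basepoints on $\ell$, the line $\ell_\om$ cannot bound a bilipschitz flat half-plane: because the interior of $X$ near $\ell$ is, after rescaling, becoming arbitrarily ``thin'' transverse to $\ell$ in a way incompatible with carrying a half-plane — more precisely, the region where $f$ differs appreciably from $1$ has $y$-width $\leq 1$, which scales to width $0$, so $\ell_\om$ lies in the closure of a set isometric to a flat half-plane $\{y \geq 0\}$ with the \emph{flat} metric, but the induced length on $\ell_\om$ is strictly larger (by the factor coming from $f(x,0) \to \infty$) than the flat length, so no bilipschitz flat half-plane can have $\ell_\om$ as its boundary with the correct metric. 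I expect the main obstacle to be calibrating the growth rate of $\psi$ simultaneously against three competing requirements — fast enough that divergence genuinely fails, slow enough that $\ell$ remains a quasigeodesic (bi-Lipschitz to $\R$) and that (CI$_n$) is preserved, and structured enough that the asymptotic cone analysis in (3) goes through cleanly — together with making the asymptotic-cone argument in (3) rigorous, since ``thinness scales to zero'' needs to be turned into a precise statement ruling out \emph{all} bilipschitz flat half-planes, not just the obvious one, which likely requires a local-homology or separation argument in $X_\om$ analogous to Lemma~\ref{lem_blow_up} but run in reverse.
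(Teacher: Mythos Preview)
Your construction has a fatal structural flaw that kills part (3) outright. You set $f(x,y)=1$ for $y\ge 1$, so the region $\{y\ge 1\}$ is isometric to a Euclidean half-plane. Under any rescaling $\frac{1}{r_k}X$ with $r_k\to\infty$, this region becomes $\{y\ge 1/r_k\}$, and its limit in $X_\om$ is an \emph{isometrically} embedded flat half-plane whose boundary is precisely $\ell_\om$ (the lines $\{y=0\}$ and $\{y=1\}$ are at Hausdorff distance $1$, which scales to $0$). So $\ell_\om$ bounds a flat half-plane in every asymptotic cone, and (3) fails. Your own heuristic for (3) --- ``the strip where $f\ne 1$ scales to width $0$'' --- is exactly the reason (3) fails, not the reason it holds: once the strip disappears, what remains is a flat half-plane. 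No choice of $\psi$ can repair this as long as the perturbation is confined to a horizontal strip of bounded width.

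The paper's construction is entirely different in spirit. It does not use a warped product; instead it starts from the flat half-plane and replaces a lattice of balls $B_{p_{jk}}(10^{-1}2^k)$, centered at all heights $2^k$, by ``very bad balls'' whose doubling constants are unbounded. The key mechanism for (3) is that any asymptotic cone $X_\om$ contains rescaled limits of $k$-bad balls for arbitrarily large $k$, hence $X_\om$ is \emph{not doubling} (Lemma~\ref{lem_doublingII}); a bilipschitz flat half-plane would be doubling, so none can embed. Meanwhile the bad balls are designed so that (i) radial coning still gives uniform (CI$_n$) (Lemma~\ref{lem_coning_inequality_bad_ball}), and (ii) enough Euclidean ``corridors'' remain between bad balls to keep divergence linear (Corollary~\ref{cor_bd_not_sd}). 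The essential idea you are missing is that the obstruction in (3) must persist at \emph{all scales and all heights}, which forces a self-similar (or multi-scale) construction rather than a bounded-width perturbation of the boundary. A secondary point: your divergence discussion is miscalibrated --- for $n=1$, failure of super-Euclidean divergence means the detour path has length $O(r)$, not merely $o(r^2)$.
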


We will construct such metric on $\R^2_{\ge 0}$ in several steps (Definition~\ref{def_l_bad_ball}, Definition~\ref{def:very bad} and Definition~\ref{def:bad plane}).
\begin{definition}
	\label{def_l_bad_ball}
	Pick $L\in [1,\infty)$.  An {\bf  $L$-bad ball} is a smooth Riemannian metric $g_L$ on the ball $B(0,R_L)\subset\R^2$ with the polar coordinate form
	$$
	g_L=dr^2+(f(r)r)^2d\th^2\,,
	$$
	where:
	\ben
	\item $f:[0,R_L]\ra [1-\frac{1}{L},L]$ is smooth function.
	\item $\max f=L$.
	\item $f|_{[0,L]}\equiv 1$ and $f|_{[\frac{R_L}{10L},R_L]}\equiv 1$.
	\item (Slow change on annuli) \; $|\D_r(\log f(r))|<\frac{1}{Lr}$.
	\item (Standard area)\; $|B(0,R_L)|_{g_L}=\pi R_L^2$.
	\item $f(r)r$ is an increasing function.
	\een
\end{definition}

\begin{lemma}
	\label{lem_coning_inequality_bad_ball}
	There is a constant $A<\infty$ such that every $L$-bad ball satisfies a coning inequality with constant $A$, i.e.  every Lipschitz $1$-cycle $\si$ 
	contained in an $R$-ball has a filling $\tau$ with $\M(\tau)\leq AR\cdot\M(\si)$.
\end{lemma}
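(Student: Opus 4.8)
The plan is to work directly with the rotationally symmetric form $g_L=dr^2+\phi(r)^2\,d\theta^2$, where $\phi(r):=f(r)r$, and to reduce everything to two elementary consequences of the defining conditions. First, a volume estimate: integrating the slow‑change bound $|\partial_r\log\phi|=|\partial_r\log f+\tfrac1r|<\tfrac1r+\tfrac1{Lr}$ from $\rho$ up to $r$ gives $\phi(\rho)\le\phi(r)(\rho/r)^{1-1/L}$ for $0\le\rho\le r$, hence
\[
F(r):=\int_0^{r}\phi(\rho)\,d\rho\;\le\;\frac{r\,\phi(r)}{2-1/L}\;\le\; r\,\phi(r)=f(r)\,r^{2}.
\]
In particular the round disk $D_r=\{r'\le r\}$ has $g_L$‑area $2\pi F(r)\le r\cdot\big(2\pi\phi(r)\big)$, i.e.\ at most $r$ times the $g_L$‑length of its boundary circle $C_r=\{r'=r\}$. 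Second, condition~(6) makes the radial projection $P_a(r,\theta)=(a,\theta)$ a $1$‑Lipschitz map on $\{r\ge a\}$: on a tangent vector it annihilates the radial part and scales the angular part by $\phi(a)/\phi(r)\le1$, so it increases neither curve lengths nor the mass of chains supported in $\{r\ge a\}$.

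Now let $\sigma$ be a Lipschitz $1$‑cycle with $\spt\sigma\subset\B{p}{R}$, so $\diam(\spt\sigma)\le 2R$, and set $a:=\min\{\,r:(r,\theta)\in\spt\sigma\,\}$. If $a=0$, take $\tau$ to be the cone over $\sigma$ from the centre along radial segments; a Jacobian computation shows that on a loop component $u\mapsto(r(u),\theta(u))$ the cone mass is $\int|\theta'(u)|\,F(r(u))\,du\le r_{\max}\int|\theta'(u)|\,\phi(r(u))\,du\le r_{\max}\,\M(\sigma)$, using $F\le r\phi$ and $|\theta'(u)|\phi(r(u))\le$ speed; and $r_{\max}=r_{\max}-a\le\diam(\spt\sigma)\le 2R$ since $\spt\sigma$ meets $\{r=0\}$. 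If $a>0$, push $\sigma$ to $C_a$ by $P_a$: the pushed cycle is supported on a topological circle, hence equals $m\,\bb{C_a}$ for some $m\in\Z$, and the radial homotopy cylinder $H$ with $\partial H=\sigma-P_{a\#}\sigma$ has $\M(H)\le\int|\theta'(u)|\big(F(r(u))-F(a)\big)\,du\le(r_{\max}-a)\,\M(\sigma)\le 2R\,\M(\sigma)$ by the same estimate. If $m=0$ then $H$ already fills $\sigma$; if $m\ne0$ we additionally fill $m\,\bb{C_a}$ by $m\,\bb{D_a}$, of mass $|m|\cdot 2\pi F(a)\le a\cdot|m|\cdot 2\pi\phi(a)=a\cdot\M\!\big(m\,\bb{C_a}\big)\le a\,\M(\sigma)$, using the volume estimate and $\M(P_{a\#}\sigma)\le\M(\sigma)$. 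In every case $\tau$ (equal to $H$, or $H+m\,\bb{D_a}$, or the cone from $0$) fills $\sigma$ with $\M(\tau)\le(2R+a)\,\M(\sigma)$.

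It remains to show $a\le C\cdot R$ whenever $m\ne0$, with $C$ absolute, and this is the crux. When $m\ne0$ the cycle $\sigma$ has nonzero winding number about the centre, so $\spt\sigma$ must meet every radial ray $\{\theta=\theta_0,\ 0<r'\le R_L\}$ (otherwise $\sigma$ would be supported in a simply connected slit disk and would be null‑homologous there). Since $\spt\sigma\subset\B{q_0}{2R}$ for any $q_0\in\spt\sigma$, some path of length $\le 2R$ from $q_0$ sweeps through every angle; if $a>2R$ such a path stays in $\{r'\ge a-2R\}$, where by condition~(6) its angular speed is at most $1/\phi(a-2R)$, so it sweeps total angle at most $2R/\phi(a-2R)$. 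Requiring this to be $\ge2\pi$ and invoking the density bounds $\phi(r)\ge(1-\tfrac1L)r$ and $\phi(r)\ge L$ for $r\ge L$ forces $a-2R$, hence $a$, to be bounded by a multiple of $R$ depending only on a uniform lower bound for $f$, so $\M(\tau)\le A\,R\,\M(\sigma)$ with $A$ absolute. The main obstacle is precisely keeping the constant in this last step independent of $L$ (and of $R$); it is the interplay of the monotonicity of $\phi$ — which both confines the angular spreading of metric balls at large radius and underlies the projection $P_a$ — with the uniform lower density of $f$ from conditions~(1) and~(3) that makes this possible.
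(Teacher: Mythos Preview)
Your argument is correct and takes a genuinely different route from the paper's proof. The paper establishes a local $4$-bilipschitz identification of the annulus $A_0(R,4R)$ in the bad ball with an annulus in the flat cone of angle $f(R)$ (using the slow-change condition~(4)), and then splits into two cases according to whether $\diam(\gamma)$ is small or large relative to the radial position $R$: in the small-diameter case the problem is transported to the flat cone, where the coning inequality is clear; in the large-diameter case one cones from the origin and uses condition~(6) to bound the cone area. Your approach instead exploits condition~(6) directly to make the radial projection $P_a$ onto the innermost circle $1$-Lipschitz, controls the cylinder by $(r_{\max}-a)\,\M(\sigma)\le 2R\,\M(\sigma)$, and when the winding number $m$ is nonzero adds the disk $D_a$, bounding its area via $F(a)\le a\phi(a)$ from condition~(4). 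This is more elementary---no bilipschitz comparison map is needed---and handles general Lipschitz $1$-cycles rather than reducing first to Jordan curves. The paper's approach, on the other hand, yields the non-doubling remark as a byproduct of the cone comparison.

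Two points of precision in your crux step. First, the phrase ``some path of length $\le 2R$ from $q_0$ sweeps through every angle'' should be sharpened: what you need is that the angular coordinate $\Theta:(r,\theta)\mapsto\theta$ is $1/\phi(a-2R)$-Lipschitz on $B_{q_0}(2R)\subset\{r\ge a-2R\}$ (when $a>2R$), and since $\spt\sigma$ meets every ray, $\Theta(\spt\sigma)=S^1$; hence $\pi=\diam S^1\le \diam(\spt\sigma)/\phi(a-2R)\le 2R/\phi(a-2R)$. So the correct inequality is $\phi(a-2R)\le 2R/\pi$, not $R/\pi$. Second, inverting this to get $a\le C\cdot R$ with $C$ absolute requires $f$ to be bounded below uniformly in $L$; this is exactly where $L\ge 2$ enters (giving $f\ge 1-1/L\ge 1/2$, hence $\phi(r)\ge r/2$), which is also the standing assumption in the paper's proof. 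Since the very bad balls only use integer $k$-bad balls and the $1$-bad ball is forced to be Euclidean by condition~(5), this suffices for the application.
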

\begin{proof}
	Let $B_L$ be an $L$-bad ball and suppose $L\geq 2$.
	Let $R>0$ and denote by $\tilde A_o(R,2R)$ the annulus in the flat cone $C_\alpha$ with with tip $o$ and cone angle $\alpha=f(R)$.
	From the slow change of annuli, we see that for all $r\in[R,2R]$ holds $\frac{1}{2}\leq\frac{f(r)}{f(R)}\leq 2$.
	
	Let $\psi:A_0(R,4R)\to \tilde A_o(R,4R)$ be the natural radial isometric homeomorphism, which is arclength preserving on the inner boundary circles. 
	The control on $f$ ensures that $\psi$ is locally $4$-bilipschitz. Now let $\ga$ be a Jordan curve in $B_L$. Let $R$ be the smallest radius such that $\ga$
	is contained in $B_0(2R)$. We will distinguish two cases. Let $S\subset R^2$ be a flat sector with angle $\frac{\pi}{4}$ and tip at the origin. Choose $C>0$ such that there exists
	a point $p\in\R^2$ with $B_r(CR)\subset S$ and $\|p\|=2R$.
	
	Suppose the diameter of $\ga$ is less than $\frac{CR}{2}$. Note that the angle $\alpha$ is at least $\frac{1}{2}$ by our assumption on $L$ and the definition of $f$. 
	Then $\psi(\gamma)$ is contained in a ball of radius $CR$ centered at a point on the outer boundary of $\tilde A_o(R,4R)$. This concludes the first case by the cone inequality in $C_\alpha$ (note that by taking $C$ sufficiently large, we can assume any geodesic between two points in $\ga$ does not escape $A_0(R,4R)$, thus the diameter of $\ga$ can be computed inside $A_0(R,4R)$, similarly, we assume the diameter of $\varphi(\ga)$ is computed inside $\tilde A_o(R,4R)$).

	So let us assume the diameter of $\ga$ is at least $\frac{CR}{2}$. Then we obtain a filling by coning off $\ga$ at the origin. By condition (6) as above, the area of the cone is at most $R\cdot \mathcal{H}^1(\ga)$ and the proof is complete.
\end{proof}

\begin{remark}\label{rmk_doubling}
	The comparison to annuli in flat cones also shows that the doubling constant of an $L$-bad ball becomes unbounded as $L\to\infty$.
\end{remark}

\begin{definition}
	\label{def:very bad}
	Choose some $r_0>1$.  The {\bf very bad ball of scale $r_0$} is obtained from the Euclidean ball $B_0(r_0)\subset\R^2$ as follows.  
	Setting $a_k:=2^{-k}r_0$, $p_k:=(a_k,0)\in \R^2$, for we replace the Euclidean ball $B_{p_k}(\frac{a_k}{10})$ with a $k$-bad ball rescaled to have radius 
	$\frac{a_k}{10}$, provided the rescaled ball is Euclidean up to the radius $k$, or equivalently, $\frac{k}{R_k}\cdot \frac{a_k}{10}\ge k$ using the notation of Definition~\ref{def_l_bad_ball}. There are only finitely many $k$ such that $a_k\ge 10 R_k$ so we are replacing only finitely many balls.
	
	An \emph{expanded very bad ball} of scale $r_0$ is obtained by gluing the boundary of a very bad ball of scale $r_0$ to the inner boundary of the Euclidean annulus $A_0(r_0,2r_0)\subset\R^2$. 
\end{definition}

\begin{lem}
	\label{lem_doublingII}
	Let $r_k\to\infty$ and let $\{B_k\}$ be a sequence of bad balls of scale $r_k$. Let $Z$ be an ultralimit of the rescaled sequence $\{\frac{1}{r_k}B_k\}$. Then $Z$ is not doubling.
\end{lem}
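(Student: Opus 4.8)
The plan is to quantify Remark~\ref{rmk_doubling} and show that a $j$‑bad feature survives, at a \emph{fixed} relative scale, inside the ultralimit for every~$j$. Fix a nonprincipal ultrafilter $\omega$ and base points $\star_k\in\tfrac1{r_k}B_k$; since each $\tfrac1{r_k}B_k$ has diameter $O(1)$, every point of it survives to $Z$. First I would locate a $j$‑bad ball inside $B_k$ at a controlled place and scale. By Definition~\ref{def:very bad}, the very bad ball $B_k$ of scale $r_k$ contains — as soon as $r_k\ge 10\cdot 2^j R_j$, hence for $\omega$‑almost every $k$ — a $j$‑bad ball centered at $p_j=(2^{-j}r_k,0)$ and rescaled to radius $\tfrac{2^{-j}r_k}{10}$ (the same holds if $B_k$ is an expanded very bad ball, the outer Euclidean annulus being irrelevant). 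After rescaling $B_k$ by $\tfrac1{r_k}$, this piece has a size depending only on $j$ (radius $\tfrac{2^{-j}}{10}$), and its center lies within distance bounded uniformly in $k$ from $\star_k$: a mostly‑Euclidean path from the center of $B_k$ to $p_j$, detouring around the finitely many smaller bad balls it would otherwise cross, has rescaled length $O(2^{-j})$.

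Next I would produce, inside this $j$‑bad ball, many pairwise‑far points in a small ball. Let $r^*\in(j,\tfrac{R_j}{10j})$ be a maximizer of $f$, so $f(r^*)=j$, and use the radial $\le 4$‑bilipschitz comparison $\psi\colon A_0(r^*,4r^*)\to \tilde A_o(r^*,4r^*)\subset C_j$ from the proof of Lemma~\ref{lem_coning_inequality_bad_ball}, where $C_j$ is the flat cone of cone angle $j$ (total angle $2\pi j$) and $4r^*<R_j$. On the circle of radius $2r^*$ in $C_j$ place points $Q_0,\dots,Q_{4j-1}$ at angular spacing $\tfrac{\pi}{2}$ in the total‑angle parametrization; then their pairwise angular separations are all $\ge\tfrac\pi2$, so $d_{C_j}(Q_\ell,Q_{\ell'})\ge 4r^*\sin(\tfrac\pi4)=2\sqrt2\,r^*$. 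Set $x_\ell:=\psi^{-1}(Q_\ell)$, which lies on $\{r=2r^*\}$ in the $j$‑bad ball. A case analysis then bounds $d_{B_k}(x_\ell,x_{\ell'})$ from below by $\tfrac{r^*}{\sqrt2}$: a connecting path in $B_k$ either stays in $A_0(r^*,4r^*)$, where the $\psi$‑comparison gives length $\ge\tfrac14 d_{C_j}(Q_\ell,Q_{\ell'})\ge\tfrac{r^*}{\sqrt2}$; or it leaves that annulus (still inside the $j$‑bad ball), whence its radial variation is $\ge 2r^*$ since its endpoints sit at radius $2r^*$; or it leaves the $j$‑bad ball altogether, whence its radial variation is $\ge 2(R_j-2r^*)\ge R_j\ge 10j\,r^*$. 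Meanwhile all $x_\ell$ lie in the metric ball of radius $2r^*$ about the center of the $j$‑bad ball. Rescaling by the total factor $\tfrac{2^{-j}}{10R_j}$, we get: for $\omega$‑almost every $k$, the space $\tfrac1{r_k}B_k$ contains $\ge 2j$ points, pairwise at distance $\ge\tfrac1{\sqrt2}\eta_j$, all inside a ball of radius $4\eta_j$ and within bounded distance of $\star_k$, where $\eta_j:=\tfrac{2^{-j}r^*}{10R_j}>0$ depends only on $j$.

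Finally I would pass to the ultralimit. For each $j\ge 2$ the points $z_\ell^{(j)}:=\omega\text{-}\lim_k x_\ell^{(k)}$ give $\ge 2j$ points of $Z$, pairwise at distance $\ge\tfrac1{\sqrt2}\eta_j$ and all contained in $\bar B(z_0^{(j)},4\eta_j)$. If $Z$ were $N$‑doubling, then $\bar B(z_0^{(j)},4\eta_j)$ could be covered by $N^4$ balls of radius $<\tfrac1{2\sqrt2}\eta_j$ (four successive halvings of the radius, since $\tfrac14<\tfrac1{2\sqrt2}$), and each such ball, having diameter $<\tfrac1{\sqrt2}\eta_j$, contains at most one of our points; hence $2j\le N^4$. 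Letting $j\to\infty$ forces $N=\infty$, a contradiction, so $Z$ is not doubling.

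The hard part will be the lower bound $d_{B_k}(x_\ell,x_{\ell'})\ge\tfrac{r^*}{\sqrt2}$, i.e.\ ruling out shortcuts through the flat part of $B_k$. The point that makes it work is that the bad feature sits at radius $r^*<\tfrac{R_j}{10j}$, i.e.\ extremely deep inside the piece relative to its outer radius $R_j$, so \emph{any} excursion toward the boundary of the piece (let alone out of it) costs radial distance comparable to $R_j\gg r^*$ and is far too expensive. A secondary technical point is verifying that the comparison map $\psi$ of Lemma~\ref{lem_coning_inequality_bad_ball} is genuinely radial — preserving the radial coordinate — so that it carries the round circle $\{r=2r^*\}$ onto the round circle of the same radius in $C_j$; this is the mechanism by which the large angular spread of $C_j$, which is exactly what drives the unbounded doubling constant, gets transported into $B_k$ and then into $Z$.
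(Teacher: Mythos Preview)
Your proposal is correct and follows the same strategy as the paper's three-line proof, which simply observes that for every $j$ the ultralimit $Z$ contains an isometrically embedded rescaled $j$-bad ball and then invokes Remark~\ref{rmk_doubling}. Your argument is a fully quantitative unpacking of this: you locate explicit witness points on the circle $\{r=2r^*\}$ inside the $j$-bad piece, and your three-case path analysis (stay in the annulus / leave the annulus but stay in the $j$-bad ball / leave the $j$-bad ball) is exactly what justifies that their pairwise distances in $B_k$, and hence in $Z$, are not shortened by excursions outside the piece --- a detail the paper's brief proof leaves implicit.
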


\begin{proof}
	For any integer $k>0$, we see ultralimit of (properly scaled) $k$-bad balls in $Z$. Thus $Z$ contains isometrically embedded rescaled $k$-bad balls for any $k>0$. Hence $Z$ is not doubling by Remark~\ref{rmk_doubling}.
\end{proof}

\begin{definition}
	\label{def:bad plane}
	Modify the flat metric on the half-plane $\R^2_+$ as follows. For every pair of positive integers $j, k$, let $p_{jk}$ be the point of $\R^2_+$ with coordinate $(j2^k,2^k)$. We replace the ball $B_{p_{jk}}(10^{-1}2^k)$ with a very bad ball of scale $10^{-1}2^k$ for each pair $j,k\ge 0$. 
	Let $X$ denote the resulting Riemannian manifold. 
\end{definition}

\begin{remark}
	The key point for such an arrangement is that the size of the very bad ball is comparable to its height as well as to the size of ``Euclidean regions'' in between very bad balls.
\end{remark}

There are two metrics on $X$, one is induced by the Riemannian metric defined in Definition~\ref{def:bad plane}, denoted by $d_X$, and one is the original Euclidean metric, denoted by $d_{euc}$.

We claim that $X$ has the following properties. It satisfies the coning inequality for $1$-cycles. However, for any sequence $(x_k)$ on $\D X$ and any sequence
$\la_k\to\infty$, the asymptotic cone $\om\lim (\frac{1}{\la_k}X,x_k)$ is not doubling and therefore not bilipschitz to the Euclidean halfplane.

We will establish the claims on $X$ step by step.

\begin{lemma}\label{lem_bd_qg}
	The boundary $\D X$ is a quasigeodesic.
\end{lemma}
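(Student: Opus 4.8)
The plan is to show that the boundary line $\D X = \{(x,0): x\in\R\}$, equipped with the induced path metric $d_X$, is quasi-isometrically embedded by the identity map into $(X,d_X)$, and then observe that $(\D X, d_X)$ is itself quasi-isometric to $\R$. First I would note that since all the modifications (the very bad balls of scale $10^{-1}2^k$ centered at $p_{jk}=(j2^k,2^k)$) are supported away from $\D X$ — they live at height $\approx 2^k$ and have radius $\approx 10^{-1}2^k$, so they never touch the line $\{y=0\}$ — the restriction of $d_X$ to $\D X$ coincides with the Euclidean restriction, i.e. $d_X((x,0),(x',0)) = |x-x'|$ computed along the line. Actually we should be slightly careful: a priori a path in $X$ between two boundary points could try to shortcut through a bad ball, so what we really need is that $d_X$ does not shrink Euclidean distances by more than a bounded factor, and conversely that the straight Euclidean segment in $\D X$ has $d_X$-length comparable to its Euclidean length.

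The key geometric input is that the metric $d_X$ is bilipschitz to $d_{euc}$ on each bad ball with a \emph{scale-independent but not globally uniform} constant — on an $L$-bad ball the metric distortion is controlled (the radial coordinate is unchanged and the angular stretch factor $f$ lies in $[1-\tfrac1L, L]$), but $L\to\infty$ as we go deeper. However, for the coarse statement we do not need a global bilipschitz constant: we need only that $d_X \le d_{euc}$ (lengths can only increase or stay the same under these modifications since we are stretching, not compressing, except possibly near the $1-\frac1L$ lower bound) together with a \emph{lower} bound $d_X \ge c\, d_{euc}$ on each individual bad ball with $c$ bounded below (namely $c \ge 1/2$, say, using the lower bound $f \ge 1-\tfrac1L \ge \tfrac12$ for $L\ge 2$). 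Concretely I would argue: given any rectifiable path $\gamma$ in $X$ joining $(x,0)$ to $(x',0)$, its $d_X$-length is at least $\tfrac12$ times its $d_{euc}$-length (since at every point the Riemannian metric of $X$ dominates $\tfrac14$ times the Euclidean one — the only place the conformal-type factor could be small is the $f \ge 1-\tfrac1L$ bound, which is $\ge \tfrac12$), hence $d_X((x,0),(x',0)) \ge \tfrac12|x-x'|$. Conversely, the Euclidean segment $[(x,0),(x',0)]$ along $\D X$ lies entirely in the unmodified region, so it is still a path in $X$ of $d_X$-length exactly $|x-x'|$, giving $d_X((x,0),(x',0)) \le |x-x'|$. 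Therefore the inclusion $(\D X, |\cdot|) \hookrightarrow (X, d_X)$ is a $(2,0)$-quasi-isometric embedding, and since $(\D X,|\cdot|)$ is isometric to $\R$, $\D X$ is a quasigeodesic in $X$.

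Let me lay out the steps in order. Step 1: record that the support of all modifications is contained in $\bigcup_{j,k\ge 0} B_{p_{jk}}(10^{-1}2^k)$, and each such ball has positive distance from $\D X$; in particular the Euclidean metric and $d_X$ agree in a neighborhood of $\D X$. Step 2: establish the pointwise comparison $\tfrac14\, g_{euc} \le g_X \le C\, g_{euc}$ on all of $X$ — wait, the upper bound is \emph{not} uniform since $\max f = L \to \infty$, so instead I only claim the \emph{lower} bound $g_X \ge \tfrac14 g_{euc}$ pointwise (which holds because in a $k$-bad ball $dr^2 + (f r)^2 d\theta^2 \ge dr^2 + \tfrac14 r^2 d\theta^2$ using $f\ge 1-\tfrac1k\ge \tfrac12$; the rescaled copies inherit the same bound, and outside bad balls $g_X = g_{euc}$). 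This gives $d_X \ge \tfrac12 d_{euc}$. Step 3: get the upper bound $d_X((x,0),(x',0)) \le |x-x'|$ from Step 1 by using the boundary segment itself as a competitor path. Step 4: combine to conclude $\tfrac12|x-x'| \le d_X((x,0),(x',0)) \le |x-x'|$, so the natural parametrization $t \mapsto (t,0)$ is a $(2,0)$-quasigeodesic (indeed a bilipschitz embedding of $\R$), which is the assertion.

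The only mildly delicate point — and the one I'd expect to need the most care — is the pointwise lower bound $g_X \ge \tfrac14 g_{euc}$ across the gluing loci. On each bad ball the estimate is immediate from $f \ge 1-\tfrac1L$, and the rescalings used to build the very bad balls and then $X$ are conformal rescalings by positive constants that affect $g_X$ and $g_{euc}$ by the same factor, so the ratio is preserved; one just has to check that the gluings in Definition~\ref{def:very bad} and Definition~\ref{def:bad plane} are $C^\infty$ (or at least Lipschitz) matchings of metrics that each individually dominate $\tfrac14 g_{euc}$, and that conditions (3) of Definition~\ref{def_l_bad_ball} ($f\equiv 1$ near the inner and outer radii) make the gluing with the surrounding Euclidean regions smooth and ratio-preserving there. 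Granting that, the whole statement is a two-line sandwich estimate, so the proof is genuinely short; the work has already been front-loaded into the construction.
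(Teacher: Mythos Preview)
Your proof is correct and takes essentially the same approach as the paper: the boundary segment provides the upper bound $d_X \leq |x-x'|$ since the metric is Euclidean near $\partial X$, and the pointwise lower bound on the angular factor $f \geq 1-\tfrac{1}{L}$ gives $g_X \geq c\, g_{euc}$ globally, hence $d_X \geq c'\,d_{euc}$. The paper records this in two sentences, writing the lower bound as $g_L \geq (1-\tfrac{1}{L})\,g_{euc}$ rather than your $g_X \geq \tfrac14\,g_{euc}$, but the content is identical.
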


\begin{proof}
	The metric on $X$ is flat in a neighborhood of $\D X$ and therefore the canonical parametrization of $\D X$ is $1$-Lipschitz.
	On the other hand, the metric $g_L$ on an $L$-bad ball fulfills  
	\begin{equation}
	\label{eq:metric comparison}
	g_L\geq (1-\frac{1}{L})g_{euc}
	\end{equation}
	by definition. Hence $\D X$
	is a bilipschitz line.
\end{proof}

\begin{lemma}\label{lem_bilip_bddist}
	The distance to the boundary in $X$ is comparable to the Euclidean distance to the boundary
	\[\frac{1}{C}d_{euc}(\cdot,\D X)\leq d_{X}(\cdot,\D X)\leq C d_{euc}(\cdot,\D X).\]
\end{lemma}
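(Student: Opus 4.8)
The plan is to establish the two inequalities separately: the left-hand one follows at once from a pointwise comparison of metric tensors, and the right-hand one from constructing an explicit efficient path from $x$ to $\D X$.

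\emph{Lower bound.} Recall that $X$ is obtained from the flat half-plane $(\R^2_{\ge 0},g_{euc})$ by replacing, inside each very bad ball (Definition~\ref{def:very bad}, Definition~\ref{def:bad plane}), finitely many Euclidean balls with rescaled bad balls, and that the Riemannian metric $g_X$ equals $g_{euc}$ outside these bad balls. On each bad ball, \eqref{eq:metric comparison} gives $g_X=g_L\ge (1-\tfrac1L)g_{euc}$; for $L\ge 2$ this is $\ge\tfrac12 g_{euc}$, and for the finitely many small values of $L$ actually occurring one uses conditions (3)--(4) of Definition~\ref{def_l_bad_ball} (in particular $f(r)\ge f(1)r^{-1/L}$ on $[1,R_L]$, with $f\equiv 1$ on $[0,1]$) to see that $f$ is still bounded below by a positive constant. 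Hence, exactly as in the proof of Lemma~\ref{lem_bd_qg}, there is a universal $c_0>0$ with $g_X\ge c_0^2\,g_{euc}$ everywhere, so $d_X(p,q)\ge c_0\,d_{euc}(p,q)$ for all $p,q$ (using that $\R^2_{\ge 0}$ is Euclidean-convex, so Euclidean geodesics stay in it). Taking the infimum over $q\in\D X$ yields $d_X(\cdot,\D X)\ge c_0\,d_{euc}(\cdot,\D X)$.

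\emph{Upper bound: geometry and the key estimate.} Write $V_{jk}:=B_{p_{jk}}(10^{-1}2^k)$ (Euclidean coordinates) for the very bad ball at $p_{jk}=(j2^k,2^k)$, and set $t:=d_{euc}(x,\D X)$, the Euclidean height of $x$. One first records elementary facts about the configuration: the $V_{jk}$ are pairwise disjoint; each $V_{jk}$ lies in the height window $[\tfrac9{10}2^k,\tfrac{11}{10}2^k]$ and is surrounded by a Euclidean collar of width $\gtrsim 2^k$ (because, for fixed $k$, the centers $p_{jk}$ are $2^k$-separated, and the heights $2^k$ form a geometric progression); a vertical ray meets at most one $V_{jk}$ per level $k$; and every non-Euclidean point of $X$ lies in some $V_{jk}$, hence at Euclidean height $\asymp 2^k$. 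The key ingredient is the estimate
\[
\diam_{d_X}\!\bigl(\text{very bad ball of scale }r_0\bigr)\le C'\,r_0
\]
with $C'$ universal. Since $g_L=dr^2+(f(r)r)^2d\theta^2$, any radial segment of a bad ball --- radial with respect to \emph{its own} centre --- has $g_X$-length equal to its Euclidean length; and the sub-bad balls inside a very bad ball of scale $r_0$ all have centres $p_m=(2^{-m}r_0,0)$ on the single ray $\theta=0$ and are pairwise disjoint with Euclidean gaps, so the segment of that ray inside the very bad ball passes through each $p_m$ along a diameter and is therefore $g_{euc}$-isometric. Consequently any point $p$ of the very bad ball can be joined to its centre $o$ by a path made of a radial-to-centre segment inside the sub-bad ball containing $p$ (if any), a short angular arc in the ample Euclidean region, and the ray $\theta=0$ or $\theta=\pi$ into $o$ --- of $g_X$-length $\lesssim r_0$. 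This gives the diameter bound.

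\emph{The descent path.} Let $h_0>0$ be the Euclidean height below which $X$ contains no very bad ball. If $t<h_0$, then $x$ lies in the Euclidean region and the vertical segment down to $\D X$ has $d_X$-length $t$. Otherwise, if $x\in V_{jk}$ we first move to a point of $\partial V_{jk}$ at cost $\le\diam_{d_X}(V_{jk})\le C'2^k/10\lesssim t$ (using $2^k\le t/0.9$), landing in the Euclidean collar; we then descend along a vertical line, and each time the line is about to enter a very bad ball $V_{j'k'}$ we detour around it through its Euclidean collar at added cost $\lesssim 2^{k'}$. Every very bad ball met sits below the current height, hence at a level $k'$ with $2^{k'}\lesssim t$, and at most one per level is met, so the detours contribute $\lesssim\sum_{2^{k'}\lesssim t}2^{k'}\lesssim t$, while the vertical portions sum to $\le t$. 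Altogether $d_X(x,\D X)\le C\,t=C\,d_{euc}(x,\D X)$. I expect the main obstacle to be precisely the diameter bound for very bad balls together with the bookkeeping that the detours are dominated by $t$; both reduce, after unwinding the case distinctions near the positive $x$-axis, to the single structural point that radial segments through the centre of a bad ball are isometric copies of Euclidean segments, plus the fact that the scales $2^k$ of the very bad balls involved form a geometric progression dominated by $t$ --- everything else is routine estimation in the Euclidean regions separating the bad balls.
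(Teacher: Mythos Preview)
Your argument is correct, but the route for the upper bound is more elaborate than necessary, and differs from the paper's.

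For the lower bound you and the paper agree; your extra care with small $L$ is harmless but unneeded, since for $L=1$ condition~(5) (standard area) together with $f\le 1$ forces $f\equiv 1$, so the only nontrivial bad balls have $L\ge 2$ and $g_L\ge\tfrac12 g_{euc}$ directly.

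For the upper bound the paper works at the finer scale of individual bad-ball \emph{cores} rather than entire very bad balls. The key structural fact you do not exploit is condition~(3) of Definition~\ref{def_l_bad_ball}: outside a small inner core of each bad ball the metric is already Euclidean. So one takes the vertical Euclidean segment from $p$ to $\partial X$, and whenever it crosses a core, replaces the chord by the shorter boundary arc (increasing that piece by at most $\pi/2$); if $p$ itself sits in a core, one escapes via two radial segments through the bad-ball centre. The modified path lies entirely in the region where $g_X=g_{euc}$ or along radial segments where $ds=dr$, so its $d_X$-length equals its Euclidean length, which is at most a fixed multiple of $t$. No diameter bound for very bad balls and no geometric-series bookkeeping over levels $k$ is needed, since the cores are disjoint and the chord-to-arc replacement is multiplicative.

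Your approach --- diameter bound for very bad balls via the special ray $\theta=0$, then descent with detours around whole very bad balls summed over levels --- is valid and self-contained, and the diameter bound is a pleasant byproduct. The paper's approach is shorter because it localises the non-Euclidean behaviour to the cores and handles each by a single elementary replacement.
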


\begin{proof}
	
	The left inequality follows from \eqref{eq:metric comparison}. To see the right inequality, we choose a point $p$ in $X$ and  consider the vertical segment $\ga$ joining $p$
	to a point $q$ on $\D X$ which realizes the Euclidean distance. Since $q$ lies on the boundray, it is not contained in a bad ball. Recall that the Euclidean metric on an $L$-bad ball is only altered on an
	inner core of radius $\frac{R_L}{10}$. If $\ga$ intersects the core of a bad ball disjoint from $p$, then we replace the segment inside the core by the shortest path along the boundary of the core. 
	This increases the Euclidean length of $\ga$ only by factor $\frac{\pi}{2}$.
	Now if $p$ itself lies in the core of an $L$-bad ball $B_L$, then we change the segment of $\ga$ inside $B_L$ to a piecewise radial geodesic. We first join $p$ to the center of $B_L$ and then
	join the center to the first intersection point of $\ga$ with $B_L$. The Euclidean length of the original segement is at least $\frac{9}{10}R_L$ whereas the length of the new path is at most $\frac{11}{10}R_L$.
	This concludes the proof as the length of the modified path in $X$ is the same as its Euclidean length.
\end{proof}

\begin{lemma}\label{lem_coning_inequality_halfspace}
	Suppose that expanded very bad balls satisfy {\rm (CI$_n$)} for any $n$ with constants independent of the scale of the very bad ball. Then $(X,d_X)$ satisfies {\rm (CI$_n$)} for any $n$.
\end{lemma}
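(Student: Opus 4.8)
The plan is to verify (CI$_n$) for $(X,d_X)$ by the standard ``thick-thin'' strategy: decompose a given cycle into a piece carried by the Euclidean part of $X$ and pieces carried by individual expanded very bad balls, fill each separately, and recombine. First I would fix a cycle $S\in\I_{k}(X)$ with bounded support and let $R=\diam(\spt S)$. By Lemma~\ref{lem_bilip_bddist} and the explicit arrangement in Definition~\ref{def:bad plane}, the very bad balls replaced in $X$ form a collection $\{V_\alpha\}$ with the crucial \emph{bounded overlap / separation} property: each $V_\alpha$ has diameter comparable to its Euclidean height, and distinct $V_\alpha$'s are at mutual Euclidean distance comparable to their diameters. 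Consequently at most boundedly many $V_\alpha$ of a given scale meet any ball of comparable radius, and the ``Euclidean region'' $X\setminus\bigcup_\alpha \mathrm{core}(V_\alpha)$ is flat and bilipschitz to a closed subset of $\R^2$ (with constant close to $1$, by \eqref{eq:metric comparison}).

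Next I would apply a Federer--Fleming type deformation to $S$ relative to a scale $\eta$ comparable to the smallest relevant very bad ball, pushing $S$ into the Euclidean skeleton at the expense of a controlled cost. More precisely, for each $V_\alpha$ meeting $\spt S$ I slice $S$ along the distance function $d_X(\cdot,\partial V_\alpha)$ to write $S=S\on(X\setminus V_\alpha)+S\on V_\alpha$; the slice $\langle S,\cdot\rangle$ at the boundary circle of $V_\alpha$ has mass controlled by the coarea inequality, and $S\on V_\alpha$ (capped off by a filling of that slice inside $V_\alpha$, using the hypothesis that expanded very bad balls satisfy (CI$_n$) with a scale-independent constant) gives a cycle supported in $V_\alpha$ with mass $\lesssim \M(S\on V_\alpha)+(\text{diam }V_\alpha)\cdot(\text{slice mass})$. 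I fill that cycle inside $V_\alpha$ at cost $\lesssim \diam(V_\alpha)\cdot\M(\cdot)\lesssim R\cdot(\text{local mass})$. The bounded-overlap property ensures these local contributions sum to $\lesssim R\cdot\M(S)$. The remaining cycle $S'$ is supported in the Euclidean region, which is bilipschitz to a subset of $\R^2$, where the coning inequality holds with a universal constant; pulling that filling back to $X$ costs another $\lesssim R\cdot\M(S)$. Summing the three contributions yields $\M(T)\le A R\cdot\M(S)$ with $A$ independent of $S$.

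The one subtlety I would be careful about is that a single cycle may simultaneously interact with very bad balls of many different scales (arbitrarily small relative to $R$), so one cannot literally ``cut out'' all of them at once. The fix is to order the cutting by scale, from the smallest scale present in $\spt S$ up to scale $\sim R$, at each stage only cutting along balls of the current scale; since balls of a fixed scale are boundedly separated and there are $O(\log(R/\eta))$ scales, a naive bound would lose a logarithm. To avoid this I would instead run the Federer--Fleming deformation in one pass onto the flat $\eta$-grid adapted to the Euclidean structure (legitimate because $X$ is flat away from cores and the cores have diameter $\gtrsim$ their scale), which produces one Euclidean cycle plus one cycle per core, all at total cost $\lesssim R\cdot\M(S)$ with no logarithmic factor, exactly as in the classical deformation theorem; this is where the precise geometric comparison of Lemma~\ref{lem_bilip_bddist} and the ``size comparable to height and to gaps'' remark after Definition~\ref{def:bad plane} are used. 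Finally, if $S$ has compact support, all fillings produced above are compactly supported, so the same argument gives the conclusion for (CI$_n$) in the compactly supported sense as well.

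\medskip

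\noindent\textbf{Main obstacle.} The step I expect to be the crux is organizing the decomposition so that the contributions of the infinitely many very bad balls of unboundedly small scale sum to $O(R\cdot\M(S))$ without an extra $\log$ factor; this forces the single-pass Federer--Fleming deformation rather than an iterative scale-by-scale excision, and relies essentially on the metric comparison $d_X\asymp d_{euc}$ near $\partial X$ together with the scale-invariance of the coning constant for expanded very bad balls.
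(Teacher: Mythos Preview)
Your approach differs substantially from the paper's, and in its current form it has a genuine gap.

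The paper exploits that $X$ is a $2$-dimensional Riemannian surface, so (CI$_n$) reduces to (CI$_1$) and it suffices to fill a single Jordan curve $\gamma$. The argument is then a short dichotomy on $d_X(\gamma,\partial X)$ versus $\diam_X(\gamma)$. If $\gamma$ is far from $\partial X$ compared to its diameter, the separation geometry forces $\gamma$ to sit entirely inside one expanded very bad ball (or a purely Euclidean patch), and the hypothesis finishes that case directly. If $\gamma$ is close to $\partial X$, the paper does \emph{not} decompose $\gamma$ at all: it encloses $\gamma$ in a single Euclidean rectangle $P$ whose upper corners are placed at \emph{centers} of very bad balls of the appropriate height. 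The crucial point, which your proposal never uses, is property~(5) of Definition~\ref{def_l_bad_ball} (standard area) together with rotational symmetry: the $d_X$-area of $P$ equals its Euclidean area, so $\mathrm{Area}(P)\lesssim \diam(\gamma)^2\lesssim \diam(\gamma)\cdot\mathrm{length}(\gamma)$ gives the coning bound immediately.

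Your decomposition scheme ignores these special features and runs into trouble at the step ``the remaining cycle $S'$ is supported in the Euclidean region, which is bilipschitz to a subset of $\R^2$, where the coning inequality holds with a universal constant.'' A planar domain with holes of unbounded relative size does \emph{not} satisfy a coning inequality, so you cannot fill $S'$ inside that region. You are forced to fill $S'$ in all of $\R^2$, and then the filling may pass through very bad balls. Here the $d_X$-area density can exceed the Euclidean one by the factor $f(r)\le L$, which is unbounded; only when a bad ball is covered with \emph{constant multiplicity} does property~(5) guarantee that the two areas agree. That observation is exactly what makes the paper's rectangle work, but your proposal never invokes it. Secondary issues: the very bad balls in $X$ have scales bounded below (there is a smallest height $2^0$), so your worry about ``unboundedly small scale'' is misplaced; and the coarea bookkeeping for the caps $c_\alpha$ across all $V_\alpha$ meeting $\spt S$ (potentially $\sim R$ many at the smallest height) is not carried out. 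A carefully repaired version of your argument could be made to work---cap on the circles $\partial V_\alpha$, fill $S'$ canonically in $\R^2$, and then observe constant multiplicity forces area preservation---but this is considerably longer than the two-paragraph argument the paper gives, and it still hinges on the area-preserving property~(5) that you omitted.
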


\begin{proof}
	Let $\ga$ be a smooth Jordan curve in $X$. We distinguish two cases according to the size of $\ga$ relative to $\D X$.
	
	Suppose $d_X(\ga,\D X)\leq 100 \diam_X(\ga)$. Set $D_0=\diam_X(\ga)$ and $D=\diam_{euc}(\ga)$. By Lemma~\ref{lem_bilip_bddist} and \eqref{eq:metric comparison}, there exists $C$ independent of 
	$\ga$ such that $D\le CD_0$ and $d_{euc}(\ga,\D X)\leq 100C D_0$. Choose the minimal natural number $k$ such that $CD_0 \leq 2^{k}$. Now we choose the smallest Euclidean rectangle $P$ containing 
	$\ga$ with two vertices on $\D X$ and the other two at centers of very bad balls at hight $2^k$. By Definition~\ref{def_l_bad_ball} (5) and rotation symmetry of the metric on a bad ball, 
	we know that Area$(P)$ and length$(\D P)$ measured with respect to $d_{euc}$ and $d_X$ result the same value. The height and width of $P$ is $\le 200CD_0$ by the distribution of very bad balls, 
	thus we obtain Area$(P)\leq (200CD_0)^2$. Since the diameter of $\ga$ is always less than twice its length, we
	found the required filling.
	
	Suppose $d_X(\ga,\D X)> 100 \diam_X(\ga)$. Note that $\ga$ lies either in an entirely Euclidean region or intersects at least one very bad ball $B$. 
	In the latter case, $\ga$ is the expanded very bad ball $B'$ around $B$ (if $\ga$ escapes $B'$, then it travels through Euclidean regions whose size is comparable to the size of $B$, which is comparable to 
	$d_X(\ga,\D X)$ by Lemma~\ref{lem_bilip_bddist}; this gives a lower bound for $\diam_X(\ga)$ which contradicts $\diam_X(\ga) <\frac{1}{100} d_X(\ga,\D X)$). As $B'$ is surrounded by Euclidean regions, 
	$\diam_B(\gamma)=\diam_X(\ga)$. This finishes the proof by our assumption.
\end{proof}

The following is a consequence of estimates regarding the rectangle $P$ in the proof of Lemma~\ref{lem_coning_inequality_halfspace} and Lemma~\ref{lem_bilip_bddist}.
\begin{cor}\label{cor_bd_not_sd}
	The quasi-geodesic $\D X$ does not have super-Euclidean divergence. 
\end{cor}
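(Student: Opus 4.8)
The plan is to show directly that the boundary line $\ell=\D X$ has only \emph{linear} divergence, which contradicts the requirement $\delta(r)\to\infty$ in the definition of super-Euclidean divergence (Definition~\ref{def_divergence2}); by Lemma~\ref{lem_bd_qg} we are in the case $n=1$, and we write $L$ for the bilipschitz constant of $\ell$ provided by that lemma. Concretely, I would fix $D=2$ and produce, for all sufficiently large radii of the form $r=2^k$, a basepoint $x$ on $\ell$, a $0$-cycle $\varphi=\bb{a_+}-\bb{a_-}$ with $a_\pm$ lying on the two rays of $\R\setminus\B{x}{r}$ (hence representing a nontrivial class in $\tilde H_0(\R\setminus\B{x}{r})$, with $\M(\varphi)=2\le D$), together with a compactly supported $1$-current $\tau$ satisfying $\D\tau=\iota(\varphi)$, $\spt(\tau)\cap\B{\Phi(x)}{r/(4L)}=\es$, and $\M(\tau)\le C\cdot r$ for a constant $C$ independent of $r$. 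The existence of such $\tau$ for unbounded $r$ is incompatible with any super-linear lower bound $\M(\tau)\ge\delta(r)\cdot r$, which is exactly what it means for $\ell$ to fail super-Euclidean divergence.

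For the filling $\tau$ I would take three sides of a Euclidean rectangle of exactly the type used in the proof of Lemma~\ref{lem_coning_inequality_halfspace}. Choose $k$ large, let $\Phi(x)=p$ be a boundary point of the form $(j2^k,0)$, pick $a_\pm=(j2^k\pm m_\pm2^k,0)\in\ell$ with $m_\pm\in\N$ minimal so that $a_\pm\notin\B{x}{r}$ (so $r\le m_\pm2^k\le 2r$ since $r=2^k$), and let $P$ be the rectangle whose bottom edge is $[a_-,a_+]\subset\ell$ and whose top edge lies on the horizontal line $\{y=2^k\}$ through the centres of the very bad balls at height $2^k$. Let $\tau$ be the integral current carried by the two vertical sides of $P$ together with its top side, oriented so that $\D\tau=\bb{\Phi(a_+)}-\bb{\Phi(a_-)}=\iota(\varphi)$, using that $\iota$ carries a vertex $\bb{a_\pm}$ to $\bb{\Phi(a_\pm)}$ (Proposition~\ref{prop_chain_map}~(1)).

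Two estimates then complete the proof. First, $\M(\tau)\le 8r$: here I would reuse the length computation for $\D P$ in the proof of Lemma~\ref{lem_coning_inequality_halfspace}. Because the centres of the very bad balls (Definition~\ref{def:bad plane}) and of the nested bad balls inside them (Definition~\ref{def:very bad}) all lie on the grid lines $\{x=j'2^{k'}\}$ and $\{y=2^{k'}\}$, the vertical sides $\{x=j2^k\pm m_\pm2^k\}$ of $P$ pass only through the (locally Euclidean) central region of any very bad ball they enter, while the top side $\{y=2^k\}$ crosses every very bad ball and every nested bad ball along a radial segment; since the bad-ball metric $dr^2+(f(r)r)^2d\th^2$ coincides with the Euclidean one along radial segments and only finitely many scales are modified, $\length_{d_X}(\D P)=\length_{d_{euc}}(\D P)\le 2\cdot2^k+2\cdot2r\le 8r$. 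Second, $\spt(\tau)$ avoids $\B{p}{r/(4L)}$: the vertical sides of $P$ lie at Euclidean distance $\ge r$ from $p$ and the top side at Euclidean distance $2^k=r$, so by \eqref{eq:metric comparison} and Lemma~\ref{lem_bilip_bddist} the $d_X$-distance from $\spt(\tau)$ to $p$ is comparable to $r$ with a constant independent of scale, hence larger than $r/(4L)$.

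The only genuinely delicate point is the first estimate — the exact identity $\length_{d_X}(\D P)=\length_{d_{euc}}(\D P)$ — and this is precisely the combinatorial bookkeeping already carried out inside the proof of Lemma~\ref{lem_coning_inequality_halfspace} (grid alignment of all nested bad balls, rotational symmetry of each bad-ball metric, and finiteness of the set of altered scales), so it can simply be invoked. Everything else is routine; letting $k\to\infty$ exhibits arbitrarily large scales at which $\ell$ can be ``shortcut'' by a chain of linear mass outside the prescribed ball, so $\ell$ has no super-Euclidean divergence.
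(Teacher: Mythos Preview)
Your argument is correct and follows exactly the route the paper has in mind: the three non-boundary sides of the rectangle $P$ from the proof of Lemma~\ref{lem_coning_inequality_halfspace} give a filling of $\iota(\varphi)$ whose $d_X$-length equals its Euclidean length (hence is linear in $r$), and Lemma~\ref{lem_bilip_bddist} together with the bilipschitz constant of $\partial X$ keeps this filling outside the required ball around $p$. One small slip in your justification: the centres of the nested bad balls are not on the integer grid lines $\{x=j'2^{k'}\}$ (they sit at $x=j'2^{k'}+10^{-1}2^{k'-m}$), but this is irrelevant --- what matters, and what you in fact use, is that they all lie on the horizontal line $\{y=2^{k'}\}$ through the centre of their ambient very bad ball, so the top side of $P$ meets each along a diameter (radial, hence length-preserving) while the vertical sides pass through the perpendicular diameter and miss them entirely.
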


\begin{lem}
	\label{lem:very bad ball CI}
	There is a constant $A<\infty$ such that every expanded very bad ball satisfies a coning inequality with constant $A$.
\end{lem}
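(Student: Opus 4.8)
The plan is to reduce the coning inequality for an expanded very bad ball to the coning inequality for an individual $L$-bad ball (Lemma~\ref{lem_coning_inequality_bad_ball}) together with the coning inequality in flat space, by a suitable decomposition of an arbitrary Lipschitz cycle. Recall that an expanded very bad ball of scale $r_0$ is the union of a very bad ball of scale $r_0$ with the Euclidean annulus $A_0(r_0,2r_0)$, and the very bad ball itself is $B_0(r_0)$ with \emph{finitely many} disjoint inner cores $B_{p_k}(\frac{a_k}{10})$ replaced by rescaled $k$-bad balls, where $a_k=2^{-k}r_0$ and the $k$-bad ball is Euclidean outside radius $\frac{R_k}{10}\cdot\frac{a_k}{10}$ anyway; crucially the altered balls are pairwise disjoint, each contained in the half-open disk they replace, and the metric is flat in a neighborhood of the complement of their union.

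First I would reduce to the case of a smooth Jordan curve $\ga$ (by the usual splitting of a Lipschitz $1$-cycle into Jordan curves with controlled total length, cf. the Federer--Fleming style arguments already invoked in the paper). Let $R=\diam(\ga)$ measured in the expanded very bad ball metric; note that by \eqref{eq:metric comparison} and Definition~\ref{def_l_bad_ball}(5) the intrinsic and Euclidean metrics are globally bilipschitz on the expanded very bad ball with a \emph{uniform} constant (independent of $r_0$), so we may freely switch between the two and it suffices to produce a filling of mass $\lesssim R\cdot\M(\ga)$. If $\ga$ meets at most one altered core $B_{p_k}(\frac{a_k}{10})$, then I would first push the part of $\ga$ lying inside that core out to its boundary circle: since the metric is rotationally symmetric on a $k$-bad ball with $f(r)r$ increasing (Definition~\ref{def_l_bad_ball}(6)) and is Euclidean near the boundary, one can either apply Lemma~\ref{lem_coning_inequality_bad_ball} directly to the portion of $\ga$ inside the core (getting a filling there of mass $\le A\cdot(\tfrac{a_k}{10})\cdot\M(\ga)\le AR\,\M(\ga)$), or homotope it to the boundary with controlled area, and then fill the resulting curve in the remaining region, which is flat, by coning over a point — mass $\lesssim R\,\M(\ga)$. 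The general case is handled by doing this core-by-core: $\ga$ can meet only finitely many altered cores (there are only finitely many), and because the cores are pairwise disjoint one can decompose $\ga\on B_{p_k}(\frac{a_k}{10})$ for each $k$, fill each such piece inside (or near) the corresponding core using Lemma~\ref{lem_coning_inequality_bad_ball}, and observe that the leftover cycle — $\ga$ with all its core-pieces replaced by arcs along the core boundaries — now lies in the flat part and is filled by coning. Summing the masses: each core contributes $\le A\cdot\diam(\text{core})\cdot\M(\ga\on\text{core})\le AR\,\M(\ga\on\text{core})$, and these pieces have disjoint supports so the masses add to $\le AR\,\M(\ga)$; the flat coning contributes $\lesssim R\,\M(\ga)$.

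For higher dimensions ($n\ge 2$), the statement (CI$_n$) for cycles of dimension $k\ge 2$ in a $2$-manifold is vacuous except that one must fill $1$-cycles; more precisely, since an expanded very bad ball is $2$-dimensional, every $k$-cycle with $k\ge 2$ is already zero in integral currents on a surface, so the only content is the $0$- and $1$-dimensional cases, and $0$-cycles are filled by arcs of controlled length using the bilipschitz comparison with Euclidean distance. Hence the hypothesis "for any $n$" with a constant independent of scale follows once the $1$-cycle case is settled with scale-independent constant $A$, which is exactly what the above decomposition gives since $A$ in Lemma~\ref{lem_coning_inequality_bad_ball} is uniform and the number of altered cores, while finite for each $r_0$, enters only through the disjoint-support additivity of mass and not as a multiplicative factor.

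The main obstacle I anticipate is bookkeeping the decomposition of $\ga$ into its restrictions to the altered cores and to the flat complement in a way that is valid for \emph{Lipschitz cycles} (or after reduction, Jordan curves) and that keeps the supports genuinely disjoint so the mass estimates add rather than multiply — in particular making sure that when $\ga$ enters and exits a core several times, one correctly closes up each excursion into a cycle supported in (a neighborhood of) that core before applying Lemma~\ref{lem_coning_inequality_bad_ball}, and that the "connector" arcs along core boundaries have length controlled by $R\cdot(\text{something summable})$ rather than by the number of cores. This is the same type of slicing-and-closing-up argument used repeatedly in the paper (e.g. in the proofs of Lemma~\ref{lem_neck_impl_weak_neck} and Lemma~\ref{lem_diamter control}), and since in any fixed expanded very bad ball only finitely many cores are altered and they are separated by definite Euclidean gaps, the connector contribution is easily absorbed; I expect the proof in the paper to dispatch this quickly, possibly just by coning $\ga$ over the center of the expanded very bad ball when $\ga$ is large compared to the core it meets and reducing to Lemma~\ref{lem_coning_inequality_bad_ball} when it is not.
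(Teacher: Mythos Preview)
Your core-by-core decomposition rests on a false claim: the intrinsic and Euclidean metrics on an expanded very bad ball are \emph{not} uniformly bilipschitz. Equation~\eqref{eq:metric comparison} gives only the one-sided bound $g_L\ge (1-\tfrac{1}{L})g_{euc}$, and condition~(5) in Definition~\ref{def_l_bad_ball} equates only the \emph{total} area, not the metric. Inside a $k$-bad core the angular factor $f$ reaches $k$, so two nearby points at the same radius have intrinsic distance $\sim k$ times their Euclidean distance; since a very bad ball of scale $r_0$ contains $k$-bad cores for $k$ as large as roughly $\log_2 r_0$, the bilipschitz constant blows up with $r_0$ (this is essentially Remark~\ref{rmk_doubling}). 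Once you lose the bilipschitz comparison, the step ``fill the leftover curve in the flat part by coning over a point'' breaks: the cone will typically pass through cores, and its intrinsic area there is inflated by the factor $f$, so you get no uniform bound in terms of $R\cdot\M(\ga)$. The bound $\diam(\text{core})\le R$ you invoke is also unjustified when $\ga$ merely grazes a large core.

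The paper's proof is precisely the dichotomy you guess in your last sentence, and it sidesteps these problems. One distinguishes the cases $d(x,\ga)\ge 100\diam(\ga)$ and $d(x,\ga)<100\diam(\ga)$, where $x$ is the center of the very bad ball; this is the ``polar coordinate'' analogue of the two cases in Lemma~\ref{lem_coning_inequality_halfspace}, with distance to the center playing the role of distance to $\partial X$. In the first case the geometry of the core placement (core $k$ at radius $a_k=2^{-k}r_0$ with size $a_k/10$) forces $\ga$ into the Euclidean region or a single expanded bad core, and Lemma~\ref{lem_coning_inequality_bad_ball} applies. In the second case one encloses $\ga$ in a region (the polar analogue of the rectangle $P$ in Lemma~\ref{lem_coning_inequality_halfspace}) whose boundary avoids the altered cores and which contains only entire cores; condition~(5) and rotational symmetry then give that its intrinsic area equals its Euclidean area, yielding the coning bound.
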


\begin{proof}
	The proof is quite similar to Lemma~\ref{lem_coning_inequality_halfspace} as it can be viewed as a ``polar coordinate'' version of Lemma~\ref{lem_coning_inequality_halfspace}. 
	Let $x$ be the center of the very bad ball $B$ and $\gamma$ be a smooth Jordan curve in $B$. Again we consider two cases $d(p,\gamma)\ge 100\diam(\ga)$ and $d(p,\gamma)< 100\diam(\ga)$. 
	The details are left to the reader.
\end{proof}

\begin{proof}[Proof of Proposition~\ref{prop:halfplane}]
	It remains to prove (3). Let $\lim_{\om} (\frac{1}{\la_k}X,x_k)=X_\om$ be an asymptotic cone of $X$ where $(x_k)$ is a sequence on $\D X$ and $\la_k\to\infty$. 
	Let $\ell_\om$ (resp. $x_\om$) be the limit of $\D X$ (resp. $x_k$). We argue by contradiction and suppose $\ell_\om$ bounds a bilipschitz half plane $H_\om$. 
	
	Let $K$ be the closed upper half of the unit disk in $\R^2$. Suppose $\D K=s\cup s'$ where $s$ is straight and $s'$ is an arc on the unit circle. 
	Let $s\cap s'=\{v_1,v_2\}$. Let $f_\infty:K\to X_\om$ be a bilipschitz embedding such that $f_\infty(0)=x_\om$, $f_\infty(K)\subset H_\om$ and $f_\infty(s)\subset \D H_\om$. 
	Then there is a sequence of Lipschitz maps $f_k: K\to \frac{1}{\la_k}X$ such that $\lim_\om f_k=f_\infty$, $f_k(s)\subset \D X$ and $f_k(0)=x_k$. To construct such $f_k$'s, we take a fine enough triangulation of $K$, 
	approximating $f_\infty$ on the $0$-skeleton, then extending skeleton by skeleton using the coning inequality in $X$. By Lemma~\ref{lem_bd_qg} and Lemma~\ref{lem_bilip_bddist}, there exists $\de>0$ such that for $k$ 
	sufficiently large, $\frac{1}{\la_k}d_{euc}(f_k(s'),f_k(0))> \de$ and $f_k(v_1),f_k(v_2)$ are in different components of $\D X\setminus B_k$ where $B_k=\{x\in X\mid d_{euc}(x,f_k(0))\le \la_k\de\}$. 
	Thus $B_k\subset \im f_k$. Thus $\im f_k$ contains very bad balls of size comparable to $\diam(\im f_k)$. As $\im f_\infty=\lim_\om \im f_k$, we know $\im f_\infty$ contains an isometrically embedded copy of an ultralimit 
	of rescaled very bad balls. Thus $\im f_\infty$ is not doubling by Lemma~\ref{lem_doublingII}, which yields a contradiction.
\end{proof}

\begin{remark}
	We point out the following stronger result.	
	Let $Z$ and $B_k$ be as in Lemma~\ref{lem_doublingII}. We define the \emph{center} of $Z$ to be the limit of centers of the $B_k$'s. 
	Then a point $p\in X_\om\setminus \ell_\om$ either has an open neighborhood bilipschitz a small disk in $\R^2$, 
	or has a neighborhood isometric to a neighborhood of the center of $Z$, which is homeomorphic to $\R^2$. The proof is left to the interested reader.
\end{remark}

\begin{remark}
	It is also true that the ultralimit of the boundary $\D X$ above does not have full support.
	Pulling back Lipschitz maps from asymptotic cones as in the proof of  Proposition~\ref{prop:halfplane}, one can show that the coning inequality of $X$ passes on to its asymptotic cones.
	This is enough to conclude that the defintion of full support does not depend on the choice of homology theory. Hence we can conlcude from Corollary~\ref{cor_bd_not_sd} and Proposition~\ref{prop_bridge}.
\end{remark}

\appendix
\section{Some properties of quasiflats}

\subsection{Lipschitz quasiflats and quasi-retractions}
\label{subsec:Lipschitz quasifalts}
\begin{definition}
	Let $K\subset X$ be a closed subset. A map $\pi:X\to K$ is called a $\lambda$-quasi-retraction if it is $\lambda$-Lipschitz and the restriction
	$\pi|_K$ has displacement $\leq \lambda$.
\end{definition}

\begin{lemma}\label{lem_quasiflat_to_bilipschitzflat}
	Let $X$ be a length space and  let $\Phi:\E^n\to X$ be a  $L$-Lipschitz $(L,A)$-quasiflat with image $Q$. Then there exist $\bar L$ depending only on $L$ and $A$, a metric space $\bar X$, an $\bar L$-bilipschitz embedding $X\to \bar X$ and an $L$-Lipschitz retraction $\bar X\to X$ with the following additional properties. $\bar X$
	contains a $\bar L$-bilipschitz flat $\bar Q$ such that $d_H(Q,\bar Q)<L$ and $d_H(X,\bar X)<L$.

\end{lemma}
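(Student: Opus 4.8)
The idea is to embed $X$ into a larger space $\bar X$ by coning off: for each point $y\in\R^n$ we attach a geodesic segment joining $\Phi(y)$ to a new ``ideal'' copy $\bar\Phi(y)$ of $y$ sitting over it, in such a way that the collection of these ideal points forms a bilipschitz flat $\bar Q$ and $X$ sits inside $\bar X$ as a bilipschitz-embedded retract. Concretely, let me first take advantage of the fact that $\Phi$ is $L$-Lipschitz together with $X$ being a length space to see that, after replacing $\Phi$ by the map $y\mapsto\Phi(y)$ precomposed with a fixed cubulation vertex snapping, we may work with the quasi-retraction $\pi\colon X\to\R^n$ produced in Definition~\ref{def_retraction} (with $\lambda$ depending only on $L,A,n$). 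The space $\bar X$ will be the quotient of $X\sqcup(\R^n\times[0,1])$ in which $(y,0)$ is identified with $\Phi(y)$ for every $y\in\R^n$, endowed with the quotient length metric where $\R^n\times\{t\}$ carries $\sqrt{1+t^2}$ times (a rescaling of) the Euclidean metric and the interval direction has its standard metric. Set $\bar Q=\R^n\times\{1\}$ and $\bar\Phi\colon\R^n\to\bar X$, $\bar\Phi(y)=(y,1)$.

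\textbf{Key steps.} First I would verify that $\bar\Phi$ is a bilipschitz embedding with constant depending only on $L,A,n$: the upper Lipschitz bound is immediate from the product metric, and the lower bound uses that any path in $\bar X$ between $(y,1)$ and $(y',1)$ either stays in $\R^n\times[0,1]$ (where the estimate is Euclidean) or dips into $X$, in which case one projects via the composite $\pi$ and uses that $\pi\circ\Phi$ is within $\lambda$ of the identity, so the $X$-portion of the path is at least a definite fraction of $d(y,y')$ minus a bounded error; since $d(y,y')$ can be assumed large (small distances are controlled directly in the cone), this yields the bilipschitz bound. Second, I would construct the retraction $\bar X\to X$ as the map fixing $X$ and sending $(y,t)\mapsto\Phi(y)$; this is $L$-Lipschitz because $\Phi$ is $L$-Lipschitz and the interval fibers collapse, and one checks Lipschitzness across the different pieces of the quotient metric by the usual path-length argument. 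Third, the Hausdorff distance bounds: $d_H(X,\bar X)<L$ because every point $(y,t)$ is within $t\le 1\le L$ of $\Phi(y)\in X$ along the attached segment (rescale so the segment has length at most $L$; one may also just absorb constants), and $d_H(Q,\bar Q)<L$ because $\bar\Phi(y)=(y,1)$ is within the segment-length of $\Phi(y)\in Q$ while conversely $\Phi(y)$ is that close to $(y,1)$. Finally, the composite $X\hookrightarrow\bar X$ is a bilipschitz embedding with the stated constant (it is an isometry onto its image for the quotient metric, up to the comparison of intrinsic and ambient distances, which is controlled by the coning construction), and the retraction restricted appropriately gives the $\lambda$-quasi-retraction $\bar X\to X$ claimed.

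\textbf{Main obstacle.} The delicate point is the lower bilipschitz bound for $\bar\Phi$ (equivalently, that $\bar Q$ does not get ``short-circuited'' through $X$): a path between two far-apart ideal points could try to descend into $X$, travel cheaply there, and come back up. One must show this gains nothing, which is exactly where the quasi-isometric embedding hypothesis on $\Phi$ enters — descending costs at least the height $t$ twice, and the horizontal gain inside $X$ is, after projecting by $\pi$, no better than $L\cdot(\text{Euclidean displacement})+A$, so for displacements above a threshold depending on $L,A$ the ideal-flat route is cheapest up to a uniform factor. Managing the additive constant $A$ (which is why the conclusion only gives $d_H(Q,\bar Q)<L$ and a bilipschitz rather than isometric statement) and checking that the quotient metric is genuinely a length metric realizing these estimates is the technical heart; everything else is a routine verification of Lipschitz bounds piece by piece across the quotient.
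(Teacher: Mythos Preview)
Your approach is essentially the same as the paper's: both glue a cylinder $\E^n\times[\text{interval}]$ to $X$ along one end via $\Phi$, take the induced length metric, and declare $\bar Q$ to be the free end of the cylinder. The paper uses $\E^n\times[0,L]$ with the standard product metric, glued along $\E^n\times\{L\}$, and sets $\bar Q=\E^n\times\{0\}$; the bilipschitz bound for $\bar Q$ is then argued directly from the quasi-isometric embedding inequality for $\Phi$ (no need for $\pi$ or any cubulation): for $x,y\in\bar Q$ with $d_{\bar X}(x,y)<2L$ one has $d_{\bar X}=d_{\E^n}$, and for $d_{\E^n}(x,y)\ge 2L(2L+A)$ one gets $d_{\bar X}(x,y)\ge\frac{1}{2L}d_{\E^n}(x,y)$, yielding a $(2L+A)$-bilipschitz embedding.

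Your version is correct in outline, but you have introduced unnecessary machinery: the varying $\sqrt{1+t^2}$ conformal factor serves no purpose, and invoking the quasi-retraction $\pi$ from Definition~\ref{def_retraction} to prove the lower bilipschitz bound is roundabout---the quasi-isometric embedding inequality $d_X(\Phi(y),\Phi(y'))\ge L^{-1}d(y,y')-A$ applied directly to the $X$-portion of any path already gives what you need. Also, the phrase ``precomposed with a fixed cubulation vertex snapping'' is confusing and not needed here. Strip these away and your argument becomes the paper's.
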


\begin{proof}
	We glue $\E^n\times[0,L]$ to $X$ along $\E^n\times\{L\}$ via $\Phi$. The resulting space equipped with the induced length metric
	is denoted by $\bar X$. We define $\bar Q=\mathbb E^n\times \{0\}$, viewed as a subset of $\bar X$.
	Note that for $x,y\in\bar Q$, $d_{\bar X}(x,y)<2L$ implies $d_{\bar X}(x,y)=d_{\E^n}(x,y)$. Moreover, if $d_{\E^n}(x,y)\geq 2L(2L+A)$, then $d_{\bar X}(x,y)\geq\frac{1}{2L}d_{\E^n}(x,y)$.
	Hence for points $x,y \in\E^n$ with $2L<d_{\E^n}(x,y)< 2L(2L+A)$ holds $d_{\bar X}(x,y)\geq\frac{1}{2L+A}d_{\E^n}(x,y)$ and the canonical embedding $\E^n\hookrightarrow \E^n\times\{0\}\subset\bar X$
	is $(2L+A)$-bilipschitz. The remaining statements follow immediately from the construction.
\end{proof}

\begin{corollary}\label{cor_quasiretraction}
	Let $X$ be a length space and  let $\Phi:\E^n\to X$ be an  $L$-Lipschitz $(L,A)$-quasiflat with image $Q$. Then there exist constants $\lambda_1$ and $\lambda_2$ which depend only on $L,A$ and $n$, and a $\lambda_1$-Lipschitz quasiretraction $\pi:X\to Q$ such that $d(x,\pi(x))\le \lambda_2$ for any $x\in Q$. Moreover, $\lambda_2\to 0$ as $A\to 0$.
\end{corollary}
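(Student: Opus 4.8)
The plan is to build $\pi$ directly from a Lipschitz extension, rather than routing through the cylinder $\bar X$ of Lemma~\ref{lem_quasiflat_to_bilipschitzflat}; the latter would only produce a displacement bounded away from $0$ (the gluing bakes in the scale $L$), whereas we need $\lambda_2\to 0$ as $A\to 0$. Assume first $A>0$. I would fix an admissible regular cubulation $\mathcal{C}$ of $\E^n$, i.e.\ one of scale $R_0=2LA$, and recall (as noted after Definition~\ref{def_retraction}) that $\Phi|_{\mathcal{C}^{(0)}}$ is then $2L$-bilipschitz onto its image; hence its inverse $g:=(\Phi|_{\mathcal{C}^{(0)}})^{-1}\colon \Phi(\mathcal{C}^{(0)})\to\E^n$ is $2L$-Lipschitz. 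By the vector-valued McShane extension recalled in Section~\ref{subsect:metric}, $g$ extends to a $2\sqrt{n}\,L$-Lipschitz map $\tilde\pi\colon X\to\E^n$ with $\tilde\pi(\Phi(v))=v$ for every vertex $v\in\mathcal{C}^{(0)}$. Then I would set $\pi:=\Phi\circ\tilde\pi\colon X\to Q$.

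The Lipschitz bound is immediate: since $\Phi$ is $L$-Lipschitz, $\pi$ is $\lambda_1$-Lipschitz with $\lambda_1:=2\sqrt{n}\,L^2$, depending only on $L$ and $n$, and the image of $\pi$ lies in $\Phi(\E^n)=Q$. The only point requiring care is the displacement estimate, which must be carried out with the explicit scale $2LA$ so as to be linear in $A$. Given $x\in Q$, choose $y\in\E^n$ with $\Phi(y)=x$ and a vertex $v\in\mathcal{C}^{(0)}$ with $d_{\E^n}(y,v)\le \sqrt{n}\,LA$ (every point of $\E^n$ lies within half a space-diagonal of a vertex). Then $d_X(x,\Phi(v))\le L\,d_{\E^n}(y,v)\le \sqrt{n}\,L^2A$; applying the $2\sqrt{n}\,L$-Lipschitz map $\tilde\pi$ and using $\tilde\pi(\Phi(v))=v$ gives $d_{\E^n}(\tilde\pi(x),v)\le 2nL^3A$, hence $d_{\E^n}(\tilde\pi(x),y)\le (2nL^3+\sqrt{n}\,L)A$; finally applying $\Phi$ once more, $d_X(\pi(x),x)=d_X(\Phi(\tilde\pi(x)),\Phi(y))\le L\,d_{\E^n}(\tilde\pi(x),y)\le \lambda_2$ with $\lambda_2:=(2nL^4+\sqrt{n}\,L^2)A$. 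Thus $\lambda_1,\lambda_2$ depend only on $L,A,n$ and $\lambda_2\to 0$ as $A\to 0$; if one wants a genuine $\lambda$-quasi-retraction in the sense opening this subsection, take $\lambda=\max\{\lambda_1,\lambda_2\}$.

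It remains to dispose of $A=0$: then $\Phi$ is $L$-bilipschitz onto $Q$, so $\Phi^{-1}\colon Q\to\E^n$ is $L$-Lipschitz; extending it by McShane to a $\sqrt{n}\,L$-Lipschitz $\tilde\pi\colon X\to\E^n$ and again setting $\pi:=\Phi\circ\tilde\pi$ yields a $\sqrt{n}\,L^2$-Lipschitz retraction with $\pi|_Q=\id$, so the displacement is $0$, consistent with the formula for $\lambda_2$. Overall there is no serious obstacle here; the substantive observation is just that the estimate should be done against the cubulation scale $2LA$ rather than through the bilipschitz-flat reduction, so that it degenerates as $A\to 0$.
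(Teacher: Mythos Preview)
Your proof is correct and takes a genuinely different route from the paper. The paper goes through the thickening $\bar X$ of Lemma~\ref{lem_quasiflat_to_bilipschitzflat}: it builds a Lipschitz retraction $\bar X\to\bar Q$ via McShane on the bilipschitz flat $\bar Q$, then composes with the natural projection $\bar X\to X$. You instead work directly in $X$: McShane-extend $(\Phi|_{\mathcal{C}^{(0)}})^{-1}$ from the vertex set of a cubulation at scale $2LA$ to get $\tilde\pi:X\to\E^n$, then post-compose with $\Phi$. This is essentially the quasi-retraction of Definition~\ref{def_retraction} followed by $\Phi$, bypassing the cylinder construction entirely.

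Your critique of the paper's route is well taken. In Lemma~\ref{lem_quasiflat_to_bilipschitzflat} the cylinder has height $L$, so $d_H(Q,\bar Q)\le L$ independently of $A$; tracing the paper's composition gives a displacement bound of order $L^2\bar L\cdot\Lip(r)$, which does not tend to $0$ as $A\to 0$. To salvage the ``moreover'' clause along the paper's lines one would need to redo the thickening with height proportional to $A$, which is possible but not what is written. Your argument avoids this by tying the only scale in the construction (the cubulation mesh $2LA$) directly to $A$, making $\lambda_2=(2nL^4+\sqrt n\,L^2)A$ explicit and the limit immediate. The separate $A=0$ case is also handled cleanly. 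So your approach is both more elementary and sharper with respect to the dependence on $A$.
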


\begin{proof}
	Choose a thickening $\bar X$ as in Lemma \ref{lem_quasiflat_to_bilipschitzflat} and denote by $\bar Q\subset \bar X$ the bilipschitz flat close to $Q$.
	By McShane's extension lemma we obtain a Lipschitz retraction $\bar X\to \bar Q$ where the Lipschitz constant is controlled by $L,A,n$. Composing with the natural projection
	$\bar X\to X$ we obtain the required map since $Q$
	is at distance $L$ from $\bar Q$.
\end{proof}

The following lemma is a consequence of Section~\ref{subsec_homotopy}.

\begin{lemma}
	\label{lem_homotopy}
	Let $X$ be a metric space with an $L'$-Lipschitz bicombing and let $Q\subset X$ an $n$-dimensional $(L,A)$-quasiflat with $\lambda$-Lipschitz quasiretraction $\pi:X\to Q$.
	Let $\sigma\in \I_n(X)$ and let $S=\spt(\sigma)$. Suppose that $h:[0,1]\times S\to X$ is the homotopy from $S$ to $\pi(S)$ induced by the bicombing on $X$.

	Let $\rho=\sup_{x\in S}\{d(x,\pi(x))\}$.
	Then there exists a constant $C>0$ depending only on $L',L,A$ and $n$ such that
	\[\M(h_\#(\llbracket 0,1\rrbracket\times \sigma))\leq C\cdot\lambda^n\cdot\rho\cdot\M(\si).\]
	
\end{lemma}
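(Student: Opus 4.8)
The plan is to realise the homotopy $h$ through the bicombing, but \emph{uniformly} rescaled in the time variable so that the defining inequality of a Lipschitz bicombing can be applied literally, and then to quote the mass estimate for Lipschitz homotopies recorded in Section~\ref{subsec_homotopy}. We may assume $\lambda\ge 1$ and $\rho<\infty$, since if $\rho=\infty$ the asserted bound is vacuous. For each $x\in S$ let $\si_{x\,\pi(x)}\colon[0,\infty)\to X$ be the bicombing path from $x$ to $\pi(x)$, extended beyond its domain $[0,a_{x\pi(x)}]$ by the constant map as in Definition~\ref{def:lipschitz bicombing}. Since this path is $L'$-bilipschitz on its domain, $a_{x\pi(x)}\le L'\,d(x,\pi(x))\le L'\rho$, so
\[
h(t,x):=\si_{x\,\pi(x)}(L'\rho\, t),\qquad (t,x)\in[0,1]\times S,
\]
is well defined, satisfies $h(0,x)=x$ and $h(1,x)=\si_{x\,\pi(x)}(L'\rho)=\pi(x)$, and is therefore the bicombing homotopy from $S$ to $\pi(S)$ (parametrised by the common factor $L'\rho$, with each path already constant once it has reached $\pi(x)$).

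Next I would verify the two Lipschitz bounds that the estimate in Section~\ref{subsec_homotopy} requires. For fixed $t$, the bicombing inequality — valid for all non-negative parameters, in particular at $s=L'\rho t$ — together with $\lip(\pi)\le\lambda$ and $\lambda\ge1$ gives
\[
d\big(h(t,x),h(t,x')\big)=d\big(\si_{x\,\pi(x)}(L'\rho t),\si_{x'\,\pi(x')}(L'\rho t)\big)\le L'\max\{d(x,x'),d(\pi(x),\pi(x'))\}\le L'\lambda\,d(x,x'),
\]
so $h(t,\cdot)$ is $L'\lambda$-Lipschitz. For fixed $x$, the path $t\mapsto h(t,x)$ has speed at most $L'\rho\cdot L'=(L')^2\rho$ and is eventually constant, so it has Lipschitz constant, hence length, at most $(L')^2\rho$. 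Consequently $h$ is Lipschitz on $[0,1]\times S$ for the $\ell^2$-product metric, and $h_\#(\bb{0,1}\times\si)\in\I_{n+1}(X)$ is well defined (with boundary the ``cylinder relation'' between $\si$ and $\pi_\#\si$, as in Section~\ref{subsec_homotopy}).

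Finally I would apply the homotopy mass bound of Section~\ref{subsec_homotopy} (the bi-Lipschitz-path form of \cite[Proposition 2.10]{wenger2005isoperimetric}) with spatial Lipschitz constant $L'\lambda$ and length/time-Lipschitz constant $(L')^2\rho$, obtaining
\[
\M\big(h_\#(\bb{0,1}\times\si)\big)\le(n+1)\,(L'\lambda)^n\,(L')^2\,\rho\,\M(\si)=C\cdot\lambda^n\cdot\rho\cdot\M(\si),
\]
with $C=(n+1)(L')^{n+2}$, which depends only on $L'$ and $n$ (a fortiori only on $L',L,A,n$). The one point that needs care — and the only genuine obstacle, a bookkeeping one rather than a conceptual one — is the choice of time-parametrisation: to use the bicombing inequality at fixed $t$ one must compare the two paths at the \emph{same} parameter value, which forces the uniform rescaling by $L'\rho$ above rather than rescaling each path by its own length or domain; once that choice is made, both Lipschitz estimates are immediate and nothing else is required.
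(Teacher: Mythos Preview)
Your proof is correct and is precisely the argument the paper has in mind: the paper gives no detailed proof of this lemma, stating only that it ``is a consequence of Section~\ref{subsec_homotopy}'', and your write-up carries out exactly that deduction. The one point you flag --- the uniform time-rescaling by $L'\rho$ so that the bicombing inequality applies at a common parameter --- is the only thing to take care of, and you handle it correctly; the resulting constant $C=(n+1)(L')^{n+2}$ is what the reference to \cite[Proposition~2.10]{wenger2005isoperimetric} yields.
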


\begin{cor}
	\label{cor_homotopy}
	Let $Q,\pi,\lambda_1,\lambda_2$ be as in Corollary~\ref{cor_quasiretraction}. Suppose $X$ has $L'$-Lipschitz bicombing. Let $\sigma\in \I_n(X)$ such that $S=\spt(\sigma)\subset N_{2\rho}(Q)\setminus N_\rho(Q)$. Suppose that $h:[0,1]\times S\to X$ is the homotopy from $S$ to $\pi(S)$ induced by the bicombing on $X$. Suppose $\rho>\lambda_2$. Then there exists a constant $C>0$ depending only on $L',L,A$ and $n$ such that
	\[\M(h_\#(\llbracket 0,1\rrbracket\times \sigma))\leq C\cdot\lambda^n\cdot\rho\cdot\M(\si).\]
\end{cor}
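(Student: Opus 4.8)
The plan is to deduce this directly from Lemma~\ref{lem_homotopy}; the only genuine content is an elementary displacement estimate showing that every point of $S$ is moved a distance comparable to $\rho$ by the quasiretraction $\pi$.

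First I would fix notation: write $\lambda:=\lambda_1$ for the Lipschitz constant of the quasiretraction $\pi:X\to Q$ furnished by Corollary~\ref{cor_quasiretraction} (so $\lambda$ depends only on $L,A,n$), and set
\[
\rho_0:=\sup_{x\in S}d(x,\pi(x)),
\]
which is the quantity playing the role of ``$\rho$'' in Lemma~\ref{lem_homotopy}. Since $X$ carries an $L'$-Lipschitz bicombing and $Q$ is an $n$-dimensional $(L,A)$-quasiflat equipped with the $\lambda$-Lipschitz quasiretraction $\pi$, and since $\sigma\in\I_n(X)$ with $S=\spt(\sigma)$, all hypotheses of Lemma~\ref{lem_homotopy} hold, and it yields a constant $C_0=C_0(L',L,A,n)$ with
\[
\M\bigl(h_\#(\llbracket 0,1\rrbracket\times\sigma)\bigr)\le C_0\cdot\lambda^n\cdot\rho_0\cdot\M(\sigma).
\]

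Next I would bound $\rho_0$ in terms of $\rho$. Fix $x\in S$. Since $S\subset N_{2\rho}(Q)$, pick $y\in Q$ with $d(x,y)\le 2\rho$. Then, using the triangle inequality, the $\lambda$-Lipschitz property of $\pi$, and the displacement bound $d(y,\pi(y))\le\lambda_2$ valid for points of $Q$ (Corollary~\ref{cor_quasiretraction}),
\[
d(x,\pi(x))\le d(x,y)+d(y,\pi(y))+d(\pi(y),\pi(x))\le 2\rho+\lambda_2+\lambda\cdot 2\rho.
\]
The hypothesis $\rho>\lambda_2$ then gives $d(x,\pi(x))\le(3+2\lambda)\rho$, hence $\rho_0\le(3+2\lambda)\rho$. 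Substituting this into the displayed mass bound and putting $C:=(3+2\lambda)C_0$ — which depends only on $L',L,A,n$, as $\lambda$ does — yields $\M\bigl(h_\#(\llbracket 0,1\rrbracket\times\sigma)\bigr)\le C\cdot\lambda^n\cdot\rho\cdot\M(\sigma)$, as claimed.

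The main (indeed essentially the only) obstacle is bookkeeping: Lemma~\ref{lem_homotopy} is phrased with its ``$\rho$'' equal to the actual $\pi$-displacement of $S$, whereas in the Corollary $\rho$ names the radius of the annular region $N_{2\rho}(Q)\setminus N_\rho(Q)$ containing $S$, so the heart of the proof is the observation that these two quantities are comparable as soon as $\rho$ dominates the displacement constant $\lambda_2$ of $\pi$ on $Q$. I would also remark that the lower bound $S\cap N_\rho(Q)=\emptyset$ in the hypothesis is not used in this estimate; only $S\subset N_{2\rho}(Q)$ and $\rho>\lambda_2$ are needed.
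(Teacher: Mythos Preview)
Your proof is correct and essentially identical to the paper's: both reduce to Lemma~\ref{lem_homotopy} and bound the displacement $d(x,\pi(x))$ by the same triangle-inequality computation through a nearest point of $Q$, yielding the same constant $(3+2\lambda_1)\rho$. Your remark that the inner radius $N_\rho(Q)$ is not actually needed is also accurate.
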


\begin{proof}
	By Lemma~\ref{lem_homotopy}, it suffices to show $d(x,\pi(x))\le C\rho$ for any $x\in S$. Let $z$ be a point in $Q$ such that $\rho\le d(x,z)=d(x,Q)\le 2\rho$. Then $d(x,\pi(x))\le d(x,z)+d(z,\pi(z))+d(\pi(z),\pi(x))\le 2\rho+ \lambda_2+\lambda_1 d(x,z)\le 2\rho+ \rho+\lambda_1\cdot(2\rho)=(3+2\lambda_1)\rho$.
\end{proof}

\subsection{Proof of Proposition~\ref{lem_homology}}
\label{subsec_proof}
Using regular cubulations at scale $R$, one can decompose chains in $\R^n$ into pieces of size $R$. More generally, in a metric space $X$ we can decompose according to Lipschitz maps $\varphi:X\to\R^n$.
For integral chains of compact support in $X$ we are going to recursively define a weak surrogate of a simplicial decomposition subordinate to (the inverse image of) a given cubulation of $\R^n$.

Let us choose an orientation on $\R^n$. If $\mathcal{C}$ is a cubulation of $\R^n$, then each cube $B\in\mathcal{C}$ inherits an orientation of $\R^n$.
For a cube $B\in\mathcal{C}$, we define the \emph{face decomposition} of $\D B$ to be $\D B=\sum_i \eps_iB_i$ where each $B_i$ is a codimension 1 face of $B$ and the sign $\eps_i=\pm 1$ is determined by our orientation.
\begin{definition}
	\label{def:stratification}
	Let $\mathcal{C}$ be a regular cubulation of $\R^n$  at scale $R$ and let $\varphi:X\to\R^n$ be a Lipschitz map.
	For a constant $C>0$, a \emph{($C$-controlled) rectifiable stratification (at scale $R$)} of an $m$-cycle $S\in\bZ_{m,c}(X)$ is 
	a collection of currents $(S_{B})_{B\in\mathcal{C}}$ such that
	\begin{enumerate}
		\item  for every $B\in\mathcal{C}^{(n-k)}$ holds $S_B\in \I_{m-k,c}(X)$
		and $\|S_B\|$ is concentrated on $\varphi^{-1}(B^\circ)$ where $B^\circ$ denotes the interior of $B$;
		\item  for every $B\in\mathcal{C}^{(n)}$ holds $S_B=S\on \varphi^{-1}(B)$;
		\item for $B\in\mathcal{C}$, let $\D B=\sum_{i=1}\eps_i B_i$ be the face decomposition of $\D B$, then $\sum_{i=1} \eps_i S_{B_i}$ is a piece decomposition of $\D S_B$;
		\item $\sum_{B\in\mathcal{C}^{(n-k)}} \M(S_B) \le C\cdot\frac{\M(S)}{R^k}$. 
	\end{enumerate}
\end{definition}

Note (1) and (2) imply that  $S=\sum_{B\in\mathcal{C}^{(n)}}S_B$ is a piece decomposition.

If $S\in\bZ_{m,c}(X)$ has a rectifiable stratification $(S_{B})_{B\in\mathcal{C}}$, then we call the collection $(S_{B})_{B\in\mathcal{C}^{(n-k)}}$ the {\em codimension-$k$-skeleton}, $S^{(m-k)}$.

Before we turn to the existence of rectifiable stratifications, we provide an auxiliary lemma.

\begin{lemma}\label{lem_boundary_of_cube}
	Let $\varphi:X\to \R^k$ be a Lipschitz map and let $\si\in\bZ_{k}(X)$ be a cycle. Denote by $p_i:X\to \R$ the composition of $\varphi$ with the projection onto the $i$-th  coordinate axis.
	Suppose that 
	\begin{enumerate}
		\item $\slc{\si,p_i,1}$ and $\slc{\si,p_i,-1}$ exist and are elements in $\I_{k-1}(X)$ for all $1\leq i\leq k$, in particular $\|\si\|(p_i^{-1}\{\pm 1\})=0$;
		\item for any $i\neq j$ hold $\|\slc{\si,p_i,1}+\slc{\si,p_i,-1}\|(p_j^{-1}\{\pm 1\})=0$.
	\end{enumerate}
	Let $B=[-1,1]^k\subset\R^k$ and $B_i^\pm=[-1,1]^{i-1}\times\{\pm 1\}\times[-1,1]^{k-i}$ with their interior denoted by $B^\circ$ and $(B_i^\pm)^\circ$. Then the face decomposition of $\D B=\sum_{i=1}^k (B_i^+ -B_i^-)$ induces a piece decomposition 
	\[\D (\si\on \varphi^{-1}(B^\circ))=\sum_{i=1}^k \left(\slc{\si,p_i,1}\on \varphi^{-1}((B_i^+)^\circ)-\slc{\si,p_i,-1}\on \varphi^{-1}((B_i^-)^\circ)\right).\]
\end{lemma}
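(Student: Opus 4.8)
The plan is to reduce the statement to the one‑dimensional slicing identity $\slc{S,\rho,s+}=\D(S\on\{\rho\le s\})-(\D S)\on\{\rho\le s\}$ from Subsection~\ref{subsect:slicing}, applied iteratively in each of the $k$ coordinate directions, and to keep track of the restrictions so that everything remains a genuine piece decomposition. First I would set up notation: write $\rho=p_1$ and let $B'=[-1,1]^{k-1}\subset\R^{k-1}$ be the ``remaining box'' in coordinates $2,\dots,k$, with $\psi=(p_2,\dots,p_k):X\to\R^{k-1}$. Since $\si$ is a cycle, the slicing formula gives
\[
\slc{\si,p_1,1+}=\D(\si\on\{p_1\le 1\})\,,\qquad
\slc{\si,p_1,-1-}=\D(\si\on\{p_1<-1\})=-\D(\si\on\{p_1\ge -1\})\,,
\]
and hypothesis (1) ensures these slices exist and lie in $\I_{k-1}(X)$, and also that $\|\si\|(p_1^{-1}\{\pm1\})=0$, so that $\si\on\{-1\le p_1\le 1\}=\si\on\{-1<p_1<1\}$ up to null sets and $\D(\si\on\varphi^{-1}([-1,1]\times\R^{k-1}))=\slc{\si,p_1,1}-\slc{\si,p_1,-1}$ as a piece decomposition (the two slices are concentrated on disjoint sets $p_1^{-1}(1)$ and $p_1^{-1}(-1)$, using Lemma~\ref{lem_slice}(2)).

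Next I would restrict the middle slab $\si_{\mathrm{slab}}:=\si\on\varphi^{-1}([-1,1]\times\R^{k-1})$ to $\psi^{-1}(B'^\circ)$. Because $\|\si\|$ gives zero mass to $p_1^{-1}\{\pm1\}$ and — by hypothesis (2) — the boundary slices $\slc{\si,p_1,\pm1}$ give zero mass to each $p_j^{-1}\{\pm1\}$ for $j\ge2$, the restriction commutes with the boundary in the appropriate sense: one computes
\[
\D\bigl(\si\on\varphi^{-1}(B^\circ)\bigr)
=\D\bigl(\si_{\mathrm{slab}}\on\psi^{-1}(B'^\circ)\bigr)
=\bigl(\D\si_{\mathrm{slab}}\bigr)\on\psi^{-1}(B'^\circ)
+\bigl(\text{terms supported over }\partial B'\bigr).
\]
The ``terms supported over $\partial B'$'' are exactly the codimension‑$2$ faces in directions $2,\dots,k$; these do not appear in the target formula, so I must show they vanish. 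This is where hypothesis (2) is used a second time: the faces $B_i^\pm$ for $i\ge2$ only meet $\{p_1=\pm1\}$ in a set of $\|\slc{\si,p_1,\pm1}\|$‑measure zero, so $\D\si_{\mathrm{slab}}\on\psi^{-1}(B'^\circ)=\D\si_{\mathrm{slab}}$ after discarding null sets, i.e. $\slc{\si,p_1,1}\on\varphi^{-1}((B_1^+)^\circ)-\slc{\si,p_1,-1}\on\varphi^{-1}((B_1^-)^\circ)$ already accounts for the full boundary in direction $1$. For the directions $2,\dots,k$ I would then apply the identical argument to the $(k-1)$‑dimensional cycle $\si_{\mathrm{slab}}$ (which is a cycle over $B'^\circ$ in the relative sense), proceeding by induction on $k$ — the hypotheses (1),(2) are stable under this reduction precisely because of the mutual‑nullity conditions assumed for all pairs $i\ne j$.

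The bookkeeping that the resulting decomposition is a \emph{piece} decomposition (masses add on Borel sets, not merely currents add) follows at each stage from Lemma~\ref{lem_slice}(2) and the concentration statements: $\slc{\si,p_i,+1}$ is concentrated on $p_i^{-1}(1)$, $\slc{\si,p_i,-1}$ on $p_i^{-1}(-1)$, and the further restrictions to $\varphi^{-1}((B_i^\pm)^\circ)$ keep them concentrated on pairwise disjoint relatively open faces, so no cancellation can occur and the mass measures are simply summed. The main obstacle, I expect, is the careful handling of the null‑set conditions so that ``restrict then take boundary'' equals ``take boundary then restrict'' with no leftover contributions on the lower faces — in other words, verifying that hypotheses (1) and (2) are exactly what is needed to kill the codimension‑$\ge2$ error terms at every step of the induction. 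Once that commutation is established the formula drops out by collecting the $2k$ terms, one $+$ and one $-$ for each coordinate.
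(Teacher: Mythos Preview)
Your high-level plan (iterated one-dimensional slicing, induction, piece decomposition via concentration on disjoint face preimages) matches the paper's, but the execution has a structural gap. After forming $\si_{\mathrm{slab}}=\si\on\{-1<p_1<1\}$ you write
\[
\D\bigl(\si_{\mathrm{slab}}\on\psi^{-1}(B'^\circ)\bigr)=(\D\si_{\mathrm{slab}})\on\psi^{-1}(B'^\circ)+(\text{terms supported over }\partial B'),
\]
and then assert that the second group ``are exactly the codimension-$2$ faces \dots\ these do not appear in the target formula, so I must show they vanish.'' This is wrong: those extra terms are the slices $\slc{\si_{\mathrm{slab}},p_j,\pm1}$ for $j\ge 2$, which are the face contributions in directions $2,\dots,k$ and must \emph{survive}, not vanish. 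Relatedly, $\si_{\mathrm{slab}}$ is $k$-dimensional and is not a cycle, so you cannot ``apply the identical argument to the $(k-1)$-dimensional cycle $\si_{\mathrm{slab}}$'' as written; the induction hypothesis as you have stated it (for cycles of a given dimension) does not apply to the slab.

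The substantive missing step is the commutation
\[
\slc{\si\on\textstyle\prod_{i\ne j}\chi_i,\,p_j,\,\pm1}\;=\;\slc{\si,\,p_j,\,\pm1}\on\textstyle\prod_{i\ne j}\chi_i,
\]
i.e.\ that slicing in the $j$-th direction commutes with the prior restrictions. This is exactly what is needed to identify the ``extra terms'' above with the desired face pieces $\slc{\si,p_j,\pm1}\on\varphi^{-1}((B_j^\pm)^\circ)$, and it does not follow for free from Lemma~\ref{lem_slice}. The paper sidesteps your difficulty by inducting not on the dimension but on the cardinality of an index set $I\subset\{1,\dots,k\}$, proving the stronger identity $\D\bigl(\si\on\prod_{i\in I}\chi_i\bigr)=\sum_{j\in I}\slc{\si,p_j,s}\big|_{-1}^{1}\on\prod_{i\in I\setminus\{j\}}\chi_i$ while always working with the original cycle $\si$. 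In the induction step one peels off a single index $i_1$, applies the hypothesis to $I\setminus\{i_1\}$, and then establishes the commutation above by redoing the same computation with $i_1$ replaced by another index $i_2$ and comparing the two resulting piece decompositions term by term (each term is concentrated on a distinct face preimage, so the identification is forced). Hypothesis~(2) enters precisely to guarantee the measure-zero conditions needed for both decompositions to be genuine piece decompositions.
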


\begin{proof}
	
	We will use the abbreviation $\slc{\si,p_i,s}|_{-1}^1=\slc{\si,p_i,1}-\slc{\si,p_i,-1}$.
	We want to prove the following slightly more general statement.
	\[\D (\si\on \bigcap\limits_{i\in I}p_i^{-1}\{(-1,1)\})=\sum_{j\in I} (\slc{\si,p_j,s}|_{-1}^1\on \bigcap\limits_{i\in I\setminus \{j\}}p_i^{-1}\{(-1,1)\}).\]
	
	Let us introduce the characteristic functions $\chi_i:=\chi|_{\{-1< p_i< 1\}}$. Then we can write $\si\on\bigcap\limits_{i\in I}p_i^{-1}\{(-1,1)\}=\si\on\prod_{i\in I} \chi_i$. 
	
	We induct on the cardinality of $I$. The case $|I|=1$ is clear.
	
	Suppose $I=\{i_1,\ldots,i_m\}$. For $1\le j\le m$, let $I_j=I\setminus \{i_j\}$. By induction, $\si\on\prod_{i\in I_1}\chi_i$ is a normal current.
	\begin{align*}
	&\D(\si\on\prod_{i\in I} \chi_i)=\slc{\si\on\prod_{i\in I_1}\chi_i,p_{i_1},s}|_{-1}^1+(\D(\si\on\prod_{i\in I_1}\chi_i))\on\chi_{i_1}\\
	&=\slc{\si\on\prod_{i\in I_1}\chi_i,p_{i_1},s}|_{-1}^1+ \sum_{j\in I_1}(\slc{\si,p_j,s}|_{-1}^1\on \prod_{i\in I_1\setminus \{j\}}\chi_i)\on \chi_{i_1}\\
	&=\slc{\si\on\prod_{i\in I_1}\chi_i,p_{i_1},s}|_{-1}^1+ \sum_{j\in I_1}(\slc{\si,p_j,s}|_{-1}^1\on \prod_{i\in I\setminus\{j\}}\chi_i)\ .
	\end{align*}
	
	There are $m$ terms in the above summation, denoted by $S_1,\ldots,S_m$ from left to right. Note that for $j>1$, $\|S_j\|$ is concentrated on the set 
	$$A_j=(\bigcap\limits_{i\in I_j}p_i^{-1}\{(-1,1)\})\cap p_j^{-1}\{\pm 1\}\ .$$
	
	We claim $\|S_1\|$ is concentrated on $A_1$. Recall $\|\si\|(\{p_{i_1}=\pm1\})=0$, then $\|\si\on\prod_{i\in I_1}\chi_i\|(\{p_{i_1}=\pm1\})=0$ by \cite[Lemma 4.7]{Lan3}. By induction,
	$$\D (\si\on\prod_{i\in I_1}\chi_i)=\sum_{j\in I_1}\slc{\si,p_j,s}|_{-1}^1\on \prod_{i\in I_1\setminus \{j\}}\chi_i.$$
	Thus by assumption (2), $\|\D(\si\on\prod_{i\in I_1}\chi_i)\|(\{p_{i_1}=\pm1\})=0.$ Then $$\slc{\si\on\prod_{i\in I_1}\chi_i,p_{i_1},1}=\slc{\si\on\prod_{i\in I_1}\chi_i,p_{i_1},1+}.$$ 
	As $\|\si\on\prod_{i\in I_1}\chi_i\|$ is concentrated on 
	$\bigcap\limits_{i\in I_1}p_i^{-1}\{(-1,1)\}$, Lemma~\ref{lem_slice} implies that  $\|\slc{\si\on\prod_{i\in I_1}\chi_i,p_{i_1},1}\|$ is concentrated on $\bigcap\limits_{i\in I_1}p_i^{-1}\{(-1,1)\}\cap\{p_{i_1}=1\}$.
	Thus $\|S_1\|$ is concentrated on $A_1$ as claimed.
	
	Note that $A_{j_1}\cap A_{j_2}=\emptyset$ when $j_1\neq j_2$, so the above sum is a piece decomposition.
	
	Now we repeat the discussion with $I_1$ replaced by $I_2$ to obtain another piece decomposition. By comparing these, we conclude 
	$$\slc{\si\on\prod_{i\in I_1}\chi_i,p_{i_1},s}|_{-1}^1=\slc{\si,p_{i_1},s}|_{-1}^1\on \prod_{i\in I\setminus\{i_1\}}\chi_i,$$ which finishes the induction.
\end{proof}

\begin{proposition}\label{prop_rec_strat}
	Let $\varphi:X\to \R^n$ be an $L$-Lipschitz map. Then there exists $C=C(L,n)>0$ such that the following holds.
	For any $S\in\bZ_{m}(X)$ and $R>0$ there exists a regular cubulation $\mathcal{C}$ of $\R^n$ at scale $R$ such that $S$
	has a $C$-controlled rectifiable stratification subordinate to $\mathcal{C}$.
\end{proposition}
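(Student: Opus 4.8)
The plan is to construct, for a suitably chosen offset of the grid, a candidate stratification built entirely out of iterated slices of $S$, and then pin down the offset by a Fubini/averaging argument. We may assume $L\ge 1$ and $\M(S)>0$, and (matching Definition~\ref{def:stratification}) that $S\in\bZ_{m,c}(X)$; the general finite-mass case needs only notational changes, replacing $\I_{*,c}$ by $\I_*$ throughout. Write $p_i:=\pi_i\circ\varphi\colon X\to\R$ for the $L$-Lipschitz coordinate functions of $\varphi$, and for $v=(v_1,\dots,v_n)\in[0,R)^n$ let $\mathcal C_v$ be the regular cubulation at scale $R$ with $0$-skeleton $v+R\Z^n$.

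First I would set up the candidate stratification for an arbitrary offset $v$. A codimension-$k$ cell $B\in\mathcal C_v$ is encoded by a set $J=\{i_1<\dots<i_k\}$ of ``pinched'' coordinates, grid values $c_J\in v_J+R\Z^J$ for them, and grid intervals in the remaining directions. For a.e.\ $c_J$ the iterated slice $S_{c_J}:=\slc{S,p_J,c_J}$ exists and lies in $\I_{m-k,c}(X)$ by repeated application of \cite[Theorem 8.5]{Lan3}; it is again a cycle because $S$ is (the boundary formula for slices gives $\D\slc{T,\rho,s}=-\slc{\D T,\rho,s}$ for a.e.\ $s$, and $\D S=0$); $\|S_{c_J}\|$ is concentrated on $p_J^{-1}(c_J)\cap\spt(S)$; and $\slc{S_{c_J},p_j,c_j}=S_{c_J\cup\{j\}}$ for a.e.\ $c_j$. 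I would then set $S_B:=S_{c_J}\on\varphi^{-1}(B^\circ)$. Properties (1) and (2) of Definition~\ref{def:stratification} are then immediate. For property (3), the face decomposition $\D B=\sum_i\eps_iB_i$ induces, after an affine change of coordinates sending the free part of $B$ to $[-1,1]^{n-k}$, exactly the piece decomposition $\D S_B=\sum_i\eps_iS_{B_i}$ produced by Lemma~\ref{lem_boundary_of_cube} applied to the cycle $S_{c_J}$ in the $n-k$ free directions; note that the proof of that lemma uses only the existence of the relevant iterated slices, so it applies verbatim to a cycle of dimension $m-k\ge n-k$. Its hypotheses amount to the requirement that $\|S_{c_J}\|$ and each $\|S_{c_J\cup\{j\}}\|$ put no mass on the hyperplanes $p_i^{-1}(v_i+R\Z)$ for the free coordinates $i$; since $(p_i)_\#\|S\|$ is a finite Borel measure on $\R$ and hence has at most countably many atoms, these conditions hold for a.e.\ offset $v$, and they also force $S_{c_J}=\sum_{B:\ \mathrm{pinched\ part}=c_J}S_B$ to be a piece decomposition.

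It remains to control the masses and choose a good $v$. Fixing $J$ with $|J|=k$, the last remark gives $\sum_{B\in\mathcal C_v^{(n-k)},\ \mathrm{pinched\ in}\ J}\M(S_B)=\sum_{c_J\in v_J+R\Z^J}\M(S_{c_J})$; integrating over $v$ and applying the coarea inequality \cite[Theorem 6.4(3)]{Lan3} one coordinate at a time yields
\[
\int_{[0,R)^n}\ \sum_{c_J\in v_J+R\Z^J}\M(S_{c_J})\ dv\ =\ R^{n-k}\int_{\R^J}\M(\slc{S,p_J,y})\,dy\ \le\ R^{n-k}L^k\,\M(S).
\]
Summing over the $\binom nk$ choices of $J$, the average over $v\in[0,R)^n$ of $\sum_{B\in\mathcal C_v^{(n-k)}}\M(S_B)$ is at most $\binom nk L^k\M(S)/R^k\le 2^nL^n\M(S)/R^k$. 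By Chebyshev's inequality, for each $k$ the offsets violating $\sum_{B\in\mathcal C_v^{(n-k)}}\M(S_B)\le 2(n+1)\,2^nL^n\M(S)/R^k$ form a set of measure $<R^n/(2(n+1))$; discarding these sets for $k=0,\dots,n$ together with the Lebesgue-null set of offsets at which one of the genericity conditions above fails still leaves a set of offsets of positive measure, and any such $v$ yields a rectifiable stratification of $S$ subordinate to $\mathcal C_v$ which is $C$-controlled with $C:=2(n+1)\,2^nL^n=C(L,n)$.

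The one genuinely delicate point is the bookkeeping: there is a finite family of ``for a.e.\ offset'' requirements (existence and integrality of all iterated slices along the grid lines, and the hyperplane-null conditions needed both for Lemma~\ref{lem_boundary_of_cube} and for the various piece decompositions) together with $n+1$ averaged mass estimates, and one must verify that these are jointly satisfied by a single offset $v$. Everything else — the slicing identities, the iterated coarea estimate, and the verification of axioms (1)--(4) — is routine given the material in Section~\ref{subsect:currents}.
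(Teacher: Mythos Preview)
Your proof is correct and follows essentially the same approach as the paper: both construct the $S_B$ from iterated slices of $S$ in the pinched coordinate directions, invoke Lemma~\ref{lem_boundary_of_cube} for the face decomposition in item~(3), and use the coarea inequality together with an averaging argument over grid offsets to secure the mass bounds in item~(4). The only cosmetic difference is that the paper packages the averaging by passing to the quotient torus $T=\R^n/R\Z^n$ and selecting a single good point $(s_1,\dots,s_n)\in T$, whereas you integrate directly over $v\in[0,R)^n$ and apply Chebyshev.
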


\begin{proof}
	The action of $R\cdot \Z^n$ on $\R^n$ induces a 1-Lipschitz covering map $h:\R^n\to T$ where $T$ is a product of $n$-circles of length $R$. 
	For a subset $I=\{i_1,i_2,\ldots,i_k\}\subset \{1,2,\ldots,n\}$, let $\R^I\subset \R^n$ and $T^I\subset T$ be the associated subspaces. 
	Denote by $p_I:X\to \R^I$ the composition of $\varphi$ with the projection $\R^n\to \R^I$. Let $\pi_I=h\circ p_I$.
	
	First we claim there exist a point $(s_1,s_2,\ldots,s_n)\in T$ and a constant $C=C(L,n)$ such that for any collection of mutually disjoint subsets $I_1,I_2,\ldots,I_k$ of $\{1,\ldots,n\}$, we have
	\begin{enumerate}
		\item[(i)] $\si:=\left\langle\ldots\left\langle\left\langle S,\pi_{I_1},s_{I_1}\right\rangle,\pi_{I_2},s_{I_2}\right\rangle\ldots,\pi_{I_k},s_{I_k}\right\rangle\in \I_{m-|I|}(X)$;
		\item[(ii)] permuting the order of $I_1,\ldots,I_k$ in the definition of $\si$ will result in the same current;
		\item[(iii)] $\si=\slc{S,\pi_I,s_I}$ where $I=\cup_{i=1}^k I_i$;
		\item[(iv)] $\M(\si)\le C\cdot \frac{\M(S)}{R^{|I|}}$.
	\end{enumerate}
	Here $|I|$ denotes the cardinality of $I$. To see the claim, note that by repeatedly applying \cite[Theorem 6.5]{Lan3} and Fubini, 
	we can find a full measure subset $A_0\subset T$ such that (i), (ii) and (iii) hold. To arrange (iv), by the coarea formula, for each subset $I$ of $\{1,\ldots,n\}$, we can find a subset $A_I\subset A_0$ such that
	$\L^n(A_I)\ge (1-\eps)\L^n(A_0)$ and $\M(\si)\le C_\eps\cdot\frac{\M(S)}{R^{|I|}}$ whenever $(s_1,\ldots,s_n)\in A_I$. By choosing $\eps$ sufficiently small (depending on $n$), 
	the intersection of all $A_I$ with $I$ ranging over subsets of $\{1,\ldots,n\}$ is non-empty and the claim follows.
	
	Let $\mathcal{C}$ be the cubulation (of scale $R$) with vertex set $h^{-1}((s_1,\ldots,s_n))$. 
	Let $B\in\mathcal{C}^{(n-k)}$ be a cube and let $I:=I_B$ be the smallest subset of $\{1,\ldots,n\}$ such that $h(B)\subset T^I$, in particular $|I|=n-k$. We define $S_B=\slc{S,\pi_I,s_I}\on \varphi^{-1}(B)$ and $S_{B^\circ}=\slc{S,\pi_I,s_I}\on \varphi^{-1}(B^\circ)$.
	We claim that the collection $(S_B)_{B\in\mathcal{C}}$ satisfies Definition~\ref{def:stratification}.
	
	Definition~\ref{def:stratification} (2) holds by definition and (4) follows from item (iv). We claim $S_B=S_{B^\circ}$. If $B$ is top-dimensional, then this claim follows from $\|S\|(\pi_i^{-1}(s_i))=0$ for any $i\in\{1,\ldots,n\}$. As we slice at regular point when defining iterated slices, a similar argument implies the lower dimensional case of the claim, which implies the second half of Definition~\ref{def:stratification} (1). Now Definition~\ref{def:stratification} (3) follows from Lemma~\ref{lem_boundary_of_cube} and properties (i)-(iii). Note that (iv) implies $S_B$ has finite mass for each $B$, hence $\D S_B$ has finite mass by (3). The first half of Definition~\ref{def:stratification} (1) follows from (3), (i), (ii) and (iii).
\end{proof}

Now we are ready to prove Proposition~\ref{lem_homology}.

\begin{proof}[Proof of Proposition~\ref{lem_homology}]
	We are going to use the Federer Fleming deformation, henceforth refered to as (FFD), with respect to the cubulation $\mathcal{C}_{R_0}$.
	By Proposition \ref{prop_rec_strat}, there exists a second regular cubulation $\mathcal{C}$ of $\R^n$ at the larger scale $R$ such that $S$
	has a rectifiable stratification $(S_B)_{B\in\mathcal{C}}$ subordinate to $\mathcal{C}$.
	
	We will write $a\lesssim b$ if $a\le C'b$ for a constant $C'$ depending only on $L,A,m,n$ and $c$.
	
	We only prove the case $m\le n$ and the $m>n$ case is similar.
	
	First we claim that there exists a constant $C_0=C_0(L,A,n)$ and a family of currents $(P_B)_{B\in\mathcal{C}}$ such that 
	\begin{enumerate}[label=(\alph*)]
		\item for every $B\in \mathcal{C}^{(n-k)}$ holds $P_B\in \I_{m-k,c}(\R^n)$ and $\spt(P_B)\subset N_{C_0}(\varphi(\spt(S_B)))$;
		\item each $P_B$ is a cubical chain with respect to $\mathcal{C}_{R_0}$;
		\item for $B\in\mathcal{C}$, let $\D B=\sum_{i} \eps_iB_i$ be the face decomposition of $\D B$, then $\D P_B=\sum_{i} \eps_iP_{B_i}$;
		\item $\sum_{B\in\mathcal{C}^{(n-k)}} \M(P_B) \le C_0\cdot\frac{\M(S)}{R^k}$;
		\item $\Fill(\varphi_\# S - \sum_{B\in \mathcal{C}^{(n)}}P_B)\le C_0\cdot \M(S)$.
	\end{enumerate}
	
	We define $(P_B)_{B\in\mathcal{C}}$ inductively as follows. Let $\tau_B=\varphi_\# S_B$. Take $S_B$ with $B\in \mathcal{C}^{(n-m)}$. 
	Then $S_B$ is a 0-dimensional cycle. Applying  (FFD) to $\tau_B$ with respect to the cubulation $\mathcal{C}_{R_0}$, we obtain a cubical chain $P_B$ and a 
	homotopy $h_B$ with $\D h_B=P_B -\tau_B$. Note that $\M(P_B)\lesssim \M(S_B)$ and $\M(h_B)\lesssim R_0 \M(S_B)$. 
	Clearly $\sum_{B\in\mathcal{C}^{(n-m)}} \M(P_B) \lesssim \frac{\M(S)}{R^m}$ and $\sum_{B\in\mathcal{C}^{(n-m)}} \M(h_B) \lesssim R_0\cdot \frac{\M(S)}{R^m}\lesssim \frac{\M(S)}{R^{m-1}}$.

	Suppose $P_B$ and $h_B$ with $B\in \mathcal{C}^{(n-m+k-1)}$ are already defined such that conditions $(a)-(d)$ hold, $\D h_B=P_B-\tau_B$ and $$\sum_{B\in\mathcal{C}^{(n-m+k-1)}} \M(h_B) \lesssim \frac{\M(S)}{R^{m-k}}.$$
	Take $B$ with $B\in \mathcal{C}^{n-m+k}$. Let $\D B=\sum_i \eps_iB_i$ be the face decomposition. Define $P'_B=\tau_B+\sum_i \eps_i h_{B_i}$. Then $\D P'_B=\sum_{i} \eps_iP_{B_i}$. 
	Applying FFD to $P'_B$ with respect to $\mathcal{C}_{R_0}$, we obtain a cubical chain $P_B$ and a homotopy $h_B$ with $\D h_B=P_B-\tau_B$ (note that $\D P'_B$ is fixed when applying the radial push-out procedure of FFD to $P'_B$). Then
	$$
	\sum_{B\in\mathcal{C}^{(n-m+k)}} \M(P'_B) \lesssim \sum_{B\in\mathcal{C}^{(n-m+k)}} \M(\tau_B)+\sum_{B\in\mathcal{C}^{(n-m+k-1)}} \M(h_B)
	\lesssim \frac{\M(S)}{R^{m-k}}
	$$
	and 
	$$
	\sum_{B\in\mathcal{C}^{(n-m+k)}} \M(h_B) \lesssim R_0 \sum_{B\in\mathcal{C}^{(n-m+k)}} \M(P'_B)
	\lesssim \frac{\M(S)}{R^{m-k-1}}\ ({\rm as}\ R\ge R_0)
	$$
	and 
	$$\sum_{B\in\mathcal{C}^{(n-m+k)}} \M(P_B)\lesssim \sum_{B\in\mathcal{C}^{(n-m+k)}} \M(P'_B)\lesssim \frac{\M(S)}{R^{m-k}}.$$
	Then the claim follows.
	
	For each $P_B$, define $T_B=\iota(P_B)$ where $\iota$ is defined with respect to $\mathcal{C}_{R_0}$ (see Proposition~\ref{prop_chain_map}). Define $P=\sum_{B\in \mathcal{C}^{(n)}} P_B$ and $T=\sum_{B\in \mathcal{C}^{(n)}}T_B=\iota(P)$. 
	Now we construct $H\in \I_{m+1}(X)$ such that $\D H=S-T$.
	
	We start with $B$ with $B\in \mathcal{C}^{(n-m)}$. By the estimates in Definition~\ref{def_retraction}, $S_B-T_B$ is a cycle of diameter $\lesssim R$. We apply the strong cone inequality to obtain $H_B$ such that 
	$\diam(\spt (H_B))\lesssim R$ and $\M(H_B)\lesssim R(\M(S_B)+\M(T_B))$. Thus $\sum_{B\in \mathcal{C}^{(n-m)}}\M(H_B)\lesssim R \cdot\frac{\M(S)}{R^{m}}\lesssim \frac{\M(S)}{R^{m-1}}$.
	
	Suppose $H_B$ with $B\in \mathcal{C}^{(n-m+k-1)}$ are already defined such that 
	\begin{enumerate}
		\item $\diam(\spt (H_B))\lesssim R$;
		\item $\sum_{B\in \mathcal{C}^{(n-m+k-1)}}\M(H_B)\lesssim \frac{\M(S)}{R^{m-k}}$;
		\item $\D H_B=S_B-T_B-\sum_i \eps_iH_{B_i}$ where $\D B=\sum_i B_i$ is the face decomposition.
	\end{enumerate}
	
	Take $B$ with $B\in \mathcal{C}^{(n-m+k)}$. Consider $\si_B=S_B-T_B-\sum_i \eps_iH_{B_i}$. Let $\D B_i=\sum_{ij} \eps_{ij}B_{ij}$ be the face decomposition of $B_i$. 
	Then $\D\si_B=\sum_i\eps_iS_{B_i}-\sum_i\eps_iT_{B_i}-\sum_i\eps_i(S_{B_i}-T_{B_i}-\sum_{ij} \eps_{ij}B_{ij})=0$ by the sign convention. Applying the strong cone inequality to $\si_B$ we obtain a filling $H_B$. 
	As $\diam(\spt (H_{B_i}))\lesssim R$ for each $i$, we have $\diam(\spt (\si_B))\lesssim R$, hence $\diam(\spt (H_B))\lesssim R$. Moreover, $\M(H_B)\lesssim R\cdot (\M(S_B)+\M(T_B)+\sum_{B_i}\M(H_{B_i})$. 
	Thus by Proposition~\ref{prop_rec_strat} and the previous claim,
	\begin{align*}
	&\sum_{B\in \mathcal{C}^{(n-m+k)}}\M(H_B)\lesssim  R\cdot \sum_{B\in \mathcal{C}^{(n-m+k)}}\si_B\lesssim R\cdot \sum_{B\in\mathcal{C}^{(n-m+k)}}(\M(S_B)+\M(T_B))\\ &+ R\cdot\sum_{B\in\mathcal{C}^{(n-m+k-1)}}\M(H_B)\lesssim \frac{\M(S)}{R^{m-k-1}}\,.
	\end{align*}
	All items of the conclusions of the proposition now follow.
\end{proof} 

\section{Neck property and coarse neck property}
\label{sec_short_cut}
The purpose of this section is to provide a short cut from the neck property to coarse neck property under stronger assumptions on the ambient metric spaces, which leads to a weaker version of Theorem~\ref{thm:equivalence}.
\begin{lemma}
	\label{lem_sequence}
	Let $(X_k,Q_k,r_k,p_k,\tau_k)$ be a sequence and $D>0$, $\rho<1$ constants, such that:
	\bit
	\item $X_k$ is a complete metric space with an $L'$-Lipschitz bicombing.
	\item $Q_k\subset X_k$ is an $n$-dimensional $(L,A)$-quasiflat containing the base point $p_k\in Q_k$.
	\item $r_k\ra\infty$.
	\item for any asymptotic cone $X_\omega$ arising from the rescaled sequence $(\frac{1}{r_k}X_k,p_k)$, $Q_\omega$ has the neck property in $X_\omega$ (cf. Definition~\ref{def_neck_decomposition}) with uniform constant $C'$;
	\item $\tau_k\in \I_n(X_k)$, $\spt(\tau_k)\subset B(p_k,r_k)\setminus N_{\rho r_k}(Q_k)$ and $\M(\tau_k)\leq D r_k^n$.
	\item $\si_k:=\D \tau_k$ satisfies $\spt (\si_k)\subset N_{2\rho r_k}(Q_k)$ and $\M(\si_k)\leq D r_k^{n-1}$.
	\eit
	Then there exists a constant $C$ depending only on $C',L',L,A$ and $n$ such that the following holds for all sufficiently large $k$.
	
	\[	\Fill(\si_k)	\le C\rho r_k\M(\sigma_k).\]
	
\end{lemma}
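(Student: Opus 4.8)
The plan is to argue by contradiction, using the Euclidean isoperimetric inequality to confine the problem to the range where $\M(\si_k)$ is comparable to $(\rho r_k)^{n-1}$, and then a rescaling/ultralimit argument that feeds this range into the neck property of the limit flat. Let $b_1=b_1(L',n)$ be the isoperimetric constant of Theorem~\ref{thm:isop-ineq} (available because an $L'$-Lipschitz bicombing yields (CI$_{n-1}$), cf.\ Remark~\ref{rmk_combing}), let $K=K(L',n)$ be the constant of the skeleton-by-skeleton pull-back used below, and put $C:=\max\{b_1,\,2KC'+2\}$. Suppose the asserted inequality fails with this $C$; then, after passing to a subsequence, $\Fill(\si_k)>C\rho r_k\M(\si_k)$ for all $k$. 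Rescaling $X_k$ by $1/r_k$ (which improves the quasi-isometry constants of $Q_k$ and keeps the bicombing constant $L'$), write $\bar X_k,\bar\tau_k,\bar\si_k,\bar Q_k,\bar p_k$ for the normalized data, so that $\M(\bar\tau_k)\le D$, $t_k:=\M(\bar\si_k)=\M(\si_k)/r_k^{n-1}\le D$, $\spt(\bar\tau_k)\subset B(\bar p_k,1)\setminus N_\rho(\bar Q_k)$, $\D\bar\tau_k=\bar\si_k$, $\spt(\bar\si_k)\subset N_{2\rho}(\bar Q_k)$, and the failure becomes $\Fill(\bar\si_k)>C\rho t_k$. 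Since Theorem~\ref{thm:isop-ineq} gives $\Fill(\bar\si_k)\le b_1 t_k^{n/(n-1)}$, we get $C\rho<b_1 t_k^{1/(n-1)}$, hence $t_k>\rho^{n-1}$ for all $k$ and in particular $\liminf_k t_k\ge\rho^{n-1}>0$. It thus suffices to produce, for $k$ large, a filling of $\bar\si_k$ in $\bar X_k$ of mass $<C\rho t_k$.

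To get such a filling I would pass to a limit. Applying Lemma~\ref{lem_compactness} to $(\bar\tau_k)$ with $\epsilon=\rho/4$ yields, after a further subsequence, regularized currents $\bar\tau_k'$ with uniformly totally bounded supports, homologous to $\bar\tau_k$ inside $N_{\rho/4}(\spt\bar\tau_k)$ — so $\D\bar\tau_k'=\bar\si_k$, $\spt(\bar\tau_k')\subset B(\bar p_k,2)$ and $\spt(\bar\tau_k')$ lies at distance $\ge 3\rho/4$ from $\bar Q_k$ — together with a compact space $Z$, isometric embeddings $\psi_k\colon\spt(\bar\tau_k')\to Z$, and $\bar\tau_\infty'\in\I_{n,c}(Z)$ with $\psi_{k\#}\bar\tau_k'\to\bar\tau_\infty'$ in the flat distance in $\ell^\infty(Z)$; hence $\psi_{k\#}\bar\si_k=\D(\psi_{k\#}\bar\tau_k')\to\bar\si_\infty:=\D\bar\tau_\infty'$ in the flat distance and $\M(\bar\si_\infty)\le\liminf_k t_k$. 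Forming the asymptotic cone $X_\om=\lim_\om(\bar X_k,\bar p_k)$ along this subsequence, the uniformly totally bounded sets $\spt(\bar\tau_k')$ have an ultralimit inside $X_\om$ canonically isometric to the Hausdorff limit of the $\psi_k(\spt\bar\tau_k')$ and carrying a current $T_\om\in\I_{n,c}(X_\om)$ corresponding to $\bar\tau_\infty'$; its support lies at distance $\ge 3\rho/4$ from $Q_\om=\lim_\om\bar Q_k$, so $\spt(T_\om)\subset X_\om\setminus Q_\om$, and $\spt(\D T_\om)\subset\overline{N_{2\rho}(Q_\om)}$ by the support semicontinuity of \cite[Proposition~2.2]{wenger2011compactness}. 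Since $Q_\om$ has the neck property in $X_\om$ with constant $C'$ by hypothesis, Definition~\ref{def_neck_decomposition}, applied with $2\rho$ in place of $\rho$, provides for every $\delta>0$ a current $T_\infty\in\I_{n,c}(X_\om)$ with $\D T_\infty=\D T_\om$ and $\M(T_\infty)\le 2C'\rho\,\M(\bar\si_\infty)+\delta$.

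Finally I would transport $T_\infty$ back into $\bar X_k$ exactly as in the proof of Lemma~\ref{lem_suplindiv_implies_fullsup}: replace $T_\infty$ by a Lipschitz chain of comparable mass whose boundary is flat-close to $\bar\si_\infty$ (harmless in a space with a Lipschitz bicombing), subdivide it into Lipschitz simplices with uniformly bounded Lipschitz constants and images in small balls, pull the $0$-skeleton back to $\bar X_k$, and fill skeleton by skeleton using the uniform coning inequalities of $\bar X_k$ (Remark~\ref{rmk_combing}). This yields $\tilde\tau_k\in\I_{n,c}(\bar X_k)$ whose boundary differs from $\bar\si_k$ by a cycle of flat norm $o_k(1)$ and with $\M(\tilde\tau_k)\le K\,\M(T_\infty)+o_k(1)$; adding an isoperimetric filling of that error cycle, of mass $o_k(1)$, gives a filling of $\bar\si_k$ in $\bar X_k$ of mass at most $2KC'\rho\,\M(\bar\si_\infty)+K\delta+o_k(1)\le 2KC'\rho\,t_k+2KC'\rho\,\epsilon'+K\delta+o_k(1)$, where I used $\M(\bar\si_\infty)\le t_k+\epsilon'$ for $k$ large (lower semicontinuity of mass). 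Since $t_k\ge\rho^{n-1}$, choosing $\epsilon'$ and $\delta$ small (depending only on $\rho,C',L',n$) and then $k$ large absorbs the last three terms into $\rho t_k$, so that $\Fill(\bar\si_k)\le(2KC'+1)\rho t_k<C\rho t_k$, the desired contradiction. Hence $C$ works, and it depends only on $C',L'$ and $n$. The step I expect to be the main obstacle is this last transport — passing from the integral filling $T_\infty$ in the asymptotic cone to a Lipschitz one while controlling the boundary perturbation, and the bookkeeping that keeps all accumulated errors below $\rho t_k$ uniformly; a secondary technical point is identifying the flat limit $\bar\tau_\infty'$ with a current in $X_\om$ and localizing its boundary near $Q_\om$.
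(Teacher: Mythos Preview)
Your overall architecture is correct and actually cleaner than the paper's in one respect: the contradiction setup together with the isoperimetric lower bound $t_k\ge\rho^{n-1}$ neatly disposes of the case where $\M(\bar\si_k)$ might degenerate, which the paper glosses over. The compactness/ultralimit step and the invocation of the neck property in $X_\om$ are also fine.

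The gap is exactly where you suspected: the transport step. After pulling back a Lipschitz approximation of $T_\infty$ skeleton by skeleton, the boundary $\D\tilde\tau_k$ you obtain is a discrete approximation in $\bar X_k$ of $\D T_\infty$. There is no mechanism that forces $\D\tilde\tau_k-\bar\si_k$ to have small flat norm \emph{in $\bar X_k$}. The flat convergence $\psi_{k\#}\bar\si_k\to\bar\si_\infty$ takes place in $\ell^\infty(Z)$, and the $n$-chain witnessing that closeness lives in $\ell^\infty(Z)$, not in $\bar X_k$ or in $X_\om$; it cannot be pulled back. So the assertion ``whose boundary differs from $\bar\si_k$ by a cycle of flat norm $o_k(1)$'' is unjustified, and in general false --- $\D\tilde\tau_k$ and $\bar\si_k$ are two cycles supported near $\bar Q_k$ with no a priori relation beyond both being close (in $\ell^\infty(Z)$) to the same limit. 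The reference to Lemma~\ref{lem_suplindiv_implies_fullsup} is misleading: there the boundary is arranged to be $\iota\varphi_k$ for a \emph{fixed} cubical cycle $\varphi$ in $\R^n$, so one can simply prescribe the pullback to have that specific boundary; here $\bar\si_k$ is an arbitrary integral cycle and no such trick is available.

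The paper bypasses this by never attempting a direct pullback. Instead it glues $X_\om$ and all the $\bar X_k$ into a single ambient space $Z$ (then $\ell^\infty(Z)$), so that the small filling of $\si$ produced by the neck property in $X_\om$ becomes a filling in $Z$, and flat convergence gives $\Fill_Z(\bar\si_k)\lesssim C'\rho\,\M(\bar\si_k)$. To get back into $\bar X_k$ it uses the one Lipschitz map that \emph{does} land there: a Lipschitz quasi-retraction $\pi_k\colon Z\to Q'_k\subset\bar X_k$, obtained by McShane-extending the quasi-inverse $Q'_k\to\R^n$ to all of $Z$ and composing with $\Phi_k$. Pushing the $Z$-filling through $\pi_k$ gives a small-mass filling of $\pi_{k\#}\bar\si_k$ inside $Q'_k$, and since $\spt(\bar\si_k)\subset N_{2\rho}(Q'_k)$, the bicombing homotopy of Corollary~\ref{cor_homotopy} supplies an $n$-current in $\bar X_k$ of mass $\lesssim\rho\,\M(\bar\si_k)$ connecting $\bar\si_k$ to $\pi_{k\#}\bar\si_k$. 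The existence of this Lipschitz projection onto the quasiflat --- available precisely because $Q'_k$ is, up to bounded distortion, a copy of $\R^n$ --- is the missing idea in your argument.
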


\begin{proof}
	Since $X_k$ has a Lipschitz bicombing, we may assume that the quasiflat $Q_k$ is represented by a Lipschitz quasi-isometric embedding. Let $X'_k=\frac{1}{r_k}X_k$ and $Q'_k=\frac{1}{r_k}Q_k$. 
	We still use $\tau_k$ and $\si_k$ to denote the rescaled currents. By Lemma~\ref{lem_compactness}, we can pass to a regularized sequence $(\tau_k')$ with $\tau_k'\in\I_{n,c}(X'_k)$ such that
	$\tau'_k$ comes with mass bounds and support control and such that $\tau'_k$ is homologous to $\tau_k$ in a small tubular neighborhood of $\spt(\tau_k)$. 
	Moreover, we may assume that there is a compact metric space $Z$, a chain $\tau\in\I_{n,c}(Z)$ 
	and isometric embeddings $\psi_k:\spt(\tau_{k})\to Z$ such that $\psi_{k\#}\tau_{k}$ converges to $\tau$ in the flat distance in $\ell^\infty(Z)$. We set $\si=\D\tau$.
	
	Let $X_\omega$ be an ultralimit of $X'_k$ and $Q_\om\subset X_\om$ be the ultralimit of the $Q'_k$'s. We view $\spt(\tau)$ with the induced metric from $Z$ as a subset of $X_\omega$. 
	Moreover, we may assume that the $X'_k$'s and $X_\omega$ are isometrically embedded in $Z$ by replacing $Z$ by the metric space obtained from gluing $X_\omega$ and $X'_k$'s to 
	$Z$ along $\spt (\tau_\omega)$ and $\spt(\tau_k)$'s (\cite[Lemma I.5.24]{bridson_haefliger}). Also, we can assume that $Z$ has a convex geodesic bicombing by replacing $Z$ by $\ell^\infty(Z)$. 
	By construction, inside $X_\om\subset Z$ we know $\tau$ is supported in $X_\om\setminus N_\rho(Q_\omega)$ and $\si_\om=\partial\tau_\om$ is supported in $N_{2\rho}(Q_\om)$.

	Let  $\pi_{k}:Z\to Q'_k$ be a Lipschitz quasi-retraction as in Corollary~\ref{cor_quasiretraction}. 
	Then there exists $L_0$ depending only on $L$ and $A$, such that $\pi_{k}$ is $L_0$-Lipschitz. We use $\Fill_{U}$ to denote the filling volume inside a space $U$.
	By assumption we have $\Fill_{X_\omega}(\si)\le C' \rho\M(\si)$. Since $\si_k$ converges to $\si$ with respect to flat distance in $Z$, we conclude $\Fill_{Z}(\si_k)\leq 2C'\rho\M(\si_k)$ for all $k$
	large enough. Hence, for all such $k$ holds $\Fill_{Q_k'}(\pi_{k\#}\si_k)\leq 2L_0^n C'\rho\M(\si_k)$. By Corollary~\ref{cor_homotopy}, there exists a constant $C_1=C_1(L',L,A,n)$ such that 
	$\Fill_{X'_k}(\si_k-\pi_{k\#}\si_k)\le C_1\rho \M(\si_k)$ and the  proof is complete.
\end{proof}

\begin{cor}
	\label{cor_small_neck}
	Suppose $X$ has an $L'$-Lipschitz bicombing and $Q\subset X$ is an $(L,A)$ quasiflat. If for any asymptotic cone $X_\om$ of $X$ (with base points in $Q$) $Q_\om$ has the neck property in $X_\om$, then $Q$ has 
	the coarse neck property in $X$.
\end{cor}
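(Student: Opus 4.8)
The plan is to deduce Corollary~\ref{cor_small_neck} from Lemma~\ref{lem_sequence} by a compactness (contradiction) argument, which is what the phrase ``immediate corollary'' suggests. Suppose $Q$ does not have the coarse neck property. Unravelling Definition~\ref{def_tcp} and using that both its hypotheses and its conclusion are covariant under rescaling the metric of $X$ and under moving the base point a bounded amount (and that the constant $2$ appearing in Lemma~\ref{lem_sequence} may be replaced by any fixed constant), one extracts a single sequence of data: constants $D>0$ and $\rho\in(0,1)$, a sequence $r_k\to\infty$, base points $p_k\in Q$, and currents $\tau_k\in\I_n\bigl(B_{p_k}(r_k)\setminus N_{\rho r_k}(Q)\bigr)$ with $\M(\tau_k)\le D r_k^n$, such that $\sigma_k:=\partial\tau_k$ satisfies $\spt(\sigma_k)\subset N_{2\rho r_k}(Q)$ and $\M(\sigma_k)\le D r_k^{n-1}$, while $\Fill(\sigma_k)/(\rho r_k\,\M(\sigma_k))\to\infty$. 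Reducing the four free parameters $C,\rho,b_1,b_2$ of Definition~\ref{def_tcp} to this normalized form is routine but requires care with the quantifiers; one first recenters onto $Q$ and rescales by $\sim 1/(\rho R)$, using the coning inequality satisfied by $X$ (valid in all dimensions, since $X$ has a Lipschitz bicombing) to keep the diameters of the supports involved under control.

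The next ingredient is a \emph{uniform} neck constant for the relevant asymptotic cones. Since $L'$-Lipschitz bicombings survive ultralimits, every asymptotic cone $X_\om$ of $X$ carries an $L'$-Lipschitz bicombing and hence satisfies {\rm (SCI$_n$)}; by Proposition~\ref{prop_cone_conditions} the neck property, the weak neck property, the piece property and full support with respect to $\ha_n$ are therefore equivalent in $X_\om$, and the hypothesis of the corollary guarantees they hold in every such $X_\om$ (based at a point of $Q_\om$). A further compactness/diagonal argument then upgrades this to a single constant $C'$: if the optimal neck constants blew up along a sequence of asymptotic cones $X_{\om_j}$ of $X$, then --- using that an iterated asymptotic cone of $X$ is again an asymptotic cone of $X$ (cf.\ \cite[Chapter 10.7]{ggt}) and that the coning inequality of $X$ passes to its asymptotic cones --- one produces an asymptotic cone of $X$ in which the neck (equivalently, full support) property fails, contradicting the hypothesis.

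With the uniform constant $C'$ in hand, I apply Lemma~\ref{lem_sequence} to the constant sequence $(X_k,Q_k):=(X,Q)$ together with the data $(r_k,p_k,\tau_k)$ and the constants $D$, $\rho$, $C'$. It produces $C=C(C',L',L,A,n)$ such that $\Fill(\sigma_k)\le C\rho r_k\,\M(\sigma_k)$ for all sufficiently large $k$, contradicting $\Fill(\sigma_k)/(\rho r_k\,\M(\sigma_k))\to\infty$. Hence $Q$ has the coarse neck property, with the constant $C_0$ of Definition~\ref{def_tcp} taken to be $C$ and the threshold function $\ul R$ read off from the contradiction.

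The main obstacle is the interaction of the first two steps: the statement of CNP carries a \emph{universal} constant $C_0$, so one cannot merely pass to a single asymptotic cone and use whichever neck constant it happens to have; one must first know the neck constant is bounded over all relevant asymptotic cones, and then push that bound through Lemma~\ref{lem_sequence}. Everything else is bookkeeping, since the quantitative heart of the matter --- converting an asymptotic statement about cones into a coarse statement in $X$ via the regularization of Lemma~\ref{lem_compactness} and the quasi-retraction homotopy estimate of Corollary~\ref{cor_homotopy} --- is already packaged inside Lemma~\ref{lem_sequence}.
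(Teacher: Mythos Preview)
Your approach is correct and is essentially the one the paper intends: the corollary is stated without proof precisely because it follows by the contradiction argument you describe, feeding a violating sequence into Lemma~\ref{lem_sequence}.

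Two points where you are working harder than necessary. First, the uniform neck constant $C'$ over all asymptotic cones does not require an iterated-cone/diagonal argument. Every asymptotic cone inherits an $L'$-Lipschitz bicombing, hence satisfies {\rm(SCI$_{n-1}$)} with constants depending only on $L'$ and $n$; and $Q_\om$ is an $L$-bilipschitz flat. By Proposition~\ref{prop_cone_conditions} the neck property is equivalent to the piece property in $X_\om$, and by Lemma~\ref{lem_pp_impl_neck} the piece property implies the neck property with constant depending only on $n$, the {\rm(SCI$_{n-1}$)} constants, and $L$. So the neck constant is automatically uniform. Second, it is cleaner to fix $C_0:=C(C',L',L,A,n)$ from Lemma~\ref{lem_sequence} \emph{before} negating CNP: then the failure of CNP for this specific $C_0$ yields fixed parameters $C,\rho,b_1,b_2$ and a single sequence with $\Fill(\sigma_k)>C_0\rho R_k\M(\sigma_k)$, which after recentering on $Q$ and absorbing $b_1$ into the scale directly contradicts the conclusion of Lemma~\ref{lem_sequence}. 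Your ``ratio $\to\infty$'' formulation is not needed, and as written would require the parameters $C,\rho,b_1,b_2$ to vary with $k$, which complicates the normalization you allude to.
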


\begin{thm}
	\label{thm_weak_equivalence}
	Let $Q:\R^n\to X$ be an $(L,A)$-quasiflat in a proper metric spaces $X$ with a Lipschitz bicombing. Then the following conditions are equivalent.
	\begin{enumerate}
		\item $Q$ is $(\mu,b)$-rigid (cf. Definition~\ref{def_rigid}).
		\item $Q$ has super-Euclidean divergence (cf. Definition~\ref{def_divergence2}).
		\item $Q$ has coarse neck property (cf. Definition~\ref{def_tcp}).
		\item $Q$ has coarse piece property (cf. Definition~\ref{def_neckp}).
	\end{enumerate}
\end{thm}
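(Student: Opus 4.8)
The plan is to establish the cycle of implications $(1)\Rightarrow(2)\Rightarrow(3)\Rightarrow(4)\Rightarrow(1)$ together with $(2)\Rightarrow(1)$, using only the results of Sections~\ref{sec:quasiflats in metric spaces}--\ref{sec_neck_decomposition} and Section~\ref{sec_short_cut}, so as to bypass the cycle contracting property and the machinery of Section~\ref{sec_cycles_close_to_Morse_quasiflats}. (Logically the statement is also a special case of Theorem~\ref{thm:equivalence}(c), since a metric space with a Lipschitz bicombing satisfies \mbox{(SCI$_n$)} for all $n$ by Remark~\ref{rmk_combing}; the point of this section is precisely to give a proof not relying on Section~\ref{sec_cycles_close_to_Morse_quasiflats}.) Throughout I would use that $X$, and every asymptotic cone $X_\om$ of $X$ with basepoints in $Q$, satisfy \mbox{(SCI$_n$)}, \mbox{(EII$_{n+1}$)}, and coning inequalities up to dimension $n$ for singular chains, Lipschitz chains and compactly supported currents: for $X$ this is Remark~\ref{rmk_combing}, and for $X_\om$ it holds because Lipschitz (bi)combings pass to ultralimits (Section~\ref{subsect:metric}). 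The equivalence $(1)\Leftrightarrow(2)$ is then immediate from Lemma~\ref{lem_rig_iff_supdiv}, as $X$ satisfies \mbox{(CI$_{n-1}$)}.

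For $(2)\Rightarrow(3)$ I would argue through asymptotic cones. Assume $Q=\Phi(\R^n)$ has super-Euclidean divergence, and let $X_\om$ be an asymptotic cone of $X$ with basepoints in $Q$. The ultralimit $Q_\om$ of $Q$ is an $L$-bilipschitz flat (the additive constant $A$ is killed by the rescaling), and by Lemma~\ref{lem_suplindiv_implies_fullsup} it has full support in $X_\om$ with respect to $\hl_*$. Since $X_\om$ has a Lipschitz combing, Proposition~\ref{prop_equal_homology} shows the reduced homology theories $\hl_*$ and $\ha_*$ agree on the relevant local homology groups, so $Q_\om$ has full support with respect to $\ha_n$; Proposition~\ref{prop_cone_conditions} then shows $Q_\om$ has the neck property in $X_\om$, and by Lemma~\ref{lem_pp_impl_neck} the neck constant depends only on $n$, the \mbox{(SCI$_{n-1}$)} constants of $X_\om$ and the bilipschitz constant $L$ --- hence is uniform over all asymptotic cones of $X$. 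Corollary~\ref{cor_small_neck} (equivalently, Lemma~\ref{lem_sequence}) now upgrades this uniform neck property in all asymptotic cones to the coarse neck property of $Q$ in $X$.

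For $(3)\Rightarrow(4)$ I would invoke Lemma~\ref{lem_decomposition_lemma}: the coarse neck property of $Q$ together with \mbox{(SCI$_n$)} yields the coarse piece property of Definition~\ref{def_neckp}, with $a$ equal to the constant $C$ of Proposition~\ref{lem_homology}. Finally, for $(4)\Rightarrow(1)$ I would reproduce the implication ``$(5)\Rightarrow(1)$'' from the proof of Theorem~\ref{thm:equivalence}: feed the cubical cycle $\varphi$ and the filling $\tau$ from Definition~\ref{def_rigid} into the coarse piece property to obtain a decomposition $\tau=U+V$ and a minimal filling $\omega$ of $\partial V$; then use that $\iota(\nu)$ is $(\Lambda,a)$-quasi-minimizing mod a neighborhood of $\Phi$ of the $0$-skeleton of $\spt(\nu)$ (Proposition~\ref{prop_chain_map}(3)), the filling-density estimate Lemma~\ref{lem:fill-density}, and Proposition~\ref{prop_chain_map}(4), to conclude that $\spt(U+\omega)$, and therefore $\Phi(W_b)$, lies in a sublinear neighborhood of $\spt(\tau)$ --- that is, $(\mu,b)$-rigidity. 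This argument uses only Proposition~\ref{lem_homology}, Lemma~\ref{lem_decomposition_lemma}, Proposition~\ref{prop_chain_map} and Lemma~\ref{lem:fill-density}, none of which depends on Section~\ref{sec_cycles_close_to_Morse_quasiflats}.

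The main obstacle is the asymptotic-cone transfer in $(2)\Rightarrow(3)$: one must check (i) that the neck property holds in \emph{every} asymptotic cone of $X$ with one and the same constant, which is what makes Lemma~\ref{lem_sequence} applicable, and (ii) that the homology-theory bookkeeping in passing from $\hl_*$ (the output of Lemma~\ref{lem_suplindiv_implies_fullsup}) to $\ha_n$ (the input of Proposition~\ref{prop_cone_conditions}) is legitimate inside the cone. Both points are resolved by the single observation that asymptotic cones inherit the Lipschitz combing, hence satisfy all coning and Euclidean isoperimetric inequalities with uniform constants, which makes Proposition~\ref{prop_equal_homology}, Proposition~\ref{prop_cone_conditions} and Lemma~\ref{lem_pp_impl_neck} directly applicable with uniform parameters. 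No genuinely new estimate is needed beyond those already proved; the argument is essentially a packaging of Lemma~\ref{lem_rig_iff_supdiv}, Lemma~\ref{lem_suplindiv_implies_fullsup}, Proposition~\ref{prop_cone_conditions}, Corollary~\ref{cor_small_neck}, Lemma~\ref{lem_decomposition_lemma}, and the final step of the proof of Theorem~\ref{thm:equivalence}.
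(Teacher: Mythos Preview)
Your proposal is correct and follows essentially the same route as the paper. The paper's proof cites Proposition~\ref{prop_bridge}, Proposition~\ref{prop_cone_conditions} and Corollary~\ref{cor_small_neck} for $(2)\Rightarrow(3)$, and refers back to the proof of Theorem~\ref{thm:equivalence} for the remaining implications; you have simply unpacked Proposition~\ref{prop_bridge} into its constituent Lemma~\ref{lem_suplindiv_implies_fullsup} together with the homology bookkeeping via Proposition~\ref{prop_equal_homology}, and made explicit (via Lemma~\ref{lem_pp_impl_neck}) the uniformity of the neck constant over all asymptotic cones that Lemma~\ref{lem_sequence} requires.
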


\begin{proof}
	$(2)\Rightarrow(3)$ follows from Proposition~\ref{prop_bridge}, Proposition~\ref{prop_cone_conditions} and Corollary~\ref{cor_small_neck}. $(1)\Rightarrow(2), (3)\Rightarrow(4)$ and $(4)\Rightarrow(1)$ is the same as the proof of Theorem~\ref{thm:equivalence}.
\end{proof}
Here we lose the dependence of the constants in various definitions as in Theorem~\ref{thm:equivalence}, as the argument go through asymptotic cones.

\section{Simpler proof for the Morse lemma}
\label{subsec:work with cones}

In this section we provide simpler proofs of Proposition~\ref{prop_divergence} and Proposition~\ref{prop_Morse_lemma_for_quasidisks} under the stronger assumption of Lipschitz bicombing. Using Proposition~\ref{prop_bridge} and Lemma~\ref{lem_rig_iff_supdiv}, these results will be a consequence of Proposition~\ref{prop_divergence1} and Proposition~\ref{prop_Morse_lemma_for_quasidisks1} below, where we take $\mathcal{X}$ to be the collection of all complete metric spaces with $L$-Lipschitz bicombing and $\mathcal{Q}$ to be the collection of all $n$-dimensional quasiflats in such metric spaces with $\delta$-super-Euclidean divergence for some given $\si$.

\begin{prop}
	\label{prop_divergence1}
	Let $\mathcal{X}$ be a collection of complete metric spaces with $L$-Lipschitz bicombing and let $\mathcal{Q}$ be a family of $n$-dimensional $L$-Lipschitz $(L,A)$-quasiflats in $\mathcal{X}$. Take $X\in\mathcal{X}$ and let $Q'\subset X$ be an $L$-Lipschitz $(L,A)$-quasiflats (not necessarily of dimension $n$). Suppose that $\mathcal{Q}$ has asymptotically full support property with respect to the singular homology.
	Then there exist $A_0>0$ and $\eps_0>0$ depending only on $\mathcal{X},L,A,n$ and $\mathcal{Q}$ such that for each $Q\in\mathcal{Q}$, either $d_H(Q,Q')\le A_0$, or 
	\begin{equation}
	\label{eq_linear_div1}
	\limsup_{r\to\infty} \frac{d_H(B_p(r)\cap Q,B_p(r)\cap Q')}{r}\ge \eps_0
	\end{equation}
	for some (hence any) $p\in Q$.	
\end{prop}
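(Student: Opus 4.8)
The plan is to argue by contradiction via a compactness/ultralimit argument over the families $\mathcal{X}$ and $\mathcal{Q}$, feeding in the hypothesis through the cone-level characterizations already at our disposal.

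First I would put the hypothesis in a usable form. Every $X\in\mathcal{X}$ has an $L$-Lipschitz bicombing, so each of its asymptotic cones has an $L$-Lipschitz combing and in particular satisfies $(\mathrm{SCI}_k)$ for all $k$ (Remark~\ref{rmk_combing}). Hence, by Proposition~\ref{prop_bridge} (second paragraph) together with Proposition~\ref{prop_cone_conditions}, the assumption that $\mathcal{Q}$ has the asymptotic full support property with respect to singular homology is equivalent to: every ultralimit $Q_\omega$ of a sequence of elements of $\mathcal{Q}$, in the corresponding asymptotic cone, has the neck property and full support with respect to $\ha_n$, with uniform constants; and by Lemma~\ref{lem_rig_iff_supdiv} it is equivalent to each member of $\mathcal{Q}$ being $(\mu,b)$-rigid with $(\mu,b)$ uniform over $\mathcal{Q}$. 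Using the bicombing I would also replace each quasiflat by a genuinely Lipschitz representative (Lemma~\ref{lem_quasiflat_to_bilipschitzflat}) and fix $\lambda$-Lipschitz quasi-retractions $\pi:X\to Q'$ (Corollary~\ref{cor_quasiretraction}).

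Now suppose the conclusion fails. Negating the statement produces spaces $X_m\in\mathcal{X}$, quasiflats $Q_m\in\mathcal{Q}$ and $L$-Lipschitz $(L,A)$-quasiflats $Q'_m\subset X_m$, and base points $p_m\in Q_m$, with $d_H(Q_m,Q'_m)>m$ while $\limsup_{r\to\infty}r^{-1}d_H(B_{p_m}(r)\cap Q_m,B_{p_m}(r)\cap Q'_m)<1/m$. The core of the proof is to pick rescaling factors $s_m\to\infty$ at the scale which governs the deviation of $Q_m$ from $Q'_m$, rescale $X_m$ by $1/s_m$, and pass to an ultralimit $(X_\omega,p_\omega)$; then the limit $Q_\omega$ of $Q_m$ is an $L$-bilipschitz $n$-flat with the neck and full support properties above, the limit $Q'_\omega$ of $Q'_m$ is an $L$-bilipschitz flat, the $\pi_m$ converge to a Lipschitz retraction onto $Q'_\omega$, and the sublinear bound passes to the limit. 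The contradiction I would extract is at the chain level: let $\nu$ be a nonzero multiple of the fundamental class of a small ball of $Q_\omega$ around $p_\omega$, so $\partial\nu$ is an $(n-1)$-sphere. Projecting the corresponding chains of $Q_m$ to $Q'_m$ via $\pi_m$ and homotoping along the bicombing (Lemma~\ref{lem_homotopy}) yields, in the limit, a filling of $\partial\nu$ whose support lies in a thin tube around $Q'_\omega$; absorbing the cheap ``defect'' cycle created by the projection (via the neck property of $Q_\omega$ and Corollary~\ref{cor_small_fill_close_to_Q}) produces a filling of $\partial\nu$ missing $p_\omega$, contradicting full support of $Q_\omega$. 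This first gives $Q_m\subset N_{A_0}(Q'_m)$ with $A_0$ uniform; rigidity of $Q_m$ then forces $\dim Q'_m=n$, and a degree argument along large quasidisks exhausting the two quasiflats (as in Lemma~\ref{lem_quasi_disk_close}) upgrades this to $d_H(Q_m,Q'_m)\le A_0'$ with $A_0'$ uniform — the desired contradiction. The case $Q'_m\not\subset N_m(Q_m)$ is handled the same way, again using the rigidity of $Q_m$.

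The main obstacle is exactly this scale selection together with identifying the obstruction in the limit: a sublinear deviation collapses in any single asymptotic cone — so naively $Q_\omega=Q'_\omega$ and there is no contradiction — and the argument must therefore exploit the behaviour of the deviation over a whole range of scales at once. This is the bookkeeping that the ``going down on scale'' proof of Proposition~\ref{prop_divergence} carries out by hand, and in the bicombing setting it is what the filling and neck estimates of Lemma~\ref{lem_sequence} and Corollary~\ref{cor_small_neck} are designed to streamline. A secondary point is to keep every constant — the neck constants of $Q_\omega$, the quasi-retraction constants, and finally $A_0$ and $\eps_0$ — depending only on $\mathcal{X},L,A,n$ and $\mathcal{Q}$ through the uniform Morse parameter $\delta$, which is what the ``uniform parameters'' clauses in Propositions~\ref{prop_bridge} and~\ref{prop_cone_conditions} are for.
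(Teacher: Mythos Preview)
Your approach has the right overall shape --- an ultralimit contradiction leveraging full support in cones --- but the gap you yourself flag is not actually closed. You say ``pick rescaling factors $s_m\to\infty$ at the scale which governs the deviation of $Q_m$ from $Q'_m$'' and then immediately concede that a sublinear deviation collapses in any fixed asymptotic cone, so $Q_\omega=Q'_\omega$ and there is no contradiction. Pointing to Lemma~\ref{lem_sequence} and Corollary~\ref{cor_small_neck} does not help: those lemmas produce coarse neck bounds from the neck property in cones, but they do not tell you how to choose a single scale $s_m$ at which the ultralimit simultaneously sees $Q_\omega\subset N_1(Q'_\omega)$ and $d(p_\omega,Q'_\omega)>0$. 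Your chain-level argument at the end also needs more: projecting a small ball of $Q_\omega$ to $Q'_\omega$ and homotoping along the bicombing gives a filling that passes through $p_\omega$, so it does not contradict full support without an additional idea (taking the sphere far away so the homotopy track misses $p_\omega$).

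The paper sidesteps the scale-selection problem entirely by \emph{localizing} the statement first. It proves a divergence lemma (Lemma~\ref{lem_divergence}, modelled on \cite[Lemma~4.1]{kapovich1997quasi}): there is $\rho>0$, uniform over $\mathcal{Q}$, such that whenever $p\in Q$ with $d(p,Q')\ge\rho$ one can find $q\in Q\cap B_p(\rho^2)$ with $d(q,Q')\ge d(p,Q')+1$. Here the scale is handed to you --- it is $\lambda_k=d(p_k,Q'_k)$ --- so the ultralimit argument is clean: rescale by $1/\lambda_k$, get $Q_\omega\subset N_1(Q'_\omega)$ with $d(p_\omega,Q'_\omega)=1$, and invoke Lemma~\ref{lem_bilip_uniqueness} (full support forces $Q_\omega=Q'_\omega$) for the contradiction. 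Iterating the divergence lemma then gives the linear divergence in \eqref{eq_linear_div1} directly; and in the alternative case $Q\subset N_\rho(Q')$, rigidity forces $\dim Q'=n$ and Lemma~\ref{lem_quasi_disk_close} upgrades to $d_H(Q,Q')\le A_0$, exactly as you outlined in your last step. So the missing ingredient in your argument is precisely this local ``one more unit of drift within a ball of controlled radius'' statement, which carries its own scale and makes the cone argument land.
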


\begin{proof}
	This follows from Lemma~\ref{lem_bilip_uniqueness} and Lemma~\ref{lem_divergence} below.
\end{proof}

\begin{lemma}\label{lem_bilip_uniqueness}
	Let $X$ be a metric space with a Lipschitz bicombing. Let $F, F'\subset X$ be bilipschitz embeddings of closed convex subsets of $\R^m$ with $\dim F=n$. Suppose $F$ has full support in $X$.
	If $F$ is contained in a tubular neighborhood of 
	$F'$ and $\D F\subset \D F'$, then $F$ is equal to $F'$. In particular, their dimensions coincide.
\end{lemma}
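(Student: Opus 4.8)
\textbf{Proof plan for Lemma~\ref{lem_bilip_uniqueness}.}

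The plan is to argue by contradiction: suppose $F\subset N_r(F')$ and $\D F\subset\D F'$, but $F\neq F'$. First I would observe that since $F'$ is a bilipschitz embedding of a convex subset of $\R^m$ and $F$ is a bilipschitz copy of an $n$-dimensional convex set contained in $N_r(F')$, one can use the bicombing on $X$ (together with the Lipschitz quasi-retraction onto $F'$ produced as in Corollary~\ref{cor_quasiretraction}, applied here to the bilipschitz flat-type object $F'$) to push $F$ into $F'$: composing the inclusion of $F$ with the nearest-type projection to $F'$ and using the homotopy from Section~\ref{subsec_homotopy} gives a continuous map $g\colon\dd\to F'$, where $\dd\subset\R^n$ is the domain of $F$, which agrees with the original parametrization on $\D\dd$ (after a bounded perturbation, as in the proof of Lemma~\ref{lem_quasi_disk_close}). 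The restriction $g|_{\D\dd}$ is then a degree-one map onto $\D F\subset\D F'$, so by the standard homological/degree argument $g$ is surjective onto $F'$; in particular $\dim F'\le\dim F=n$, and $F'$ is covered by an $n$-dimensional set, forcing $\dim F'=n$ and $\dd'$ (the domain of $F'$) to be $n$-dimensional as well.

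Next I would rule out the remaining possibility, namely that $F'$ strictly contains $F$ (both being $n$-dimensional bilipschitz disks/half-flats with $\D F\subset \D F'$). Here I would use the full support hypothesis on $F$. If $F'\setminus F\neq\es$, pick an interior point $q\in F$ lying on $\D F$... more carefully: choose a point $q$ in the topological frontier of $F$ relative to $F'$ which lies in the interior of $F'$. Such a $q$ exists precisely because $F\subsetneq F'$ and $\D F\subset \D F'$: otherwise $F$ would be relatively clopen in $F'$, hence $F=F'$ by connectedness of the convex domain. At such a $q$ the local homology $H_n(F,F\setminus\{q\})$ maps to $H_n(X,X\setminus\{q\})$ through the inclusion $F\hookrightarrow F'\hookrightarrow X$; but near $q$ the set $F$ looks like a half-space-with-boundary-at-$q$ inside the $n$-manifold-with-boundary $F'$, so the fundamental class of a small disk in $F$ centered at $q$ already bounds inside $F'$ (slide it off the frontier), hence bounds inside $X$ avoiding $q$. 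This contradicts injectivity of $H_n(F,F\setminus\{q\})\to H_n(X,X\setminus\{q\})$, i.e.\ full support of $F$. Therefore $F=F'$, and in particular their dimensions coincide.

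The main obstacle I anticipate is making the ``push $F$ into $F'$'' step rigorous at the level of continuous (not just chain-level) maps: the projection to $F'$ need not be a genuine retraction, only a quasi-retraction with bounded displacement, so $g$ does not literally fix $\D F$ but only moves it a bounded amount. This is exactly the situation handled in the proof of Lemma~\ref{lem_quasi_disk_close}, and the fix is the same — perturb $g$ by a bounded homotopy so that $g(\D\dd)\subset\D\dd'$ with $g|_{\D\dd}$ of degree one, which is possible once the domains have radius large compared to the quasi-isometry and displacement constants; since here we are in the asymptotic/limit setting where $F,F'$ are honest bilipschitz embeddings, the displacement constant is governed only by $L$ and the Lipschitz constant of the bicombing, so there is no loss. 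A secondary technical point is checking that the frontier point $q$ can be chosen in the interior of $F'$ and that the local model of $F$ at $q$ inside $F'$ is the expected half-space model; this follows from the bilipschitz invariance of local homology (via metric differentiability as in the results cited before Lemma~\ref{lem:CAT differential}) but one should state it carefully, perhaps reducing to the case where both $F$ and $F'$ are genuinely flat after a further blow-up.
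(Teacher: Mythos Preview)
Your proposal has a genuine gap in the first step, and this gap propagates to the second step.

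In step~1 you want to produce a map $g\colon\dd\to F'$ with $g|_{\D\dd}$ of degree one onto $\D F'$, and conclude surjectivity and hence $\dim F'\le n$. But the degree of $g|_{\D\dd}\colon\D\dd\to\D\dd'$ only makes sense once you already know $\dim\dd=\dim\dd'$, so the argument is circular. (The easy inequality here goes the other way: from $F\subset N_r(F')$ one gets $n\le\dim F'$, as the paper notes.) Even if this could be patched, surjectivity of $g$ onto $F'$ does not give $F\subset F'$; it only gives $F'\subset N_{Cr}(F)$. Your step~2 then assumes $F\subset F'$ when you speak of ``the frontier of $F$ relative to $F'$'', so it cannot proceed.

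The paper's argument is different and uses full support already to obtain $F\subset F'$, which is the heart of the lemma. For any interior point $p\in F$, it takes a large $(n-1)$-sphere $\si_r$ in $F$ at distance $\gg r$ from $p$, projects it to $F'$ via a Lipschitz retraction $\pi'$, and connects $\si_r$ to $\pi'_*\si_r$ by the bicombing homotopy $h_r$ (whose image lies in $N_{Cr}(\spt\si_r)$, hence far from $p$). Filling $\pi'_*\si_r$ by some $\tau'_r$ inside $F'$ gives a filling $\tau'_r+h_r$ of $\si_r$ in $X$. Full support forces $p\in\im(\tau'_r+h_r)$; since $\im h_r$ avoids $p$ for $r$ large, this yields $p\in\im\tau'_r\subset F'$. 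Thus $F\subset F'$, and then full support again gives $F=F'$. The missing ingredient in your plan is precisely this use of full support together with the ``sphere at large radius, homotopy of bounded width'' trick to force each interior point of $F$ into $F'$; no degree argument is involved or needed.
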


\begin{proof}
	We only treat the case where $\D F$ is nontrivial and not compact, the case of empty boundary or compact boundary is either similar or easier.
	After rescaling we may assume $F\subset N_1(F')$. Then $n=\dim(F)\leq \dim(F')=n'$. 
	Let $\pi':X\to F'$ be an $L$-Lipschitz
	retraction. Choose a point  $p\in F\setminus\D F$. 
	For large $r>0$, choose a relative singular cycle $\si_r\in C_{n-1}(F\setminus\{p\},\D F)$ such that its image avoid $B_p(r)$ and $\si_r$ represents a non-trivial class in $\tilde H_{n-1}(F\setminus\{p\},\D F)$. Choose a chain $\beta_r\in C_{n-1}(\D F)$ filling $\D \si_r$, then $\si_r-\beta_r$ represents a non-trivial class in $\tilde H_{n-1}(F\setminus\{p\})$.
	Let $h_r$ denote the singular $n$-chain
	induced by the Lipschitz bicombing such that $\D h_r=\si_r-\si'_r$ where $\si'_r:=\pi'_*\si_r$. 
	If  $\tau'_r$ is an $n$-chain in $F'$ filling $\si'_r-\beta_r$, then $\tau'_r+h_r$
	fills $\si_r-\beta_r$. Since $F$ has full support, $p$ has to lie in the image of $\tau'_r+h_r$, and if $r$ is large enough, even in the image of $\tau'_r$. Hence $p\in F'$ and therefore $F\subset F'$.
	But then $F=F'$ because $F$ has full support.
\end{proof}

The next lemma is similar to \cite[Divergence Lemma 4.1]{kapovich1997quasi}.

\begin{lemma}\label{lem_divergence}
	Let $\mathcal{X},\mathcal{Q}, Q',Q,X$ be as in Proposition~\ref{prop_divergence1}. There exists a constant $\rho>0$ depending only on the constants $L, A, n, \mathcal{X}$ and $\mathcal{Q}$ with the following property. 
	If $p\in Q$ is such that 
	$d(p,Q')\geq \rho$, then there exists a point $q\in Q\cap B_p(\rho^2)$ with $d(q,Q')\geq d(p,Q')+1$. 
\end{lemma}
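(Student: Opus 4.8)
The statement is a higher-dimensional analogue of the Divergence Lemma of \cite{kapovich1997quasi}, and the plan is to mimic its argument, replacing ``$p$ is a point which disconnects'' by ``$Q$ has super-Euclidean divergence (equivalently, full support / CNP / rigidity) near $p$'' and using the machinery built in Sections \ref{sec:quasiflats in metric spaces}--\ref{sec_neck_decomposition}. Fix $Q\in\mathcal Q$, let $p\in Q$ with $h:=d(p,Q')$ large, and set $q_0\in Q$ a point which maximizes $d(\cdot,Q')$ over $Q\cap \B{p}{\rho^2}$. We argue by contradiction: suppose $d(q,Q')<h+1$ for all $q\in Q\cap \B{p}{\rho^2}$. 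First I would record the geometric picture this forces: the ``cap'' $Q\cap \B{p}{\rho^2}$ is entirely contained in the annulus $A_{Q'}(h,h+1)$ around $Q'$ (intersected with a ball of radius $\sim\rho^2$), while the boundary sphere $Q\cap \Sph{p}{\rho^2}$ stays within bounded distance of $Q'$ only through a thin slab. Using the $\lambda$-quasi-retraction $\pi':X\to Q'$ from Corollary~\ref{cor_quasiretraction} (available since $X$ has a Lipschitz bicombing), push the cap and its boundary onto $Q'$; the homotopy from Corollary~\ref{cor_homotopy} has mass controlled by $\rho^2$ times the mass of the cycle being moved, because every point of the cap is at distance $\le h+1\le 2h$ from $Q'$ and, crucially, $h$ will turn out to be comparable to $\rho^2$ in the regime we consider (or one localizes to a scale where it is).

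\textbf{The main estimate.} The heart of the matter is to produce, from the assumption ``$d(\cdot,Q')$ is $(h+1)$-bounded on $Q\cap\B{p}{\rho^2}$'', a cycle $\sigma$ with $\dim\sigma=n-1$, supported in $N_{C\rho}(Q')$ (after rescaling so the bounding slab has width $O(1)$), with $\M(\sigma)\lesssim \rho^{n-1}$, which admits a filling $\tau$ (namely the pushed-forward and coned-off cap) with $\spt(\tau)\subset \B{p'}{b_1\rho}\setminus N_{\rho}(Q')$ and $\M(\tau)\lesssim \rho^{n}$, yet whose filling $\Fill(\sigma)$ is \emph{not} $\le C_0\cdot\rho\cdot\M(\sigma)$. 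Concretely: take $\sigma$ to be (the image under $\pi'$ of) $\D\bb{\text{cap}}$, which lies over a bounded-width neighborhood of $Q'$; the cap itself, after moving its boundary to $Q'$ by the Lipschitz homotopy and coning off at $p$ (which is at distance $h$ from $Q'$), gives a chain disjoint from $N_\rho(Q')$ filling $\sigma$. If $\Fill(\sigma)$ \emph{were} small, Corollary~\ref{cor_small_fill_close_to_Q} plus the isoperimetric/filling-radius control of Theorem~\ref{thm:isop-ineq}(2) would let us build a filling of the original boundary sphere $Q\cap\Sph{p}{\rho^2}$ missing $p$ — contradicting full support of $Q'_\om$... no: contradicting that $p\in\spt$ of the canonical filling, i.e. contradicting \emph{rigidity/full support of $Q$ itself} at $p$. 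This is exactly the mechanism of Lemma~\ref{lem_neck_impl_full_spt} and of the CNP $\Rightarrow$ (1) implication in Theorem~\ref{thm:equivalence}. So the contradiction is: CNP of $Q$ (which holds since $Q\in\mathcal Q$ has super-Euclidean divergence, by Theorem~\ref{thm:equivalence} and Remark~\ref{rmk_combing}) forces $\Fill(\sigma)$ to be small, but smallness of $\Fill(\sigma)$ forces $p$ to be contractible off $Q$ at scale $\rho^2$, contradicting that $Q$ is rigid at $p$, provided $\rho$ is chosen larger than all the threshold constants $\ul R$ appearing in CNP and in Definition~\ref{def_rigid}.

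\textbf{Organizing the constants.} The quantifier structure is delicate, so I would set it up as follows. By Theorem~\ref{thm:equivalence}(b) together with Remark~\ref{rmk_combing}, every $Q\in\mathcal Q$ has CNP with a constant $C_0$ and a threshold function $\ul R=\ul R(C,\rho_{\mathrm{CNP}},b_1,b_2)$ depending only on $L,A,n,\mathcal X$ and $\delta$; likewise $Q$ is $(\mu,b)$-rigid with $\mu,b$ uniform over $\mathcal Q$ (Theorem~\ref{thm:equivalence}(a), Lemma~\ref{lem_rig_iff_supdiv}). Choose the constants $C,b_1,b_2,\rho_{\mathrm{CNP}}$ (the last $\ll 1$) needed in the argument above purely in terms of $L,A,n$ and the Lipschitz-bicombing constant, then set $\rho$ to exceed the resulting $\ul R$ and also exceed the scale at which $\mu(\rho)<\varepsilon_0\rho$ for a suitable $\varepsilon_0$. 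Then $q:=q_0$ works and $d(q,Q')\ge h+1$, and $q\in \B{p}{\rho^2}$ by construction.

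\textbf{The main obstacle.} I expect the hard part to be the bookkeeping of scales: the assumption gives control only on the annulus of width $1$ at distance $h$ from $Q'$, but the natural scale of the filling problem is $\rho^2$, and $h$ itself can be anywhere in $[\rho,\rho^2]$. One must either (i) rescale by $1/h$ — but then the cap has radius $\rho^2/h$ which could be as small as $1$, too small to invoke rigidity — or (ii) argue directly that if $h$ is small compared to $\rho^2$ the cap sphere is genuinely ``thin'' and a short homotopy onto $Q'$ exists, while if $h$ is comparable to $\rho^2$ one rescales by $1/\rho^2$ and the bounding slab becomes width $1/\rho^2\to 0$. Handling the intermediate regime cleanly — showing the pushed cycle $\sigma$ really does lie in a $C\rho$-neighborhood of $Q'$ at the chosen scale while keeping its filling $\tau$ at distance $\ge\rho$ from $Q'$ — is the delicate point, and is where the precise form of Corollary~\ref{cor_quasiretraction} (the displacement $\lambda_2$ of $\pi'$ on $Q'$, which one uses to see the coned-off cap stays away from $Q'$) and Corollary~\ref{cor_homotopy} (the homotopy mass bound on the shell $N_{2\rho}(Q)\setminus N_\rho(Q)$) enter. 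Everything else is a routine combination of the isoperimetric inequality, Corollary~\ref{cor_small_fill_close_to_Q}, and the ``small filling radius $\Rightarrow$ contractibility'' step already used repeatedly in the paper.
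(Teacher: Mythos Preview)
Your plan is substantially more complicated than the paper's, and it contains a genuine gap.

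The paper's proof is a five-line asymptotic-cone argument. Assume the lemma fails for every $\rho$; extract a sequence of counterexamples $(X_k,Q_k,Q'_k,p_k)$ with $\lambda_k:=d(p_k,Q'_k)\to\infty$ and $Q_k\cap B_{p_k}(\lambda_k^2)\subset N_{\lambda_k+1}(Q'_k)$. Rescale by $1/\lambda_k$ and pass to an ultralimit: now $d(p_\om,Q'_\om)=1$, while the rescaled cap has radius $\lambda_k\to\infty$, so $Q_\om\subset N_1(Q'_\om)$. Since $Q_\om$ has full support, Lemma~\ref{lem_bilip_uniqueness} forces $Q_\om=Q'_\om$, contradicting $d(p_\om,Q'_\om)=1$. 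No CNP, no filling estimates, no scale bookkeeping --- the passage to the cone converts ``cap of radius $\rho^2$ trapped in a slab of width $h+1$'' into the clean inclusion $Q_\om\subset N_1(Q'_\om)$.

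Your finite-scale argument breaks at the CNP step. CNP of $Q$ concerns cycles supported near $Q$ that admit a filling supported \emph{away from $Q$}; your $\sigma=\pi'_\#(\partial\text{cap})$ lies in $Q'$, and the filling you propose (cap plus boundary homotopy) is not away from $Q$ --- the cap is a piece of $Q$. There is no natural filling of $\sigma$ disjoint from a neighborhood of $Q$, because nothing in the hypotheses controls how $Q'$ sits relative to $Q$. So ``CNP of $Q$ forces $\Fill(\sigma)$ small'' has no content here.

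If you drop CNP and go straight to rigidity of $Q$ (which is what your argument is really using), the alternative filling of $\partial\text{cap}$ is $\pi'_\#(\text{cap})$ together with the boundary homotopy. This misses $p$ only when the homotopy tracks --- of length $\sim C(h+1)$ starting at distance $\rho^2$ from $p$ --- stay away from $p$, i.e.\ when $h\lesssim\rho^2$. For $h\gtrsim\rho^2$ the alternative filling can pass through $p$ and its mass $\sim h\rho^{2(n-1)}$ exceeds the $M\rho^{2n}$ bound needed to invoke rigidity at scale $\rho^2$. The obstacle you flag in your last paragraph is exactly this regime, and it is not a matter of bookkeeping: nothing in the hypotheses rules it out (the negation gives only $h\ge\rho$, not $h\le\rho^2$). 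The paper's passage to the cone is precisely the missing idea --- after rescaling by $1/h$ the slab width becomes $1$ while the cap radius becomes infinite, and the difficulty evaporates.
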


\begin{proof}
	Suppose for contradiction  that there are sequences $(Q_k)\in\mathcal{Q}$ and $(Q'_k)$ of $(L,A)$-quasiflats  in $X_k\in\mathcal{X}$ such that there are
	points $p_k\in Q_k$ with  $\lambda_k=d(p_k,Q'_k)\to\infty$ and such that $Q_k\cap B_{p_k}(\lambda_k^2)\subset N_{\lambda_k+1}(Q_k')$.
	We pass to an asymptotic cone $(X_\om,p_\om)=\om\lim(\frac{1}{\lambda_k}\cdot X_k,p_k)$. 
	Then $d(p_\om,Q'_\om)=1$ and $Q_\om\subset N_1(Q'_\om)$. As $Q_\om$ has full support in $X_\om$, by Lemma \ref{lem_bilip_uniqueness}, 
	$Q_\om=Q'_\om$. Contradiction.
\end{proof}

\begin{proposition}\label{prop_Morse_lemma_for_quasidisks1}
	Let $\mathcal{X}$ be a family of complete metric spaces with $L$-Lipschitz bicombing. Let $\mathcal{D}$ be a family of $n$-dimensional $L$-Lipschitz $(L,A)$-quasidisks in $\mathcal{X}$ such that $\mathcal{D}$ has asymptotic full support property with respect to the singular homology. Let $D'$ be some $n$-dimensional $L$-Lipschitz $(L,A)$-quasidisk.
	Then for every constant $c$ there exists a constant  $C$ depending only on $c, L, A, n, \mathcal{X}$ and $\mathcal{D}$
	such that for each $D\in\mathcal{D}$, $d_H(\D D,\D D')<c$ implies  $d_H(D, D')<C$.
\end{proposition}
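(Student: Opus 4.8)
The plan is to mimic the ``going down on scale'' strategy of the general Morse lemma (Proposition~\ref{prop_Morse_lemma_for_quasidisks}), but to replace the delicate current-theoretic bookkeeping with the compactness of asymptotic cones. The first observation is that we may reduce to showing a one-sided containment: it suffices to prove that every $D\in\mathcal D$ lies in a uniformly bounded neighborhood of $D'$, since then $D'$ also lies in a bounded neighborhood of $D$ (here one uses that $\mathcal D$ has the asymptotic full support property, hence by Lemma~\ref{lem_rig_iff_supdiv}/Theorem~\ref{thm:conditions in asymptotic cones body} is uniformly rigid, and applies Lemma~\ref{lem_quasi_disk_close} to conclude $d_H(D,D')<C$; as in the proof of Proposition~\ref{prop_divergence}, rigidity also forces $\dim D=\dim D'=n$ in the containment case). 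So the heart of the matter is a uniform estimate $D\subset N_{C}(D')$ whenever $d_H(\partial D,\partial D')<c$.

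Next I would argue this by contradiction via asymptotic cones, exactly paralleling Lemma~\ref{lem_bilip_uniqueness} and Lemma~\ref{lem_divergence}. Suppose no uniform $C$ works: then there are $D_k\in\mathcal D$ and $D_k'$, both $L$-Lipschitz $(L,A)$-quasidisks in $X_k\in\mathcal X$, with $d_H(\partial D_k,\partial D_k')<c$ but $\sup_{x\in D_k} d(x, D_k')=:\lambda_k\to\infty$. Choose $p_k\in D_k$ realizing (approximately) this distance and pass to the asymptotic cone $(X_\om,p_\om)=\om\lim(\tfrac1{\lambda_k}X_k,p_k)$. Since the bicombing constant is uniform, $X_\om$ has a Lipschitz bicombing, so the coning inequalities hold in all dimensions and the hypotheses of Proposition~\ref{prop_bridge} are met; hence $\mathcal D$'s asymptotic full support property passes to the ultralimit $D_\om$ of the $D_k$'s, which is a bilipschitz disk (or half-flat, in the unbounded-boundary case) with full support in $X_\om$. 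The ultralimit $D_\om'$ of the $D_k'$ is a bilipschitz disk containing $D_\om$ in its closure; moreover since $d_H(\partial D_k,\partial D_k')<c$ and we rescaled by $\lambda_k\to\infty$, the boundaries collapse: $\partial D_\om=\partial D_\om'$ (with the convention that a quasidisk whose domain has bounded radius in the rescaled picture degenerates, in which case the statement is vacuous, so we may assume the rescaled domains have radius $\to\infty$ and $D_\om$, $D_\om'$ are genuine bilipschitz half-flats). By construction $d(p_\om, D_\om')=1$, contradicting the conclusion of the full-support uniqueness statement Lemma~\ref{lem_bilip_uniqueness}, which forces $D_\om=D_\om'$ and hence $p_\om\in D_\om'$.

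To run Lemma~\ref{lem_bilip_uniqueness} I must check its hypotheses: $D_\om\subset N_1(D_\om')$ (true, after the rescaling, since $D_k\subset N_{\lambda_k+o(\lambda_k)}(D_k')$), $\partial D_\om\subset\partial D_\om'$ (from the boundary Hausdorff bound), and $D_\om$ has full support (just established). The conclusion $D_\om=D_\om'$ then says $d(p_\om,D_\om')=0$, contradicting $d(p_\om,D_\om')=1$. This yields the uniform one-sided containment, and combining with the rigidity/uniqueness argument of the first paragraph completes the proof.

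The main obstacle I anticipate is the careful handling of the \emph{boundary behavior} in the ultralimit: one must make sure that after rescaling by $\lambda_k\to\infty$, the disks $D_k$ and $D_k'$ limit to honest bilipschitz half-flats (or full flats) rather than degenerating, and that the hypothesis $d_H(\partial D,\partial D')<c$ really does force $\partial D_\om\subset\partial D_\om'$ in the limit so that Lemma~\ref{lem_bilip_uniqueness} applies; the case analysis (boundary compact vs.\ noncompact, or domain-radius bounded vs.\ unbounded in the rescaled picture) needs to be organized so that the remaining degenerate cases are genuinely vacuous or reduce to the already-proven Morse lemma for quasidisks. A secondary point is to confirm that the asymptotic full support property of $\mathcal D$ genuinely transfers to the particular ultralimit arising here, which is where Proposition~\ref{prop_bridge} and the Lipschitz-bicombing hypothesis on $\mathcal X$ are used.
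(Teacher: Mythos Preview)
Your proposal is correct and follows essentially the same contradiction-via-asymptotic-cone argument as the paper: assume $d_H(D_k,D'_k)\to\infty$, pick $x_k\in D_k$ realizing $\lambda_k=d(x_k,D'_k)$, rescale by $1/\lambda_k$, observe that the ultralimits are bilipschitz convex sets with $D_\om\subset N_1(D'_\om)$ and $\partial D_\om\subset\partial D'_\om$, and apply Lemma~\ref{lem_bilip_uniqueness} to contradict $d(x_\om,D'_\om)=1$. Your preliminary reduction via uniform rigidity is an unnecessary detour (Lemma~\ref{lem_quasi_disk_close} alone gives the two-sided bound from the one-sided one, and both disks are $n$-dimensional by hypothesis), and the paper simply absorbs this into the contradiction step, but the core argument is the same.
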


\begin{proof}
	Suppose for contradiction that we can find sequences $D_k\in \mathcal{D}$ and $D'_k$ of $n$-dimensional $L$-Lipschitz $(L,A)$-quasidiscs such that $d_H(\D D_k,\D D'_k)<c$ but $d_H( D_k, D'_k)\to\infty$. Choose points $x_k\in D_k$ at maximal distance from $D'_k$ and set $\lambda_k=d(x_k,D'_k)$. Suppose $D_k\subset X_k\in\mathcal{X}$.
	Consider the ultralimit $(X_\om,x_\om)=\om \lim (\frac{1}{\lambda_k}\cdot X_k,x_k)$. The ultralimits $D_\om$ and $D_\om'$ of the quasidiscs are bilipschitz embeddings of closed convex subsets of $\R^n$
	which fulfill $D_\om\subset N_1(D'_\om)$. As $D_\om$ has full support, Lemma~\ref{lem_bilip_uniqueness} yields $D'_\om=D_\om$. This contradicts $d(x_\om,D'_\om)=1$.
\end{proof}

\section{Examples of Morse quasiflats}

\subsection{Examples in weakly special cube complexes}
We recall the following definition from \cite{huang_quasiflat}, which is a weak version of special cube complexes introduced in \cite{haglund_wise_special}.
\begin{definition}
	\label{weakly special}
	A cube complex $D$ is \textit{weakly special} if
	\begin{enumerate}
		\item $D$ is non-positively curved.
		\item No hyperplane of $D$ \textit{self-osculates} or \textit{self-intersects}.
	\end{enumerate}
\end{definition}
The notions of self-osculation and self-intersection were introduced in \cite[Definition 3.1]{haglund_wise_special}. The key property we need about weakly special cube complexes is the following, which was proved in \cite[Section 5.2]{huang_quasiflat}.

\begin{lem}
	\label{label}
	Let $D$ be a compact weakly special cube complex and let $G=\pi_1(D)$. Let $\tilde{D}$ be the universal cover of $D$. Then there exists a finite index subgroup $G'\le G$, and a labeling and orientation of edges in $\tilde{D}$ such that
	\begin{enumerate}
		\item The labeling and orientation of edges are invariant under the $G'$-action.
		\item If two edges of $\tilde{D}$ are parallel, then they have the same label, and their orientation is compatible with the parallelism.
		\item If two different edges of $\tilde{D}$ intersect at a vertex, then they have different labels.
	\end{enumerate}
\end{lem}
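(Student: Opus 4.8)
The statement is exactly \cite[Section 5.2]{huang_quasiflat}, so let me describe the shape of that argument. The plan is to let the \emph{label} of an edge be the hyperplane of $D$ to which it projects, and to obtain the \emph{orientation} by passing to a suitable finite cover, since the only obstruction to a $G$-invariant orientation is the possible one-sidedness of hyperplanes of $D$. The first step is thus: since $D$ is compact, it has only finitely many hyperplanes, and by a standard argument it admits a finite regular cover $\hat D\to D$ in which every hyperplane is two-sided (see e.g.\ \cite{haglund_wise_special}; one convenient choice is a finite cover killing $H^1(D;\mathbb Z/2)$). Let $G'\le G$ be the finite index subgroup corresponding to $\hat D$, let $q\colon\tilde D\to\hat D$ be the universal covering (so $G'$ is its deck group), and note that $\hat D$ is again a non-positively curved cube complex, as non-positive curvature passes to covers.

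Next, using two-sidedness, choose for every hyperplane $\hat H$ of $\hat D$ a transverse orientation, i.e.\ a coherent orientation of all edges of $\hat D$ dual to $\hat H$. Now to an edge $e$ of $\tilde D$ assign as its label the hyperplane of $D$ dual to the image of $e$ in $D$, and orient $e$ by pulling back through $q$ the chosen orientation of $q(e)$. Conclusions (1) and (2) are then essentially formal: deck transformations of $\tilde D\to D$ commute with the projection to $D$, so the labelling is in fact $G$-invariant; $G'$ acts as the deck group of $q$, so it preserves the orientation (this is the sole reason one must shrink $G$ to $G'$: a one-sided hyperplane of $D$ forces some element of $G\setminus G'$ to reverse the induced orientation along it). If $e_1,e_2$ are opposite edges of a square of $\tilde D$, then $q(e_1),q(e_2)$ are opposite edges of a square of $\hat D$, hence dual to a common hyperplane $\hat H$, hence carry the transverse orientation of $\hat H$ and so compatible orientations; pulling back through the square isomorphism and taking transitive closure yields (2), and in particular parallel edges have a common label because they map to edges of $D$ dual to the same hyperplane.

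For (3), suppose $e_1\neq e_2$ are edges of $\tilde D$ sharing a vertex $\tilde v$ and carrying the same label $H$. Since $\tilde D\to D$ is a covering map it restricts to an isomorphism on the star of $\tilde v$, so the images $\bar e_1,\bar e_2$ in $D$ are distinct edges sharing a vertex $v$, both dual to $H$. If $\bar e_1,\bar e_2$ are two sides of a square $s$ of $D$, then the edges of $s$ opposite to them are parallel to them and hence also dual to $H$, so both midcubes of $s$ lie in $H$; this is a self-intersection of $H$, contradicting weak specialness. Otherwise $\bar e_1,\bar e_2$ lie in no common square, so they are two distinct edges dual to $H$ sharing the vertex $v$ and not contained in a common square, i.e.\ $H$ directly self-osculates, again contradicting weak specialness. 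Hence distinct edges of $\tilde D$ at a common vertex receive distinct labels, which is (3).

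The main obstacle is this last step: one must make precise the combinatorics relating parallelism, duality to hyperplanes, and the definitions of self-intersection and (direct) self-osculation, and verify that these purely local configurations are faithfully detected under the covering maps $\tilde D\to D$ and $\tilde D\to\hat D$ (covering maps are isomorphisms on vertex stars and send squares to squares, which is what makes the transfer work). Everything else — the existence of the two-sided finite cover and the equivariance bookkeeping in Step 2 — is routine.
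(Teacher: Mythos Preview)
Your proposal is correct and is precisely the argument from \cite[Section~5.2]{huang_quasiflat}, which is all the paper invokes here (it gives no independent proof). One minor wording point: in the final clause you write ``$H$ \emph{directly} self-osculates'', but since Definition~\ref{weakly special} does not assume two-sidedness, the relevant hypothesis is simply ``no self-osculation'' in the undirected sense (two distinct edges at a vertex, dual to the same hyperplane, not lying in a common square); your case analysis already establishes exactly this, so the conclusion stands.
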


Pick a base vertex $x\in\tilde{D}$. For every edge path $\omega$ in $\tilde{D}$ starting at $x$, we define $L(\omega)$ to be the word $e^{\epsilon_{i_{1}}}_{i_{1}} e^{\epsilon_{i_{2}}}_{i_{2}} e^{\epsilon_{i_{3}}}_{i_{3}}\cdots$ where $e_{i_{j}}$ is the label of the $j$--th edge in $\omega$ and $\epsilon_{i_{j}}=\pm 1$ records the orientation of the $j$--th edge. Moreover, Lemma \ref{label} implies that if two edge paths starting at $x$ correspond to the same word, then they are equal.

\begin{lem}
	\label{parallel}
	Let $G',D$ and $\tilde{D}$ be as in Lemma \ref{label}. Suppose there is a convex subcomplex $W\subset\tilde{D}$ which splits as a product $W=W_1\times W_2$. Pick an element $g'\in G'$ such that there exist vertices $w_1\in W_1$ and $w_2\in W_2$ such that $g'$ maps vertex $(w_1,w_2)\in W$ to another vertex in $W$ of form $(w_1,w'_2)$. Then $g'$ maps $W_1\times \{w_2\}$ to $W_1\times \{w'_2\}$. Moreover, $g'$ restricted on $W_1\times \{w_2\}$ is exactly the parallelism map between $W_1\times \{w_2\}$ and $W_1\times \{w'_2\}$.
\end{lem}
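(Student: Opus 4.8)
\textbf{Proof plan for Lemma~\ref{parallel}.}

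The plan is to argue entirely at the level of edge paths, using the labeling from Lemma~\ref{label} together with the product structure $W = W_1 \times W_2$. First I would fix the base vertex $x \in \tilde D$ to be the vertex $(w_1, w_2)$, and observe that the parallelism map $\Pi : W_1 \times \{w_2\} \to W_1 \times \{w'_2\}$ (defined since $\{w_2\}$ and $\{w'_2\}$ are both vertices of $W_2$) is a cubical isomorphism. The key point is that for any vertex $(v, w_2) \in W_1 \times \{w_2\}$, an edge path $\omega$ from $(w_1, w_2)$ to $(v, w_2)$ lying entirely in $W_1 \times \{w_2\}$ has a parallel copy $\Pi(\omega)$ from $(w_1, w'_2)$ to $\Pi(v, w_2)$ lying in $W_1 \times \{w'_2\}$, and by property (2) of Lemma~\ref{label}, parallel edges carry the same label and orientation, so $L(\omega) = L(\Pi(\omega))$.

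Next I would apply $g'$. Since $g'$ maps $(w_1, w_2) = x$ to $(w_1, w'_2)$, the image $g'(\omega)$ is an edge path starting at $(w_1, w'_2)$, and because $g'$ is a simplicial automorphism respecting the labeling (property (1) of Lemma~\ref{label}, as $g' \in G'$), we have $L(g'(\omega)) = L(\omega) = L(\Pi(\omega))$. Now both $g'(\omega)$ and $\Pi(\omega)$ are edge paths in $\tilde D$ starting at the same vertex $(w_1, w'_2)$ and realizing the same word; by the uniqueness statement recorded after Lemma~\ref{label} (two edge paths starting at a common vertex with the same word are equal), we conclude $g'(\omega) = \Pi(\omega)$ as edge paths. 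In particular their endpoints agree: $g'(v, w_2) = \Pi(v, w_2)$. Since $v \in W_1$ was an arbitrary vertex of $W_1$, and since $W_1 \times \{w_2\}$ is a subcomplex all of whose vertices are reached by such edge paths within it (using that $W$, hence $W_1 \times \{w_2\}$, is connected), $g'$ and $\Pi$ agree on the $0$-skeleton of $W_1 \times \{w_2\}$. A simplicial map is determined by its values on vertices, so $g'$ restricted to $W_1 \times \{w_2\}$ equals the parallelism map $\Pi$; in particular $g'(W_1 \times \{w_2\}) = W_1 \times \{w'_2\}$, which gives both assertions of the lemma.

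The main obstacle I anticipate is bookkeeping rather than conceptual: one must be careful that the edge path $\omega$ chosen to reach $(v, w_2)$ can be taken inside the slice $W_1 \times \{w_2\}$ (so that its parallel translate genuinely lands in $W_1 \times \{w'_2\}$), and that the product decomposition of $W$ is such that edges of $\tilde D$ lying in a slice $W_1 \times \{w_2\}$ are all parallel to the corresponding edges in any other slice $W_1 \times \{w'_2\}$ — this is exactly the content of the product structure of a CAT(0) cube complex. I would also want to double-check that "parallel" in the sense used in Lemma~\ref{label}(2) is the same as parallelism in the product, i.e.\ that an edge $\{v\} \times$(edge of $W_1$) and its translate share a square-path witnessing parallelism; this follows from convexity of $W$ in $\tilde D$. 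Once these identifications are in place, the uniqueness-of-edge-paths argument closes the lemma immediately.
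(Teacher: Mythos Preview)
Your proposal is correct and follows essentially the same line as the paper's proof: choose an edge path in the slice $W_1\times\{w_2\}$, use property~(2) to see its parallel copy in $W_1\times\{w'_2\}$ carries the same word, use property~(1) to see $g'$ preserves the word, and conclude via the uniqueness of edge paths with a given starting vertex and word. The paper also invokes property~(3) alongside~(2) when comparing the two paths, but otherwise your argument and the paper's are the same.
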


\begin{proof}
	Let $w'_1\in W_1$ be a vertex and let $\omega\subset W_1$ be an edge path from $w_1$ to $w'_1$. By Lemma \ref{label} (2) and (3), $\omega\times\{w_2\}$ and $\omega\times\{w'_2\}$ correspond to the same word. Now Lemma \ref{label} (1) implies that $g'(\omega\times\{w_2\})$ and $\omega\times\{w'_2\}$ are two edge paths which start at the same point, and correspond to the same word. Thus $g'(\omega\times\{w_2\})=\omega\times\{w'_2\}$ and the lemma follows.
\end{proof}

\begin{thm}
	\label{thm_virtually_special}
	Suppose $G$ is virtually the fundamental group of a compact weakly special cube complex. Then each highest abelian subgroup of $G$ is a Morse quasiflat.
\end{thm}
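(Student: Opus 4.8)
The plan is to reduce the statement to Corollary~\ref{cor:special} by verifying that the relevant cube complex can be taken to be special after passing to a finite index subgroup, and then to carry out the actual geometric work using the ``half-flat criterion'' of Proposition~\ref{lem_coefficient} together with the product structure provided by Lemma~\ref{parallel}. First I would pass to a finite index subgroup so that $G$ acts freely and cocompactly on the CAT(0) cube complex $\tilde D$, where $D$ is a compact weakly special cube complex, and apply Lemma~\ref{label} to obtain a further finite index subgroup $G'$ with the invariant labeling and orientation of edges. Since being a Morse quasiflat is a commensurability-invariant and quasi-isometry-invariant notion (by Proposition~\ref{prop:QI invariance} and the fact that $G'$ is quasi-isometric to $G$, together with the observation that a highest abelian subgroup of $G$ remains highest and abelian when intersected with $G'$), it suffices to prove the statement for $G'$ acting on $\tilde D$.

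Let $H$ be a highest free abelian subgroup of $G'$, of rank $m$. Since $\tilde D$ is a CAT(0) cube complex and hence a CAT(0) space on which $G'$ acts properly cocompactly, $H$ stabilizes an $m$-flat $F\subset\tilde D$ on which it acts cocompactly; this flat is coarsely the orbit $H\cdot x$, so it is a quasiflat quasi-isometric to $H$. By Corollary~\ref{cor_Morse_crit}, $F$ is Morse if and only if $F$ does not bound an isometrically embedded half-flat in $\tilde D$. So the heart of the argument is to show: if $F$ bounds an isometrically embedded $(m{+}1)$-dimensional half-flat $F\times[0,\infty)$, then $H$ is not highest, contradicting our hypothesis. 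Here I would use the flat torus theorem / the structure of flats in CAT(0) cube complexes: a half-flat in $\tilde D$ bounded by the $H$-cocompact flat $F$ forces, after possibly passing to a sub-flat, a convex subcomplex $W\subset\tilde D$ splitting as a product $W=W_1\times W_2$ with $W_1$ containing a copy of $\R^m$ close to $F$ and $W_2$ containing a segment in the half-flat direction. Lemma~\ref{parallel} then says that any element $g'\in G'$ translating along $F$ (hence mapping a product slice of $W$ to another product slice) acts on the $W_1$-slices by the parallelism map, and in particular commutes with the canonical $\Z$ of translations in the $W_2$-direction once one produces such a translation in $G'$. Producing an honest group element translating in the half-flat direction is the delicate point: the half-flat is a priori only a subset of $\tilde D$, not $G'$-invariant, so one uses cocompactness of the $G'$-action together with the product structure of $W$ and a pigeonhole/limiting argument to find $g'\in G'$ whose axis has a nontrivial component in the $W_2$-direction while still normalizing (a finite index subgroup of) $H$; this yields an abelian subgroup of rank $m+1$ containing a finite index subgroup of $H$, contradicting highestness.

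The main obstacle I expect is exactly this last step — upgrading the geometric half-flat to an algebraic rank increase. The weakly special hypothesis enters precisely to control this: without the non-self-osculation and non-self-intersection of hyperplanes, the combinatorics of edge-words (Lemma~\ref{label}) breaks down and one cannot guarantee that a translation witnessed geometrically in the half-flat direction is realized by a group element acting nicely on the flat. In more detail, the argument should run: the half-flat meets infinitely many hyperplanes dual to the $W_2$-direction, these hyperplanes are pairwise disjoint (by convexity of the half-flat and the structure of CAT(0) cube complexes), cocompactness gives two of them in the same $G'$-orbit, and the element carrying one to the other — composed with a correcting element of $H$ and suitably powered — provides the desired rank-$(m{+}1)$ abelian subgroup via Lemma~\ref{parallel}. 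Once this is in place the theorem follows immediately from Corollary~\ref{cor_Morse_crit} and the reduction to $G'$. I would also remark that an alternative, slightly softer route is to invoke Corollary~\ref{cor:special} directly if one can argue that the fundamental group of a compact weakly special cube complex is virtually the fundamental group of a compact special cube complex; but I expect the self-contained argument via Lemma~\ref{parallel} and Corollary~\ref{cor_Morse_crit} to be cleaner and to avoid quoting heavier special-cube-complex machinery.
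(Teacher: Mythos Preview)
Your strategy is exactly the paper's: reduce to Corollary~\ref{cor_Morse_crit}, assume a half-flat exists, build a product decomposition, and use cocompactness together with Lemma~\ref{parallel} to manufacture an element commuting with the abelian subgroup but independent from it. The execution, however, has one genuine gap.

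You assert the existence of a convex subcomplex $W=W_1\times W_2$ with $W_1$ close to the flat and $W_2$ picking up the half-flat direction, but you do not justify why the transverse factor is unbounded. The paper takes $W$ to be the combinatorial convex hull $C_F$ of the half-flat, which splits as $C_E\times C_E^\perp$ via the dichotomy of hyperplanes (those crossing $E$ versus those parallel to it). A priori $C_E$ could be enormous --- the convex hull of a flat in a $\cat(0)$ cube complex need not be close to the flat --- and could swallow the entire half-flat, leaving $C_E^\perp$ bounded. The paper closes this by invoking Wise's results (\cite[Theorems~2.1 and~3.6]{wise2017cubical}): highestness of $A$ forces $d_H(C_E,E)<\infty$, hence $C_E^\perp$ is unbounded. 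This is a second, essential use of highestness \emph{before} the final rank contradiction; your sketch only uses highestness at the end.

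Once $C_E^\perp$ is unbounded, the paper's production of the extra element is slightly cleaner than your hyperplane-orbit pigeonhole: fix a vertex $w_1\in C_E$, find $w_2\neq w_2'$ in $C_E^\perp$ with $(w_1,w_2)$ and $(w_1,w_2')$ projecting to the same vertex of $D$, and let $g$ carry one to the other. Lemma~\ref{parallel} applied to $g$ and to each $a\in A$ (which preserves $C_E$, hence $C_F$, and acts trivially on $C_E^\perp$) gives commutation directly; independence $A\cap\langle g\rangle=\{1\}$ follows because $g$ acts trivially on the $C_E$-factor of the parallel set while $A$ acts freely there. Also note that your alternative route through Corollary~\ref{cor:special} would be circular: in this paper that corollary is recorded as a consequence of Theorem~\ref{thm_virtually_special}, not an independent input.
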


\begin{proof}
	We assume without loss of generality that $G$ is the fundamental group of a compact weakly special cube complex $D$. Also by Lemma \ref{label}, we assume there is a $G$-invariant labeling and orientation of edges in $\tilde{D}$ satisfying the conclusion of Lemma \ref{label}. Let $A\le G$ be a highest abelian subgroup (note that $A$ is free), and let $k$ be the rank of $A$. Let $E\subset\tilde{D}$ be an $A$-invariant $k$-flat such that $A$ acts on $E$ properly and cocompactly. By sliding $E$ and possibly replacing $A$ by a finite index subgroup, we can assume $E$ is not contained in any hyperplane of $\tilde{D}$. It suffices to show $E$ is a Morse flat in $\tilde{D}$. By Corollary \ref{cor_Morse_crit}, it suffices to show $E$ does not bound a $(k+1)$-half-flat in $\tilde{D}$. We argue by contradiction and assume $E$ bounds a $(k+1)$-half-flat $F\subset\tilde{D}$. Note that $F$ is also not contained in a hyperplane.
	
	Let $C_F$ (resp. $C_E$) be the combinatorial convex hull of $F$ (resp. $E$). Let $\h(F)$ (resp. $\h(E)$) be the collection of hyperplanes of $\tilde{D}$ that intersect $F$ (resp. $E$). Then there is a 1-1 correspondence between hyperplanes in $C_F$ (resp. $C_E$) and hyperplanes in $\h(F)$ (resp. $\h(E)$). Since no hyperplane of $\tilde{D}$ contains $E$ or $F$, we have a decomposition $\h(F)=\h_1\sqcup \h_2$, where $\h_1$ is the set of hyperplanes that intersect $F$ in a $k$-half-flat, and $\h_2$ is the set of hyperplanes that intersect $F$ in a $k$-flat. Note that if $h\in\h_1$, then $h\cap F$ is transverse to $E$; if $h\in \h_2$, then $h\cap F$ is parallel to $E$. Thus $\h_1=\h(E)$. The decomposition $\h(F)=\h(E)\sqcup \h_2$ induces a product decomposition $C_F\cong C_E\times C^{\perp}_E$.
	
	Since $A$ is highest, by Theorem 2.1 and Theorem 3.6 of \cite{wise2017cubical}, $C_E$ and $E$ have finite Hausdorff distance. However, $F\subset C_E\times C^{\perp}_E$, thus $C^{\perp}_E$ can not be bounded. Pick a base vertex $w_1\in C_E$, then there exist a pair of vertices $w_2\neq w'_2$ of $C^{\perp}_E$ such that $(w_1,w_2)$ and $(w_1,w'_2)$ are mapped to the same vertex under the covering map $\tilde{D}\to D$. Thus there is an element $g\in G$ such that $g(w_1,w_2)=(w_1,w'_2)$.
	
	We claim $g$ commutes with any element in $A$. It suffices to show for any $a\in A$, $a\circ g (w_1,w_2)=g\circ a(w_1,w_2)$. Since $a$ leaves $C_E$ invariant, Lemma \ref{parallel} implies that $a(C_F)=C_F$, the action of $a$ respects the product decomposition $C_F\cong C_E\times C^{\perp}_E$ and $a$ acts by identity on the $C^{\perp}_E$ factor. Since $g(w_1,w_2)=(w_1,w'_2)$, Lemma \ref{parallel} implies $g|_{C_E\times \{w_2\}}$ is the parallelism map from $C_E\times \{w_2\}$ to $C_E\times\{w'_2\}$. Thus $a\circ g (w_1,w_2)=g\circ a(w_1,w_2)$.
	
	We claim the intersection of $A$ with the subgroup $\langle g\rangle$ generated by $g$ is the identity element. Let $P=C_E\times W$ be the combinatorial parallel set of $C_E$. Then we can identify $C^{\perp}_E$ as a convex subcomplex of $W$. Since $g(C_E\times \{w_2\})=C_E\times\{w'_2\}$, $g(P)=P$. By Lemma \ref{parallel}, $g$ respects the product decomposition $P=C_E\times W$ and acts as identity on the $C_E$ factor. If $g^{n}\in A$, then $g^{n}(C_E)=C_E$. However, $g^n$ must fix $C_E$ pointwise by previous discussion. Thus $n=0$ since $G$ is torsion free.
	
	The previous two claims imply that $A$ is contained in an abelian group of higher rank generated by $g$ and $A$, and this contradiction finishes the proof.
\end{proof}

\subsection{An example arising from branched covering}
\label{subsec:example}

A truncated hyperbolic space of dimension $\geq 3$ provides an example of a CAT(0) space which contains Morse-geodesics and flats of dimension $\geq 2$.
Taking products we obtain examples of CAT(0) spaces which contain Morse-flats which are not top-dimensional.

We are going to twist this example a bit to produce a smooth irreducible example.

Let $\bar M$ be a finite volume cusped hyperbolic manifold of dimension $n\geq 3$. Suppose that $\bar M$ contains a separating closed hypersurface $\bar N$ which is totally geodesic \cite{RaTsch}, \cite{LoRe}. 
We remove the cusps of $\bar M$ and deform the metric conformally near the boundary, leaving the metric unchanged in a neighborhood of $\bar N$, to obtain $\bar M'$, 
which is negatively curved in the interior and such that each component of its boundary is
a totally geodesic flat torus. The double $M$ of $\bar M'$ contains a finite family of totally geodesic flat hypersurfaces and a closed totally geodesic hyperbolic hypersurface $N$.
Then there exist finite coverings 
\[\beta:V\to M\times M\] 
of any degree, branched along $N\times N$. The pull-back metric on $V$ is locally CAT(0) \cite[Section 4.4]{Gromov_hyp} and can be smoothed near $\beta^{-1}(N\times N)$ to a metric of nonpositive sectional curvature \cite{FoSch}.

Denote by $\pi_V:\tilde V\to V$ and $\pi_{M}:\tilde M\to M$ 
the universal covers of $V$ respectively $M$. We obtain an induced covering map between universal covers 
\[\tilde\beta:\tilde V\to\tilde M\times\tilde M\] 
which branches along $(\pi_{M}\times\pi_{M})^{-1}(N\times N)$.

Since $\tilde M$ is a space with isolated $(n-1)$-flats, the product space $\tilde M\times\tilde M$ has isolated $(2n-2)$-flats. Further, a pair of geodesic in $\tilde M$ yields a $2$-flat in $\tilde M\times\tilde M$.
By the criterion in Proposition \ref{lem_coefficient}, we see that if none of these geodesics lies in one of the isolated flats, then the associated $2$-flat is Morse.

Now we turn to $\tilde V$. Since the top-dimensional tori in $M\times M$ are disjoint from $ N\times N$, the corresponding top-dimensional flats in $\tilde M\times\tilde M$ lift to flats in $\tilde V$. In particular, $\tilde V$
contains flats of dimension $\geq 4$.
Similarly, if we take two closed geodesics in $M$, both of them disjoint from $N$ and the flat hypersurfaces, then their product yields an immersed flat torus in $M\times M$. The inverse image of this torus under
$(\pi_{M}\times \pi_{ M})\circ\tilde\beta$ is a discrete family of $2$-flats, and each of these has to be Morse by Corollary \ref{cor_Morse_crit}.

\bibliography{morse_quasiflats}
\bibliographystyle{alpha}

\end{document}